\theoremstyle{plain}
\newtheorem{theorem}{Theorem}[section]
\newtheorem{corollary}[theorem]{Corollary}
\newtheorem{lemma}[theorem]{Lemma}
\newtheorem{proposition}[theorem]{Proposition}
\theoremstyle{definition}
\newtheorem{definition}[theorem]{Definition}
\theoremstyle{remark}
\newtheorem{remark}[theorem]{Remark}
\numberwithin{equation}{section}
\newcommand{\average}{{\mathchoice {\kern1ex\vcenter{\hrule height.4pt
width 6pt depth0pt} \kern-9.7pt} {\kern1ex\vcenter{\hrule
height.4pt width 4.3pt depth0pt} \kern-7pt} {} {} }}
\newcommand{\ave}{\average\int}
\def\R{\mathbb{R}}
\newcommand{\ep}{\varepsilon}
\newcommand{\HH}{\mathcal H}
\newcommand{\N}{\mathbb N}
\newcommand{\anz}{\mathscr P}
\newcommand{\Xalpha}{\boldsymbol{X}_\alpha}
\newcommand{\place}[2]{%
  \overset{\substack{#1\\\smile}}{#2}%
}
\begin{document}

 \title[Generic regularity of free boundaries for the obstacle problem]{Generic regularity of free boundaries  \\for the obstacle problem}

\author{Alessio Figalli}
\address{ETH Z\"urich, Department of Mathematics, Raemistrasse 101, 8092 Z\"urich, Switzerland}
\email{alessio.figalli@math.ethz.ch}

\author{Xavier Ros-Oton}
\address{Universit\"at Z\"urich,
Institut f\"ur Mathematik, 
Winterthurerstrasse 190, 8057 Z\"urich, Switzerland \& 
ICREA, Pg.\ Llu\'is Companys 23, 08010 Barcelona, Spain \& 
Universitat de Barcelona, Departament de Matem\`atiques i Inform\`atica, Gran Via de les Corts Catalanes 585, 08007 Barcelona, Spain.}
\email{xavier.ros-oton@math.uzh.ch}

\author{Joaquim Serra}
\address{ETH Z\"urich, Department of Mathematics, Raemistrasse 101, 8092 Z\"urich, Switzerland}
\email{joaquim.serra@math.ethz.ch}

\thanks{AF and JS have received funding from the European Research Council (ERC) under the Grant Agreement No 721675.
XR was supported by the European Research Council (ERC) under the Grant Agreement No 801867.
JS was supported by Swiss NSF Ambizione Grant PZ00P2 180042.
XR and JS were supported by MINECO grant MTM2017-84214-C2-1-P (Spain).\\
The authors are grateful to the anonymous referee for a careful reading and the useful comments.}

\keywords{Obstacle problem, singular set, generic regularity}
\subjclass[2010]{35R35; 35B65.}

\begin{abstract}
The goal of this paper is to establish generic regularity of free boundaries for the  obstacle problem in $\R^n$.
By classical results of Caffarelli, the free boundary is $C^\infty$ outside a set of singular points.
Explicit examples show that the singular set could be in general $(n-1)$-dimensional ---that is, as large as the regular set.
Our main result establishes that, generically, the singular set has zero $\mathcal H^{n-4}$ measure (in particular, it has codimension 3 inside the free boundary).
Thus, for $n\leq4$, the free boundary is generically a $C^\infty$ manifold.
This solves a conjecture of Schaeffer (dating back to 1974) on the generic regularity of free boundaries in dimensions $n\leq4$.
\end{abstract}

\maketitle

\tableofcontents

\section{Introduction}
\label{sect:Intro}

Several fundamental problems in science (physics, biology, finance, geometry, etc.) can be described by PDEs that exhibit a-priori unknown  interfaces or boundaries.
They are called \textit{free boundary problems}, and have been a major line of research in the PDE community in the last 60 years; see for instance \cite{LS,LS69,Kin73,BK74,KN77,C-obst,CR77,Sak91,C-obst2,W99,CKS00,M03,SU,ACS08,GP09,ALS,AlessioJoaquim}.

The obstacle problem
\begin{equation} \label{obstacle}
\begin{split}
\Delta u &= \chi_{\{u>0\}}\quad \textrm{in}\quad \Omega\subset\R^n \\
u & \geq 0,
\end{split}
\end{equation}
is the most classical and among the most important elliptic free boundary problems, and it arises in a variety of situations; see e.g. \cite{DL,obst-appl4,obst-appl1,PSU12,Serfaty}.

From the mathematical point of view, the most challenging question in this context is to understand the \emph{regularity of free boundaries}.
The modern development of the regularity theory for free boundaries started in the late 1970's with the seminal paper of Caffarelli  \cite{C-obst}, and since then it has been a very active area of research.

The main result in \cite{C-obst} establishes that, for any solution of \eqref{obstacle}, the free boundary $\partial\{u>0\}$ is $C^\infty$ outside a closed set of singular points.
Singular points arise for example when the free boundary creates {cusps}, and they may appear in any dimension $n\geq2$.
By \cite{CR76,C-obst2,M03}, these points are locally contained in a $C^1$ manifold of dimension $n-1$.
More recently, finer estimates at singular points were established  in \cite{CSV17,AlessioJoaquim}.

\vspace{3mm}

\subsection{Generic regularity for the obstacle problem}

\ \\ A major question in the understanding of singularities in PDE theory is the development of methods to prove \emph{generic regularity} results.
In the context of the obstacle problem \eqref{obstacle}, the key question is to understand the generic regularity of free boundaries.
Explicit examples \cite{Sch76} show that singular points in the obstacle problem can form a set of dimension $n-1$ (thus, as large as the whole free boundary).
Still, singular points are expected to be rare \cite{Sch1}:

\vspace{3mm}

\noindent \textbf{Conjecture} (Schaeffer, 1974): \emph{Generically, free boundaries in the obstacle problem have {no} singular~points.}

\vspace{3mm}

\noindent
The conjecture is only known to hold in the plane $\R^2$ \cite{M03}, and up to now nothing was known in the physical space $\R^3$ or in higher dimensions.

Notice that, in the obstacle problem, the question of generic regularity is particularly relevant, since in such context the singular set can be as large as the regular set ---while in other problems the singular set has lower Hausdorff dimension \cite{Giusti}.
Also, from the point of view of applications (see \cite{Baiocchi,DL,obst-appl1,Serfaty}), it is particularly important to understand the problem in the physical space $\R^3$.\\

A main goal of this paper is to prove Schaeffer's conjecture in $\R^3$ and $\R^4$.
To this aim, 
we consider any monotone family of solutions $\{u^t\}_{t\in(-1,1)}$ of \eqref{obstacle} in $B_1$ satisfying the following ``uniform monotonicity'' condition:
\begin{equation}\label{assumption}
\begin{split}
\text{For every $t \in (-1,1)$ and any compact set $K_t\subset \partial B_1\cap \{u^t>0\}$ there exists $c_{K_t}>0$ such that}\\
\inf_{x\in K_t} \big( u^{t'}(x)- u^t(x)  \big) \ge  c_{K_t} (t'-t) , \qquad \text{for all }-1 <t< t'< 1.\qquad\qquad
\end{split}
\end{equation}
This condition rules out the existence of regions that remain stationary as we increase the parameter $t$.
In case that $u^t$ is continuously differentiable with respect to $t$, then such condition is equivalent to saying that~$\partial_t u^t>0$ inside $\{u^t>0\}$.

We shall also assume that $(-1,1)\ni t \mapsto u^t|_{\partial B_1} \in L^\infty(\partial B_1)$ is continuous with respect to $t$. Note that, by the maximum principle, this implies that $(-1,1)\ni t \mapsto u^t \in L^\infty(B_1)$ is  continuous.
Under this assumption, we prove the following:

\begin{theorem}\label{thm-Schaeffer-intro}
Let $\{u^t\}_{t\in(-1,1)}$ be a monotone and continuous family of solutions to \eqref{obstacle} in $B_1\subset \R^n$ satisfying \eqref{assumption}, and let $\Sigma^t\subset \partial \{u^t>0\}\cap B_1$ be the set of singular points for $u^t$.
Then
\[\mathcal H^{n-4}(\Sigma^t)=0\qquad \text{for a.e. }t \in (-1,1).\]
In particular, Schaeffer's conjecture holds for $n\le 4$.
\end{theorem}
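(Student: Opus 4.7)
The plan is to combine three ingredients: (i) Caffarelli's stratification of singular points, (ii) a spacetime analysis of how these points evolve in the monotone family, exploiting \eqref{assumption}, and (iii) a Fubini-type slicing inequality.

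\textbf{Stratification and reduction to the top strata.} By the classical theory (\cite{CR76,C-obst,C-obst2,M03}) and its refinements (\cite{CSV17,AlessioJoaquim}), at each $x_0\in\Sigma^t$ the second-order blow-up of $u^t$ is a non-negative harmonic polynomial $p_{x_0,t}$ of degree $2$ whose zero set is a linear subspace $L_{x_0,t}$ of some dimension $m\in\{0,\dots,n-1\}$. Decomposing accordingly
\[
\Sigma^t=\bigcup_{m=0}^{n-1}\Sigma^t_m,
\]
and recalling that each $\Sigma^t_m$ is contained in a countable union of $m$-dimensional $C^1$ manifolds, the strata with $m\le n-5$ are automatically $\mathcal H^{n-4}$-null. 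Hence the content is in the four top strata $m\in\{n-4,n-3,n-2,n-1\}$.

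\textbf{Spacetime analysis and Fubini.} Introduce the spacetime strata
\[
\mathcal S_m:=\{(t,x)\in(-1,1)\times B_1:\ x\in\Sigma^t_m\},\qquad m\in\{n-4,\dots,n-1\}.
\]
For $t<t'$, the increment $w:=u^{t'}-u^t\ge 0$ is harmonic in $\{u^t>0\}$ and, by \eqref{assumption} and the maximum principle, bounded below by $c(t'-t)$ on compact subsets of $\{u^t>0\}\cap\partial B_1$. The key technical claim is that this positivity, combined with the higher-order asymptotics at singular points from \cite{CSV17,AlessioJoaquim} and a barrier built from the harmonic profile on $\R^n\setminus L_{x_0,t_0}$, forces a quantitative anisotropic retreat of the contact set transverse to $L_{x_0,t_0}$. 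Iterating on dyadic time scales and organizing the retreats via the $C^1$-parameterization of $\Sigma^t_m$, one aims at the spacetime dimension bound
\[
\mathcal H^{n-3}(\mathcal S_m)=0\qquad\text{for every }m\in\{n-4,\dots,n-1\}.
\]
Once this is established, the Eilenberg inequality applied to the projection $\pi:(t,x)\mapsto t$ gives $\int\mathcal H^{n-4}(\Sigma^t_m)\,dt\le C\,\mathcal H^{n-3}(\mathcal S_m)=0$, so $\mathcal H^{n-4}(\Sigma^t_m)=0$ for a.e.\ $t$. Summing over $m$ and using the trivial bound for $m\le n-5$ yields the theorem.

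\textbf{Main obstacle.} The entire difficulty is concentrated in the spacetime dimension bound $\mathcal H^{n-3}(\mathcal S_m)=0$. Indeed, \eqref{assumption} is a purely boundary condition, so the challenge is to propagate it into an anisotropic \emph{interior} retreat of the contact set around an arbitrary singular subspace $L_{x_0,t_0}$ of codimension as small as $1$. This is far more than a soft monotonicity argument: it requires a precise \emph{non-recurrence} statement asserting that a singular point of stratum $m$ cannot be re-approached by singular points at nearby times, except along a set of time-parameters of codimension at least $3$ in spacetime --- the magic codimension-$3$ gain of the theorem. I would expect the heart of the paper to develop a new cleaning/iteration mechanism, built on top of the frequency and epiperimetric tools of \cite{CSV17,AlessioJoaquim} and tailored specifically to one-parameter families of obstacle-problem solutions.
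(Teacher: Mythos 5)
Your Fubini-over-time scaffolding has the right shape, but the spacetime bound you posit cannot hold, and the reason it fails is precisely where the codimension-$3$ gain of the theorem must come from. For the top stratum, the Schaeffer examples provide boundary data and a time $t_0$ with $\mathcal H^{n-1}(\Sigma^{t_0}_{n-1})>0$; since $\{t_0\}\times\Sigma^{t_0}_{n-1}\subset\mathcal S_{n-1}$ and $n-3<n-1$, this already forces $\mathcal H^{n-3}(\mathcal S_{n-1})=+\infty$ for $n\ge4$. More structurally, Eilenberg's inequality for the Lipschitz time-projection $(t,x)\mapsto t$ trades at most \emph{one} dimension against the slicing, whereas you must pass from $\dim_\HH\Sigma^t_{n-1}\le n-1$ down to $\mathcal H^{n-4}(\Sigma^t_{n-1})=0$, a gain of three; no soft coarea argument can supply the extra two.

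The correct mechanism is the cleaning-weighted coarea inequality of Proposition \ref{prop:GMT4}, whose inputs are: (i) the monotonicity assumption \eqref{assumption} makes the \emph{spatial} projection $\pi_1:(x,t)\mapsto x$ injective on the spacetime singular set --- a given $x$ can be singular for at most one $t$ (Lemma \ref{lem:EG2B1bis}(c)) --- so the controlling quantity is $\beta:=\dim_\HH\bigl(\cup_t\Sigma^t\bigr)\le n-1$, not a spacetime Hausdorff measure; and (ii) a quantitative cleaning estimate $\Sigma^{t_\circ+Cr^{s}}\cap B_r(x_\circ)=\varnothing$, which records that the $t$-fiber over $\pi_1$ is H\"older-$(1/s)$ in $x$. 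Proposition \ref{prop:GMT4}(b) then gives $\mathcal H^{\beta-s}(\Sigma^t)=0$ for a.e.\ $t$, so reaching exponent $n-4$ from $\beta=n-1$ requires $s\ge4$, i.e.\ a Taylor expansion to order $5-\zeta$ at most singular points of the top stratum. That is not available from the cubic-order asymptotics of \cite{CSV17,AlessioJoaquim} which you invoke (those give at best $s=2$ and only recover the planar case); it requires the new fourth-order expansion of Theorem \ref{thm-expansion-obstacle}, built on the curvature ansatz $\anz_{x_\circ}$ of Definition \ref{ansatz}, several new truncated frequency functionals, and dimension-reduction arguments adapted to the whole one-parameter monotone family. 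Moreover, the exceptional sets where the expansion stops early have to be treated by hand --- notably $\mathbf{\Sigma}_{n-1}^{\ge 3}\setminus\mathbf{\Sigma}_{n-1}^{3rd}$, for which a rectifiability result together with a \emph{backward}-in-time barrier argument (Lemma \ref{lem:cleaningpast3}) is needed, something your sketch does not anticipate.
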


We remark that very few results are known in this direction for elliptic PDE, and most of them deal only with simpler situations (for instance the obstacle problem in $\R^2$ \cite{M03}), or when  the singular set is known to be very small (as in the case of area-minimizing hypersurfaces in $\R^8$ \cite{Smale}).
\\

As a particular family of solutions to which our Theorem~\ref{thm-Schaeffer-intro} applies,
one can consider the solution $u^t$ to the obstacle with boundary data $u^t|_{\partial B_1}=g+t$ (similarly to what was done in \cite{M03}), but many other choices are possible.

In particular, due to the general character of our assumption \eqref{assumption}, we can apply Theorem~\ref{thm-Schaeffer-intro} (more precisely, some of the results behind its proof) to study the \emph{Hele-Shaw flow}.
This is a well-known 2D model which describes a flow between two parallel flat plates following Darcy law \cite{HS,CJK}.
After a transformation of the type $u(x,t)=\int_0^tp(x,\tau)d\tau$ ---where $p(x,t)$ is the pressure--- the problem becomes
\begin{equation} \label{Hele-Shaw}
\begin{split}
\Delta u &=\chi_{\{u>0\}}\quad \textrm{in}\quad K^c\times (0,T)\subset\R^2\times \R \\
u & = t \hspace{14mm} \textrm{in}\quad K\times (0,T)\subset\R^2\times \R  \\
u & \geq 0,
\end{split}
\end{equation}
where $K\subset \R^2$ is a given compact set, and $K^c:=\R^2\setminus K$.
Since the singular set is  closed inside the free boundary (see for instance Lemma~\ref{lem:EG2B1bis}(a)),
as a consequence of our fine analysis of singular points, we can also show the following:

\begin{theorem}\label{cor-Hele-Shaw-intro}
Let $K\subset \R^2$ be any compact set, and $u(x,t)$ be any solution to the Hele-Shaw flow~\eqref{Hele-Shaw}.
Let $\Sigma^t\subset K^c$ be the set of singular points of $\partial \{u(\,\cdot\,,t)>0\}$, and let $\mathcal S:=\{t\in(0,T) : \Sigma^t\neq\varnothing\}$ be the  set of singular times.
Then  $\mathcal S$ is relatively closed inside $(0,T)$ and
\[{\rm dim}_{\mathcal H}(\mathcal S)\leq \frac14.\]
In particular, the free boundary is $C^\infty$ for a.e. time $t\in(0,T)$.
\end{theorem}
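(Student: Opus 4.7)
The strategy is to cast the Hele-Shaw evolution as a monotone family of obstacle solutions to which (the machinery behind) Theorem~\ref{thm-Schaeffer-intro} applies, and then to sharpen the ``a.e.~$t$'' conclusion to a Hausdorff estimate on the set of singular times. First I would set $u^t(x):=u(x,t)$ on $B_R\setminus K$ for a large ball $B_R\supset K$. Since $u^t|_{\partial K}=t$ and $\Delta u^t=\chi_{\{u^t>0\}}$ outside $K$, the comparison principle immediately yields that $t\mapsto u^t$ is nondecreasing and continuous into $L^\infty$. To verify \eqref{assumption}, I would observe that for $t<t'$ the difference $v:=u^{t'}-u^t\ge 0$ is nonnegative and satisfies $\Delta v\le 0$ in $\{u^t>0\}$ with boundary value $t'-t$ on $\partial K$; Harnack / boundary-Harnack arguments in the connected open set $\{u^t>0\}\setminus K$ then give $v\ge c_{K_*}(t'-t)$ on any compact $K_*\subset\{u^t>0\}\cap K^c$, which is \eqref{assumption} applied on balls surrounding~$K$.

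For the closedness of $\mathcal S$ inside $(0,T)$ I would use the upper semi-continuity of the singular set in the Hausdorff sense provided by Lemma~\ref{lem:EG2B1bis}(a). Concretely, if $t_k\to t$ and $x_k\in\Sigma^{t_k}$, a subsequence of $x_k$ stays in a fixed compact subset of $K^c$ (using local finiteness of singular points in $\R^2$) and converges to some $x_\infty\in\partial\{u^t>0\}$; continuity of the relevant Weiss-type density along $(x_k,u^{t_k})\to(x_\infty,u^t)$ then forces $x_\infty\in\Sigma^t$, so $t\in\mathcal S$ and $\mathcal S$ is relatively closed.

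The heart of the argument is the bound $\dim_{\mathcal H}(\mathcal S)\le 1/4$. Here I would exploit the 2D Caffarelli-Monneau classification: each $x_0\in\Sigma^{t_0}$ has a unique quadratic blow-up $p(x)=\tfrac12\langle Ax,x\rangle$ with $A\ge 0$ and $\mathrm{tr}\,A=1$, of either ``stripe'' (rank~1) or ``point'' (rank~2) type. Combining this classification with \eqref{assumption} should produce a sharp quantitative persistence estimate for the time interval during which a given spatial point remains close to the free boundary through a singular type: in terms of a dyadic spatial scale $r$, the time window at which singular behavior of scale $r$ may be observed has length $\sim r^2$. Grouping singular space-time points $(x_0,t_0)$ by their dyadic scale, projecting the resulting cylinders onto the time axis, and running a Vitali covering in $(0,T)$, the area factor $r^{-2}$ of a maximal $r$-packing in the fixed compact $K^c\cap B_R$ balanced against the persistence length $r^2$ should yield the Hausdorff $\tfrac14$-bound after optimizing in $r$. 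The main obstacle, as in Theorem~\ref{thm-Schaeffer-intro}, is this sharp persistence estimate with the precise scaling exponent: in $n=2$ Theorem~\ref{thm-Schaeffer-intro} only says $\Sigma^t=\varnothing$ for a.e.~$t$, whereas here one needs a polynomial-in-$r$ count of singular times at each scale, which requires uniform rates of uniqueness and nondegeneracy of the quadratic blow-up along the family $u^t$.

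Once $\dim_{\mathcal H}(\mathcal S)\le 1/4$ is established, since $1/4<1$ the set of regular times has full Lebesgue measure in $(0,T)$; for every $t\notin\mathcal S$ the set $\partial\{u(\cdot,t)>0\}\cap K^c$ consists only of regular free-boundary points, hence is $C^\infty$ by Caffarelli's theorem and elliptic regularity for $\Delta u=\chi_{\{u>0\}}$.
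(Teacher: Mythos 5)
Your setup and the verification of \eqref{assumption} via Harnack/boundary‐Harnack in $\{u^t>0\}\setminus K$ are correct, and the closedness of $\mathcal S$ via the closedness of $\mathbf{\Sigma}$ (Lemma~\ref{lem:EG2B1bis}(a)) is also the right argument. However, the heart of the proof --- the bound $\dim_{\mathcal H}(\mathcal S)\le 1/4$ --- is not reached by your scaling heuristic, for two compounding reasons.

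First, your time-persistence estimate is off by two orders. You estimate that the time window during which a singular point at spatial scale $r$ can persist has length $\sim r^2$, reasoning from the quadratic blow-up. Even if one had a uniform $O(r^{2+\alpha})$ error for the quadratic approximation, the barrier construction (Lemma~\ref{lem:ESCbar} together with Lemma~\ref{lem:ESC1}) would only give a cleaning time $\sim r^{1+\alpha}$. To obtain the $r^{4-\zeta}$ persistence needed for the exponent $1/4$, one must have the \emph{fifth}-order Taylor expansion $u^{t_\circ}=\anz_{x_\circ,t_\circ}+p_{4,x_\circ,t_\circ}+O(|x|^{5-\zeta})$ at the good stratum $\mathbf{\Sigma}^*$ (Theorem~\ref{thm:E3B3} and Lemma~\ref{lem:ESC2}(e)); combined with the Hopf-type lower bound $\min_{D_r}[u(\cdot,t)-u(\cdot,0)]\gtrsim rt$ from Lemma~\ref{lem:ESC1}(b) this gives the cleaning lemma $\{t>|x|^{4-\ep}\}\cap\{u=0\}=\varnothing$. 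This higher-order expansion --- involving the third and fourth blow-ups, the Ansatz $\anz$, the truncated frequency $\phi^\gamma$, and the uniqueness/nondegeneracy of non-harmonic cubic blow-ups --- is the paper's central technical contribution and cannot be replaced by ``uniform rates of uniqueness of the quadratic blow-up'', which is what you propose.

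Second, the spatial packing count $r^{-2}$ is too pessimistic. The set $\pi_1(\mathbf{\Sigma}^*)$ has Hausdorff dimension at most $n-1=1$ (it is covered by a $C^{1,1}$ curve, cf.\ Proposition~\ref{prop:EG2B4}(c)), so the correct packing count is $r^{-1}$, not $r^{-2}$. Your arithmetic ($r^{-2}$ cylinders of time-height $r^2$) would only yield $\mathcal H^\alpha_\infty(\mathcal S)\lesssim r^{2\alpha-2}\to 0$ for $\alpha>1$, i.e.\ the trivial bound $\dim_\HH(\mathcal S)\le 1$. With the corrected figures (packing $r^{-1}$, persistence $r^{4-\zeta}$), Proposition~\ref{prop:GMT4}(a) with $\beta=1$ and $s=4-\zeta$ gives $\dim_\HH(\pi_2(\mathbf\Sigma^*))\le 1/(4-\zeta)$, and the remaining strata $\mathbf{\Sigma}\setminus\mathbf{\Sigma}^*$ have spatial projection of dimension $\le n-2=0$ (countable in $\R^2$), so they contribute a zero-dimensional set of times. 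Letting $\zeta\downarrow 0$ gives $1/4$. So the covering/coarea framework you outline is essentially the right one, but the two key inputs --- the fifth-order expansion at generic singular points and the one-dimensional bound on the spatial singular set --- are precisely what your plan omits, and without them the exponent $1/4$ is unattainable.
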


Prior to our result, it was an open question to decide whether singularities in such model could persist in time or not.
Theorem~\ref{cor-Hele-Shaw-intro} answers this question, and provides for the first time an estimate on the set of singular times.

\vspace{3mm}

\subsection{Higher-order expansions at most singular points}
\label{sect:intro higher order}

\ \\A key tool in the proof of Theorem~\ref{thm-Schaeffer-intro} is a very fine understanding of singular points, as explained next.

For the obstacle problem \eqref{obstacle}, a classical result of Caffarelli \cite{C-obst2} states that at every singular point $x_\circ$ we have an expansion of the form
\begin{equation}\label{Caffarelli}
u(x)=p_{2,x_\circ}(x-x_\circ)+o(|x-x_\circ|^2),
\end{equation}
where $p_{2,x_\circ}$ is a nonnegative, homogeneous, quadratic polynomial satisfying $\Delta p_{2,x_\circ}\equiv 1$.

In dimension $n=2$ this estimate was improved in \cite{W99} by replacing $o(|x-x_\circ|^2)$ with $O(|x-x_\circ|^{2+\alpha})$ for some $\alpha>0$, and in arbitrary dimensions it was shown in \cite{CSV17} that $o(|x-x_\circ|^2)$ can be replaced by $O(|x-x_\circ|^2|\log|x-x_\circ||^{-\epsilon})$, for some $\epsilon>0$.
More recently, it was proved by the first and third authors \cite{AlessioJoaquim} that, in every dimension $n$, one actually has
\[u(x)=p_{2,x_\circ}(x-x_\circ)+O(|x-x_\circ|^3),\]
possibly outside a set of ``anomalous'' singular points whose Hausdorff dimension is at most $n-3$.

Here, in order to prove our main result, we need to improve substantially the understanding of singular points, establishing a new higher order expansion at most singular points for monotone families of solutions to the obstacle problem.
Here and in the sequel, ${\rm dim}_{\mathcal H}$ will denote the Hausdorff dimension (see Section~\ref{sec:GMT} for a definition).

\begin{theorem}\label{thm-expansion-obstacle}
Let $\{u^t\}_{t\in(-1,1)}$ be a family of solutions to \eqref{obstacle} in $B_1\subset \R^n$ which is continuous and nondecreasing in $t$ (in particular, they could be independent of $t$).
Let $\Sigma^t\subset\partial\{u^t>0\}\cap B_1$ be the set of singular points of $u^t$, and $\hat\Sigma:=\cup_{t \in (-1,1)} \Sigma^t\subset B_1$.

Then there exists a set $E\subset\hat\Sigma$, with ${\rm dim}_{\mathcal H}(E)\leq n-2$, such that for every $t_\circ \in (-1,1)$ and every $x_\circ\in \Sigma^{t_\circ}\setminus E$ we have
\begin{equation}\label{expansion-obstacle}
u^{t_\circ}(x) = P_{4,x_\circ,t_\circ}(x-x_\circ)+O\bigl(|x- x_\circ|^{5-\zeta}\bigr)
\end{equation}
for all $\zeta>0$, where
$P_{4,x_\circ,t_\circ}$ is a fourth order polynomial satisfying $\Delta P_{4,x_\circ,t_\circ}\equiv 1$.
\end{theorem}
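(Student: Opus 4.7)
I would build the expansion by successively extracting the cubic and quartic pieces on top of Caffarelli's quadratic blow-up, and package the ``anomalies'' of this procedure, accumulated over all times $t_\circ$, into a single exceptional set $E$ whose size I estimate in the joint $(t,x)$ picture.

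\emph{Step 1: stratification and the trivially exceptional part.} For any $(t_\circ, x_\circ)$ with $x_\circ \in \Sigma^{t_\circ}$, write the Caffarelli blow-up $p_{2,x_\circ,t_\circ}$ and stratify $\Sigma^{t_\circ}$ by $k := \dim \ker p_{2,x_\circ,t_\circ}$, letting $\Sigma^{t_\circ}_k$ denote the stratum. Only the top stratum $k=n-1$ (where $p_{2,x_\circ,t_\circ} = \tfrac12 (e\cdot(x-x_\circ))^2$) carries full codimension-$1$ mass; the lower strata are controlled by Federer--Almgren dimension reduction on each slice and together contribute at most an $(n-2)$-dimensional set after unioning over $t$. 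The idea for this last claim is to view $\{(t,x) : x\in\Sigma^{t_\circ}_k,\ k\le n-2\}$ as a subset of $(-1,1)\times B_1$ of Hausdorff dimension at most $n-2$, and then to use the monotonicity assumption \eqref{assumption} to argue that each spatial point $x$ belongs to at most one time-slice $\Sigma^{t}$ (the free boundary passes through it only once, since $u^t(x)$ is strictly increasing once positive), so the projection to $B_1$ does not increase dimension. These lower-stratum points form the first contribution to $E$.

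\emph{Step 2: existence of the cubic and quartic corrections on the top stratum.} At $x_\circ \in \Sigma^{t_\circ}_{n-1}$, the Figalli--Serra improvement cited in Section~\ref{sect:intro higher order} already gives
\[
u^{t_\circ}(x) \;=\; p_{2,x_\circ,t_\circ}(x-x_\circ) + O(|x-x_\circ|^3)
\]
up to removing an anomalous set of dimension $\le n-3$. Consider the cubic rescalings
\[
v_r(y) \;:=\; \frac{u^{t_\circ}(x_\circ + ry) - p_{2,x_\circ,t_\circ}(ry)}{r^3},
\]
which are uniformly bounded. Since $\Delta v_r = r^{-1}(\chi_{\{u^{t_\circ}>0\}}-1)(x_\circ+r\,\cdot\,)$ and the coincidence set concentrates on the hyperplane $\{y\cdot e = 0\}$ at rate $r$, standard harmonic-analysis arguments on the half-space, together with a Monneau-type monotonicity formula tailored to the $3$-homogeneous regime, identify any subsequential limit with a harmonic polynomial $p_{3,x_\circ,t_\circ}$ of degree $3$. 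Iterating the same scheme once more, using an epiperimetric-type inequality at order $4$ (which should be available because $n=1,2,3,4$-harmonic polynomials are non-degenerate directions in the associated Dirichlet energy functional), one extracts a fourth harmonic polynomial $p_{4,x_\circ,t_\circ}$. Setting $P_{4,x_\circ,t_\circ} := p_2+p_3+p_4$ gives $\Delta P_{4,x_\circ,t_\circ} \equiv 1$.

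\emph{Step 3: closing the remainder at order $5-\zeta$.} A Weiss/Monneau-type monotonicity formula at each successive order provides a modulus of convergence of $v_r$ to its limit. Combined with the epiperimetric gap, this upgrades the abstract blow-up of Step 2 to a quantitative rate
\[
\bigl|u^{t_\circ}(x) - P_{4,x_\circ,t_\circ}(x-x_\circ)\bigr| \;\le\; C_\zeta\, |x-x_\circ|^{5-\zeta},
\]
except at points where the monotone quantity fails to decay at a polynomial rate; call these points \emph{anomalous of order $4$} and denote this set by $A$.

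\emph{Step 4: the main obstacle --- uniformity of $E$ in $t$.} The delicate point, and the whole reason the family structure appears in the statement, is to bound the Hausdorff dimension of $A$ \emph{after taking the union over all $t_\circ\in(-1,1)$}. For a fixed $t_\circ$, a standard Monneau-frequency dichotomy shows $A\cap\Sigma^{t_\circ}$ has dimension $\le n-3$, but unioning naively gives only a bound of $n-2$ on the $(t,x)$-product and we need this to survive projection to $x$-space. Here I would exploit \eqref{assumption}: the time derivative $w := \partial_t u^t$ (where defined) is a nonnegative function, harmonic in $\{u^t>0\}$, vanishing on the contact set, and strictly positive by the uniform monotonicity; it therefore behaves like a positive ``linearization direction'' that prevents certain resonant blow-up modes at order $3$ and $4$ simultaneously for a positive-measure set of $t$. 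Concretely, one argues by contradiction: if $A\subset\hat\Sigma$ had $\mathcal H^{n-2}(A)>0$, then by the fact that each $x$ lies in a unique $\Sigma^{t(x)}$, the function $t(x)$ is measurable on $A$, and a coarea/Fubini-type argument produces a positive-measure set of times $t_\circ$ whose slice $A\cap \Sigma^{t_\circ}$ is larger than what the fixed-$t$ anomalous bound permits --- contradiction. Carrying out this coarea step rigorously (in particular verifying the measurability/rectifiability needed to slice) is what I expect to be the real technical core; the rest is an iteration of well-developed obstacle-problem machinery. Finally set $E := (\cup_t \cup_{k\le n-2}\Sigma^t_k) \cup A$, which is at most $(n-2)$-dimensional.
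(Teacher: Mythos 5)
Your outline captures the broad ``successive blow-ups at orders $2,3,4$'' shape, but several of the load-bearing steps either cannot be carried out as you describe or are actually the hard part of the paper that you are waving past. The most serious issues are the following.

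First, Theorem~\ref{thm-expansion-obstacle} does \emph{not} assume the uniform monotonicity condition \eqref{assumption}; only that the family is continuous and nondecreasing in $t$ (it could even be constant). So the ``unique time-slice'' trick in your Step~1 and the ``$w=\partial_t u^t>0$ prevents resonant modes'' heuristic in your Step~4 are not available here. Second, in Step~2 you assert that every subsequential limit of $v_r=r^{-3}\bigl(u^{t_\circ}(x_\circ+r\,\cdot\,)-p_{2}(r\,\cdot\,)\bigr)$ is a degree-$3$ harmonic polynomial. On the top stratum this is false in general: the second blow-up is a $\lambda$-homogeneous global solution of the \emph{Signorini} problem (Proposition~\ref{prop:E2B2}(b)), which at $\lambda=3$ need not be harmonic. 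The non-harmonic points form the set $\Sigma^{\ge3}_{n-1}\setminus\Sigma^{3rd}_{n-1}$, which can be genuinely $(n-2)$-dimensional, and handling it requires the uniqueness of non-harmonic cubic blow-ups and the ensuing $(n-2)$-rectifiability established in Section~\ref{cubic-sec} --- there is no Monneau formula that just gives you a polynomial limit here. Third, once you do have $p_3$, iterating the Weiss/Almgren machinery on $u-p_2-p_3$ simply does not work: the frequency function on $w_3=u-p_2-p_3$ is not almost-monotone without the nondegeneracy $\|w_3(r\,\cdot\,)\|_{L^2}\gtrsim r^{4-\varepsilon}$, which fails precisely on the large part of the singular set you must control. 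The paper's key move is to replace $p_2+p_3$ by the curvature-corrected Ansatz $\anz_{x_\circ}$ (Definition~\ref{ansatz}), together with the truncated frequency $\phi^\gamma$ and the rotational-derivative semiconvexity estimate (Lemma~\ref{lem:E3B5}); no ``epiperimetric-type inequality at order $4$'' is available or used.

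Fourth, your dimension bookkeeping in Steps~3--4 is off. For a single fixed $t_\circ$ the anomalous set where the expansion stops below order $5-\zeta$ can have dimension $n-2$, not $n-3$ (again: the non-harmonic cubic stratum; and $\Sigma_{n-1}^{\ge4}\setminus\Sigma_{n-1}^{4th}$, controlled via zero sets of harmonic quartics). So your claimed contradiction in Step~4 (``slice larger than the fixed-$t$ bound permits'') never arises. And even if the slices were $(n-3)$-dimensional, a coarea/Fubini argument runs the wrong way: Fubini controls the slices given the projection, not the projection given the slices; a ``shear'' of an $(n-3)$-dimensional set over $t$ can project onto something $(n-2)$-dimensional or worse. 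The actual mechanism in the paper is entirely different: one proves that the blow-up at an accumulation point of singular points $(x_k,t_k)$ with $x_k\to x_\circ$ (possibly with $t_k\neq t_\circ$) inherits a translation-invariance in the direction $\lim x_k/|x_k|$ (Lemmas~\ref{lem:EG2Bm}, \ref{lem:EG2Bn-1}, \ref{lem:E3B2d}, \ref{lem:E3B2b}), and one then runs a Reifenberg-type dimension reduction (Proposition~\ref{prop:GMT2} and its variant Proposition~\ref{prop:GMT2bis}) directly on the spatial projection. This step --- obtaining extra symmetry information on a blow-up at a point of $\Sigma^{t_\circ}$ from nearby singular points at \emph{different} times, with no PDE in $t$ --- is the genuine new idea that lets the $(n-2)$ bound survive the union over $t$, and it is what your proposal is missing.
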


An important point here is that the dimension $n-2$ of the ``bad'' set $E$ is sharp.
Indeed, by well known examples in $\R^2$ (see e.g. \cite{Sak93}), one can construct solutions $u$ whose singular set contains a $(n-2)$-dimensional subset $E$ for which \eqref{expansion-obstacle} does not hold at any point in $E$.

As the reader will see from the proof, when $p_{2,x_\circ,t_\circ}(x)=\frac12 (x\cdot e)^2$ for some unit vector $e \in \R^n$ then the expansion \eqref{expansion-obstacle} can be written alternatively as 
\[u(x_\circ+x)=\frac12\big(e\cdot x+p(x)\big)^2+O\bigl(|x|^{5-\zeta}\bigr),\]
for a certain polynomial $p$ of degree 3 with no linear or constant terms.
Geometrically, this expansion ---together with a Lipschitz estimate that we will establish later--- yields that, around most singular points, the contact set is contained inside a set of the form $\bigl\{|e\cdot x+p(x)|\leq C|x|^{4-\zeta}\bigr\}$.
Thus, if the free boundary has a cusp, then at most points this cusp must be very thin.
It is worth noticing that the expression of $p$ (or equivalently, of $P_{4,x_\circ,t_\circ}$) is related to the curvature of the free boundary near a singular point.
In particular, whenever the solution is even with respect to the hyperplane $\{e\cdot x=0\}$, then $p\equiv0$ (and thus $P_{4,x_\circ,t_\circ}(x)\equiv \frac 1 2 (e\cdot x)^2$), since there are no curvature terms.
\\

To establish Theorem~\ref{thm-Schaeffer-intro} we need to introduce a variety of new ideas, combining Geometric Measure Theory tools, PDE estimates, several dimension reduction arguments, and new monotonicity formulas.
This is explained in more detail next.

\vspace{3mm}

\subsection{On the proofs of the main results}
\label{sect:idea pf}

\ \\ Let us give an overview of the main ideas introduced in this paper.

\subsubsection{From expansion to cleaning: a Sard-type approach}
The starting idea to prove Theorem~\ref{thm-Schaeffer-intro} is the following: denote by $\Sigma^t\subset\partial\{u^t>0\}\cap B_1$ the set of singular points of $u^t$. 
Assume that, for some fixed $t_\circ$, we have $x_\circ \in \Sigma^{t_\circ}$ and  $u^{t_\circ}$ has an expansion of the form
\begin{equation}\label{haiohoaih}
u^{t_\circ}(x_\circ+x)=P(x)+O(|x|^\lambda)
\end{equation}
for some  $\lambda\geq 2$  and some polynomial $P$ such that $\Delta P\equiv 1$.
Note that, since $u^{t_\circ}\geq 0$, the expansion above implies that $P(x)\geq -O(|x|^\lambda)$.
Hence, for any $r>0$, $P+Cr^\lambda$ is a solution to the obstacle problem in~$B_r$ with an empty contact set,
and $u^{t_\circ}(x_\circ+\cdot )$ is $O(r^\lambda)$-close to it.
This suggests that, thanks to the monotonicity assumption \eqref{assumption}, by slightly increasing the value of the parameter $t$ the contact set of $u^t$ inside $B_r(x_\circ)$ will disappear.

To make this argument quantitative we need to introduce a series of
delicate barrier constructions which actually depend on the fine structure of the singular point $x_\circ$, see Section~\ref{sec:ESC}.
In this way we are able to prove that, for a increment of $t$ of size $\delta t\sim r^{\lambda-1}$, the contact set of $u^{t+\delta t}$ is $B_r$ disappears:
more precisely we can show that, for some $C>0$,
\begin{equation}
\label{eq:cleaning type}
\Sigma^{t_\circ+Cr^{\lambda-1}}\cap B_{r}(x_\circ)=\varnothing\qquad \text{for $r>0$ sufficiently small}.
\end{equation}
As we will explain better below,  we can prove an expansion as in \eqref{haiohoaih} for $\lambda$ belonging a discrete set $\Lambda$ (see \eqref{setoffreq} for a definition of this set).

Hence, for $t_\circ \in (-1,1)$ and $x_\circ \in \Sigma^{t_\circ}\subset \partial \{u^{t_\circ}>0\}$, we define $\lambda_{x_\circ,t_\circ}$ to be  the maximal $\lambda\in \Lambda$  for which we can prove an expansion as in \eqref{haiohoaih}.
Then, for each $\lambda \in \Lambda$ and $t \in (-1,1)$, we 
define $\Sigma^{t,\lambda}$ as the set of $x_\circ\in \Sigma_{t}$ for which  $\lambda_{x_\circ,t} =\lambda$.  In other words, $\Sigma^{t,\lambda}$ is defined as the set of points at which we have a polynomial expansion up to order $\lambda\in \Lambda$ 
but not better. Then, a covering argument ``\`a la Sard'' yields
\begin{equation}\label{naoibgiaug}
\dim_{\mathcal H}\big(\{t\in (-1,1)\,:\, \Sigma^{t,\lambda}\neq \varnothing\} \big) \leq \frac{\dim_{\mathcal H}\big(\cup_{t \in (-1,1)}\Sigma^{t,\lambda}\big)}{\lambda-1}
\end{equation}
(see Proposition~\ref{prop:GMT4}(a) for a more refined statement).
In particular, if the right hand side is strictly less than $1$, then $\Sigma^{t,\lambda}=\varnothing$ for a.e. $t$.
On the other hand, if the right hand side is larger or equal to~$1$, then a coarea-type argument allows us to show that $\Sigma^{t,\lambda}$ is very small for a.e. $t$ (see Proposition~\ref{prop:GMT4}(b)).

In view of the given description of our approach, our goals are the following:
\begin{itemize}
\item[(1)] given a singular point, prove an expansion up to order $O(|x|^\lambda)$ with $\lambda\in \Lambda$ as large as possible;
\item[(2)] given $\lambda \in \Lambda$, estimate the dimension of $\cup_{t \in (-1,1)} \Sigma^{t,\lambda} $, i.e., the set of points  where the expansion stops at $\lambda$.
\end{itemize}

\subsubsection{A higher-order expansion at singular points: the case of a single solution}
\label{subsect:higher}
To understand these questions in a simplified situation, one can first look at the problem without the parameter $t$.
So, given a solution $u$ to the obstacle problem and a singular point $x_\circ$, we want to obtain a Taylor expansion around $x_\circ$ at the highest possible order.
This will require several steps, described below.

\subsubsection*{(a) Second blow-up: a cubic expansion at most points}
This first part is essentially contained in \cite{AlessioJoaquim}.
Recall first that, as proven in \cite{C-obst2}, for any singular point $x_\circ$ we have \eqref{Caffarelli}, that is, $p_{2,x_\circ}$ is tangent up to second order to $u(x_\circ +\,\cdot \,)$ at $0$.
Equivalently
$$
\frac{u(x_\circ+r\,\cdot\,)}{r^2}\to p_{2,x_\circ}\qquad \text{as }r\to 0,
$$
and $p_{2,x_\circ}$ is called the ``first blow-up'' of $u$ at $x_\circ$.

One can then catalog singular points according to the dimension of the kernel of $p_{2,x_\circ}$: given $m \in \{0,\ldots,n-1\}$, we say
 $$
x_\circ \in \Sigma_{m} \qquad \Longleftrightarrow\qquad {\rm dim}(\{p_{2,x_\circ}=0\})=m.
 $$
 We then consider the ``second blow-ups'', namely, the possible limits of the functions
 $$
 \tilde w_{2,r}(x):=\frac{w_2(rx)}{\|w_2(r\,\cdot\,)\|_{L^2(\partial B_1)}},\qquad \text{where }w_2:=u(x_\circ + \,\cdot\,)-p_{2,x_\circ}
 $$
 as $r \to 0.$
 As shown in \cite{AlessioJoaquim},
 $r\mapsto \|\nabla \tilde w_{2,r}\|_{L^2(B_1)}$ is monotonically increasing (equivalently, the so-called Almgren frequency formula is monotone on $w_2$).
 Thanks to this fact, setting $\lambda_2:=\lim_{r\to 0}\|\nabla \tilde w_{2,r}\|_{L^2(B_1)}$,
one can characterize all the blow-ups (namely, the accumulation points of $\{\tilde w_{2,r}\}$ as $r\to 0$): they are $\lambda_2$-homogeneous functions $q$\\
- either satisfying
  $$
  \Delta q=0 \quad \mbox{ in }  \R^n \,, \quad \mbox{ if } x_\circ \in \Sigma_m\,\text{ with }\, m \leq n-2,
  $$
 - or solving the Signorini problem
  $$
  \Delta q \leq 0,\quad q\Delta q \equiv 0,\quad \Delta q|_{\R^n\setminus L}=0, \quad  q| _L \ge 0 \, , \quad  \mbox{if }L := \{p_{2,x_\circ}=0\} \  \mbox{is a hyperplane (i.e., $x_\circ \in \Sigma_{n-1}$)}.
  $$
  
In the first case (i.e., $m\leq n-2$), since $q$ is harmonic in $\R^n$ it must be  $\lambda_2 \in \{2,3,4,\ldots\}$.
Also, following \cite{AlessioJoaquim}, one can show that $\lambda_2 \geq 3$ up to an ``anomalous set'' of dimension $m-1$ inside $\Sigma_m$.
This implies that, for $x_\circ$ outside this anomalous set,  we  have $u= p_{2,x_\circ} (x-x_\circ)+ O(|x-x_\circ|^3)$. 
Note that applying  this result to $u = u^{t_\circ}$ gives an expansion as in \eqref{haiohoaih} for $\lambda=3$.  
As we will explain in Subsection \ref{subsect:extra}(a) below, when $m\leq n-2$
we are able to improve \eqref{eq:cleaning type} so that this expansion suffices for proving our main theorem.

The real challenge is to understand the set $\Sigma_{n-1}$. In this case, since $q$ solves the Signorini problem, thanks to a classification result for 2D solutions and a dimension reduction argument, we can
show that
\begin{equation}\label{setoffreq}
  \lambda_2 \in \Lambda: = \{2,3,4,\ldots\}\cup\Bigl\{\frac{7}{2}, \frac{11}{2},\frac{15}{2},\ldots\Bigr\}
\end{equation}
outside a set of singular points of dimension $n-3$. Also it is easy to prove that, in this case, $\lambda_2\neq 2$; thus, except for a small set, the lowest possible value for $\lambda_2$ is $3$. The main challenge is now to improve this cubic expansion to higher order.

\subsubsection*{(b) Third blow-up: a delicate dichotomy}
From now on, we focus on points of $\Sigma_{n-1}$ where $\lambda_2=3$, i.e., $q$ is a 3rd-order homogeneous solution of Signorini (as one can see from the coming argument, the other cases can be considered as a particular case of this taking $\lambda_2=3$ and $p_{3,x_\circ }\equiv 0$ in the definition of $w_{3}$ below).
Two possibilities arise, depending  whether some accumulation point $q$ of $\{\tilde w_{2,r}\}$ as $r\to 0$ is harmonic or not. These two cases have to be analyzed separately.

{\it - The third blow-up is not harmonic: a new uniqueness result.}
By another dimension reduction argument, we can prove that the set where  $q$ is not harmonic has dimension $n-2$.
However this is not enough, and 
here comes one of the key arguments introduced in this paper: as explained in Subsection \ref{subsect:extra}(b), in order to obtain Schaeffer's conjecture in $\R^4$ we need to prove that the limit $q$ of $\tilde w_{2,r}$ is unique, and that this set is $(n-2)$-rectifiable. To accomplish, in Section~\ref{cubic-sec} we introduce new differential formulae, compactness and barrier arguments, and a delicate ODE-type lemma, that allow us to obtain the uniqueness of blow-ups (taking quotients of suitable qualities) even if we lack a monotonicity formula.

{\it - The third blow-up is harmonic: a monotonicity argument at nondegenerate points.}
Assume that there exists a harmonic accumulation point $q$. Then (thanks to a Monneau-type monotonicity formula) we can show that the limit $\lim_{r\downarrow 0} \tilde w_{2,r}$ exists (i.e., all accumulation points coincide) and that $u(x_\circ+\cdot) = p_{2,x_\circ} + p_{3,x_\circ} + o(|x|^3)$ for some 3-homogeneous harmonic polynomial $p_{3,x_\circ}$ ($p_{3,x_\circ}$ being a multiple of $q$). This suggests to iterate the previous blow-up procedure by defining
 $$
 \tilde w_{3,r}(x):=\frac{w_3(rx)}{\|w_3(r\,\cdot\,)\|_{L^2(\partial B_1)}},\qquad \text{where }\,w_3:=u-p_{2,x_\circ}-p_{3,x_\circ},
 $$
 and try to mimicking  the argument described above.
 Unfortunately,
in this case it is not true anymore that $r\mapsto \|\nabla \tilde w_{3,r}\|_{L^2(B_1)}$ is monotonically increasing.
Still, by a delicate bootstrapping argument (cf. Lemma~\ref{lem:E3B1}) we can prove that\footnote{More precisely, Lemma~\ref{lem:E3B1} gives the monotonocity of a ``truncated frequency function'', where the size of  $w_3(r\,\cdot\,)$ is corrected by adding a term $r^{\gamma}$ with $\gamma \in (3,4)$. In order to relate the (almost) monotonicity of this truncated frequency function to the one of $\|\nabla \tilde w_{3,r}\|_{L^2(B_1)},$ one needs to know that the size of  $w_3(r\,\cdot\,)$ dominates $r^{\gamma}$ for some $\gamma \in (3,4)$.}
$$
\text{$r\mapsto \|\nabla \tilde w_{3,r}\|_{L^2(B_1)}$ is almost increasing, provided $\|w_3(r\,\cdot\,)\|_{L^2(\partial B_1)} \gtrsim r^{4-\varepsilon}$ for some $\varepsilon>0$}.
$$
Therefore, under this nondegeneracy assumption, we can consider accumulation points of $\tilde w_{3,r}$ and prove that they are $\lambda_3$-homogeneous solution of Signorini for some $\lambda_3\in [3,4)$.
Then, by a dimension reduction argument (based again on the classification of 2D solutions), we can  prove that 
$\lambda_3 \in \{3,7/2\}=\Lambda\cap [3,4)$ in a set of dimension $n-2$, and the remaining points are of codimension~$3$.
So, to summarize:
\begin{itemize}
\item[(i)] for most points in $\Sigma_{n-1}$ where the limit of $\tilde w_{2,r}$ is harmonic,  the assumption $\|w_3(r\,\cdot\,)\|_{L^2(\partial B_1)} \gtrsim r^{4-\varepsilon}$ fails for every $\varepsilon>0$, except perhaps in a set of dimension $n-2$;
\item[(ii)]  if $\|w_3(r\,\cdot\,)\|_{L^2(\partial B_1)} \gtrsim r^{4-\varepsilon}$ holds, then the blow-up is $\lambda_3$-homogeneous for $\lambda_3 \in \{3,7/2\}$, except for a set of dimension at most $n-3$.
\end{itemize}
In order to prove Schaeffer's conjecture in $\R^4$, it is important for us to rule out the possibility that $\lambda_3=3$ in case (ii). This is highly nontrivial, and follows from the analysis performed in Section~\ref{cubic-sec}
to understand points where $\tilde w_{2,r}$ converges to a non-harmonic function.

\subsubsection*{(c) Fourth blow-up: monotonicity via a new ansatz and proof of \eqref{expansion-obstacle}}
To go further in our analysis and prove our main theorem, we now need to investigate the set of points where case (i) happens, namely $\|w_3(r\,\cdot\,)\|_{L^2(\partial B_1)} \gtrsim r^{4-\varepsilon}$ fails for every $\varepsilon>0$.
In this case  Almgren's monotonicity formula fails on $w_{3,r}$, and therefore a new approach needs to be found.

The key idea here is to replace $w_3=u(x_\circ+\cdot)-p_{2,x_\circ}-p_{3,x_\circ}$ with a much more refined Ansatz $W_3:=u(x_\circ+\cdot)-\anz_{x_\circ}$ which takes into account both the curvature of the free boundary and the non-negativity of the solution ---this 
is done in Definition~\ref{ansatz}. Doing so, and defining $\tilde W_{3,r}$ in analogy to what done before, we can show (again after a bootstrap argument) that 
$$
\text{$r\mapsto \|\nabla \tilde W_{3,r}\|_{L^2(B_1)}$ is almost increasing, provided $\|W_3(r\,\cdot\,)\|_{L^2(\partial B_1)} \gtrsim r^{5-\varepsilon}$ for some $\varepsilon>0$}
$$
(see Lemma \ref{lem:E4B1}).
Let us note that obtaining this almost monotonicity is much more involved than in the previous case (when we had $4-\varepsilon$ instead of $5-\varepsilon$). The reason is technical and rather delicate: we need to show that the size of $W_{3,r}$ controls the one of its gradient (see Lemma~\ref{lem:E3B5}), and this follows from a semiconvexity estimate along some rotational derivatives.

Once this almost monotonicity is proved, we can consider accumulation points of $\tilde W_{3,r}$ at all points where $\|W_3(r\,\cdot\,)\|_{L^2(\partial B_1)} \gtrsim r^{5-\varepsilon}$, and we prove that (up to a codimension 2 set) the only possible limit is a harmonic polynomial $p_{4,x_\circ}$ of degree $4$.
This leads to an expansion of the form
$$
w_4:=u(x_\circ+\cdot)-\anz_{x_\circ}-p_{4,x_\circ},\qquad w_4(x)=o(x^4).
$$
Hence, to finally obtain \eqref{expansion-obstacle} with $P_{4,x_\circ}=\anz_{x_\circ}+p_{4,x_\circ}$, we only need to prove that $w_4(x)=O(|x|^{5-\zeta})$ for all $\zeta>0$,  up to a set of dimension $n-2$.
This is again nontrivial: indeed, we can show that
 $r\mapsto \|\nabla \tilde w_{4,r}\|_{L^2(B_1)}$ is almost increasing   provided that $p_{4,x_\circ}$ vanishes on $\{p_{2,x_\circ}=0\}$ and $\|w_4(r\,\cdot\,)\|_{L^2(\partial B_1)} \gtrsim r^{5-\varepsilon}$ for some $\varepsilon>0$. Hence we need to ensure that these assumptions are satisfied in a large enough set, and for this we exploit some recent results on the size of the zero set of harmonic functions (see \cite{NV17}) and another dimension reduction argument. In this way, we conclude the validity of  \eqref{expansion-obstacle}.
 \smallskip 

It is important to remark here that the expansion \eqref{expansion-obstacle} up to order $5-\zeta$ is exactly what we need in order to prove Theorem~\ref{thm-Schaeffer-intro}.
Even if one could improve such an expansion to a higher order, the estimate on the singular set in Theorem~\ref{thm-Schaeffer-intro} would not change.
Indeed, the bounds on the sizes of the sets where the expansions stop before $5-\zeta$ would not improve, and the conclusion in Theorem~\ref{thm-Schaeffer-intro} would remain exactly the same (see also Remark \ref{remark:sharp}).

\subsubsection{From one solution to a monotone 1-parameter family of solutions}
Note that the analysis performed above holds only for {one solution}. 
If now we have an increasing {family of solutions} $\{u^t\}_{t\in(-1,1)}$, we need to understand for each $t$ the size of points where the expansion stops at some fixed order $\lambda \in \Lambda$. 

If one simply applies the previous analysis to each solution $u^t$ one would not be able to conclude. Indeed,  if for each solution the set $\Sigma^{t,\lambda}$ has dimension bounded by some $s\geq 0$, then a simple argument (using the structure of our problem) would show that their union over $t\in (-1,1)$ has dimension bounded by $s+1$. Unfortunately, this estimate would be absolutely too weak for our scope. Indeed, in order to prove our result, we need to show the following: if the analysis performed on a single solution implies that a set $\Sigma^{t,\lambda}$ has dimension bounded by $s$, then also $\cup_{t \in (-1,1)}\Sigma^{t,\lambda}$ has dimension bounded by $s$. In other words, the bound on the union should be exactly the same as the one obtained for each single set!

To achieve this, we have to exploit the fact that we have an increasing family $u^t$ of solutions to obtain very refined estimates on the possible blow-ups of a fixed solution at a free boundary point.
More precisely, the idea is the following: if a sequence of singular points $x_k\in \Sigma^{t_k}$ converges to $0 \in \Sigma^0$,
and if both solutions $u^{t_k}$ and $u^0$ have a Taylor expansion up to the same order $\lambda$ at these points, then this implies some extra information on the possible Taylor expansion of $u^0$ at $0$ (more precisely, this implies some symmetry properties on its higher order term). This analysis is performed in Section~\ref{sec:EG2B} and it introduces a complete series of new ideas and techniques with respect to \cite{AlessioJoaquim}, where only one fixed solution was considered. We want to emphasize that, with respect to the case of only one solution (where one can still deduce symmetry properties of blow-ups as a consequence of being an accumulation point of other singular points), in this case this analysis is made particularly delicate by the fact that we do not have any equation in $t$ relating the solutions: we only know that they are ordered and strictly increasing with respect to $t$.
Still, we are able to deduce some strong symmetry properties of blow-up at all points where the Almgren's frequency is continuous (see the results in Section~\ref{sec:EG2B}), and from these properties we obtain a very precise description of the structure of singular points.
This is then combined with a series of covering and dimension reduction arguments (see Sections~\ref{sec:GMT} and~\ref{sec:DRB}) to estimate the size of the singular points where blow-ups have few symmetries,
which allow us to show the desired dimensional bounds on $\cup_{t \in (-1,1)}\Sigma^{t,\lambda}$. {To our knowledge, this is the first dimension reduction argument for a 1-parameter family of solutions to elliptic PDEs, and we expect these ideas and techniques to be useful in many other problems.}

\subsubsection{Extra comments}
\label{subsect:extra}
The previous description syntheses well the main ideas behind our strategy, and what explained until now suffices for proving the Schaeffer conjecture in $\R^3$.
However, for the $\R^4$ case, other extra ideas (that we only briefly mentioned before) are required.
In particular, we need a more  refined analysis that depends on the type of singular point, having to distinguish two cases:

\subsubsection*{(a) The case of the ``lower dimensional strata''} For singular points in $\cup_{t\in(-1,1)}\Sigma_m^{t}$ with $m \leq n-2$, we know that the expansion \eqref{haiohoaih} stops at $\lambda=2$ at  ``anomalous points'', and these points have dimension bounded by $m-1$.
Hence, denoting this set of anomalous points by $\cup_{t\in(-1,1)}\Sigma_m^{t,2}$, and the remaining $m$-dimensional set (where the expansion stops at $\lambda=3$) by $\cup_{t\in(-1,1)}\Sigma_m^{t,3}$, applying \eqref{naoibgiaug} for $n=4$ and $m=2$ we get the trivial bound
 $$\dim_{\mathcal H}\big(\{t\in (-1,1)\,:\, \Sigma_m^{t,\lambda}\neq \varnothing \}\big)\le \frac{\dim_{\mathcal H}(\cup_{t\in(-1,1)}\Sigma_m^{t,\lambda})}{\lambda-1}=1,\qquad \text{for } \lambda=2,3.$$
To improve this estimate, we refine our barrier arguments and show that \eqref{eq:cleaning type} can be improved to $\Sigma^{t_\circ+r^{\lambda-\ep},i}_m\cap B_{r}(x_0)=\varnothing$ for any $\ep>0$, for $\lambda=2,3$. This increased speed at which the contact set clears near singular points for  ``lower strata'' is not difficult to prove, but is fundamental to establish  Schaeffer's conjecture in $\R^4$.

\subsubsection*{(b) The case of the ``top stratum''} A big difficulty arises from points in $\cup_{t\in (-1,1)}\Sigma_{n-1}^t$ where the expansion stops at $\lambda =3$.
More precisely, as mentioned  in Subsection \ref{subsect:higher}(b) when discussing the case where the third blow-up is not harmonic, there may exist a set $\cup_{t\in (-1,1)}\Sigma^{t,3}_{n-1}$ of dimension $n-2$ at which the expansion   $u^{t_\circ}(x) = p_{2,x_\circ,t_\circ}(x-x_\circ)+ O(|x-x_\circ|^3)$ is sharp ---i.e., the third derivatives of $u^{t_\circ}$ do not exist at those points. Then, if we use \eqref{naoibgiaug}  for $\lambda=3$ and $n=4$, we obtain  the (trivial) estimate $$\dim_{\mathcal H}\big(\{t\in (-1,1)\,:\, \Sigma_{n-1}^{t,3}\neq \varnothing \}\big)\le \frac{n-2}{\lambda-1}=1,$$ while to establish Schaeffer's conjecture in $\R^4$ we  need to prove $\HH^1\big(\{t\in (-1,1)\,:\, \Sigma^{t,3}\neq \varnothing \}\big) =0$. 
Since it is possible to construct examples where the set $\cup_{t\in (-1,1)}\Sigma^{t,3}_{n-1}$ is $(n-2)$-dimensional, there is no hope to improve its dimensional bound. 
In addition, unlike in the ``lower strata'', one can see that  \eqref{eq:cleaning type} for $\lambda=3$ is sharp at these points. 
Consequently, we  need a completely different  argument to conclude. 

Here the idea consists in  taking negative increments of $t$ ---which make  the contact set become thicker instead of thinner--- and use  barrier arguments to show that all free boundary points in a neighborhood become regular at a slightly enhanced speed (see Lemma~\ref{lem:cleaningpast3}). 
However, we can only take advantage of this improvement if we can prove that the set $\cup_{t\in (-1,1)}\Sigma_{n-1}^{t,3}$ is $(n-2)$-rectifiable (the information that its Hausdorff dimension is bounded by $n-2$ is not sufficient). 
This rectifiability result is crucial, and its proof relies on the existence of  $\lim_{r\downarrow 0} \tilde w_{2,r}$ in the non-harmonic case. As mentioned before, this fact requires completely new ideas with respect to the classical tools known in this kind of problems, and it is the focus of 
Section~\ref{cubic-sec}. 
It is worth observing that such arguments lead to new interesting results even when applied to the Signorini problem, see Appendix \ref{apb}.

\vspace{3mm}

\subsection{Organization of the paper}

\ \\ The paper is organized as follows.
In Section~\ref{sec:EFF} we introduce a series useful functionals that will be used in the proof of some of our monotonicity formulae.
In Section~\ref{sec:EMF1} we present some preliminary results that will be needed throughout the paper.
Then, in Section~\ref{sec:E3B} we develop our higher order analysis of singular points.
In Section~\ref{cubic-sec} we study in detail singular points at which the blow up of $u(x_\circ+\,\cdot\,)-p_{2,x_\circ}$ is 3-homogeneous (cf. Subsection~\ref{subsect:higher}).
In Section~\ref{sec:EG2B} we consider a 1-parameter family of solutions to the obstacle problem and study symmetry properties of their blow-ups.
In Section~\ref{sec:GMT} we prove a series of lemmas of geometric measure theory-type, and in Section~\ref{sec:DRB} we establish several dimension reduction results. Finally, 
in Section~\ref{sec:ESC} we prove our main result, Theorem~\ref{thm-Schaeffer-intro}, as well as Theorem \ref{cor-Hele-Shaw-intro}.
In addition, at the end of the paper we provide two appendices on the Signorini problem: one with some basic results that are needed throughout the paper, and a second one with new results (uniqueness and nondegeneracy of blow-ups at odd frequencies) that are a consequence of our analysis in Section~\ref{cubic-sec} and that we believe to be of independent interest.

\section{Useful functionals and formulae} \label{sec:EFF}

In all this section $r \in (0,1),$ and $w: B \rightarrow \R$ denotes an arbitrary $C^{1,1}$ function defined in a ball $B\subset \R^n$ (specified in each statement).
Throughout the paper we shall use the following dimensionless quantities:
\[
D(r,w) := r^{2-n} \int_{B_r} |\nabla w|^2 = r^2\int_{B_1} |\nabla w|^2(r\,\cdot\,),
\]
\[
H(r,w) := r^{1-n}\int_{\partial B_r} w^2 = \int_{\partial B_1} w^2(r\, \cdot\,).
\]
We also introduce here a useful notation for rescaling and normalization. Given $w: B \rightarrow \R$  and $r>0$ we define $w_r$ and $\tilde w_r$ as
\begin{equation}\label{defwr_tildewr}
w_r(x) : = w(rx) \qquad \mbox{and}\qquad \tilde w_r(x) : = \frac{w_r}{H(1,w_r)^{\frac 1 2}}  = \frac{w(r\,\cdot\, )}{H(r,w)^{\frac 1 2}}  .
\end{equation}
We start by computing the derivatives of $H$ and $D$.

\begin{lemma} \label{lem:EFF1}Let $w\in C^{1,1}(B_2)$. Then
\begin{equation}\label{thirda}
\left.\frac{d}{dr}\right|_{r=1}H(r,w) = 2\int_{\partial B_1}  w w_\nu =  2 \int_{B_1} w\Delta w + 2\int_{B_1}|\nabla w|^2.
\end{equation}
\end{lemma}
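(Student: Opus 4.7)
The plan is to prove the two equalities in \eqref{thirda} separately, each by a standard one-line computation, so this lemma is essentially a warm-up to fix notation.

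For the first equality, I would use the scaled form $H(r,w)=\int_{\partial B_1} w^2(r\cdot)$, which is the more convenient of the two expressions precisely because the domain of integration no longer depends on $r$. Differentiation under the integral sign (justified by $w\in C^{1,1}(B_2)$, which gives a locally uniform bound on the integrand) yields
\[
\frac{d}{dr}H(r,w)=\int_{\partial B_1} 2\,w(rx)\,\nabla w(rx)\cdot x\,d\sigma(x).
\]
Evaluating at $r=1$ and observing that on $\partial B_1$ the outward unit normal is $\nu(x)=x$, so $\nabla w\cdot x=w_\nu$, gives the first identity $2\int_{\partial B_1}w\,w_\nu$.

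For the second equality, I would apply the divergence theorem to the vector field $w\nabla w$ on $B_1$. Since $w\in C^{1,1}(B_2)$, the field $w\nabla w$ is Lipschitz on $\overline{B_1}$ with weak divergence
\[
\operatorname{div}(w\nabla w)=w\,\Delta w+|\nabla w|^2\in L^\infty(B_1),
\]
so the divergence theorem is valid and produces
\[
\int_{\partial B_1} w\,w_\nu=\int_{B_1} w\,\Delta w+\int_{B_1}|\nabla w|^2,
\]
which after multiplying by $2$ is exactly the second identity in \eqref{thirda}.

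There is no real obstacle here: the only mild technical point is justifying differentiation under the integral and the divergence theorem at the $C^{1,1}$ level, and both are immediate from the hypothesis $w\in C^{1,1}(B_2)$ (which makes $\nabla w$ Lipschitz and $\Delta w$ bounded, so all quantities involved are integrable and continuous up to the boundary).
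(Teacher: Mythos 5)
Your proof is correct, and it is the standard computation the paper is citing (differentiation under the integral sign in the scaled form of $H$, then the divergence theorem applied to $w\nabla w$); the justifications at the $C^{1,1}$ level are exactly as you state.
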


\begin{proof}
This is a standard computation, that can be found for instance in \cite{AlessioJoaquim}.
\end{proof}

\begin{lemma}\label{lem:EFF2} Let $w\in C^{1,1}(B_2)$. Then
\begin{equation}\label{first}
\left.\frac{d}{dr}\right|_{r=1} D(r,w) = 2\int_{\partial B_1} w_\nu ^2 - 2\int_{B_1} \Delta w \,(x\cdot \nabla w).
\end{equation}
\end{lemma}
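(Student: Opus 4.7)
The plan is to compute $\frac{d}{dr}D(r,w)|_{r=1}$ directly from the definition and then apply a Pohozaev/Rellich-type identity on $B_1$ to recast the resulting boundary term $\int_{\partial B_1}|\nabla w|^2$ into the desired form involving $w_\nu^2$ and $\Delta w\,(x\cdot\nabla w)$.

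First I would differentiate $D(r,w) = r^{2-n}\int_{B_r}|\nabla w|^2$ in $r$ using the product rule together with the coarea formula (or equivalently writing $\int_{B_r}|\nabla w|^2 = \int_0^r\int_{\partial B_\rho}|\nabla w|^2\,d\rho$). This gives
\[
\frac{d}{dr}D(r,w) = (2-n)r^{1-n}\int_{B_r}|\nabla w|^2 + r^{2-n}\int_{\partial B_r}|\nabla w|^2,
\]
so evaluating at $r=1$ yields
\[
\left.\frac{d}{dr}\right|_{r=1}D(r,w) = (2-n)\int_{B_1}|\nabla w|^2 + \int_{\partial B_1}|\nabla w|^2. \qquad (\ast)
\]

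Next I would derive a Rellich/Pohozaev identity on $B_1$ by computing the divergence of the two vector fields $\frac12|\nabla w|^2\, x$ and $(x\cdot\nabla w)\nabla w$. A short computation gives
\[
\operatorname{div}\!\left(\tfrac12|\nabla w|^2 x\right) = \nabla w\cdot D^2 w\cdot x + \tfrac{n}{2}|\nabla w|^2,
\]
\[
\operatorname{div}\!\bigl((x\cdot\nabla w)\nabla w\bigr) = |\nabla w|^2 + \nabla w\cdot D^2 w\cdot x + (x\cdot\nabla w)\Delta w.
\]
Integrating over $B_1$, using that $\nu=x$ on $\partial B_1$ (so $x\cdot\nabla w=w_\nu$ and $(x\cdot\nabla w)w_\nu = w_\nu^2$), and eliminating the common Hessian term yields
\[
\int_{\partial B_1} w_\nu^2 = \tfrac{2-n}{2}\int_{B_1}|\nabla w|^2 + \tfrac{1}{2}\int_{\partial B_1}|\nabla w|^2 + \int_{B_1}(x\cdot\nabla w)\,\Delta w,
\]
which I would rearrange as
\[
\int_{\partial B_1}|\nabla w|^2 = 2\int_{\partial B_1} w_\nu^2 - (2-n)\int_{B_1}|\nabla w|^2 - 2\int_{B_1}\Delta w\,(x\cdot\nabla w).
\]

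Substituting this into $(\ast)$, the two terms $(2-n)\int_{B_1}|\nabla w|^2$ cancel and we obtain exactly \eqref{first}. The main (minor) care needed is the $C^{1,1}$ regularity of $w$: since $\nabla w \in W^{1,\infty}$, the Hessian $D^2 w$ exists a.e.\ and the divergence computations above are justified in the distributional/a.e.\ sense, so all integrations by parts are legitimate. No step here is a real obstacle; the identity is essentially an algebraic consequence of Rellich's identity applied to $w$ on $B_1$ combined with the scaling of $D$.
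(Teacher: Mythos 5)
Your proof is correct. The computations in both divergence identities and the coarea differentiation check out, and the regularity remark is accurate (for $w\in C^{1,1}$, $\nabla w$ is Lipschitz, so $D^2w$ exists a.e.\ and is bounded, which is enough for the divergence theorem applied to the vector fields $\tfrac12|\nabla w|^2 x$ and $(x\cdot\nabla w)\nabla w$).

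Your route is, however, organized differently from the paper's. The paper works with the rescaled form $D(r,w)=r^2\int_{B_1}|\nabla w(r\,\cdot\,)|^2$, differentiates under the integral sign to obtain $D'(1)=2D(1)+2\int_{B_1}\nabla w\cdot D^2w\cdot x$, and then performs a single integration by parts on the Hessian term to land directly on \eqref{first}. You instead work with the form $r^{2-n}\int_{B_r}|\nabla w|^2$, differentiate via coarea (which produces the boundary term $\int_{\partial B_1}|\nabla w|^2$ rather than the Hessian term), and then appeal to the classical Rellich--Pohozaev identity to trade $\int_{\partial B_1}|\nabla w|^2$ for $\int_{\partial B_1}w_\nu^2$ and $\int_{B_1}\Delta w\,(x\cdot\nabla w)$. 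The two arguments are essentially the same integration by parts in different clothing, so neither is more powerful; the paper's is a touch shorter (one divergence computation instead of two), while yours makes the link to the standard Rellich identity explicit, which is conceptually cleaner for a reader who already knows that identity.
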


\begin{proof}
For convenience, we set $D(r):=D(r,w)$. It holds
\[
\begin{split}
D'(1) &=  \sum_{i,j} \int_{B_1} 2w_i x_j  w_{ij}  + 2D(1) = \sum_{i,j}\int_{\partial B_1}2w_i x_j w_{j} \nu_i  -\sum_{i,j} \int_{B_1}  2(w_{i}x_j)_i w_j +2D(1)
\\
&= 2\int_{\partial B_1} w_\nu ^2 - 2\int_{B_1} \Delta w \,(x\cdot \nabla w) - 2\int_{B_1} |\nabla w|^2 +2D(1)
= 2\int_{\partial B_1} w_\nu ^2 - 2\int_{B_1} \Delta w \,(x\cdot \nabla w).
\end{split}
\]
\end{proof}

Let us introduce the functions
\begin{equation}
\label{eq:Almgren}
\phi(r,w)  : = \frac{D(r,w)}{H(r,w)} , \qquad  \phi^\gamma(r,w)  : = \frac{D(r,w) + \gamma r^{2\gamma}}{H(r,w)+ r^{2\gamma}}.
\end{equation}
The quantity $\phi$ is often known as the Almgren frequency function.
Instead, $\phi^\gamma$ is a new truncated frequency function, that to our knowledge has never been introduced before.
It will be used throughout the paper and will be extremely useful in our arguments, as it will allow us to deal with the cases when $H$ may be too small.\footnote{In the past, other kind of truncations have been introduced (see in particular \cite{CSS08}), but they do not work in our case due to the fact that $D$ is not equal to (a multiple of) the derivative of $H$, as it is instead the case for the Signorini problem.}
The choice of the constant $\gamma$ in front of $r^{2\gamma}$ in the numerator is important to make the following lemma work.

\begin{lemma} \label{lem:EFF4} Let $w\in C^{1,1}(B_1)$. Then for $r\in (0,1)$ we have
\[
 \frac{d}{dr} \phi(r,w) \ge \frac{2}{r}  \frac{ \big( r^{2-n} \int_{B_r}w\Delta w\big)^2  + E(r,w) }{ \big( H(r,w)\big)^2 }
\]
and
\[
 \frac{d}{dr} \phi^\gamma(r,w) \ge \frac{2}{r}  \frac{ \big( r^{2-n} \int_{B_r}w\Delta w\big)^2  + E^\gamma(r,w) }{ \big( H(r,w) + r^{2\gamma}\big)^2 },
\]
where
\begin{equation}\label{wq:Erw}
E(r,w) :=   \left(r^{2-n} \int_{B_r} w\Delta w\right) D(r,w) - \left(r^{2-n} \int_{B_r} (x\cdot \nabla w)\Delta w\right)H(r,w)
\end{equation}
and
\begin{equation}\label{eq:Erwgamma}
E^\gamma(r,w) :=   \left(r^{2-n} \int_{B_r} w\Delta w\right) \big( D(r,w)+  \gamma r^{2\gamma}  \big) - \left(r^{2-n} \int_{B_r} (x\cdot \nabla w)\Delta w\right) \big(H(r,w) +r^{2\gamma} \big).
\end{equation}
\end{lemma}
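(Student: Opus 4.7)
The plan is to compute both $\phi'$ and $(\phi^\gamma)'$ via the quotient rule, reduce the resulting boundary integrals to volume integrals through Green's identity, and apply a Cauchy--Schwarz inequality on $\partial B_r$; for the truncated frequency an additional perfect-square rearrangement (enabled by the specific coefficient $\gamma$ in front of $r^{2\gamma}$) will close the argument.

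First, I would derive workable formulas for $H'(r,w)$ and $D'(r,w)$. Rescaling $w \mapsto w(r\,\cdot\,)$ reduces to $r=1$, and applying Lemma~\ref{lem:EFF1} (combined with the divergence theorem to convert the boundary integral to a volume one) together with Lemma~\ref{lem:EFF2} yields
\[
H'(r,w) = \tfrac{2}{r}(D+J_1), \qquad D'(r,w) = 2r^{2-n}\!\!\int_{\partial B_r}\!\!w_\nu^2\,dS - \tfrac{2}{r}J_2,
\]
where $J_1 := r^{2-n}\!\int_{B_r}\!w\Delta w$ and $J_2 := r^{2-n}\!\int_{B_r}\!(x\cdot\nabla w)\Delta w$. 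Plugging into $\phi' = (D'H - DH')/H^2$, the Cauchy--Schwarz inequality $\bigl(\int_{\partial B_r}\!w w_\nu\bigr)^2 \leq \int_{\partial B_r}\!w^2 \cdot \int_{\partial B_r}\!w_\nu^2$ combined with the identity $\int_{\partial B_r}\!w w_\nu = r^{n-2}(D+J_1)$ (from Green's identity) gives the lower bound $2r^{2-n}H\int_{\partial B_r}\!w_\nu^2 \geq \tfrac{2}{r}(D+J_1)^2$. Expanding this square and cancelling $D^2$ produces the first claim, $\phi' \geq \tfrac{2}{r}(J_1^2 + E)/H^2$, immediately.

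For $\phi^\gamma$, I would set $D^\gamma := D + \gamma r^{2\gamma}$ and $H^\gamma := H + r^{2\gamma}$, and expand the numerator of $(\phi^\gamma)' = ((D^\gamma)' H^\gamma - D^\gamma (H^\gamma)')/(H^\gamma)^2$. The result organizes into the previous $(D'H - DH')$ expression plus explicit correction terms coming from the $r^{2\gamma}$-perturbation. Subtracting off $\tfrac{2}{r}(J_1^2 + E^\gamma)$ (which differs from $\tfrac{2}{r}(J_1^2 + E)$ by $\tfrac{2r^{2\gamma}}{r}(\gamma J_1 - J_2)$), the gap reduces to
\[
2r^{2\gamma+2-n}\!\!\int_{\partial B_r}\!\!w_\nu^2\,dS \;+\; \tfrac{2\gamma r^{2\gamma}}{r}\bigl(\gamma H - 2(D+J_1)\bigr).
\]
The decisive step is to multiply through by $H$ (so Cauchy--Schwarz can be invoked on $\int w_\nu^2$ without division by $H$); the Cauchy--Schwarz lower bound $H\!\int\!w_\nu^2 \geq r^{n-3}(D+J_1)^2$ then turns the whole expression into
\[
\tfrac{2r^{2\gamma}}{r}\bigl[(D+J_1)^2 + \gamma^2 H^2 - 2\gamma H(D+J_1)\bigr] = \tfrac{2r^{2\gamma}}{r}(D+J_1 - \gamma H)^2 \geq 0.
\]
The specific coefficient $\gamma$ in the numerator of $\phi^\gamma$ is exactly what is needed to complete this square; any other constant would leave a non-sign-definite remainder.

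The main obstacle is really just carrying out the $\phi^\gamma$ algebra carefully and spotting the correct rearrangement into a square. The degenerate case $H(r,w)=0$ needs a separate note: then $w\equiv 0$ on $\partial B_r$, so Green's identity forces $D+J_1=0$ and the inequality collapses to the trivial $2r^{2\gamma+2-n}\int_{\partial B_r}w_\nu^2\geq 0$. Modulo this degeneracy, the two bounds follow by the same two ingredients (Cauchy--Schwarz + Green), with the extra algebraic trick only being required for the truncated version.
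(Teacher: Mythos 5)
Your proof is correct, and it takes a genuinely different algebraic route from the paper's. The paper differentiates $\log\phi^\gamma$ rather than $\phi^\gamma$ itself, so that the numerator organizes (after scaling to $r=1$) into a single term $X^2+\mathrm{rest}$ where
\[
X^2 = \Bigl(\int_{\partial B_1}w_\nu^2+\gamma^2 a^{2\gamma}\Bigr)\Bigl(\int_{\partial B_1}w^2+a^{2\gamma}\Bigr)-\Bigl(\int_{\partial B_1}ww_\nu+\gamma a^{2\gamma}\Bigr)^2 \ge 0
\]
is \emph{one} Cauchy--Schwarz/H\"older inequality in the augmented space $L^2(\partial B_1)\oplus\R$ (the pair $(w_\nu,\gamma a^\gamma)$ against $(w,a^\gamma)$), and $\mathrm{rest}$ is identified with $J_1^2+E^\gamma$; both inequalities of the lemma then drop out at once. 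Your route instead applies the quotient rule directly, separates the numerator into the ``$\gamma=\infty$'' piece $D'H-DH'$ and the $r^{2\gamma}$-corrections, uses the boundary Cauchy--Schwarz $\bigl(\int_{\partial B_r}ww_\nu\bigr)^2\le\int_{\partial B_r}w^2\int_{\partial B_r}w_\nu^2$ twice --- once to control the main piece, once (after multiplying by $H$) to absorb $\int_{\partial B_r}w_\nu^2$ in the correction piece --- and then closes with the explicit perfect square $\frac{2r^{2\gamma}}{r}(D+J_1-\gamma H)^2\ge 0$. The trade-offs: your version makes transparent \emph{why} the coefficient $\gamma$ in $D^\gamma=D+\gamma r^{2\gamma}$ is the unique one that works (it is precisely the one completing the square), which the paper only asserts without explanation; on the other hand, the paper's augmented-H\"older packaging handles the truncated and untruncated cases uniformly and avoids the ``multiply by $H$'' step, which in your argument forces the separate (if trivial) treatment of the degenerate case $H(r,w)=0$ that you correctly flag.
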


\begin{proof}
We first observe that, for $r\in (0,1)$, the formula for $\phi$ can be deduced from the one for $\phi^\gamma$ letting $\gamma\uparrow+\infty$.

By scaling it is enough to compute, for $a  >0$,
\[
\left.\frac{d}{dr}\right|_{r=1} \log  \phi^{\gamma,a}(r,w),   \qquad\mbox{for}\quad \phi^{\gamma,a}(r,w):=   \frac{D(r,w) + \gamma (a r)^{2\gamma}}{ H(r,w) + (a r)^{2\gamma} }.
\]
Using Lemmas~\ref{lem:EFF1} and~\ref{lem:EFF2} we obtain
\[
 \frac{\frac{d}{dr}|_{r=1} \big(D(r,w) +\gamma(a r)^{2\gamma} \big)}{D(1,w) +\gamma a^{2\gamma} } =  2\frac{ \int_{\partial B_1} w_\nu ^2 - \int_{B_1} (x\cdot \nabla w)  \Delta w +  \gamma^2a^{2\gamma}}{  \int_{\partial B_1} w w_\nu - \int_{B_1} w \Delta w + \gamma a^{2\gamma}  }
\]
and
\[
 \frac{ \frac{d}{dr}|_{r=1}\big( H(r,w) + a r^{2\gamma} \big)}{ H(1,w) +  a^{2\gamma} } = 2 \frac{  \int_{\partial B_1} ww_\nu   + \gamma a^{2\gamma} }{  \int_{\partial B_1} w^2+ a^{2\gamma}}.
\]
Therefore,
\[
\begin{split}
\frac{d}{dr} \log \phi^{\gamma,a}(1,w)& =  \frac{\frac{d}{dr}|_{r=1} \big(D(r,w) +\gamma (a r)^{2\gamma} \big)}{D(1,w) + \gamma a^{2\gamma} } -  \frac{ \frac{d}{dr}|_{r=1}\big( H(r,w) + ar^{2\gamma} \big)}{ H(1,w) +  a^{2\gamma} }
\\
&=2\frac{X^2 +  {\rm rest}   }{ \big(D(1,w) + \gamma a^{2\gamma}\big)\big( H(1,w) + a^{2\gamma}\big) },
\end{split}
\]
where
\[
X^2 :=   \bigg(\int_{\partial B_1} w_\nu^2  +  \gamma^2a^{2\gamma} \bigg)\bigg(\int_{\partial B_1} w^2+ a^{2\gamma} \bigg) - \bigg( \int_{\partial B_1} ww_\nu+ \gamma a^{2\gamma} \bigg)^2
\ge 0
\]
(the non-negativity of $X^2$ follows by H\"older inequality), 
and
\[
{\rm rest} := \left(\int_{ B_1} w\Delta w\right)\left( \int_{\partial B_1} ww_\nu +  \gamma a^{2\gamma}  \right) - \left(\int_{ B_1} (x\cdot\nabla w)\Delta w\right)\left(  \int_{\partial B_1} w^2 +a^{2\gamma} \right).
\]
Using again Lemma~\ref{lem:EFF1} we have
\[
{\rm rest} =  \left(\int_{B_1} w\Delta w\right)^2  + \left(\int_{B_1} w\Delta w\right) \big( D(1,w)+  \gamma a^{2\gamma}  \big) -
\left( \int_{B_1} (x\cdot \nabla w)\Delta w\right) \big(H(1,w) +a^{2\gamma} \big).
\]
By scaling (applying the previous formulas with $w$ replaced by $w_r =w(r\,\cdot\,)$ and $a$ replaced by~$r$) we obtain
\[
 \frac{d}{dr}\log \phi^\gamma(r,w) \ge \frac{2}{r}  \frac{ \big( r^{2-n} \int_{B_r} w\Delta w\big)^2  + E^\gamma(r,w) }{ \big(D(r,w) +\gamma r^{2\gamma}\big)\big( H(r,w) + r^{2\gamma}\big) },
\]
where $E^\gamma$ is defined in \eqref{eq:Erwgamma}. 
Since $\frac{d}{dr}\log \phi^\gamma(r,w)=\phi^\gamma(r,w)^{-1}\frac{d}{dr} \phi^\gamma(r,w)$,
the lemma follows immediately by recalling the definition of $\phi^\gamma$ in \eqref{eq:Almgren}.
\end{proof}

\section{Preliminaries: First and second blow-up analysis} \label{sec:EMF1}

In this section we collect some known results and basic tools that will be used throughout the paper.
Let $u: B_1  \rightarrow \R$ be a solution to the obstacle problem
\begin{equation}\label{eq:UELL}
\Delta u = \chi_{\{u>0\}}\quad \text{and}\quad u \geq 0\qquad   \mbox{in } B_1.
\end{equation}
By the classical theory of Caffarelli on the obstacle problem \cite{C-obst, C-obst2},  any solution $u$ of \eqref{eq:UELL} with $0\in \partial \{u>0\}$ satisfies
\begin{equation}\label{optimalreg+nondeg}
\|u\|_{C^{1,1}(B_{1/2})} \le C   \qquad \mbox{and}\qquad \sup_{B_r(0)} u\ge cr^2 \quad \forall \,r\in (0,{\textstyle \frac12}),
\end{equation}
where $C,c>0$ are positive dimensional constants. Moreover, points of the {\em free boundary} $\partial \{u>0\}$ can be split into two classes:
\begin{itemize}

\item  {\em Regular points}: $x_\circ\in \partial \{u>0\}$ is a regular point if
\[\lim_{r\to 0} r^{-2}u(x_\circ+ rx) = \frac 1 2 \big(\max\{0, \boldsymbol e\cdot x\}\big)^2\] for some $\boldsymbol e\in \mathbb S^{n-1}$.

\vspace{1mm}

\item {\em Singular points}: $x_\circ\in \partial \{u>0\}$ is a singular point if
\[
p_{2,x_\circ}(x) := \lim_{r\to 0} r^{-2}u(x_\circ+ rx)
\]
exists and  $p_{2,x_\circ}$ is a quadratic polynomial belonging to the set
\[\mathcal P := \big\{ \mbox{convex $2$-homogeneous polynomials $p$ with $\Delta p \equiv 1$} \big\}.\]
\end{itemize}

It is well known that the free boundary is analytic in a neighborhood of regular points. So, the main goal is to understand the structure of singular points.

When $x_\circ= 0$ is a singular point, we will simplify the notation $p_{2,x_\circ}$ to $p_2$.
A well known result due to Caffarelli is the following estimate at singular points.

\begin{lemma}\label{lem:EG2B1}
There exists a modulus of continuity $\omega:\R^+\to \R^+$, depending only on the dimension $n$, such that if $u$ is a solution of the obstacle problem \eqref{eq:UELL} and $0$ is a singular free boundary  point, then
\[
\|u-p_2\|_{L^\infty(B_r)} \le r^2 \omega(r) , \qquad \forall \,r\in (0,1).
\]
\end{lemma}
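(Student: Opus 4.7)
I would prove this classical estimate of Caffarelli by a compactness-and-contradiction argument, relying on the uniqueness of the quadratic blow-up at singular points (which, in modern expositions, follows from Monneau's monotonicity formula). The overall strategy is to rescale by the ``bad'' radius to turn a failure of the modulus into a failure on a fixed ball, and then use compactness of both the solution class and of $\mathcal P$ to pass to a limit in which a Monneau-type rigidity forces equality.

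Concretely, suppose no dimensional modulus works. Then there exist $\varepsilon_0>0$, a sequence $\{u_k\}$ of solutions of \eqref{eq:UELL} having $0$ as a singular free-boundary point, and radii $r_k\downarrow 0$ with $\|u_k-p_2^{u_k}\|_{L^\infty(B_{r_k})}>\varepsilon_0\,r_k^2$. Set $\tilde u_k(x):=r_k^{-2}u_k(r_kx)$; using the $2$-homogeneity of $p_2^{u_k}$ this becomes $\|\tilde u_k-p_2^{u_k}\|_{L^\infty(B_1)}>\varepsilon_0$. Each $\tilde u_k$ solves the obstacle problem on $B_{1/r_k}$, and by \eqref{optimalreg+nondeg} the family $\{\tilde u_k\}$ is uniformly bounded in $C^{1,1}_{\mathrm{loc}}(\R^n)$; along a subsequence, $\tilde u_k\to u_\infty$ in $C^1_{\mathrm{loc}}(\R^n)$, with $u_\infty$ a global nonnegative solution of the obstacle problem. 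The polynomials $p_2^{u_k}$ lie in the compact subset $\{p\in\mathcal P:\|p\|_{L^\infty(B_1)}\le C\}$ of a finite-dimensional space (the $L^\infty$ bound follows from $p_2^{u_k}\ge 0$, $\Delta p_2^{u_k}\equiv 1$, and the uniform bound on $u_k$), so along a further subsequence $p_2^{u_k}\to p_\infty\in\mathcal P$. Passing to the limit yields $\|u_\infty-p_\infty\|_{L^\infty(B_1)}\ge \varepsilon_0$.

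The heart of the proof is to derive a contradiction by showing $u_\infty\equiv p_\infty$ in $\R^n$. For this I would invoke Monneau's monotonicity formula: for each $k$ the quantity
\[
M_k(r):=r^{-n-3}\int_{\partial B_r}(u_k-p_2^{u_k})^2
\]
is nondecreasing in $r$, and the $L^2$-characterization of the blow-up gives $M_k(0^+)=0$. A direct scaling identity shows that the Monneau functional for $(\tilde u_k, p_2^{u_k})$ at radius $\rho$ equals $M_k(r_k\rho)$, which by monotonicity is dominated by $M_k(\sigma)$ for every fixed $\sigma>0$ (once $k$ is large enough that $r_k\rho\le\sigma$). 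Taking $k\to\infty$ gives the limiting monotone functional
\[
\tilde M_\infty(\rho):=\rho^{-n-3}\int_{\partial B_\rho}(u_\infty-p_\infty)^2\le \limsup_{k\to\infty}M_k(\sigma)
\]
for every $\rho,\sigma>0$. One then uses that $p_\infty$ is necessarily the quadratic blow-up of $u_\infty$ at $0$ — a continuity property of the map $u\mapsto p_2^u$ that itself follows from Monneau's inequality and the compactness of $\mathcal P$ — to send $\sigma\to 0$ and conclude $\tilde M_\infty(\rho)=0$ for all $\rho>0$. This forces $u_\infty\equiv p_\infty$, contradicting $\|u_\infty-p_\infty\|_{L^\infty(B_1)}\ge\varepsilon_0$ and proving the lemma.

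The main obstacle is precisely this last interchange of limits: Monneau applied to each individual $u_k$ only yields $M_k(0^+)=0$ at a rate that \emph{a priori} depends on $k$, so one cannot naively commute $\lim_{r\to 0}$ with $\lim_{k\to\infty}$. The remedy is to combine (i) the monotonicity of each $M_k$ in $r$, (ii) the uniform $C^{1,1}$ bound, which upgrades $L^2$ smallness of $u_k-p_2^{u_k}$ on a sphere to $L^\infty$ smallness on the enclosed ball, and (iii) the continuous dependence of the blow-up $p_2^u$ on $u$, itself a consequence of Monneau's inequality. Together these produce the desired dimensional modulus $\omega$.
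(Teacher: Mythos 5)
Your overall strategy---compactness plus Monneau---is the standard modern route to Caffarelli's modulus of continuity (the paper itself does not give a proof, only citing \cite{C-obst2} and the quantitative improvements in \cite{CSV17,AlessioJoaquim}), and the setup through the inequality $\tilde M_\infty(\rho)\le\limsup_k M_k(\sigma)$ is correct. However, the step where you ``send $\sigma\to 0$'' has a genuine gap, and you have misidentified what is needed to close it. The quantity $\limsup_k M_k(\sigma)$ lives on the \emph{fixed} sphere $\partial B_\sigma$, so its behavior as $\sigma\to 0$ is controlled by the \emph{unrescaled} limit $v:=\lim_k u_k$ (which exists along a further subsequence, by the uniform $C^{1,1}$ bound and $u_k(0)=0=\nabla u_k(0)$), not by the rescaled blow-up limit $u_\infty$. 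After this further extraction one gets $\limsup_k M_k(\sigma)=M_v^{p_\infty}(\sigma)$, and to conclude $M_v^{p_\infty}(\sigma)\to 0$ you need two things: (a) that $0$ is a \emph{singular} free-boundary point of $v$, and (b) that $p_\infty=p_{2,v}$. You address neither. Saying ``$p_\infty$ is the blow-up of $u_\infty$'' is a true statement, but it bounds $\tilde M_\infty$ only from \emph{below} (via Monneau for $u_\infty$ it gives $\tilde M_\infty(0^+)=0$, hence $\tilde M_\infty\ge 0$), which does nothing toward the needed upper bound $\tilde M_\infty\equiv 0$.

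To fill the gap: for (a) one has to invoke the Weiss monotonicity formula and the energy gap $W_2(0^+,\cdot)\in\{\alpha_n^{\mathrm{reg}},\alpha_n^{\mathrm{sing}}\}$ between regular and singular points; since $W_2(r,u_k)\ge W_2(0^+,u_k)=\alpha_n^{\mathrm{sing}}$ and $W_2(r,u_k)\to W_2(r,v)$, one deduces $W_2(0^+,v)\ge\alpha_n^{\mathrm{sing}}$, so $0$ is singular for $v$. This is an additional tool you never mention. For (b), your intuition that ``continuity of $u\mapsto p_{2,u}$ follows from Monneau'' is in fact correct, but it is not automatic and must be argued: once (a) is known, apply Monneau to each $u_k$ with the \emph{fixed} target polynomial $p_{2,v}\in\mathcal P$, giving $M_{u_k}^{p_{2,v}}(r)\ge M_{u_k}^{p_{2,v}}(0^+)=\|p_{2,u_k}-p_{2,v}\|_{L^2(\partial B_1)}^2$; then let $k\to\infty$ (so the LHS $\to M_v^{p_{2,v}}(r)$) and finally $r\to 0$ (so the LHS $\to 0$), yielding $p_{2,u_k}\to p_{2,v}$ and hence $p_\infty=p_{2,v}$. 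Without (a) one cannot even write $M_v^{p_{2,v}}$, and without the explicit argument (b) the claim remains circular, since in the paper continuity of the blow-up map (Lemma~\ref{lem:EG2B1bis}) is itself \emph{derived from} the present Lemma~\ref{lem:EG2B1}. So the approach is salvageable, but as written it is incomplete.
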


\begin{proof}
This result, with an abstract (dimensional) modulus of continuity $\omega$, is contained in \cite[Theorem 8]{C-obst2}. 
A stronger quantitative version of the estimate (with independent proofs) giving an explicit $C|\log r|^{-\ep}$ modulus of continuity is given in \cite{CSV17, AlessioJoaquim}.
\end{proof}

\begin{remark}\label{rem:Esigna}  Let $p\in \mathcal P$. Since  $\Delta u = \Delta p= 1$  in $\{u>0\}$, we have
\begin{equation}\label{eq:wlapw}
(u-p)\Delta (u-p)   =  p\chi_{\{u=0\}} \ge 0.
\end{equation}
Similarly,
\begin{equation}\label{eq:ZwLapw}
x\cdot \nabla (u-p)\, \Delta (u-p)= x\cdot \nabla p\chi_{\{u=0\}} = 2p\chi_{\{u=0\}}  \ge 0.
\end{equation}
\end{remark}

We recall Weiss' monotonicity formula (proved in \cite{W99} for $\lambda=2$, and in \cite{AlessioJoaquim} in the general case) and a useful consequence of~it.

\begin{lemma}[Weiss' formula]\label{LOClem:Weiss}
Let $u:B_1 \to [0,\infty)$ be a solution of  \eqref{eq:UELL}, and let $0$ be a singular point. Given $p \in \mathcal P$, set $w:= u-p$. Also, for $\lambda>0$ set
\[W_{\lambda} (r, w) :=   r^{-2\lambda}  \big( D(r,w)-   \lambda H(r,w)\big).\] Then:

{\rm (a)} For all $\lambda\ge 2$
\[
\frac{d}{dr} W_\lambda(r,w) \ge2 r^{-2\lambda-1} \int_{\partial B_r} (x\cdot \nabla w -\lambda w)^2 \ge0.
\]

{\rm (b)}  $W_2(0^+,w) =0$ and
\begin{equation}\label{eq:Dge2H}
D(r,w)- 2H(r,w) \ge 0, \quad\forall \,r\in (0,1).
\end{equation}
\end{lemma}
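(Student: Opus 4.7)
For part (a), I would compute $\frac{d}{dr} W_\lambda(r,w)$ by differentiating $W_\lambda(r,w) = r^{-2\lambda}\bigl(D(r,w) - \lambda H(r,w)\bigr)$ directly and applying Lemmas \ref{lem:EFF1} and \ref{lem:EFF2} (reduced to general $r$ by scaling, as is done implicitly in the proof of Lemma \ref{lem:EFF4}). To convert $\int_{\partial B_r}|\nabla w|^2$ into the radial quantity $\int_{\partial B_r} w_\nu^2$, I would also use the standard Rellich/Pohozaev identity
\[
r\int_{\partial B_r}|\nabla w|^2 = 2r\int_{\partial B_r} w_\nu^2 - 2\int_{B_r}(x\cdot \nabla w)\Delta w - (2-n)\int_{B_r}|\nabla w|^2.
\]
The combinatorial guiding principle is that $W_\lambda'$ must vanish identically on any $\lambda$-homogeneous function (for which $x\cdot\nabla w = \lambda w$), so the boundary contributions should organize into the perfect square $(x\cdot\nabla w - \lambda w)^2$, while the bulk terms $\int w\Delta w$ and $\int(x\cdot\nabla w)\Delta w$ should coalesce into $-(x\cdot\nabla w - \lambda w)\Delta w$. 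Using $x = r\nu$ on $\partial B_r$, the algebra yields exactly this identity (up to an overall positive power of $r$).

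For the sign of the bulk contribution, I would use that $\Delta w = \Delta u - \Delta p = -\chi_{\{u=0\}}$, and that on $\{u=0\}$ one has $\nabla u = 0$ and $u = 0$ a.e., so $w = -p$ and $\nabla w = -\nabla p$ there. By $2$-homogeneity of $p$ (Euler's identity $x\cdot\nabla p = 2p$),
\[
(x\cdot\nabla w - \lambda w)\Delta w\big|_{\{u=0\}} = (\lambda - 2)p \cdot (-1) = (2-\lambda)\,p \leq 0 \qquad\text{for } \lambda\geq 2,\ p\in\mathcal P,
\]
so $-\int_{B_r}(x\cdot\nabla w - \lambda w)\Delta w = (\lambda-2)\int_{B_r\cap\{u=0\}} p \geq 0$. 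Combined with the manifestly non-negative squared boundary term, this gives the monotonicity $\frac{d}{dr}W_\lambda(r,w)\geq 0$ together with the explicit lower bound in the statement.

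For part (b), I would show $W_2(0^+,w) = 0$ via a blow-up argument. Setting $\tilde w_r(x) := w(rx)/r^2 = \tilde u_r(x) - p(x)$ with $\tilde u_r(x) := u(rx)/r^2$, Lemma \ref{lem:EG2B1} gives $\|\tilde u_r - p_2\|_{L^\infty(B_1)}\leq \omega(r)\to 0$, so $\tilde w_r \to q := p_2 - p$ uniformly on $\bar B_1$. The uniform $C^{1,1}$ bound \eqref{optimalreg+nondeg} applied on a slightly larger ball, together with standard interpolation, upgrades this to $\tilde w_r \to q$ in $C^1(\bar B_1)$. A direct rescaling computation gives $W_2(r,w) = \int_{B_1}|\nabla\tilde w_r|^2 - 2\int_{\partial B_1}\tilde w_r^2$, and passing to the limit,
\[
W_2(0^+,w) = \int_{B_1}|\nabla q|^2 - 2\int_{\partial B_1} q^2.
\]
Now $q = p_2 - p$ is a $2$-homogeneous polynomial with $\Delta q = \Delta p_2 - \Delta p = 0$, and Euler's identity gives $q_\nu = 2q$ on $\partial B_1$, so the divergence theorem yields $\int_{B_1}|\nabla q|^2 = \int_{\partial B_1} q q_\nu - \int_{B_1} q\Delta q = 2\int_{\partial B_1}q^2$. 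Hence the limit is $0$. The monotonicity from part (a) with $\lambda = 2$ then gives $W_2(r,w)\geq W_2(0^+,w) = 0$ for every $r\in(0,1)$, which is exactly \eqref{eq:Dge2H}.

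The main obstacle is the bookkeeping in part (a): one must carefully combine the outputs of Lemmas \ref{lem:EFF1}--\ref{lem:EFF2} with the Rellich identity so that all the boundary contributions collapse into the perfect square $(x\cdot\nabla w - \lambda w)^2$, leaving only a single bulk term whose sign is dictated by the obstacle structure and the 2-homogeneity of $p$. Once this algebraic identity is established, both parts (a) and (b) follow in a few lines.
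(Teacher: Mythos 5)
Your proposal is correct and follows essentially the same route as the paper's proof: differentiate $W_\lambda$ via Lemmas~\ref{lem:EFF1}--\ref{lem:EFF2} (the Rellich identity you invoke is already encoded in Lemma~\ref{lem:EFF2}, so there is no need to cite it separately), collapse the boundary terms into the perfect square $(w_\nu-\lambda w)^2$, and exploit $\Delta w=-\chi_{\{u=0\}}$ together with $w=-p$ and Euler's identity $x\cdot\nabla p=2p$ on the contact set to get the sign of the bulk term. Part~(b) is likewise the paper's argument --- rescale, identify the limit with $p_2-p$, use the $2$-homogeneity/harmonicity of $p_2-p$ to conclude $W_2(0^+,w)=0$, then integrate~(a) --- and you merely supply slightly more detail on the $C^1$ convergence needed to pass to the limit in the Dirichlet energy.
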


\begin{proof}
$(a)$ By scaling it is enough to compute  $\frac{d}{dr}W_{\lambda}(r,w)$ at $r=1$. Using Lemmas~\ref{lem:EFF1}-\ref{lem:EFF2}, we obtain
\[
\begin{split}
 W_{\lambda}'(1,w) &=    \big( D'(1,w)-  \lambda H'(1,w)\big)  - 2\lambda D(1,w) + 2\lambda^2 H(1,w)
\\ &=  2\int_{\partial B_1} w_\nu ^2 - 2\int_{B_1} \Delta w \,(x\cdot \nabla w) -2\lambda \int_{\partial B_1}  w w_\nu   - 2\lambda D(1,w)  + 2\lambda^2  H(1,w)
\\ & =  2\int_{\partial B_1} w_\nu ^2  + 2\int_{B_1} (\lambda w-x\cdot \nabla w) \Delta w  - 4\lambda \int_{\partial B_1}  w w_\nu   + 2\lambda^2   \int_{\partial B_1}  w^2
\\& =  2 \int_{\partial B_1} (w_\nu -\lambda w)^2 +   2\int_{B_1} (\lambda w-x\cdot \nabla w)\Delta w
\\& =  2 \int_{\partial B_1} (w_\nu -\lambda w)^2 +   2\int_{\{u(r\,\cdot\,)=0\}\cap B_1} (\lambda p-x\cdot \nabla p).
\end{split}
\]
One concludes noticing that, for $\lambda\ge2$, it holds  $(\lambda p-x\cdot \nabla p)= (\lambda-2)p\ge0$.

$(b)$ Since $0$ is a singular point then $w_r(x) = (u-p)(r x)  = (p_2-p) (rx)+ o(r^2)$, thus
 \[W_2(0^+, w) = \lim_{r\downarrow 0} W(1, r^{-2}w_r) =  W_2(1, p_2-p) =0.\]
As a consequence,  \eqref{eq:Dge2H} follows integrating $(a)$ for $\lambda=2$ between $0$ and $r$.
\end{proof}

We recall from  \cite{AlessioJoaquim} that  the frequency function $\phi$ applied to the function $w= u-p$, with $p\in \mathcal P$, is monotone increasing in $r$. More precisely we have the following:

\begin{proposition}[Frequency formula]\label{prop:EMF1}
Let $u:B_1 \to [0,\infty)$ be a solution of  \eqref{eq:UELL},  and  let $0$ be a singular point.  Given $p \in \mathcal P$, set $w:= u-p$.  Then
\[
\frac{d}{dr} \phi(r, w) \ge \frac 2 r  \frac{ \big( r^{2-n}  \int_{B_r }w\Delta w \big)^2 }{  H(r,w)^2  } \ge 0, \quad \forall \,r\in (0,1).
\]
\end{proposition}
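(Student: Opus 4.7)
The plan is to reduce the claim directly to Lemma~\ref{lem:EFF4} (applied with $\phi$ rather than $\phi^\gamma$) and then show that the ``error term'' $E(r,w)$ appearing there is nonnegative by combining the two sign identities in Remark~\ref{rem:Esigna} with Weiss' inequality~\eqref{eq:Dge2H}. No further computation of derivatives of $H$ or $D$ should be needed.

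First, I would invoke Lemma~\ref{lem:EFF4}, which gives
\[
\frac{d}{dr}\phi(r,w) \ge \frac{2}{r}\,\frac{\bigl(r^{2-n}\int_{B_r} w\Delta w\bigr)^2 + E(r,w)}{H(r,w)^2},
\]
with $E(r,w)$ as in \eqref{wq:Erw}. Thus it suffices to prove that $E(r,w)\ge 0$ for all $r\in(0,1)$. Next I would use Remark~\ref{rem:Esigna}: from \eqref{eq:wlapw} and \eqref{eq:ZwLapw}, the integrands satisfy the pointwise identity
\[
(x\cdot\nabla w)\,\Delta w \;=\; 2\,p\,\chi_{\{u=0\}} \;=\; 2\,w\,\Delta w \quad \text{a.e. in } B_r,
\]
and both sides are nonnegative. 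Integrating over $B_r$ gives $\int_{B_r}(x\cdot\nabla w)\Delta w = 2\int_{B_r} w\Delta w$, and so
\[
E(r,w) = \left(r^{2-n}\int_{B_r} w\Delta w\right)\bigl(D(r,w) - 2H(r,w)\bigr).
\]

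Finally, I would observe that each factor is nonnegative: the first because $w\Delta w\ge 0$ by \eqref{eq:wlapw}, and the second by Weiss' inequality \eqref{eq:Dge2H} in Lemma~\ref{LOClem:Weiss}(b), which applies precisely because $0$ is a singular point and $p\in\mathcal P$. Therefore $E(r,w)\ge 0$, and plugging this into the bound from Lemma~\ref{lem:EFF4} yields the desired inequality
\[
\frac{d}{dr}\phi(r,w) \;\ge\; \frac{2}{r}\,\frac{\bigl(r^{2-n}\int_{B_r} w\Delta w\bigr)^2}{H(r,w)^2} \;\ge\; 0.
\]
The only subtle point is the clean algebraic identity $(x\cdot\nabla w)\Delta w = 2w\Delta w$, but this is already packaged in Remark~\ref{rem:Esigna} via $x\cdot\nabla p = 2p$ (Euler's identity for the $2$-homogeneous polynomial $p$). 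There is no genuine obstacle here; the proposition is a direct corollary of the three preceding results.
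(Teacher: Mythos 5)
Your proof is correct and follows the paper's own argument exactly: invoke Lemma~\ref{lem:EFF4}, use Remark~\ref{rem:Esigna} to deduce $\int_{B_r}(x\cdot\nabla w)\Delta w = 2\int_{B_r} w\Delta w$ and thus $E(r,w)=\bigl(r^{2-n}\int_{B_r}w\Delta w\bigr)\bigl(D(r,w)-2H(r,w)\bigr)$, then conclude via the sign of $w\Delta w$ and Weiss' inequality \eqref{eq:Dge2H}. No gaps.
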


\begin{proof}
By  Lemma~\ref{lem:EFF4} we just need to show that
\[
E(r,w) :=   \left(r^{2-n} \int_{B_r} w\Delta w\right) D(r,w) - \left(r^{2-n} \int_{B_r} (x\cdot \nabla w)\Delta w\right)H(r,w) \ge 0.
\]
Using Remark~\ref{rem:Esigna} for $w= u-p$ we have
\[\left(r^{2-n} \int_{B_r} (x\cdot \nabla w)\Delta w\right) = 2  \left(r^{2-n} \int_{B_r} w\Delta w\right),\] 
thus
\[
E(r,w)  =  \left(r^{2-n} \int_{B_r} w\Delta w\right)  \big(D(r,w) -2H(r,w)\big),
\]
which by \eqref{eq:Dge2H}  and Remark~\ref{rem:Esigna} is nonnegative.
\end{proof}

The following observation, also contained in \cite{AlessioJoaquim}, follows immediately from \eqref{eq:Dge2H}.

\begin{lemma}\label{lem:EMF2}
Let $u:B_1 \to [0,\infty)$ be a solution of  \eqref{eq:UELL},  and  let $0$ be a singular point.  Given $p \in \mathcal P$, set $w:= u-p$.
Then $\phi(0^+,w) \geq 2$.
\end{lemma}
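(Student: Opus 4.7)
The plan is to read off the result directly from the Weiss inequality \eqref{eq:Dge2H} already established in Lemma~\ref{LOClem:Weiss}(b), combined with the monotonicity of the frequency from Proposition~\ref{prop:EMF1}. No new machinery should be needed; the lemma is flagged as an immediate consequence.

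More precisely, I would first recall that, by Lemma~\ref{LOClem:Weiss}(b) applied to $w = u - p$ with $p \in \mathcal P$ and $0$ a singular point, one has
\[
D(r,w) - 2\,H(r,w) \;\ge\; 0 \qquad \text{for all } r\in(0,1).
\]
Dividing by $H(r,w)$ at any radius where $H(r,w)>0$ yields $\phi(r,w) \ge 2$ at that radius. Since Proposition~\ref{prop:EMF1} ensures that $r \mapsto \phi(r,w)$ is monotone nondecreasing on the set where $H(r,w)>0$, the limit $\phi(0^+,w) = \lim_{r\downarrow 0}\phi(r,w)$ exists (as the infimum over $r$) and inherits the same lower bound. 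This gives $\phi(0^+,w) \ge 2$.

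The only subtlety worth addressing is the possibility that $H(r,w)$ vanishes for some small radii, which would make $\phi(r,w)$ ill-defined on those radii. I would handle this by noting that the set of $r\in(0,1)$ with $H(r,w)>0$ accumulates at $0$: otherwise $w \equiv 0$ on a sequence of spheres $\partial B_{r_k}$ with $r_k \downarrow 0$, and since $\Delta w = -\chi_{\{u=0\}}$ is bounded, unique continuation-type arguments (or simply the fact that, by Lemma~\ref{lem:EG2B1}, $u - p_2 = o(r^2)$ and hence $w=(p_2-p)+o(r^2)$ cannot vanish identically on spheres of radius $r_k \to 0$ unless $p = p_2$) reduce us to the case $p = p_2$, where $w = o(r^2)$ and the statement is interpreted via $\phi(0^+,w) := \liminf_{r\downarrow 0} \phi(r,w)$; on the positive-$H$ subsequence the bound $\phi \ge 2$ still applies.

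I do not expect any real obstacle: the content of the lemma is a one-line corollary of \eqref{eq:Dge2H} together with the definition of $\phi$. The only care needed is to phrase the limit $\phi(0^+,w)$ correctly in the degenerate case where $w$ is very small (i.e.\ $p$ equals the second blow-up $p_2$), but this causes no issue because the inequality $D \ge 2H$ is pointwise in $r$ and passes to the limit along any sequence $r_k\downarrow 0$ with $H(r_k,w)>0$.
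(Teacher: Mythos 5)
Your proof is correct and follows exactly the route the paper intends: it is the one-line observation that the Weiss inequality $D(r,w) - 2H(r,w) \ge 0$ from Lemma~\ref{LOClem:Weiss}(b) gives $\phi(r,w) \ge 2$ pointwise in $r$, and passing to $r\downarrow 0$ yields the claim. The extra care you take about $H(r,w)$ possibly vanishing is reasonable though not needed in the paper's implicit convention; note that the cleanest way to dispose of it is the minimum principle rather than unique continuation (which does not directly apply to $\Delta w = -\chi_{\{u=0\}}$): if $H(r_0,w)=0$ then $w\equiv 0$ on $\partial B_{r_0}$, and since $w$ is superharmonic with $w(0)=0$, the strong minimum principle forces $w\equiv 0$ in $B_{r_0}$, so the statement is vacuous in that case and otherwise $H(r,w)>0$ for all small $r$.
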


A new important estimate that we will use throughout the paper is the following:

\begin{lemma}\label{lem:EMF3}
Let $u:B_1 \to [0,\infty)$ be a solution of  \eqref{eq:UELL},  and  let $0$ be a singular point.   Given $p \in \mathcal P$, set $w:= u-p$. Suppose  that for  $0<r<R<1$ we have $\underline \lambda \le \phi(r, w) \le \phi(R,w)\le \overline \lambda$. Then,  for any given $\delta >0$ we have
\[
\left(\frac{R}{r} \right)^{2\underline \lambda}  \le \frac{H(R,w)}{H(r,w)} \le  C_\delta  \left(\frac{R}{r} \right)^{2\overline \lambda+\delta},
\]
where $C_\delta$ depends only on $n$, $\overline \lambda$, $\delta$.
%Moreover, if $w\Delta w\ge 0$ in $B_R$ then the bound by below improves to
%\[
% \left(\frac{R}{r} \right)^{2\underline \lambda}  \le \frac{H(R,w)}{H(r,w)}, \qquad  0<r< R<1.
%\]
\end{lemma}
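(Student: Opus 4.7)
The plan is to integrate a differential identity for $\log H(s,w)$ between $r$ and $R$. Combining Lemma~\ref{lem:EFF1} with the divergence theorem and the natural scaling yields, for every $s\in(0,1)$,
\[
\frac{d}{ds}\log H(s,w) \;=\; \frac{2\phi(s,w)}{s} + \frac{2A(s)}{s\,H(s,w)},\qquad A(s):=s^{2-n}\!\int_{B_s}\!w\,\Delta w.
\]
By Remark~\ref{rem:Esigna} we have $A(s)\ge 0$, and the monotonicity of $\phi$ (Proposition~\ref{prop:EMF1}) together with the hypothesis gives $\underline\lambda\le \phi(s,w)\le \overline\lambda$ for every $s\in[r,R]$.

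The lower bound is immediate: dropping the nonnegative term $2A/(sH)$ and using $\phi(s,w)\ge \underline\lambda$, integration of $\frac{d}{ds}\log H \ge 2\underline\lambda/s$ from $r$ to $R$ gives $H(R,w)/H(r,w)\ge (R/r)^{2\underline\lambda}$.

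For the upper bound I use $\phi(s,w)\le \overline\lambda$ to obtain
\[
\log\frac{H(R,w)}{H(r,w)} \;\le\; 2\overline\lambda\,\log(R/r) \;+\; 2\int_r^R \frac{A(s)}{s\,H(s,w)}\,ds,
\]
so the task reduces to controlling the last integral. The crucial ingredient is the strengthened frequency inequality from Proposition~\ref{prop:EMF1}: since $E(s,w)\ge 0$, one has $\phi'(s)\ge 2A(s)^2/\bigl(s\,H(s,w)^2\bigr)$, which integrated over $[r,R]$ yields
\[
\int_r^R \frac{A(s)^2}{s\,H(s,w)^2}\,ds \;\le\; \frac{\phi(R,w)-\phi(r,w)}{2} \;\le\; \frac{\overline\lambda}{2}.
\]
Cauchy--Schwarz applied to the factorization $\frac{A}{sH}=\frac{A}{\sqrt{s}\,H}\cdot\frac{1}{\sqrt{s}}$ then gives $\int_r^R \frac{A(s)}{s\,H(s,w)}\,ds \le \sqrt{(\overline\lambda/2)\log(R/r)}$, and Young's inequality $\sqrt{ab}\le \tfrac{\delta}{2}b+\tfrac{a}{2\delta}$ (applied with $a=\overline\lambda/2$, $b=\log(R/r)$) converts this into $\tfrac{\delta}{2}\log(R/r)+\tfrac{\overline\lambda}{4\delta}$. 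Substituting back produces $H(R,w)/H(r,w)\le C_\delta(R/r)^{2\overline\lambda+\delta}$ with $C_\delta=e^{\overline\lambda/(2\delta)}$.

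The main obstacle is the upper estimate: the term $A/(sH)$ in $\frac{d}{ds}\log H$ cannot be bounded pointwise by any multiple of $1/s$ since, a priori, $H(s,w)$ may be much smaller than $s^4$ while $A(s)=O(s^4)$. Only the integrated control $\int_r^R A^2/(sH^2)\,ds\le \overline\lambda/2$ coming from the strengthened frequency formula, paired with Cauchy--Schwarz and Young's inequality, absorbs this potentially large term into an arbitrarily small multiple of $\log(R/r)$ at the price of the additive constant $C_\delta$.
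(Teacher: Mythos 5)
Your proof is correct and follows essentially the same route as the paper: you use the identity $\frac{d}{ds}\log H = \frac{2\phi}{s} + \frac{2F}{s}$ (with $F=A/H$), integrate the strengthened frequency inequality $\phi'\ge \frac{2}{s}F^2$ to control $\int F^2/s$, then apply Cauchy--Schwarz to bound $\int F/s$ by $\sqrt{\text{const}\cdot\log(R/r)}$, and absorb this via Young's inequality into $\delta\log(R/r)+C_\delta$. The paper's version of the last two steps is identical in substance (it uses $\bar\lambda-\underline\lambda$ in place of your $\bar\lambda$ and leaves the $\sqrt{\log}\le\delta\log+C$ absorption implicit), so this is the same argument with the constants written out explicitly.
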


\begin{proof}
Define
\[
F(r)  := \frac{r^{2-n}  \int_{B_r}  w\Delta w }{H(r,w)}.
\]
By Proposition~\ref{prop:EMF1} we have
\begin{equation}\label{derphia}
\frac{d}{dr} \phi(r,w) \ge   \frac{2}{r} \big(F(r)\big)^2.
\end{equation}
On the other hand, thanks to Lemma~\ref{lem:EFF1},
\begin{equation}\label{derHa}
\begin{split}
\frac{\frac{d}{dr} H(r,w)}{H(r,w)} &= \frac  2 r \, \frac{r^{2-n}  \int_{B_r}  w\Delta w + r^{2-n}\int_{B_r} |\nabla w|^2 }{H(r,w)}
=   \frac 2 r   \phi(r,w) +  \frac{2}{r} F(r).
\end{split}
\end{equation}
Integrating \eqref{derphia} and using Cauchy-Schwartz inequality, since $\underline \lambda \le \phi(\rho, w)\le \overline \lambda$  for all $\rho\in (r,R)$
we get
\[
\begin{split}
\big(\overline\lambda- \underline \lambda\big)^{1/2} \big(\log (R/r)\big)^{1/2}&\ge \left(\int_r^R  {\textstyle \frac{d}{d\rho}}\phi(\rho,w)   d\rho \right)^{1/2}   \left(\int_r^R  \frac{d\rho}{\rho} \right)^{1/2}
\\
 &=  \left(\int_r^R  \frac{1}{\rho} \big(F(\rho)\big)^2   d\rho \right)^{1/2}   \left(\int_r^R \frac{d\rho}{\rho} \right)^{1/2}
\ge\int_r^R  \frac{1}{\rho} F(\rho)   d\rho \ge0.
\end{split}
\]
Hence, integrating \eqref{derHa}, we obtain
\[
\log \frac{H(R,w)}{H(r,w)} \le \int_r^R \frac{2}\rho \big(\overline \lambda + F(\rho)  \big)\, d\rho \le \log \big( (R/r)^{2\overline \lambda} \big) + C\big(\log (R/r)\big)^{1/2} \le  \log \big((R/r)^{2(\overline \lambda+\delta)}\big) +C,
\]
where $C$ depends only on $n$, $\overline \lambda$, and $\delta$.

For the lower bound we recall that, since $w\Delta w \ge 0$, we have $F(\rho)\ge 0$. Therefore, integrating \eqref{derHa} over $[r,R]$,
\[
\log \frac{H(R,w)}{H(r,w)} \ge 2\underline\lambda \int_r^{R} \frac{d\rho}{\rho} = \log( R/r)^{2\underline \lambda}.
\]
\end{proof}

We will also need the following result, which allows us to control the $L^\infty$ norm of the difference of two solutions with the $L^2$ norm.

\begin{lemma}\label{lem:u-v}
Let $u:B_1 \to [0, \infty)$ and $v:B_1 \to [0, \infty)$ be solutions of the obstacle problem \eqref{eq:UELL}.  Then
\[
\| u-v \|_{L^\infty(B_{1/2})}    \le C(n)\| u-v \|_{L^2(B_{1})}.
\]
\end{lemma}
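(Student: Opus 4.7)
The plan is to show that $|u-v|$ is subharmonic in $B_1$, after which the claimed $L^\infty$--$L^2$ bound drops out of the mean value inequality together with Cauchy--Schwarz. First I set $w:=u-v$, which belongs to $C^{1,1}(B_1)$ by \eqref{optimalreg+nondeg}. The crucial observation is that on the open set $\{w>0\}$ we have $u>v\ge 0$, so $\chi_{\{u>0\}}\equiv 1$ there, and therefore
\[\Delta w = 1-\chi_{\{v>0\}}\ge 0\quad\text{on }\{w>0\},\]
while symmetrically $\Delta w\le 0$ on $\{w<0\}$. This sign information is the only place where the obstacle structure (nonnegativity together with Laplacian $1$ on the positivity set) is used.

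Next I would upgrade this pointwise sign information into the subharmonicity of $w^+$ via a classical comparison argument. Given any ball $B\Subset B_1$, let $h$ be the harmonic extension of $w^+|_{\partial B}\ge 0$ to $B$, so $h\ge 0$ by the maximum principle. On the open set $U:=\{w>0\}\cap B$ the function $w-h$ is subharmonic, and on $\partial U$ it is nonpositive: along $\partial U\cap\partial B$ we have $w=w^+=h$, and along $\partial U\cap B\subset\{w=0\}$ we have $w-h=-h\le 0$. The maximum principle then gives $w\le h$ on $U$, and since $w^+\equiv 0\le h$ outside $U$, we conclude $w^+\le h$ throughout $B$. Evaluating at the center of $B$ yields the subharmonic mean value inequality for $w^+$. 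Running the same argument with the roles of $u$ and $v$ swapped gives that $w^-$ is subharmonic, and hence so is $|w|=w^++w^-$.

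Finally, for any $x\in B_{1/2}$ the ball $B_{1/2}(x)\subset B_1$, so the mean value inequality for the subharmonic function $|w|$ combined with Cauchy--Schwarz gives
\[|w(x)|\le \ave_{B_{1/2}(x)}|w|\le \bigg(\ave_{B_{1/2}(x)}|w|^2\bigg)^{1/2}\le C(n)\|w\|_{L^2(B_1)},\]
which is the desired inequality.

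The only place where I expect to need care is the comparison argument establishing subharmonicity of $w^+$, since $w$ changes sign and $\Delta w$ has no fixed sign globally; however, the sign of $\Delta w$ on $\{w>0\}$ is exactly what makes the maximum principle applicable on $U$, and the rest reduces to textbook facts, so I do not anticipate any genuine obstacle in executing this plan.
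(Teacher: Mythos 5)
Your proof is correct and follows essentially the same route as the paper: both show that $(u-v)_+$ and $(u-v)_-$ are subharmonic (the paper phrases the sign analysis on $\{u>0\}$ and $\{u=0\}$ rather than on $\{w>0\}$ and $\{w\le 0\}$, but since $\{w>0\}\subset\{u>0\}$ this is the same observation), conclude $|u-v|$ is subharmonic, and finish with the mean value inequality plus H\"older/Cauchy--Schwarz. The comparison-with-harmonic-extension step you include is just a spelled-out version of the standard fact the paper invokes implicitly.
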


\begin{proof}
On the one hand, from 
\[
\Delta(u-v) = 1- \Delta v \ge  0 \quad \mbox{in } \{u>0\} \qquad \mbox{and}\qquad    u-v =  -v  \le 0 \quad  \mbox{in } \{u=0\} 
\]
we obtain that $(u-v)_+ = \max(u-v, 0)$ is  subharmonic in $B_1$. Exchanging the role of $u$ and $v$, the same argument shows that $(v-u)_+ = (u-v)_-$ is subharmonic.
Thus also $|u-v|= (u-v)_++ (u-v)_-$ is subhamonic in $B_1$, and using the mean value property we obtain, for $x\in B_{1/2}$,
$$
|u-v|(x) \le \ave_{B_{1/2}(x)} |u-v| \le  C(n) \|u-v\|_{L^1(B_1)} \le  C(n) \|u-v\|_{L^2(B_1)}. 
$$
\end{proof}

We now start investigating the structure of possible second blow-ups.
The next few results are a small modification of those in \cite{AlessioJoaquim}.

The following Lipschitz estimate for the rescaled difference $u-p,$ with $p \in \mathcal P$, will be useful in the sequel. We recall that $w_r$ and $\tilde w_r$ have been defined in \eqref{defwr_tildewr}.
\begin{lemma}\label{lem:E2B1}
Let  $u:B_1 \to [0,\infty)$ be a solution of  \eqref{eq:UELL} with $u\not\equiv p_2$,  and  let $0$ be a singular point.   Given $p \in \mathcal P$, set $w:= u-p$. Let $R\ge1$, and $r\in \left(0,\frac{1}{10R}\right)$. Then
\begin{equation}\label{semic}
\partial_{\boldsymbol e \boldsymbol e} \tilde w_r \ge -C \quad \mbox{in }B_{R},  \qquad  \forall\, \boldsymbol e\in \mathbb S^{n-1}\cap\{p=0\} \end{equation}
where $C$ depends only on $n$, $R$, and $\phi(1/2, u-p)$.
In addition, if ${\rm dim}(\{p=0\})=n-1$, then 
\begin{equation}
\label{eq:wk Lip0}
|\nabla \tilde w_r| \le C  \qquad \mbox{in } B_{R/2},
\end{equation}
where $C$ depends only on $n$, $R$, and $\phi({\textstyle \frac12}, u-p)$.
\end{lemma}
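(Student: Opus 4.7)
The argument splits naturally along the kernel structure of $p$. Writing $p(x)=\tfrac12\langle Ax,x\rangle$ with $A$ symmetric positive semidefinite and $\mathrm{tr}(A)=1$, we have $\{p=0\}=\ker A$; thus for any $\boldsymbol e\in\{p=0\}\cap\mathbb{S}^{n-1}$, $A\boldsymbol e=0$, so $\partial_{\boldsymbol e}p\equiv 0$, $\partial_{\boldsymbol e\boldsymbol e}p\equiv 0$, and $\partial_{\boldsymbol e\boldsymbol e}w=\partial_{\boldsymbol e\boldsymbol e}u$ pointwise in $B_1$. Moreover $p$ is constant along every line parallel to $\boldsymbol e$, a fact that will be convenient below.

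To prove the semiconvexity \eqref{semic}, the plan is to reduce to the classical $C^{1,1}$ regularity of a rescaled obstacle-problem solution. Set $V(x):=u(rx)/r^2$, which itself solves $\Delta V=\chi_{\{V>0\}}$, $V\ge 0$ on $B_{1/r}\supset B_{5R}$ (using $r<1/(10R)$); the bound $\|V\|_{L^\infty(B_{5R})}\le CR^2$ follows from \eqref{optimalreg+nondeg} applied at the singular point $0$, and hence $\|D^2 V\|_{L^\infty(B_{2R})}\le C(n,R)$ by the classical theory. With $\tau:=r^2/H(r,w)^{1/2}$, the exact identity $\tilde w_r=\tau(V-p)$ combined with $\partial_{\boldsymbol e\boldsymbol e}p=0$ yields $\partial_{\boldsymbol e\boldsymbol e}\tilde w_r = \tau\,\partial_{\boldsymbol e\boldsymbol e}V$, so the task reduces to bounding the product $\tau\cdot\partial_{\boldsymbol e\boldsymbol e}V$ from below uniformly in $r$ by a constant depending only on $n$, $R$, and $\overline\lambda:=\phi(1/2,w)$. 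For this I will combine the two-sided estimate of Lemma~\ref{lem:EMF3} for $H(r,w)$ with finer pointwise information on $\partial_{\boldsymbol e\boldsymbol e}V$ in kernel directions ---in particular using $\partial_{\boldsymbol e\boldsymbol e}V(0)=\partial_{\boldsymbol e\boldsymbol e}p_2\ge 0$ together with interior $C^{1,1}$ estimates--- to produce the compensating decay of $\partial_{\boldsymbol e\boldsymbol e}V$.

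For the Lipschitz estimate \eqref{eq:wk Lip0}, the condition $\dim\{p=0\}=n-1$ means $p(x)=\tfrac12(x\cdot\nu)^2$ for a unique unit vector $\nu$, so \eqref{semic} now applies in every direction perpendicular to $\nu$. Since $\Delta\tilde w_r=-\tau\chi_{\{u(r\,\cdot\,)=0\}}\le 0$, writing, for an orthonormal basis $\{\boldsymbol e_1,\ldots,\boldsymbol e_{n-1}\}$ of $\nu^\perp$,
\[
\partial_{\nu\nu}\tilde w_r \;=\; \Delta\tilde w_r \;-\; \sum_{i=1}^{n-1}\partial_{\boldsymbol e_i\boldsymbol e_i}\tilde w_r \;\le\; (n-1)C,
\]
gives a \emph{semiconcavity} bound in the remaining direction $\nu$. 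Combined with an $L^\infty$ bound on $\tilde w_r$ in a slightly larger ball ---which I plan to derive by applying Lemma~\ref{lem:u-v} to the pair of obstacle solutions $V$ and $p$ (recall $\tilde w_r/\tau=V-p$) and controlling the $L^2$ content via Lemma~\ref{lem:EMF3}--- a standard interpolation argument (semiconcavity plus $L^\infty$ control implies a gradient bound in the interior) yields $|\nabla\tilde w_r|\le C$ on $B_{R/2}$. The main technical hurdle is the uniform lower bound on $\tau\cdot\partial_{\boldsymbol e\boldsymbol e}V$ in the semiconvexity step: a direct application of Lemma~\ref{lem:EMF3} only controls $\tau$ by $Cr^{2-\overline\lambda-\delta/2}$, which is vacuous when $\overline\lambda>2$, and the resolution must exploit that in kernel directions $\partial_{\boldsymbol e\boldsymbol e}V$ decays at the sharp rate $\tau^{-1}$ rather than being merely bounded.
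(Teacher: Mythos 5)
The second half of your plan (the Lipschitz estimate \eqref{eq:wk Lip0}) is in good shape once the semiconvexity \eqref{semic} is in hand: the identity $\partial_{\nu\nu}\tilde w_r = \Delta\tilde w_r - \sum_i\partial_{\boldsymbol e_i\boldsymbol e_i}\tilde w_r$ plus $\Delta\tilde w_r\le 0$ does give semiconcavity in $\nu$, and the $L^\infty$ bound on $\tilde w_r$ via Lemma~\ref{lem:u-v} and Lemma~\ref{lem:EMF3} is legitimate (observe that $p\in\mathcal P$ is indeed a solution of the obstacle problem, since $\{p=0\}$ has measure zero). The interpolation from semiconvexity/semiconcavity $+$ $L^\infty$ control to a Lipschitz bound is standard and is what the paper does too.

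However, the core step — the semiconvexity estimate \eqref{semic} — is not actually proven, and the route you sketch cannot close the gap. You correctly identify the obstruction: $\tau = r^2/H(r,w)^{1/2}$ can blow up as $r\downarrow 0$ (as soon as $\phi(0^+,w)>2$), and the classical $C^{1,1}$ bound on $V=u(r\,\cdot\,)/r^2$ gives only $|\partial_{\boldsymbol e\boldsymbol e}V|\le C$, so $\tau\,\partial_{\boldsymbol e\boldsymbol e}V$ has no uniform lower bound. But your proposed resolution — that $\partial_{\boldsymbol e\boldsymbol e}V$ should decay at rate $\tau^{-1}$, deduced from $\partial_{\boldsymbol e\boldsymbol e}V(0)\ge 0$ plus interior $C^{1,1}$ estimates — is not an argument, and in fact cannot work: $C^{1,1}$ regularity is sharp for the obstacle problem (the Hessian jumps across $\partial\{u>0\}$), so there is no modulus-of-continuity decay for $\partial_{\boldsymbol e\boldsymbol e}V$ near $0$ to exploit. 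Moreover, what you are trying to prove about $\tau\,\partial_{\boldsymbol e\boldsymbol e}V$ \emph{is} the statement of the lemma after unwinding $\tilde w_r = \tau(V-p)$; you have not broken the circularity.

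The missing idea is structural, not quantitative. Because $\Delta u\le 1$ in $B_1$ with equality on $\{u>0\}$, the second-order incremental quotient $\delta^2_{\boldsymbol e,h}w$ is superharmonic on $\{u>0\}$; and since $u\ge0$ and $p$ is affine-invariant along $\boldsymbol e\in\{p=0\}$, one has $\delta^2_{\boldsymbol e,h}w\ge 0$ on $\{u=0\}$. Together these imply that $(\delta^2_{\boldsymbol e,h}\tilde w_r)_-$ — hence, in the limit, $(\partial_{\boldsymbol e\boldsymbol e}\tilde w_r)_-$ — is \emph{subharmonic}. This is the key: one can then bound $\|(\partial_{\boldsymbol e\boldsymbol e}\tilde w_r)_-\|_{L^\infty(B_R)}$ by an $L^\epsilon$ average via the weak Harnack inequality, pass through weak-$L^1$/Calder\'on--Zygmund to reduce to $\|\Delta\tilde w_r\|_{L^1}+\|\tilde w_r\|_{L^1}$, absorb the first term by a cutoff argument (using $\Delta\tilde w_r\le 0$), and finally bound $\|\tilde w_r\|_{L^1(B_{4R})}$ by Lemma~\ref{lem:EMF3}. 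That chain is precisely what produces a constant depending only on $n$, $R$, $\phi(\tfrac12,w)$, independently of how small $H(r,w)$ is relative to $r^2$. Without the subharmonicity observation, there is no mechanism to convert the available $L^1$-type control of $\tilde w_r$ into a pointwise lower bound on $\partial_{\boldsymbol e\boldsymbol e}\tilde w_r$, so your proposal as written does not prove \eqref{semic}.
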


\begin{proof}
This proof is essentially contained in Step 3 from the Proof of Proposition 2.10 in \cite{AlessioJoaquim}.  However, since some small changes are needed in our setting, we reproduce the main steps for the convenience of the reader.

Given a function $f: \R^n \to \R$, a vector $\boldsymbol e \in \mathbb S^{n-1}$, and $h>0$, let
\[\delta^2_{\boldsymbol e,h} f:= \frac{f(\,\cdot\,+h\boldsymbol e)+f(\,\cdot\,-h\boldsymbol e)-2f}{h^2}\]
 denote a second order incremental quotient.  For $\boldsymbol e\in  \{p=0\}\cap \mathbb S^{n-1}$ we have $\delta^2_{\boldsymbol e,h} p\equiv 0$. Thus, since $\Delta u =1$ outside of  $\{u=0\}$ and $\Delta u\le 1$ in $B_1,$ we have
\[
%\begin{split}
\Delta \big(\delta^2_{\boldsymbol e,h} w \big)%&
 =  \frac{\Delta u\big(\,\cdot\,+h\boldsymbol e\big)+\Delta u\big(\,\cdot\,-h\boldsymbol e\big) - 2\Delta u }{h^2}
%\\
%&
%\le 1+1-2
\le 0  \quad\mbox{ in } B_R \setminus \{u=0\}.
%\end{split}
\]
On the other hand, since $u\ge 0$ we have
\[
\delta^2_{\boldsymbol e,h} w = \delta^2_{\boldsymbol e,h}  u(\,\cdot\,) \ge 0  \quad \mbox{ in } \{u=0\}.
\]
As a consequence, the negative part of the second order incremental quotient $(\delta^2_{\boldsymbol e,h} \tilde w_r)_-$ is subharmonic, and so is its limit $(\partial_{\boldsymbol e\boldsymbol e}^2\tilde w_r)_-$
(recall that $u$ is semiconvex,  and thus $(\delta^2_{\boldsymbol e,h} \tilde w_r)_- \to( \partial_{\boldsymbol e\boldsymbol e}^2\tilde w_r)_-$ a.e. as $h \to 0$).
Therefore, by weak Harnack inequality (see for instance  \cite[Theorem 4.8(2)]{CC95}) there exists $\ep=\ep(n) \in (0,1)$ such that
\[
\|(\partial_{\boldsymbol e\boldsymbol e}^2\tilde w_r)_- \|_{L^\infty(B_{R})}
 \le C(n) \biggl(\ave_{B_{R}} (\partial_{\boldsymbol e\boldsymbol e}\tilde w_r)^\ep_-  \biggr)^{1/\ep}\leq C(n) \biggl(\ave_{B_{2R}} |\partial_{\boldsymbol e\boldsymbol e}\tilde w_r|^\ep  \biggr)^{1/\ep}.
\]
Also, by standard interpolation inequalities, the $L^\ep$ norm (here we use $\ep<1$) can be controlled by the weak $L^1$ norm, namely
$$
\biggl(\int_{B_{2R}} |\partial_{\boldsymbol e\boldsymbol e}\tilde w_r|^\ep \biggr)^{1/\ep} \leq C(n,R)\,\sup_{\theta >0} \,\theta \bigl|\bigl\{|\partial_{\boldsymbol e\boldsymbol e}\tilde w_r| >\theta\bigr\}\cap B_{2R }\bigr|.
$$
Furthermore, by Calder\'on-Zygmund theory, the right hand side above is controlled by
\[
\|\Delta \tilde w_r\|_{L^1(B_{3R})}+
\|\tilde w_r\|_{L^1(B_{3R})}.
\]
Thus, since $ \Delta w_r \leq 0$ in $B_{3R}$, $\|\Delta \tilde w_r\|_{L^1(B_{R})}$ is controlled by the $L^1$ norm of $\tilde w_r$ inside $B_{4R}$: indeed, if $\chi$ is a smooth nonnegative cut-off function that is equal to $1$ in $B_{3R}$ and vanishes outside $B_{4R}$, then
\begin{equation}
\label{eq:control Delta w}
\|\Delta \tilde w_r\|_{L^1(B_{3R} )}\leq -\int_{B_{4R}}\chi\,\Delta \tilde w_r =
-\int_{B_{4R}} \Delta \chi\, \tilde w_r\leq C(n,R)\int_{B_{4R}\setminus B_{3R}}|\tilde w_r|.
\end{equation}
Also, for $8rR<1$, as a consequence of Lemma~\ref{lem:EMF3} we have
\[
 H(4R, \tilde w_r) \le C R^{2\phi(1,u-p)+1}   H(1, \tilde w_r) = C R^{2\phi(\frac 1 2,u-p)+1}
\]
and thus
\[
 \|\tilde w_r\|_{L^1(B_{4R})} \le C\big(n, R, \phi(\textstyle \frac 1 2 ,u-p)\big).
\]
In conclusion, we obtain
$$
\|(\partial_{\boldsymbol e\boldsymbol e}\tilde w_r)_- \|_{L^\infty(B_R)}\leq C(n,R)
  \|\tilde w_r\|_{L^1(B_{4R})} \leq  C\big(n, R, \phi(\textstyle \frac 1 2 ,u-p)\big).
$$
Finally note that,  when $\{p=0\}$ is $(n-1)$-dimensional,  as a consequence of \eqref{semic}  the tangential Laplacian of $\tilde w_r$ (in the directions of $\{p=0\}$) is uniformly bounded from below. Thus, since
 $\Delta \tilde w_r  \le 0 $, we have
\[
\partial_{\boldsymbol e'\boldsymbol e'} \tilde w_r \le C  \qquad \mbox{in } B_R ,\quad  \mbox{ for } \boldsymbol e'\in \{p=0\}^\perp \mbox{ with } |\boldsymbol e'|=1,
\]
where, as before, $C= C\big(n, R, \phi(\frac 1 2 ,u-p)\big)$.
Thanks to these semiconvexity and semiconcavity estimates, we deduce the Lipschitz bound \eqref{eq:wk Lip0}.
\end{proof}

The next result corresponds to \cite[Proposition 2.10]{AlessioJoaquim}.
However the statement there has a small mistake (that anyhow does not affect any of the arguments in \cite{AlessioJoaquim}) and for convenience we correct it here.

\begin{proposition}
\label{prop:E2B2}
Let $u:B_1 \to [0,\infty)$ be a solution of  \eqref{eq:UELL} with $u\not\equiv p_2$, let $0$ be a singular point, and set  $w:=u-p_2$.
Denote  $m: = {\rm dim }(\{p_2=0\}) \in \{0,1,2,\dots n-1\}$, and  $\lambda^{2nd}:=\phi(0^+,w)$.

Then, for every sequence $r_k\downarrow 0$ there is a subsequence $r_{k_\ell}$ such that
$\tilde  w_{r_{k_\ell}} \rightharpoonup  q$ as $\ell \to \infty$  in $W^{1,2}_{\rm loc}(\R^n)$, where
$q\not\equiv 0$ is a $\lambda^{2nd}$-homogeneous function satisfying the following:

\begin{enumerate}
\item[(a)]
If $0\le m\le n-2$ then $q$ is a harmonic polynomial, and in particular $\lambda^{2nd} \in\{2,3,4, \dots\}$.
In addition, if $\lambda^{2nd}=2$, then in some appropriate coordinates we have
\begin{equation}\label{howisD2q}
p_2(x)= \frac{1}{2} \sum_{i=m+1}^n \mu_i x_i^2 \qquad \mbox{and}\qquad  q(x)= \nu \sum_{i=m+1}^n  x_i^2  - \sum_{j=1}^m \nu_j x_j^2,
\end{equation}
where $\mu_{i},\nu>0$, and they satisfy $\sum_{i=m+1}^{n}\mu_i=1$,$(n-m)\nu = \sum_{j=1}^m \nu_j$, and $|\nu_j|\leq \nu$ for any $j=1,\ldots,m$.

\item[(b)] If $m=n-1$ then we have $\tilde  w_{r_{k_\ell}} \rightarrow  q$ in $C^0_{\rm loc}(\R^n)$ and we have $\lambda^{2nd} \ge\ 2+\alpha_\circ$, where $\alpha_\circ >0$ is a dimensional constant. In addition, $q$ is a global solution of the Signorini problem:
\begin{equation}\label{ETOP}
\begin{cases}
\Delta q\le 0\quad \text{and} \quad q\Delta q=0 \quad & \mbox{in }\R^n
\\
\Delta q=0  &\mbox{in }\R^n\setminus  \{p_2=0\}
\\
\,q\ge 0 &\mbox{on } \{p_2=0\}.
\end{cases}
\end{equation}
\end{enumerate}
\end{proposition}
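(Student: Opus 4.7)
The first step is to set up the blow-up. Proposition~\ref{prop:EMF1} gives monotonicity of $\phi(r,w)$, so $\lambda^{2nd} = \phi(0^+,w) \geq 2$ is well-defined (Lemma~\ref{lem:EMF2}). The normalization $H(1,\tilde w_{r_k}) = 1$ combined with Lemma~\ref{lem:EMF3} yields uniform bounds on $H(R,\tilde w_{r_k})$ for every $R>0$, and since $\phi(R,\tilde w_{r_k}) = \phi(R r_k, w) \to \lambda^{2nd}$, we also control $D(R,\tilde w_{r_k}) = \phi(R,\tilde w_{r_k})\,H(R,\tilde w_{r_k})$. A diagonal extraction then produces a subsequence $\tilde w_{r_{k_\ell}} \rightharpoonup q$ weakly in $W^{1,2}_{\rm loc}(\R^n)$. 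Compactness of the trace $W^{1,2}(B_1) \hookrightarrow L^2(\partial B_1)$ (via $H^{1/2}$) gives $H(1,q) = \lim H(1,\tilde w_{r_{k_\ell}}) = 1$, so $q \not\equiv 0$. Passing to the limit in $\phi$ using weak lower semicontinuity of $D$ and continuity of $H$ under trace, then combining with the monotonicity $\phi(Rr_k,w) \ge \lambda^{2nd}$, gives $\phi(r,q) = \lambda^{2nd}$ for every $r>0$. Constancy of the frequency, together with the equality case hidden in Lemma~\ref{lem:EFF4} (the Cauchy--Schwarz term $X^2$ must vanish), forces $x\cdot\nabla q = \lambda^{2nd} q$, i.e.\ $\lambda^{2nd}$-homogeneity of $q$.

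\textbf{Paragraph 2 (Case $m \le n-2$).} Since $\Delta w = -\chi_{\{u=0\}}$, the rescaled function $\tilde w_{r_k}$ is harmonic on $\{u(r_k\cdot)>0\}$. Lemma~\ref{lem:EG2B1} gives $r^{-2}u(r\cdot) \to p_2$ uniformly, so the rescaled contact set $\{u(r_k\cdot)=0\}$ shrinks into arbitrarily small neighborhoods of $\{p_2=0\}$, and passing to the limit yields $\Delta q = 0$ in $\R^n \setminus \{p_2=0\}$. Since $\dim\{p_2=0\}=m\le n-2$, this set has zero $W^{1,2}$-capacity, hence $q$ is harmonic on all of $\R^n$. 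A nontrivial $\lambda^{2nd}$-homogeneous harmonic function must be a polynomial of integer degree, so $\lambda^{2nd}\in\{2,3,4,\ldots\}$. For the explicit form when $\lambda^{2nd}=2$: first diagonalize $p_2 = \tfrac12\sum_{i>m}\mu_i x_i^2$ with $\mu_i>0$, $\sum\mu_i=1$; the inequality $w\ge -p_2$ rescales to $\tilde w_r \ge -p_2(\cdot)/H(r,w)^{1/2}$ on $\{p_2=0\}$, and since $H(r,w) = o(r^4)$ in this case (otherwise $p_2 + cq$ would be a second admissible first blow-up, contradicting Caffarelli's uniqueness), we obtain $q \ge 0$ on $\{p_2=0\}$. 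Diagonalize the restriction of $q$ to the kernel via the residual $O(m)$ symmetry to get $-\sum_{j\le m}\nu_j x_j^2$, and the harmonicity $\Delta q=0$ becomes $(n-m)\nu=\sum\nu_j$. The bound $|\nu_j|\le \nu$ and the absence of cross terms in the non-kernel directions come from a perturbative rigidity argument: if such terms were present, then $p_2 + cq$ for small $c>0$ would be a strictly convex polynomial in $\mathcal P$ distinct from $p_2$, violating uniqueness of the first blow-up.

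\textbf{Paragraph 3 (Case $m=n-1$, and main obstacle).} When $\{p_2=0\}$ is a hyperplane, Lemma~\ref{lem:E2B1} gives a uniform Lipschitz bound on $\tilde w_{r_k}$ in compact sets, upgrading the weak convergence to $C^0_{\rm loc}$. Passing to the limit in $u \ge 0$, $\Delta u \le 1$, and $\Delta u = 1$ on $\{u>0\}$ yields that $q$ satisfies the Signorini system \eqref{ETOP}. To rule out $\lambda^{2nd} = 2$: any $2$-homogeneous global solution of Signorini with thin obstacle a hyperplane can, by dimension reduction to a $2$-D slice and the explicit classification there, be listed; each such $q$ would, when added to $p_2$ with small coefficient, either produce a competing first blow-up in $\mathcal P$ (contradicting uniqueness) or destroy the non-negativity of $u$ at order $r^2$. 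The discrete spectrum of admissible homogeneities of $2$-D Signorini solutions above $2$ then yields a dimensional gap $\alpha_\circ > 0$. \emph{The main obstacle} I anticipate is the rigid form claimed in case (a) for $\lambda^{2nd}=2$---in particular, the absence of off-diagonal terms and the bound $|\nu_j|\le \nu$---since it requires coupling three ingredients (uniqueness of the first blow-up, convexity of $p_2$, and non-negativity of $u$) through a delicate perturbative argument, whereas the other steps reduce to relatively standard frequency and capacity arguments.
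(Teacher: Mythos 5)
Your proposal attempts a self-contained proof, whereas the paper's own proof simply cites \cite[Proposition 2.10]{AlessioJoaquim} and records two corrections to that statement; so a comparison is really a comparison against \cite{AlessioJoaquim}. Your Paragraph~1 (compactness, nontriviality, constancy of frequency, and homogeneity of the limit) and Paragraph~3 (Signorini structure, Lipschitz bound, and the gap $2+\alpha_\circ$) follow the standard frequency-function route and are essentially sound modulo details. The genuine gap is in Paragraph~2.

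The claim that ``we obtain $q\ge 0$ on $\{p_2=0\}$'' does not hold and is in fact incompatible with the conclusion you are trying to prove. First, the logic is backwards: rescaling $u\ge 0$ gives $\tilde w_r \ge -\bigl(r^2/H(r,w)^{1/2}\bigr)\,p_2$, and if $H(r,w)=o(r^4)$ (as you assert) then $r^2/H(r,w)^{1/2}\to\infty$, so this lower bound is \emph{vacuous} in the limit --- it gives no information about the sign of $q$ on $\{p_2=0\}$. Second, the conclusion $q\ge 0$ on $\{p_2=0\}$ is inconsistent with \eqref{howisD2q}: there one has $q|_{\{p_2=0\}}=-\sum_j\nu_j x_j^2$, so $q\ge0$ forces $\nu_j\le 0$, whereas harmonicity gives $\sum_j\nu_j=(n-m)\nu>0$, a contradiction. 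This is precisely the sign issue the paper's proof flags: in \cite{AlessioJoaquim} the (opposite) sign $\nu_j>0$ was claimed, and the corrected statement drops all sign information on the $\nu_j$ and retains only $|\nu_j|\le\nu$, which comes from $\nu$ being the largest eigenvalue of $D^2q$.

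The ``perturbative rigidity argument'' you invoke for the absence of cross terms and for $|\nu_j|\le\nu$ is also not a proof. The polynomial $p_2 + cq$ is not a competitor for the first blow-up: the first blow-up $\lim_{r\to 0}r^{-2}u(r\,\cdot\,)=p_2$ is already unique, and adding a small multiple of $q$ does not produce a second admissible limit. What actually forces $D^2q$ to be a multiple of the identity on $(\ker D^2p_2)^\perp$ and $\nu$ to dominate the other eigenvalues is a variational/monotonicity argument in the spirit of Weiss--Monneau comparing $u-p$ for varying $p\in\mathcal P$ (this is \cite[Lemma 2.12]{AlessioJoaquim}), not uniqueness of the first blow-up.
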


\begin{proof}
The statement here is almost identical to that of  \cite[Proposition 2.10]{AlessioJoaquim}. The only differences are the following:\\
(1) In \cite{AlessioJoaquim} it is uncorrectly stated that $\nu_j>0.$ Instead, \cite[Lemma 2.12]{AlessioJoaquim} proves that $\nu$ is the largest eigenvalue of $D^2q$, hence the correct conclusion is that $|\nu_j|\le \nu$ for each $j=1,\ldots,m$.\\
(2) In the above statement we said that 
$\tilde w_k\rightharpoonup q$ weakly in $W^{1,2}_{\rm loc}(\R^n)$, while  \cite[Proposition 2.10]{AlessioJoaquim} states the convergence only in  $W^{1,2}(B_1)$. The reason why we may replace $B_1$ by any larger ball is Lemma~\ref{lem:EMF3}, as it allows us to control $H(R,\tilde w_r)$ by $C(n, \phi(1,w), R )  H(1,\tilde w_r)$ for any $r<1/R$. Hence, using a diagonal argument, the proof of \cite[Proposition 2.10]{AlessioJoaquim} yields the desired result.
\end{proof}

We now recall another important estimate from \cite{AlessioJoaquim}:

\begin{proposition}\label{prop:E2B3}
Let  $u:B_1 \to [0,\infty)$ be a solution of  \eqref{eq:UELL} with  $u\not\equiv p_2$, let $0$ be a singular point, and set $w:=u-p_2$,
$\lambda^{2nd}:=\phi(0^+,w)$. Let $\lambda\in (0, \lambda^{2nd}]$.
 Then
 $$
 \frac{|\{u =0\}\cap B_r|}{|B_r|}\leq Cr^{\lambda-2}\qquad \forall\,r \in (0,1/2).
 $$
Moreover, if  ${\rm dim}(\{p_2=0\}) =n-1$  then
\[
\{u=0\}\cap B_r \subset \big\{ x \, : \, {\rm dist}(x, \{p_2=0\}) \le C r^{\lambda-1}\big\}.
\]
The constant $C$ depends only on $n$ and $\lambda$.
\end{proposition}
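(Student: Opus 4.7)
The plan is to combine the frequency monotonicity for $w=u-p_2$ with the geometric structure of the polynomial $p_2\in\mathcal P$. By Proposition~\ref{prop:EMF1}, $r\mapsto\phi(r,w)$ is nondecreasing with $\phi(0^+,w)=\lambda^{2nd}\ge\lambda$, so $\phi(r,w)\ge\lambda$ throughout $(0,1)$, and Lemma~\ref{lem:EMF3} yields $H(r,w)\le Cr^{2\lambda}$ for $r$ small. Integrating in spherical shells gives $\int_{B_r}w^2\le Cr^{n+2\lambda}$, and since $w=-p_2$ on $\{u=0\}$, we get $\int_{\{u=0\}\cap B_r}p_2^2\le Cr^{n+2\lambda}$; this is the quantitative input driving the rest of the argument.

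For the \emph{inclusion} assertion when $m:=\dim\{p_2=0\}=n-1$, write $p_2(x)=\tfrac12(x\cdot e)^2$ for a unit vector $e$. Lemma~\ref{lem:E2B1} provides $|\nabla\tilde w_r|\le C$ in $B_{1/2}$, which rescales to $|\nabla w|\le CH(r,w)^{1/2}/r\le Cr^{\lambda-1}$ in $B_{r/4}$. At any $x_0\in\{u=0\}\cap B_r$, the minimality of $u$ forces $\nabla u(x_0)=0$, so $|\nabla w(x_0)|=|\nabla p_2(x_0)|=|x_0\cdot e|\le Cr^{\lambda-1}$, which is the desired inclusion; the density bound $|\{u=0\}\cap B_r|\le Cr^{n+\lambda-2}$ follows by a volume count.

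For the density bound when $m\le n-2$, I would upgrade the $L^2$ decay into the pointwise estimate $\|w\|_{L^\infty(B_\rho)}\le C\rho^\lambda$. The rescaled family $\{\tilde w_r\}$ is bounded in $W^{1,2}_{\rm loc}$, superharmonic ($\Delta\tilde w_r\le 0$), enjoys the tangential semiconvexity of Lemma~\ref{lem:E2B1}, and subsequentially converges to $\lambda^{2nd}$-homogeneous harmonic polynomials (Proposition~\ref{prop:E2B2}(a)) that are bounded on compacts; a compactness/contradiction argument, combined with the superharmonic mean-value inequality, then produces a uniform $L^\infty$ bound on $\tilde w_r$. With this bound in hand, on $\{u=0\}\cap B_r$ we have $p_2=|w|\le Cr^\lambda$, so $\{u=0\}\cap B_r\subset\{p_2\le Cr^\lambda\}$. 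Using $|\{p_2\le t\}\cap B_1|\le Ct^{(n-m)/2}$ for small $t$ (because $p_2$ is a convex quadratic with $m$-dimensional kernel) and scaling, we obtain
\[
\frac{|\{u=0\}\cap B_r|}{|B_r|}\le Cr^{(\lambda-2)(n-m)/2}\le Cr^{\lambda-2},
\]
where the last inequality uses $\lambda\ge 2$ (Lemma~\ref{lem:EMF2}) together with $n-m\ge 2$. The main obstacle is precisely this $L^\infty$ upgrade: for $m\le n-2$ the Lipschitz bound of Lemma~\ref{lem:E2B1} is unavailable, so the pointwise control must be extracted from the $W^{1,2}$-compactness together with the superharmonicity and the weaker tangential semiconvexity, rather than from a direct gradient estimate.
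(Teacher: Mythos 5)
Your plan for the inclusion when $\dim\{p_2=0\}=n-1$ is sound: the Lipschitz estimate of Lemma~\ref{lem:E2B1}, rescaled, together with $\nabla u(x_0)=0$ at every point of $\{u=0\}$ (since $u\ge 0$), gives $|x_0\cdot e|=|\nabla p_2(x_0)|=|\nabla w(x_0)|\le Cr^{\lambda-1}$, which is exactly the paper's route (the paper cites \cite[Remark 2.14]{AlessioJoaquim} and Lemma~\ref{lem:E2B1} for this). The derived density bound by a tube--volume count on the $(n-1)$ stratum is also fine.

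The trouble is with your treatment of the density bound for $m\le n-2$, and it is twofold. First, the $L^\infty$ upgrade is not a genuine obstacle: you do not need compactness, semiconvexity, or blow-up classification here. The function $|w|=|u-p_2|$ is subharmonic (this is precisely Lemma~\ref{lem:u-v}, since $p_2\in\mathcal P$ with $m\ge 1$ is itself a solution of \eqref{eq:UELL}), and the mean-value inequality together with $H(\rho,w)\le C\rho^{2\lambda}$ gives $\|w\|_{L^\infty(B_{r/2})}\le C(n,\lambda)\,r^{\lambda}$ directly and with a dimensional constant. You should simply invoke Lemma~\ref{lem:u-v} rather than sketch a compactness argument, which ---as stated--- does not obviously produce the pointwise bound (weak $W^{1,2}$ convergence to a polynomial does not control the $L^\infty$ norm of the approximating sequence, and superharmonicity bounds only the negative part).

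Second, and more seriously, the volume estimate $|\{p_2\le t\}\cap B_1|\le Ct^{(n-m)/2}$ is false with a constant depending only on $n$: the sublevel set $\{p_2\le t\}$ is an (infinite) ellipsoidal slab whose thickness in the $i$-th positive eigendirection is $\sim(t/\mu_i)^{1/2}$, and the $\mu_i$'s are only constrained by $\sum\mu_i=1$. Hence the constant in your estimate depends on $\min_i\mu_i$ (equivalently on $p_2$), while the proposition asserts $C=C(n,\lambda)$. With a purely dimensional constant the only sublevel-set bound available is $|\{p_2\le t\}\cap B_1|\le Ct^{1/2}$ (using that the largest eigenvalue is $\ge 1/(n-m)$), which after rescaling yields the weaker exponent $r^{(\lambda-2)/2}$ instead of $r^{\lambda-2}$. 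The argument that actually closes the density bound for all strata ---and the one used in \cite{AlessioJoaquim} as well as in the proof of Lemma~\ref{lem:E2B1} of this paper--- goes through the measure equation $-\Delta w=\chi_{\{u=0\}}$ and an integration by parts against a cutoff: for $\chi\in C_c^\infty(B_{2r})$ with $\chi\equiv 1$ on $B_r$, $|\Delta\chi|\le Cr^{-2}$,
\[
|\{u=0\}\cap B_r|\le -\int_{B_{2r}}\chi\,\Delta w=-\int_{B_{2r}}\Delta\chi\, w\le Cr^{-2}\int_{B_{2r}\setminus B_r}|w|\le Cr^{-2}\,r^{n/2}\,\|w\|_{L^2(B_{2r})}\le C(n,\lambda)\,r^{n+\lambda-2},
\]
where the last step uses $H(\rho,w)\le C\rho^{2\lambda}$. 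This bypasses the $L^\infty$ upgrade altogether and produces the claimed dimensional dependence of the constant. I recommend replacing your sublevel-set argument with this one.
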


\begin{proof}
The first part is exactly  \cite[Proposition 2.13]{AlessioJoaquim}.
The second part on $\Sigma_{n-1}$ follows by the argument in \cite[Remark 2.14]{AlessioJoaquim}, as a consequence of the Lipschitz estimate in Lemma~\ref{lem:E2B1}.
\end{proof}

Following the notation introduced in \cite{AlessioJoaquim}, we denote
\begin{equation} \label{ahoiah0}
\Sigma_m :=  \big\{ x_\circ \mbox{ singular points with } {\rm dim}\big( \{p_{2,x_\circ} =0 \}\big) =m    \big\}\,, \quad 0\le m \le n-1
\end{equation}
and, for $m\leq n-2$,
\begin{equation} \label{ahoiah7}
\Sigma^a_m := \big\{ x_\circ\in  \Sigma_m \mbox{ such that } \phi\big(0^+, u(x_\circ+\,\cdot\,)-p_{2,x_\circ}\big) =2 \big\}\,,  \quad 0\le m \le n-2.
\end{equation}
Moreover, for $m=n-1$ we introduce some further notation: we define
\begin{equation} \label{ahoiah8}
\Sigma_{n-1}^{<3} := \big\{ x_\circ\in  \Sigma_{n-1} \mbox{ such that } \phi\big(0^+, u(x_\circ+\,\cdot\,)-p_{2,x_\circ}\big) < 3 \big\},
\end{equation}
and
\begin{equation}\label{ahoiah1}
\Sigma_{n-1}^{\ge 3} := \Sigma_{n-1}\setminus \Sigma_{n-1}^{<3}.
\end{equation}

Finally, we will need the following:

\begin{definition}\label{def:Sigma3rd}
Let $u:B_1\to[0,\infty)$ solve \eqref{eq:UELL}.
 For $1\le m \le n-1$ we denote by $\Sigma_{m}^{3rd}$ the set of points $x_\circ\in \Sigma_m$  such that, for  $w:= u(x_\circ+\,\cdot\,)-p_{2, x_\circ}$, the following two conditions hold:
 \begin{itemize}
 \item[(i)] $\phi(0^+,w)\ge3$;
\item[(ii)] there exists some sequence $r_k\downarrow 0$ along which  $r_k^{-3} w(r_k\,\cdot\,)$ converges, weakly in $W^{1,2}_{\rm loc}(\R^n)$, to some 3-homogeneous harmonic polynomial vanishing on $\{p_{2,x_\circ}=0\}$ ---possibly the polynomial zero.
 \end{itemize}
\end{definition}
 
Notice that, by Proposition~\ref{prop:E2B2}(a), for $m\leq n-2$ we have $\Sigma_m\setminus\Sigma_m^a=\Sigma_{m}^{3rd}=\Sigma_{m}^{\ge3}$.
On the other hand, this is not true for $m=n-1$, and later on in the paper we will need to understand the set $\Sigma^{\ge 3}_{n-1}\setminus\Sigma^{3rd}_{n-1}$.

We conclude this section by recalling that if $0 \in \Sigma^{3rd}_{m}$ then the limit 
\begin{equation}\label{cubic-lim}
\lim_{r\downarrow 0} r^{-3} (u-p_2)(r\,\cdot\,) 
\end{equation}
exists.
Indeed, as shown in \cite{AlessioJoaquim}, this is a consequence of the following Monneau-type monotonicity formula.

\begin{lemma}\label{lem:EGB6}
Let $u:B_1 \to [0,\infty)$ satisfy \eqref{eq:UELL}, and let $0\in \Sigma_m^{\geq3}$ for some $0\leq m\leq n-1$.  
Let $w:= u-p_2 - P$, where $P$ is any 3-homogeneous harmonic polynomial vanishing on  $\{p_2=0\}$.
Then
\begin{equation}\label{eq:EGB6a}
D(r,w)\ge 3 H(r,w) \qquad \forall \,r\in (0,1)
\end{equation}
and
\begin{equation}\label{eq:EGB6b}
\frac{d}{dr} \big( r^{-6}H(r,w) \big) \ge - C \sup_{\partial B_1} \frac{P^2}{p_2},
\end{equation}
where $C$ is some dimensional constant.
\end{lemma}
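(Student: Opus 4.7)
\textbf{Proof plan for Lemma \ref{lem:EGB6}.}

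Set $v := u - p_2$, so that $w = v - P$. The main idea is to exploit the fact that $P$ is 3-homogeneous and harmonic in order to linearize the quantity $D-3H$, and separately to use the smallness of $P$ near $\{p_2=0\}$ to control the sign of $w\Delta w$.

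For part \eqref{eq:EGB6a}, I will first expand $D(r,w)$ and $H(r,w)$ around $v$ and $P$. Using integration by parts and the fact that $\Delta P\equiv 0$, and that $P_\nu = 3P/r$ on $\partial B_r$ by 3-homogeneity, a direct computation yields
\[
D(r,w) = D(r,v) - 6K(r) + D(r,P), \qquad H(r,w) = H(r,v) - 2K(r) + H(r,P),
\]
where $K(r):= r^{1-n}\int_{\partial B_r} vP$. The coefficients $-6$ and $-2$ combine so that the $K(r)$-terms cancel in $D(r,w) - 3H(r,w)$. Moreover, integration by parts using that $P$ is a 3-homogeneous harmonic polynomial gives $D(r,P) = 3H(r,P)$. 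Thus
\[
D(r,w) - 3H(r,w) = D(r,v) - 3H(r,v).
\]
Since $0\in\Sigma_m^{\ge 3}$, we have $\phi(0^+,v)\ge 3$, and Proposition~\ref{prop:EMF1} gives $\phi(r,v)\ge 3$ for all $r\in(0,1)$, which is exactly $D(r,v)-3H(r,v)\ge 0$. This proves \eqref{eq:EGB6a}.

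For part \eqref{eq:EGB6b}, I will differentiate directly. Using Lemma~\ref{lem:EFF1} (suitably rescaled) one finds
\[
\frac{d}{dr}\bigl(r^{-6}H(r,w)\bigr) = \frac{2}{r^{7}}\bigl(D(r,w) - 3 H(r,w)\bigr) + \frac{2}{r^{n+5}}\int_{B_r} w\,\Delta w.
\]
The first term is nonnegative by \eqref{eq:EGB6a}. For the second term, note that $\Delta w = -\chi_{\{u=0\}}$, and $w = -(p_2+P)$ on $\{u=0\}$, so $w\Delta w = (p_2+P)\chi_{\{u=0\}}$. Therefore the task reduces to bounding
\[
\int_{B_r\cap \{u=0\}} (p_2 + P) \quad\text{from below.}
\]

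The main obstacle, and key step, is to control the (possibly negative) contribution of $P$ by the positive contribution of $p_2$, together with a measure bound on the contact set. Since $P$ vanishes on $\{p_2=0\}$, on $\partial B_1$ we have $P^2 \le C_* p_2$ with $C_*:=\sup_{\partial B_1}(P^2/p_2)$. By homogeneity ($P$ is 3-homogeneous, $p_2$ is 2-homogeneous), this extends to
\[
|P(x)|^2 \le C_* |x|^4\, p_2(x),\qquad \text{i.e.}\qquad |P(x)| \le \sqrt{C_*}\,|x|^2\,p_2(x)^{1/2}.
\]
Young's inequality then gives $|P|\le \tfrac{1}{2}p_2 + \tfrac{C_*}{2}|x|^4$, so
\[
p_2 + P \ge \tfrac{1}{2}p_2 - \tfrac{C_*}{2}|x|^4 \ge -\tfrac{C_*}{2}|x|^4.
\]
Combining this pointwise bound with the measure estimate from Proposition~\ref{prop:E2B3} applied with $\lambda=3$ (valid since $\lambda^{2nd}\ge 3$), which gives $|\{u=0\}\cap B_r|\le C r^{n+1}$ (both in the case $m\le n-2$ directly, and in the case $m=n-1$ using the slab inclusion), one obtains
\[
\int_{B_r\cap \{u=0\}} (p_2 + P) \ge -\tfrac{C_*}{2}\int_{B_r\cap\{u=0\}}|x|^4 \ge -C\, C_* r^{\,n+5}.
\]
Plugging into the expression for $\tfrac{d}{dr}(r^{-6}H(r,w))$, the $r$-powers balance exactly and yield $\tfrac{d}{dr}\bigl(r^{-6}H(r,w)\bigr) \ge -C\, C_* = -C\sup_{\partial B_1}(P^2/p_2)$, completing the proof of \eqref{eq:EGB6b}.
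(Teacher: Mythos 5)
Your proof is correct and follows the standard Monneau-type monotonicity argument, which is essentially the route taken in the cited reference [AlessioJoaquim, Lemma 4.1]: the key cancellation $D(r,w)-3H(r,w)=D(r,v)-3H(r,v)$ (using that $P$ is $3$-homogeneous and harmonic), the frequency lower bound $\phi(r,v)\geq 3$, and the error control on $\{u=0\}$ via $p_2+P\geq -\tfrac12 (P^2/p_2)$ together with the contact-set measure bound of Proposition~\ref{prop:E2B3} are all the right ingredients. Two tiny imprecisions worth noting: the measure estimate $|\{u=0\}\cap B_r|\leq Cr^{n+1}$ follows from the first (unconditional) part of Proposition~\ref{prop:E2B3} for \emph{every} $m$, so the parenthetical distinction between $m\leq n-2$ and $m=n-1$ is unnecessary; and one should dispose of the degenerate case $u\equiv p_2$ (excluded in Propositions~\ref{prop:EMF1} and~\ref{prop:E2B3}), where $w=-P$ is $3$-homogeneous and both conclusions hold with equality, before invoking those propositions.
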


\begin{proof}
The proof is contained in \cite[Lemma 4.1]{AlessioJoaquim}.
\end{proof}

The next result provides the existence of a unique limit in \eqref{cubic-lim} for all points in $\Sigma^{3rd}_m$, which follows immediately from Lemma~\ref{lem:EGB6} (see \cite[Proposition 4.5]{AlessioJoaquim}):

\begin{lemma}\label{lem:EGB7}
Let  $u:B_1\to[0,\infty)$ solve \eqref{eq:UELL}. 
Then, for all $x_\circ$ in $\Sigma_m^{3rd}$ with $0\leq m\leq n-1$, the limit
\[
p_{3, x_\circ} := \lim_{r\downarrow 0} \frac{1}{r^3} \big( u(x_\circ+r \,\cdot\,) - p_{2, x_\circ}(r\,\cdot\,) \big)
\]
exists, and $p_{3, x_\circ}$ is a $3$-homogeneous harmonic polynomial vanishing on  $\{p_{2, x_\circ}=0\}$.
\end{lemma}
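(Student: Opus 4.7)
The plan is to apply the Monneau-type monotonicity formula of Lemma~\ref{lem:EGB6} with the specific $3$-homogeneous harmonic polynomial $P_*$ supplied by the definition of $\Sigma_m^{3rd}$, and then run the classical uniqueness-of-blow-up argument to upgrade the given subsequential limit to a full limit. Without loss of generality I would take $x_\circ = 0$ and write $p_2 := p_{2,0}$, $w := u - p_2$, and $v_r(x) := r^{-3} w(rx)$. By definition of $\Sigma_m^{3rd}$ there exist a sequence $r_k \downarrow 0$ and a $3$-homogeneous harmonic polynomial $P_*$ vanishing on $\{p_2 = 0\}$ (possibly $P_* \equiv 0$) such that $v_{r_k} \rightharpoonup P_*$ in $W^{1,2}_{\rm loc}(\R^n)$.

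Next I would apply Lemma~\ref{lem:EGB6} with $P = P_*$ to $\bar w := u - p_2 - P_*$. The estimate \eqref{eq:EGB6b} then reads $\frac{d}{dr}(r^{-6} H(r, \bar w)) \ge - C_*$, where $C_* := C \sup_{\partial B_1} P_*^2/p_2$ is finite (as $P_*$ vanishes on the subspace $\{p_2 = 0\}$ while $p_2$ vanishes quadratically there). Hence $r \mapsto r^{-6} H(r, \bar w) + C_* r$ is monotone non-decreasing on $(0,1)$, and the limit $L := \lim_{r \downarrow 0} r^{-6} H(r, \bar w)$ exists. A direct change of variables using the $3$-homogeneity of $P_*$ yields
\[
r^{-6} H(r, \bar w) = \|v_r - P_*\|_{L^2(\partial B_1)}^2,
\]
so identifying $L$ will pin down the limit of $v_r$ on $\partial B_1$.

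To compute $L$ I use the subsequence $r_k$: by compactness of the trace $W^{1,2}(B_2) \hookrightarrow L^2(\partial B_1)$, the weak convergence $v_{r_k} \rightharpoonup P_*$ in $W^{1,2}_{\rm loc}$ upgrades to strong $L^2(\partial B_1)$ convergence, whence $r_k^{-6} H(r_k, \bar w) \to 0$. Monotonicity then forces $L = 0$. The same identity applied at each scale (via $v_{r\rho}(z) = \rho^{-3} v_r(\rho z)$, which combined with the $3$-homogeneity of $P_*$ gives $(r\rho)^{-6} H(r\rho, \bar w) = \|v_{r\rho} - P_*\|_{L^2(\partial B_1)}^2$) delivers $\|v_r - P_*\|_{L^2(\partial B_\rho)} \to 0$ for every $\rho > 0$, and by Fubini $v_r \to P_*$ in $L^2_{\rm loc}(\R^n)$. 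This identifies $p_{3,0} = P_*$, which by construction is $3$-homogeneous, harmonic, and vanishing on $\{p_2 = 0\}$.

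The heavy lifting has already been done in Lemma~\ref{lem:EGB6}; the argument above is a standard uniqueness-of-blow-up deduction, with no real obstacle. The only point worth double-checking is the finiteness of $C_*$, which is exactly the reason why the condition ``$P_*$ vanishes on $\{p_{2, x_\circ}=0\}$'' is built into Definition~\ref{def:Sigma3rd}: without it, $P_*^2/p_2$ would blow up near the kernel of $p_2$ and the Monneau monotonicity would be useless for this argument.
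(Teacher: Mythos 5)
Your proof is correct and takes the same route the paper defers to in \cite{AlessioJoaquim}: apply the Monneau-type monotonicity of Lemma~\ref{lem:EGB6} with $P$ equal to the subsequential limit supplied by Definition~\ref{def:Sigma3rd}, use the compact trace embedding to identify the limit of $r^{-6}H(r,\bar w)$ as zero, and conclude convergence of $v_r$ to $P_*$ at every scale. The one point you flag (finiteness of $\sup_{\partial B_1}P_*^2/p_2$) is indeed exactly what the vanishing condition in Definition~\ref{def:Sigma3rd} is for, and your check is sound.
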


%
%\begin{proof}
%Suppose that $(0,0)\in \Sigma_m^{3rd}$ for some $0 \leq m\le n-1$. Then, by definition, there exists a sequence
%$r_k^{-3} \big( u(r_k\,\cdot\, ,0)- p_2(r_k\,\cdot\,)\big) \to p_3$, where $p_3$ is some harmonic polynomial vanishing on $\{p_2=0\}$.
%The fact that the same limit $p_3$ is obtained when letting $r\downarrow 0$  (and not just when $r_k$) is a immediate consequence of the monotonicity formula in \eqref{eq:EGB6b} (cf. .
%\end{proof}

When $x_\circ= 0$  we simplify the notation $p_{3,0}$ to $p_3$.

\section{Higher order blow-ups on the maximal stratum} \label{sec:E3B}

As explained in Section~\ref{sect:intro higher order}, in order to prove the main result of this paper (Theorem~\ref{thm-Schaeffer-intro}) we need to obtain ---among other things--- an expansion up to order $O(|x|^{5-\zeta})$ at ``most points'' of $\Sigma_{n-1}^{3rd}$.
This requires a very detailed analysis of such set, which is the goal of this section.
From now on, we will only study the points of  $\Sigma_{n-1}$ (hence, $m = n-1$).

In order to study the higher regularity properties of the set $\Sigma_{n-1}^{3rd}$, we need a new frequency function for $u-p_2-p_3$.
The following lemma is a more flexible version of Lemma~\ref{lem:EMF3}.
It will be useful later in order to prove the almost monotonicity of $\phi^\gamma(r,w)$ for a suitable $\gamma$, where $w$ will be the difference between $u$ and its polynomial expansions at singular points.

\begin{lemma}\label{lem:E3B1c}
Let $R\in (0,1)$,  and let  $w:B_R\to  [0,\infty)$ be a $C^{1,1}$ function.
Assume that for some $\kappa\in (0,1)$ we have
\[
\frac{d}{dr} \phi^\gamma(r,w) \ge \frac 2 r  
\frac{\left(r^{2-n} \int_{B_r} w\Delta w\right)^2}{\big(H(r,w) + r^{2\gamma}\big)^2}  - r^{\kappa-1} \qquad \forall \,r\in (0,R).
\]
Then the following holds:

{\rm (a)} Suppose  that $0<\underline \lambda \le \phi^\gamma(r, w)\le \overline \lambda$  for all $r\in (0,R)$. Then,  for any given $\delta >0$ we have
\[
\frac{1}{C_\delta} \left(\frac{R}{r} \right)^{2\underline \lambda-\delta}  \le \frac{H(R,w)+R^{2\gamma}}{H(r,w)+r^{2\gamma}} \le  C_\delta  \left(\frac{R}{r} \right)^{2\overline \lambda+\delta}\qquad \textrm{for all}\quad r\in(0,R),
\]
where $C_\delta$ depends only on $n$, $\gamma$, $\kappa$, $\overline \lambda$, $\delta$.

{\rm (b)} Assume in addition that
\[
\frac{ r^{2-n} \int_{B_r} w\Delta w }{H(r,w) + r^{2\gamma}}  \ge -r^{\kappa}\qquad \forall \,r\in (0,R).
\]
Then, for $\lambda_*  : = \phi^\gamma(0^+,w)$, we have
\[
e^{-\frac{4}{\kappa^2}}   \left(\frac{R}{r}\right)^{2 \lambda_*} \le \frac{H(R, w) +R^{2\gamma}}{H(r,w) +r^{2\gamma}}.
\]
\end{lemma}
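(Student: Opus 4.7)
The plan is to mirror the proof of Lemma~\ref{lem:EMF3}, upgrading the frequency $\phi$ to the truncated version $\phi^\gamma$ and tracking two new error sources: the term $-r^{\kappa-1}$ on the right-hand side of the assumed differential inequality, and the truncation $r^{2\gamma}$ appearing in both the numerator and denominator of $\phi^\gamma$. The starting point is a logarithmic-derivative identity analogous to \eqref{derHa}: combining Lemma~\ref{lem:EFF1} with the trivial derivative of $r^{2\gamma}$, one gets
\[
\frac{d}{dr}\log\bigl[H(r,w) + r^{2\gamma}\bigr] \;=\; \frac{2}{r}\bigl[\phi^\gamma(r,w) + F^\gamma(r)\bigr],\qquad F^\gamma(r):=\frac{r^{2-n}\int_{B_r} w\Delta w}{H(r,w) + r^{2\gamma}}.
\]
The coefficient $\gamma$ in front of $r^{2\gamma}$ in the definition of $\phi^\gamma$ is chosen precisely so that this identity takes this clean form; this is the calculational reason why the scheme of Lemma~\ref{lem:EMF3} transfers to the truncated setting with only minor modifications.

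For part (a), I would integrate the differential inequality from $r$ to $R$. Since $\phi^\gamma$ is bounded between $\underline\lambda$ and $\overline\lambda$ throughout $(0,R)$ and $\int_0^R \rho^{\kappa-1}\,d\rho = R^\kappa/\kappa$, this gives
\[
\int_r^R \frac{[F^\gamma(\rho)]^2}{\rho}\,d\rho \;\le\; C(\overline\lambda,\underline\lambda,\kappa).
\]
Then Cauchy--Schwarz yields $\bigl|\int_r^R F^\gamma(\rho)/\rho\,d\rho\bigr| \le C'(\log(R/r))^{1/2}$, which by Young's inequality is at most $\delta \log(R/r) + C_\delta$. Integrating the logarithmic-derivative identity on $[r,R]$ and combining with $\underline\lambda \le \phi^\gamma \le \overline\lambda$ produces simultaneously the upper and lower bounds claimed in (a).

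For part (b), the additional assumption $F^\gamma(\rho) \ge -\rho^\kappa$ gives directly
\[
\int_r^R \frac{F^\gamma(\rho)}{\rho}\,d\rho \;\ge\; -\frac{R^\kappa}{\kappa},
\]
which is sharper than the Cauchy--Schwarz bound and has no $\delta$-loss. Moreover the differential inequality implies that $\rho\mapsto \phi^\gamma(\rho,w) + \rho^\kappa/\kappa$ is nondecreasing, so $\lambda_* := \phi^\gamma(0^+,w)$ exists and $\phi^\gamma(\rho,w) \ge \lambda_* - \rho^\kappa/\kappa$ for every $\rho \in (0,R)$. Plugging both estimates into the logarithmic-derivative identity and integrating from $r$ to $R$, the cumulative error from the two error contributions is at most $2R^\kappa/\kappa^2 + 2R^\kappa/\kappa \le 4/\kappa^2$ (using $R<1$ and $\kappa < 1$). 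Exponentiating yields the estimate in (b).

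The main technical subtlety relative to Lemma~\ref{lem:EMF3} is the absence of the sign condition $w\Delta w \ge 0$: in the obstacle setting of that lemma, such a sign forces $F \ge 0$ and produces a clean lower bound with no $\delta$-loss. Here, in part (a) this absence is the reason for the $\delta$-loss in the lower bound, while in part (b) the surrogate sign assumption $F^\gamma \ge -r^\kappa$ recovers the clean exponent $2\lambda_*$ at the cost of the multiplicative constant $e^{-4/\kappa^2}$. I expect the careful bookkeeping of these two error sources to be the main nuisance; once the logarithmic-derivative identity above is in place, the rest of the argument is routine integration.
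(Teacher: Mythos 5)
Your proof is correct and follows essentially the same route as the paper: the truncated log-derivative identity (which is where the coefficient $\gamma$ in $\phi^\gamma$ is used), Cauchy--Schwarz on the square of $F^\gamma$ for part (a), and the one-sided bound $F^\gamma\ge -r^\kappa$ together with the almost-monotonicity of $\phi^\gamma$ for part (b). One small remark: in the paper's proof of (b) the displayed inequality $\phi^\gamma(\rho,w)-\lambda_*\ge-\frac1\kappa\rho^\kappa$ is attributed to integrating \eqref{derHa1}, but it in fact comes from the assumed differential inequality (i.e.\ from $\frac{d}{dr}\phi^\gamma\ge -r^{\kappa-1}$), exactly as you state it.
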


\begin{proof}
(a) Define
\[
F(r)  := \frac{r^{2-n}  \int_{B_r}  w\Delta w }{H(r,w) + r^{2\gamma}}
\]
so that, by assumption, we have
\begin{equation}\label{derphia1}
\frac{d}{dr} \phi^\gamma(r,w)  + r^{\kappa-1} \ge   \frac{2}{r} \big(F(r)\big)^2.
\end{equation}
It follows by Lemma~\ref{lem:EFF1}  that
\begin{equation}\label{derHa1}
\frac{\frac{d}{dr} (H(r,w) +r^{2\gamma})}{ (H(r,w)+r^{2\gamma})} = \frac  2 r \, \frac{r^{2-n}  \int_{B_r}  w\Delta w + r^{2-n}\int_{B_r} |\nabla w|^2  +\gamma r^{2\gamma} }{H(r,w) +r^{2\gamma}}
= \frac 2 r   \phi^\gamma(r,w) +  \frac{2}{r} F(r).
\end{equation}
Integrating \eqref{derphia1} and using Cauchy-Schwarz inequality, since $0 \le \phi^\gamma(r, w)\le \overline \lambda$  for all $r\in (0,R)$, we get
\begin{equation}\label{whiohg3oghg}
\begin{split}
\left|\int_r^R  \frac{1}{\rho} F(\rho)   d\rho \right| &\le  \left(\int_r^R  \frac{1}{\rho} \big(F(\rho)\big)^2   d\rho \right)^{1/2}   \left(\int_r^R \frac{d\rho}{\rho} \right)^{1/2}
\\
& \le \left(\int_r^R  \big( {\textstyle \frac{d}{dr}}\phi^\gamma(\rho,w)  + \rho^{\kappa-1} \big) d\rho  \right)^{1/2}   \left(\int_r^R  \frac{d\rho}{\rho} \right)^{1/2}
\\
& \le \left(\overline\lambda +\frac 1 \kappa (R^\kappa- r^{\kappa})\right)^{1/2}\big( \log (R/r) \big) ^{1/2}
\\
&\le C\big(\log (R/r)  \big)^{1/2}.
\end{split}
\end{equation}

Hence, integrating \eqref{derHa1} between $r$ and $R$ (recall $0<r<R<1$) and using  $\phi^\gamma(\rho,w) \le \overline \lambda$ and \eqref{whiohg3oghg} we obtain
\[
\log \frac{H(R,w) + R^{2\gamma} }{H(r,w) +r^{2\gamma}} \le \int_r^R \frac{2}\rho \big(\overline \lambda + F(\rho)  \big)\, d\rho \le 2\overline \lambda\log (R/r) + C\big(\log (R/r)\big) ^{1/2}
\le  (2\overline \lambda+\delta) \log (R/r) +C_\delta.
\]
Similarly (now using $\phi^\gamma(\rho,w) \ge \underline \lambda$) we obtain
\[\begin{split}
\log \frac{H(R,w) + R^{2\gamma} }{H(r,w) +r^{2\gamma}} &\ge   (2\underline \lambda-\delta) \log (R/r)  -C_\delta.
\end{split}
\]

(b) In this case we have $ F(r)\ge -r^\kappa$ for all $r\in (0,R)$. Hence, integrating  \eqref{derHa1} between $0$ and $\rho\in (0,R)$ we obtain
\[
\phi^\gamma(\rho,w) - \lambda_* \ge  -\frac 1 \kappa \rho^\kappa.
\]
Thus,
\[
\log \frac{H(R,w) + R^{2\gamma} }{H(r,w) +r^{2\gamma}} \ge \int_r^R \frac{2}\rho \big(\phi^\gamma(\rho,w) + F(\rho)  \big)\, d\rho
 \ge 2\lambda_* \int_r^R   \frac{d\rho}{\rho} - \left(\frac{2}{\kappa}  +1\right) \int_r^R   \rho^{\kappa-1}\,d\rho
\ge 2\lambda_*\log (R/r) - \frac{4}{\kappa^2}.
\]
\end{proof}

\begin{remark}\label{rem:phi gamma}
An interesting consequence of Lemma~\ref{lem:E3B1c}(a) is the following. If $w$ is as in of Lemma~\ref{lem:E3B1c}(a)  then $\phi^\gamma(0^+,w) \leq \gamma$.
Indeed, otherwise we would have $\phi^\gamma(r,w)\geq \underline\lambda :=\gamma+\delta>\gamma$ for all $r>0$ small, and Lemma~\ref{lem:E3B1c}(a) would imply that
$H(r,w)+r^{2\gamma}\leq Cr^{2\underline\lambda-\delta}=Cr^{2\gamma+\delta}$ for $r\ll 1$, impossible.
\end{remark}

The following lemma gives the (approximate) monotonicity of $\phi^\gamma$ when applied to $w:= u-p_2-P$, where $P$ is any $3$-homogeneous harmonic polynomial vanishing on $\{p_2=0\}$.

\begin{lemma}\label{lem:E3B1}
Let $u:B_1\to  [0,\infty)$ be a solution of \eqref{eq:UELL}, with $0\in \Sigma_{n-1}^{\ge3}$. 
Let $w : = u - p_2- P$, where $P$ is a $3$-homogeneous harmonic polynomial vanishing on $\{p_2=0\}$.  Then, given $\gamma\in(3,4)$, for all $r\in (0,1/2)$ we have
\[
\frac{d}{dr} \phi^\gamma(r,w) \ge   \frac{2}{r}  \frac{ \big( r^{2-n} \int_{B_r}w\Delta w\big)^2  }{ \big( H(r,w) + r^{2\gamma}\big)^2 } - C r^{3-\gamma}\qquad \text{and}\qquad
\frac{r^{2-n} \int_{B_r}w\Delta w  }{ H(r,w) + r^{2\gamma} } \ge - C  r^{4-\gamma},
\]
where $C$  depends only on $n$, $\gamma$,  $\|P\|_{L^2(B_1)}$.

In particular,  assuming $0\in \Sigma^{3rd}_{n-1}$, the previous inequalities hold for $w:=u-p_2-p_3$.
\end{lemma}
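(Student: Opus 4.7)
The plan is to apply Lemma~\ref{lem:EFF4} to $w := u-p_2-P$ and carefully estimate the error term $E^\gamma(r,w)$ appearing in the resulting differential inequality.

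First I would compute the key integrals. Since $\Delta u = \chi_{\{u>0\}}$, $\Delta p_2 \equiv 1$, and $\Delta P \equiv 0$, one has $\Delta w = -\chi_{\{u=0\}}$; and on $\{u=0\}$ the function $u$ attains its minimum, so $\nabla u = 0$ a.e.\ there. Using Euler's identities $x\cdot\nabla p_2 = 2p_2$ and $x\cdot\nabla P = 3P$ and setting
\[
J_0 := r^{2-n}\int_{\{u=0\}\cap B_r} p_2 \ge 0,\qquad K_0 := r^{2-n}\int_{\{u=0\}\cap B_r} P,
\]
I obtain $r^{2-n}\int_{B_r} w\Delta w = J_0 + K_0$ and $r^{2-n}\int_{B_r}(x\cdot\nabla w)\Delta w = 2J_0 + 3K_0$. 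Substituting into \eqref{eq:Erwgamma} and regrouping gives the clean decomposition
\[
E^\gamma(r,w) = J_0\,\bigl[D-2H+(\gamma-2)r^{2\gamma}\bigr] + K_0\,\bigl[D-3H+(\gamma-3)r^{2\gamma}\bigr],
\]
with $D=D(r,w)$, $H=H(r,w)$; a short algebraic manipulation identifies the second bracket with $(\phi^\gamma(r,w)-3)\,\tilde H$, where $\tilde H := H+r^{2\gamma}$.

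Next I would exploit sign information. Lemma~\ref{lem:EGB6}(a) yields $D\ge 3H$, so $D-2H\ge H$ and the first bracket is $\ge \tilde H$ (since $\gamma\ge 3$). Combined with $J_0\ge 0$, the $J_0$-contribution is non-negative and the task reduces to controlling $|K_0|$ and showing $|K_0|(\phi^\gamma-3)\lesssim r^{4-\gamma}\,\tilde H$. To estimate $|K_0|$, I would choose coordinates so that $\{p_2=0\}=\{x_n=0\}$ and invoke Proposition~\ref{prop:E2B3} with $\lambda=3$ (allowed since $0\in\Sigma_{n-1}^{\ge 3}$): this gives $\{u=0\}\cap B_r\subset\{|x_n|\le Cr^2\}$ and $|\{u=0\}\cap B_r|\le Cr^{n+1}$. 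Since $P$ is a $3$-homogeneous harmonic polynomial vanishing on $\{x_n=0\}$, it factors as $P(x)=x_n\,Q(x)$ with $Q$ a $2$-homogeneous polynomial of size $C\|P\|_{L^2(B_1)}$; hence $|P|\le C\|P\|_{L^2(B_1)}\,r^4$ on the slab. The pointwise-times-volume estimate then yields $|K_0|\le C\|P\|_{L^2(B_1)}\,r^{7}$, and the analogous bound $|J_0|\le Cr^7$ gives also $|I_0|:=|J_0+K_0|\le Cr^7$; this already proves the second inequality, upon combining with $\tilde H\ge r^{2\gamma}$ and the AM-GM bound $\tilde H\ge 2\sqrt{H(r,w)}\,r^\gamma$ to cover the various regimes.

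The main obstacle is the first inequality. With only $|K_0|\le Cr^7$ and $\tilde H\ge r^{2\gamma}$ in hand, the estimate $|K_0|(\phi^\gamma-3)\le \tfrac{C}{2}r^{4-\gamma}\tilde H$ is \emph{not} immediate when $\gamma>3$, because it requires an a priori upper bound on $\phi^\gamma$. The argument therefore proceeds by a bootstrap: starting from $\phi^\gamma(r_0,w)$ bounded at some fixed $r_0\in(0,1/2)$, I would feed the very inequality we are trying to prove (with the error estimated using the current control on $\phi^\gamma$) into Lemma~\ref{lem:E3B1c}(a) to propagate a two-sided control on $\tilde H$, and hence on $\phi^\gamma$, down to all smaller scales. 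This yields the desired a priori bound $\phi^\gamma\le \bar\phi$ with $\bar\phi$ depending only on $n$, $\gamma$, and $\|P\|_{L^2(B_1)}$, completing the proof. The specialization to $w=u-p_2-p_3$ is immediate, since Lemma~\ref{lem:EGB7} ensures that $p_3$ is a $3$-homogeneous harmonic polynomial vanishing on $\{p_2=0\}$.
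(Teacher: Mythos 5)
Your decomposition of $E^\gamma(r,w)$ is correct and rather elegant: with $J_0 := r^{2-n}\int_{\{u=0\}\cap B_r}p_2\ge 0$ and $K_0 := r^{2-n}\int_{\{u=0\}\cap B_r}P$, the identity
$E^\gamma(r,w)= J_0\bigl[D-2H+(\gamma-2)r^{2\gamma}\bigr]+K_0\bigl(\phi^\gamma(r,w)-3\bigr)\tilde H$
is exact, the first bracket is $\ge\tilde H$ by Lemma~\ref{lem:EGB6}(a), and so the $J_0$-term may be discarded. The genuine gap is in how you control $|K_0|$.

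The bound $|K_0|\le C r^{7}$ obtained from the volume estimate $|\{u=0\}\cap B_r|\le Cr^{n+1}$ is true but does not relate $|K_0|$ to $\tilde H=H(r,w)+r^{2\gamma}$, and this is exactly what is needed. The target estimate is $|K_0|(\phi^\gamma-3)\le Cr^{4-\gamma}\tilde H$. In the worst case $\tilde H\sim r^{2\gamma}$, so the right-hand side is of size $r^{4+\gamma}$, which for $\gamma>3$ is strictly \emph{smaller} than $r^7$; hence $|K_0|\le Cr^7$ is not good enough even if $\phi^\gamma$ is already bounded, and a bootstrap that propagates a bound only on $\phi^\gamma$ cannot repair this. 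The same problem kills the claimed proof of the second inequality: $|J_0+K_0|\le Cr^7$ and $\tilde H\ge r^{2\gamma}$ give $|J_0+K_0|/\tilde H\le Cr^{7-2\gamma}$, which is $\gg r^{4-\gamma}$ when $\gamma>3$; your AM-GM variant $\tilde H\ge 2H^{1/2}r^\gamma$ requires a lower bound $H\gtrsim r^6$ that is not available (and fails precisely when $\lambda^{2nd}>3$).

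The missing ingredient — and the crux of the paper's proof — is the integration-by-parts estimate
$r^{2-n}\,|\{u=0\}\cap B_r|= -\int_{B_1}\Delta w_r\le C\,\|w_r\|_{L^2(B_2\setminus B_1)}$
(test $-\Delta w_r\le 0$ against a cutoff). Combined with $|P|\le C\|P\|_{L^2}r^4$ on the slab $\{|x_n|\le Cr^2\}$, this gives $|K_0|\le Cr^4\|w_r\|_{L^2(B_2\setminus B_1)}$, i.e. a bound proportional to $g^\gamma(r)\tilde H^{1/2}$ rather than $r^7$. This is strictly stronger whenever $\tilde H\ll r^6$, and is what makes $|K_0|/\tilde H\le Cr^{4-\gamma}g^\gamma(r)$ hold. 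Accordingly the bootstrap must propagate a bound on the \emph{pair} $(\phi^\gamma, g^\gamma)$ — not on $\phi^\gamma$ alone — by first establishing both at $\gamma=3$ (where $|w|\le C|x|^3$ gives $g^3\le C$) and then iterating in $\gamma$ using Lemma~\ref{lem:E3B1c}(a) to convert almost-monotonicity of $\phi^{\gamma+\beta}$ into doubling of $\tilde H$, and hence into a bound on $g^{\gamma+\beta}$. Without the link between $|\{u=0\}\cap B_r|$ and $\|w_r\|_{L^2}$, the argument does not close.
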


\begin{proof}
The proof will rely on a iteration argument where one enlarge the value of $\gamma$, starting from $3$ and increasing it up to the desired $\gamma \in (3,4).$
We split the proof in two steps.

\smallskip

\noindent $\bullet$ \emph{Step 1}. We first show that
\begin{equation}\label{goal100}
 \frac{d}{dr} \phi^\gamma(r,w)  \ge\frac{2}{r}  \frac{ \big( r^{2-n} \int_{B_r}w\Delta w\big)^2  }{ \big( H(r,w) + r^{2\gamma}\big)^2 }  -  Cr^{3-\gamma}\phi^\gamma(r,w) g^\gamma(r)
\end{equation}
and
\begin{equation}\label{wlap1xx}
\frac{r^{2-n} \int_{B_r}w\Delta w  }{ H(r,w) + r^{2\gamma} }   \ge - Cr^{4-\gamma} g^\gamma(r),
\end{equation}
where
\[
g^\gamma (r) :=  \frac{ \|w_r\|_{L^2(B_2\setminus B_1)}}{(H(r,w) + r^{2\gamma})^{1/2}}
\]
and $C$ depends only on $n$ and $\|P\|_{L^2(B_1)}$.

Indeed, by Lemma~\ref{lem:EFF4} we have
\[
 \frac{d}{dr} \phi^\gamma(r,w) \ge \frac{2}{r}  \frac{ \big( r^{2-n} \int_{B_r}w\Delta w\big)^2  }{ \big( H(r,w) + r^{2\gamma}\big)^2 } + \overline E ^\gamma(r,w),
\]
where
\[
\overline E^\gamma(r,w) :=
  \frac{ 2}{r}\frac{r^{2-n} \int_{B_r} (\lambda_r w - x\cdot \nabla w) \Delta w }{ H(r,w) + r^{2\gamma}}
\qquad
\text{and}
\qquad
\lambda_r = \phi^\gamma(r,w) = \frac{D(r,w) + \gamma r^{2\gamma}} {H(r,w) + r^{2\gamma} }.
\]
Note that  $\Delta w = \Delta(u-p_2-p_3) = \chi_{\{u>0\}} - 1-0 = -\chi_{\{u=0\}}$. Also, since $\lambda_r\ge 3$  by  \eqref{eq:EGB6a}, using the inequality $p_2 + P \ge -\frac{P^2}{2p_2}$, since  $\frac{P^2}{p_2}$ is homogeneous of degree $4$ we have
\[
(\lambda_r  - x\cdot \nabla)(p_2 +P)   \ge (\lambda_r-2)p_2 +  (\lambda_r-3) P \ge  (\lambda_r-3) (p_2+P)
\ge - \frac{(\lambda_r-3)}{2}\biggl(\sup_{\partial B_1} \frac{P^2}{p_2} \biggr)|x|^4.
\]
Therefore we obtain
\begin{equation}\label{ashbon}
\overline E^\gamma(r,w) = \frac{ 2}{r}  \frac{r^{2-n} \int_{B_r\cap\{u=0\}} (\lambda_r  - x\cdot \nabla)(p_2 +P)}{H(r,w) + r^{2\gamma}}
 \ge  - (\lambda_r-3) r^{3-\gamma}  \biggl(\sup_{\partial B_1} \frac{P^2}{p_2} \biggr)   \frac{ r^{2-n} |B_r\cap\{u=0\}|}{ (H(r,w) + r^{2\gamma})^{1/2} }.
\end{equation}
Also, since $\Delta w = -\chi_{\{u=0\}}\le 0$, choosing $\chi\in C^\infty_c(B_2)$ a nonnegative cut-off satisfying $\chi=1$ in $B_1$,
integrating by parts we obtain
\[
r^{2-n}  \big|\{u=0\}\cap B_r\big| = \int_{B_1}  -\Delta w_r \le  \int_{B_2} -\Delta w_r\, \chi = -\int_{B_2} w_r \,\Delta\chi  \le C \int_{B_2\setminus B_1} |w_r| \le C\|w_r\|_{L^2(B_2\setminus B_1)} .
\]
Thus, since $\lambda_r=\phi^\gamma(r,w)$ and recalling \eqref{ashbon}, we have shown that
\[
\overline E^\gamma(r,w) \ge -  Cr^{3-\gamma} \phi^\gamma(r,w) g^\gamma(r),
\]
and  \eqref{goal100} follows. Note that, since by assumption $P^2$ is divisible by $p_2,$ we have that $\sup_{\partial B_1} \frac{P^2}{ p_2}$ is bounded by a constant depending only on $n$ and $\|P\|_{L^2(B_1)}$, and thus the constant $C$ above depends only on $n$ and $\|P\|_{L^2(B_1)}$.

Similarly, using  again $p_2 + P \ge - \frac{P^2}{2p_2}$, we obtain
\[
\frac{r^{2-n} \int_{B_r}w\Delta w  }{ H(r,w) + r^{2\gamma} }  =  \frac{r^{2-n} \int_{\{u=0\} \cap B_r} (p_2 +P)  }{ H(r,w) + r^{2\gamma} }  \ge  - Cr^{4-\gamma}   \frac{ r^{2-n} \big|\{u=0\} \cap B_r\big|}{ (H(r,w) + r^{2\gamma})^{1/2} },
\]
which gives \eqref{wlap1xx}.

\smallskip

\noindent $\bullet$ \emph{Step 2.} Next we show that \eqref{goal100} implies that, for all $\gamma<4$,
\begin{equation}\label{boundphi00}
\phi^\gamma(r, w)   \le  C_\gamma \quad \mbox{and} \quad  g^\gamma(r)   \le  C_\gamma,
\end{equation}
with $C_\gamma$ depending only on
$n$, $\gamma$, and $\|P\|_{L^2(B_1)}$.

We prove \eqref{boundphi00} for all $\gamma\in [3,4)$ by iteratively increasing the value of $\gamma$ at each iteration, starting from $\gamma=3$, in order to always have (along the iteration) a uniform bound on $\phi^\gamma(r,w)$ and $g^\gamma(r)$.

First,  we observe that since $0\in \Sigma^{\geq 3}_{n-1}$ we have $\phi(0^+, u-p_2)\ge 3$, hence $|u-p_2| \le C|x|^3$.
Therefore $|u-p_2-P| \le C|x|^3$, which immediately implies that $g^3(r)\leq C_3$, and then it follows by \eqref{goal100} that $\log\left(\phi^3(\cdot,w)\right)$ is almost monotonically increasing, so in particular it is uniformly bounded.

We show next that if \eqref{boundphi00} holds for some $\gamma\ge3$  then \eqref{boundphi00}  holds also with $\gamma$ replaced by $\gamma+\beta$ for any $\beta>0$ such that  $3\beta<4-\gamma$.

Indeed, we can bound
\[
    \phi^{\gamma+\beta}(r,w)  = \frac{D(r,w) + (\gamma+\beta)r^{2\gamma+2\beta}}{H(r,w) + r^{2\gamma+2\beta}} \le \frac{2}{r^{2\beta}}  \frac{D(r,w) + \gamma r^{2\gamma}}{H(r,w) + r^{2\gamma}}  \le  \frac{\phi^\gamma(r,w)}{r^{2\beta}}\le 2\frac{C_\gamma}{r^{2\beta}},
\]
and similarly
\[
 g^{\gamma+\beta}(r)  = \frac{ \|w_r\|_{L^2(B_2\setminus B_1)}}{ (H(r,w) + r^{2\gamma+2\beta})^{1/2}} \le \frac{1}{r^{\beta}}  \frac{ \|w_r\|_{L^2(B_2\setminus B_1)}}{(H(r,w) + r^{2\gamma})^{1/2}}  = \frac{g^\gamma(r)}{r^{\beta}}\le \frac{C_\gamma}{r^{\beta}}.
\]
Then \eqref{goal100} ---with $\gamma$ replaced by $\gamma+\beta$--- yields
\begin{equation}\label{goal1yy00}
 \frac{d}{dr} \phi^{\gamma+\beta}(r,w)  \ge\frac{2}{r}  \frac{ \big( r^{2-n} \int_{B_r}w\Delta w\big)^2  }{ \big( H(r,w) + r^{2\gamma}\big)^2 }  -  C C_\gamma^2 r^{4-\gamma-1-3\beta}.
\end{equation}
Noticing that  $r^{4-\gamma-1-3\beta}$ is integrable over $r \in (0,1)$ provided $3\beta<4-\gamma$,  \eqref{goal1yy00} implies that $\phi^{\gamma+\beta}(r,w)$ is almost monotonically increasing. In particular
 $\phi^{\gamma+\beta}(r,w)\le C'$ for  $r \in (0,1/2)$, where $C'$ is a constant depending only on $n$ and $\gamma+\beta$.

In addition,  \eqref{goal1yy00} and Lemma~\ref{lem:E3B1c}(a) imply that
\[
\frac{H(R,w) +R^{2\gamma+2\beta}}{H(r,w) +r^{2\gamma+2\beta}} \le  C'  \qquad \forall \,R\in (r,2r),
\]
thus
\[
\big(g^{\gamma+\beta} (r) \big)^2 = \frac{\|w_r\|^2_{L^2(B_2\setminus B_1)}}{H(r,w) +r^{2\gamma+2\beta}} \le  C \ave_{r}^{2r} \frac{H(R,w)\,dR}{H(r,w) +r^{2\gamma+2\beta}} \le C  C',
\]
and therefore \eqref{boundphi00} holds for $\gamma$ replaced by $\gamma+\beta$.

Having proven that if \eqref{boundphi00} holds for some $\gamma\ge3$  then \eqref{boundphi00}  holds also with $\gamma$ replaced by $\gamma+\beta$ for any $\beta>0$ such that  $3\beta<4-\gamma$, iterating finitely many times we conclude that   \eqref{boundphi00} holds for any $\gamma<4$, as claimed.

Combining this information with \eqref{goal100} we obtain
\[
\frac{d}{dr} \phi^\gamma(r,w) \ge \frac 2 r  \frac{\left(r^{2-n} \int_{B_r} w\Delta w\right)^2}{\left(H(r,w) + r^{2\gamma}\right)^2}  -C C_{\gamma}^2 r^{3-\gamma} \qquad \forall \,r\in (0,1/2).
\]
Also, it follows by  \eqref{wlap1xx}  and  \eqref{boundphi00} that
$
\frac{ r^{2-n} \int_{B_r} w\Delta w }{H(r,w) + r^{2\gamma}} \ge - C C_{\gamma} r^{4-\gamma}.
$
Finally, when $0\in \Sigma^{3rd}_{n-1}$ we can take $P=p_3$ (since, by definition of $\Sigma^{3rd}_{n-1}$,  $p_3$ vanishes on $\{p_2=0\}$).
\end{proof}

We have proved that in $\Sigma_{n-1}^{3rd}$ we have almost monotonicity of the new truncated frequency function $\phi^\gamma(r,u-p_2-p_3)$ for any $\gamma\in(3,4)$.
This means that $\phi^\gamma(0^+,u-p_2-p_3)$ exists, and satisfies  $3\leq \phi^\gamma(0^+,u-p_2-p_3)\leq \gamma$ (see Remark~\ref{rem:phi gamma}).

Hence, we can now introduce the following:

\begin{definition}\label{def:Sigma>3}
Let $u:B_1\to[0,\infty)$ solve \eqref{eq:UELL}.
We denote by $\Sigma_{n-1}^{>3}$ the set of points $x_\circ\in \Sigma_{n-1}^{3rd}$  such that, for $w:= u(x_\circ+\,\cdot\,)-p_{2, x_\circ}-p_{3, x_\circ}$, we have $\phi^\gamma(0^+,w)>3$ for any  $\gamma\in(3,4)$.
\end{definition}

Moreover, we will need the following:

\begin{definition}\label{ansatz}
Let $u:B_1\to[0,\infty)$ solve \eqref{eq:UELL}, and assume that $0\in \Sigma^{3rd}_{n-1}$. Choose a coordinate system such that
\begin{equation}\label{eq:p2p3}
p_2(x) =\frac 1 2 x_n^2 \qquad \mbox{and}\qquad p_3(x) =  \sum_{\alpha=1}^{n-1} \frac{a_\alpha}{2} x_\alpha^2x_n + \frac{a_n}{6} x_n^3.
\end{equation}
(Here we used that $p_3$ is harmonic and vanishes on $\{x_n=0\}$.)
We define the  \emph{fourth order polynomial Ansatz} at $0$, and we denote it by $\anz$, as\footnote{The formula for the ansatz can found by inspection, by trying to find the coefficients that guarantee the validity of Lemma~\ref{lem:E3B4}.}
\begin{equation}\label{eq:ansatzell}
\begin{split}
\anz(x) :=
\frac 1 2 x_n^2 + p_3 + \frac 1 2 \left(\frac{p_3}{x_n}\right)^2 + x_nQ
= \frac 1 2 \left(x_n  + \frac{p_3}{x_n}  + Q\right)^2 + O(|x|^5).
\end{split}
\end{equation}
Here $Q$ is a $3$-homogeneous polynomial which  depends only on $p_2$ and $p_3$, and is defined as follows
\begin{equation}\label{eq:defQ}
Q(x): = \sum_{\alpha = 1}^{n-1} \left(a_\alpha^2 - \frac{a_\alpha a_n}{3}\right) \left(\frac{x_n^3}{12} -\frac{x_\alpha^2 x_n}{2}\right).
\end{equation}
When $u$ is a solution of   \eqref{eq:UELL} and  $x_\circ\in \Sigma^{3rd}_{n-1}$, we define $\anz_{x_\circ}$ to be the fourth oder polynomial Ansatz
at $0$ of $u(x_\circ + \,\cdot\,)$  (note that $\anz_{x_\circ}$ depends only on $p_{2,x_\circ}$ and $p_{3,x_\circ}$).
In addition, for $\alpha\in \{1,2,\dots, n-1\}$ we define the \emph{osculating rotation vector fields} at $0$ as
\begin{equation}\label{eq:Xalpha}
\Xalpha :=  (1 + a_\alpha x_n) \boldsymbol e_\alpha - a_\alpha x_\alpha  \boldsymbol e_n, \qquad \mbox{where } \boldsymbol e_i = (0, \dots, 0, \place{i}{1}, 0, \dots, 0),
\end{equation}
where $a_\alpha$ are as in \eqref{eq:p2p3}.
\end{definition}

We will use the following notation throughout the section. Given $f\in C^1(\R^n) $, we denote by  $\Xalpha f$ the derivative of $f$ in the direction of $\Xalpha$, namely
$\Xalpha f =(1+a_\alpha x_n)\partial_\alpha f - a_\alpha x_\alpha \partial_n f$.

\begin{lemma}\label{lem:E3B4}
Given $p_2$ and $p_3$ as in \eqref{eq:p2p3},  define $Q$ as \eqref{eq:defQ}.
Then, $\anz$ given by \eqref{eq:ansatzell} satisfies
\[
\Delta \anz = 1
\qquad \mbox{and}\qquad
\Xalpha\Xalpha \anz = O(|x|^3) \quad \forall \alpha \in \{1,2,\dots, n-1\}.
\]
\end{lemma}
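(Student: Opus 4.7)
The plan is a direct, two-step computation that hinges on finding the right factorization of $\anz$. Set
\[
h := p_3/x_n = \sum_{\beta=1}^{n-1} \frac{a_\beta}{2} x_\beta^2 + \frac{a_n}{6} x_n^2, \qquad c_\alpha := a_\alpha^2 - \tfrac{a_\alpha a_n}{3},
\]
so that $Q = \sum_{\alpha=1}^{n-1} c_\alpha \bigl(\tfrac{x_n^3}{12} - \tfrac{x_\alpha^2 x_n}{2}\bigr)$. Note that the harmonicity of $p_3$ forces $a_n = -\sum_{\alpha=1}^{n-1} a_\alpha$, which in turn gives $\Delta h = -\tfrac{2a_n}{3}$.

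First I would prove $\Delta \anz = 1$. Since $\Delta(\tfrac12 x_n^2) = 1$ and $\Delta p_3 = 0$, it suffices to check that $\Delta(\tfrac12 h^2) + \Delta(x_n Q) \equiv 0$. Using $\Delta(\tfrac12 h^2) = h\,\Delta h + |\nabla h|^2$, a direct calculation yields
\[
\Delta({\textstyle\tfrac12} h^2) = -\tfrac{2a_n}{3}h + \sum_{\alpha=1}^{n-1} a_\alpha^2 x_\alpha^2 + \tfrac{a_n^2}{9} x_n^2 = \sum_{\alpha=1}^{n-1} c_\alpha x_\alpha^2,
\]
after cancelling the $x_n^2$ terms. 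On the other hand $\Delta(x_n Q) = 2\partial_n Q + x_n \Delta Q$, and computing the two pieces separately gives $2\partial_n Q = \tfrac{x_n^2}{2}\sum_\alpha c_\alpha - \sum_\alpha c_\alpha x_\alpha^2$ and $x_n \Delta Q = -\tfrac{x_n^2}{2}\sum_\alpha c_\alpha$, so $\Delta(x_n Q) = -\sum_{\alpha=1}^{n-1} c_\alpha x_\alpha^2$. The two contributions cancel exactly; this is precisely the algebraic identity the coefficients $c_\alpha$ in the definition of $Q$ are designed to produce.

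Next I would prove $\Xalpha \Xalpha \anz = O(|x|^3)$ by exploiting the square structure already suggested by the second line of \eqref{eq:ansatzell}. Expanding $(x_n + h + Q)^2$ and comparing with $\anz$ gives
\[
\anz = \tfrac12 F^2 + R, \qquad F := x_n + h + Q, \qquad R := -\tfrac12 Q^2 - hQ,
\]
and since $\deg h = 2$ and $\deg Q = 3$, we have $R = O(|x|^5)$; thus $\Xalpha\Xalpha R = O(|x|^3)$ by degree counting. So
\[
\Xalpha\Xalpha \anz = (\Xalpha F)^2 + F\,\Xalpha\Xalpha F + \Xalpha\Xalpha R.
\]
The crucial cancellation is $\Xalpha F = O(|x|^3)$. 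Indeed, $\partial_\alpha F = a_\alpha x_\alpha + \partial_\alpha Q$ and $\partial_n F = 1 + \tfrac{a_n}{3} x_n + \partial_n Q$, and a direct expansion yields
\[
\Xalpha F = \partial_\alpha Q + c_\alpha x_\alpha x_n + a_\alpha x_n \partial_\alpha Q - a_\alpha x_\alpha \partial_n Q;
\]
but $\partial_\alpha Q = -c_\alpha x_\alpha x_n$ by the explicit form of $Q$, so the degree-$2$ terms cancel and $\Xalpha F = a_\alpha x_n \partial_\alpha Q - a_\alpha x_\alpha \partial_n Q = O(|x|^3)$. Combined with $F = O(|x|)$ and the elementary bound $\Xalpha\Xalpha F = O(|x|^2)$ (each application of $\Xalpha$ lowers the minimal degree by at most one), we arrive at $\Xalpha\Xalpha \anz = O(|x|^6) + O(|x|^3) + O(|x|^3) = O(|x|^3)$.

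There is no real analytic obstacle: the proof reduces to careful polynomial bookkeeping. The only non-routine input is recognizing the factorization $\anz = \tfrac12 F^2 + O(|x|^5)$, which makes transparent the otherwise mysterious choices of $Q$ and of the osculating rotation vector fields $\Xalpha$: heuristically, $F$ should vanish to leading order on the contact set, and $\Xalpha$ is built to be an infinitesimal rotation that preserves $F$ to the relevant order, so two derivatives of $F$ along $\Xalpha$ must generate a cubic error.
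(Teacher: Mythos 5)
Your proof is correct, and it takes a genuinely different route from the paper's. The paper proves $\Xalpha\Xalpha\anz = O(|x|^3)$ by a brute-force term-by-term computation: it explicitly expands $\Xalpha\Xalpha$ on each of the four pieces $\frac12 x_n^2$, $p_3$, $\frac12(p_3/x_n)^2$, $x_nQ$ and verifies that the quadratic terms cancel when summed, with a similar direct check that $\Delta\bigl(\frac12(p_3/x_n)^2 + x_nQ\bigr)=0$ (this second computation is described in the paper as ``direct but tedious''). You instead exploit the factorization $\anz = \frac12 F^2 + R$ with $F = x_n + p_3/x_n + Q$ and $R = -\frac12 Q^2 - hQ = O(|x|^5)$, reduce the whole claim to the single cancellation $\Xalpha F = O(|x|^3)$, and observe that this follows from $\partial_\alpha Q = -c_\alpha x_\alpha x_n$ together with the cancellation $\Xalpha(x_n + h) = c_\alpha x_\alpha x_n + O(|x|^3)$. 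I verified all the algebra: the cancellation at degree one and at degree two in $\Xalpha F$ is exact, $\Delta(\frac12 h^2) = \sum_\alpha c_\alpha x_\alpha^2 = -\Delta(x_nQ)$, and the degree counting for $\Xalpha\Xalpha R$ is right. What your approach buys is that it makes the paper's construction transparent: where the paper says the coefficients of $Q$ ``can be found by inspection,'' your computation shows $Q$ is precisely the cubic correction that makes the surrogate $F$ invariant (to cubic order) under the osculating rotations $\Xalpha$, which is exactly the geometric picture one wants. Both arguments are valid; yours is shorter once the factorization is written down, and it is the more conceptual of the two.
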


\begin{proof}
Let $p_2$, $p_3$, $\anz$, and $\Xalpha$, be as in \eqref{eq:p2p3}, \eqref{eq:ansatzell}, \eqref{eq:defQ}, \eqref{eq:Xalpha}.
We compute
\[
\Xalpha\Xalpha  \left( \frac{ x_n^2}{2} \right) = -a_\alpha x_n + a^2_\alpha (x_\alpha^2-x_n^2),
\]
\[
\Xalpha\Xalpha   p_3  = a_\alpha x_n  +  2a_\alpha^2 (x_n^2-x_\alpha^2) - \sum_{\beta=1}^{n-1}\frac{ a_\alpha a_\beta}{2} x_\beta^2  -\frac{a_\alpha a_n}{2} x_n^2  + O(|x|^3),
\]
\[
\Xalpha\Xalpha  \biggl( \frac 1 2  \left(\frac{ p_3}{x_n}\right)^2 \biggr) =   \sum_{\alpha=1}^{n-1} \frac{a_\alpha a_\beta}{2} x_\beta^2 + \frac{a_\alpha a_n}{6} x_n^2  +  a_\alpha^2 x_\alpha^2 + O(|x|^3),
 \]
\[
\Xalpha\Xalpha (x_n Q)  = - \left(a_\alpha^2 - \frac{a_\alpha a_n}{3}\right) x_n^2  + O(|x|^3),
\]
and thus adding them we obtain
\[
\Xalpha\Xalpha  \anz = O(|x|^3).
\]
Similarly, using that $\sum_{\alpha=1}^{n-1}a_\alpha=-a_n$ (as a consequence of the fact that $p_3$ is harmonic), a direct (but tedious) computation shows that
\[
\Delta\biggl( \frac 1 2 \left(\frac{p_3}{x_n}\right)^2 + x_nQ\biggr) =0,
\]
therefore $\Delta \anz \equiv 1$.
\end{proof}

We will need the following semiconvexity estimate in the spirit of Lemma~\ref{lem:E2B1}.

\begin{lemma}\label{lem:E3B5}
Assume that  $u:B_1 \to [0,\infty)$ is a solution of  \eqref{eq:UELL}  and that $0\in \Sigma^{3rd}_{n-1}$. Let $w:= u-\anz -P$, where $P$ is a 4-homogeneous harmonic polynomial vanishing on $\{p_2=0\}$.
 Then
\[
\inf_{B_r} r^2 \Xalpha\Xalpha w \ge - C(P) \,\big( \|w_r\|_{L^2(B_5\setminus B_1)}+ r^5\big)
\]
and
\[
\sup_{B_{r/2}}  r |\nabla w| \le  C(P) \,\big(  \|w_r\|_{L^2(B_5\setminus B_1)} + r^5\big)
\]
for all $r\in (0,1/5)$.
\end{lemma}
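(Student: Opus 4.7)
The plan is to adapt the argument of Lemma~\ref{lem:E2B1} to our higher-order setting, relying on the identity $\Delta w = -\chi_{\{u=0\}}$. Indeed, since $\Delta u = \chi_{\{u>0\}}$, $\Delta \anz = 1$ by Lemma~\ref{lem:E3B4}, and $\Delta P = 0$, one has $\Delta w = \chi_{\{u>0\}}-1 = -\chi_{\{u=0\}}$; in particular $w$ is harmonic in $\{u>0\}$. Writing $\Xalpha = \partial_\alpha + a_\alpha R_\alpha$ with $R_\alpha = x_n\partial_\alpha - x_\alpha\partial_n$, both $\partial_\alpha$ and $R_\alpha$ generate Euclidean isometries and hence commute with $\Delta$; consequently $[\Xalpha,\Delta]=0$ and $\Xalpha\Xalpha u$ is harmonic in $\{u>0\}$. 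Since $u\ge 0$ attains its minimum on $\{u=0\}$, a second-order incremental quotient argument along the rotational flow of $\Xalpha$ (exactly as in the proof of Lemma~\ref{lem:E2B1}) gives $\Xalpha\Xalpha u \ge 0$ a.e.\ on $\{u=0\}$, so $(\Xalpha\Xalpha u)_-$ is subharmonic.

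The key structural observation is that $\Xalpha\Xalpha(\anz+P)$ is very small on the contact set. By Lemma~\ref{lem:E3B4} we have $\Xalpha\Xalpha\anz = O(|x|^3)$. For $P$ a $4$-homogeneous harmonic polynomial vanishing on $\{x_n=0\}$, we write $P = x_n\,Q$ with $Q$ cubic; the operator expansion
\[
\Xalpha\Xalpha = \partial_\alpha^2 + 2a_\alpha R_\alpha\partial_\alpha - a_\alpha\partial_n + a_\alpha^2 R_\alpha^2
\]
shows that the only quadratic contribution in $\Xalpha\Xalpha P$ comes from $\partial_\alpha^2 P = x_n\,\partial_\alpha^2 Q$, which itself vanishes on $\{x_n=0\}$. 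Thus $|\Xalpha\Xalpha(\anz+P)|\le C(P)(|x_n|\,|x|+|x|^3)$ in $B_1$. Since $0\in\Sigma^{3rd}_{n-1}$ implies $\phi(0^+,u-p_2)\ge 3$, Proposition~\ref{prop:E2B3} applied with $\lambda=3$ yields $\{u=0\}\cap B_\rho \subset \{|x_n|\le C\rho^2\}$, so we conclude
\[
|\Xalpha\Xalpha(\anz+P)|\le C(P)\,r^3 \qquad \text{on } \{u=0\}\cap B_{5r}.
\]

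Decomposing $\Xalpha\Xalpha w = \Xalpha\Xalpha u - \Xalpha\Xalpha(\anz+P)$, we apply the weak Harnack inequality to the subharmonic function $(\Xalpha\Xalpha u)_-$ and, as in the scheme of Lemma~\ref{lem:E2B1}, control it by the weak-$L^1$ norm of $\Xalpha\Xalpha u$ and then by $\|\Delta w\|_{L^1(B_{6r})}+\|w\|_{L^1(B_{6r})}$ through the Calder\'on-Zygmund inequality; the measure bound $|\{u=0\}\cap B_{6r}|\le Cr^{n+1}$ from Proposition~\ref{prop:E2B3} controls the first term, while the second is dominated by $\|w_r\|_{L^2(B_5\setminus B_1)}$ after rescaling. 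Combined with the pointwise bound on $\Xalpha\Xalpha(\anz+P)$ on the contact set, this yields the first inequality. The gradient estimate then follows as in Lemma~\ref{lem:E2B1}: the semiconvexity along the $n-1$ tangential directions $\Xalpha$ just established, together with $\Delta w\le 0$, produces semiconcavity in the remaining direction and therefore an $L^\infty$ bound on $D^2 w$ of the correct size, after which the Lipschitz estimate follows by standard interior interpolation.

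The main obstacle is identifying the algebraic/geometric mechanism responsible for the sharp $r^5$ decay: neither the cancellation $\Xalpha\Xalpha\anz = O(|x|^3)$ alone, nor the quadratic thickness of $\{u=0\}$ alone, would be enough; it is their combination with the vanishing of $\partial_\alpha^2 P$ on $\{x_n=0\}$ (forced by the $4$-harmonicity of $P$ and its vanishing on $\{x_n=0\}$) that produces the estimate. Indeed, the Ansatz $\anz$ of Definition~\ref{ansatz} is designed precisely so that $\Xalpha\Xalpha\anz$ vanishes to order $3$, which is what makes the combined semiconvexity estimate as strong as $r^5$.
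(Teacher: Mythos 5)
Your identification of the algebraic mechanism is correct and matches the paper: the combined effect of $\Xalpha\Xalpha\anz=O(|x|^3)$, the vanishing of $\partial_{\alpha\alpha}P$ on $\{x_n=0\}$, and the quadratic thinness $\{u=0\}\subset\{|x_n|\le C|x|^2\}$ is exactly what gives $|\Xalpha\Xalpha(\anz+P)|\le C(P)r^3$ on $\{u=0\}\cap B_{5r}$, which after scaling becomes the $r^5$ in the statement. However, the way you feed this into the weak Harnack/Calder\'on--Zygmund machinery has genuine gaps.

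First, the subharmonic barrier you choose is the wrong one. You correctly observe that $(\Xalpha\Xalpha u)_-$ is subharmonic (it is supported in $\{u>0\}$ where $\Xalpha\Xalpha u$ is harmonic, and vanishes on $\{u=0\}$). But the weak Harnack + interpolation + Calder\'on--Zygmund chain applied to $(\Xalpha\Xalpha u)_-$ bounds it in terms of $\|\Delta u\|_{L^1}+\|u\|_{L^1}$, which are $O(r^n)$ and $O(r^{n+2})$, not the small $w$-quantities you need; the claim ``control it by $\|\Delta w\|_{L^1}+\|w\|_{L^1}$'' does not follow, since $\Xalpha\Xalpha u\ne\Xalpha\Xalpha w$. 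Conversely, $(\Xalpha\Xalpha w)_-$ \emph{is} controlled by $\|\Delta w\|_{L^1}+\|w\|_{L^1}$, but it is \emph{not} subharmonic: on $\{u=0\}$ one only has the one-sided estimate $\Xalpha\Xalpha w\ge-\Xalpha\Xalpha(\anz+P)\ge-C(P)r^3$, not $\ge 0$. The paper resolves this by truncating: one works with $\bar v:=\min\{\Xalpha^r\Xalpha^r w_r,\,-C(P)r^5\}$, which \emph{is} superharmonic (the constant $-C(P)r^5$ is precisely the pointwise bound on the rotational second increment of $w_r$ on $\{u_r=0\}$, and $v$ is superharmonic in $\{u_r>0\}$), and then applies weak Harnack to $|\bar v|$. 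The truncation is essential; your decomposition $\Xalpha\Xalpha w=\Xalpha\Xalpha u-\Xalpha\Xalpha(\anz+P)$ doesn't substitute for it.

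Second, your claim that the measure bound $|\{u=0\}\cap B_{6r}|\lesssim r^{n+1}$ ``controls'' $\|\Delta w\|_{L^1}$ is not sufficient for the stated estimate. After rescaling this gives $\|\Delta w_r\|_{L^1(B_3)}\lesssim r^3$, which is \emph{not} dominated by $\|w_r\|_{L^2(B_5\setminus B_1)}+r^5$ when $\|w_r\|_{L^2}$ is much smaller than $r^3$ (which is exactly the degenerate regime this lemma must cover). What is actually used is the sharper bound $\|\Delta w_r\|_{L^1(B_3)}\le C\|w_r\|_{L^1(B_4\setminus B_3)}$, obtained by integrating the nonpositive measure $\Delta w_r$ against a cutoff as in \eqref{eq:control Delta w}.

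Third, the Lipschitz estimate is not ``standard interior interpolation.'' Because the $\Xalpha$ are not an orthonormal coordinate frame, passing from the rotational semiconvexity to a semiconcavity bound in the missing direction $E_n$ via $\Delta w_r\le 0$ introduces frame-conversion error terms of size $r\|w_r\|_{\rm Lip}$ (from $\nabla_{E_\alpha}E_\alpha=O(r)$, $\nabla_{E_n}E_n=O(r)$). Closing this requires a bootstrap: first establish a cruder Lipschitz bound with error $O(r^4)$ using ordinary derivatives $\partial_{\alpha\alpha}w_r$ (for which the pointwise bound on $\{u=0\}$ is only $O(r^4)$, not $O(r^5)$), and only then use that interim bound to absorb the $r\|w_r\|_{\rm Lip}$ error and upgrade to $O(r^5)$. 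Your sketch skips this two-step structure entirely.
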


\begin{proof}
Let $p_2$, $p_3$, $\anz$, and $\Xalpha$, be as in \eqref{eq:p2p3}, \eqref{eq:ansatzell}, \eqref{eq:defQ}, \eqref{eq:Xalpha}, and fix $\alpha\in \{1,2,\dots, n-1\}$.

\smallskip

\noindent $\bullet$ \emph{Step 1}. 
For $r>0$ small, we consider the rescaled vector field $\Xalpha^r = \Xalpha(r\,\cdot\,)$, and denote $w_r = w(r\,\cdot\,)$ and $v: = \Xalpha^r \Xalpha^r  w_r$.
We consider
\[
\bar v (x) : = \min \big\{ v(x) \, ,\,   -C(P) r^5\big\}
\]
for some constant $C(P)>0$ depending on $P$, to be chosen. We claim that
\[
\Delta \bar v \le 0 \quad \mbox{in  }B_5.
\]
Since  $w_r$ is $C^{1,1}$ (for fixed $r>0$) but not $C^2$, to prove that  $\bar v$ is  subharmonic we need to proceed similarly to the proof of Lemma~\ref{lem:E2B1}, now taking second order ``rotational'' incremental quotients.
More precisely, let $\phi^h_{\Xalpha^r}$ denote the integral flow of the vector field  $\Xalpha^r$ at time $h$, and
define
\[v^h:=  \frac{w_r\circ \phi^{h}_{\Xalpha^r} + w_r\circ \phi^{-h}_{\Xalpha^r}  -2w_r}{h^2}.\]
On the one hand, since $\phi^h_{\Xalpha^r}$ is a rotation (and thus it commutes with $\Delta$), noticing that $\Delta w = \chi_{\{u>0\}}-1 \le 0$ in $B_1$ and $\Delta w = 0$ in $\{u>0\}$ we obtain
\begin{equation}\label{asivobhasoibn}
\Delta v^h \le 0  \quad \mbox{in }\{u(r\,\cdot\,)>0\}\cap B_{1/r}.
\end{equation}
On the other hand, we claim that
\begin{equation}\label{abnoinaw}
v^h \ge  -C(P)\,r^5  \quad \mbox{in }\{u(r\,\cdot\,)=0\}\cap B_5.
\end{equation}
Indeed, recalling that $\Xalpha\Xalpha\anz = O(|x|^3)$  (by Lemma~\ref{lem:E3B4}) and since $\Xalpha\Xalpha P = \partial_{\alpha\alpha} P + O(|x|^3)$,  we obtain
\[
\Xalpha\Xalpha (-\anz -P) \ge  -\partial_{\alpha\alpha}P + O(|x|^3).
\]
In addition, since  $P$  is 4-homogeneous  and vanishes on $\{x_n=0\}$, we wave $\partial_{\alpha\alpha} P = x_n\ell(x')$ where $\ell$  is some linear function,
thus
\[
|\partial_{\alpha\alpha}P | \le C(P) |x||x_n|.
\]
Therefore, combining all these estimates, we get
\begin{equation}\label{abnaeoihw}
|\Xalpha\Xalpha (\anz +P)| \ge  C(P) \,\big( r^3 + r|x_n| \big) \quad \mbox{in }B_{5r}.
\end{equation}
In addition, thanks to Proposition~\ref{prop:E2B3}  (recall that $\lambda^{2nd} \ge 3$, since by assumption $0\in \Sigma_{n-1}^{3rd}$) we have
\begin{equation}\label{eq:u0}
\{u=0\} \cap B_{1/2} \subset  \{|x_n| \le C |x'|^2 \},
\end{equation}
thus
\[
(x',x_n) \in \{u=0\}\cap B_{5r} \quad \Rightarrow \quad |x'|\le 5r, \ |x_n| \le Cr^2
\quad\Rightarrow \quad  \big| \phi^{rh}_{\Xalpha} (x) \cdot \boldsymbol e_n \big| \le C r^2  \quad \forall\, h\in(0,2).
\]
As a consequence, combining this bound with \eqref{abnaeoihw} we obtain 
\[
\frac{\big|(\anz +P)\circ \phi^{rh}_{\Xalpha} + (\anz +P)\circ \phi^{-rh}_{\Xalpha}  -2(\anz +P)\big|}{(rh)^2} \le C(P)r^3  \quad \mbox{in }\{u=0\}\cap B_{5r} \quad \forall \,h\in(0,1).
\]
Rescaling this estimate one gets \eqref{abnoinaw}, that combined with \eqref{asivobhasoibn} implies that
\[
\bar v^h := \min \big\{ v^h(x) \, ,\,   -C(P) r^5\big\}  \quad \mbox{is superharmonic,}
\]
Therefore, since  $\bar v = \lim_{h\downarrow 0} \bar v^h$ a.e. then the function $\bar v$ is superharmonic too, as claimed.

\smallskip

\noindent $\bullet$ \emph{Step 2}. Note that if we consider $V: = \partial_{\alpha\alpha} w_r$  instead of rotational derivatives (as we did in the proof of Lemma~\ref{lem:E2B1}), then the same argument as the one above gives
\[
|\partial_{\alpha\alpha} (\anz +P)| \le  C(P) r^2 \quad \mbox{in }B_{5r}
\]
(cp. \eqref{abnaeoihw}), from which one deduces that
the function $\bar V: =  \min \big\{ V \, ,\,   -C(P) r^4\big\}$   is superharmonic (notice the difference in the power of $r$ in the definitions of $\bar v$ and $\bar V$).

\smallskip

\noindent $\bullet$ \emph{Step 3}.
Now, as in the proof of Lemma~\ref{lem:E2B1}, by weak Harnack inequality, interpolation, and the Calder\'on-Zygmund theory, we have
\[
\| \bar v \|_{L^\infty(B_1)}\leq  C(n) \biggl(\ave_{B_{3/2}} |\bar v|^\ep \biggr)^{1/\ep}\hspace{-2mm} \le  C (\|  w_r \|_{W^{2,1}_{weak} (B_2)}  + r^5)
\le   C \big( \|  w_r \|_{L^{1} (B_3 \setminus B_2)}  +  \|  \Delta w_r \|_{L^{1} (B_3)} +r^5  \big).
\]
On the other hand, since $\Delta w_r \le 0$ (because $\Delta w   = \Delta u- \Delta(\anz-P) = \Delta u - 1 \le 0$), reasoning as in the proof of Lemma~\ref{lem:E2B1} (cf. \eqref{eq:control Delta w}) we obtain
\[
 \|  \Delta w_r \|_{L^{1} (B_3)}  \le  C(n) \|  w_r \|_{L^{1} (B_4\setminus B_3)}.
\]
Hence
$
\| \bar v \|_{L^\infty(B_1)}\leq C\|  w_r \|_{L^{1} (B_4\setminus B_3)} ,
$
which yields
\begin{equation}\label{aniohaiohaoi}
\inf_{B_1} \Xalpha\Xalpha w_r  \ge  - C(\|  w_r \|_{L^{1} (B_4\setminus B_3)}  + r^5),
\end{equation}
and the first part of the lemma (semiconvexity)  follows easily by scaling.

In order to prove the Lipschitz bound, we 
note that if we repeat the same reasoning with $\bar V$  instead of $\bar v$, we find instead the semiconvexity estimates
\[ 
\inf_{B_1} \partial_{\alpha\alpha} w_r  \ge  - C(\|  w_r \|_{L^{1} (B_4\setminus B_3)}  + r^4), \quad \mbox{for } 1\le\alpha \le n-1.
\]
Although this estimate is less precise (it has an error of size $r^4$ instead of $r^5$) it is still useful. Indeed, using that $\Delta w_r \le 0$, it implies the semiconcavity estimate
\[
\sup_{B_1} \partial_{nn} w_r  \le   C(\|  w_r \|_{L^{1} (B_4\setminus B_3)}  + r^4).
\]
Combined together, these semiconvexity and semiconcavity estimates imply a bound on the Lipschitz constant of $w_r$
in terms of its $L^\infty$ norm in $B_4\setminus B_3$ and its semiconvexity/semiconcavity constants, that is
\begin{equation}\label{ajhioahioughqiuowb}
\| w_r  \|_{{\rm Lip} (B_{3/4})} \le   C(\|  w_r \|_{L^{\infty} (B_4\setminus B_3)}  + r^4).
\end{equation}
Although this is not the desired bound, this will be useful in the next step to obtain the sharp bound.

\smallskip

\noindent $\bullet$ \emph{Step 4}.
To conclude the proof, we need to improve \eqref{ajhioahioughqiuowb} and get Lipschitz bound for $w_r$ in $B_{1/2}$ with an error $O(r^5)$. For this, we note that for each $\alpha=1,\dots,n-1$ the unit vector  field  $E_\alpha  = \Xalpha^r/|\Xalpha^r|$ satisfies $|E_\alpha -\boldsymbol e_\alpha| \le Cr$ in $B_1$, and thus we can complete $\{E_\alpha\}_{\alpha=1}^{n-1}$ to obtain a orthonomal moving frame by adding a vectorfield $E_n$  satisfying $|E_n -\boldsymbol e_n|\le Cr$ in $B_1$.

Note that, since  $\nabla_{\Xalpha^r} {\Xalpha^r} = O(r)$  and $\nabla_{E_\alpha} {E_\alpha} = O(r)$, we can choose $E_n$ satisfying $\nabla_{E_n} {E_n} = O(r)$. Hence, using these bounds and $\Delta w_r \le 0$,  we obtain 
\[
\begin{split}
E_n E_n w_r - Cr \|w_r\|_{\rm Lip (B_{3/4})}  &\le  D^2 w_r(E_n, E_n) \le  -\sum_{\alpha=1}^{n-1} D^2 w_r(E_\alpha, E_\alpha)
\\
& \le   -\sum_{\alpha=1}^{n-1}  \Xalpha^r\Xalpha^r  w_r  +Cr \|w_r\|_{\rm Lip (B_{3/4})} \quad \mbox{ in }B_{3/4}.
\end{split}
\]
Thus, recalling \eqref{aniohaiohaoi} and \eqref{ajhioahioughqiuowb}, we get
\begin{equation}\label{qnioqoinqoi}
\sup_{B_{3/4}} E_n E_n w_r \le C(\|  w_r \|_{L^{1} (B_4\setminus B_3)}+r\|  w_r \|_{L^{\infty} (B_4\setminus B_3)}  +r^5)
\end{equation}
It follows by \eqref{aniohaiohaoi}  (resp. \eqref{qnioqoinqoi}) that the restiction of $w_r$ to the integral curves of $\Xalpha$ (resp. $E_n$) is semiconvex (resp. semiconcave), and hence Lipschitz along these curves.
Since the directions of these curves span $\R^n$, this yields
\[
\sup_{B_{1/2} }  |\nabla w_r|  \le C\sup_{B_{1/2}} \bigg( \sum_{\alpha =1}^{n-1} |\nabla w_r \cdot \Xalpha |  +   |\nabla w_r \cdot E_n|  \bigg)  \le C(\|  w_r \|_{L^{\infty} (B_4\setminus B_3)} +r^5).
\]
Finally, to conclude the proof, it suffices to show that the $L^\infty(B_4\setminus B_3)$-norm above can be replaced by $\|  w_r \|_{L^{2} (B_5\setminus B_2)} +C(P)r^5$. 
Indeed, recalling that $\Delta w=- \chi_{\{u=0\}}$, the function $w$ is superharmonic everywhere, and harmonic outside $\{u=0\}$. In particular, this gives the desired control on the $L^\infty$ norm of $w_-$. 
In addition, since $\anz\geq O(|x|^5)$ and $P$ is a 4-th order polynomial vanishing on $\{x_n=0\}$,
it follows from \eqref{eq:u0} that
$$w=u-\anz-P \leq P \leq C(P) r^5\qquad \text{inside }\{u=0\}\cap B_r,$$
 hence the function $\max\{  w_+, C(P)r^5\}$ is subharmonic.
Thus, the mean value inequalities allow us to control the $L^\infty$ norm of $(w_r)_\pm$ with the $L^1$ (or $L^2$) norm of $|w_r|+Cr^5$ in a slightly larger domain, concluding the proof.
\end{proof}

\begin{remark}\label{rem:E3B3}
We note that Lemma~\ref{lem:EFF4} can be rewritten as
\[
 \frac{d}{dr} \phi^\gamma(r,w) \ge \frac{2}{r}  \frac{ \big( r^{2-n} \int_{B_r}w\Delta w\big)^2  }{ \big( H(r,w) + r^{2\gamma}\big)^2 }  + \frac{2}{r}  \int_{B_1} (\lambda_r \hat w_r^{(\gamma)} -x\cdot\nabla  \hat w_r^{(\gamma)}) \Delta  \hat w_r^{(\gamma)},
\]
where
\[
\lambda_r := \phi^\gamma(r, w) \qquad \mbox{and} \qquad \hat w_r^{(\gamma)}: =  \frac{w(r\,\cdot\,)}{\big( H(r,w) + r^{2\gamma}\big)^{1/2}}.
\]
Also, we observe that Lemma~\ref{lem:E3B5} yields $\|\nabla \hat w_r^{(\gamma)}\|_{L^\infty}\le C$ for all $\gamma\le 5$ when $w:=u-\anz-P$. 
This will be crucial in the proof of Lemma~\ref{lem:E4B1} below.
\end{remark}

\begin{lemma}
\label{lem:E4B1}
Let $u:B_1 \to [0,\infty)$ be a solution of  \eqref{eq:UELL}, and assume that $0\in  \Sigma^{3rd}_{n-1}$. Set $w:=u-\anz-P$, where $\anz$ is defined in \eqref{eq:ansatzell}, and  $P$ is a $4$-homogeneous harmonic polynomial vanishing on $\{p_2=0\}$.
Then, given $\gamma\in (4,5)$, for all $r\in (0,1/2)$ we have
\[
\frac{d}{dr} \phi^\gamma(r,w) \ge \frac 2 r  \frac{\left(r^{2-n} \int_{B_r} w\Delta w\right)^2}{\left(H(r,w) + r^{2\gamma}\right)^2}  - Cr^{4-\gamma}
\qquad
\text{and}
\qquad
\frac{ r^{2-n} \int_{B_r} w\Delta w }{H(r,w) + r^{2\gamma}}  \ge -Cr^{5-\gamma },
\]
where $C$ is a constant depending only on $n$, $\gamma$, and $\|P\|_{L^2(B_1)}$.
\end{lemma}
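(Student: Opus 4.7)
We follow the two-step strategy of Lemma~\ref{lem:E3B1}, adapted to the higher-order ansatz $\anz$ and to the range $\gamma\in(4,5)$.

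\emph{Step 1 (Approximate monotonicity with controlled error).} Apply Lemma~\ref{lem:EFF4}. Since $\Delta\anz\equiv 1$ (Lemma~\ref{lem:E3B4}) and $\Delta P\equiv 0$, we have $\Delta w=-\chi_{\{u=0\}}$. On the coincidence set $\nabla u=0$, $w=-(\anz+P)$, and a homogeneity expansion using the decomposition $\anz=p_2+p_3+A_4$ with $A_4:=\tfrac12(p_3/x_n)^2+x_n Q$ yields, with $\lambda_r:=\phi^\gamma(r,w)$,
\[
-(\lambda_r w-x\cdot\nabla w)=(\lambda_r-2)p_2+(\lambda_r-3)p_3+(\lambda_r-4)(A_4+P)=-(\lambda_r-4)w+(2p_2+p_3)\quad\text{on }\{u=0\}.
\]
The first summand contributes $(\lambda_r-4)F(r)$, and the second is controlled via the defining square structure of the ansatz: writing $2p_2+p_3=x_n S-x_n Q$ with $S:=x_n+p_3/x_n+Q$, the identity $\anz=\tfrac12 S^2+O(|x|^5)$ combined with the relation $\anz=-(P+w)$ on $\{u=0\}$ gives $S^2\le C(|w|+|P|+|x|^5)$. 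Since $|x_n|\le Cr^2$ on $\{u=0\}\cap B_r$ (Proposition~\ref{prop:E2B3} with $\lambda=3$), $|P|\le Cr^5$ there (because $P$ is $4$-homogeneous and vanishes on $\{x_n=0\}$), and the Lipschitz estimate of Lemma~\ref{lem:E3B5} controls $|w|$ pointwise (in the normalized form $\|\nabla\hat w_r^{(\gamma)}\|_{L^\infty}\le C$ for $\gamma\le 5$, cf.\ Remark~\ref{rem:E3B3}), we control $|S|$ and hence $|x_nS|$ on $\{u=0\}\cap B_r$. Combining with the integration-by-parts estimate $r^{2-n}|\{u=0\}\cap B_r|\le Cg^\gamma(r)(H(r,w)+r^{2\gamma})^{1/2}$, where $g^\gamma(r):=\|w_r\|_{L^2(B_5\setminus B_1)}/(H(r,w)+r^{2\gamma})^{1/2}$ (the annulus $B_5\setminus B_1$ matching the domain of validity in Lemma~\ref{lem:E3B5}), produces
\[
\frac{d}{dr}\phi^\gamma(r,w)\ge \frac{2}{r}\frac{\left(r^{2-n}\int_{B_r}w\Delta w\right)^2}{\left(H(r,w)+r^{2\gamma}\right)^2}-Cr^{4-\gamma}\phi^\gamma(r,w)\,g^\gamma(r),
\]
and analogously $F(r)\ge -Cr^{5-\gamma}g^\gamma(r)$.

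\emph{Step 2 (Iterative bootstrap).} Exactly as in Step 2 of Lemma~\ref{lem:E3B1}, we iteratively enlarge $\gamma$, starting from a value $\gamma_0$ slightly greater than $4$ at which $\phi^{\gamma_0}(r,w)$ and $g^{\gamma_0}(r)$ are bounded; the initial bound follows from applying Lemma~\ref{lem:E3B1} to $u-p_2-p_3$ (legal since $0\in\Sigma^{3rd}_{n-1}$), together with the fact that $\anz-p_2-p_3=A_4$ and $P$ are fixed $4$-homogeneous polynomials, so the $L^2$-size of $w$ and that of $u-p_2-p_3$ differ in a controllable way modulo the truncation $r^{2\gamma_0}$. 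At each induction step we pass from $\gamma$ to $\gamma+\beta$ with $3\beta<5-\gamma$; the elementary scaling bounds $\phi^{\gamma+\beta}\le r^{-2\beta}\phi^\gamma$ and $g^{\gamma+\beta}\le r^{-\beta}g^\gamma$, plugged into Step 1, produce an error $r^{4-\gamma-3\beta}$ that is integrable over $(0,1)$, and Lemma~\ref{lem:E3B1c}(a) converts the resulting almost-monotonicity of $\phi^{\gamma+\beta}$ into the power-law comparability of $H(r,w)+r^{2(\gamma+\beta)}$ needed to bound $g^{\gamma+\beta}$. Finitely many iterations exhaust $(4,5)$ and combining with Step 1 yields the lemma.

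\emph{Main obstacle.} The key difficulty is upgrading the naive pointwise estimate $|2p_2+p_3|\lesssim r^4$ on $\{u=0\}\cap B_r$---which would only produce an error of order $r^{3-\gamma}$, not integrable at $0$ for $\gamma>4$---to an estimate sharp enough that, once combined with the controlled normalization of $w$, yields the integrable error $r^{4-\gamma}$. This is where the precise algebraic structure of the ansatz (designed as an approximate square of $S$, so that the quartic-order error $2p_2+p_3$ factors through the ``small'' quantity $S$ which is in turn controlled by $|w|$ on the coincidence set) and the Lipschitz regularity of Lemma~\ref{lem:E3B5} enter in a crucial way---analogous to the role played by the Cauchy--Schwarz bound $p_2+P\ge -P^2/(2p_2)$ in the proof of Lemma~\ref{lem:E3B1}, but one order higher.
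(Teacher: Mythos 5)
Your overall architecture (a new $\overline E^\gamma$-bound in Step 1 via the square structure of $\anz$, then an iterative bootstrap in Step 2) matches the paper's, and your algebraic identity $\lambda_r w - x\cdot\nabla w = (\lambda_r-4)w - (2p_2+p_3)$ on $\{u=0\}$ is correct and could in principle be made to work. However, two of the steps you sketch are not sound as written, and both are exactly the places where the paper has to work hardest.

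\textbf{The control of $S$ is circular and one power of $r$ too weak.} You control $|w|$ on the coincidence set via $\|\nabla\hat w_r^{(\gamma)}\|_{L^\infty}\le C$ (which, after integrating from the origin, gives $|w|\lesssim (H(r,w)+r^{2\gamma})^{1/2}$ in $B_r$), and then bound $|S|$ by $S^2\lesssim |w|+|P|+|x|^5$, getting $|S|\lesssim (H+r^{2\gamma})^{1/4}+r^{5/2}$. Feeding $\sup_{\{u=0\}\cap B_r}|2p_2+p_3|\lesssim r^2|S|\lesssim r^2(H+r^{2\gamma})^{1/4}$ into the $\overline E^\gamma$-bound and dividing by $H+r^{2\gamma}$ produces an error of order $r^{1-\gamma/2}g^{1/2}$, which is \emph{not} integrable on $(0,1)$ for $\gamma\in(4,5)$; it is strictly larger than the claimed $r^{4-\gamma}\phi^\gamma g^\gamma$. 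The paper's argument goes in the opposite direction: it first controls $|\nabla w|$ on $\{u=0\}\cap B_r$ via Lemma~\ref{lem:E3B5}, namely $r|\nabla w|\lesssim \|w_r\|_{L^2(B_5\setminus B_1)}+r^5$, then uses $\partial_n(\anz+P)=S'(1+O(|x|))+O(|x|^4)$ (where $S':=x_n+p_3/x_n+Q+P/x_n$) to get the \emph{linear} bound $|S'|\lesssim \|w_r\|/r + r^4$, and only \emph{then} controls $|w|\approx \tfrac12 S'^2\lesssim r^2|S'|\lesssim r\|w_r\|+r^5$ on $\{u=0\}$. This last inequality is an entire power of $r$ better than what your integration of the Lipschitz bound gives, and that gain is exactly what yields the integrable error $r^{4-\gamma}$. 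The same gap affects the $(\lambda_r-4)F(r)$ term you split off (which you do not address explicitly): with the weak bound $|w|\lesssim (H+r^{2\gamma})^{1/2}$ one only gets $|F(r)|\lesssim g^{1/2}$, so $\frac{2}{r}|(\lambda_r-4)F(r)|$ would produce a non-integrable $r^{-1}$ error, whereas the sharp bound $|w|\lesssim r\|w_r\|+r^5$ gives $|F(r)|\lesssim r^{5-\gamma}(g+1)$.

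\textbf{The bootstrap cannot start at $\gamma_0>4$.} The iteration requires an a priori bound on $g^{\gamma_0}$ at the starting value, and $g^{\gamma_0}(r)\le \|w_r\|/r^{\gamma_0}\lesssim r^{3-\gamma_0}$ (using only $|w|\lesssim |x|^3$ from $0\in\Sigma^{3rd}_{n-1}$) blows up as $r\downarrow 0$ whenever $\gamma_0>3$. Applying Lemma~\ref{lem:E3B1} to $u-p_2-p_3$ does not rescue this: subtracting the fixed $4$-homogeneous polynomial $A_4+P$ can decrease $H(r,\cdot)$ by an arbitrary amount (indeed that is precisely the interesting regime), so you get no useful lower bound on $H(r,w)$ from Lemma~\ref{lem:E3B1}. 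The paper starts the bootstrap at $\gamma=3$, where $g^3$ is trivially bounded, and iterates up through $(3,5)$; your scheme needs the same starting point, and the exponent constraint at each step is $4\beta<5-\gamma$ rather than $3\beta<5-\gamma$.
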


\begin{proof}
With no loss of generality, we assume that $\{p_2=0\}=\{x_n=0\}$.
By Remark~\ref{rem:E3B3} we have
\begin{equation}\label{000}
 \frac{d}{dr} \phi^\gamma(r,w) \ge \frac{2}{r}  \frac{ \big( r^{2-n} \int_{B_r}w\Delta w\big)^2  }{ \big( H(r,w) + r^{2\gamma}\big)^2 }  + \frac{2}{r}  \int_{B_1} (\lambda_r \hat w_r^{(\gamma)} -x\cdot\nabla  \hat w_r^{(\gamma)}) \Delta  \hat w_r^{(\gamma)}.
\end{equation}

\smallskip

\noindent $\bullet$ \emph{Step 1.} We show that for some $C= C(n, P)$ we have
\begin{equation}\label{goal1xx}
 \frac{d}{dr} \phi^\gamma(r,w)  \ge\frac{2}{r}  \frac{ \big( r^{2-n} \int_{B_r}w\Delta w\big)^2  }{ \big( H(r,w) + r^{2\gamma}\big)^2 }  -  Cr^{4-\gamma}\phi^\gamma(r,w) \big( g^\gamma(r) +1\big),
\end{equation}
where
\[
g^\gamma(r) :=  \frac{ \|w_r\|^2_{L^2(B_5\setminus B_1)}}{H(r,w)+r^{2\gamma}} .
\]
Indeed, recall that by   Lemma~\ref{lem:E3B5}  we have
\[
r|\nabla w| \le C (\|w_r\|_{L^2(B_5\setminus B_1)} + r^5) \quad  \mbox{in }B_r.
\]

Now, notice that (recall that $p_3$ and $P$ are divisible by $x_n$)
\[\frac 1 2(x_n + p_3/x_n + Q + P/x_n)^2 =\anz +P + O(|x|^5),\]
we obtain
\[ 
\begin{split}
 (x_n + p_3/x_n + Q + P/x_n)(1+O(|x|))&= (x_n + p_3/x_n + Q + P/x_n)\partial_n(x_n + p_3/x_n + Q + P/x_n)
 \\&=  |\partial_n(\anz +P)|  +O(|x|^4).
\end{split}
\]

Therefore, since $Q$ is 3-homoegenous and also divisible by $x_n$ and  $|x_n|\le C|x|^2$ in $\{u=0\}$ (cf. \eqref{eq:u0}) we obtain 
\[ 
\begin{split}
{\textstyle \frac{1}{2}} |x_n + p_3/x_n + P/x_n| \le     \partial_n(\anz +P)  +O(|x|^4) \quad \mbox{in} \{u=0\}.
\end{split}
\]

Thus, since $w := u-\anz-P$,  inside $B_r\cap\{u=0\}$ (for $r$ small) we have
\begin{equation}
\label{111}
 r \textstyle{ \frac{1}{2}}\big|x_n + p_3/x_n +P/x_n \big| \le  r\big| \partial_n(\anz +P)\big|  +O(r^5)=  r|\partial_n w|  + O(r^5)
  \le C ( \|w_r\|_{L^2(B_5\setminus B_1)} + r^5)
\end{equation}

Therefore
\begin{equation}
\label{222}
\begin{split}
|w| &  =   |- (\anz +P)|   = \big|x_n + p_3/x_n +P/x_n \big|^2 + O(r^5) \le  2 r^2 |x_n + p_3/x_n +P/x_n|   + O(r^5)
 \\
 & \le C (r \|w_r\|_{L^2(B_5\setminus B_1)} + r^5) \qquad \mbox{ in  } B_r\cap\{u=0\}
\end{split}
\end{equation}
and, for $\alpha=1,2,\dots, n-1$,
\begin{equation}
\label{333}
\begin{split}
r|\partial_\alpha w| & =  r|- \partial_\alpha(\anz +P)| = r| \partial_\alpha (p_3/x_n) | \, \big|x_n + p_3/x_n +P/x_n \big|  + O(r^5)
\\
&\le   Cr^2\big|x_n + p_3/x_n +P/x_n \big|  + O(r^5) \le C (r  \|w_r\|_{L^2(B_5\setminus B_1)} + r^5) \qquad \mbox{ in  } B_r\cap\{u=0\},
\end{split}
\end{equation}
from which it follows that
\begin{equation}
\label{444}
\begin{split}
|\lambda_r \hat w_r^{(\gamma)} -x\cdot\nabla  \hat w_r ^{(\gamma)}| & \le  \frac{C(1+\lambda_r)  (r \|w_r\|_{L^2(B_5\setminus B_1)} + r^5)}{  (H(r,w) + r^{2\gamma})^{1/2}}
\\
& \le C (1+\lambda_r) (r g^\gamma(r)^{1/2} + r^{5-\gamma})
\\
& \le C(1+\lambda_r)  r^{5-\gamma} \big(g^\gamma(r)^{1/2} +1\big)   \qquad \mbox{ in  } B_1\cap\{u_r=0\}.
\end{split}
\end{equation}
Also, since $\Delta \anz \equiv 1$ and $\Delta P\equiv 0$ we have $\Delta w = -\chi_{\{u=0\}} \le 0$. Hence, arguing as in \eqref{eq:control Delta w},
\begin{equation}
\label{555}
0 \ge  \int_{B_1} \Delta \hat w_r^{(\gamma)} \ge - C \|\hat w_r^{(\gamma)}\|_{L^1(B_5\setminus B_1)}  \ge - C \frac{  \| w_r\|_{L^2(B_5\setminus B_1)} }{ (H(r,w) + r^{2\gamma})^{1/2} } \ge -C g^\gamma(r)^{1/2}.
\end{equation}
Combining this information with \eqref{444} we obtain
\[
  \left|  \int_{B_1} (\lambda_r \hat w_r^{(\gamma)} -x\cdot\nabla  \hat w_r^{(\gamma)}) \Delta  \hat w_r^{(\gamma)}  \right| \le   C(1+\lambda_r)  r^{5-\gamma} \big( g^\gamma(r) +1\big),
\]
that together with \eqref{000} yields \eqref{goal1xx}.

\smallskip

\noindent $\bullet$ \emph{Step 2.} Next we show that \eqref{goal1xx} implies that, for all $\gamma<5$, we have
\begin{equation}\label{boundphi}
\phi^\gamma(r, w)   \le  C_\gamma \quad \mbox{and} \quad  g^\gamma(r)   \le  C_\gamma,
\end{equation}
where $C_\gamma$  depends only on
$n$, $\gamma$,  and  $\|P\|_{L^2(B_1)}$.

We prove \eqref{boundphi} for all $\gamma\in [3,5)$ by iteratively increasing the value of $\gamma$ at each iteration, starting from $\gamma=3$, in order to always have (along the iteration) a uniform bound on $\phi^\gamma(r,w)$ and $g^\gamma(r)$.

First,  we observe that since $0\in \Sigma^{3rd}_{n-1}$ we have $\phi(0^+, u-p_2)\ge 3$, hence $|u-\anz - P| \le Cr^3$ in $B_r$, with a bound depending only on $n$ and $P$.
This immediately implies that $g^3(r)\leq C_3$, and then it follows by \eqref{goal1xx} that $\phi^3(\cdot,w)$ is almost monotonically increasing, so in particular it is uniformly bounded.

Then, by the very same argument as the one used in Step 2 in the proof of Lemma~\ref{lem:E3B1} (using \eqref{goal1xx} in place of \eqref{goal100}) we deduce that if \eqref{boundphi} holds for some $\gamma\ge3$,  then \eqref{boundphi}  holds also with $\gamma$ replaced by $\gamma+\beta$ for any $\beta>0$ such that  $4\beta<5-\gamma$.
Thanks to this fact, with finitely many iterations we conclude that  \eqref{boundphi} holds for any  $\gamma<5$, as claimed.

Combining  \eqref{boundphi} with \eqref{goal1xx}, we obtain
\[
\frac{d}{dr} \phi^\gamma(r,w) \ge \frac 2 r  \frac{\left(r^{2-n} \int_{B_r} w\Delta w\right)^2}{\left(H(r,w) + r^{2\gamma}\right)^2}  -C C_{\gamma}^2 r^{4-\gamma} \qquad \forall \,r\in (0,1).
\]
Moreover, recalling  \eqref{222} and \eqref{555}, we conclude that
\[
\frac{ r^{2-n} \int_{B_r} w\Delta w }{H(r,w) + r^{2\gamma}}  =  \int_{B_1\cap \{u_r=0\}} \hat w_r\Delta \hat w_r  \ge  - C r^{5-\gamma}  g^\gamma(r)  \ge - C C^\gamma r^{5-\gamma}.
\]
\end{proof}

Thanks to Lemma~\ref{lem:E4B1} we know that the truncated frequency function $\phi^\gamma$ is almost monotone for $\gamma<5$, and we can use this to study finer properties for points in $\Sigma_{n-1}^{3rd}$.
In particular, we introduce the following:

\begin{definition}\label{def:Sigma4th}
Let $u:B_1\to[0,\infty)$ solve \eqref{eq:UELL}.
 We denote by $\Sigma_{n-1}^{4th}$ the set of points $x_\circ\in \Sigma^{3rd}_{n-1}$  such that the following holds:\\
 Set $w= u(x_\circ+\,\cdot\,)-\anz_{x_\circ}$, where $\anz_{x_\circ}$ is defined as in \eqref{eq:ansatzell} starting from $p_{2,x_\circ}$ and $p_{3,x_\circ}$. Then there exists some sequence $r_k\downarrow 0$ along which  $r_k^{-4} w(r_k\,\cdot\,)$ converges, weakly in $W^{1,2}_{\rm loc}(\R^n)$, to some 4-homogeneous harmonic polynomial vanishing on $\{p_{2,x_\circ}=0\}$ ---possibly the polynomial zero.
\end{definition}

We can now prove the existence of a unique 4-th order limit at points of  $\Sigma_{n-1}^{4th}$.

\begin{lemma}\label{lem:E4B2}
Let $u:B_1 \to [0,\infty)$ solve  \eqref{eq:UELL}, and let $0\in\Sigma^{\ge4}_{n-1}$ (see Definition \ref{def:Sigma>4} below).  Set $w:= u-\anz - P$, where $\anz$ is defined in \eqref{eq:ansatzell}, and $P$  is a 4-homogeneous harmonic polynomial vanishing on  $\{p_2=0\}$.
Then, for any $\gamma\in (4,5)$ we have
\begin{equation}\label{eq:E4B2b}
\frac{d}{dr} \log \big( r^{-8}(H(r,w) + r^{2\gamma}) \big) \ge - C r^{4-\gamma},
\end{equation}
where $C$ is a constant depending only on $n$, $\gamma$, and $\|P\|_{L^2(B_1)}$.

As a consequence,  for all $x_\circ\in\Sigma^{4th}_{n-1}$ the limit
\[
p_{4, x_\circ} := \lim_{r\downarrow 0} \frac{1}{r^4} \big( u(x_\circ+r \,\cdot\,) - \anz_{x_\circ}(r\,\cdot\,) \big)
\]
exists, and it is a $4$-homogeneous harmonic polynomial vanishing on  $\{p_{2, x_\circ}=0\}$.
\end{lemma}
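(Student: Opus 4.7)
My plan mirrors the approach used at the cubic level in Lemma~\ref{lem:EGB6} and its consequence Lemma~\ref{lem:EGB7}, replacing the classical frequency monotonicity by the $\gamma$-truncated estimates of Lemma~\ref{lem:E4B1}.

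For the differential inequality, I will compute $\frac{d}{dr}\log\bigl(r^{-8}(H(r,w)+r^{2\gamma})\bigr)$ directly. Proceeding exactly as in the derivation of \eqref{derHa1} via Lemma~\ref{lem:EFF1}, this derivative equals
\[
\frac{2}{r}\bigl(\phi^\gamma(r,w)-4\bigr)+\frac{2}{r}F^\gamma(r),\qquad F^\gamma(r):=\frac{r^{2-n}\int_{B_r}w\,\Delta w}{H(r,w)+r^{2\gamma}}.
\]
Lemma~\ref{lem:E4B1} now supplies both ingredients needed: its second inequality yields $F^\gamma(r)\ge -Cr^{5-\gamma}$, while its first inequality, after dropping the non-negative square term, reads $\frac{d}{dr}\phi^\gamma(r,w)\ge -Cr^{4-\gamma}$. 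Integrating the latter from $0$ to $r$ and invoking $\phi^\gamma(0^+,w)\ge 4$ (which is what the hypothesis $0\in\Sigma^{\ge 4}_{n-1}$ encodes, cf.\ Definition~\ref{def:Sigma>4}), I obtain $\phi^\gamma(r,w)-4\ge -\tfrac{C}{5-\gamma}r^{5-\gamma}$. Plugging these two bounds into the identity above immediately produces \eqref{eq:E4B2b}.

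For the consequence, the estimate \eqref{eq:E4B2b} implies that $r\mapsto \log\bigl(r^{-8}(H(r,w)+r^{2\gamma})\bigr)+\tfrac{C}{5-\gamma}r^{5-\gamma}$ is non-decreasing on $(0,1/2)$, hence admits a limit in $[-\infty,+\infty)$ as $r\downarrow 0$. Given $x_\circ\in\Sigma^{4th}_{n-1}$, Definition~\ref{def:Sigma4th} provides a $4$-homogeneous harmonic polynomial $p_4$ vanishing on $\{p_{2,x_\circ}=0\}$ and a sequence $r_k\downarrow 0$ along which $r_k^{-4}\bigl(u(x_\circ+\,\cdot\,)-\anz_{x_\circ}\bigr)(r_k\,\cdot\,)\rightharpoonup p_4$ weakly in $W^{1,2}_{\rm loc}(\R^n)$. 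Setting $w:=u(x_\circ+\,\cdot\,)-\anz_{x_\circ}-p_4$, Rellich compactness together with the trace theorem upgrade this weak convergence to $L^2(\partial B_1)$-convergence of $r_k^{-4}w(r_k\,\cdot\,)$ to $0$, so $r_k^{-8}(H(r_k,w)+r_k^{2\gamma})\to 0$ along the subsequence. By the monotonicity of the modified quantity, this forces it to vanish along every sequence $r\downarrow 0$, yielding $H(r,w)=o(r^8)$ and hence full $L^2(\partial B_1)$-convergence $r^{-4}(u(x_\circ+\,\cdot\,)-\anz_{x_\circ})(r\,\cdot\,)\to p_4$. The same argument applied to any other candidate subsequential limit $\tilde p_4$ would force $\tilde p_4=p_4$, giving both existence and uniqueness of $p_{4,x_\circ}:=p_4$.

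The step I expect to require the most care is identifying precisely what the forward-referenced Definition~\ref{def:Sigma>4} encodes, so that $\phi^\gamma(0^+,u-\anz-P)\ge 4$ is indeed available for every admissible $P$ as needed to activate the first part. I anticipate this reducing to a dichotomy based on whether $H(r,w)$ is dominated by $r^{2\gamma}$ or vice versa: in the former case $\phi^\gamma(r,w)\to\gamma>4$ thanks to the truncation (cf.\ Remark~\ref{rem:phi gamma}), while in the latter $\phi^\gamma(r,w)\approx \phi(r,w)$ tends to the classical frequency of the next-generation blow-up, which is $\ge 4$ since any $4$-homogeneous harmonic polynomial has untruncated frequency equal to $4$.
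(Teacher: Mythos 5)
Your overall route is the same as the paper's: write the logarithmic derivative as $\tfrac{2}{r}(\phi^\gamma(r,w)-4)+\tfrac{2}{r}F^\gamma(r)$, control $F^\gamma$ by the second inequality of Lemma~\ref{lem:E4B1}, integrate the first inequality of Lemma~\ref{lem:E4B1} to get almost-monotonicity of $\phi^\gamma$, and invoke a lower bound at $0^+$; the consequence is then the paper's Monneau-style argument. That part is fine, as is the use of Rellich/trace compactness to upgrade the weak $W^{1,2}_{\rm loc}$ convergence in Definition~\ref{def:Sigma4th} to $L^2(\partial B_1)$ convergence of $r_k^{-4}w(r_k\,\cdot\,)$.

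The one real gap is exactly the one you flag: you need $\phi^\gamma(0^+,u-\anz-P)\ge 4$ for \emph{every} admissible quartic $P$, whereas Definition~\ref{def:Sigma>4} only records $\phi^\gamma(0^+,u-\anz)\ge 4$ (the case $P\equiv 0$). Your sketched dichotomy does not close this: in the regime where $H(r,w)$ dominates $r^{2\gamma}$ you assert that $\phi(r,w)$ ``tends to the classical frequency of the next-generation blow-up, which is $\ge 4$,'' but that frequency is precisely $\phi^\gamma(0^+,u-\anz-P)$, the quantity you are trying to bound from below, so the reasoning is circular; moreover the blow-up of $u-\anz-P$ is a Signorini solution whose homogeneity need not be $4$ a priori, and nothing ties it directly to the hypothesis about $u-\anz$. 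The correct way to close the gap is purely quantitative, using Lemma~\ref{lem:E3B1c}. From $\phi^\gamma(0^+,u-\anz)\ge 4$ and the almost-monotonicity of Lemma~\ref{lem:E4B1} one gets $\phi^\gamma(r,u-\anz)\ge 4-\delta$ for $r$ small, so Lemma~\ref{lem:E3B1c}(a) applied to $u-\anz$ gives $H(r,u-\anz)+r^{2\gamma}\le C_\delta\, r^{8-3\delta}$ for every $\delta>0$. Since $H(r,P)=r^8\|P\|^2_{L^2(\partial B_1)}$, the triangle inequality yields $H(r,u-\anz-P)\le C_\delta\, r^{8-3\delta}$ as well. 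If now $\lambda_*:=\phi^\gamma(0^+,u-\anz-P)<4$, then $\lambda_*<\gamma$, and Lemma~\ref{lem:E3B1c}(b) (using the second part of Lemma~\ref{lem:E4B1} to supply the hypothesis $F\ge -r^\kappa$, after absorbing the constant into a slightly smaller exponent) gives $H(r,u-\anz-P)+r^{2\gamma}\ge c\,r^{2\lambda_*}$, hence $H(r,u-\anz-P)\ge \tfrac{c}{2}r^{2\lambda_*}$ for $r$ small since $r^{2\gamma}=o(r^{2\lambda_*})$. Comparing the two bounds forces $2\lambda_*\ge 8-3\delta$ for every $\delta>0$, i.e.\ $\lambda_*\ge 4$, a contradiction. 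With this inserted, your argument is complete and matches the paper's (which silently absorbs this step into ``where we used Lemma~\ref{lem:E4B1}'').
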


\begin{proof}
For every 4-homogeneous harmonic polynomial $P$ vanishing on $\{p_2=0\}$, we have
\[
\frac{d}{dr} \log \big( r^{-8}(H(r,w) + r^{2\gamma} )\big)  = \frac{2}{r}\big(  \phi^\gamma(r,w) -4\big) + \frac{ r^{2-n} \int_{B_r} w\Delta w }{H(r,w) + r^{2\gamma}}
\ge -C r^{4-\gamma},
\]
where we used Lemma~\ref{lem:E4B1}. This proves \eqref{eq:E4B2b}.

Now, if $0\in \Sigma^{4th}_{n-1}$  then we have that, for some some $r_k\downarrow0$ and some $P$ which is $4$-homogeneous harmonic and vanishes  on $\{p_2=0\}$,
\[\log \big( r_k^{-8}(H(r_k,w) + r_k^{2\gamma}) \big)  \to -\infty.\]
Thus, thanks to \eqref{eq:E4B2b} we have
\[
\lim_{r\downarrow 0} \log \big( r^{-8}(H(r,w) + r^{2\gamma}) \big) = -\infty,
\]
which implies that $r^{-4} (u -\anz)(rx) \to P =: p_{4,0}$.
\end{proof}

When $x_\circ=0$  we will simplify the notation $p_{4,0}$ to $p_4$.

We can now prove an enhanced version of Proposition~\ref{prop:E2B2} for higher-order blow-ups in $\Sigma^{3rd}_{n-1}$ and $\Sigma^{4th}_{n-1}$.

\begin{proposition}
\label{prop:E3B5}
Let $u:B_1\to [0, \infty)$ solve \eqref{eq:UELL}, and let $\anz$ be as in Definition~\ref{ansatz}.

{\rm (a)}  Let $0\in\Sigma^{3rd}_{n-1}\setminus \Sigma^{4th}_{n-1}$ and set $w:=u-\anz$.
Then the limit $\lambda^{3rd}:=  \lim _{r\downarrow 0}\phi(r,w)$ exists and satisfies $\lambda^{3rd}\in [3,4]$. Moreover,  for every sequence $r_k\downarrow 0$ there is a subsequence $r_{k_\ell}$ such that
$\tilde  w_{r_{k_\ell}} \to  q$ as $\ell \to \infty$  in $C^0_{\rm loc}(\R^n)$ and weakly in $W^{1,2}_{\rm loc}(\R^n)$, where
$q\not\equiv 0$ is a global $\lambda^{3rd}$-homogeneous solution of the Signorini problem \eqref{ETOP}.  
In addition, if $\lambda^{3th} <4$ then $q$ is even with respect to $\{p_2=0\}$.

 {\rm (b)}  Let $0\in \Sigma^{4th}_{n-1}$ and set $w:=u-\anz-p_4$.
Then:

\begin{itemize}
\item[(b1)] either  $H(r, w)^{1/2}\le C_\zeta r^{5-\zeta}$  for all $\zeta\in (0,1)$, for some $C_\zeta$ depending on $\zeta$;
\item[$\,$(b2)] or the limit $\lambda^{4th}:=  \lim _{r\downarrow 0}\phi(r,w)$ exists and satisfies $\lambda^{4th}\in [4,5)$. Moreover, for every sequence $r_k\downarrow 0$ there is a subsequence $r_{k_\ell}$ such that
$\tilde  w_{r_{k_\ell}} \rightharpoonup  q$ as $\ell \to \infty$  in $C^0_{\rm loc}(\R^n)$ and weakly in $W^{1,2}_{\rm loc}(\R^n)$, where
$q\not\equiv 0$ is a $\lambda^{4th}$-homogeneous solution of \eqref{ETOP}, even with respect to $\{p_2=0\}$.
\end{itemize}
\end{proposition}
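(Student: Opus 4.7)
The plan is to replicate the structure of Proposition~\ref{prop:E2B2}(b) but now using the almost-monotonicity of the \emph{truncated} frequencies from Lemmas~\ref{lem:E3B1} and~\ref{lem:E4B1} together with the Lipschitz/semiconvex estimates of Lemma~\ref{lem:E3B5}. We choose coordinates so that $p_2=\tfrac12 x_n^2$ and $p_3$ is as in \eqref{eq:p2p3}.

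\emph{Part (a).} Set $w=u-\anz$ and write $w=(u-p_2-p_3)-Z$, where $Z:=\anz-p_2-p_3=\tfrac12(p_3/x_n)^2+x_nQ$ is a \emph{$4$-homogeneous} polynomial. By Lemma~\ref{lem:E3B1} applied with $P=p_3$, for every $\gamma\in(3,4)$ the truncated frequency $\phi^\gamma(r,u-p_2-p_3)$ is almost-increasing, so the limit $\lambda_\gamma:=\phi^\gamma(0^+,u-p_2-p_3)$ exists in $[3,\gamma]$. Because $Z$ is 4-homogeneous, it is absorbed in the $r^{2\gamma}$-truncation of $\phi^\gamma(\cdot,w)$; thus the limits of $\phi^\gamma(\cdot,w)$ and $\phi^\gamma(\cdot,u-p_2-p_3)$ coincide. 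The assumption $0\notin\Sigma^{4th}_{n-1}$ prevents $H(r,w)$ from decaying faster than every $r^{2\gamma}$ with $\gamma<4$ (otherwise the zero polynomial would be a 4-th order blow-up), so the truncation can be removed and $\lambda^{3rd}:=\lim_{r\downarrow 0}\phi(r,w)\in[3,4]$ exists. Lemma~\ref{lem:E3B1c}(a)-(b) then furnishes matching two-sided bounds $H(r,w)\simeq r^{2\lambda^{3rd}\pm\delta}$; Lemma~\ref{lem:E3B5} applied with $P=0$ gives the uniform Lipschitz bound on $\tilde w_r$, and (extending Proposition~\ref{prop:E2B2}(b)) Arzel\`a--Ascoli plus Lemma~\ref{lem:EMF3} yield a subsequence $\tilde w_{r_{k_\ell}}\to q\not\equiv 0$ in $C^0_{\rm loc}$ and weakly in $W^{1,2}_{\rm loc}$. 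Homogeneity of $q$ at exponent $\lambda^{3rd}$ follows from the vanishing of the derivative term in the almost-monotonicity formula, while \eqref{ETOP} is obtained as in Proposition~\ref{prop:E2B2}(b): $\Delta w=\chi_{\{u>0\}}-1\le 0$, and by Proposition~\ref{prop:E2B3} the contact set is asymptotically contained in $\{x_n=0\}$.

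\emph{Evenness when $\lambda^{3rd}<4$.} This is the main novelty. The polynomial $Z=\anz-p_2-p_3$ is \emph{even in $x_n$}: both $\tfrac12(p_3/x_n)^2$ and $x_nQ$ are even, since $Q$ is a linear combination of $x_n^3$ and $x_\alpha^2 x_n$. Hence the odd-in-$x_n$ parts of $w$ and of $u-p_2-p_3$ coincide. By Proposition~\ref{prop:E2B3} the contact set lies in $\{|x_n|\le C|x'|^2\}$, so along any blow-up the odd part $w^a$ is harmonic outside $\{x_n=0\}$ and asymptotically satisfies Signorini-type conditions on $\{x_n=0\}$. By the classification of \emph{odd} homogeneous Signorini solutions (cf.\ Appendix~\ref{apb}), the smallest admissible frequency of such a solution is $\ge 4$; since we have $\lambda^{3rd}<4$, the blow-up of $w^a$ at that exponent necessarily vanishes, and $q$ is even.

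\emph{Part (b).} The argument is parallel, with $w:=u-\anz-p_4$ and Lemma~\ref{lem:E4B1} (with $P=p_4$) playing the role of Lemma~\ref{lem:E3B1}: for each $\gamma\in(4,5)$, $\phi^\gamma(r,w)$ is almost-increasing. The dichotomy (b1)/(b2) is precisely whether $H(r,w)^{1/2}$ decays faster than every $r^{5-\zeta}$. If not, then non-degeneracy permits removal of the truncation, giving $\lambda^{4th}\in[4,5)$; the bounds of Lemma~\ref{lem:E3B1c} together with the Lipschitz estimate of Lemma~\ref{lem:E3B5} (now with $P=p_4$) yield blow-up convergence in $C^0_{\rm loc}$ and weakly in $W^{1,2}_{\rm loc}$ to a non-trivial $\lambda^{4th}$-homogeneous solution of \eqref{ETOP}, exactly as above. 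Evenness of $q$: since $p_4$ is odd in $x_n$ (being $4$-harmonic and vanishing on $\{x_n=0\}$), the odd part of $w$ equals the odd part of $u-(\anz^{\rm even})$ minus $p_4$, and the same Signorini odd-frequency classification rules out odd frequencies in $[4,5)$ (the next odd admissible frequency is $\ge 5$), forcing $q$ to be even. The main technical obstacle throughout is this evenness step, which requires the Signorini classification; everything else is a careful bookkeeping of the truncated-frequency machinery already set up in the preceding lemmas.
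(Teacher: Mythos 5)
Your general outline (truncated frequency from Lemma~\ref{lem:E3B1}/\ref{lem:E4B1}, removing the truncation via the non-membership in $\Sigma^{4th}_{n-1}$, Lipschitz bounds from Lemma~\ref{lem:E3B5} for $C^0_{\rm loc}$-compactness, and the Signorini limit via Proposition~\ref{prop:E2B3}) follows the paper and is sound modulo minor bookkeeping (the paper actually works with $\gamma\in(4,5)$ and $w=u-\anz$ directly, rather than with $\gamma\in(3,4)$ and $u-p_2-p_3$ plus an absorption of the $4$-homogeneous correction $Z$; the former is cleaner but the two routes can be reconciled). However, the evenness argument at integer homogeneity is wrong.

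You assert that ``the classification of odd homogeneous Signorini solutions'' forces the odd part of the blow-up to vanish because ``the smallest admissible frequency of such a solution is $\geq 4$'' (and $\geq 5$ in part~(b)). This misreads Appendix~\ref{apb}: Lemma~\ref{lem:signoriniodd} classifies solutions with \emph{odd-integer homogeneity}, not functions that are \emph{odd in $x_n$}. A function $q^{\rm odd}$ odd in $x_n$ that solves Signorini must vanish on $\{x_n=0\}$, hence is harmonic; a $\lambda$-homogeneous harmonic function is a polynomial when $\lambda$ is an integer and must vanish only when $\lambda$ is non-integer. So your argument does work for $\lambda^{3rd}\in(3,4)$ (and $\lambda^{4th}\in(4,5)$), but it breaks precisely in the hard cases $\lambda^{3rd}=3$ and $\lambda^{4th}=4$: there \emph{are} nontrivial odd-in-$x_n$ homogeneous harmonic polynomials of degrees $3$ and $4$, e.g.\ $x_n^3-3x_nx_1^2$ and $x_n\bigl(x_1^3-3x_1x_2^2\bigr)$. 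No classification result rules these out, so at this point your proof has a genuine gap.

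The paper closes this gap differently: for $\lambda^{3rd}=3$ it uses the Monneau-type formula \eqref{eq:EGB6b} from Lemma~\ref{lem:EGB6} (the estimate \eqref{quadratictrick}, divided by $\ep_r=H(r,u-p_2-p_3)^{1/2}$ and sent to $0$, yields $\int_{\partial B_1}qP\geq 0$ for every odd harmonic cubic $P$, hence $q\perp P$); for $\lambda^{4th}=4$ it uses the analogous inequality \eqref{eq:E4B2b} from Lemma~\ref{lem:E4B2}. These monotonicity estimates are specifically what make the borderline integer frequencies tractable, and your proposal would need to incorporate them (or a substitute) to complete the evenness step.
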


\begin{proof}
(a)  Let $0\in \Sigma^{3rd}_{n-1}\setminus \Sigma^{4th}_{n-1}$,  and $w:=u-\anz$. 
Let $\gamma := 5-\ep \in (4,5)$.

We note that $\phi^\gamma(0^+, w)\le 4$.
Indeed, if by contradiction   $\phi^\gamma(0^+, w)>4$ then by Lemma~\ref{lem:E3B1c} (which can be applied thanks to Lemma~\ref{lem:E4B1}) we would have
\[
H(r,w) + r^{2\gamma}   \le   Cr^{\phi^\gamma(0^+, w)}  (H(1,w) + 1)  \ll r^8 \quad \mbox{as } r\downarrow 0,
\]
and hence we would have $r^{-4}w_r \to 0$ and in particular $0\in \Sigma^{4th}_{n-1}$ (with $p_4\equiv 0$), contradicting our assumption.

Note now that  since $\gamma>4$ we have
\begin{equation}\label{<gamma1aa}
\phi^\gamma(0^+, w) = \lim_{r\downarrow 0} \frac{D(r,w) + \gamma r^{2\gamma}}{ H(r,w) +r^{2\gamma}} \leq 4<\gamma.
\end{equation}
Also, using Lemma \ref{lem:E3B1c},
\[
\mbox{\eqref{<gamma1aa}} \quad \Rightarrow \quad \frac{r^{2\gamma}}{H(r, w)}  \downarrow 0 \quad  \Rightarrow  \quad \lambda^{3rd}:=\phi^\gamma(0^+, w) = \phi(0^+,w).
\]
Thus, the limit $\phi(0^+, w)$ exists and equals again $\lambda^{3rd}\leq 4$.
In addition, by Lemmas~\ref{lem:E3B1c} and~\ref{lem:E4B1} we have
\[
\| \tilde w_r\|_{W^{1,2}(B_R)} \le C(R) \qquad \forall \,r>0
\]
for each $R\ge 1$, which gives weak compactness in $W^{1,2}_{\rm loc}(\R^n)$ of $\tilde w_{r}$ as $r \downarrow 0$.

We now show that any ``accumulation point''
\[ 
q := \lim_k \tilde w_{r_k}
\]
satisfies
\begin{equation}\label{01aa}
c r^{-\lambda^{3rd}}H(r,q)^{1/2} \le H(1,q)^{1/2} =1 \qquad  \forall \,r\in (0,1)
\end{equation}
and, for all $\delta >0$,
\begin{equation}\label{01bb}
c_\delta R^{-\lambda^{3rd}-\delta}H(R,q)^{1/2} \le H(1,q)^{1/2} =1 \qquad  \forall \,R\in (1,\infty),
\end{equation}
where $c$, $c_\delta$ are positive constants.

Indeed, since $\phi^\gamma(0^+,w)= \lambda^{3rd}$, Lemma~\ref{lem:E3B1c} gives (for $0<r<R\ll 1$)
\[
c(R/r)^{2\lambda^{3rd}} \le  \frac{H(R,w) + R^{2\gamma}}{H(r,w)  + r^{2\gamma}} =  \frac{ H(R/r_k,\tilde w_{r_k})   + \frac{R^{2\gamma}}{H(r_k w)}   }{  H(r/r_k,\tilde w_{r_k})  + \frac{r^{2\gamma}}{H(r_k, w)}  }.
\]
In particular, replacing $R$ by $r_k$ and $r$ by $r_kr$   (with $r<1$)  we obtain
\[
cr^{-2\lambda^{3rd}} \le    \frac{ H(1,\tilde w_{r_k})   + \frac{r_k^{2\gamma}}{H(r_k w)}   }{  H(r,\tilde w_{r_k})  + \frac{(r_kr)^{2\gamma}}{H(r_k, w)}  }
\le \frac{ H(1,\tilde w_{r_k})   + \frac{r_k^{2\gamma}}{H(r_k ,w)}   }{  H(r,\tilde w_{r_k})  } \to  \frac{ H(1,q)  }  {  H(r,q)  },
\]
proving  \eqref{01aa}.

Similarly,  by the other inequality in  Lemma~\ref{lem:E3B1c} we have (for $0<r<R\ll 1$)
\[
c_\delta(R/r)^{2\lambda^{3rd} +\delta} \ge  \frac{H(R,w) + R^{2\gamma}}{H(r,w)  + r^{2\gamma}} =  \frac{ H(R/r_k,\tilde w_{r_k})   + \frac{R^{2\gamma}}{H(r_k w)}   }{  H(r/r_k,\tilde w_{r_k})  + \frac{r^{2\gamma}}{H(r_k, w)}  }.
\]
In particular, replacing $R$ by $r_kR$ and $r$ by $r_k$   (with $R>1$)  we obtain
\[
c_\delta R^{2\gamma}  \ge    \frac{ H(R,\tilde w_{r_k})   +   R^{2\gamma} \frac{r_k^{2\gamma}}{H(Rr_k w)}   }{  H(1,\tilde w_{r_k})  + \frac{(r_k)^{2\gamma}}{H(r_k, w)}  }
\ge \frac{ H(R,\tilde w_{r_k})    }{  H(1,\tilde w_{r_k})  +\frac{(r_k)^{2\gamma}}{H(r_k, w)} } \to  \frac{ H(R,q)  }  {  H(1,q)  },
\]
which proves \eqref{01bb}.

We now note that $\Delta w =-\chi_{\{u=0\}}$ implies  that  $\Delta \tilde w_k$ and $\Delta q$ are nonpositive measures. Also, the Lipschitz estimate in Lemma~\ref{lem:E3B5} (with $P\equiv 0$) implies that
\[
\|\tilde w_{r_k} \|_{{\rm Lip}(B_R)} \le C(R), \qquad\mbox{and thus} \qquad    \|q\|_{{\rm Lip}(B_R)} \le C(R),
\]
for all $R\ge1$. As a consequence, the convergence of $\tilde w_{r_k}$ to $q$ is uniform on compact sets.
Furthermore, by \eqref{eq:ansatzell} we have $w =u-\anz \ge - \anz \ge -O(|x|^5)$ on $\{x_n = p_3/x_n\}$, so by uniform convergence we obtain
\[q\ge 0\quad \mbox{ on }\{x_n=0\}.\]
In addition, by Lemma~\ref{lem:E4B1} we have
\[
\int_{B_R}  \tilde w_{r_k} \Delta \tilde w_{r_k}  \downarrow 0,
\]
and since $\tilde w_{r_k} \to q$ in $C^0$ and $0\ge \Delta w_k \rightharpoonup^* \Delta q$ (up to extracting a further subsequence) we obtain
\[
\int_{B_R} q\Delta q = 0\qquad \forall \,r\ge1.
\]
But since $\Delta \tilde w_{r}$ is supported in $\{u_r=0\} \subset \{|x_n|\le o(1)\}$ as  $r\downarrow 0$, the support the nonpositive measure $\Delta q$ is contained on $\{x_n=0\}$ where $q\ge 0$.
We have thus shown that $q: \R^n \rightarrow \R$ is a solution of the Signorini problem \eqref{ETOP}.

Finally, recalling \eqref{01aa} and \eqref{01bb} we have that $\phi(0^+, q) \ge \lambda^{3rd}$ while  $\phi(+\infty, q) \le \lambda^{3rd}+\delta$ for all $\delta >0$. This implies that $\phi(r, q) =  \lambda^{3rd}$ for all $r>0$, and thus
$q$ must be $\lambda^{3rd}$-homogeneous (see Lemma~\ref{lemAP:Alm}).

Note also that, by Lemma~\ref{lem:E3B1c}, we have   $H(w,r) \gg r^{\lambda^{3rd}+\delta}$ for every $\delta>0$. Thus, since by definition of $\Sigma_{n-1}^{3rd}$ we have $\phi(0^+, u-p_2)\ge 3$, this implies
$|u-p_2|  \le C|x|^3$ and thus $|u-\anz|  \le C|x|^3$. 
Therefore  it must be $\lambda^{3rd}\ge 3$.

To conclude part (a), we need to show that if $\lambda^{3rd}<4$ then $q$ is even.
For this, notice that if one writes $q$ as the sum of its even and odd part, then the odd part is harmonic.
Thus, if $\lambda^{3rd}\in(3,4)$ then any $\lambda^{3rd}$-homogeneous solution of the Signorini problem is even (since the homogeneity of a harmonic function is always an integer), so we only need to understand the case $\lambda^{3rd}=3$. 

Assume $\lambda^{3rd}=3$, and let us show that $q$ is even.
We have (see Lemma~\ref{lem:E3B1c}) that
$H(w,r)\gg r^{3+\delta}$ for all $\delta>0$ as $r\downarrow 0$ therefore
\[
q := \lim_k \tilde w_{r_k} =   \lim_k \frac{(u-p_2-p_3)(r_k \cdot )}{H(r_k, u-p_2-p_3)^{1/2} }.
\]
Moreover, using \eqref{eq:EGB6b} from Lemma~\ref{lem:EGB6} we obtain (note that $w$ in this proof and in Lemma~\ref{lem:EGB6} are different)
\begin{equation}\label{quadratictrick}
\int_{\partial B_1} \left( \frac{(u-p_2-p_3)_r}{r^3} +P\right)^2  + C(P)r \ge  \lim_{r\downarrow 0}\int_{\partial B_1} \left(\frac{(u-p_2-p_3)_r}{r^3} +P\right)^2  \ge
 \int_{\partial B_1} P^2
\end{equation}
for all $P$ harmonic $3$-homogeneous vanishing on $\{p_2=0\}$, therefore
\[
 -C(P)r \le \int_{\partial B_1} \biggl[\left( \frac{(u-p_2-p_3)_r}{r^3} +P\right)^2 - P^2\biggr].
\]
 As a consequence, expanding the square and dividing by
 \[\ep_r :=   \left(\int_{\partial B_1} \left( \frac{(u-p_2-p_3)_r}{r^3}\right)^2 \right)^{1/2}  = o(1),\]
since $r^{\delta} \ll \ep_r$  as $r\downarrow 0$ we obtain
\[
 -C(P)\frac{r_k}{\ep_{r_k}}  \le \int_{\partial B_1} \biggl[\ep_r  \left(\frac{(u-p_2-p_3)_r}{H(r_k, u-p_2-p_3)^{1/2} }\right)^2  + 2  \frac{(u-p_2-p_3)_r}{H(r_k, u-p_2-p_3)^{1/2} }P\biggr],
\]
and in the limit as $r\downarrow 0$ we get
\[
0\le 2\int_{\partial B_1} q P
\]
for every odd harmonic $3$-homogeneous polynomial $P$. 
Since if $P$  is an odd harmonic $3$-homogeneous polynomial then so is $-P$, we deduce that $q$ must be orthogonal to all odd harmonic polynomials, hence $q$ is even.

\smallskip

(b) 
We assume that (b1) fails and we prove (b2). If (b1) fails then  there exist $\zeta \in (0,1)$ and a sequence $r_k\to 0$ such that $H(r_k, w)^{1/2} \ge  C_\zeta r_k^{5-\zeta}$.
In particular, there exists some $\gamma\in (4,5)$ such that
\begin{equation}\label{<gamma}
\phi^\gamma(0^+, w) = \lim_{r\downarrow 0} \frac{D(r,w) + \gamma r^{2\gamma}}{ H(r,w) +r^{2\gamma}} <\gamma.
\end{equation}
Thus, as in the proof of (a),
\[
\mbox{\eqref{<gamma}} \quad \Rightarrow \quad \frac{r^{2\gamma}}{H(r, w)}  \downarrow 0 \quad  \Rightarrow  \quad \lambda^{4th}:=\phi(0^+, w) = \phi^\gamma(0^+,w), \quad \forall\,\gamma\in (\lambda^{4th},5).
\]
In addition, combining Lemmas~\ref{lem:E3B1c} and~\ref{lem:E4B1} we obtain that
\[
\| \tilde w_r\|_{W^{1,2}(B_R)} \le C(R) \qquad \forall \,r>0
\]
for each $R\ge 1$, which gives compactness of sequences $\tilde w_{r_k}$ as $r_k \downarrow 0$ ---they converge weakly  in $W^{1,2}(B_R)$ for every $R$ up to extracting a subsequence.
Also, as in (a), it follows by Lemma~\ref{lem:E3B1c} that any ``accumulation point''
\[
q := \lim_k \tilde w_{r_k}
\]
satisfies \eqref{01aa} and  \eqref{01bb} with $\lambda^{3rd}$ replaced by $\lambda^{4th}$.
Also, exactly as in (a)  we have that  $\Delta \tilde w_k$ and $\Delta q$ are nonpositive measures, and
\[
\|\tilde w_{r_k} \|_{{\rm Lip}(B_R)} \le C(R) \qquad\mbox{and thus} \qquad    \|q\|_{{\rm Lip}(B_R)} \le C(R)
\]
for all $R\ge1$. As a consequence, the convergence is uniform on compact sets.
Furthermore \eqref{eq:ansatzell} yields $w =u-\anz -p_4 \ge - \anz -p_4 \ge -O(|x|^5)$ on $\{x_n = p_3/x_n\}$, so by uniform convergence  of $\tilde w_{r_k}$ to $q$ we obtain
\[q\ge 0\quad \mbox{ on }\{x_n=0\}.\]
Also,  by Lemma~\ref{lem:E4B1} we obtain
$\int_{B_R}  \tilde w_{r_k} \Delta \tilde w_{r_k}  \downarrow 0$ from which we deduce that  $\int_{B_R} q\Delta q = 0$ for all $R>1$.
As a consequence, $q$ is a solution of the Signorini problem \eqref{ETOP}.
Finally, recalling \eqref{01aa} and \eqref{01bb} we have that $\phi(0^+, q) \ge \lambda^{4th}$ while  $\phi(+\infty, q) \le \lambda^{4th}+\delta$ for all $\delta >0$. 
This implies that $\phi(r, q) =  \lambda^{4th}$ for all $r>0$, and thus
$q$ must be $\lambda^{4th}$-homogeneous.

Note also that by Lemma~\ref{lem:E3B1c} we have   $H(w,r) \gg r^{\lambda^{4th}+\delta}$ for every $\delta>0$. 
Thus, since by definition of $\Sigma_{n-1}^{4th}$ we have
$|u-\anz|  \le C|x|^4$, it must be $\lambda^{4th}\ge 4$.

Finally, we prove that $q$ must be even.  
As in (a), we only need to understand the case $\lambda^{4th}=4$.
When $\lambda^{4th}=4$  then we have (see Lemma~\ref{lem:E4B1})
$H(w,r)\gg r^{4+\delta}$ for all $\delta>0$ as $r\downarrow 0$. On the other hand, by definition of $p_4$ it follows that $H(r_k, u-\anz -p_4)^{1/2}= r_k ^4 \ep_k$, where $r_k^\delta  \ll \ep_k =o(1)$. Then, using Lemma~\ref{lem:E4B2} we obtain
\[
 r^{-8}H\big(r,u-\anz-P \big) \ge - C(P)\,r^{\delta} + \lim_{r\downarrow 0}    r^{-8}H\big(r,u-\anz-P\big) =  -C(P)\,r^\delta  + H(1,p_4-P),
\]
for all $P$ quartic vanishing on $\{x_n=0\}$.
Therefore, similarly to (a), we deduce that $q$  is orthogonal to every odd harmonic 4-homogeneous polynomial. Since $q$ is a solution of Signorini this implies that its odd part (which is harmonic) must vanish, concluding the proof.
\end{proof}

We can now introduce the following:

\begin{definition}\label{def:Sigma>4}
Let $u:B_1\to[0,\infty)$ solve \eqref{eq:UELL}, and recall the definition of $\anz_{x_\circ}$ in \eqref{eq:ansatzell}.

We denote by $\Sigma_{n-1}^{\geq4}$ the set of points $x_\circ\in \Sigma_{n-1}^{3rd}$  such that, for $w:= u(x_\circ+\,\cdot\,)-\anz_{x_\circ}$, we have $\phi^\gamma(0^+,w)\geq4$ for every $\gamma\in(4,5)$.

We denote by $\Sigma_{n-1}^{>4}$ the set of points $x_\circ\in \Sigma_{n-1}^{4th}$  such that, for $w:= u(x_\circ+\,\cdot\,)-\anz_{x_\circ}-p_{4,x_\circ}$, we have $\phi^\gamma(0^+,w)>4$  for every $\gamma\in(4,5)$.

Furthermore, for fixed $\zeta\in (0,1)$ we denote by $\Sigma_{n-1}^{\geq 5-\zeta}$ the set of points $x_\circ\in \Sigma_{n-1}^{4th}$ such that, for $w:= u(x_\circ+\,\cdot\,)-\anz_{x_\circ}-p_{4,x_\circ}$,
we have $\phi^\gamma(0^+,w)\ge 5-\zeta$ for any $\gamma\in(5-\zeta,5)$.
\end{definition}

Our last goal of this section is to show that $\Sigma_{n-1}^{>4}=\Sigma_{n-1}^{4th}$.
For this, we need a new monotonicity formula.

\begin{lemma} \label{lem-Sigma4th=Sigma>4-mon}
Let $u:B_1 \to [0,\infty)$ solve  \eqref{eq:UELL}, and let $0\in\Sigma^{4th}_{n-1}$.  Let $w:= u-\anz - p_4$, 
where $\anz$ is defined in \eqref{eq:ansatzell}, and let $P$ 
be any $4$-homogeneous harmonic polynomial such that $P\geq0$ on  $\{p_2=0\}$.
Then
\[\frac{d}{dr}  \bigg( r^{-4} \int_{\partial B_1} w_r P\bigg) \le C,\]
where $C$ is a constant depending only on $n$ and $\|P\|_{L^2(B_1)}$.
\end{lemma}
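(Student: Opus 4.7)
The proof will be a Monneau-type monotonicity computation combined with a careful geometric analysis of the contact set using the ansatz structure.

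The first step is to rewrite the quantity and differentiate. Since $P$ is $4$-homogeneous, the change of variables $y=r\omega$ gives
\[
r^{-4}\int_{\partial B_1} w_r P \;=\; r^{-(n+7)}\int_{\partial B_r} wP.
\]
Differentiating in $r$, using Euler's identity $\partial_\nu P = (4/r)P$ on $\partial B_r$, and the Green identity $\int_{\partial B_r}(P\partial_\nu w - w\partial_\nu P) = \int_{B_r}(P\Delta w - w\Delta P)$ together with $\Delta P=0$, the boundary terms involving $\int_{\partial B_r}wP$ cancel and we arrive at the clean identity
\[
\frac{d}{dr}\Bigl(r^{-4}\int_{\partial B_1} w_r P\Bigr) \;=\; r^{-(n+7)}\int_{B_r} P\,\Delta w.
\]
Now Lemma~\ref{lem:E3B4} gives $\Delta\anz\equiv 1$, and since $p_4$ is harmonic we have $\Delta w = \Delta u - 1 = -\chi_{\{u=0\}}$. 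Therefore
\[
\frac{d}{dr}\Bigl(r^{-4}\int_{\partial B_1} w_r P\Bigr) \;=\; -r^{-(n+7)}\int_{B_r\cap\{u=0\}} P,
\]
and the task reduces to proving the lower bound $\int_{B_r\cap\{u=0\}} P \ge -Cr^{n+7}$.

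For this, I will use the fine description of the contact set available since $0\in\Sigma_{n-1}^{4th}\subset\Sigma_{n-1}^{3rd}$. First, Proposition~\ref{prop:E2B3} (applied with $\lambda=3$, using $\lambda^{2nd}\ge 3$ on $\Sigma_{n-1}^{3rd}$) gives $\{u=0\}\cap B_r\subset\{|x_n|\le C|x'|^2\}$. Next, the identity $u=\anz+p_4+w=0$ on the contact set, combined with the representation $\anz = \tfrac12(x_n + p_3/x_n + Q)^2 + O(|x|^5)$, forces $(x_n+p_3/x_n+Q)^2 = -2p_4 - 2w + O(|x|^5)$. Thus the contact set lies in a small neighborhood of the graph $\{x_n = F(x')\}$, where $F$ is the root of $x_n+p_3/x_n+Q=0$ and satisfies $F(x')=-\tfrac12\sum_{\alpha} a_\alpha x_\alpha^2 + O(|x'|^3)$. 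Setting $\eta := x_n - F(x')$, the slice of the contact set at fixed $x'$ is, to leading order, the symmetric interval $[-\sqrt{\Delta(x')},\sqrt{\Delta(x')}]$ with $\Delta(x')\le C|x'|^4$.

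The final step is to Taylor-expand $P(x',F(x')+\eta)$ in $\eta$. The odd-in-$\eta$ terms integrate to zero over the symmetric slice, and the leading surviving term after slice integration is $2\sqrt{\Delta(x')}\,P(x',F(x'))$, followed by $\tfrac{\Delta^{3/2}}{3}\partial_n^2 P(x',F(x'))$ and $O(\Delta^{5/2})$. Using the assumption $P(x',0)\ge 0$ together with the Taylor estimate $|P(x',F(x'))-P(x',0)|\le C|x'|^5$ (from $|F|\le C|x'|^2$), and $|\partial_n^2P(x',F(x'))|\le C|x'|^2$, all contributions integrate to something $\ge -Cr^{n+7}$ on $|x'|\le r$, proving the claim. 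The constant depends on $n$ and $\|P\|_{L^2(B_1)}$ through the bounds on the coefficients of $P$ and on the higher-order error terms in the ansatz expansion.

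The main obstacle is achieving the sharp exponent $n+7$ in the lower bound: a naive combination of $|\{u=0\}\cap B_r|\le Cr^{n+1}$ with $|P|\le Cr^4$ would yield only $-Cr^{n+5}$, and even using $P(x',0)\ge 0$ together with $|P(x,x_n)-P(x',0)|\le Cr^5$ on the slab only gives $-Cr^{n+6}$. The extra factor of $r$ must come from exploiting the approximate reflection-symmetry of the contact set around $\{x_n=F(x')\}$ to kill the odd-in-$\eta$ Taylor contributions and paying attention that the remaining leading piece is paired with the nonnegative $P(x',0)$ rather than with its sign-indefinite $x_n$-derivative.
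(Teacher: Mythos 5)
Your computation of the derivative formula is correct and matches the paper's (up to a change of variables): the product rule, Green's identity and $\Delta P=0$ give $\frac{d}{dr}\bigl(r^{-4}\int_{\partial B_1} w_r P\bigr) = -r^{-(n+7)}\int_{B_r\cap\{u=0\}}P$, so the task is to prove $-\int_{B_r\cap\{u=0\}}P\le Cr^{n+7}$. You also correctly identify that combining $|\{u=0\}\cap B_r|\le Cr^{n+1}$ with the one-sided bound $-P\le C r^5$ on the contact set (using $P\ge 0$ on $\{x_n=0\}$ and $|x_n|\le C|x'|^2$) only yields $-Cr^{n+6}$, so one more power of $r$ is needed. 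However, the way you propose to gain it has two genuine gaps. First, the claim that the $x_n$-slice of $\{u=0\}$ at fixed $x'$ is a symmetric interval about $\{x_n=F(x')\}$ is not justified: the contact set is only known to lie in a slab around a graph, and could be any closed subset of that slab; the odd-in-$\eta$ cancellation has no a priori basis. Second, even granting the symmetry, the exponents do not add up: with $\sqrt{\Delta(x')}\le C|x'|^2$ and $P(x',F(x'))\ge -C|x'|^5$, your leading term $2\sqrt{\Delta}\,P(x',F)$ is bounded below by $-C|x'|^7$, which integrates over $\{|x'|\le r\}$ to $-Cr^{n+6}$, still one power of $r$ short.

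The missing ingredient is an improved \emph{measure} bound on the contact set, which is where the hypothesis $0\in\Sigma^{4th}_{n-1}$ actually enters. From \eqref{111} (applied with $P=p_4$) together with $\|w_r\|_{L^2(B_5\setminus B_1)}\le Cr^4$ (a consequence of the existence of the fourth blow-up $p_4$, Lemma~\ref{lem:E4B2}), one gets $\big|x_n+p_3/x_n+p_4/x_n\big|\le Cr^3$ on $\{u=0\}\cap B_r$; this is a slab of thickness $\sim r^3$, not $\sim r^2$, giving $|\{u=0\}\cap B_r|\le Cr^{n+2}$ (equivalently $|\{u_r=0\}\cap B_1|\le Cr^2$). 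Combined with $-P\le Cr^5$ on the contact set, one then gets $-\int_{B_r\cap\{u=0\}}P\le Cr^5\cdot Cr^{n+2}=Cr^{n+7}$, which is exactly what the paper does (in rescaled form $-P\le Cr$, measure $\le Cr^2$). No slice-symmetry argument is needed, and your Ansatz-based localization of the contact set, which only yields a slab of thickness $\sim|x'|^2\sim r^2$, is a factor of $r$ too thick to close the estimate.
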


\begin{proof}
After a rotation, we may assume $p_2 = \frac 1 2 x_n^2$.
We have 
\[
\begin{split}
\frac{d}{dr} \int_{\partial B_1}  w_r P  &=  \int_{\partial B_1}  \frac x r \cdot \nabla w_r P   =  \frac{1}{r} \int_{\partial B_1}  \partial_\nu w_r P  =  \frac{1}{r} \int_{B_1}  {\rm div} ( \nabla w_r P) = \frac{1}{r} \bigg( \int_{B_1}  \nabla w_r \nabla P   + \int_{B_1}  \Delta w_r P\bigg)
\\
&= \frac{1}{r} \bigg( \int_{\partial B_1}   w_r \partial_\nu P   -\int_{B_1} w_r \Delta P  + \int_{B_1}  \Delta w_r P\bigg)
= \frac{1}{r} \bigg( 4\int_{\partial B_1}   w_r P + \int_{B_1}  \Delta w_r P\bigg),
\end{split}
\]
where we used that $\partial_\nu P=4P$ on $\partial B_1$, and that $\Delta P=0$.
Now, since $\Delta w_r=-r^2\chi_{\{u_r=0\}}$, we deduce that 
\[
\frac{d}{dr}  \bigg( r^{-4} \int_{\partial B_1} w_r P\bigg) = -\frac{1}{r^3}\int_{B_1\cap \{u_r=0\}} P.\]
Finally notice that  \eqref{111} rescaled implies, using  $\|w_r\|_{L^2(B_5\setminus B_1)} \le Cr^4$ since $0\in\Sigma^{4th}_{n-1}$,
\[
\{u_r =0\}\cap B_1 \subset \{ |x_n+rp_3/x_n|\le Cr^2\} \quad \mbox{and thus} \quad \big|\{u_r=0\}\cap B_1\big|\le Cr^2
\]
Moreover,  since $P\geq0$ on $\{x_n=0\}$, we have $P \ge -C|x_n|$ in $B_1$.
Hence we obtain
\[-\int_{B_1\cap \{u_r=0\}} P \leq Cr\,\big|\{u_r=0\}\cap B_1\big| \leq Cr^3,\]
and the lemma follows.
\end{proof}

We can now show the following:

\begin{proposition} \label{lem-Sigma4th=Sigma>4}
Let $u:B_1 \to [0,\infty)$ solve  \eqref{eq:UELL}.
Then $\Sigma^{4th}_{n-1} = \Sigma^{>4}_{n-1}$.
\end{proposition}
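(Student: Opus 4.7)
The inclusion $\Sigma^{>4}_{n-1}\subseteq \Sigma^{4th}_{n-1}$ is immediate from Definition~\ref{def:Sigma>4}, so the plan is to prove the reverse. Fix $x_\circ \in \Sigma^{4th}_{n-1}$; after translation and rotation, assume $x_\circ = 0$ and $\{p_2 = 0\} = \{x_n = 0\}$, and set $w := u - \anz - p_4$. The goal is to show $\phi^\gamma(0^+, w) > 4$ for every $\gamma \in (4,5)$. The engine of the argument is that the monotonicity \eqref{eq:E4B2b} of Lemma~\ref{lem:E4B2}, combined with the subsequential convergence built into the definition of $\Sigma^{4th}_{n-1}$, upgrades to $r^{-8}\bigl(H(r,w) + r^{2\gamma}\bigr) \to 0$ as $r\downarrow 0$; in particular, $H(r,w) = o(r^8)$.

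The easy regime $\phi^\gamma(0^+, w) < 4$ can be ruled out using Lemma~\ref{lem:E3B1c}(a) (applicable thanks to the almost-monotonicity provided by Lemma~\ref{lem:E4B1}): if $\phi^\gamma(0^+, w) = \lambda_* < 4$, then $H(r,w) + r^{2\gamma} \geq c_\delta\, r^{2\lambda_* + \delta}$ for any $\delta > 0$, and choosing $\delta$ small enough so that $2\lambda_*+\delta < 8$ contradicts the decay above.

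The main obstacle is the borderline case $\phi^\gamma(0^+, w) = 4$, which I will treat via Proposition~\ref{prop:E3B5}(b). In case (b1) of that proposition, a direct computation combining the decay $H(r,w) = o(r^{2\gamma})$ with Remark~\ref{rem:phi gamma} gives $\phi^\gamma(0^+, w) = \gamma > 4$, incompatible with the assumption. We are therefore forced into case (b2) with $\lambda^{4th}\in [4,5)$: along a subsequence $r_k\downarrow 0$, $\tilde w_{r_k}\to q$ uniformly on compacts, where $q\not\equiv 0$ is a $\lambda^{4th}$-homogeneous Signorini solution, even with respect to $\{x_n=0\}$, normalized by $\|q\|_{L^2(\partial B_1)}=1$. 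Since a homogeneous function harmonic on an open set is a polynomial, $q$ is a polynomial on each half-space; decomposing it into its even and odd parts in $x_n$ and using that the Signorini condition $\partial_n q|_+\le 0$ forces a $3$-homogeneous polynomial in $x'$ to be non-positive (hence identically zero), the odd part must vanish. Thus $q$ is a $4$-homogeneous harmonic polynomial with $q\geq 0$ on $\{x_n = 0\}$, and in particular $\lambda^{4th} = 4$.

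The contradiction now follows from Lemma~\ref{lem-Sigma4th=Sigma>4-mon} applied with $P = q$. Setting $F(r) := r^{-4}\int_{\partial B_1} w_r\, q$, the lemma yields that $F(r) - Cr$ is non-increasing in $r$; combined with $\lim_{r\downarrow 0} F(r) = 0$, which follows from Cauchy--Schwarz and $H(r,w) = o(r^8)$, this forces $F(r) \leq Cr$, i.e., $\int_{\partial B_1} w_r\,q \leq C r^5$. Evaluating along the blow-up subsequence $r_k$, the uniform convergence $\tilde w_{r_k}\to q$ on $\partial B_1$ and the normalization of $q$ yield $\int_{\partial B_1} w_{r_k}\, q = H(r_k,w)^{1/2}(1+o(1))$, so $H(r_k,w)\leq C r_k^{10}$. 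On the other hand, Lemma~\ref{lem:E3B1c}(a) with $\overline{\lambda} = 4+\varepsilon$ produces $H(r_k,w) + r_k^{2\gamma} \geq c_\delta\, r_k^{8+2\varepsilon+\delta}$; choosing $\varepsilon,\delta$ so small that $8+2\varepsilon+\delta < 10$ yields the required contradiction, completing the proof.
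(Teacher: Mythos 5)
Your proposal is correct and, at its core, follows the same strategy as the paper's proof: blow up using Proposition~\ref{prop:E3B5}(b) to produce a $4$-homogeneous even Signorini solution $q$, identify $q$ as a harmonic polynomial, feed $P=q$ into the monotonicity of Lemma~\ref{lem-Sigma4th=Sigma>4-mon}, and conclude $\int_{\partial B_1}q^2\le 0$ in disguise. The differences are largely organizational. The paper argues by contradiction from $0\in\Sigma^{4th}_{n-1}\setminus\Sigma^{>4}_{n-1}$ and divides by the normalization $\varepsilon_r=r^{-4}H(1,w_r)^{1/2}$ before passing to the limit; you instead do a direct proof, first ruling out $\phi^\gamma(0^+,w)<4$, then in the borderline case $\phi^\gamma(0^+,w)=4$ deriving explicit two-sided power bounds $cr_k^{8+2\varepsilon+\delta}\le H(r_k,w)+r_k^{2\gamma}$ (from Lemma~\ref{lem:E3B1c}(a)) and $H(r_k,w)\le Cr_k^{10}$ (from Lemma~\ref{lem-Sigma4th=Sigma>4-mon}), and comparing. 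Both routes are valid; the paper's is a little slicker, while yours makes the quantitative decay transparent. Your treatment of case (b1) via Remark~\ref{rem:phi gamma} is a fine substitute for the power comparison that could equally be used there.

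One small inaccuracy in the exposition: the sentence ``Since a homogeneous function harmonic on an open set is a polynomial, $q$ is a polynomial on each half-space; decomposing it into its even and odd parts in $x_n$\dots the odd part must vanish'' is imprecise. A homogeneous harmonic function on a half-space is not automatically a polynomial, and $q$ has no odd part to discard since Proposition~\ref{prop:E3B5}(b) already gives evenness. The correct (and implicitly what you intend) argument is: $\partial_n q|_+$ restricted to $\{x_n=0\}$ is $3$-homogeneous and $\le 0$ by the Signorini condition, so by oddness of degree it vanishes identically; this makes the even extension $C^1$ across $\{x_n=0\}$, hence harmonic everywhere, hence a $4$-homogeneous harmonic polynomial. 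This is precisely the content of \cite[Lemma 1.3.4]{GP09}, which is what the paper cites; you may simply cite it as well rather than rephrasing the proof.
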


\begin{proof}
Assume by contradiction that $0\in \Sigma^{4th}_{n-1} \setminus \Sigma^{>4}_{n-1}$.
Then, by Proposition~\ref{prop:E3B5}(b), there is a sequence $r_k\to 0$ along which $\tilde w_{r_k}\to q$ locally uniformly in $\R^n$, where $q$ is a 4-homogeneous even solution of the Signorini problem \eqref{ETOP}.
Then, by \cite[Lemma 1.3.4]{GP09}, $q$ is a harmonic polynomial.

Let $w:= u-\anz - p_4$.
Since $r^{-4} w_r\to0$ (by definition of $p_4$), it follows by Lemma~\ref{lem-Sigma4th=Sigma>4-mon} that
\[  \int_{\partial B_1} r^{-4}w_r P \leq Cr\]
for any $4$-homogeneous harmonic polynomial $P$ vanishing on  $\{p_2=0\}$.

Set now $\tilde w_r = w_r/H(1,w_r)^{1/2}$ and $\varepsilon_r:= r^{-4} H(1,w_r)^{1/2}$, and notice that, since $0\notin \Sigma^{>4}_{n-1}$, for any $\delta>0$ we have $\varepsilon_r\gg r^{\delta}$ for $r>0$ small enough.
Hence, 
\[ Cr \ge \int_{\partial B_1} r^{-4}w_r P = \int_{\partial B_1} \varepsilon_r \tilde w_r P.\]
Dividing by $\varepsilon_r$, and letting $r=r_k\to0$, we deduce that 
\[ 0 \geq \int_{\partial B_1} q P.\]
Taking $P=q$, this provides the desired contradiction.
\end{proof}

\section{Uniqueness and nondegeneracy of non-harmonic cubic blow-ups}  \label{cubic-sec}

The goal of this section is to study the set $\Sigma_{n-1}^{\geq3}\setminus\Sigma_{n-1}^{>3}$,
namely the set of singular points where blow-ups are 3-homogeneous and non-harmonic\footnote{More precisely, $\Sigma_{n-1}^{\geq3}\setminus\Sigma_{n-1}^{3rd}$ is the set in which any second blow-up (for $u-p_2$) is 3-homogeneous and non-harmonic, while $\Sigma_{n-1}^{3rd}\setminus\Sigma_{n-1}^{>3}$ is the set in which the third blow-up (for $u-p_2-p_3$) is 3-homogeneous and non-harmonic.}.
As explained in the introduction, this study is crucial for our proof of Theorem~\ref{thm-Schaeffer-intro}.

We will prove that $\Sigma_{n-1}^{\geq3}\setminus\Sigma_{n-1}^{3rd}$ is contained in a countable union of $(n-2)$-dimensional Lipschitz manifolds, and that $\Sigma_{n-1}^{3rd}\setminus\Sigma_{n-1}^{>3}=\varnothing$.
For this, we will need to establish the uniqueness and nondegeneracy of blow-ups at these points.

We start by classifying all $\lambda$-homogeneous solutions of the Signorini problem in $\R^n$, with $\lambda$ odd.

\begin{lemma}\label{lem:signoriniodd}
Let $q:\R^n \rightarrow \R$ be a $\lambda$-homogenous solution of the Signorini problem
\begin{equation}\label{signorininormal}
\begin{cases}
\Delta q\le 0\quad \text{and} \quad q\Delta q=0 \quad & \mbox{in }\R^n
\\
\Delta q=0  &\mbox{in }\R^n\setminus  \{x_n=0\}
\\
\,q\ge 0 &\mbox{on } \{x_n=0\}.
\end{cases}
\end{equation}
with  homogeneity $\lambda=2m+1$, $m\in \mathbb N$.
Then $q\equiv 0$ on $\{x_n=0\}$.
\end{lemma}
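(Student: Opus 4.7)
My plan is to split $q = q_e + q_o$ into its even and odd parts in $x_n$, show the odd piece contributes nothing, and then analyse the even piece by spherical-harmonic separation of variables. Set $q_o(x',x_n) := \tfrac{1}{2}\bigl(q(x',x_n)-q(x',-x_n)\bigr)$: it is continuous, $\lambda$-homogeneous, harmonic on $\R^n\setminus\{x_n=0\}$, and vanishes on $\{x_n=0\}$. Differentiating the oddness relation shows that its one-sided normal derivatives at the hyperplane agree, so $q_o$ has no distributional Laplacian there and is globally harmonic, hence a harmonic polynomial of degree $\lambda$; in particular it contributes nothing to $q|_{\{x_n=0\}}$. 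The remaining piece $q_e$ is a $\lambda$-homogeneous, even-in-$x_n$ Signorini solution, and it suffices to show $q_e|_{\{x_n=0\}}\equiv 0$.

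Now write $q_e(x) = r^\lambda \phi(\omega)$, with $\phi$ continuous on $S^{n-1}$, even in $x_n$, smooth off the equator $S^{n-2}:=S^{n-1}\cap\{x_n=0\}$, and an eigenfunction of $-\Delta_{S^{n-1}}$ with eigenvalue $\mu = \lambda(\lambda+n-2)$ on the complement of the equator. For $n\ge 3$ I would use polar coordinates $(\theta,\omega')$ with $\theta\in[0,\pi]$ the colatitude from $e_n$ and $\omega'\in S^{n-2}$, and expand
\[
\phi(\theta,\omega') \;=\; \sum_{k\ge 0} c_k\, T_\lambda^k(\cos\theta)\, Y_k(\omega'),
\]
where $Y_k$ runs over spherical harmonics of degree $k$ on $S^{n-2}$ and $T_\lambda^k$ is the unique (up to scaling) solution of the Gegenbauer ODE arising from the separation that is regular at both poles; equivalently, $r^\lambda T_\lambda^k(\cos\theta)Y_k(\omega')$ is the degree-$\lambda$ harmonic polynomial on $\R^n$ transforming as $Y_k$ under $SO(n-1)$. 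The parity identity $T_\lambda^k(-s) = (-1)^{\lambda-k}T_\lambda^k(s)$ has two immediate consequences: evenness of $\phi$ in $x_n$ forces $\lambda-k$ to be even, so for $\lambda=2m+1$ only odd $k$ contribute; and $T_\lambda^k(0)=0$ precisely when $\lambda-k$ is odd. Setting $\theta=\pi/2$ the trace becomes
\[
g(\omega') \;:=\; \phi(\tfrac{\pi}{2},\omega') \;=\; \sum_{k \text{ odd}} c_k T_\lambda^k(0) Y_k(\omega'),
\]
a linear combination of odd-degree spherical harmonics on $S^{n-2}$, so $g(-\omega') = -g(\omega')$; combined with the Signorini non-negativity $g\ge 0$ this forces $g\equiv 0$, and hence $q_e\equiv 0$ on $\{x_n=0\}$.

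The case $n=2$ is handled directly: writing $\phi(\theta) = A\cos(\lambda\theta)+B\sin(\lambda\theta)$ on the upper half-circle, the values $\phi(0)=A$ and $\phi(\pi)=-A$ (using $\cos\lambda\pi = -1$ for odd $\lambda$) must both be non-negative, forcing $A=0$ and hence vanishing of the trace at the two boundary points. The main technical obstacle is the parity computation for the Legendre-type radial factors $T_\lambda^k$ in general $n$, together with checking that the separation-of-variables expansion is compatible with $\phi$ being merely continuous (and not smooth) across the equator. Once this identity is isolated and the expansion is interpreted in the appropriate $L^2$ sense on $S^{n-2}$, the proof reduces to the one-line observation that a non-negative, antipodally-odd function on $S^{n-2}$ must vanish.
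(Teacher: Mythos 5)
Your reduction to the even part $q_e$ is correct: differentiating the oddness relation does show the one-sided normal derivatives of $q_o$ agree, so $q_o$ is globally harmonic, hence a polynomial vanishing on $\{x_n=0\}$, and it may be discarded. Your $n=2$ argument is also correct. The gap is in the $n\ge 3$ step, and it is not merely the technical obstacle you flag at the end --- it is a genuine failure of the strategy.

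When you expand $\phi(\theta,\omega')=\sum_k a_k(\theta)Y_k(\omega')$, the coefficient $a_k$ is a solution of a Gegenbauer-type ODE on $(0,\pi/2)$, determined up to scalar by regularity at $\theta=0$. You then identify it with $T_\lambda^k$, ``the unique solution regular at both poles,'' equivalently the restriction of a degree-$\lambda$ harmonic polynomial. But $\phi$ is only $C^0$ across the equator, so $a_k$ need not extend to a smooth solution of the ODE on all of $(0,\pi)$: the correct $a_k$ is the even extension of the solution $F_\lambda^k$ regular at $\theta=0$ alone, and this extension in general has a corner at $\theta=\pi/2$. For $k\le\lambda$ one indeed has $F_\lambda^k=T_\lambda^k$ and your parity identity gives $T_\lambda^k(0)=0$ whenever $\lambda-k$ is odd. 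But for $k>\lambda$ there is no degree-$\lambda$ harmonic polynomial in the $Y_k$-isotypic component; the regular solution $F_\lambda^k$ is a non-polynomial Gegenbauer function, which has no reason to satisfy the parity identity at $s=0$. A concrete computation shows it does not: take $n=3$, $\lambda=1$, $k=2$, so the substituted ODE is $(1-s^2)b''-6sb'-4b=0$, with fundamental solutions $b_e=(1-s^2)^{-2}$ and $b_o=(s-\tfrac{s^3}{3})(1-s^2)^{-2}$. The unique combination regular at $s=1$ is $F=b_o-\tfrac{2}{3}b_e$, so $F(0)=-\tfrac{2}{3}\ne 0$, and $a_2(\pi/2)=(1-0)F(0)\ne 0$ even though $k=2$ is even. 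Consequently the trace $g$ is not a combination of odd-degree spherical harmonics: it contains even-degree pieces from $k>\lambda$, and the concluding argument (``$g\ge 0$ and $g(-\omega')=-g(\omega')$ force $g\equiv 0$'') does not apply. Since you use only that $q$ is harmonic off $\{x_n=0\}$ and $q\ge 0$ there, one cannot appeal to a ``piecewise polynomial'' structure of Signorini solutions to cut the expansion at $k=\lambda$ --- that structure is a consequence of the very lemma being proved.

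For comparison, the paper's proof avoids the isotypic decomposition entirely: it builds, for each tangential direction $x_\alpha$, an explicit $\lambda$-homogeneous barrier $\psi$ that vanishes on $\{x_n=0\}$ and has $\Delta\psi=2\lambda|x_\alpha|^{\lambda-1}\mathcal H^{n-1}|_{\{x_n=0\}}$. Green's identity together with equality of Almgren frequencies kills the boundary terms, $\int\psi\,\Delta q=0$ because $\Delta q$ lives where $\psi$ vanishes, and the remaining identity $\int_{\{x_n=0\}} q\,|x_\alpha|^{\lambda-1}=0$ combined with $q\ge0$ finishes the proof. This sidesteps exactly the infinite tower of $k>\lambda$ modes that breaks your approach, and it works uniformly in $n$.
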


\begin{proof}
Using complex variables (so $i$ denotes the imaginary unit), for $\alpha \in \{1,2,\dots, n-1\}$ define
\[
\psi (x) :=
\begin{cases}
i^{1-\lambda} {\rm Re}\bigl[(x_n + i x_\alpha)^\lambda\bigr] \quad & x_n \ge 0
\\
-i^{1-\lambda}{\rm Re}\big[(x_n + i x_\alpha)^\lambda\bigr]  		& x_n \le 0.
\end{cases}
\]
Note that
\[
\psi(x' ,x_n)= \psi(x' ,-x_n)\quad \mbox{and}\quad \psi (x) = 0  \quad \mbox{on } \{x_n =0\}.
\]
In addition,  on $\{x_n =0\}$ we have
$
\partial_n \psi (x',0^+) =    \lambda |x_\alpha|^{\lambda-1}
$ (recall that $\lambda-1$  is even), therefore
\[
\Delta \psi =  2\lambda |x_\alpha|^{\lambda-1}  \HH^{n-1}|_{\{x_n =0\}}.
\]
On the other hand, since both $\psi$ and $q$ are $\lambda$-homogeneous we have $(x\cdot \nabla q)  \psi  =  q(x\cdot \nabla \psi)  = \lambda q\psi$. Thus $\int_{\partial B_1}  (q_\nu \psi - q\psi_\nu)=0$, and an integration by parts gives
\[
\int_{B_1}  \Delta q \psi  =  \int_{B_1}   q\Delta \psi.
\]
Since $\Delta q$ is concentrated on $\{x_n=0\}$ where $\psi$ vanishes, combining all together we get
\[
0 = \int_{B_1}   q\Delta \psi  = 2\lambda\int_{B_1 \cap \{x_n =0\}}  q |x_\alpha|^{\lambda-1}.
\]
Since $q\ge 0$ on $\{x_n=0\}$ and the previous equality holds for all $\alpha \in \{1,2,\dots, n-1\}$, we conclude that $q$ must vanish on $\{x_n=0\}$.
\end{proof}

\begin{lemma}\label{lem:signorinieven3}
Assume that $q:\R^n \rightarrow \R$ is a $3$-homogenous  even solution of the Signorini problem~\eqref{signorininormal}.
Then, after a suitable rotation that leaves the hyperplane $\{x_n=0\}$ invariant, we have
\[
q(x) =b |x_n|^3 -3 |x_n| \left( \sum_{\alpha=1}^{n-1} b_\alpha x_\alpha^2\right),
\]
where $b, b_\alpha\ge 0$ and $b = \sum_{\alpha=1}^{n-1} b_\alpha$.
\end{lemma}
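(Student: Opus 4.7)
The plan is to reduce the classification to that of a $3$-homogeneous harmonic polynomial via Schwarz reflection, directly exploiting the just-proved Lemma~\ref{lem:signoriniodd}. Since $\lambda=3$ is odd (the case $m=1$ of that lemma), Lemma~\ref{lem:signoriniodd} immediately gives $q\equiv 0$ on $\{x_n=0\}$. Combined with the fact that $q$ is harmonic on $\{x_n>0\}$ and continuous up to the boundary, the Schwarz reflection principle applied to the odd extension
\[\hat q(x',x_n):=\operatorname{sgn}(x_n)\,q(x',|x_n|)\]
produces a function harmonic on all of $\R^n$. Being $3$-homogeneous, $\hat q$ is automatically a polynomial, and by construction it is odd in $x_n$, so it admits the expansion
\[\hat q(x',x_n)=x_n\,c_0(x')+c_1\,x_n^3,\]
with $c_0$ a $2$-homogeneous polynomial on $\R^{n-1}$ and $c_1\in\R$; harmonicity translates into the single linear relation $\Delta_{x'}c_0+6c_1=0$.

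I then recover $q$ from $\hat q$ via the evenness hypothesis: since $\hat q=q$ on $\{x_n\ge 0\}$ and $q(x',x_n)=q(x',-x_n)$, one obtains the pointwise representation
\[q(x',x_n)=|x_n|\,c_0(x')+c_1\,|x_n|^3\qquad \text{on } \R^n.\]
A direct distributional computation based on $\partial_{x_n}^2|x_n|=2\delta_{\{x_n=0\}}$ and $\partial_{x_n}^2|x_n|^3=6|x_n|$ gives
\[\Delta q=(\Delta_{x'}c_0+6c_1)|x_n|+2c_0(x')\,\mathcal H^{n-1}|_{\{x_n=0\}}=2c_0(x')\,\mathcal H^{n-1}|_{\{x_n=0\}},\]
so the Signorini inequality $\Delta q\le 0$ is equivalent to $c_0\le 0$ pointwise on $\R^{n-1}$, i.e.\ the quadratic form $c_0$ is negative semi-definite. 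To conclude, I diagonalize $c_0$ by an orthogonal change of coordinates in $\R^{n-1}$, which leaves the hyperplane $\{x_n=0\}$ invariant. Writing $c_0(x')=-3\sum_{\alpha=1}^{n-1}b_\alpha x_\alpha^2$ with $b_\alpha\ge 0$, the constraint $\Delta_{x'}c_0=-6c_1$ forces $c_1=\sum_{\alpha=1}^{n-1}b_\alpha=:b$, and substituting yields the claimed formula.

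The only real subtlety, and the step I would write most carefully, is justifying the regularity of $q$ across $\{x_n=0\}$ needed for Schwarz reflection to legitimately produce a harmonic extension on all of $\R^n$. This is however standard once Lemma~\ref{lem:signoriniodd} is in hand: the hypothesis that $q$ is a continuous (or $W^{1,2}$) solution of the Signorini system, together with $q\equiv 0$ on $\{x_n=0\}$, provides exactly the continuity up to the hyperplane and harmonicity on each half-space that the reflection principle requires. Everything else reduces to elementary linear algebra and distribution theory.
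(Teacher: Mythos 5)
Your proof is correct and follows essentially the same path as the paper's: apply Lemma~\ref{lem:signoriniodd} to get $q\equiv 0$ on $\{x_n=0\}$, odd-reflect to a $3$-homogeneous harmonic polynomial, diagonalize the quadratic coefficient, and use the sign condition from $\Delta q\leq 0$ to get $b_\alpha\geq 0$. The only cosmetic difference is that you extract the sign constraint by writing the distributional Laplacian $\Delta q = 2c_0\,\mathcal H^{n-1}|_{\{x_n=0\}}$ explicitly, whereas the paper invokes the equivalent boundary condition $\partial_n q\leq 0$ on $\{x_n=0\}$ directly.
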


\begin{proof}
By Lemma~\ref{lem:signoriniodd} $q$ must vanish everywhere on $\{x_n=0\}$.  Thus, $q$ is a $3$-homogenous harmonic function in $\{x_n>0\}$ vanishing on $\{x_n =0\}$, so its odd extension is a 3-homogeneous harmonic polynomial. This implies, after a rotation, that
\[
q(x) =b x_n^3 -3 x_n \left( \sum_{\alpha=1}^{n-1} b_\alpha x_\alpha^2\right)\qquad \text{for }x_n>0,
\]
where $b, b_\alpha\in \R$ and $b = \sum_{\alpha=1}^{n-1} b_\alpha$.
Finally, since $q$ is an even solution of Signorini, it follows that $\partial_n q \le 0$ on $\{x_n=0\}$.
This implies that $b_\alpha\ge 0$ (and thus $b\ge 0$), concluding the proof.
\end{proof}

In order to continue our analysis, we introduce a
new monotonicity formula:

\begin{lemma}\label{lem:derprod}
Let  $u:B_1\to[0,\infty)$ solve \eqref{eq:UELL}, and let $0 \in \Sigma^{\ge3}_{n-1} \setminus \Sigma^{>3}_{n-1}$. 
Set $w := u-p_2$ and 
$w_r:= w(r\,\cdot\, )$. 
Then, for fixed $\varrho\in (0,1)$ and for any 3-homogeneous solution $q$ of the Signorini problem \eqref{ETOP}, we have
\[
\frac{d}{dr} \int_{\partial B_\varrho}  w_r q = \frac{3}{r} \int_{\partial B_\varrho} w_r q - \frac{\varrho}{r} \int_{B_\varrho} w_r \Delta q + O(r^3).
\]
In particular
\[\frac{d}{dr}\left(\frac{1}{r^3}\int_{\partial B_1} w_r q\right) \geq -C.\]
\end{lemma}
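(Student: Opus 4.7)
The plan is to proceed by a direct calculation: rescale, differentiate under the integral sign, integrate by parts twice, and then carefully estimate the remainder term using the structural properties of the contact set near $0 \in \Sigma^{\ge 3}_{n-1}$ together with the vanishing of $q$ on $\{p_2=0\}$. Without loss of generality, take coordinates so that $p_2=\tfrac12 x_n^2$, i.e.\ $\{p_2=0\}=\{x_n=0\}$.

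First I would observe that since $w_r(x)=w(rx)$ we have $\partial_r w_r(x)=r^{-1}(x\cdot\nabla w_r)(x)$, so on $\partial B_\varrho$,
\[
\frac{d}{dr}\int_{\partial B_\varrho} w_r q \;=\; \frac{\varrho}{r}\int_{\partial B_\varrho} \partial_\nu w_r\, q.
\]
An application of the divergence theorem together with $\partial_\nu q=3q/\varrho$ on $\partial B_\varrho$ (from 3-homogeneity of $q$) gives
\[
\frac{d}{dr}\int_{\partial B_\varrho} w_r q \;=\; \frac{3}{r}\int_{\partial B_\varrho} w_r q \;+\; \frac{\varrho}{r}\int_{B_\varrho} q\,\Delta w_r \;-\; \frac{\varrho}{r}\int_{B_\varrho} w_r\,\Delta q.
\]
So the identity is established once we show $\frac{\varrho}{r}\int_{B_\varrho} q\,\Delta w_r = O(r^3)$.

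The next step is this error bound. Since $p_2$ is quadratic, $\Delta w_r = -r^2 \chi_{\{u_r=0\}}$. By Proposition~\ref{prop:E2B3} applied with $\lambda=3$ (valid because $0\in\Sigma^{\ge 3}_{n-1}$), the contact set of $u$ satisfies $|\{u=0\}\cap B_{r\varrho}|\le C(r\varrho)^{n+1}$ and $\{u=0\}\cap B_{r\varrho}\subset\{|x_n|\le C(r\varrho)^2\}$; rescaling yields
\[
|\{u_r=0\}\cap B_\varrho|\le Cr\varrho^{n+1}\qquad\text{and}\qquad \{u_r=0\}\cap B_\varrho\subset\{|x_n|\le Cr\varrho^2\}.
\]
By Lemma~\ref{lem:signoriniodd}, $q$ vanishes on $\{x_n=0\}$; being 3-homogeneous and harmonic on each side, $|q(x)|\le C|x|^2|x_n|$. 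Combining these three inequalities on $\{u_r=0\}\cap B_\varrho$ gives $\int_{B_\varrho\cap\{u_r=0\}}|q|\le Cr^2\varrho^{n+5}$, hence $\frac{\varrho}{r}\int_{B_\varrho} q\,\Delta w_r = O(r^3)$ for fixed $\varrho$.

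For the ``in particular'' statement I take $\varrho=1$ and compute
\[
\frac{d}{dr}\left(\frac{1}{r^3}\int_{\partial B_1} w_r q\right) = -\frac{3}{r^4}\int_{\partial B_1} w_r q + \frac{1}{r^3}\frac{d}{dr}\int_{\partial B_1} w_r q.
\]
Substituting the formula just proved makes the first term cancel with the $\frac{3}{r}\int_{\partial B_1}w_r q$ piece, leaving
\[
\frac{d}{dr}\left(\frac{1}{r^3}\int_{\partial B_1} w_r q\right) = -\frac{1}{r^4}\int_{B_1} w_r\,\Delta q + O(1).
\]
By the Signorini conditions and Lemma~\ref{lem:signoriniodd}, $\Delta q$ is a nonpositive measure supported on $\{x_n=0\}$, where $p_2=0$ so $w_r=u_r\ge 0$. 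Hence $\int_{B_1} w_r\,\Delta q\le 0$ and the right hand side is bounded below by a constant, as claimed. The main obstacle I expect to be slightly delicate is precisely the $O(r^3)$ bound: one must exploit both the Lebesgue measure control and the thickness (in $x_n$) control of the contact set from Proposition~\ref{prop:E2B3}, together with the second-order vanishing of $q$ at $\{x_n=0\}$ coming from Lemma~\ref{lem:signoriniodd}; using any one of these three ingredients alone would yield a strictly worse rate and would not suffice to make the resulting monotonicity formula useful.
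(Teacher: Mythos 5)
Your proof is correct and follows essentially the same route as the paper: differentiate, integrate by parts twice, use the $3$-homogeneity of $q$ to rewrite $\varrho\int_{\partial B_\varrho}w_r\partial_\nu q$, and then bound the remainder $\frac{\varrho}{r}\int_{B_\varrho}q\,\Delta w_r$ by combining $\Delta w_r=-r^2\chi_{\{u_r=0\}}$, the thinness of the contact set from Proposition~\ref{prop:E2B3}, and the vanishing of $q$ on $\{x_n=0\}$. One small remark: where the paper cites Lemma~\ref{lem:signorinieven3} (which formally covers only the even part) for the bound $|q|\le C|x_n|$, you correctly deduce the same bound for a general $q$ from Lemma~\ref{lem:signoriniodd} plus odd reflection; also, your phrase ``second-order vanishing of $q$'' is a misnomer — the relevant fact is the first-order vanishing $|q|\le C|x|^2|x_n|$, which combined with homogeneity and the quadratic thinness $|x_n|\lesssim r\varrho^2$ is what produces the extra power of $r$.
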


\begin{proof}
We have 
\[
\begin{split}
\frac{d}{dr} \int_{\partial B_\varrho}  w_r q  &=  \int_{\partial B_\varrho}  \frac x r \cdot \nabla w_r q   =  \frac{\varrho}{r} \int_{\partial B_\varrho}  \partial_\nu w_r q   =  \frac{\varrho}{r} \int_{B_\varrho}  {\rm div} ( \nabla w_r q)= \frac{\varrho}{r} \bigg( \int_{B_\varrho}  \nabla w_r \nabla q   + \int_{B_\varrho}  \Delta w_r q\bigg)
\\
&= \frac{\varrho}{r} \bigg( \int_{\partial B_\varrho}   w_r \partial_\nu q   -\int_{B_\varrho} w_r \Delta q  + \int_{B_\varrho}  \Delta w_r q\bigg).
\end{split}
\]
Now, since  $q$ is 3-homogeneous, we find that $\varrho \int_{\partial B_\varrho}  w_r \partial_\nu q = 3\int_{\partial B_\varrho}  w_r q$. To complete the proof of the Lemma we only need to show that 
$\int_{B_\varrho}  \Delta w_r q = O(r^4)$.

With no loss of generality, assume that $p_2 = \frac 1 2 x_n^2$. Then it follows by Proposition~\ref{prop:E2B3} that $\{u(r\,\cdot\,) =0\}\cap B_1 \subset \{ |x_n|\le Cr\}$, and  $|q| \le C|x_n|$ in $B_1$ (by Lemma~\ref{lem:signorinieven3}).
Thus, since $\Delta w_r =   -r^2 \chi_{\{u(r\,\cdot\,) =0\}}$, we get $\int_{B_\varrho}  \Delta w_r q = O(r^4)$.

Finally, taking $\varrho=1$ and using that $-w_r\Delta q\geq0$ in $\R^n$ (since $w_r=u(r\,\cdot\,)\geq 0$ on $\{x_n=0\}$), we obtain
\[
\frac{d}{dr} \left(\frac{1}{r^3}\int_{\partial B_1} w_r q\right)  =  \frac{1}{r^4} \bigg( -\int_{\partial B_1} w_r \Delta q  + \int_{B_1}  \Delta w_r q\bigg)\geq \frac{1}{r^4}\int_{B_1}  \Delta w_r q\geq -C,
\]
as desired.
\end{proof}

As a consequence of the previous lemma, we deduce the uniqueness of blow-ups in $\Sigma_{n-1}^{\geq3} \setminus \Sigma_{n-1}^{3rd}$.
Notice that this is quite surprising, since even in the (simpler) case of the Signorini problem it was not known if cubic blow-ups are unique at every point (see Appendix \ref{apb}).

\begin{proposition}\label{prop-uniq-cubic-blowups}
Let $u:B_1\to[0,\infty)$ solve \eqref{eq:UELL}, and let $0 \in \Sigma^{\ge3}_{n-1}\setminus \Sigma_{n-1}^{3rd}$. 
Then the limit
\[
\tilde q := \lim_{r\downarrow 0} \frac{(u-p_2)(r\,\cdot\,)}{r^3}
\]
exists, and it is a 3-homogeneous (non-harmonic) solution of Signorini.
\end{proposition}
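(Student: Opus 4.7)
Set $w:=u-p_2$. The plan is to combine the monotonicity formula of Lemma~\ref{lem:derprod} with the second blow-up analysis of Proposition~\ref{prop:E2B2}(b), and then to enforce uniqueness of the limit via a Cauchy--Schwarz-type trick applied to $Q$-weighted boundary integrals.

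First, I would show that $\lambda^{2nd}:=\phi(0^+,w)=3$ and $H(r,w)\asymp r^6$ as $r\downarrow 0$. Since $0\in\Sigma^{\ge 3}_{n-1}$ we have $\lambda^{2nd}\ge 3$. If $\lambda^{2nd}>3$, then Lemma~\ref{lem:EMF3} yields $H(r,w)\le Cr^{6+\delta}$ for some $\delta>0$, so $w_r/r^3\to 0$ in $W^{1,2}_{\rm loc}$ (using also the Lipschitz bound of Lemma~\ref{lem:E2B1} to upgrade $L^2$-smallness); but then the polynomial zero is a blow-up of $r^{-3}w_r$, which by Definition~\ref{def:Sigma3rd} places $0\in\Sigma^{3rd}_{n-1}$, contradicting our assumption. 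The same argument rules out $H(r_k,w)/r_k^6\to 0$ along any subsequence: writing $w_{r_k}/r_k^3=(H(r_k,w)^{1/2}/r_k^3)\tilde w_{r_k}$ and using Proposition~\ref{prop:E2B2}(b) to extract a nontrivial limit of $\tilde w_{r_k}$ again forces $w_{r_k}/r_k^3\to 0$. Hence $cr^6\le H(r,w)\le Cr^6$ for all $r$ small.

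These bounds, combined with Proposition~\ref{prop:E2B2}(b) and Lemma~\ref{lem:E2B1}, imply that $\{w_r/r^3\}_{r>0}$ is bounded in $W^{1,2}_{\rm loc}(\R^n)\cap C^{0,1}_{\rm loc}(\R^n)$, and any accumulation point $q$ as $r\downarrow 0$ is a nontrivial $3$-homogeneous solution of the Signorini problem~\eqref{ETOP}. Moreover $q$ must be non-harmonic: were $q$ a $3$-homogeneous harmonic polynomial, then since $q\ge 0$ on $\{p_2=0\}$ and its restriction there is an odd $3$-homogeneous polynomial in $x'$, such a restriction would vanish identically, so $q$ itself would be a harmonic polynomial vanishing on $\{p_2=0\}$, again contradicting $0\notin\Sigma^{3rd}_{n-1}$.

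For the uniqueness of the limit I would invoke Lemma~\ref{lem:derprod} with $\varrho=1$: for every $3$-homogeneous Signorini solution $Q$, the map $r\mapsto r^{-3}\int_{\partial B_1}w_r\,Q+Cr$ is monotone nondecreasing, and since $w_r/r^3$ is bounded in $L^2(\partial B_1)$, it is bounded below, hence the limit
\[ I(Q):=\lim_{r\downarrow 0}\frac{1}{r^3}\int_{\partial B_1}w_r\, Q \]
exists and is finite. Uniform convergence on $\partial B_1$ of $w_{r_k}/r_k^3$ to any accumulation point $q_\ast$ (granted by the Lipschitz bound) gives $I(Q)=\int_{\partial B_1}q_\ast Q$. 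If $q_1,q_2$ are two accumulation points, the previous paragraph shows that both are admissible $3$-homogeneous Signorini solutions, so we may test the identity with $Q=q_1$ and with $Q=q_2$ to obtain
\[ \int_{\partial B_1}q_1^2=I(q_1)=\int_{\partial B_1}q_1 q_2=I(q_2)=\int_{\partial B_1}q_2^2, \]
whence $\int_{\partial B_1}(q_1-q_2)^2=0$, and by $3$-homogeneity $q_1\equiv q_2$ on $\R^n$. This yields existence of the limit $\tilde q$ and identifies it as a $3$-homogeneous non-harmonic solution of Signorini. The main obstacle is really the approximate monotonicity statement of Lemma~\ref{lem:derprod}, which substitutes for an Almgren-type frequency formula that is not available on $r^{-3}w_r$; once it is in hand, the Cauchy--Schwarz closure of the uniqueness argument is clean and short.
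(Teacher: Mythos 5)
Your proposal is correct and follows essentially the same strategy as the paper's proof: the almost-monotonicity of $r\mapsto r^{-3}\int_{\partial B_1}w_r\,q$ from Lemma~\ref{lem:derprod} is the key ingredient, and your derivation of $\int_{\partial B_1}(q_1-q_2)^2=0$ by testing the limit pairing $I(\cdot)$ against $q_1$ and $q_2$ is exactly the linear-algebra content behind the paper's ``choosing $q=q^{(1)}-q^{(2)}$''. Your extra preliminary steps (showing $\lambda^{2nd}=3$, $H(r,w)\asymp r^6$, and that the limit is nontrivial and non-harmonic via Definition~\ref{def:Sigma3rd}) are details the paper leaves implicit but which are welcome, and they do not deviate from the underlying approach.
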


\begin{proof}
Let $w:=u-p_2$, and $w_r=w(r\,\cdot\,)$.
Assume that
\[
q^{(i)} = \lim_{r^{(i)}_k\downarrow 0} \frac{1}{(r_k^{(i)})^3}  w_{r_k^{(i)}} ,\qquad i=1,2,
\]
are two accumulation points along different sequences $r_k^{(i)}$. Then, give a $3$-homogeneous solution of Signorini $q$, we can apply Lemma~\ref{lem:derprod} to deduce that  $r\mapsto \frac{1}{r^3} \int_{\partial B_1}  w_r q $ has a unique limit as $r\to 0.$
In particular this implies that 
\begin{equation}\label{haiohaohaoih}
\int_{\partial B_1} q^{(1)} q = \int_{\partial B_1} q^{(2)} q. 
\end{equation}
Choosing $q=q^{(1)}-q^{(2)}$ we obtain 
$$
\int_{\partial B_1} \big(q^{(1)}-  q^{(2)}\big)^2=0,
$$
hence $q^{(1)}\equiv q^{(2)}$, as desired.
\end{proof}

The next step consists in showing that if $0 \in\Sigma^{3rd}_{n-1}$ then $\phi(0^+, u-p_2-p_3 )> 3$.
This is a kind of nondegeneracy property, which implies that $\Sigma^{3rd}_{n-1}\setminus \Sigma^{>3}_{n-1}$ is empty. 
This highly non-trivial fact  is essential in order to establish Schaeffer conjecture in $\R^4$, and it is the core of this section. 
Its proof require a barrier and ODE-type arguments obtained below.

\begin{lemma}\label{lem:compcubic}
Let $u:B_1\to[0,\infty)$ solve \eqref{eq:UELL}, and let $0 \in\Sigma^{3rd}_{n-1}$. 
Set $w := u-p_2-p_3$, and let $w_r$ and $\tilde w_r$ be defined as in \eqref{defwr_tildewr}. 
Assume that $\{p_2=0\}=\{x_n=0\}$, and
given $x= (x_1, \dots, x_{n}) \in \R^n$ let $x':= (x_1, \dots, x_{n-1}) \in \R^{n-1}$.

For any $\eta>0$ there exists  $\delta = \delta(n,\eta)$  such that  if
\[
\| \tilde w_r -q\|_{L^\infty(B_2)} \le \delta \qquad \mbox{for }\quad  q = |x_n| \bigg(\frac{a}{3} x_n^2 - x' \cdot A x'  \bigg), \quad A\in \R^{(n-1)\times (n-1)},\,A\ge0,\  a = {\rm trace}(A)
\]
then
\[
u(r\,\cdot\,)= O(r^4) \quad \mbox{on}\quad \{x_n=0\} \cap  (B_1\setminus B_{1/2} )\cap \big\{x' \cdot A x'  \ge \eta\big\}.
\]
\end{lemma}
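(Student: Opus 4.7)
I would fix a point $x_\circ=(x_\circ',0)$ with $|x_\circ'|\in[\tfrac12,1]$ and $S:=x_\circ'\cdot A x_\circ'\ge\eta$, and study $u$ along the vertical segment through $rx_\circ$. In the coordinates of \eqref{eq:p2p3} both $p_2$ and $p_3$ vanish on $\{x_n=0\}$, so at $t=0$ one already has $u(rx_\circ)=h\tilde w_r(x_\circ)$ with $h:=H(r,w)^{1/2}$; since $q\equiv 0$ on $\{x_n=0\}$ too, the hypothesis gives the rough a-priori bound $|u(rx_\circ)|\le h\delta$. The whole point is thus to show that $h$ is quantitatively small in terms of $r$, by exploiting the fact that $q<0$ in the half-space away from $\{x_n=0\}$ together with the sign constraint $u\geq 0$.

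Set $T:=\sum_{\alpha}\tfrac{a_\alpha}{2}(x_\circ')_\alpha^{2}$. Expanding $p_2,p_3$ from \eqref{eq:p2p3} and the explicit $q$ from the statement, along the line $x=(x_\circ',t)$ one gets
\[
u\bigl(r(x_\circ',t)\bigr)=\tfrac{r^{2}t^{2}}{2}+r^{3}\bigl(tT+\tfrac{a_n t^{3}}{6}\bigr)+h|t|\bigl(\tfrac{a}{3}t^{2}-S\bigr)+R(t),\quad |R(t)|\le h\delta.
\]
The essential feature is the \emph{linear}-in-$|t|$ term $-h|t|S$, negative of size $\ge h|t|\eta$, which is balanced only by the quadratic $r^{2}t^{2}/2$. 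Reading the RHS as $\tfrac{r^{2}}{2}t^{2}-\beta_\pm\,t+\gamma_\pm$ with $\beta_\pm=hS\mp r^{3}T$ and $\gamma_\pm=u(rx_\circ)+O(h\delta)$ for $t\gtrless 0$ (plus cubic errors $O(h|t|^{3})+O(r^{3}|t|^{3})$), the discriminant condition imposed by $u\ge0$ yields
\[
(hS\mp r^{3}T)^{2}\le 2r^{2}\bigl(u(rx_\circ)+Ch\delta\bigr)+\text{cubic errors at }t^{*}\sim\beta_\pm/r^{2}.
\]
Adding the two $\pm$ inequalities eliminates the cross term $r^{3}T$ and produces $h^{2}S^{2}\lesssim r^{2}(u(rx_\circ)+h\delta)$; combined with $u(rx_\circ)\le h\delta$ and $S\ge\eta$, this gives the preliminary estimate $h\le Cr^{2}\delta/\eta^{2}$, and consequently $u(rx_\circ)\le Cr^{2}\delta^{2}/\eta^{2}$.

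To upgrade this to the claimed $O(r^{4})$, I would bootstrap: once $h\le Cr^{2}\delta$, the cubic remainders $h|t^{*}|^{3}$ and $r^{3}|t^{*}|^{3}$ at the optimal scale $t^{*}\sim hS/r^{2}$ become negligible compared to $r^{4}$, and reinserting the improved ansatz $u(rx_\circ)\le Cr^{4}$ into the discriminant inequality closes the loop. An alternative, more robust route is to compare $u$ with the explicit supersolution
\[
\varphi(x):=r^{2}p_{2}(x)+r^{3}p_{3}(x)+hq(x)+Cr^{4},
\]
which satisfies $\Delta\varphi\le 1$ in $\R^n$ (since $\Delta p_{2}=1$, $\Delta p_{3}=0$, and $\Delta q\le 0$ because $q$ solves \eqref{ETOP}) and, for appropriate $C$, dominates $u$ on a suitable neighborhood of $r\{x_n=0,\,x'\cdot Ax'\ge\eta\}\cap(B_{1}\setminus B_{1/2})$. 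The main obstacle is precisely this passage from the $O(r^{2})$ bound produced by the direct nonnegativity argument to the sharp $O(r^{4})$ rate: it requires either the careful bootstrap described above (which uses the preliminary smallness of $h$ to dominate the cubic corrections at the critical scale $t^*$) or the barrier construction indicated, which hinges on the precise compatibility between the $p_{3}$ correction and the Signorini bulk term $hq$ in the ansatz.
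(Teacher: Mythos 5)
Your ``main route'' does not reach the claimed rate, and I do not see how the proposed bootstrap or barrier closes the gap. The discriminant argument (reading $u(r(x_\circ',t))$ as a quadratic in $t$ with negative linear part $-hS|t|$) can at best produce a bound of the form $h\le Cr^{2}\delta$ and hence $u(rx_\circ)\le h\delta\le Cr^{2}\delta^{2}$. This is genuinely only $O(r^{2})$ with a fixed $\delta$: the only information you have about $u(rx_\circ)$ is $|u(rx_\circ)|=|R(0)|\le h\delta$, and since $0\in\Sigma_{n-1}^{3rd}$ all you know a priori about $h=H(r,w)^{1/2}$ is $h=o(r^{3})$, not $h=O(r^{4})$. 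Reinserting $h\lesssim r^{2}\delta$ into the discriminant inequality does not improve the exponent: the constraint remains $h^{2}S^{2}\lesssim r^{2}h\delta+\text{(cubic errors)}$, i.e.\ $h\lesssim r^{2}\delta/S^{2}$, which is stable under iteration. Worse, when the cubic coefficients $a_\alpha$ of $p_3$ are non-zero, $\beta_{\pm}$ can have opposite signs, so you only get one of the two vertex constraints and cannot ``add the $\pm$ inequalities to eliminate $r^{3}T$''.

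The paper's proof is a barrier argument, and the point you are missing is exactly what makes it produce $O(r^{4})$ rather than $O(r^{2})$. One fixes $z=(z',0)$ with $z'\cdot Az'\ge\eta$ and compares $\hat u_z(x):=u(rz+rx)$, in a fixed small ball $B_{\varrho}$, with the family
$\phi_{z,c}(x):=(p_2+p_3)(rz+rx)-r^{3}(n-1)|x_n|^{2}+r^{3}|x'|^{2}+c$.
Note that this barrier does \emph{not} contain the Signorini term $hq$: this is deliberate, because otherwise you would need an a priori bound on $h$ that you do not have. Both $\phi_{z,c}$ and $\hat u_z$ satisfy $\Delta=r^{2}$ where $\hat u_z>0$, so if $\phi_{z,c_*}$ touches $\hat u_z$ from above at an interior point $y$, the maximum principle forces $y\in\{\hat u_z=0\}$. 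The decisive geometric input is then Proposition~\ref{prop:E2B3}: since $\phi(0^{+},u-p_2)\ge 3$, the contact set satisfies $\{u=0\}\cap B_{1/2}\subset\{|x_n|\le C|x'|^{2}\}$, hence in rescaled coordinates $\{\hat u_z=0\}\cap B_\varrho\subset\{|x_n|\le Cr\}$. On this thin slab, $\frac12(ry_n)^{2}=O(r^{4})$ and $|p_3(r(z+y))|=O(r^{4})$ (because $p_3$ is divisible by $x_n$), so $0=\phi_{z,c_*}(y)\ge c_*-Cr^{4}$, giving $c_*\le Cr^{4}$ and thus $u(rz)\le\phi_{z,c_*}(0)=c_*\le Cr^{4}$. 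Your sketched supersolution $\varphi=r^{2}p_2+r^{3}p_3+hq+Cr^{4}$ needs $Cr^{4}\ge h\delta$ to dominate $u(r\cdot)$, i.e.\ it again presupposes $h=O(r^{4})$; and it does not isolate the indispensable fact that the touching point lies in the contact set, which is where the quartic rate comes from. In short, the structural ingredient you are missing is the quadratic flatness of $\{u=0\}$ near a point of $\Sigma_{n-1}^{\ge 3}$, not a finer control on $H(r,w)$.
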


\begin{proof}
Let  $z= (z',0)\in (B_1\setminus B_{1/2} )$ satisfy $z' \cdot A z'  \ge \eta$, and given $c>0$, denote 
\[
\phi_{z,c}(x) := (p_2 + p_3)(rz+rx)  -r^3(n-1)|x_n|^2 + r^3 |x'|^2 + c.
\]
Note that, since $q$ is uniformly close to $\tilde w_r$, the constant $a$ and the matrix $A$ appearing in the definition of $q$ are universally bounded. Hence, there exists  $\varrho>0$ small, depending only on $n$ and  $\eta$, such that
\[ 
-n |x_n|^2   \ge   |x_n| \bigg(\frac{a}{3} x_n^2  - (z'+x') \cdot A  (z'+x')  \bigg)\qquad \mbox{for }|x|<\varrho.
\] 
Thus, denoting $h_r : = H(r, w)^{1/2} = o(r^3)$, we have
\begin{equation}\label{haiohaoih}
\begin{split}
 \phi_{z,c} &\ge  (p_2 + p_3)(rz+rx)    -r^3(n-1)|x_n|^2 + r^3 |x'|^2 +c
 \\ 
  &> (p_2 + p_3)(rz+rx)  +h_r q(rz+rx) + r^3( |x_n|^2+|x'|^2)
  \\
  & \ge  (u(rz+rx) -\delta h_r) + r^3 |x|^3  \qquad \mbox{for }|x|<\varrho .
\end{split}
\end{equation}
We now compare the two functions $\hat u_z(x) :=  u(rz +rx)$ and $\phi_{z,c}$ in $B_\varrho(0)$. 
Two cases arise:\\
(1) either $\phi_{z,c}\geq u_z$ for each $c>0$, which implies that $0\leq u(rz)=u_z(0)\leq \phi_{z,0}(0)=0$ (since $p_2$ and $p_3$ vanish on $\{x_n=0\}$);\\
(2) or there exists $c_*> 0$  such that $\phi_{z,c_*}$ touches from above $\hat u_z$ at some point $y =(y',y_n)\in \overline{B_\rho}$.
Note that $\Delta \phi_{z,c_*} = r^2$ in $B_\rho$, and  $\Delta  \hat u_{z}  = r^2 \chi_{\{\hat u_{z}>0\}}$ in $B_\rho$.
Also, since $h_r : = H(r, w)^{1/2} = o(r^3)$, for $r$ small enough we have $\phi_{z,c} \ge \hat u_z(x)$ on $\partial B_{\varrho}$ (by \eqref{haiohaoih}). 
Thus, it follows by the maximum principle that the point $y$ must belong to $\{\hat u_{z}=0\}\cap B_\rho \subset \{|x_n| \le Cr\}\cap B_\rho$, therefore  
\[
0 = \hat u_z(y)  = \phi_{z,c_*}(y) =   (p_2 + p_3)(rz'+ry', ry_n) -r^3(n-1)|y_n|^2 + r^3 |y'|^2 +c_*
 \ge -C r^4 +c_*.
\]
Thus $c_*\leq Cr^4$, and as a consequence
\[0 \le u(rz) = \hat u_z(0) \le   \phi_{z,c_*}(0) = c_* \le Cr^4.\]
This proves that in both cases $0\leq u(rz)\leq Cr^4$, and since $z\in  \{x_n=0\} \cap  (B_1\setminus B_{1/2} )\cap \big\{x' \cdot A x'  \ge \eta\big\}$ is arbitrary, the result follows.
\end{proof}

Another key tool is the following ODE-type formula.

\begin{lemma}\label{lem:ODEalessio}
Let $u:B_1\to[0,\infty)$ satisfy \eqref{eq:UELL}, and  $0 \in\Sigma^{3rd}_{n-1}$. 
Set $w := u-p_2-p_3$, let $w_r$ and $\tilde w_r$ be defined as in \eqref{defwr_tildewr}, and set $h(r): = H(r,w)^{1/2}$. 
Assume that $\{p_2=0\}=\{x_n=0\}$,
and given a symmetric  $(n-1)\times (n-1)$ matrix $A\geq0$, we define its ``associated  solution of the Signorini problem''
 \begin{equation}\label{haisohaioh}
 q_A(x) := |x_n| \bigg( \frac {{\rm trace}(A)} 3 x_n^2 -  x'\cdot Ax'\bigg),\qquad x=(x',x_n) \in \R^{n-1}\times \R,
 \end{equation}
and we introduce the quantity 
\begin{equation}\label{psi-alessio}
\psi(r;A): =  \int_{\partial B_1}  \tilde w_r q_A  - 2\int_{\partial B_{1/2}}  \tilde w_r q_A .
\end{equation}
Then
\[
\frac{d}{dr} \psi(r;A)  = -\theta(r)  \psi(r;A)  - \frac{1}{r} \int_{B_1\setminus B_{1/2}} \tilde w_r \Delta q_A + O\big(r^3/h(r)\big),
\]
where 
\[
\theta(r) : =  \bigg(\frac{h'(r)}{h(r)} +\frac 3 r\bigg) = \big(  \log(h(r)/r^3)\big)'.
\]
\end{lemma}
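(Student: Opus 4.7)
The plan is to write $\psi(r;A) = J(r)/h(r)$, where
$$J(r) := \int_{\partial B_1} w_r\,q_A - 2\int_{\partial B_{1/2}} w_r\,q_A,$$
differentiate by the product rule, and apply a direct analogue of Lemma~\ref{lem:derprod} at $\varrho=1$ and $\varrho=1/2$.

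First I would verify that the computation behind Lemma~\ref{lem:derprod} carries over verbatim with $u-p_2$ replaced by our $w=u-p_2-p_3$: since $p_3$ is harmonic, $\Delta w = \chi_{\{u>0\}}-1-0 = -\chi_{\{u=0\}}$, exactly as in that lemma. Integration by parts together with the $3$-homogeneity of $q_A$ (so that $\partial_\nu q_A = (3/\varrho)q_A$ on $\partial B_\varrho$) then gives, for each $\varrho\in\{1,\,1/2\}$,
\begin{equation}\label{eq:plan-aux}
\frac{d}{dr}\int_{\partial B_\varrho} w_r\,q_A \;=\; \frac{3}{r}\int_{\partial B_\varrho} w_r\,q_A \;-\; \frac{\varrho}{r}\int_{B_\varrho} w_r\,\Delta q_A \;+\; \frac{\varrho}{r}\int_{B_\varrho} \Delta w_r\,q_A,
\end{equation}
and the last term is $O(r^3)$. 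Indeed, $\Delta w_r = -r^2\chi_{\{u_r=0\}}$, while Lemma~\ref{lem:signorinieven3} yields $|q_A|\le C|x_n|$, and Proposition~\ref{prop:E2B3} (with $\lambda=3$, using $0\in\Sigma_{n-1}^{3rd}$) gives $\{u_r=0\}\cap B_1\subset\{|x_n|\le Cr\}$; multiplying these bounds and using that $|\{u_r=0\}\cap B_1|$ is uniformly bounded produces the claimed $O(r^3)$.

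Next, I would take the combination ``$\varrho=1$ minus twice $\varrho=1/2$'' of \eqref{eq:plan-aux}. The boundary pieces assemble into $\frac{3}{r}J(r)$, and the bulk $\Delta q_A$ pieces telescope to $-\frac{1}{r}\int_{B_1\setminus B_{1/2}} w_r\,\Delta q_A$, yielding
$$J'(r) \;=\; \frac{3}{r}J(r) \;-\; \frac{1}{r}\int_{B_1\setminus B_{1/2}} w_r\,\Delta q_A \;+\; O(r^3).$$
Applying the product rule $\psi'=J'/h - (h'/h)\,\psi$ and dividing the bulk integral by $h(r)$ to convert $w_r$ into $\tilde w_r$ delivers
$$\psi'(r) \;=\; \Bigl(-\tfrac{h'(r)}{h(r)}+\tfrac{3}{r}\Bigr)\psi(r) \;-\; \frac{1}{r}\int_{B_1\setminus B_{1/2}} \tilde w_r\,\Delta q_A \;+\; O\bigl(r^3/h(r)\bigr).$$
Recognizing the prefactor as $-\theta(r)=-\bigl(\log(h(r)/r^3)\bigr)'$ (which fixes the intended sign convention for $\theta$) gives the formula.

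There is no essential obstacle: the statement is a mechanical consequence of the scaling identity in Lemma~\ref{lem:derprod}, the $h$-normalization, and the sharp thickness estimate on the contact set. The only point requiring a little care is the $O(r^3)$ error in \eqref{eq:plan-aux}: obtaining it (rather than the cruder $O(r^2)$ one would get from $|q_A|\lesssim 1$ alone) uses \emph{both} the vanishing $|q_A|\le C|x_n|$ from Lemma~\ref{lem:signorinieven3} and the cubic-contact thickness $|x_n|\le Cr$ coming from $0\in\Sigma_{n-1}^{3rd}$.
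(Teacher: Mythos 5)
Your proposal is correct and follows essentially the same route as the paper: differentiate the auxiliary integral via the divergence theorem, use the $3$-homogeneity of $q_A$ to convert $\partial_\nu q_A$ into $\tfrac{3}{\varrho}q_A$, control the $\int_{B_\varrho}\Delta w_r\,q_A$ error with the geometry of the contact set, and then divide by $h(r)$ and combine $\varrho=1$ with $\varrho=1/2$. You also correctly identify that $\theta$ should be read off from $\theta=(\log(h(r)/r^3))'$, i.e.\ as $h'/h-3/r$.

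There is, however, one small slip in your justification of the $O(r^3)$ error. You bound
\[
\Big|\frac{\varrho}{r}\int_{B_\varrho}\Delta w_r\,q_A\Big|
\;\le\; \frac{\varrho}{r}\,r^2\int_{\{u_r=0\}\cap B_\varrho}|q_A|
\;\le\; \frac{\varrho}{r}\,r^2\cdot Cr\cdot\big|\{u_r=0\}\cap B_\varrho\big|,
\]
using $|q_A|\le C|x_n|$ and $|x_n|\le Cr$ on the contact set, and then invoke only that $|\{u_r=0\}\cap B_1|$ is \emph{uniformly bounded}. That chain yields $O(r^2)$, not the claimed $O(r^3)$. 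The missing factor of $r$ comes precisely from the slab containment you already cite: since $\{u_r=0\}\cap B_\varrho\subset\{|x_n|\le Cr\}\cap B_\varrho$, you in fact have $\big|\{u_r=0\}\cap B_\varrho\big|\le Cr$ (not merely $\le C$). Substituting this gives $\int_{B_\varrho}\Delta w_r\,q_A=O(r^4)$ and hence the claimed $O(r^3)$ for the last term of your display, matching the paper. So the tool is already in your hands; you just under-used the slab bound by treating it only as an $L^\infty$ estimate on $x_n$ rather than also as a volume estimate on the contact set.
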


\begin{proof}
As in the proof of Lemma~\ref{lem:derprod}, we obtain
\[
\frac{d}{dr} \int_{\partial B_\varrho}  w_r q_A = \frac{3}{r} \int_{\partial B_\varrho} w_r q_A - \frac{\varrho}{r} \int_{B_\varrho} w_r \Delta q_A + O(r^3).
\]
Now, since $\tilde w_r = w_r/h(r)$ we deduce that
\[
\frac{d}{dr} \int_{\partial B_\varrho}  \tilde w_r q_A = \bigg( - \frac {h'(r)}{h(r)} + \frac{3}{r}\bigg) \int_{\partial B_\varrho} \tilde w_r q - \frac{\varrho}{r} \int_{B_\varrho} \tilde w_r \Delta q_A + O\big(r^3/h(r)\big)
\]
and the lemma follows by combining the identities for $\varrho =1$ and  $\varrho =1/2$.
\end{proof}

We shall also need the following formula:

\begin{lemma}\label{lem:scalarprod}
Given  $A,\bar A\geq 0$ be two symmetric  $(n-1)\times (n-1)$ matrices, let $q_{A}$ and $q_{\bar  A}$ as defined in \eqref{haisohaioh}. 

Then
\[
 \int_{\partial B_\varrho}  q_{A} q_{\bar  A}  = \frac{4  \varrho^{n+5} |\partial B_1|}{n(n+2)(n+4)} \bigg\{ {\rm trace }\big(  A\cdot \bar A\big) + \frac 1 3 {\rm trace }(A)\,  {\rm trace} (\bar A) \bigg\}.
\]
\end{lemma}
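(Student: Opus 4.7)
The plan is to carry out a direct moment computation on the sphere. Since both $q_A$ and $q_{\bar A}$ are $3$-homogeneous, the integrand is $6$-homogeneous, and the change of variables $x=\varrho y$ yields $\int_{\partial B_\varrho} q_A q_{\bar A} = \varrho^{n+5}\int_{\partial B_1} q_A q_{\bar A}$, so it suffices to treat the case $\varrho=1$. Setting $a:={\rm trace}(A)$ and $\bar a:={\rm trace}(\bar A)$, I would then expand
\[
q_A\,q_{\bar A} \;=\; \tfrac{a\bar a}{9}\,x_n^{6}\;-\;\tfrac{\bar a}{3}\,x_n^{4}(x'\!\cdot\! Ax')\;-\;\tfrac{a}{3}\,x_n^{4}(x'\!\cdot\!\bar Ax')\;+\;x_n^{2}(x'\!\cdot\! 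Ax')(x'\!\cdot\!\bar Ax').
\]

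The next step is to integrate each of the four monomial pieces using the standard Wick (Gaussian-pairing) formula on the unit sphere,
\[
\int_{\partial B_1} x_{i_1}\cdots x_{i_{2k}} \;=\; \frac{|\partial B_1|}{n(n+2)\cdots(n+2k-2)}\sum_{\text{pairings}}\prod\delta_{i_\alpha i_\beta}.
\]
For $2k=6$ this gives, writing $c := |\partial B_1|/[n(n+2)(n+4)]$, the values $\int x_n^6 = 15c$ and $\int x_n^4 x_\alpha^2 = 3c$ for $\alpha<n$. The only slightly delicate moment, $\int x_n^2 x_\alpha x_\beta x_\gamma x_\delta$ for $\alpha,\beta,\gamma,\delta<n$, is handled by the observation that any pairing of an $x_n$ with an $x_\alpha$ ($\alpha<n$) vanishes, so the two $x_n$'s are forced to pair with each other, leaving the three pairings of the remaining four indices and producing $c\bigl[\delta_{\alpha\beta}\delta_{\gamma\delta}+\delta_{\alpha\gamma}\delta_{\beta\delta}+\delta_{\alpha\delta}\delta_{\beta\gamma}\bigr]$.

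Contracting this last identity against $A_{\alpha\beta}\bar A_{\gamma\delta}$ and using the symmetry of $A$ and $\bar A$ yields the key algebraic simplification
\[
\sum A_{\alpha\beta}\bar A_{\gamma\delta}\bigl[\delta_{\alpha\beta}\delta_{\gamma\delta}+\delta_{\alpha\gamma}\delta_{\beta\delta}+\delta_{\alpha\delta}\delta_{\beta\gamma}\bigr] \;=\; a\bar a + 2\,{\rm trace}(A\bar A),
\]
so the last term of the expansion contributes $c\bigl(a\bar a + 2\,{\rm trace}(A\bar A)\bigr)$. Summing this with the contributions $\tfrac{5}{3}a\bar a\,c$, $-a\bar a\,c$, $-a\bar a\,c$ from the first three monomials collapses to a multiple of $\bigl[{\rm trace}(A\bar A)+\tfrac13 a\bar a\bigr]$ times the common factor $c=|\partial B_1|/[n(n+2)(n+4)]$, from which the stated identity follows after restoring the $\varrho^{n+5}$. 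The main (and essentially only) obstacle is the careful bookkeeping of the pairings and the arithmetic simplification of the $a\bar a$ contributions to the coefficient $\tfrac13$; no conceptual difficulty is involved.
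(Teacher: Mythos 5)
Your approach is essentially the same as the paper's: scale to $\varrho=1$, expand $q_Aq_{\bar A}$ into the four monomial pieces, evaluate the sixth-order moments on $\partial B_1$, and combine. You compute the moments via the Wick/pairing formula and keep $A,\bar A$ general (contracting the pairing tensor against $A\otimes\bar A$), whereas the paper obtains the same moment values by repeated integration by parts and first reduces to $A$ diagonal by a rotation in $\{x_n=0\}$; these are equivalent bookkeeping choices.

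There is, however, a discrepancy you should not pass over silently: the four contributions you list sum to $\frac{5}{3}a\bar a\,c -a\bar a\,c-a\bar a\,c+c\bigl(a\bar a+2\,{\rm tr}(A\bar A)\bigr)=2c\bigl[{\rm tr}(A\bar A)+\frac13 a\bar a\bigr]$ with $c=|\partial B_1|/[n(n+2)(n+4)]$, so the coefficient your (correct) arithmetic produces is $2$, not the $4$ appearing in the lemma's statement. A direct sanity check confirms your value: for $n=2$, $A=\bar A=1$ one has $q_A=|x_2|(x_2^2/3-x_1^2)$ and $\int_{\partial B_1}q_A^2=\frac19\cdot\frac{5\pi}{8}-\frac23\cdot\frac{\pi}{8}+\frac{\pi}{8}=\frac{\pi}{9}=2c\cdot\frac43$, not $4c\cdot\frac43$. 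The paper's own final display in its proof is also arithmetically inconsistent at that point, so the $4$ in the statement appears to be a typo. Thus "the stated identity follows" glosses over this mismatch; you should flag it rather than assert agreement. (The error is harmless downstream: the only use of the lemma, in the proof of Proposition~\ref{prop:nobadpoints}, needs only a positive lower bound $\Phi(r)\ge c(n)>0$.)
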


\begin{proof}
Let $A=(a_{\alpha\beta})_{\alpha,\beta=1}^{n-1}$, $\bar A=(\bar a_{\alpha\beta})_{\alpha,\beta=1}^{n-1}$, $a={\rm trace}(A) = \sum_\alpha a_{\alpha\alpha}$, $\bar a ={\rm trace}(\bar A)= \sum_\alpha \bar a_{\alpha\alpha}$. Denote for brevity  $q= q_A$, $\bar q=q_{\bar  A}$.
Then 
\[
\begin{split}
 \int_{\partial B_\varrho} q\bar q =   \sum_{\alpha,\beta,\gamma,\delta=1}^{n-1}
  \int_{\partial B_\varrho}  x_n^2 &\bigg(a \frac{x_n^2}{3} - a_{\alpha\beta}x_\alpha  x_\beta\bigg) \bigg(\bar a\frac{x_n^2}{3} - \bar a_{\gamma \delta}x_\gamma  x_\delta\bigg) .
\end{split}
\]
Up to a rotation in the $\{x_n=0\}$ plane, we may assume that $a_{\alpha\beta}$ is diagonal.  
Noting that  $\int_{\partial B_\varrho} x_n^4x_\gamma x_\delta=\int_{\partial B_\varrho} x_n^2x_\alpha^2x_\gamma x_\delta=0$ for $\gamma \neq\delta$, we have
\[
 \int_{\partial B_\varrho} q\bar q  = \int_{\partial B_\varrho}\bigg( \frac{a\bar a}{9} x_n^6   +\sum_{\alpha} \bigg\{- \bigg( \frac{\bar a}{3}  a_{\alpha\alpha} + \frac{a}{3} \bar a_{\alpha \alpha}\bigg) x_n^4x_\alpha^2 +  a_{\alpha\alpha} \bar a_{\alpha \alpha} x_n^2 x_\alpha^4\bigg\}    +  \sum_{\alpha \neq \gamma} a_{\alpha\alpha} \bar a_{\gamma \gamma}   x_n^2 x_\alpha^2  x_\gamma^2\bigg)
\]
We observe that 
\[
\int_{\partial B_1} x_i^4 =  \frac 1 4 \int_{\partial B_1} \partial_\nu(x_i^4) =  \frac 1 4 \int_{B_1} \Delta(x_i^4) =  3 \int_{B_1} x_i^2 = \frac{3}{n+2} \int_{\partial B_1} x_i^2 =   \frac{3}{n(n+2)} |\partial B_1|.
\]
Similarly,
\[
\int_{\partial B_1} x_i^6  =  \frac 1 6 \int_{B_1} \Delta(x_i^6) =  5 \int_{B_1} x_i^4 = \frac{5}{n+4} \int_{\partial B_1} x_i^4 =\frac{15}{n(n+2)(n+4)} |\partial B_1| ,
\]
\[
\int_{\partial B_1} x_i^2x_j^2 =  \frac 1 4 \int_{B_1} \Delta( x_i^2x_j^2)  = \frac{2}{4(n+2)} \int_{\partial B_1}  2x_i^2 = \frac{1}{n(n+2)} |\partial B_1|,  
\]
\[
\int_{\partial B_1} x_i^4x_j^2 =  \frac 1 6 \int_{B_1} \Delta( x_i^4x_j^2)  = \frac{1}{6(n+4)} \int_{\partial B_1} (12 x_i^2x_j^2 + 2 x_i^4)  = \frac{3}{n(n+2)(n+4)} |\partial B_1|,
\]
and
\[
\int_{\partial B_1} x_i^2x_j^2x_k^2 =  \frac 1 6 \int_{B_1} \Delta( x_i^2x_j^2 x_k^2)  = \frac{3}{6(n+4)} \int_{\partial B_1}  2x_i^2x_j^2  =  \frac{1}{n(n+2)(n+4)} |\partial B_1| .
\]
Thus, calling $c_n := \frac{|\partial B_1|}{n(n+2)(n+4)}$  and using that $\sum_{\alpha} a_{\alpha\alpha} = a$ and  $\sum_{\alpha} \bar a_{\alpha\alpha} = \bar a$, we obtain
\[
 \int_{\partial B_\varrho} q\bar q  = \varrho^{n+5} \bigg(  \frac{a\bar a}{9} 15 c_n    - \frac{a\bar a}{3}   6c_n  +  \sum_{\alpha }a_{\alpha\alpha} \bar a_{\alpha \alpha}  3c_n    +  \sum_{\alpha \neq \gamma} a_{\alpha\alpha} \bar a_{\gamma \gamma} c_n\bigg).
\]
Finally, since $\sum_{\alpha}\sum_{\gamma}a_{\alpha\alpha} \bar a_{\gamma\gamma} =  \big( \sum_{\alpha }a_{\alpha\alpha} \big) \big( \sum_{\gamma }\bar a_{\gamma\gamma} \big) = a\bar a$ and recalling that $a_{\alpha \beta}$ is diagonal, we 
get
\[
 \int_{\partial B_\varrho} q\bar q  =   2 \varrho^{n+5} c_n \sum_{\alpha }a_{\alpha\alpha} \bar a_{\alpha \alpha}   = 2c_n\varrho^{n+5}\left(2\, {\rm trace }\big(  (a_{\alpha\beta})\cdot(\bar a_{\gamma\delta}) \big) + \frac  2 3 {a\bar a}\right),
\]
as claimed.
\end{proof}

We can now finally prove the following fundamental result, which implies that $\Sigma_{n-1}^{3rd}\setminus\Sigma_{n-1}^{>3}=\varnothing$:

\begin{proposition}\label{prop:nobadpoints}
Let $0 \in\Sigma^{3rd}_{n-1}$, and set $w := u-p_2-p_3$.  
Then $\phi(0^+, w)>3$. 
\end{proposition}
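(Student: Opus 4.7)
The plan is to argue by contradiction. Since $0 \in \Sigma^{3rd}_{n-1}$, Lemma~\ref{lem:EGB6} applied with $P = p_3$ gives $\phi(0^+, w) \geq 3$, so what must be ruled out is $\phi(0^+, w) = 3$. In that case $0 \in \Sigma^{3rd}_{n-1} \setminus \Sigma^{>3}_{n-1}$, and by Lemma~\ref{lem:E3B1c} applied to the almost-monotone truncated frequency $\phi^\gamma$ of Lemma~\ref{lem:E3B1}, the quantity $h(r) := H(r,w)^{1/2}$ satisfies $h(r) \gg r^\gamma$ for every $\gamma \in (3,4)$. On the other hand, since $p_3 = \lim_{r\downarrow 0} r^{-3}(u-p_2)(r\,\cdot\,)$ by Lemma~\ref{lem:EGB7}, one has $r^{-3} w(r\,\cdot\,) \to 0$ pointwise, which combined with $C^0_{\rm loc}$ compactness will force $h(r)/r^3 \to 0$.

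First I would classify the blow-ups of $\tilde w_r := w(r\,\cdot\,)/h(r)$. Combining the almost-monotonicity of Lemma~\ref{lem:E3B1}, the semiconvexity and Lipschitz estimates of Lemma~\ref{lem:E3B5}, and the evenness argument from the proof of Proposition~\ref{prop:E3B5}(a) (driven by the Monneau-type Lemma~\ref{lem:EGB6}), any $W^{1,2}_{\rm loc}$ accumulation point $q$ of $\tilde w_r$ as $r \downarrow 0$ is a nonzero 3-homogeneous Signorini solution, even in $x_n$. By Lemma~\ref{lem:signorinieven3}, after a rotation fixing $\{x_n=0\}$, $q = q_A$ for some $A \geq 0$ with $A \neq 0$. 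Next I would upgrade this to uniqueness: $\tilde w_r \to q_{A_0}$ for a single $A_0 \geq 0$. The crucial observation is that Lemma~\ref{lem:derprod} applies verbatim with $w=u-p_2-p_3$ in place of $u-p_2$, since $p_3$ vanishes on $\{x_n=0\}$ and therefore $w \geq 0$ there. Combined with the scalar-product formula of Lemma~\ref{lem:scalarprod} and a compactness argument tracking the $h(r)/r^3 \to 0$ rescaling, this pins down $A_0$.

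With $q_{A_0}$ identified as the unique blow-up, Lemma~\ref{lem:compcubic} gives $u(rz) = O(r^4)$ on $(B_1 \setminus B_{1/2}) \cap \{z_n = 0\} \cap \{z' \cdot A_0 z' \geq \eta\}$ for $r$ small. Plug $A = A_0$ into the ODE of Lemma~\ref{lem:ODEalessio}; multiplying by $h(r)/r^3$ and using $\theta = (\log(h/r^3))'$ converts it into
\[
\frac{d}{dr}\Bigl[ \bigl(h(r)/r^3\bigr)\, \psi(r; A_0) \Bigr] = -\frac{1}{r^4} \int_{B_1 \setminus B_{1/2}} w_r\, \Delta q_{A_0} + O(1),
\]
and the first term is non-negative because $-\Delta q_{A_0} = 2(z' \cdot A_0 z')\,\mathcal H^{n-1}|_{\{z_n=0\}}$ and $w \geq 0$ on $\{z_n=0\}$. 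By Lemma~\ref{lem:scalarprod} and the uniform convergence $\tilde w_r \to q_{A_0}$, $\psi(r; A_0) \to (1 - 2^{-(n+4)})\, c_{A_0} > 0$. Since $h(r)/r^3 \to 0$ and $\psi$ stays bounded, $(h/r^3)\psi(r; A_0) \to 0$ as $r \downarrow 0$. Integrating the identity above from $0$ to $1$ yields a finite bound on the time integral of the non-negative Signorini term; comparing this bound with the explicit lower estimate on $-(1/r)\int \tilde w_r\,\Delta q_{A_0}$ produced by Lemma~\ref{lem:compcubic} on the non-degenerate set and by uniform smallness of $\tilde w_r$ on $\{z_n=0\}$ on the complement, one extracts the desired contradiction.

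\textbf{The hardest part} will be this last step: the quantitative comparison between (i) the decay rate of the Signorini integrand on $\{z_n=0\}$ (controlled on the good set $\{z'\cdot A_0 z' \geq \eta\}$ by the $O(r^4)$ estimate of Lemma~\ref{lem:compcubic}, and on the complementary bad set by uniform smallness of $\tilde w_r \to 0$ on $\{z_n=0\}$), and (ii) the slow decay $h(r)/r^3 \to 0$ (slower than any power of $r$, by Lemma~\ref{lem:E3B1c}). The structural tension from which the contradiction is extracted is that the positive limit $\psi(0^+; A_0) > 0$ given by Lemma~\ref{lem:scalarprod} cannot coexist with $(h/r^3)\psi \to 0$ under the bound on the integrated Signorini term.
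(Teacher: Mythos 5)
Your proposal correctly identifies the structural tension and several of the key tools (Lemmas~\ref{lem:ODEalessio}, \ref{lem:compcubic}, \ref{lem:scalarprod}, and the Signorini classification from Lemma~\ref{lem:signorinieven3}), but it contains a genuine gap that undermines the whole plan.

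The gap is the claimed uniqueness $\tilde w_r \to q_{A_0}$. You propose to obtain it by applying Lemma~\ref{lem:derprod} to $w=u-p_2-p_3$, arguing as in Proposition~\ref{prop-uniq-cubic-blowups}. The differential identity in Lemma~\ref{lem:derprod} does generalize to $w=u-p_2-p_3$ (since $p_3$ vanishes on $\{x_n=0\}$, so $w=u\ge 0$ there and the sign of $-w_r\Delta q$ is preserved), but the resulting almost-monotone quantity is $r^{-3}\int_{\partial B_1} w_r\,q$, whose limit is $0$ for \emph{every} admissible $q$ because $r^{-3}w_r\to 0$ locally uniformly (this is precisely what $0\in\Sigma^{3rd}_{n-1}$ says). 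Writing $r^{-3}\int w_r q = (h(r)/r^3)\int\tilde w_r q$ with $h(r)/r^3\to 0$, the product tends to $0$ no matter what $\int\tilde w_r q$ does along different subsequences; so the monotonicity carries no information about the \emph{normalized} accumulation points $q^{(i)}=\lim\tilde w_{r_k^{(i)}}$ and cannot separate $q^{(1)}$ from $q^{(2)}$. Proposition~\ref{prop-uniq-cubic-blowups} worked for $\Sigma^{\ge3}_{n-1}\setminus\Sigma^{3rd}_{n-1}$ exactly because there $r^{-3}(u-p_2)(r\cdot)$ converges to a \emph{non-zero} Signorini solution, so the unnormalized monotone quantity retains content. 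Here it does not, and the uniqueness you need is not established — nor, as far as I can see, can it be obtained prior to proving the nondegeneracy that is the actual content of Proposition~\ref{prop:nobadpoints}.

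This matters for the rest of your plan: every subsequent step assumes $\tilde w_r$ stays uniformly close to one fixed $q_{A_0}$ for all small $r$ (needed both for $\psi(r;A_0)\to(1-2^{-n-4})\,c_{A_0}>0$ and for the applicability of Lemma~\ref{lem:compcubic}). The paper avoids the issue entirely: it works with the $r$-\emph{dependent} maximizer $A_r^*$ of $\psi(r;A)$ over $\{A\ge 0,\ \|q_A\|_{L^2(\partial B_1)}=1\}$, which is automatically $o(1)$-close to whatever $\tilde w_r$ is close to. It then compares $\Psi(r)=\psi(r;A_r^*)$ with a \emph{fixed} reference $\Phi(r)=\psi(r;{\rm Id})$; since both satisfy the same ODE with the same $\theta(r)$-factor, the $\theta$-terms cancel in $\Psi/\Phi$, and the key estimate $|\int\tilde w_r\Delta q_{A_r^*}|\le\varepsilon|\int\tilde w_r\Delta q_{\rm Id}|+Cr^4/h(r)$ (which is where Lemma~\ref{lem:compcubic} is actually used — as a comparison of two Signorini integrands, not as a direct lower bound as your last paragraph suggests) shows $(\Psi/\Phi)'$ is integrable. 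This in turn yields integrability of $\frac1r\int\tilde w_r\Delta q_{\rm Id}$, and plugging that back into the ODE for $\Phi\asymp 1$ forces $\log(h(r)/r^3)$ to stay bounded, hence $h(r)\asymp r^3$, contradicting $h(r)=o(r^3)$. If you want to salvage your approach, you must either prove uniqueness by a different mechanism (not via Lemma~\ref{lem:derprod}), or adopt the $r$-dependent maximizer / ratio device.
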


\begin{proof}
Without loss of generality, we can assume that $\{p_2=0\}=\{x_n=0\}$.

Suppose by contradiction that $\phi(0^+, w)=3$. 
Then we know that  the accumulation points  of $\tilde w_r$ as $r\downarrow 0$ must be 3-homogeneous even solutions of the Signorini problem, that is,
of the form $q_A$ for some symmetric matrix $A\geq0$ (see \eqref{haisohaioh}). 
Note that, by construction, $\|q_A\|_{L^2(\partial B_1)} =1$ and thus the matrix $A$ must have at least one positive eigenvalue.

Let us define the quantity 
\begin{equation}\label{themax}
\Psi(r): = \max\big\{   \psi(r;A) \ : \  \|q_A \|_{L^2(\partial B_1)}=1\big\},
\end{equation}
where $\psi$ is given by \eqref{psi-alessio}.
Let $A_r^*$ be the matrix for which the previous maximum is attained. 
Then, as a consequence of Lemma~\ref{lem:ODEalessio}, we have 
\[
\frac{d}{dr} \Psi(r)  = \theta(r)  \Psi(r)   - \frac{1}{r} \int_{B_1\setminus B_{1/2}} \tilde w_r \Delta q_{A_r^*} + O\big(r^3/h(r)\big), \quad \mbox{ for a.e. } r>0 .
\]
On the other hand, if we define $\Phi(r) : = \psi(r, {\rm Id})$,
then
\begin{equation}\label{reugheirhn}
\frac{d}{dr} \Phi(r)  = \theta(r)  \Phi(r)   - \frac{1}{r} \int_{B_1\setminus B_{1/2}} \tilde w_r \Delta q_{\rm Id} + O\big(r^3/h(r)\big).
\end{equation}
We now claim that
\[
\Psi(r)  \asymp  \Phi(r)  \asymp \frac{\Psi(r) }{\Phi(r)}\asymp  1 \quad \mbox{as } \quad r\downarrow 0,
\]
where $X\asymp Y$ is a short notation for $X\le C(n) Y$ and $Y \le C(n) X$.
Indeed, the accumulation points of $\tilde w_r$ (as $r\downarrow 0$ and in the $C^0_{\rm loc}(\R^n)$ topology) are of the form $q_A$ (and have unit norm) and thus for every $r>0$ we have $w_r -q_{A_r} =o(1)$ for some $A_r$.
Hence, by definition of $\Psi$,
\[
\begin{split}
\Psi(r) \ge  \psi(r;A_r) &=  \int_{\partial B_1}\tilde w_r q_{A_r}  - 2\int_{\partial B_{1/2}}  \tilde w_r q_{A_r}
=   \int_{\partial B_{1}}  q_{A_r}^2  - 2\int_{\partial B_{1/2}}  q_{A_r}^2 +o(1) 
\\
& =   (1-2^{-n-4})\int_{\partial B_{1}}  q_{A_r}^2 + o(1)  \ge c(n)>0.
\end{split}
\]
Note that the above computation shows also that
$\psi(r;A^*_r) = (1-2^{-n-4})\int_{\partial B_{1}} q_{A_r} q_{A_r^*}+o(1)$  (as $r\downarrow 0$).
Thus since by defintion of $A^*_r$ we have $\psi(r;A^*_r)\ge\psi(r;A_r)$  it follows 
\[
\int_{\partial B_{1}} q_{A_r^*} q_{A_r}    \ge \int_{\partial B_{1}} q_{A_r} ^2 +o(1)
\]
Since$\int_{\partial B_{1}} q_{A_r^*}^2   =  \int_{\partial B_{1}} q_{A_r}^2=1$ is follows that $q_{A_r^*} = q_{A_r} + o (1)$ and hence
\[
A_r^* = A_r+o(1)\quad \mbox{ as $r\downarrow 0$}.
\]

Similarly,  using   Lemma~\ref{lem:scalarprod},
\[
\begin{split}
\Phi(r) &= \int_{\partial B_{1}}  \tilde w_r q_{\rm Id}  - 2\int_{\partial B_{1/2}}  \tilde w_r q_{\rm Id} 
=   \int_{\partial B_{1}}  q_{A_r}q_{\rm Id}  - 2\int_{\partial B_{1/2}}  q_{A_r}q_{\rm Id} +o(1)
\\
& =    \frac{(1-2^{-n-4}) 4 |\partial B_1| }{n(n+2)(n+4)}  \bigg\{  {\rm trace}(A_r) +  \frac 1 3 {\rm trace}(A_r) (n-1) \bigg\} +o(1)\ge c(n)>0.
\end{split}
\]
Since $\Psi(r)$ and $\Phi(r)$ are bounded by above, the claim follows.

Now notice that, using the expressions for $\frac{d}{dr}\Psi$ and $\frac{d}{dr}\Phi$, we find
\[
\frac{d}{dr}  \bigg( \frac{\Psi(r)}{\Phi(r)}\bigg)  =   \frac{1}{r}  \frac{ - \Phi(r) \int_{B_1\setminus B_{1/2}} \tilde w_r \Delta q_{\rm Id} + \Psi(r) \int_{B_1\setminus B_{1/2}} \tilde w_r \Delta q_{A_r^*} }{\Phi(r)^2}+ O\big(r^3/h(r)\big),
\]
We claim that, given $\varepsilon>0$, for $r$ sufficiently small it holds
\begin{equation}\label{ggooaall}
\left|\int_{B_1\setminus B_{1/2}} \tilde w_r \Delta q_{A_r^*}\right| \leq \varepsilon \left|\int_{B_1\setminus B_{1/2}} \tilde w_r \Delta q_{\rm Id}\right| + Cr^4/h(r).
\end{equation}
Indeed, it follows by  Lemma~\ref{lem:compcubic} that, for any $\eta>0$, if $r>0$ is sufficiently small so that $\|\tilde w_r - q_{A_r^*}\|_{L^\infty(B_2)} \le \delta (n,\eta)$ then
(here we use the notation $B_r':=B_r\cap\{x_n=0\}$)
\[\begin{split}
-\int_{B_1\setminus B_{1/2}}  w_r \Delta q_{A_r^*} & =2  \int_{B'_1\setminus B'_{1/2}}  u(rx',0) \, (x'\cdot A_r^* x')\,dx' \\
&\leq 2 \eta \int_{(B'_1\setminus B'_{1/2})\cap \{x'\cdot Ax'\leq \eta\}} u(rx',0)\,dx'+\int_{(B'_1\setminus B'_{1/2})\cap \{x'\cdot Ax'\geq \eta\}} Cr^4 \,dx' \\
& \le  2\eta \int_{B'_1\setminus B'_{1/2}}  u(rx',0)  +  Cr^4
\end{split}\]
(here we used that $w_r\equiv u(r\,\cdot\,)$ on $\{x_n=0\}$), while 
\[
-\int_{B_1\setminus B_{1/2}}  w_r \Delta q_{\rm Id} = c_n \int_{B'_1\setminus B'_{1/2}}  u(r x',0)\,  |x'|^2\,dx'   \ge c_n  \int_{B'_1\setminus B'_{1/2}}  u(rx',0)\,dx',
\]
where $c_n>0$.
Dividing by $h(r)$, we obtain 
\[
0\leq -\int_{B_1\setminus B_{1/2}}  \tilde w_r \Delta q_{A_r^*} \le -4\eta \int_{B_1\setminus B_{1/2}}  \tilde w_r \Delta q_{\rm Id} + Cr^4/ h(r),
\]
and thus \eqref{ggooaall} follows.

Hence, thanks to \eqref{ggooaall}, we have that
\[\begin{split}
\frac{d}{dr} \bigg( \frac{\Psi(r)}{\Phi(r)}\bigg)  & =   \frac{1}{r}  \frac{  - \Phi(r) \int_{B_1\setminus B_{1/2}} \tilde w_r \Delta q_{\rm Id} +  \Psi(r) \int_{B_1\setminus B_{1/2}} \tilde w_r \Delta q_{A_r^*}}{\Phi(r)^2}+ O\big(r^3/h(r)\big) \\
&=-\frac{a(r)}{r} \int_{B_1\setminus B_{1/2}} \tilde w_r \Delta q_{\rm Id} + O\big(r^3/h(r)\big),\qquad a(r)\asymp 1.
\end{split}
\]
Choosing $r_0$ so that $C^{-1} \leq a(r)\leq C$ over $[0,r_0]$, 
we can integrating the above ODE over $[\hat r,r_0]$ for any $\hat r>0$.
Then, since the integrals of 
$\frac{d}{dr} \big(\frac{\Psi(r)}{\Phi(r)}\big)$ and $r^3/h(r)$ are both uniformly bounded independently of $\hat r$, so must be the integral of the negative term $\frac{a(r)}r \int_{B_1\setminus B_{1/2}} \tilde w_r \Delta q_{\rm Id}$.
Hence, this proves that
$$
\int_0^{r_0}\biggl|\frac{1}r \int_{B_1\setminus B_{1/2}} \tilde w_r \Delta q_{\rm Id}\biggr|\,dr<\infty.
$$
Since $\Phi(r)\asymp 1$ and $\theta(r)= \frac{d}{dr} \log (h(r)/r^3)$, it follows from \eqref{reugheirhn} that
$$
\frac{d}{dr}\log \Phi(r)= \frac{d}{dr} \log (h(r)/r^3)+g(r),\qquad \text{with $g \in L^1([0,r_0])$}.
$$
Integrating over $[\hat r,r_0]$ and using again that $\Phi(r)\asymp 1$, we deduce that $\log (h(\hat r)/\hat r^3)$ is uniformly bounded as $\hat r \to 0$,
therefore $h(r)\asymp r^3$.
However, since $0 \in \Sigma_{n-1}^{3rd}$ we know that $h(r)=o(r^3)$, contradiction.
\end{proof}

\section{Symmetry properties of blow-ups for 1-parameter family of solutions} \label{sec:EG2B}

As explained in the introduction, to establish generic regularity results, we shall consider 1-parameter monotone family of  solutions. For this, we shall use the parameter $t$ (over which solutions are indexed) as a second variable for our solution $u$ (one may think of $t$ as a ``time'' variable, although there is no equation in $t$).

So, let $u: \overline{B_1}\times [-1,1]  \rightarrow \R$, $u\ge 0$, be a monotone  1-parameter family of  solutions of the obstacle problem, namely
\begin{equation}\label{eq:UELL+t}
\Delta u(\cdot,t) = \chi_{\{u(\cdot,t)>0\}}\quad \textrm{and} \quad    0\leq u(\,\cdot\,,t)  \le u(\,\cdot\,, t') \quad  \mbox{in } B_1,\qquad \mbox{for } -1\le t\leq t'\le1.
\end{equation}
We will assume in addition that $u\in C^0\big(\overline{B_1}\times [-1,1]  \big)$ (this continuity property in $t$ follows by the maximum principle whenever $u\in C^0\big(\partial {B_1}\times [-1,1]  \big)$).

Note that by, the regularity results for the obstacle problem,  $u(\,\cdot\,, t)$ is of class $C^{1,1}$ inside  $B_1$ for each $t\in (-1,1)$.
Moreover, for each fixed $t\in (-1,1)$, we can apply the results of the previous sections, and define the different blow-ups at singular points.

So, following the previous sections,
we say that $(x_\circ, t_\circ)$ is a singular point of $u$ if $x_\circ$ is a singular point of $u(\,\cdot\, , t_\circ )$. 
Given a singular free boundary point $(x_\circ, t_\circ)$, we denote
\[ p_{2,x_\circ, t_\circ}(x) := \lim_{r\to 0} r^{-2}u(x_\circ+ rx, t_\circ).\]
Note that $p_{2,x_\circ, t_\circ}$ is a convex $2$-homogeneous polynomials  with $\Delta p_{2,x_\circ, t_\circ} =1$.
When $(x_\circ,t_\circ) = (0,0)$, we simplify the notation to $p_2$.

From now on, using the notation introduced in the previous sections, we set:
\begin{equation} \label{eq:Sigmaall}
\begin{array}{rcl}
\mathbf{\Sigma} &:=& \{ (x_\circ, t_\circ) \mbox{ singular points in } B_{1}\times[-1,1] \}, \vspace{1mm}\\
\mathbf{\Sigma}_m &:=&  \big\{ (x_\circ, t_\circ): x_\circ\in {\Sigma}_m \mbox{ for } u(\,\cdot\,,t_\circ)\big\},  \quad 0\le m \le n-1, \vspace{1mm}\\
\mathbf{\Sigma}^a_m &:=& \big\{ (x_\circ, t_\circ): x_\circ\in {\Sigma}_m^a \mbox{ for } u(\,\cdot\,,t_\circ)\big\},  \quad 0\le m \le n-2, \vspace{1mm}\\
\mathbf{\Sigma}_{n-1}^{<3} &:=& \big\{ (x_\circ, t_\circ): x_\circ\in {\Sigma}_{n-1}^{<3} \mbox{ for } u(\,\cdot\,,t_\circ)\big\}, \vspace{1mm}\\
\mathbf{\Sigma}_{n-1}^{\ge 3} &:=& \big\{ (x_\circ, t_\circ): x_\circ\in {\Sigma}_{n-1}^{\ge 3} \mbox{ for } u(\,\cdot\,,t_\circ)\big\}, \vspace{1mm}\\
\mathbf{\Sigma}_{n-1}^{3rd} &:=& \big\{ (x_\circ, t_\circ): x_\circ\in {\Sigma}_{n-1}^{3rd} \mbox{ for } u(\,\cdot\,,t_\circ)\big\}, \vspace{1mm}\\
\mathbf{\Sigma}_{n-1}^{>3} &:=& \big\{ (x_\circ, t_\circ): x_\circ\in {\Sigma}_{n-1}^{>3} \mbox{ for } u(\,\cdot\,,t_\circ)\big\}, \vspace{1mm}\\
\mathbf{\Sigma}_{n-1}^{4th} &:=& \big\{ (x_\circ, t_\circ): x_\circ\in {\Sigma}_{n-1}^{4th} \mbox{ for } u(\,\cdot\,,t_\circ)\big\}, \vspace{1mm}\\
\mathbf{\Sigma}_{n-1}^{>4} &:=& \big\{ (x_\circ, t_\circ): x_\circ\in {\Sigma}_{n-1}^{>4} \mbox{ for } u(\,\cdot\,,t_\circ)\big\}, \vspace{1mm}\\
\mathbf{\Sigma}_{n-1}^{\ge 5-\zeta} &:=& \big\{ (x_\circ, t_\circ): x_\circ\in {\Sigma}_{n-1}^{\ge 5-\zeta} \mbox{ for } u(\,\cdot\,,t_\circ)\big\},\quad \zeta \in (0,1).
\end{array}
\end{equation}
Recall that ${\Sigma}_m$, ${\Sigma}_m^a$,  ${\Sigma}_{n-1}^{<3}$, and ${\Sigma}_{m}^{\ge3}$ were defined in \eqref{ahoiah0}-\eqref{ahoiah1}, while ${\Sigma}_{m}^{3rd}$, $\Sigma_{n-1}^{>3}$, ${\Sigma}_{m}^{\ge4}$, ${\Sigma}_{n-1}^{4th}$, ${\Sigma}_{n-1}^{>4}$, and ${\Sigma}_{n-1}^{\ge5-\zeta}$ were defined in Definitions~\ref{def:Sigma3rd},~\ref{def:Sigma>3},~\ref{def:Sigma4th},~\ref{def:Sigma>4}, respectively.

\begin{remark}
\label{rmk:Sigma 3rd}
Note that, as a consequence of Proposition~\ref{prop:nobadpoints}, $\mathbf{\Sigma}_{n-1}^{3rd}=\mathbf{\Sigma}_{n-1}^{>3}.$
\end{remark}

For $(x_\circ,t_\circ)\in \mathbf{\Sigma}_{m}^{3rd}$ we define
\begin{equation}
\label{eq:def p3 t}
p_{3,x_\circ, t_\circ}(x) := \lim_{r\to 0} r^{-3}\bigl(u(x_\circ+ rx, t_\circ)-p_{2,x_\circ, t_\circ}(rx)\bigr),
\end{equation}
and for  $(x_\circ,t_\circ)\in \mathbf{\Sigma}_{m}^{4th}$ we define $\anz_{x_\circ,t_\circ}$ as the fourth order Ansatz of $u(x_\circ+\,\cdot\,,t_\circ)$ at 0 (cf. \eqref{eq:ansatzell}), and
\begin{equation}
\label{eq:def p4 t}
 p_{4,x_\circ, t_\circ}(x) := \lim_{r\to 0} r^{-4}\bigl(u(x_\circ+ rx, t_\circ)-\anz_{x_\circ, t_\circ}(rx)\bigr).
\end{equation}

We begin with a simple lemma.

\begin{lemma}\label{lem:EG2B1bis}
Let $u\in C^0\big(\overline{B_1}\times [-1,1]  \big)$ solve \eqref{eq:UELL+t}. % and \eqref{notparabola}. 
Then:

{\rm (a)} The singular set  is closed ---more precisely $\mathbf{\Sigma}\cap \overline B_\varrho\times [-1,1]$ is closed for any $\varrho<1$. Moreover,
\[ \mathbf{\Sigma} \cap \overline B_\varrho\times [-1,1]\ni(x_k,t_k)\to (x_\infty, t_\infty) \quad \Rightarrow \quad p_{2,x_k,t_k} \to p_{2,x_\infty, t_\infty}.\]

{\rm (b)} The frequency function \[\mathbf{\Sigma} \ni (x_\circ, t_\circ) \mapsto \phi(0^+, u(x_\circ + \,\cdot\, \,, t_\circ)- p_{2,x_\circ, t_\circ} )\] is upper semi-continuous.

{\rm (c)} If $(x_\circ, t_1)$ and $(x_\circ, t_2)$ belong both to $\mathbf{\Sigma}$ and $t_1< t_2$, then there exists $r>0$ such that $u(x,t)$ is independent of $t$ for all $(x,t)\in B_r(x_\circ)\times [t_1, t_2]$.
\end{lemma}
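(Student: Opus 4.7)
All three assertions rest on Caffarelli's uniform quadratic expansion at singular points (Lemma~\ref{lem:EG2B1}) combined with the interior $C^{1,1}$ estimate \eqref{optimalreg+nondeg}.

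For (a), the uniform $C^{1,1}$ bound and the continuity hypothesis $t\mapsto u(\,\cdot\,,t)\in L^\infty(B_1)$ together yield $u(\,\cdot\,,t_k)\to u(\,\cdot\,,t_\infty)$ in $C^{1,\alpha}_{\mathrm{loc}}$. Since $u(x_k,t_k)=|\nabla u(x_k,t_k)|=0$, passing to the limit gives $u(x_\infty,t_\infty)=|\nabla u(x_\infty,t_\infty)|=0$, so $x_\infty$ is a free boundary point of $u(\,\cdot\,,t_\infty)$. Applying Lemma~\ref{lem:EG2B1} to each $u(\,\cdot\,,t_k)$ one has
\[
\bigl\|u(x_k+\,\cdot\,,t_k)-p_{2,x_k,t_k}\bigr\|_{L^\infty(B_r)}\le r^2\omega(r),
\]
with $\omega$ depending only on $n$. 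The polynomials $\{p_{2,x_k,t_k}\}\subset\mathcal P$ live in a bounded subset of a finite-dimensional space, hence admit a subsequential limit $p_\infty\in\mathcal P$; passing to the limit in the previous estimate identifies $p_\infty$ as the quadratic blow-up of $u(\,\cdot\,,t_\infty)$ at $x_\infty$, showing both that $x_\infty$ is singular and that $p_\infty=p_{2,x_\infty,t_\infty}$. Uniqueness of the quadratic blow-up then promotes subsequential to full convergence. Part (b) is then a direct corollary: by Proposition~\ref{prop:EMF1}, $r\mapsto\phi(r,u-p_2)$ is nondecreasing, so $\phi(0^+,\cdot)=\inf_{r>0}\phi(r,\cdot)$; for each fixed $r>0$ the map $(x_\circ,t_\circ)\mapsto\phi\bigl(r,u(x_\circ+\,\cdot\,,t_\circ)-p_{2,x_\circ,t_\circ}\bigr)$ is continuous on $\mathbf{\Sigma}$ thanks to (a) and the continuity of $u$, so $\phi(0^+,\cdot)$ is an infimum of continuous functions and hence upper semicontinuous.

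For (c), set $w:=u(\,\cdot\,,t_2)-u(\,\cdot\,,t_1)\ge0$. By monotonicity $\{u(\,\cdot\,,t_1)>0\}\subset\{u(\,\cdot\,,t_2)>0\}$, and so
\[
\Delta w=\chi_{\{u(\,\cdot\,,t_2)>0\}}-\chi_{\{u(\,\cdot\,,t_1)>0\}}\ge0,
\]
i.e., $w$ is subharmonic. Comparing the quadratic expansions at $x_\circ$ gives $w(x_\circ+x)=\bigl(p_{2,x_\circ,t_2}-p_{2,x_\circ,t_1}\bigr)(x)+o(|x|^2)$. Since $w\ge0$, the $2$-homogeneous harmonic polynomial $p_{2,x_\circ,t_2}-p_{2,x_\circ,t_1}$ must be pointwise nonnegative on $\R^n$; but any such polynomial vanishes identically (its restriction to $\mathbb{S}^{n-1}$ is a second-order spherical harmonic, hence has zero spherical mean). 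Thus $p_{2,x_\circ,t_1}=p_{2,x_\circ,t_2}$ and $w=o(|x-x_\circ|^2)$. I would then combine this faster-than-quadratic decay with the observation that $\{\Delta w\ne 0\}\subset\{u(\,\cdot\,,t_1)=0\}$ has zero density at $x_\circ$ (Proposition~\ref{prop:E2B3}), while $w$ is harmonic on the full-density open set $\{u(\,\cdot\,,t_1)>0\}$: on any connected component of $\{w>0\}$ whose closure contains $x_\circ$, a Hopf-type argument should yield a strictly positive inward normal derivative of $w$ at $x_\circ$, contradicting $\nabla w(x_\circ)=0$. Hence $w\equiv0$ in some $B_r(x_\circ)$, and the conclusion for all intermediate $t\in[t_1,t_2]$ follows from $0\le u(\,\cdot\,,t)-u(\,\cdot\,,t_1)\le w$ inside $B_r(x_\circ)$.

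\emph{Main obstacle.} The delicate step is the rigidity in (c): turning ``$w\ge0$ subharmonic with $w=o(|x-x_\circ|^2)$'' into ``$w\equiv0$ in a neighborhood''. This is not a direct consequence of the strong maximum principle (subharmonic functions may attain interior minima), and genuinely exploits both the density-zero structure of $\{\Delta w\ne 0\}$ at $x_\circ$ and the cancellation of quadratic blow-ups that forces $w$ to vanish to strictly higher than second order.
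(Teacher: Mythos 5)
Your proof of (a) and (b) is essentially the paper's argument: uniform $C^{1,1}$ estimates plus the universal modulus of continuity $\omega$ from Lemma~\ref{lem:EG2B1} give compactness of the second-order polynomials, the limiting polynomial inherits the quadratic expansion at $(x_\infty,t_\infty)$ and is therefore $p_{2,x_\infty,t_\infty}$, and (b) then follows because $\phi(0^+,\cdot)$ is an infimum over $r>0$ of functions that are continuous on $\mathbf{\Sigma}$ by (a). That part is fine.

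For (c), the first half of your argument is correct: since $w:=u(\cdot,t_2)-u(\cdot,t_1)\ge 0$ and $p_{2,x_\circ,t_2}-p_{2,x_\circ,t_1}$ is a $2$-homogeneous harmonic polynomial, positivity of $w$ forces this difference to be pointwise nonnegative, hence identically zero, giving $w(x_\circ+x)=o(|x|^2)$. The genuine gap is exactly the one you flag in your ``Main obstacle'' paragraph: you cannot invoke Hopf's lemma at $x_\circ$, because $\partial\{u(\cdot,t_1)>0\}$ is singular there — no interior ball condition, no $C^{1,\alpha}$ normal. The paper resolves this with a concrete barrier that replaces Hopf. After a rotation so that $\{p_2=0\}\subset\{x_n=0\}$, Lemma~\ref{lem:EG2B1} ensures that, after rescaling by a small $r$, the cone $\mathcal{C}_\varepsilon=\{y:\mathrm{dist}(y/|y|,\{x_n=0\})>\varepsilon\}$ is contained in $\{u(x_\circ+r\cdot,t_1)>0\}$, where the rescaled difference $v:=u(x_\circ+r\cdot,t_2)-u(x_\circ+r\cdot,t_1)$ is nonnegative and \emph{harmonic}. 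One then builds, from the first Dirichlet eigenfunction $\Psi$ of the spherical Laplacian on $\mathbb{S}^{n-1}\cap\mathcal{C}_{2\varepsilon}$, the function $\hat\psi(x):=|x|^{3/2}\Psi(x/|x|)$; since the harmonic homogeneity $\lambda_\varepsilon$ on this cone tends to $1$ as $\varepsilon\downarrow 0$, the exponent $3/2>\lambda_\varepsilon$ makes $\hat\psi$ subharmonic and it vanishes on the lateral boundary. If $v\not\equiv 0$, Harnack on the narrower cone gives $v\ge c_0>0$ on an inner spherical cap, and the comparison principle then yields $v\ge c\hat\psi$ near the origin, i.e.\ growth $\gtrsim|x|^{3/2}$ in a cone — incompatible with $v(x)=O(|x|^2)$. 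This forces $v\equiv 0$ on $B_r(x_\circ)$, and monotonicity in $t$ then gives the full conclusion for all $t\in[t_1,t_2]$ as in your last line. So your outline identifies the right quantities but stops exactly where the key construction is needed; the barrier argument is what turns the heuristic ``Hopf-type'' step into a proof.
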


\begin{proof}
(a) We first show that if $(x_k,t_k)$ are singular points and $(x_k,t_k)\to (x_\infty, t_\infty)$ then the limit point is also singular.
Indeed, by Lemma~\ref{lem:EG2B1} we have
\[
\|u(x_k + \, \cdot\,, t_k)-p_{2, x_k t_k} \|_{L^\infty(B_r)} \le r^2 \omega(r)  \qquad \forall \,r>0.
\]
Hence, since  $u(x_k + \, \cdot\,, t_k) \rightarrow u(x_\infty + \, \cdot\,, t_\infty)$ in $C^0$  as $k\to \infty$ and (after taking a subsequence) $p_{2, x_k t_k} \to P$ for some convex $2$-homogeneous polynomials  with $\Delta P=1$,  we obtain
\begin{equation}\label{noname}
\|u(x_\infty + \, \cdot\,, t_\infty)- P \|_{L^\infty(B_r)} \le r^2 \omega(r)  \qquad \forall \,r>0.
\end{equation}
Thus  $(x_\infty, t_\infty)\in \mathbf{\Sigma}$  and $p_{2, x_\infty t_\infty} = P$. 
A posteriori, we deduce that  for any other subsequence it must be $p_{2, x_k t_k} \to p_{2, x_\infty t_\infty} $ since there is only one $P$ for which \eqref{noname} holds, namely, $p_{2, x_\infty t_\infty}$.

(b) The upper semicontinuity follows from the fact that the map $r\mapsto \phi(r, u(x_\circ +\, \cdot \,, t_\circ ) - p_{2, x_\circ t_\circ} ) $ is increasing, and that for $r>0$ fixed the map $(x_\circ, t_\circ)\mapsto \phi(r, u(x_\circ +\, \cdot \,, t_\circ ) - p_{2, x_\circ t_\circ} ) $ is continuous on $\mathbf{\Sigma}$ ---using (a) and the fact that $ u(x_\circ +\, \cdot \,, t_\circ )$ satisfies uniform $C^{1,1}$ estimates.

(c) 
As in (a), we have, for $i=1,2$,
\[
\|u(x_\circ + \, \cdot\,, t_i)- p_{2,x_\circ, t_i} \|_{L^\infty(B_r)} \le r^2 \omega(r)  \qquad \forall \,r>0.
\]
Since  $u(x_\circ + \, \cdot\,, t_1) \le  u(x_\circ + \, \cdot\,, t_2)$ then it must be $p_{2,x_\circ, t_1} \equiv p_{2,x_\circ, t_2} =:  P$.
Also, after a change of coordinates, we can assume that $\{P=0\}  \subset  \{x_n=0\}$.

Take $r>0$ small enough, and set $v: = u(x_\circ + r\, \cdot\,, t_2)-  u(x_\circ + r\, \cdot\,, t_1) \ge 0$. Then
\[
\Delta v= 0 \quad \mbox{in } \{u(x_\circ + r\, \cdot\,, t_1)>0\} .
\]
Also,  as a consequence of Lemma~\ref{lem:EG2B1}, given $\ep>0$, for $r>0$ small enough we have
\[
\mathcal C_\ep : = \big\{y \, :\;  {\rm dist}\big({\textstyle \frac{y}{|y|}}, \{x_n=0\}\big) > \ep\big\} \subset  \{u(x_\circ + r\, \cdot\,, t_1)>0\}.
\]
Consider now the first eigenfunction of
\[
-\Delta_{\mathbb S^{n-1}}  \Psi = k_\ep \Psi \ \mbox{ in }\  {\mathbb S^{n-1}} \cap \mathcal C_{2\ep} , \qquad \Psi = 0 \  \mbox{ in }  {\mathbb S^{n-1}} \cap  \partial \mathcal C_{2\ep}.
\]
Then, setting $\psi(x) := |x|^{\lambda_\ep} \Psi(x/|x|)$ with $k_\ep=(n-2+\lambda_\ep)\lambda_\ep$, we have that $\psi$ is a positive $\lambda_\ep$-homogeneous harmonic function in $\mathcal C_{2\ep}$ which vanishes on the boundary.
Note that as $\ep\downarrow 0$ we have $\mathbb S^{n-1} \cap\partial  \mathcal C_{2\ep} \downarrow \{x_n=0\}$ and $\lambda_0=1$  (this corresponds to the solution $|x_n|$). Thus, by continuity, for $\ep>0$ small enough, the function $\hat\psi(x) := |x|^{3/2}\Psi(x/|x|)$ is subharmonic and vanishes on  $\partial \mathcal C_{2\ep}$. Hence using $\hat\psi$ as lower barrier and the standard Harnack inequality on $v$, we obtain that if $v>0$ somewhere then  $v\ge c \hat \psi(x)$ in $B_1$ for some $c>0$. This implies
\[
 u(x_\circ + r\, \cdot\,, t_2) \ge c \hat \psi(x),
\]
which is impossible since $u(x_\circ +\, \cdot\,  , t_2)  = P + o(|x|^2) = O(|x|^2)$, while
$\hat\psi$ is positive in some cone and  $3/2$-homogeneous.
This proves that $u(\cdot,t_1)\equiv u(\cdot ,t_2)$ inside $B_r(x_\circ)$, which implies the result.
\end{proof}

We now prove some relations between $p_2$ and singular points close to $(0,0)$.

\begin{lemma}\label{lem:EG2B1bisbis}
Let $u\in C^0\big(\overline{B_1}\times [-1,1]  \big)$ solve \eqref{eq:UELL+t},  let $(x_k,t_k) \in \mathbf{\Sigma}$, $(0,0)\in \mathbf{\Sigma}$,  and assume that $x_k\to 0$.
Set  $p_{2,k}:= p_{2,x_k,t_k}$.
Then $p_{2,k} \to p_2$ and we have
\[
\textstyle \left\|p_{2,k} -p_2\left(\frac{x_k}{|x_k|}+\,\cdot\,\right) \right\|_{L^\infty(B_1)} \le C \omega(2|x_k|) \quad \mbox{and}\quad \|p_{2,k} -p_2 \|_{L^\infty(B_1)} \le  C \omega(2|x_k|).
\]
In addition,
\[
\textstyle {\rm dist}\left ( \frac{x_k}{|x_k|} , \{p_2=0\}\right ) \to 0 \qquad \mbox{as $k\to \infty$}.
\]
\end{lemma}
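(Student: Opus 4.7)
The strategy is to combine the Caffarelli expansions at the singular points $(0,0)$ and $(x_k,t_k)$, use the monotonicity in $t$ to derive a one-sided comparison of the two blow-up polynomials, and finally exploit the fact that $p_{2,k}-p_2$ is a harmonic $2$-homogeneous polynomial to upgrade this one-sided comparison to a two-sided estimate.

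For the qualitative convergence $p_{2,k}\to p_2$, I would extract any subsequence along which $t_k\to t_\infty$. Lemma~\ref{lem:EG2B1bis}(a) then yields $(0,t_\infty)\in\mathbf{\Sigma}$ and $p_{2,k}\to p_{2,0,t_\infty}$; if $t_\infty\ne 0$, Lemma~\ref{lem:EG2B1bis}(c) applied to the two singular points $(0,0)$ and $(0,t_\infty)$ forces $u(\cdot,t)\equiv u(\cdot,0)$ on $B_\rho\times[\min(0,t_\infty),\max(0,t_\infty)]$ for some $\rho>0$, so $p_{2,0,t_\infty}=p_2$. Since every subsequence admits a further subsequence converging to $p_2$, the full sequence converges.

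For the quantitative part, I apply Lemma~\ref{lem:EG2B1} at $0$ with $r=2|x_k|$ and at $x_k$ with $r=|x_k|$, noting that $B_{|x_k|}(x_k)\subset B_{2|x_k|}(0)$. Subtracting the two expansions and rescaling $z=|x_k|y$ yields on $B_1$, with $\xi_k:=x_k/|x_k|$,
\[
\Bigl|\tfrac{1}{|x_k|^2}\bigl[u(x_k+|x_k|y,t_k)-u(x_k+|x_k|y,0)\bigr]-\bigl[p_{2,k}(y)-p_2(\xi_k+y)\bigr]\Bigr|\le C\omega(2|x_k|).
\]
Monotonicity in $t$ assigns a definite sign to the first bracket, giving $p_{2,k}(y)-p_2(\xi_k+y)\ge -C\omega(2|x_k|)$ if $t_k\ge 0$, and $\le C\omega(2|x_k|)$ if $t_k\le 0$, on $B_1$. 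Evaluating at $y=0$ (resp.\ $y=-\xi_k$) and using the $2$-homogeneity of $p_2$ and $p_{2,k}$, this yields $p_2(\xi_k)\le C\omega(2|x_k|)$ or $p_{2,k}(\xi_k)\le C\omega(2|x_k|)$. Combined with the already-established convergence $p_{2,k}\to p_2$, in either case $p_2(\xi_k)\to 0$; since $p_2$ is a non-negative quadratic form whose zero set is a linear subspace, this forces $\mathrm{dist}(\xi_k,\{p_2=0\})\to 0$, which is the third claim.

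To upgrade the one-sided bound to the two-sided estimates, set $h:=p_{2,k}-p_2$: this is a $2$-homogeneous harmonic polynomial, so the symmetric matrix representing its Hessian has trace zero, and hence its largest eigenvalue is bounded by $(n-1)$ times the absolute value of its smallest. Writing $p_2(x)=\tfrac12x^TMx$ and expanding $p_2(\xi_k+y)=p_2(y)+\xi_k^TMy+p_2(\xi_k)$, the bound $p_2(\xi_k)\le C\omega(2|x_k|)$ together with $M\succeq 0$ yields $\|M\xi_k\|\le C\omega(2|x_k|)^{1/2}$, so the one-sided inequality becomes $\pm h(y)\ge -C\omega(2|x_k|)^{1/2}$ on $B_1$. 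The trace-zero structure then forces $\|h\|_{L^\infty(B_1)}\le C\omega(2|x_k|)^{1/2}$, proving the second stated estimate; the first estimate follows by reinserting the bounds on $\|M\xi_k\|$ and $p_2(\xi_k)$ into the identity $p_{2,k}(y)-p_2(\xi_k+y)=h(y)-\xi_k^TMy-p_2(\xi_k)$. The hard part is precisely this upgrade step: monotonicity in $t$ is inherently one-sided, depending on $\mathrm{sign}(t_k)$, and recovering the opposite direction via the trace-zero structure costs a square root because the distance from $\xi_k$ to $\{p_2=0\}$ is naturally controlled only by $p_2(\xi_k)^{1/2}$. Accordingly, the modulus $\omega$ in the statement is to be read as any modulus of continuity depending only on $n$, which absorbs the square-root loss.
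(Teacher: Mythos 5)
Your proof follows the same initial skeleton as the paper's (qualitative convergence from Lemma~\ref{lem:EG2B1bis}, double application of Lemma~\ref{lem:EG2B1} at $0$ and $x_k$, one-sided comparison from monotonicity in $t$), but you diverge at the upgrade to a two-sided bound, and your route is genuinely lossier. The paper works with the full degree-two harmonic polynomial $p_{2,k}-p_2(\xi_k+\cdot)$ (paper's $y_k$ is your $\xi_k$): the one-sided bound on $B_2$ together with the facts $p_{2,k}(-\xi_k)\ge0$, $-p_2(\xi_k)\le0$ show that this polynomial \emph{vanishes at an interior point} of $\overline{B_1}$ (intermediate value on the segment $[0,-\xi_k]$). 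For a harmonic function, a one-sided bound plus an interior zero upgrades to a two-sided bound at the cost of only a dimensional constant --- apply Harnack to the nonnegative harmonic function $C\omega(2|x_k|)-(p_{2,k}-p_2(\xi_k+\cdot))$ on $B_2$, or argue by compactness in the finite-dimensional space of degree-two polynomials. This yields $\|p_{2,k}-p_2(\xi_k+\cdot)\|_{L^\infty(B_1)}\le C\omega(2|x_k|)$ with no square root; the second estimate then follows at once from orthogonality of spherical harmonics of distinct degrees, since $p_{2,k}-p_2$ is a pure degree-two harmonic and $p_2-p_2(\xi_k+\cdot)$ is affine, so $\|p_{2,k}-p_2\|_{L^2(\partial B_1)}\le\|p_{2,k}-p_2(\xi_k+\cdot)\|_{L^2(\partial B_1)}$. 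Your approach of first extracting $p_2(\xi_k)\le C\omega$, deducing $\|M\xi_k\|\le C\omega^{1/2}$, and only then invoking the trace-zero structure of $D^2(p_{2,k}-p_2)$ is correct in outline but inherently costs the square root you acknowledge; it does not deliver the statement with the modulus $\omega$ of Lemma~\ref{lem:EG2B1} as written, only with $\omega^{1/2}$ for the second estimate (and, once you reinsert the bounds to get the first estimate, $\omega^{1/4}$).

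There is also a concrete gap in the case $t_k\le0$. There the one-sided bound evaluated at $y=-\xi_k$ gives $p_{2,k}(\xi_k)\le C\omega(2|x_k|)$, \emph{not} $p_2(\xi_k)\le C\omega(2|x_k|)$, yet your upgrade step begins ``the bound $p_2(\xi_k)\le C\omega(2|x_k|)$ together with $M\succeq0$\dots''. Passing from $p_{2,k}(\xi_k)$ to $p_2(\xi_k)$ via the qualitative convergence only gives $p_2(\xi_k)\le C\omega(2|x_k|)+\|p_{2,k}-p_2\|_{L^\infty(B_1)}$, which is circular since $\|p_{2,k}-p_2\|_{L^\infty(B_1)}$ is the quantity you are trying to estimate. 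The fix within your framework is to symmetrize: write $p_{2,k}=\tfrac12x^TM_kx$ with $M_k\succeq0$ and $\|M_k\|$ uniformly bounded (by $p_{2,k}\to p_2$), so $p_{2,k}(\xi_k)\le C\omega$ gives $\|M_k\xi_k\|\le C\omega^{1/2}$; then expand $p_{2,k}(y)=p_{2,k}(\xi_k+y)-(\xi_k+y)^TM_k\xi_k+p_{2,k}(\xi_k)$ to transfer the one-sided bound directly onto $h=p_{2,k}-p_2$ evaluated at $\xi_k+y$, and run the one-sided bound on $B_2$ (Lemma~\ref{lem:EG2B1} permits this after rescaling) so that the region $\{\xi_k+y:|y|<2\}$ covers $B_1(0)$.
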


\begin{proof}
We observe first that $p_{2, x_k,t_k} \to p_2$. 
Indeed, if $t_k\to t_\infty$ then (up to a subsequence) by Lemma~\ref{lem:EG2B1bis} we have $p_{2, x_k,t_k} \to p_{2, 0,t_\infty}$ and $p_{2, 0,t_\infty }\equiv p_2$, as desired.

Now, set $r_k:=|x_k|$. By Lemma~\ref{lem:EG2B1} we have
\[
\| r_k^{-2} u(x_k +r_k x, t_k)  - p_{2,k}(x) \|_{L^\infty(B_{2})}  \le 4\omega(2r_k)
\qquad
\text{and}
\qquad
\| r_k^{-2} u(r_k x, 0)  - p_{2}(x) \|_{L^\infty(B_{2})}  \le 4\omega(2r_k).
\]
Thus, defining $y_k:=x_k/|x_k|$, for all $x\in B_2$ we have the following: if $t_k \leq 0$ then
\[
  - 4 \omega(2r_k) + p_{2,k}(x) \le r_k^{-2}u(x_k +r_k x, t_k ) \le r_k^{-2} u(x_k +r_k x, 0)  \le    4 \omega(2r_k) + p_{2}(y_k +x),
\]
while if $t_k \ge 0$ then
\[
  4 \omega(2r_k) + p_{2,k}(x) \ge r_k^{-2}u(x_k +r_k x, t_k ) \ge r_k^{-2} u(x_k +r_k x, 0)  \ge    -4 \omega(2r_k) + p_{2}(y_k +x).
\]
In both cases, since  $p_{2,k}$ and $p_2$ are nonnegative $2$-homogeneous polynomials vanishing at $0$ and with Laplacian $1$, then $p_{2,k}-p_{2}(y_k +\,\cdot\,)$ is a harmonic quadratic polynomial which vanishes at some point of the segment joining $0$ and $y_k$, where $y_k:=x_k/|x_k|$.
Moreover, $|p_{2,k}-p_{2}(y_k +\,\cdot\,)|$ is bounded from above by $  8\omega(2r_k) $ in $B_2$. 
Using the mean value formula and  the fact that all norms are comparable on polynomials, we obtain
\[
\|p_{2,k} -p_2(y_k+\,\cdot\,) \|_{L^\infty(B_1)} \le C\|p_{2,k} -p_2(y_k+\,\cdot\,) \|_{L^2(\partial B_1)} \le C \omega(2r_k).
\]
By orthogonality of spherical harmonics with different homogeneities (or by a direct computation) we then obtain
\[
\|p_{2,k} -p_2 \|_{L^2(\partial B_1)}^2  + \|p_2 -p_2(y_k+\,\cdot\,) \|^2_{L^2(\partial B_1)}  =   \|p_{2,k} -p_2(y_k+\,\cdot\,) \|^2_{L^2(\partial B_1)}  \le C \omega(2r_k) ^2.
\]
In particular $\|p_2 -p_2(y_k+\,\cdot\,) \|_{L^2(\partial B_1)} \to 0$, and therefore ${\rm dist}(y_k, \{p_2=0\})\to 0$.
\end{proof}

We prove next two key lemmas that will allow us to perform some dimension reduction arguments  needed to control the spatial projection (i.e., $\pi_1: (x,t)\mapsto x$) of some ``bad'' subsets of $\mathbf{\Sigma}\subset B_1\times [-1,1]$. 
Note that the spatial version  of these first two lemmas (i.e., when considering $u(\cdot, t_\circ)$ with $t_\circ$ fixed) was proven in \cite{AlessioJoaquim}. 
Here we need stronger results valid for a one-parameter monotone family of  solutions to the obstacle problem. To our best knowledge, this is the first dimension reduction argument applicable to a one-parameter family of solutions to an elliptic equation, and it will involve several new and non-standard techniques.

We recall that, given $w: \R^n \rightarrow \R$, the rescaled functions $w_r$ and $\tilde w_r$ have been defined in \eqref{defwr_tildewr}.

The first lemma concerns the intermediate strata of the singular set $\mathbf{\Sigma}_m$ with $0\le m \le n-2$.

\begin{lemma}\label{lem:EG2Bm}
Let $u\in C^0\big(\overline{B_1}\times [-1,1]  \big)$ solve \eqref{eq:UELL+t}, let
$(0,0)\in \mathbf{\Sigma}_m$ with $0\le m \le n-2,$ and assume that $u(\,\cdot\,,0)\not\equiv p_2$. Let $(x_k, t_k)\in \mathbf{\Sigma}_m$  satisfy $|x_k|\le r_k$ with $r_k\downarrow 0$, and suppose that
\begin{equation}\label{extraassump}
\tilde  w_{r_{k}} \rightharpoonup  q \mbox{  in $W^{1,2}_{\rm loc}(\R^n)$} \qquad \mbox{for}\quad  w:= u-p_2  \quad \mbox{and} \quad   y_k :=\frac{x_k}{r_k} \to y_\infty.
\end{equation}
Then   $y_\infty\in \{p_2=0\}$   and $q(y_\infty)=0$.
\end{lemma}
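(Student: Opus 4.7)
My plan is to prove the two conclusions separately, with the second being substantially more delicate than the first.

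\textbf{First conclusion ($y_\infty\in\{p_2=0\}$).} This follows directly from Lemma~\ref{lem:EG2B1bisbis} applied to the singular points $(x_k,t_k)$ and $(0,0)\in \mathbf{\Sigma}$: that lemma yields $\mathrm{dist}(x_k/|x_k|,\{p_2=0\})\to 0$. Writing $y_k=(|x_k|/r_k)(x_k/|x_k|)$ with $|x_k|/r_k\in [0,1]$ bounded, and using that $\{p_2=0\}$ is a linear subspace through the origin, extracting a subsequence if necessary shows $y_\infty=\alpha\hat e$ with $\alpha\in[0,1]$ and $\hat e\in\{p_2=0\}\cap\mathbb S^{n-1}\cup\{0\}$, so $y_\infty\in\{p_2=0\}$.

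\textbf{Second conclusion ($q(y_\infty)=0$).} The strategy is to establish \emph{two} limits at $y_k\to y_\infty$: pointwise convergence $\tilde w_{r_k}(y_k)\to q(y_\infty)$, and vanishing $\tilde w_{r_k}(y_k)\to 0$, forcing $q(y_\infty)=0$. For the pointwise convergence, I would upgrade the weak $W^{1,2}_{\mathrm{loc}}$ convergence to locally uniform convergence, exploiting that $q$ is a smooth harmonic polynomial (Proposition~\ref{prop:E2B2}(a)), combined with the uniform tangential semiconvexity of $\tilde w_{r_k}$ from Lemma~\ref{lem:E2B1} and the superharmonicity $\Delta \tilde w_{r_k}\le 0$ (through a Calder\'on--Zygmund/weak-Harnack argument analogous to the one used in the proof of Lemma~\ref{lem:E2B1}).

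The vanishing $\tilde w_{r_k}(y_k)\to 0$ comes from a sharp estimate of $|w(x_k)|=|u(x_k,0)-p_2(x_k)|$ obtained by combining the singular structure at $(x_k,t_k)$ (which gives $u(x_k,t_k)=0$) with monotonicity in $t$. After extracting a subsequence, one handles the case $t_k\ge 0$ by using $u(\cdot,0)\le u(\cdot,t_k)$ to conclude $u(x_k,0)=0$ and hence $w(x_k)=-p_2(x_k)$; the case $t_k\le 0$ uses $u(x_k,0)\ge 0$ together with Caffarelli's expansion at $(0,0)$ to bound $u(x_k,0)\le p_2(x_k)+|x_k|^2\omega(|x_k|)$. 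In both cases, combining with the refined estimate $|p_2(x_k)|\le C|x_k|^2 \omega(|x_k|)^2$ (which follows from the quantitative bound $\mathrm{dist}(x_k/|x_k|,\{p_2=0\})\le C\omega(|x_k|)$ extracted from the proof of Lemma~\ref{lem:EG2B1bisbis}, using that $p_2$ vanishes to second order at its zero set), yields a power of $\omega(|x_k|)$ in the bound for $|w(x_k)|$. Dividing by $H(r_k,w)^{1/2}$, which by Lemma~\ref{lem:EMF3} (together with non-triviality of the blow-up $\|q\|_{L^2(\partial B_1)}=1$) is comparable to $r_k^{\lambda^{2nd}}$ up to a slowly varying factor, then gives $\tilde w_{r_k}(y_k)\to 0$.

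\textbf{Main obstacle.} The most delicate point is matching scales: one must verify that the factor $|w(x_k)|/H(r_k,w)^{1/2}$ genuinely tends to $0$, which requires controlling $H(r_k,w)^{1/2}/r_k^{\lambda^{2nd}}$ against $\omega(r_k)$ via Lemma~\ref{lem:EMF3}. This splits naturally into cases. When $\lambda^{2nd}\ge 3$ (the case $(0,0)\in \Sigma_m^{3rd}$), the cubic expansion from Lemma~\ref{lem:EGB7} gives the much better pointwise bound $|w(y)|\le C|y|^3$, and the comparison is straightforward. When $\lambda^{2nd}=2$ (anomalous points in $\Sigma_m^a$), both numerator and denominator degenerate at comparable rates dictated by $\omega$, and here the sharpening from the second power $\omega(|x_k|)^2$ in the bound on $|p_2(x_k)|$ —— crucial for the $t_k\ge 0$ branch —— combined with the quantitative control on the distance of $x_k/|x_k|$ from $\{p_2=0\}$, is what makes the argument close.
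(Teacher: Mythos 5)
Your derivation of $y_\infty\in\{p_2=0\}$ from Lemma~\ref{lem:EG2B1bisbis} is correct and matches the paper.

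\textbf{Second conclusion.} Your strategy —— show $\tilde w_{r_k}(y_k)\to q(y_\infty)$ and $\tilde w_{r_k}(y_k)\to 0$ simultaneously —— has two gaps, and I do not see how either closes.

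\emph{Pointwise convergence.} For $m\le n-2$, Proposition~\ref{prop:E2B2}(a) only gives weak $W^{1,2}_{\rm loc}$ convergence of $\tilde w_{r_k}$ to $q$; the $C^0_{\rm loc}$ convergence in part~(b) is specific to $m=n-1$, because the full Lipschitz estimate \eqref{eq:wk Lip0} in Lemma~\ref{lem:E2B1} requires $\dim\{p=0\}=n-1$. For $m\le n-2$ you only have tangential semiconvexity \eqref{semic}, and the nonpositive measures $\Delta\tilde w_{r_k}$ concentrate exactly near $\{p_2=0\}$, which is where $y_k\to y_\infty$. Superharmonicity plus $L^1_{\rm loc}$ convergence gives $\liminf_k \tilde w_{r_k}(y_k) \ge q(y_\infty)$ (compare with averages and use the mean value property of the harmonic limit $q$), but the reverse inequality is exactly what can fail when the Laplacian concentrates near the point; the Calder\'on--Zygmund/weak-Harnack machinery controls $L^\infty$ norms of second derivatives, not pointwise values of the function at the singular set.

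\emph{Rate matching.} Even granting pointwise convergence, your vanishing argument needs $|w(x_k)|/H(r_k,w)^{1/2}\to 0$. In the subcase $t_k\le 0$ you only obtain $|w(x_k)|\le C\,r_k^2\omega(r_k)$ (one power of $\omega$, from Caffarelli's expansion), while from Lemma~\ref{lem:EG2B1} we have $H(r_k,w)^{1/2}\le r_k^2\omega(r_k)$; these can be comparable, giving a bounded ratio rather than a vanishing one. In the subcase $t_k\ge 0$ your improved bound $|w(x_k)|=p_2(x_k)\le Cr_k^2\omega(r_k)^2$ is correct, but it does not help: from Lemma~\ref{lem:EMF3} one only gets $H(r_k,w)^{1/2}\ge c_\delta r_k^{\lambda^{2nd}+\delta}$ for every $\delta>0$, with $\lambda^{2nd}=2$ in the anomalous case; there is no a priori relation between the modulus $\omega$ (an $L^\infty$-decay of $w$) and the $L^2$-decay rate encoded in $H$, so the quotient $\omega(r_k)^2\,r_k^2/H(r_k,w)^{1/2}$ need not tend to $0$. ``Slowly varying'' is not sufficient: two slowly varying functions need not be comparable.

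\textbf{Comparison with the paper's proof.} The paper does not attempt pointwise control at $y_k$. Instead it blows up at $x_k$, writing $w_k := u(x_k+r_k\cdot,t_k)-p_2(r_k\cdot)$ and decomposing $w_k = w_k^{(1)}+w_k^{(2)}+w_k^{(3)}$, where $w_k^{(1)}$ is the time-increment $u(x_k+r_k\cdot,t_k)-u(x_k+r_k\cdot,0)$. The crucial structural input you are not using: $w_k^{(1)}$ has a sign (determined by ${\rm sgn}\,t_k$), so after normalization its harmonic limit $\hat Q^{(1)}$ is a signed harmonic function of polynomial growth and is therefore a \emph{constant} by Liouville. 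Combined with the limits of the other two pieces ($b\,q(y_\infty+\cdot)$ and $c\,\nabla p_2\cdot\boldsymbol e$) and the quadratic degeneracy of the blow-up $\hat Q$ at the singular point $x_k$ (which forces $\nabla\hat Q(0)=0$), one reads off $q(y_\infty)=0$ by taking the directional derivative in the direction $y_\infty$. This sidesteps both gaps above by never invoking pointwise values of $\tilde w_{r_k}$ and never comparing $|w(x_k)|$ against $H(r_k,w)^{1/2}$.
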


\begin{proof}
Let  us define
\[
w_k  := u(x_k  +r_k\,  \cdot\, , t_k  ) -   p_{2}(r_k\,  \cdot\,  ) = w_k ^{(1)} +w_k ^{(2)}  +w_k ^{(3)},
\]
where
\[
\begin{split}
w_k ^{(1)}  &:= u(x_k +r_k\,  \cdot\,  , t_k  )  - u(x_k +r_k\,  \cdot\,  ,0 ),
\\
w_k ^{(2)}  &:=  u(x_k + r_k\,  \cdot\,  , 0 )  - p_{2}(x_k +r_k\,\cdot\,),
\\
w_k ^{(3)}  &:=   p_{2} (x_k +r_k \,\cdot\,) -p_{2} \big(r_k\,\cdot\,).
\end{split}
\]
We divide the proof into three steps.

\smallskip

\noindent $\bullet$ \textit{Step 1.}
We prove that
\[
\tilde w_k := \frac{w_k}{\| w_k\|_{L^2(\partial B_1)}} \rightharpoonup Q \quad \mbox{in } W^{1,2}_{\rm loc} (\R^n)
\]
for some harmonic function $Q$ with polynomial growth.

Indeed, since $u\in C^0(\overline{B_1} \times [-1,1])$, by the monotonicity of $\phi$  there exist $r_\circ>0$ and $k_\circ \in \N$ such that, for $M := \phi\big(0^+, u(\,\cdot\,, 0)-p_2\big)+1$, we have
\begin{equation}\label{freqbda}
\phi\left( r \,, u(x_k + \,\cdot\,, t_k ) -p_{2}\right) \le M  \qquad \forall \,r\in (0, r_\circ), \ \forall \,k \ge k_\circ,
\end{equation}
or equivalently
 \begin{equation}\label{freqbd11aaa}
\phi(r , w_k )  \le M \qquad \forall \,r\in (0, r_\circ/r_k), \ \forall\, k \ge k_\circ.
\end{equation}
Then, applying  Lemma~\ref{lem:EMF3} to  $w_k$,  we obtain the following polynomial growth control for $\tilde w_k$:
\begin{equation}\label{growthcona}
H\big(R , \tilde w_k \big) \le  CR^{2M+1} H\big(1, \tilde w_k\big) = CR^{2M+1}  \quad \forall \,R\in [1, r_\circ/r_k), \ \forall\, k \ge k_\circ.
\end{equation}
Note that \eqref{freqbd11aaa}  is equivalent to $\phi(r_k , \tilde w_k )\le M$, which combined with \eqref{growthcona} implies  that
\begin{equation}\label{bound!}
\| \tilde w_k\|_{W^{1,2}(B_R)} \le C(R).
\end{equation}
This gives compactness of the sequence $\tilde w_k$ and hence (up to a subsequence)
\[\tilde w_k  \rightharpoonup Q \quad \mbox{in $W^{1,2}_{\rm loc}(\R^n)$}\]
for some  $Q:\R^n \rightarrow \R$. Let us prove next that $Q$ is harmonic.

Indeed, on the one hand we have
\begin{equation}\label{eq:lapwk}
\Delta w_k = -r_k^2\chi_{ \{u(x_k +r_k \,\cdot\,, \,t_k  )=0\} } \le 0 \quad \mbox{in }  B_{\frac{1}{2r_k}}.
\end{equation}
On the other hand, Lemmas~\ref{lem:EG2B1} and~\ref{lem:EG2B1bisbis} imply that, for  $R\ge 1$, 
\[
x\in B_R \cap \{u(x_k +r_k x,t_k  )=0\} \quad \Rightarrow \quad p_{2,x_k,t_k} (x) \le R^2\omega(Rr_k )
\quad\Rightarrow  \quad p_{2} (x) \le CR^2\omega(Rr_k );
\]
thus, since $p_2$ grows quadratically away from its zero set,
\begin{equation}\label{eq:distcont}
B_R \cap \{u(x_k +r_k \,\cdot\, , t_k  )=0\} \subset \left\{ y \, : \, {\rm dist}\big(y, \{p_2=0\}\big) \le CR \sqrt{ \omega(Rr_k)} \right\}.
\end{equation}
Note that, for any fixed $R\ge 1$,  we have $CR \sqrt{ \omega(Rr_k)}\downarrow 0$ as $k\to \infty$. We have thus shown
\[\sup \big\{ {\rm dist}(x, \{p_2=0\}) \ :\ x\in B_R\cap\{u(x_k +r_k \,\cdot\, , t_k  )=0\}  \big\} \downarrow 0 \qquad \textrm{as}\quad k\to\infty.\]
Therefore, the weak limit of the sequence of nonpositive measures $\Delta \tilde w_k$ will be supported on $\{p_2=0\}$. 
But then, recalling \eqref{bound!}, we have shown that $Q$ is a locally $W^{1,2}$ function whose Laplacian is supported in linear space of dimension $m= {\rm dim} (\{p_2=0\} )\le n-2$ and thus of zero harmonic capacity. It follows\footnote{The proof of this implication is standard. 
We want to prove that
$\int \nabla Q\cdot\nabla \xi =0$ for all $\xi\in C^1_c(\R^n)$. But since $\{p_2=0\}$ has zero harmonic capacity, any given $\xi$ can be approximated in $W^{1,2}$ norm by functions $\xi_k$ which vanish on $\{p_2=0\}$, and thus for which $\int \nabla Q \cdot\nabla \xi_k = - \int \Delta Q \xi_k = 0$. 
The desired conclusion follows by taking the limit as $k\to \infty$.} that $Q$ must be harmonic.

Moreover, since $x_k$ is a singular point,  Lemma~\ref{lem:EMF3} yields
$$
H(\rho , w_k) \le \rho ^4   H(1 , w_k) \quad \mbox{for all } \rho \in (0,1),
$$
and thus in the limit we find
\begin{equation}\label{quadraticsep}
H(\rho , Q)^{1/2} \le \rho ^2   \quad \mbox{for all } \rho \in (0,1).
\end{equation}

\smallskip

\noindent $\bullet$ \textit{Step 2.} We now want to prove that
\begin{equation}\label{abcd}
\frac{w_k^{(2)}}{ \| w_k^{(2)} \|_{L^2(\partial B_1)}} \rightharpoonup \frac{q (y_\infty +\,\cdot\,)}{ \|q (y_\infty +\,\cdot\,) \|_{L^2(\partial B_1)}} \quad \mbox{in } W^{1,2}_{\rm loc} (\R^n)
\end{equation}
(with $q$ defined in \eqref{extraassump}), and
\begin{equation}\label{dcba}
 \frac{ w_k^{(3)} }{  \| w_k^{(3)} \|_{L^2(\partial B_1)}}    \rightarrow  \nabla p_2 \cdot \boldsymbol e  \quad \mbox{in } W^{1,2}_{\rm loc} (\R^n)
\end{equation}
for some (nonzero) $\boldsymbol e\in \{p_2=0\}^\perp$.

Note that, since  $y_\infty\in \{p_2=0\}$ (by Lemma~\ref{lem:EG2B1bisbis}),
\[
\frac{w_k^{(2)}}{ \| w_k^{(2)} \|_{L^2(\partial B_1)}}  = \frac{w_{r_k}(y_k + \cdot\,)}{\| w_{r_k}(y_k + \cdot\,)\|_{L^2(\partial B_1)}} =  \tilde w_{r_k}(y_k + \cdot\,)  \frac{\| \tilde w_{r_k} \|_{L^2(\partial B_1)} }{\| \tilde w_{r_k}(y_k + \cdot\,)\|_{L^2(\partial B_1)}}.
\]
Thus, noticing that $\|\tilde w_{r_k} \|_{L^2(\partial B_1)} \to\| q \|_{L^2(\partial B_1)}$ and $\| \tilde w_{r_k}(y_k + \cdot\,)\|_{L^2(\partial B_1)}\to \|q(y_\infty + \cdot\,)\|_{L^2(\partial B_1)}$, since $q$ is a nonzero quadratic harmonic polynomial (see Proposition~\ref{prop:E2B2}) both limits are nonzero and universally bounded. Thus \eqref{abcd} follows.

To prove \eqref{dcba}, we set $\ep_k := \| p_2(y_k +\,\cdot\,) -p_2\|_{L^2(\partial B_1)}\to 0$.
Then, if $y_k^*$ denotes the projection of $y_k$ onto $\{p_2=0\}$, we have  $p_2(y^*_k + \,\cdot\,)\equiv p_2$ and $y_k^*-y_k\to y_\infty^* -y_\infty =0$. Thus, up to taking a further subsequence, we obtain
\[
 \lim_k \frac{ w_k^{(3)} }{  \| w_k^{(3)} \|_{L^2(\partial B_1)}}   =  \lim_k \frac{ p_2(y_k +\,\cdot\,) -p_2} { \ep_k } =  \lim_k \frac{ p_2(y_k-y_k^* +\,\cdot\,) -p_2 }{ \ep_k }=  c\nabla p_2 \cdot \lim_k \frac{y_k-y_k^*}{|y_k-y_k^*|}  = \nabla p_2 \cdot \boldsymbol e
\]
for some nonzero $\boldsymbol e \in \{p_2=0\}^\perp$. Note that the limit in $k$ exists (up to subsequence) and is nonzero, since $\frac{ w_k^{(3)} }{  \| w_k^{{3}} \|_{L^2(\partial B_1)}}$ is a sequence of linear functions with  unit $L^2$ norm.

\smallskip

\noindent $\bullet$ \textit{Step 3}. We finally prove that $q(y_\infty)=0$.

Let us consider
\[
\hat \ep_k := \sum_{i=1,2,3}\| w_k^{(i)}\|_{L^2(\partial B_1)}  \qquad \mbox{and } \qquad  \hat w_k : = \frac{w_k}{\hat \ep_k}\,.
\]
By Step 1 we have
\begin{equation}\label{bjfiobi}
\hat w_k \to  \hat Q = a Q\quad \mbox{ for some }a\in [0,1].
\end{equation}
Moreover, by Step 2, 
\[
\hat Q^{(2)} := \lim _k w_k^{(2)}/\hat \ep_k = b q(y_\infty+\,\cdot\,),
\qquad
\hat Q^{(3)} := \lim _k w_k^{(3)}/\hat \ep_k  = c\nabla p_2 \cdot \boldsymbol e,
\]
for some constant $b,c\ge0$. (Above, the convergences are weak in $W^{1,2}_{\rm loc}(\R^n)$.)

Then, it is well defined
\[\hat Q^{(1)} := \lim_k w_k^{(1)}/\hat \ep_k =  \lim_k w_k/\hat \ep_k -\lim _k w_k^{(2)}/\hat \ep_k- \lim _k w_k^{(3)}/\hat \ep_k,\]
and we observe that $Q^{(1)}$ is either nonpositive or nonnegative (since $w_k^{(1)}= u(x_k +r_k\,  \cdot\,  , t_k  )  - u(x_k +r_k\,  \cdot\,  ,0 ) $ is so, depending on the sign of $t_k$). Moreover, since $\hat Q$, $\hat Q^{(2)}$, and
$\hat Q^{(3)}$ are harmonic, so is $\hat Q^{(1)}$ and thus it must be constant by Liouville Theorem.
Hence, we have
\[
\hat Q = C + b q(y_\infty+\,\cdot\,) +c\nabla p_2 \cdot \boldsymbol e.
\]
Note now that, by definition of $\hat \ep_k$, we have  $\sum_{i=1,2,3}  \|\hat Q^{(i)}\|_{L^2(\partial B_1)}=1$. 
Moreover, since the homogeneity of $q$ at the origin is at least two,  the three functions  $\hat Q^{(i)}$ are linearly independent and hence their sum $\hat Q$ cannot be zero (equivalently, in \eqref{bjfiobi} it must be $a>0$). 
Note also that it must be $b>0$ since \eqref{quadraticsep} implies that $\hat Q$ is at least quadratic and hence it can not be equal to the constant $\hat Q^{(1)}$ plus the linear function, $\hat Q^{(3)}$.
Finally, \eqref{quadraticsep} implies $\nabla Q(0) =0$.

But then, since $y_\infty \in \{p_2=0\}$, and $q$ is a homogeneous polynomial of degree $\phi(q,1)$,
\[
0 = y_\infty\cdot \nabla \hat Q (0) = y_\infty \cdot \nabla \hat Q^{(1)} (0) +  b  y_\infty \cdot \nabla  q(y_\infty) + cy_\infty\cdot \nabla ( \nabla p_2 \cdot \boldsymbol e) (0)
=  0 +b\phi(q,1)\, q(y_\infty) + 0,
\]
which proves that $q(y_\infty) =0$.
\end{proof}

The next lemma concerns the maximal stratum $\mathbf{\Sigma}_{n-1}$.
This case is more involved, since blow-ups are not necessarily harmonic functions as in the previous lemma.
In particular, in this situation we will need to assume that the frequency is continuous along the sequence that we consider.

\begin{lemma}\label{lem:EG2Bn-1}
Let $u\in C^0\big(\overline{B_1}\times [-1,1]  \big)$  solve \eqref{eq:UELL+t}, let
$(0,0)\in \mathbf{\Sigma}_{n-1},$ and assume that $u(\,\cdot\,,0)\not\equiv p_2$.  Let $(x_k, t_k) \in \mathbf{\Sigma}_{n-1}$ satisfy $|x_k|\le r_k$ with $r_k\downarrow 0$, assume that \eqref{extraassump} holds, and  that $\lambda^{2nd}_k\to \lambda^{2nd}$, where
\[\lambda^{2nd}_{k} := \phi\big(0^+,  u(x_k +\,\cdot\,, t_k  )-p_{2,x_k,t_k}\big)\quad \mbox{and}\quad  \lambda^{2nd} := \phi(0^+,  u-p_2).\]
Then $y_\infty\in \{p_2=0\}$  and $q^{even}$ is translation invariant in the direction $y_\infty$. (Here $q^{even}$ denotes the even symmetrisation of $q$ with respect to the hyperplane $\{p_2=0\}$.)
\end{lemma}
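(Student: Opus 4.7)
The plan is to mimic the scheme of Lemma \ref{lem:EG2Bm} with two essential modifications. First, the limit $\hat Q$ now solves the Signorini problem rather than the Laplace equation, so the Liouville-type argument that forced $\hat Q^{(1)}$ to be constant must be replaced by an analysis of even/odd parts together with sign information. Second, the frequency continuity $\lambda_k^{2nd}\to\lambda^{2nd}$ must be used to impose $\lambda^{2nd}$-homogeneity on $\hat Q^{even}$, which in turn will yield the translation invariance of $q^{even}$.

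By Lemma \ref{lem:EG2B1bisbis}, $y_\infty\in\{p_2=0\}$; after a rotation, assume $\{p_2=0\}=\{x_n=0\}$, so that $p_2=\tfrac12 x_n^2$. I would set $w_k(x):=u(x_k+r_k x,t_k)-p_2(r_k x)$ and introduce the decomposition $w_k=w_k^{(1)}+w_k^{(2)}+w_k^{(3)}$ exactly as in the proof of Lemma \ref{lem:EG2Bm}, together with $\hat\epsilon_k:=\sum_i\|w_k^{(i)}\|_{L^2(\partial B_1)}$ and $\tilde w_k:=w_k/\hat\epsilon_k$. Frequency monotonicity (Proposition \ref{prop:EMF1}) applied to $u(x_k+\,\cdot\,,t_k)-p_{2,x_k,t_k}$, combined with $\lambda_k^{2nd}\to\lambda^{2nd}$ and Lemma \ref{lem:EMF3}, yields the polynomial growth bounds $H(R,\tilde w_k)\le C(R)$. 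Together with the Lipschitz estimate in Lemma \ref{lem:E2B1} (valid in the top stratum), this gives $\tilde w_k\to\hat Q$ locally uniformly and weakly in $W^{1,2}_{\rm loc}(\R^n)$ along a subsequence. Since $\Delta w_k=-r_k^2\chi_{\{u(\cdot,t_k)=0\}}(x_k+r_k\,\cdot\,)\le 0$ and the contact set localizes near $\{x_n=0\}$ (Lemmas \ref{lem:EG2B1} and \ref{lem:EG2B1bisbis}), the limit $\hat Q$ solves the Signorini problem \eqref{ETOP}. Exactly as in Lemma \ref{lem:EG2Bm}, one identifies $w_k^{(2)}/\hat\epsilon_k\to b\,q(y_\infty+\,\cdot\,)$, $w_k^{(3)}/\hat\epsilon_k\to c\,x_n+c_0$, and $w_k^{(1)}/\hat\epsilon_k\to\hat Q^{(1)}$ with $\hat Q^{(1)}$ having a definite sign (coming from the monotonicity of $u$ in $t$).

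The novelty lies in exploiting $\lambda_k^{2nd}\to\lambda^{2nd}$ to enforce $\lambda^{2nd}$-homogeneity of $\hat Q^{even}$. Setting $\bar v_k(x):=u(x_k+r_k x,t_k)-p_{2,x_k,t_k}(r_k x)=w_k+r_k^2(p_2-p_{2,x_k,t_k})$, the correction is a quadratic harmonic polynomial. Applying Weiss' monotonicity formula (Lemma \ref{LOClem:Weiss}) at exponent $\lambda^{2nd}$ to $\bar v_k$ and using $\lambda_k^{2nd}\to\lambda^{2nd}$, every blow-up of $\bar v_k$ at scale $r_k$ must be $\lambda^{2nd}$-homogeneous. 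After normalizing and taking even parts with respect to $\{x_n=0\}$ (so that the odd piece $c\,x_n$ of $\hat Q^{(3)}$ and the odd harmonic part of $\hat Q^{(1)}$ drop out), one concludes that
\[\hat Q^{even}=(\hat Q^{(1)})^{even}+b\,q^{even}(y_\infty+\,\cdot\,)+c_0\]
must be $\lambda^{2nd}$-homogeneous. Taylor-expanding the shift $q^{even}(y_\infty+x)-q^{even}(x)$ produces terms of degree strictly less than $\lambda^{2nd}$: for $\lambda^{2nd}=3$ and $q^{even}$ as in Lemma \ref{lem:signorinieven3}, one finds contributions of the form $c_1|x_n|(y_\infty^{\top}Ay_\infty)$ and $c_2|x_n|(y_\infty^{\top}Ax')$, of degrees $1$ and $2$ respectively. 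These lower-degree pieces must be absorbed by $(\hat Q^{(1)})^{even}+c_0$; but $(\hat Q^{(1)})^{even}$ has a fixed sign, while $c_2|x_n|(y_\infty^{\top}Ax')$ changes sign as $x'$ varies, forcing $Ay_\infty=0$. Analogous sign-based arguments handle the higher odd homogeneities. This gives $q^{even}(y_\infty+\,\cdot\,)\equiv q^{even}$, i.e., the required translation invariance.

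The main obstacle will be the homogeneity step for $\hat Q^{even}$: identifying the blow-up of $\bar v_k$ with $\hat Q$ up to the quadratic correction $r_k^2(p_{2,x_k,t_k}-p_2)$ requires careful bookkeeping of all $L^2$-norms involved, since \emph{a priori} $\|r_k^2(p_{2,x_k,t_k}-p_2)\|$ can dominate $\hat\epsilon_k$, and the sign-based analysis on $(\hat Q^{(1)})^{even}$ at higher frequencies must be combined with the classification of odd-frequency Signorini solutions developed in Section \ref{cubic-sec}.
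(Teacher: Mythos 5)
Your setup (the three-piece decomposition $w_k=w_k^{(1)}+w_k^{(2)}+w_k^{(3)}$, compactness via frequency continuity and the Lipschitz estimate, identification of the limit as a $\lambda^{2nd}$-homogeneous Signorini solution) matches the paper's Steps~1--2.  But the final step has two genuine gaps.

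\textbf{The missing ordering step.}  Your sign argument on $(\hat Q^{(1)})^{even}$ only goes through if you know $\hat Q^{even}\equiv bq^{even}$, whereas all you establish is that $\hat Q^{even}$ is $\lambda^{2nd}$-homogeneous.  Without the former, $(\hat Q^{(1)})^{even}=\hat Q^{even}-bq^{even}(y_\infty+\,\cdot\,)$ has a (possibly nonzero) degree-$\lambda^{2nd}$ part $g:=\hat Q^{even}-bq^{even}$, and the positivity of $(\hat Q^{(1)})^{even}$ near $\{x_n=0\}$ then only forces a discriminant-type inequality mixing $g$ with the Taylor terms of $q^{even}(y_\infty+\,\cdot\,)$; it does \emph{not} force the degree-2 piece $-6b|x_n|\sum_\alpha b_\alpha y_{\infty,\alpha}x_\alpha$ to vanish.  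Concretely, the lowest-order (degree-1) piece is automatically of the right sign, and the degree-2 piece can be compensated by an upward-opening degree-3 part $g$, so no contradiction arises from sign considerations alone.  The paper closes this gap by first proving $\hat Q\equiv bq$: letting $R\to\infty$ in the identity $\hat Q=\hat Q^{(1)}+bq(y_\infty+\,\cdot\,)+\hat Q^{(3)}$ (using the sign of $\hat Q^{(1)}$, the degree-$\leq 2$ bound on $\hat Q^{(3)}$, and the $\lambda^{2nd}$-homogeneity of both $\hat Q$ and $q$) gives $\hat Q\ge bq$ or $\hat Q\le bq$, and then Lemma~\ref{lemAP:ordered} forces equality.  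Only after that can one take even parts, rescale to get $y_\infty\cdot\nabla q^{even}\le 0$ (resp.\ $\ge 0$), and conclude.

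\textbf{The non-integer frequencies.}  Your Taylor-expansion argument is tailored to $\lambda^{2nd}$ odd integer, where $q^{even}$ is a polynomial (Lemma~\ref{lem:signorinieven3}).  But Lemma~\ref{lem:EG2Bn-1} is also applied in the proof of Proposition~\ref{prop:EG2B4}(b) with $\lambda^{2nd}\in[2+\alpha_\circ,3)$, where $q^{even}$ is a half-integer homogeneous Signorini solution with no finite polynomial decomposition of its shift $q^{even}(y_\infty+\,\cdot\,)$; your ``lower-degree pieces'' are not well-defined there.  The paper instead concludes from $q^{even}(sy_\infty+x)\le q^{even}(x)$ for all $s>0$ that $\psi:=y_\infty\cdot\nabla q^{even}$ has constant sign, hence (if nonzero) would be the first eigenfunction of a spherical Laplacian on $\mathbb{S}^{n-1}\setminus\mathcal Z$ with $\mathcal Z\subset\{x_n=0\}$; monotonicity of eigenvalues then gives the incompatible bound $\lambda^{2nd}\le 2$.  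This argument is uniform in $\lambda^{2nd}$ and replaces your sign-based Taylor analysis.

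A smaller point: $\hat Q^{(3)}$ is not $c\,x_n+c_0$ but $c_1x_n+c_2x_nx_{n-1}$; there is no constant term (the one you would get is $O(|y_k-y_k^*|^2)\ll\epsilon_k$), and the extra bilinear term records the tilting of $\{p_{2,x_k,t_k}=0\}$ relative to $\{p_2=0\}$.  Since it is odd in $x_n$ its even part still vanishes, so this does not break the even-part reduction, but it should be corrected.
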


\begin{proof}
Let  us define
\[
w_k  := u(x_k  +r_k\,  \cdot\, , t_k  ) -   p_{2,x_k, t_k} (r_k\,  \cdot\,  ) = w_k ^{(1)} +w_k ^{(2)}  +w_k ^{(3)},
\]
where
\[
\begin{split}
w_k ^{(1)}  &:= u(x_k +r_k\,  \cdot\,  , t_k  )  - u(x_k +r_k\,  \cdot\,  ,0 ),
\\
w_k ^{(2)}  &:=  u(x_k + r_k\,  \cdot\,  , 0 )  - p_{2}(x_k +r_k\,\cdot\,),
\\
w_k ^{(3)}  &:=   p_{2} (x_k +r_k \,\cdot\,) -p_{2,x_k, t_k} \big(r_k\,\cdot\,).
\end{split}
\]
We divide the proof into three steps.

\smallskip

\noindent $\bullet$ \textit{Step 1.}
Exactly as in  Lemma~\ref{lem:EG2Bm},
\[
\tilde w_k := \frac{w_k}{\| w_k\|_{L^2(\partial B_1)}} \rightharpoonup Q \quad \mbox{in } W^{1,2}_{\rm loc} (\R^n)
\]
for some $Q\in W^{1.2}_{\rm loc} (\R^n) $  with polynomial growth. We claim that $Q$ is a $\lambda^{2nd}$-homogeneous solution of the Signorini problem \eqref{ETOP}.

Indeed, by the upper-semicontinuity property in  Lemma~\ref{lem:EG2B1bis}(b) and the assumption  $\lambda^{2nd}_k \to \lambda^{2nd}$, given $\delta >0$ there exist $r_\delta>0$ and  $k_\delta$ such that
\begin{equation}\label{freqbdb}
\phi\left( r , u(x_k + \,\cdot\,, t_k ) -p_{2, x_k, t_k}\right) \in (\lambda^{2nd}-\delta, \lambda^{2nd}+\delta)  \qquad \forall \,r\in (0, r_\delta), \ \forall\, k \ge k_\delta,
\end{equation}
or equivalently
 \begin{equation}\label{freqbd11bbb}
\phi(r, w_k )  \in (\lambda^{2nd}-\delta, \lambda^{2nd}+\delta)  \qquad \forall \,r\in (0, r_\delta/r_k), \ \forall\, k \ge k_\delta.
\end{equation}
Then, applying  Lemma~\ref{lem:EMF3} to  $w_k$  we obtain the following polynomial growth control for $\tilde w_k$:
\begin{equation}\label{growthconyy}
H\big(R , \tilde w_k \big) \le   C_\delta R^{2\lambda^{2nd} + 3\delta}  \qquad \forall \,R\in [1, r_\delta/r_k), \ \forall\, k \ge k_\circ,
\end{equation}
and the decay estimate
\begin{equation}\label{sepsep}
H\big(\varrho , \tilde w_k\big) \le  C\varrho^{2(\lambda^{2nd} - \delta)}  \qquad \forall \,\varrho\in (0, 1], \ \forall\, k \ge k_\circ.
\end{equation}
In addition, the Lipschitz estimate in Lemma~\ref{lem:E2B1} gives
\[
\|\tilde w_k\|_{{\rm Lip}(B_R)} \le C(R).
\]
Hence $\tilde w_k  \rightarrow Q$ in $C^0_{\rm loc}(\R^n)$ (up to a further subsequence).

Note that, using \eqref{eq:lapwk} and  \eqref{eq:distcont} in our context, one deduces that $\Delta Q$ is a nonpositive measure supported on $\{p_2=0\}$.
Moreover, since $w_k(y_k +\,\cdot\,) = u(x_k + r_k\, \cdot\, ) -p_{2,x_k t_k}(r_k\,\cdot\, )$, it follows that
 $\tilde w_k (y_k +\,\cdot\,) \ge 0$ on $\{p_{2,x_k t_k}=0\}$ and  thus, by uniform convergence,  $Q\ge 0$  on $\{p_2=0\}$.
 
 On the other hand \eqref{eq:lapwk} and the fact that $\tilde w_k (y_k +\,\cdot\,) \le 0$ on  $\{u(x_k +r_k \,\cdot\,, t_k  )=0\}$ imply that  $\tilde w_k\Delta \tilde w_k \ge 0$,
 and since $\Delta \tilde w_k  \rightharpoonup \Delta Q$ weakly as measures and $\tilde w_k  \to Q$  in $C^0$, we obtain
 $Q\Delta Q \ge 0$ in $\R^n$.
But since $\Delta Q\leq $ is nonpositive  and supported on $\{p_2=0\}$ where $Q\ge 0$,   it must be  $Q\Delta Q\le 0$.
This implies that $Q$ is a solution of the Signorini problem \eqref{ETOP}.

Finally, taking the limit in \eqref{growthconyy} and \eqref{sepsep} we obtain that, for any given $\delta>0$,
\begin{equation}\label{growthconzz}
H\big(R , Q \big) \le   C_\delta R^{2\lambda^{2nd} + 3\delta}  \quad \forall \,R\in [1, \infty)
\end{equation}
and
\begin{equation}\label{sepsepzz}
H\big(\varrho , Q \big) \le  C\varrho^{2(\lambda^{2nd} - \delta)}  \quad \forall\,\varrho\in (0, 1].
\end{equation}
Since $\delta>0$ is arbitrary and 
 $Q$ is a global solution of Signorini, it follows by  Lemma~\ref{lemAP:Alm} that
\[
\lambda^{2nd}\leq \phi(0^+,Q)\leq \phi(+\infty,Q) \leq \lambda^{2nd}.
\]
Hence $\phi(r,Q)=\lambda^{2nd}$ for all $r>0$, from which (using Lemma~\ref{lemAP:Alm} again) it follows that $Q$ is a $\lambda^{2nd}$-homogeneous.

\smallskip

\noindent $\bullet$ \textit{Step 2.} We now want to prove that
\begin{equation}\label{abcd2}
\frac{w_k^{(2)}}{ \| w_k^{(2)} \|_{L^2(\partial B_1)}} \rightharpoonup \frac{q (y_\infty +\,\cdot\,)}{ \|q (y_\infty +\,\cdot\,) \|_{L^2(\partial B_1)}} \quad \mbox{in } W^{1,2}_{\rm loc} (\R^n)
\end{equation}
and
\begin{equation}\label{dcba2}
 \lim_k \frac{ w_k^{(3)} }{  \| w_k^{{3}} \|_{L^2(\partial B_1)}}  \rightarrow  ( \boldsymbol e\cdot x)  + ( \boldsymbol e'\cdot x)(\boldsymbol e\cdot x) \not\equiv 0 \quad \mbox{in } W^{1,2}_{\rm loc} (\R^n).
\end{equation}
for some $\boldsymbol e \in \{p_2=0\}^\perp$ and  $\boldsymbol e' \in \{p_2=0\}$.

Indeed, the proof of \eqref{abcd2} is identical to the one of \eqref{abcd} in the proof of Lemma~\ref{lem:EG2Bm}.

To show \eqref{dcba2},  denote $\ep_k := \| p_2(y_k +\,\cdot\,) -p_{2,x_k, t_k} \|\to 0$.  
Recall that (by Lemma~\ref{lem:EG2B1bisbis}) we have $y_\infty\in \{p_2=0\}$ and hence, if $y_k^*$ denotes the projection of $y_k$ onto $\{p_2=0\},$ then   $p_2(y^*_k + \,\cdot\,)\equiv p_2$ and $y_k^*-y_k\to y_\infty^* -y_\infty =0$. 
Thus,  up to taking a further subsequence, if $\{p_2=0\}=\{ \hat {\boldsymbol e}\cdot x=0\}$ and $\{p_{2,x_k,t_k}=0\}=\{ \hat{\boldsymbol  e}_k\cdot x=0\}$ with $\hat {\boldsymbol e}, \hat {\boldsymbol e}_k\in \mathbb S^{n-1}$, then
\[
\begin{split}
 \lim_k \frac{ w_k^{(3)} }{  \| w_k^{(3)} \|_{L^2(\partial B_1)}}   &=  \lim_k \frac{ p_2(y_k +\,\cdot\,) -p_{2, x_k, t_k}} { \ep_k }
 =  \lim_k \frac{ p_2(y_k-y_k^* +\,\cdot\,) -p_2 } { \ep_k } +  \lim_k \frac{p_2 -p_{2, x_k, t_k}} { \ep_k }
\\
&=  c_1\nabla p_2 \cdot \lim_k \frac{y_k-y_k^*}{|y_k-y_k^*|} + c_2\lim_k \frac{ (\hat {\boldsymbol e}\cdot x)^2 - (\hat {\boldsymbol e}_k \cdot x)^2 }{2|\hat {\boldsymbol  e}- \hat {\boldsymbol e}_k|}
\\
&= ( \boldsymbol e\cdot x)  +  ( \boldsymbol e'\cdot x)(\boldsymbol e\cdot x),
\end{split}
\]
where $\boldsymbol e \in \{p_2=0\}^\perp$ and  $\boldsymbol e' \in \{p_2=0\}$. Note that the previous limit in $k$ must exist (up to subsequence) and will be nonzero, since $w_k^{(3)}/ \| w_k^{(3)} \|_{L^2(\partial B_1)}$ is a sequence of quadratic polynomials  with  unit $L^2$ norm.

\smallskip

\noindent $\bullet$ \textit{Step 3}. We finally prove that $q$ is translation invariant in the direction $y_\infty$. 
Consider
\[
\hat \ep_k := \sum_{i=1,2,3}\| w_k^{(i)}\|_{L^2(\partial B_1)}  \qquad \mbox{and } \qquad  \hat w_k : = \frac{w_k}{\hat \ep_k}\,.
\]
By Step 1 we have
\[
\hat w_k \to  \hat Q = a Q\quad \mbox{ for some }a\in [0,1].
\]
Moreover, by Step 2
\[
\hat Q^{(2)} := \lim _k w_k^{(2)}/\hat \ep_k = b q(y_\infty+\,\cdot\,)
\]
and, after choosing some appropriate coordinate frame (so that, in particular, $\{p_2=0\}=\{x_n=0\}$),
\[
\hat Q^{(3)} := \lim _k w_k^{(3)}/\hat \ep_k  =c_1 x_n + c_2x_n x_{n-1}\]
for some $b,c\ge0$. (Above, the convergences are weak in $W^{1,2}_{\rm loc}(\R^n)$.)

Then, it is  well defined
\[\hat Q^{(1)} := \lim_k w_k^{(1)}/\hat \ep_k =  \lim_k w_k/\hat \ep_k -\lim _k w_k^{(2)}/\hat \ep_k- \lim _k w_k^{(3)}/\hat \ep_k,\]
and we observe that $Q^{(1)}$ is either nonpositive or nonnegative (since the functions $w_k^{(1)}$ are so).
Hence, we have
\[
\hat Q = \hat Q^{(1)} + b q(y_\infty+\,\cdot\,) + c_1 x_n + c_2x_n x_{n-1}.
\]
Note now that, by definition of $\hat \ep_k$, we have  $\sum_{i=1,2,3}  \|\hat Q^{(i)}\|_{L^2(\partial B_1)}=1$. Moreover, since the homogeneity of $q$ at the origin is at least $2+\alpha_\circ$ (see Proposition~\ref{prop:E2B2}),  the three functions  $\hat Q^{(i)}$ are  linearly independent\footnote{Note again that $\hat Q^{(1)}$ has a sign, $\hat Q^{(2)}$ is (the translation of) a $\lambda^{2nd}$-homogeneous solution of Signorini with $\lambda^{2nd}>2$, and $\hat Q^{(3)}$ is a odd quadratic harmonic polynomial, and thus they are linearly independent.} and thus their sum $\hat Q$  cannot be zero.

Let us show next that $b>0$ and that $\hat Q \equiv  bq$. Indeed, since both $q$ and $\hat Q$ are $\lambda^{2nd}$-homogeneous with $\lambda^{2nd}\ge 2+\alpha_\circ$, if $Q^{(1)}\ge 0$ (resp. $\le$) then
\[
\hat Q = \lim_{R \to \infty} \frac{ \hat Q(R \,\cdot\,) }{ R^{\lambda^{2nd}} }  = \lim_{R \to \infty} \frac{Q^{(1)}(R\,\cdot\,) + b q(y_\infty +R\,\cdot\,) + Q^{(3)}(R\,\cdot\,)}{R^{\lambda^{2nd} }} \ge b q  \quad \mbox{(resp. $\le$)},
\]
where we used that $Q^{(3)}$ is 2-homogeneous.
Hence, $\hat Q$ and $bq$ are two ordered solutions  of Signorini  with homogeneities greater than 1 at the origin and thus they must be equal by Lemma~\ref{lemAP:ordered}.

Therefore, we have shown that
\begin{equation}\label{idqSodd}
\hat Q =  \hat Q^{(1)} +  b q(y_\infty+\,\cdot\,) + x_n (c_1 x_{n-1}+ c_2 )= b q.
\end{equation}
In particular, since $\hat Q$ has unit $L^2(\partial B_1)$ norm this implies that $b>0$.

Now, taking the even parts, if $Q^{(1)}\ge 0$ (resp. if  $Q^{(1)}\le 0$) we obtain
\begin{equation}\label{idqSodd2}
b q^{even}(y_\infty+\,\cdot\,)  \le b q^{even} \qquad (\mbox{resp. }\ge ).
\end{equation}
Hence it follows by homogeneity that,  for all $s>0$,
\[
b s^{-\lambda^{2nd}}q^{even}(sy_\infty+x) \le b s^{-\lambda^{2nd}} q^{even}(x) \qquad (\mbox{resp. }\ge ).
\]
Therefore, since $b>0$,
\[
q(sy_\infty+x)   \le q(x)\qquad (\mbox{resp. }\ge),
\]
and thus
 \[
y_\infty\cdot  \nabla q^{even} \le 0  \qquad (\mbox{resp. }\ge ).
\]
In summary we obtain  that $\psi:= y_\infty\cdot  \nabla q^{even}$ has constant sign. But then $\psi$ restricted to the sphere  $\mathbb S^{n-1}$ must be a multiple of the first even eigenfunction (since all other eigenfunctions change sign) of
\[
\begin{cases}
-\Delta_{\mathbb S^{n-1}} \psi= k \psi \ &\mbox{in } \mathbb S^{n-1}\setminus \mathcal Z \\
 \psi = 0 &\mbox{on } \mathbb S^{n-1}\cap \mathcal Z,
\end{cases}
\]
where $\mathcal Z : = \{x_n =0\}\cap \{q=0\}$ and   $k := (n-2+\lambda^{2nd}) \lambda^{2nd}$.
Note $\mathcal Z \subset \{x_n=0\}$, and the two extremal cases $\mathcal Z = \varnothing$ and $\mathcal Z = \{x_n=0\}$ correspond respectively to the eigenfunctions $1$ and $|x_n|$ (restricted to the sphere), which have homogeneity $0$ and $1$ respectively. As a consequence of the monotonicity property of the eigenvalues with respect to the domain, for every $\mathcal Z$  we will have  $(n-2+0) 0  \le  k = (n-2+\lambda^{2nd}) \lambda^{2nd} \le  (n-2+1)1$. This leads to $\lambda^{2nd} \le 2$; a contradiction. Therefore, the only possibility is that $\psi = y_\infty\cdot  \nabla q^{even}\equiv 0$. In other words  $q^{even}$ is translation invariant in the direction $y_\infty$.
\end{proof}

The next result will imply that the projection $\pi_1\big(\mathbf{\Sigma}_{n-1}^{\ge3}\setminus \mathbf{\Sigma}^{3rd}_{n-1}\big)$ (recall that $\pi_1(x,t)=x$) is contained in a countable union of $(n-2)$-dimensional Lipschitz manifolds, i.e., it is $(n-2)$-rectifiable.
This will be crucial in our proof of Theorem~\ref{thm-Schaeffer-intro}.

\begin{lemma}\label{dimred}
Let $u\in C^0\big(\overline{B_1}\times [-1,1]  \big)$ solve \eqref{eq:UELL+t}, and let
$(0,0)\in \mathbf{\Sigma}_{n-1}^{\ge3}\setminus \mathbf{\Sigma}^{3rd}_{n-1}$. 
Then there exists a $(n-2)$-dimensional linear subspace $L$ such that the following holds: for any $\ep>0$  there exists $\varrho_\ep>0$ such that
\[
\pi_1\big(\mathbf{\Sigma}_{n-1}^{\ge 3}\big)\cap B_r \subset  L +   B_{\ep r} \qquad \mbox{for all }r \in (0,\varrho_\ep),
\]
where $L+B_{\ep r}:=\{z=(x+y)\,:\,x\in L,\,y \in B_{\ep r}\}$ denotes the sum of sets.
\end{lemma}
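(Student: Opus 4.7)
The plan is to identify $L$ as a superspace of the ``kernel'' of the third-order blow-up of $u(\,\cdot\,,0)-p_2$ at the origin, and then to argue by contradiction using the symmetry-reduction result Lemma~\ref{lem:EG2Bn-1}.

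First, since $(0,0)\in\mathbf{\Sigma}_{n-1}^{\ge 3}\setminus\mathbf{\Sigma}_{n-1}^{3rd}$, Proposition~\ref{prop-uniq-cubic-blowups} applied to $u(\,\cdot\,,0)$ yields the unique limit
\[
\tilde q:=\lim_{r\downarrow 0}r^{-3}\bigl(u(r\,\cdot\,,0)-p_2(r\,\cdot\,)\bigr),
\]
which is a $3$-homogeneous, non-harmonic solution of the Signorini problem. Choose coordinates so that $\{p_2=0\}=\{x_n=0\}$, hence $p_2=\tfrac{1}{2}x_n^2$. By Lemma~\ref{lem:signoriniodd}, $\tilde q$ vanishes on $\{x_n=0\}$; a distributional computation then shows that its odd part $\tilde q^{odd}$ is a $3$-homogeneous harmonic polynomial vanishing on $\{x_n=0\}$, while its even part $\tilde q^{even}$ is itself a $3$-homogeneous Signorini solution which is \emph{non-harmonic} (since $\Delta\tilde q^{odd}\equiv 0$ while $\Delta\tilde q\not\equiv 0$). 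Lemma~\ref{lem:signorinieven3} then applies to $\tilde q^{even}$: after a rotation within $\{x_n=0\}$ (which preserves $p_2$) we have
\[
\tilde q^{even}(x)=b|x_n|^3-3|x_n|\sum_{\alpha=1}^{n-1}b_\alpha x_\alpha^2,\qquad b_\alpha\ge 0,\ b=\sum_{\alpha}b_\alpha>0,
\]
with at least one $b_\alpha$ strictly positive. Define
\[
L_0:=\bigl\{y\in\{x_n=0\}:b_\alpha y_\alpha=0 \text{ for all }\alpha=1,\dots,n-1\bigr\},
\]
a linear subspace of dimension $(n-1)-\#\{\alpha:b_\alpha>0\}\le n-2$; the elementary computation $y\cdot\nabla\tilde q^{even}(x)=-6|x_n|\sum_\alpha b_\alpha y_\alpha x_\alpha$ shows that $y\in L_0$ precisely when $\tilde q^{even}$ is translation-invariant in the direction $y$. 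Let $L$ be any $(n-2)$-dimensional linear subspace containing $L_0$.

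Next, I would argue by contradiction. Suppose the conclusion fails: then there exist $\epsilon>0$, $r_k\downarrow 0$, and $(x_k,t_k)\in\mathbf{\Sigma}_{n-1}^{\ge 3}$ with $|x_k|\le r_k$ and $\mathrm{dist}(x_k,L)\ge\epsilon r_k$. Setting $y_k:=x_k/r_k\in B_1$, pass to a subsequence so that $y_k\to y_\infty$ with $\mathrm{dist}(y_\infty,L)\ge\epsilon$; in particular $y_\infty\notin L_0$. To reach a contradiction, I would verify the hypotheses of Lemma~\ref{lem:EG2Bn-1} applied to $w:=u(\,\cdot\,,0)-p_2$. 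First, the uniqueness of $\tilde q$ (together with the uniform Lipschitz estimates of Lemma~\ref{lem:E2B1}) gives
\[
\tilde w_{r_k}=\frac{w(r_k\,\cdot\,)}{H(r_k,w)^{1/2}}\longrightarrow q:=\frac{\tilde q}{\|\tilde q\|_{L^2(\partial B_1)}}\quad\text{in }W^{1,2}_{\mathrm{loc}}(\R^n),
\]
verifying \eqref{extraassump}. Second, since $(x_k,t_k)\in\mathbf{\Sigma}_{n-1}^{\ge 3}$ we have $\lambda^{2nd}_k\ge 3$, and upper semi-continuity of the frequency (Lemma~\ref{lem:EG2B1bis}(b)) combined with $\lambda^{2nd}=\phi(0^+,w)=3$ forces $\lambda^{2nd}_k\to 3=\lambda^{2nd}$. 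Lemma~\ref{lem:EG2Bn-1} then yields $y_\infty\in\{x_n=0\}$ and that $q^{even}$, which is a positive multiple of $\tilde q^{even}$, is translation-invariant along $y_\infty$; hence $y_\infty\in L_0\subseteq L$, contradicting $\mathrm{dist}(y_\infty,L)\ge\epsilon>0$.

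The main obstacle is not in this proof itself but in its key input, Proposition~\ref{prop-uniq-cubic-blowups}: without the uniqueness of the non-harmonic cubic blow-up at $(0,0)$, different sequences of scales could produce blow-ups with different ``kernel'' subspaces $L_0$, and no single $(n-2)$-dimensional $L$ would contain all accumulation directions $x_k/|x_k|$. Given this uniqueness, the remainder is a clean dimension-reduction via Lemma~\ref{lem:EG2Bn-1}; the only non-trivial hypothesis to verify there is the frequency convergence $\lambda^{2nd}_k\to\lambda^{2nd}$, which follows immediately from upper semi-continuity together with the lower bound $\lambda^{2nd}_k\ge 3$ encoded in the definition of $\mathbf{\Sigma}_{n-1}^{\ge 3}$.
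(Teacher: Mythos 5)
Your proof is correct and follows essentially the same route as the paper's: invoke Proposition~\ref{prop-uniq-cubic-blowups} to obtain a unique non-harmonic cubic blow-up $\tilde q$, use Lemma~\ref{lem:signorinieven3} to write $\tilde q^{even}$ explicitly and extract an $(n-2)$-dimensional subspace $L$, verify $\lambda^{2nd}_k\to\lambda^{2nd}=3$ via upper semicontinuity together with $\lambda^{2nd}_k\ge3$, and apply Lemma~\ref{lem:EG2Bn-1} to force accumulation directions into $L$. The paper merely fixes $L=\{x_n=x_{n-1}=0\}$ after sorting the coefficients so $b_{n-1}>0$, where you allow any $(n-2)$-plane containing the translation-invariance kernel $L_0$; this is an immaterial cosmetic difference.
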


\begin{proof}
Let $w : = u(\,\cdot\,, 0)-p_2$, and recall that $w_r(x) = w(rx)$ and $\tilde w_r= w_r/\|w_r\|_{L^2(\partial B_1)}$. 
Recall also that, by Proposition~\ref{prop-uniq-cubic-blowups}, the following limit exists
\[
\tilde q := \lim_{r\downarrow 0} r^{-3} w(r \,\cdot\,),
\]
and (after choosing suitable coordinate system) the even part of $\tilde q$ is of the form
\begin{equation}\label{ahoiahiohai}
\tilde q^{even}(x) =  b |x_n|^3 -3 |x_n| \left( \sum_{\alpha=1}^{n-1} b_\alpha x_\alpha^2\right),
\end{equation}
where $b>0$, $b_\alpha\ge 0$, and $b = \sum_{\alpha=1}^{n-1} b_\alpha$; see Lemma~\ref{lem:signorinieven3}. 
Relabelling if necessary the indices,  we may assume that $b_1\le b_2 \le\cdots\le b_{n-1}$. In particular we must have $b_{n-1}>0$. 

Define $L$ to be the $(n-2)$-dimensional subspace $\{x_n = x_{n-1} =0\}$ in this system of coordinates.
We claim that, for any sequence $(x_k, t_k) \in \mathbf{\Sigma}^{\ge 3}_{n-1}$ such that $x_k \to 0$, we have
\[
{\rm dist} \bigg(\frac{x_k}{|x_k|} , L\bigg) \to 0.
\]
Note that the lemma follows immediately from this claim. 
To prove the claim we observe that
\[
\lambda^{2nd}_{k} := \phi\big(0^+,  u(x_k +\,\cdot\,, t_k  )-p_{2,x_k,t_k}\big)\ge 3 \qquad \mbox{and}\qquad  \lambda^{2nd} := \phi(0^+,  u-p_2) =3.
\]
Thus, since the frequency is upper-semicontinuous, $\lambda^{2nd}_{k}\to \lambda^{2nd}=3$.
This allows us to apply  Lemma~\ref{lem:EG2Bn-1} with $r_k := |x_k|$ and deduce that, if $y_\infty$ is an accumulation point of $\big\{x_k /|x_k|\big\}$, then the even part of
$
q = \frac{\tilde q}{ \|\tilde q\|_{L^2(\partial B_1)} }
$
is translation invariant in the direction $y_\infty$. Thus $\tilde q^{even}$ has the same invariance. But then, recalling 
\eqref{ahoiahiohai} and $b_{n-1}>0$,  we find that  $y_\infty \in \{x_n=x_{n-1}=0\}=L$.
\end{proof}

We next need the following Lipschitz estimate.

\begin{lemma}\label{lem:lipestF3}
Let $u:B_1 \to [0,\infty)$ solve  \eqref{eq:UELL}, and let $0\in {\Sigma}^{3rd}_{n-1}\setminus {\Sigma}^{\ge 4}_{n-1}$. 
Set $w:= u - p_2- P$, where $P$ is a $3$-homogeneous harmonic polynomial vanishing on $\{p_2=0\}$, and let $w_r$ and $\tilde w_r$ be as in \eqref{defwr_tildewr}.
Assume that, for some $r_\circ >0$, $\gamma\in (3,4)$, $\delta_\circ>0$, and $h_\circ>0$,  we have
\begin{equation}\label{boundphi11}
\phi^{\gamma}(r, u-p_2-P) \le \gamma-\delta_\circ \quad \forall \,r\in (0, r_\circ) \qquad \mbox{and}  \qquad H(r_\circ, u-p_2-P) \ge h_\circ.
\end{equation}
Then there exist positive constants $\varrho_\circ$, $\eta_\circ$, and $C$,  depending only on $n$, $\gamma$, $\delta_\circ$, $r_\circ$, $h_\circ$, and $\|P\|_{L^2(B_1)}$,  such that for any given $R\ge 1$ and for all $r\in \big(0 , \textstyle \frac{\varrho_\circ}{10R}\big)$ we have
\begin{equation}
\label{eq:wk Lip}
\|\tilde w_r\|_{{\rm Lip} (B_R)}  \le CR^3   \qquad \mbox{and} \qquad  \tilde w_r \Delta \tilde w_r \ge -Cr^{\eta_\circ} R^4  \Delta \tilde w_r\quad \mbox{in } B_{R}.
 \end{equation}
\end{lemma}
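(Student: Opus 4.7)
}

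The plan is to first extract a two-sided control on $H(r,w)$ from the hypotheses, then to use the resulting nondegeneracy to transport the semiconvexity machinery of Lemma \ref{lem:E2B1} and the pointwise estimates on the contact set (as in the proof of Lemma \ref{lem:E3B5}) to our function $w=u-p_2-P$ with $P$ cubic.

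\emph{Step 1 (two-sided control on $H$).} The assumption $\phi^\gamma(\,\cdot\,,w)\le\gamma-\delta_\circ$ on $(0,r_\circ)$ together with Lemma~\ref{lem:E3B1} allows us to apply Lemma~\ref{lem:E3B1c}(a) with $\overline\lambda=\gamma-\delta_\circ$ and $\delta=\delta_\circ/2$. Combined with $H(r_\circ,w)\ge h_\circ$, this yields, for all sufficiently small $r$,
\[
H(r,w)\,\geq\, c\,r^{2\gamma-\delta_\circ},
\]
with $c$ depending on $n,\gamma,\delta_\circ,r_\circ,h_\circ$. In particular $H(r,w)^{1/2}\gtrsim r^{\gamma-\delta_\circ/2}$, and since $\gamma-\delta_\circ/2<4$ we may set
$\eta_\circ:=4-\gamma+\delta_\circ/4>0$, so that $r^4/H(r,w)^{1/2}\le C r^{\eta_\circ}$. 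Conversely, since $0\in\Sigma^{3rd}_{n-1}$ we have $|u-p_2|\le C r^3$ and thus $H(r,w)^{1/2}\le C r^3$, which will give polynomial growth of $\tilde w_r$ on bounded balls via the upper half of Lemma~\ref{lem:E3B1c}(a): $H(R,\tilde w_r)\le C_\delta R^{2\gamma+\delta}$ for $R\le\varrho_\circ/r$.

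\emph{Step 2 (pointwise estimate on the contact set).} Since $\lambda^{2nd}\ge 3$, Proposition~\ref{prop:E2B3} gives $\{u_r=0\}\cap B_R\subset\{|x_n|\le C r R^2\}$ after rescaling (equivalently $|y_n|\le C(rR)^2$ on $\{u=0\}\cap B_{rR}$). Using that $P$ vanishes on $\{x_n=0\}$ (so $|P(y)|\le C|y_n|\,|y|^2$) and that $p_2=\tfrac12 x_n^2$, we obtain on $\{u=0\}\cap B_{rR}$
\[
|p_2(y)+P(y)|\;\le\; C(rR)^4.
\]
Since $w=-(p_2+P)$ on $\{u=0\}$, this yields $|w(rx)|\le C r^4R^4$ on $\{u_r=0\}\cap B_R$, and dividing by $H(r,w)^{1/2}$ and using Step 1,
\[
|\tilde w_r|\;\le\; C r^{\eta_\circ}R^4 \qquad\text{on }\{u_r=0\}\cap B_R.
\]
The claimed bound $\tilde w_r\Delta\tilde w_r\ge -Cr^{\eta_\circ}R^4\,\Delta\tilde w_r$ then follows at once, since $\Delta\tilde w_r$ is a nonpositive measure supported on $\{u_r=0\}$.

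\emph{Step 3 (Lipschitz bound).} The proof of Lemma~\ref{lem:E2B1} applies with only cosmetic changes once we take into account the contribution of the cubic polynomial $P$. Precisely, for $\boldsymbol e\in\{x_n=0\}$, $\delta^2_{\boldsymbol e,h}p_2\equiv 0$ and $\partial^2_{\boldsymbol e\boldsymbol e}P$ is linear and divisible by $x_n$; hence $|\partial^2_{\boldsymbol e\boldsymbol e}P|\le C|x_n|$ on $\{u=0\}$, which by Proposition~\ref{prop:E2B3} gives $|\partial^2_{\boldsymbol e\boldsymbol e}P|\le C r^2$ on $\{u=0\}\cap B_r$. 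Consequently $\min\{\delta^2_{\boldsymbol e,h} w,\,-Cr^2\}$ is subharmonic in $B_1$, and after rescaling we get the semiconvexity estimate $\partial^2_{\boldsymbol e\boldsymbol e}\tilde w_r\ge -C$ in $B_R$ (with $C$ depending on $R$) by Calder\'on--Zygmund plus weak Harnack, exactly as in Lemma~\ref{lem:E2B1}; the $L^1$ norm that enters this bound is controlled via the $H(R,\tilde w_r)\le C R^{2\gamma+\delta}\le C R^8$ estimate from Step 1. Combining the tangential semiconvexity with $\Delta\tilde w_r\le 0$ (giving semiconcavity in the normal direction) yields the Lipschitz bound; tracking the $R$-dependence through the $L^2$ estimate $\|\tilde w_r\|_{L^2(B_{4R})}\lesssim R^{n/2+\gamma}$ and using that $\gamma-1<3$ produces the sharp growth $\|\tilde w_r\|_{{\rm Lip}(B_R)}\le C R^{3}$.

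\emph{Main obstacle.} The only delicate point is in Step 3: getting the explicit polynomial factor $R^{3}$ (rather than a larger power) requires a careful bookkeeping of how the tangential semiconvexity constant and the $L^2$-based $L^\infty$ bound scale together with $R$. Everything else, including the crucial measure inequality, reduces to the nondegeneracy of $H$ established in Step 1 combined with the elementary pointwise bound on $p_2+P$ on the contact set.
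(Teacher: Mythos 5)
Your proposal follows essentially the same route as the paper's proof of Lemma~\ref{lem:lipestF3}: (i) use $\phi^\gamma\le\gamma-\delta_\circ$ plus Lemma~\ref{lem:E3B1} to invoke Lemma~\ref{lem:E3B1c}(a) and derive a two-sided polynomial control on $H(r,w)$; (ii) bound $|w|=|p_2+P|\lesssim|x|^4$ on the contact set (using $|x_n|\lesssim|x|^2$ from Proposition~\ref{prop:E2B3}) and divide by $H(r,w)^{1/2}$ to get the measure inequality; (iii) run the tangential-semiconvexity argument as in Lemmas~\ref{lem:E2B1}/\ref{lem:E3B5} with the extra $O(r^2)$ error from $\partial_{\boldsymbol e\boldsymbol e}P$, combine with $\Delta\tilde w_r\le0$, and interpolate $\|\tilde w_r\|_{L^\infty(B_R)}\lesssim R^4$ against the semiconvexity constant $\sim R^2$ to get $\|\nabla\tilde w_r\|_{L^\infty(B_R)}\lesssim R^3$. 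Two cosmetic slips worth flagging: the truncated second difference $\min\{\delta^2_{\boldsymbol e,h}w,-Cr^2\}$ is \emph{super}harmonic (not subharmonic), and this holds on $B_r$ (i.e., on the ball where the bound $|x_n|\le Cr^2$ on $\{u=0\}$ is valid), not on all of $B_1$ at a fixed scale; after rescaling to $\tilde w_r$ one obtains the paper's formulation. Neither affects the correctness of the argument.
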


\begin{proof}
With no loss of generality we can assume that $\{p_2=0\}=\{x_n=0\}$.

Since $P$ is some $3$-homogeneous harmonic polynomial vanishing on $\{p_2=0\}$,
for any unit vector $\boldsymbol e$ tangential to $\{p_2=0\}$ we have $|\partial_{\boldsymbol e\boldsymbol e} P | \le C|x_n| \le  Cr^2$ in $B_r\cap \{u=0\}$ (cf. \eqref{eq:u0}).
Thus, arguing as in the proof of Lemma~\ref{lem:E3B5} (see Step 3), we get
\begin{equation}\label{sahfgij}
\inf_{B_r} r^2\partial_{\boldsymbol e\boldsymbol e} w  \ge - C(P) (\|w(r\,\cdot\,)\|_{L^2(B_{5})}  + r^4).
\end{equation}
Also, since $0\in {\Sigma}^{3rd}_{n-1}\setminus {\Sigma}^{\ge4}_{n-1}$, we can apply Lemmas~\ref{lem:E3B1c} and~\ref{lem:E4B1} to deduce that $\phi(0^+, u -\anz)$ exists and is less that $4$ (cf. proof of Proposition~\ref{prop:E3B5}(a)).

We now note that, as a consequence of \eqref{boundphi11}, 
Lemmas~\ref{lem:E3B1} and~\ref{lem:E3B1c} yield that, for any $\delta>0$, $r>0$, and $\varrho\in (r, r_\circ]$,
\[
\frac{ H(\varrho, w) + \rho^{2\gamma}}{H(r, w) + r^{2\gamma} } \le C_\delta \left( \varrho/r\right)^{2(\gamma-\delta) +\delta}.
\]
In particular, for $\delta = 4-\gamma$ and $\varrho = r_\circ$ we obtain
 \begin{equation}
 \label{lower}
H(1, w_r) =  H(r, w)  =   \frac{H(r_\circ, w) + r_\circ^{2\gamma} }{C_\delta} \left(r/ r_\circ\right)^{2(\gamma-\delta) +\delta} -    r^{2\gamma} \ge c _1 r^{2\gamma-\delta},
 \end{equation}
 provided that $r\in (0, r_1)$, where $c_1>0$ and $r_1 \in (0,r_\circ)$ is sufficiently small.
Also, for $r\in (0, r_1)$ and  $\varrho = Rr \le r_\circ$ we get
 \[
 H(Rr, w)  \le C_\delta R^{2\gamma-\delta} (H(r,w) + r^{2\gamma}) \le  C R^8 H(r,w),
 \]
 where $C= C_\delta(1+1/c_1)$ depends only on $n$, $\gamma$, $\delta$, and $h_\circ$
Thus, scaling \eqref{sahfgij}, for $r\in \big(0 , \textstyle \frac{r_1}{10R}\big)$ we obtain
\[
(2R)^{-2}\inf_{B_{2R}}  \partial_{\boldsymbol e\boldsymbol e} w_r  \ge - C(P) \big(\|w(3Rr \,\cdot\,)\|_{L^2(B_{5})}  + (2Rr)^4)
\ge - CR^4 (H(r,w)^{1/2}  + r^4\big) \ge -C H(1,w_r)^{1/2},
\]
where $C$ depends only on $n$, $R$, $\gamma$, $\delta$ and $h_\circ$.

Hence,  given  $R\ge 1$,  for all $r\in \big(0 , \textstyle \frac{r_\circ}{10R}\big)$ we have $\partial_{\boldsymbol e\boldsymbol e} \tilde w_r \ge -C R^2 $ in $B_{2R}$.
Therefore, as in the proof of Lemma~\ref{lem:E2B1}, we obtain  $|\nabla \tilde w_r| \le CR^3$ in $B_R$, where $C$ depends only on $n$, $\gamma$, $\delta$, and $h_\circ$.
This proves the first  part of \eqref{eq:wk Lip}.

For the second part, notice that $|u-p_2-P|=|\frac12(x_n)^2+P|\leq C|x|^4$ inside $\{u=0\}$ ---here we used that $|x_n|\leq C|x|^2$ in $\{u=0\}$ and that $P$ is a cubic polynomial divisible by $x_n$.
Combining this bound with \eqref{lower} and the fact that $\Delta \tilde w_r=0$ inside $\{u_r>0\}$, we find (choosing for instance $\eta_\circ:=4-\gamma)$
\[\tilde w_r \Delta \tilde w_r \geq -\frac{C|rx|^4}{H(1,w_r)^{1/2}}\,\Delta \tilde w_r \geq -\frac{C(rR)^4}{cr^{4-\eta_\circ}}\,\Delta \tilde w_r=-Cr^{\eta_\circ} R^4\,\Delta \tilde w_r\qquad \textrm{in}\, B_r,\]
which proves  \eqref{eq:wk Lip}.
\end{proof}

The following result will be needed in order to bound the Hausdorff dimension of the projection $\pi_1(\mathbf{\Sigma}_{n-1}^{>3}\setminus \mathbf{\Sigma}^{\ge 4}_{n-1})$.
Although the argument is very similar to the one used in the proof of Lemma~\ref{lem:EG2Bn-1}, we repeat the proof in detail since there are differences that require a detailed analysis. Recall that $p_3=p_{3,0,0}$ is defined in \eqref{eq:def p3 t}.

\begin{lemma}\label{lem:E3B2d}
Let $u\in C^0\big(\overline{B_1}\times [-1,1]  \big)$ solve \eqref{eq:UELL+t}, let $(0,0)$ and $(x_k,t_k)$ belong to  $ \mathbf{\Sigma}_{n-1}^{>3}\setminus \mathbf{\Sigma}^{\ge 4}_{n-1}$,
and suppose that  $|x_k|\le r_k\downarrow 0$.
Assume in addition that
\begin{equation} \label{ashoasdh}
\tilde  w_{r_{k}} \rightharpoonup  q \mbox{  in } W^{1,2}_{\rm loc}(\R^n) \qquad \mbox{for } w :=  u-p_2-p_3\quad \mbox{and} \quad y_k :=\frac{x_k}{r_k} \to y_\infty,
\end{equation}
 and that   $\lambda^{3rd}_k\to \lambda^{3rd}$, where
\[\lambda^{3rd}_{k} := \phi\big(0^+,  u(x_k +\,\cdot\,, t_k  )-p_{2,x_k,t_k} -p_{3,x_k,t_k}\big)\quad \mbox{and}\quad  \lambda^{3rd} := \phi(0^+,  u-p_{2} -p_{3}).\]
Then $y_\infty\in \{p_2=0\}$, and $q$ is translation invariant in the direction $y_\infty$.
\end{lemma}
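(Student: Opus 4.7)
The plan is to mirror the proof of Lemma~\ref{lem:EG2Bn-1} one order higher, using the third-order machinery of Section~\ref{sec:E3B}. First, $y_\infty\in\{p_2=0\}$ follows directly from Lemma~\ref{lem:EG2B1bisbis} applied to $x_k\to 0$. I then set
\[
w_k(x):=u(x_k+r_kx,t_k)-(p_{2,x_k,t_k}+p_{3,x_k,t_k})(r_kx)
\]
and decompose $w_k=w_k^{(1)}+w_k^{(2)}+w_k^{(3)}$, where
\[
w_k^{(1)}:=u(x_k+r_k\,\cdot\,,t_k)-u(x_k+r_k\,\cdot\,,0),\quad w_k^{(2)}:=u(x_k+r_k\,\cdot\,,0)-(p_2+p_3)(x_k+r_k\,\cdot\,),
\]
and $w_k^{(3)}:=(p_2+p_3)(x_k+r_k\,\cdot\,)-(p_{2,x_k,t_k}+p_{3,x_k,t_k})(r_k\,\cdot\,)$ is a polynomial in $x$ of degree at most~$3$. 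Note that $\lambda^{3rd}\in(3,4)$ by our hypothesis $(0,0)\in\mathbf{\Sigma}_{n-1}^{>3}\setminus\mathbf{\Sigma}_{n-1}^{\ge4}$, so I can fix $\gamma\in(\lambda^{3rd},4)$.

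In Step 1, I extract a Signorini limit of $\tilde w_k:=w_k/\|w_k\|_{L^2(\partial B_1)}$. Lemma~\ref{lem:E3B1} gives the almost-monotonicity of $\phi^\gamma(r,w_k)$, and the continuity $\lambda^{3rd}_k\to\lambda^{3rd}$ pins $\phi^\gamma(r,w_k)$ in a narrow window around $\lambda^{3rd}$ on a suitable range of $r$. Lemma~\ref{lem:E3B1c} then gives two-sided polynomial bounds for $H(\varrho,\tilde w_k)$, while Lemma~\ref{lem:lipestF3} (with $P=p_{3,x_k,t_k}$) provides $\|\tilde w_k\|_{\mathrm{Lip}(B_R)}\leq CR^3$ and $\tilde w_k\Delta\tilde w_k\geq -Cr_k^{\eta_\circ}R^4\,\Delta\tilde w_k$, hence $C^0_{\mathrm{loc}}(\R^n)$ compactness. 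The concentration of $\Delta w_k$ on $\{p_2=0\}$ (via Proposition~\ref{prop:E2B3}) together with $\tilde w_k\geq 0$ on $\{p_{2,x_k,t_k}=0\}$ identifies any accumulation point $Q$ as a $\lambda^{3rd}$-homogeneous global solution of the Signorini problem \eqref{ETOP} (homogeneity extracted via Lemma~\ref{lemAP:Alm}); moreover $Q$ is even with respect to $\{p_2=0\}$ since $\lambda^{3rd}<4$, by the orthogonality-to-odd-harmonics argument at the end of the proof of Proposition~\ref{prop:E3B5}(a).

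In Step 2, I set $\hat\epsilon_k:=\sum_i\|w_k^{(i)}\|_{L^2(\partial B_1)}$ and $\hat w_k:=w_k/\hat\epsilon_k$. Then $\hat w_k\to\hat Q=aQ$ with $a\in(0,1]$ (non-triviality coming from linear independence of the three blocks), and identifying the limits of $w_k^{(i)}/\hat\epsilon_k$ yields
\[
\hat Q=\hat Q^{(1)}+b\,q(y_\infty+\,\cdot\,)+\hat Q^{(3)},
\]
with $b\geq 0$ (from the hypothesis $\tilde w_{r_k}\rightharpoonup q$ evaluated at $y_k\to y_\infty$), $\hat Q^{(1)}$ of constant sign (since $w_k^{(1)}$ is), and $\hat Q^{(3)}$ a polynomial of degree $\leq 3$. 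The homogeneous scaling $\hat Q(R\,\cdot\,)/R^{\lambda^{3rd}}\to\hat Q$ annihilates $\hat Q^{(3)}$ (since $\deg\hat Q^{(3)}\leq 3<\lambda^{3rd}$) and converts $b\,q(y_\infty+R\,\cdot\,)/R^{\lambda^{3rd}}$ into $b\,q$; using the sign of $\hat Q^{(1)}$ this gives $\hat Q\geq b\,q$ or $\hat Q\leq b\,q$, and Lemma~\ref{lemAP:ordered} applied to these two ordered Signorini solutions (both of homogeneity $>1$) forces $\hat Q\equiv b\,q$ and in particular $b>0$. Taking even parts (with respect to $\{p_2=0\}$) and using that $q$ is even gives
\[
(\hat Q^{(1)})^{\mathrm{even}}+(\hat Q^{(3)})^{\mathrm{even}}=b\,q-b\,q(y_\infty+\,\cdot\,).
\]
Rescaling $x\mapsto sx$ and dividing by $s^{\lambda^{3rd}}$ again annihilates $(\hat Q^{(3)})^{\mathrm{even}}$; combined with the preserved sign of $(\hat Q^{(1)})^{\mathrm{even}}$, this shows $b\,q(sy_\infty+x)\leq b\,q(x)$ for all $s>0$ (or with the reversed inequality, depending on the sign of $\hat Q^{(1)}$). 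Differentiating at $s=0$ shows that $y_\infty\cdot\nabla q$ has a constant sign on $\R^n$, and the eigenvalue argument at the end of the proof of Lemma~\ref{lem:EG2Bn-1} then rules this out unless $y_\infty\cdot\nabla q\equiv 0$: a non-trivial one-signed first eigenfunction on the relevant spherical subdomain would force $\lambda^{3rd}\leq 2$, contradicting $\lambda^{3rd}>3$.

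The main technical obstacle is that $\hat Q^{(3)}$ can now be a genuine cubic, whereas in Lemma~\ref{lem:EG2Bn-1} the analogous term was quadratic and odd in the normal direction (hence had vanishing even part). Nevertheless, the strict inequality $\lambda^{3rd}>3\geq\deg\hat Q^{(3)}$ is precisely what is needed to kill $\hat Q^{(3)}$ under the $R^{\lambda^{3rd}}$-scaling, so the reduction to the one-signed-derivative argument for $q$ survives. A secondary difficulty is ensuring that the constants in Lemmas~\ref{lem:E3B1}--\ref{lem:lipestF3} are uniform in $k$: this requires uniform bounds on $\|p_{3,x_k,t_k}\|_{L^2(\partial B_1)}$ and on $H(r_\circ,w_k)$ from below, which follow from the upper semicontinuity of $\phi$ combined with a Monneau-type estimate (Lemma~\ref{lem:EGB6}) controlling $p_{3,x_k,t_k}-p_3$ as $(x_k,t_k)\to(0,0)$.
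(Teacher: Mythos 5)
Your overall strategy is sound (mirror Lemma~\ref{lem:EG2Bn-1} with the third-order machinery), but there is a genuine gap in the treatment of the cubic polynomial $\hat Q^{(3)}$. You correctly identify this as the main obstacle, and you correctly note that $\deg \hat Q^{(3)}\le 3<\lambda^{3rd}$ kills $\hat Q^{(3)}$ under the \emph{large-$R$} scaling $\hat Q(R\,\cdot\,)/R^{\lambda^{3rd}}$ used to show $\hat Q = bq$ via Lemma~\ref{lemAP:ordered}. But the step that produces the sign condition on $y_\infty\cdot\nabla q$ requires the \emph{opposite} scaling $s\downarrow 0$, where one looks at $s^{\lambda^{3rd}-1}\hat Q^{(3)}(s^{-1}x)$: the degree-3 part contributes $s^{\lambda^{3rd}-4}\to\infty$ because $\lambda^{3rd}<4$, so the cubic part of $\hat Q^{(3)}$ is \emph{not} annihilated by this scaling — it blows up. Your sentence ``Rescaling $x\mapsto sx$ and dividing by $s^{\lambda^{3rd}}$ again annihilates $(\hat Q^{(3)})^{\mathrm{even}}$'' is exactly where the argument breaks: the even part of $\hat Q^{(3)}$ can be a genuine cubic (e.g.\ $x_1^3$ or $x_1x_n^2$ with $\{p_2=0\}=\{x_n=0\}$), and this cubic is precisely what survives and diverges. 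The paper closes this gap by a separate argument: after writing
\[
\frac{b\bigl[q(sy_\infty+x)-q(x)\bigr]}{s}\le s^{\lambda^{3rd}-1}\hat Q^{(3)}(s^{-1}x)
\]
one uses that the left side is uniformly bounded as $s\downarrow 0$ (because $q$, being a Signorini solution, is Lipschitz). Since the degree-3 piece of $\hat Q^{(3)}$ would make the right side diverge to $\mp\infty$ at points where that piece is $\pm$, the cubic coefficients must vanish; only then do the lower-order terms produce $O(s^{\lambda^{3rd}-3})\to 0$. Without this Lipschitz-boundedness step the conclusion does not follow.

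A secondary point you omit is the role of \eqref{ahsgoih}: the paper shows that $w_k^{(3)}$ is $O(r_k^4)$ at some $\bar y_k\in\overline{0\,y_k}$ (using the pointwise bound $p_2+p_3\ge -p_3^2/(2p_2)$), hence in the limit the harmonic polynomial $\hat Q^{(3)}$ vanishes at some $\bar y\in\overline{0\,y_\infty}$ and therefore changes sign unless it is identically zero. This is what feeds the linear-independence claim you invoke for ``non-triviality''; simply asserting linear independence of the three blocks without this step is not justified, since a priori $\hat Q^{(3)}$ could be a one-signed harmonic polynomial aligned with $\hat Q^{(1)}$. Finally, taking even parts is harmless (since $q$ is already even for $\lambda^{3rd}<4$) but is not needed here, and it does not help resolve the cubic-part issue above.
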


\begin{proof}
The fact that  $y_\infty\in \{p_2=0\}$ follows from Lemma~\ref{lem:EG2B1bisbis}.

Since $(0,0)\in \mathbf{\Sigma}^{>3}_{n-1}\setminus \mathbf{\Sigma}^{\ge 4}_{n-1}$,
as in the proof of Lemma~\ref{lem:lipestF3} the limit $ \lim_{r\downarrow 0} \phi(r, u(\,\cdot \,, 0) -p_2 -p_3)$ exists and belongs to $(3,4)$, that is
\[
\lambda^{3rd} : = \phi(0^+, u(\,\cdot \,, 0) -p_2 -p_3)  \in (3, 4).
\]
Similarly, the limits defining $\lambda^{3rd}_{k}$ exist, and by assumption, we have
\begin{equation}
\lambda^{3rd}_{k} := \phi\big(0^+,  u(x_k +\,\cdot\,, t_k  )-p_{2,x_k,t_k} -p_{3,x_k,t_k}\big) \to \lambda^{3rd}.
\end{equation}
We define
\[ \mathfrak p := p_2 + p_3\qquad \mbox{and}\qquad  \mathfrak p_k := p_{2,x_k, t_k} + p_{3,x_k, t_k}\]
and consider
\begin{equation}\label{defwk00}
\begin{split}
w_k  &:= u(x_k +r_k\,  \cdot\, , t_k  ) -   \mathfrak p_{k} \big( r_k\,\cdot\, \big) = w_k ^{(1)} +w_k ^{(2)}  +w_k ^{(3)},
\\
 w_k ^{(1)} &:=  u(x_k +r_k\,  \cdot\,  , t_k  )  - u(x_k + r_k\,  \cdot\,  ,0 ),
 \\
 w_k ^{(2)} &:=  u(x_k +r_k\,  \cdot\,  , 0 )  - \mathfrak p(x_k +r_k\,\cdot\,),
\\
 w_k ^{(3)} &:=   \mathfrak p (x_k+ r_k \,\cdot\,) -\mathfrak p_{k} \big(r_k\,\cdot\,).
\end{split}
\end{equation}
Recall that $y_k : = x_k /r_k$ and define
\begin{equation}\label{tildewk}
\tilde w_k : = \frac{w_k}{\|w_k\|_{L^2(\partial B_1)} }.
\end{equation}

\smallskip

\noindent $\bullet$ \emph{Step 1. } Throughout the proof we fix $\gamma\in (\lambda^{3rd},4)$.
Thanks to Lemma~\ref{lem:E3B1}, for any given  $\delta>0$ we have
\begin{equation}\label{continuity}
\big|\phi^\gamma\big(r,  u(x_k +\,\cdot\,, t_k  )-p_{2,x_k,t_k} -p_{3,x_k,t_k}\big) - \lambda^{3rd}\big| \le \delta \qquad \forall \,r\in (0,r_\delta), \ \forall\, k\ge k_\delta.
\end{equation}
Hence, we may fix  positive constants $\delta_\circ$ and $r_\circ$ such that, for $k\ge k_\circ$ large enough, we have 
\begin{equation}\label{continuity2}
\phi^\gamma\big(r,  u(x_k +\,\cdot\,, t_k  )-  \mathfrak p_k \big)  \le \gamma -3\delta_\circ \qquad \forall \,r\in (0,r_\circ),
\end{equation}
and Lemma~\ref{lem:lipestF3}  ---applied to the function $u(x_k +\,\cdot\, , t_k)$ and  with $r= r_k$--- yields
\begin{equation}
\label{eq:wk Lip11}
\| \tilde w_k\|_{{\rm Lip} (B_R)} \le C(R)  \qquad \mbox{in } B_{R}
 \end{equation}
and $\tilde w_k \Delta\tilde w_k \ge -C(R) r_k ^{\eta_\circ} \Delta\tilde w_k  $, where $\eta_\circ>0$ and $C(R)$ are independent of $k$.
 Then, similarly to the proof of Lemma~\ref{lem:EG2Bn-1}, the (locally uniformly bounded) nonpositive measures $\Delta \tilde w_k$ converge weakly to  $\Delta Q \le 0,$  and  since  $r_k^{\eta_\circ} \Delta \tilde w_k \rightharpoonup 0$  and $\tilde w_k \to Q$ locally uniformly, we have $\tilde w_k\Delta \tilde w_k \to Q\Delta Q \ge 0$. 
Furthermore, since $w_k = u(x_k +r_k\,\cdot\,, t_k  ) \ge 0$ on $\{p_{2,x_k,t_k} =0\}$  and $p_{2,x_k,t_k} \to p_2$, we obtain that  $Q\ge 0$ on $\{p_2=0\}$.
Therefore, we proved that $Q$ is a solution of the Signorini problem \eqref{ETOP}.
Finally,  arguing as in the proof of Lemma~\ref{lem:EG2Bn-1}, it follows by \eqref{continuity} that 
the function $Q$ is $\lambda^{3rd}$-homogeneous.

Note that, by the same reasoning, also $q$ is a  $\lambda^{3rd}$-homogeneous of the Signorini problem \eqref{ETOP}.
\smallskip

\noindent $\bullet$ \textit{Step 2.}  Recall that  $y_\infty\in \{p_2=0\}$.
In addition  by Proposition~\ref{prop:E3B5}(a) we have
\[
\frac{w_k^{(2)}}{ \| w_k^{(2)} \|_{L^2(\partial B_1)}} \rightharpoonup \frac{q (y_\infty +\,\cdot\,)}{ \|q (y_\infty +\,\cdot\,) \|_{L^2(\partial B_1)}} \quad \mbox{in } W^{1,2}_{\rm loc} (\R^n),
\]
and, by construction, $w_k^{(3)}$ is a cubic hamonic polynomial. 

We claim that, for each $k$, there exists a point $\bar y_k$ in the segment $\overline{0\,y_k}$ such that
\begin{equation}\label{ahsgoih}
|w_k ^{(3)}  (\bar y_k)| \le Cr_k^4.
\end{equation}
Indeed, note that $p_2 +p_3  \ge -\frac{p_3^2}{2p_2}$ and thus we have  $\mathfrak p (r_k \,\cdot\,) \ge -Cr_k^4$ and   $\mathfrak p_k (r_k \,\cdot\,) \ge -Cr_k^4$ in $B_1$.
Hence, since $\mathfrak p (0) = \mathfrak p_k (0)=0$,
\[
w_k ^{(3)}(0)   =  \mathfrak p (0) -\mathfrak p_k \big(-r_k y_k)  \ge -Cr_k^4
\qquad \text{and}\qquad
w_k ^{(3)}(y_k)  =  \mathfrak p (r_k y_k) -\mathfrak p_k \big(0)  \le Cr_k^4,
\]
so \eqref{ahsgoih} follows.

\smallskip

\noindent $\bullet$ \textit{Step 3}.  Let us consider
\[
\hat \ep_k := \sum_{i=1,2,3}\| w_k^{(i)}\|_{L^2(\partial B_1)} \qquad \mbox{and } \qquad  \hat w_k : = \frac{w_k}{\hat \ep_k}\,.
\]
Recalling that $\phi^{\gamma} (0^+, u(\,  \cdot\, , 0 ) -   \mathfrak p ) =\lambda^{3rd} <\gamma<4$,
it follows by Lemma~\ref{lem:E3B1c}  that, for any given $\delta>0$,
\[
\hat \ep_k  \ge\| w_k^{(2)}\|_{L^2(\partial B_1)} = \big \|  (u-p_2-p_3)\big(r_k(y_k +\,\cdot\,) \big)\big\|_{L^2(\partial B_1)} \gg r_k^{\lambda^{3rd}+\delta}\quad \mbox{ as $k\to \infty$}.
\]
Thus, by Step 1, we have
\[
\hat w_k \to  \hat Q = a Q\quad \mbox{ for some }a\in [0,1].
\]
Moreover, by Step 2,
\[
\hat Q^{(2)} := \lim _k w_k^{(2)}/\hat \ep_k = b q(y_\infty+\,\cdot\,)
\qquad \text{and}\qquad
\hat Q^{(3)} := \lim _k w_k^{(3)}/\hat \ep_k  =  \big[\mbox{degree 3 hamonic polynomial}\big] \]
for some $b\ge0$. 
(Above, the convergences are weak in $W^{1,2}_{\rm loc}(\R^n)$.) Thus, it is 
well defined
\[\hat Q^{(1)} := \lim_k w_k^{(1)}/\hat \ep_k =  \lim_k w_k/\hat \ep_k -\lim _k w_k^{(2)}/\hat \ep_k- \lim _k w_k^{(3)}/\hat \ep_k,\]
and we observe that $Q^{(1)}$ is either nonpositive or nonnegative (since so is $w_k^{(1)}$).
Hence, we have
\[
\hat Q = \hat Q^{(1)} + b q(y_\infty+\,\cdot\,) + \hat Q^{(3)}.
\]
Moreover, it follows by \eqref{ahsgoih} that  the polynomial $\hat Q^{(3)}$ vanishes at some point $\bar y$ in the segment $\overline{0\,y_\infty}$.
Hence, since $\hat Q^{(3)}$ is harmonic, we see that it cannot have constant sign  (unless it is identically zero).

Note now that, by definition of $\hat \ep_k$, we have  $\sum_i  \|\hat Q^{(i)}\|_{L^2(\partial B_1)}=1$. Hence, since $q$ is a $\lambda^{3rd}$-homogeneous solution of Signorini with $\lambda^{3rd}>3$,  $\hat Q^{(1)}$ has constant sign, and  $\hat Q^{(3)}$ is a cubic harmonic polynomial that does not have constant sign, we deduce that the three functions  $\hat Q^{(i)}$ are  linearly independent and their sum $\hat Q$ cannot be zero.

We show next that $b>0$ and that $\hat Q \equiv  bq$. 
Indeed, since both $q$ and $\hat Q$ are $\lambda^{3rd}$-homogeneous, if $\hat Q^{(1)}\ge 0$ (resp. $\le$) then
\[
\hat Q = \lim_{R \to \infty} \frac{ \hat Q(R \,\cdot\,) }{ R^{\lambda^{3rd}} }  = \lim_{R \to \infty} \frac{\hat Q^{(1)}(R\,\cdot\,) + b q(y_\infty +R\,\cdot\,) + \hat Q^{(3)}(R\,\cdot\,)}{R^{\lambda^{3rd}}} \ge b q  \quad \mbox{(resp. $\le$).}
\]
But then $\hat Q$ and $bq$ are two solution ordered solutions  of Signorini  with homogeneities $>1$ at the origin, and thus they must be equal by Lemma~\ref{lemAP:ordered}.

Therefore, we have shown
\[
b \big(q - q(y_\infty+\,\cdot\,)\big)  = \hat Q^{(1)} + \hat Q^{(3)}.
\]
Now,  using homogeneity, we obtain that  for all $s>0$
\[
 \hat Q^{(1)}(s^{-1}x)  +b s^{-\lambda^{3rd}}q(sy_\infty+x)  + \hat Q^{(3)}(s^{-1}x)  = b s^{-\lambda^{3rd}} q(x).
\]
If  $Q^{(1)}\ge 0$ (resp. if $Q^{(1)}\le 0$), we obtain
\begin{equation}\label{ashgiodashp}
b\frac{q(sy_\infty+x)   - q(x)}{s}  \le s^{\lambda^{3rd}-1}  \hat Q^{(3)}(s^{-1}x)   \quad (\mbox{resp. }\ge ).
\end{equation}
Note that, since $q$ is a solution of \eqref{ETOP} (and so it is Lipschitz continuous, see for instance \cite{ACS08}), the absolute value of the left hand side of   \eqref{ashgiodashp} is bounded as $s\downarrow 0$. 
Hence, since $\lambda^{3rd} \in (3,4)$, the cubic coefficients of $\hat Q^{(3)}$  (recall that $\hat Q^{(3)}$ is a cubic harmonic polynomial) must vanish as otherwise the right hand side would be unbounded. Thus, the cubic coefficients of  $\hat Q^{(3)}$ vanish and therefore right hand side  converges to zero.

Thus, since $b>0$, we have shown that
 \[
y_\infty\cdot  \nabla q \le 0  \quad (\mbox{resp. }\ge 0).
\]
Hence, reasoning as in Step 3 of the proof of Lemma~\ref{lem:EG2Bn-1}, we obtain that $\psi:= y_\infty\cdot  \nabla  q$ restricted to $\mathbb S^{n-1}$  must be a multiple of the first eigenfunction of a certain elliptic problem, and this easily leads to a contradiction because the homogeneity of $q$ is greater than 2.
\end{proof}

Our next goal is to prove a variant of Lemma~\ref{lem:E3B2d} for points in $\mathbf{\Sigma}_{n-1}^{>4} \setminus \mathbf{\Sigma}_{n-1}^{\ge5-\zeta} $.
For that, we need the following Lipschitz estimate.

\begin{lemma}\label{lem:lipestF4}
Let $u:B_1 \to [0,\infty)$ solve  \eqref{eq:UELL}, and let $0\in {\Sigma}^{>4}_{n-1}\setminus {\Sigma}^{\ge 5-\zeta}_{n-1}$. Set $w:= u - \anz -P$, where $P$ is some $4$-homogeneous harmonic polynomial vanishing on $\{p_2=0\}$.
Assume that, for some $r_\circ >0$, $\gamma\in (4,5)$, $\delta>0$, and $h_\circ>0$,
\begin{equation}\label{boundphi22}
\phi^{\gamma}(r, u-\anz-P) \le \gamma-\delta_\circ \quad \forall \,r\in (0, r_\circ) \qquad \mbox{and}  \qquad H(r_\circ, u-\anz-P) \ge h_\circ.
\end{equation}
Then there exist positive constants $\varrho_\circ$, $\eta_\circ$, and $C$,  depending only on $n$, $\gamma$, $\delta_\circ$, $r_\circ$, and $h_\circ$, such that  for any given $R\ge 1$ and  for all $r\in \big(0 , \textstyle \frac{\varrho_\circ}{10R}\big)$ we have
\begin{equation}
\label{eq:wk Lip5}
\|\tilde w_r\|_{{\rm Lip} (B_R)}  \le CR^4  \quad \mbox{and} \quad   \tilde w_r \Delta \tilde w_r \ge -Cr^{\eta_\circ} R^5  \Delta \tilde w_r\quad \mbox{in } B_{R} .
 \end{equation}
\end{lemma}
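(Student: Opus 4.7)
The plan is to mirror the structure of the proof of Lemma~\ref{lem:lipestF3}, but one order higher: instead of using the cubic semiconvexity estimates and Lemma~\ref{lem:E3B1}, we will use the quartic semiconvexity estimates of Lemma~\ref{lem:E3B5} and the fourth-order frequency monotonicity of Lemma~\ref{lem:E4B1}. As before, we may assume $\{p_2=0\}=\{x_n=0\}$ and work with the osculating rotation vector fields $\Xalpha$ from Definition~\ref{ansatz}.

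The first step is to apply Lemma~\ref{lem:E3B5} to $w=u-\anz-P$, which gives, for every tangential rotational derivative,
\[
\inf_{B_r} r^2\,\Xalpha\Xalpha w \;\ge\; -C(P)\bigl(\|w_r\|_{L^2(B_5\setminus B_1)} + r^5\bigr).
\]
The second step is to translate the hypothesis \eqref{boundphi22} into quantitative control of $H(r,w)$. Since $\phi^\gamma(r,w)\le \gamma-\delta_\circ<\gamma<5$ on $(0,r_\circ)$ and, by Lemma~\ref{lem:E4B1}, $\phi^\gamma$ is almost monotonically increasing (so Lemma~\ref{lem:E3B1c}(a) applies with $\overline\lambda=\gamma-\delta_\circ$), we obtain, choosing $\delta:=\delta_\circ/2$ and using the assumed lower bound $H(r_\circ,w)\ge h_\circ$,
\[
H(r,w)\;\ge\; c_1\,r^{2\gamma-\delta_\circ}\qquad \text{for all }r\in (0,r_1),
\]
for some $c_1>0$ and $r_1\in (0,r_\circ)$ depending on the allowed parameters. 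Applying Lemma~\ref{lem:E3B1c}(a) in the opposite direction yields $H(Rr,w)\le C\,R^{2(\gamma-\delta_\circ)+\delta_\circ}\,H(r,w)\le C R^{10} H(r,w)$ for $Rr\le r_\circ$, hence
\[
\|w_r\|_{L^2(B_5\setminus B_1)}+r^5\;\le\; C R^{5} H(r,w)^{1/2},\qquad r\in\bigl(0,\tfrac{r_1}{10R}\bigr).
\]
Rescaling the semiconvexity estimate to the radius $2R$ then gives $\Xalpha\Xalpha \tilde w_r\ge -CR^{3}$ in $B_{2R}$.

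The third step is to upgrade these rotational semiconvexity bounds to the full Lipschitz estimate $\|\tilde w_r\|_{\mathrm{Lip}(B_R)}\le CR^4$. Following Steps~3--4 in the proof of Lemma~\ref{lem:E3B5}, one combines the semiconvexity along the rotational frame $\{\Xalpha^r/|\Xalpha^r|\}_{\alpha=1}^{n-1}$ with a semiconcavity estimate in a complementary direction $E_n$ obtained from $\Delta \tilde w_r\le 0$ (since $\Delta\anz=1$, $\Delta P=0$, $\Delta u\le 1$). Because $|E_n-\boldsymbol e_n|,\,|\nabla_{E_n}E_n|=O(r)$, the error terms generated by $\nabla_{E_n}E_n$ are controlled by the provisional Lipschitz bound obtained as in \eqref{ajhioahioughqiuowb}, yielding the sharp estimate $\sup_{B_R}|\nabla\tilde w_r|\le CR^{4}$, which is the first half of \eqref{eq:wk Lip5}.

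For the second half of \eqref{eq:wk Lip5} we argue as at the end of the proof of Lemma~\ref{lem:lipestF3}. Since $\Delta w=-\chi_{\{u=0\}}$, it suffices to bound $|w|$ pointwise on $\{u=0\}\cap B_r$. Here the gain comes from the refined structure of the Ansatz: writing $\anz+P=\tfrac12(x_n+p_3/x_n+Q+P/x_n)^2+O(|x|^5)$ as in \eqref{eq:ansatzell} and using the inclusion $\{u=0\}\cap B_r\subset\{|x_n+p_3/x_n|\le Cr^2\}$ (from \eqref{111} with $\|w_r\|_{L^2(B_5\setminus B_1)}\lesssim r^5$, valid by the step above), we obtain $|w|=|\anz+P|\le Cr^5$ on $\{u=0\}\cap B_r$. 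Dividing by $H(1,w_r)^{1/2}\ge c r^{\gamma-\delta_\circ/2}$, and recalling that $\Delta \tilde w_r$ is supported on $\{u_r=0\}\cap B_R\subset B_R$, we conclude
\[
\tilde w_r\Delta\tilde w_r\;\ge\; -\frac{C(rR)^{5}}{H(1,w_r)^{1/2}}\,\Delta\tilde w_r\;\ge\; -Cr^{\eta_\circ}R^{5}\,\Delta\tilde w_r\qquad\text{in }B_R,
\]
with $\eta_\circ:=5-\gamma+\delta_\circ/2>0$.

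I expect the main technical obstacle to be the third step, namely the passage from rotational semiconvexity to the full Lipschitz bound with the sharp exponent $R^4$. The naive frame $\{\Xalpha^r\}$ is not exactly orthonormal and its normal completion has a non-zero second fundamental form of size $O(r)$; controlling the resulting error terms requires the two-tier bootstrap of Lemma~\ref{lem:E3B5}, where an intermediate non-sharp Lipschitz bound is used to absorb $\nabla_{E_n}E_n$-type contributions before obtaining the sharp estimate. All remaining computations are direct adaptations of the corresponding steps in Lemmas~\ref{lem:E3B5} and~\ref{lem:lipestF3}.
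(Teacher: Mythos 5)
Your plan is structurally correct and is exactly the ``one order higher'' analogue that the paper intends: Lemma~\ref{lem:E3B5} replaces the semiconvexity bound \eqref{sahfgij}, and Lemma~\ref{lem:E4B1} replaces Lemma~\ref{lem:E3B1}. Steps~1 and~2 are fine. Step~3 is correct but redundant: the sharp Lipschitz estimate $\sup_{B_{r/2}}r|\nabla w|\le C(P)\bigl(\|w_r\|_{L^2(B_5\setminus B_1)}+r^5\bigr)$ is already part of the \emph{conclusion} of Lemma~\ref{lem:E3B5}, so you can invoke it directly (as the proof of Lemma~\ref{lem:lipestF3} does with its cruder analogue) rather than re-running Steps 3--4 of that proof; after rescaling it yields $\|\tilde w_r\|_{{\rm Lip}(B_R)}\le CR^4$ immediately.

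There is, however, a genuine gap in your final step. You assert ``$\|w_r\|_{L^2(B_5\setminus B_1)}\lesssim r^5$, valid by the step above,'' but this was never established and is in fact impossible: the hypothesis $\phi^\gamma(r,w)\le\gamma-\delta_\circ<5$ together with Lemma~\ref{lem:E3B1c}(a) forces $H(r,w)^{1/2}\ge c\,r^{\gamma-\delta_\circ/2}\gg r^5$. Moreover the inclusion $\{u=0\}\cap B_r\subset\{|x_n+p_3/x_n|\le Cr^2\}$ that you quote is too weak to yield $|w|\le Cr^5$: squaring $Cr^2$ gives only $|w|\le Cr^4$. What is actually needed, and available, is the finer bound encoded in \eqref{111}. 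Since $0\in\Sigma^{4th}_{n-1}$ and $P$ is $4$-homogeneous, $|u-\anz-P|\le|u-\anz-p_4|+|p_4-P|\le C|x|^4$ (by Lemma~\ref{lem:E4B2}), so $\|w_r\|_{L^2(B_5\setminus B_1)}\le Cr^4$; plugging this into \eqref{111} gives $|x_n+p_3/x_n+P/x_n|\lesssim r^{3}$ on $\{u=0\}\cap B_r$. Together with $|Q|\lesssim r^4$, $|P/x_n|\lesssim r^3$ on the contact set (where $|x_n|\le Cr^2$) and the $O(|x|^5)$ remainder in \eqref{eq:ansatzell}, this yields $|w|=|\anz+P|\lesssim r^{6}+r^5\lesssim r^5$ on $\{u=0\}\cap B_r$ --- which is precisely \eqref{222} from the proof of Lemma~\ref{lem:E4B1}. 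With this correction, dividing by $H(r,w)^{1/2}\ge c\,r^{\gamma-\delta_\circ/2}$ and tracking the $R$-powers as you do gives the second half of \eqref{eq:wk Lip5}.
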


\begin{proof}
The proof is analogous to the one of Lemma~\ref{lem:lipestF3}, using Lemma~\ref{lem:E3B5} instead of \eqref{sahfgij} and Lemma~\ref{lem:E4B1} instead of Lemma~\ref{lem:E3B1}.
\end{proof}

Recalling that $p_4=p_{4,0,0}$ is defined in \eqref{eq:def p4 t},
we now prove the following:

\begin{lemma}\label{lem:E3B2b}
Let $u\in C^0\big(\overline{B_1}\times [-1,1]  \big)$ solve \eqref{eq:UELL+t}, let $(0,0)$ and $(x_k,t_k)$ belong to  $\mathbf{\Sigma}_{n-1}^{>4} \setminus \mathbf{\Sigma}_{n-1}^{\ge5-\zeta} $ for some $\zeta\in (0,1)$, and suppose that $|x_k|\le r_k\downarrow 0$. Assume in addition that 
$$
\tilde  w_{r_{k}} \rightharpoonup  q \mbox{  in } W^{1,2}_{\rm loc}(\R^n) \qquad \mbox{for } w :=  u-\anz-p_4\quad \mbox{and} \quad y_k :=\frac{x_k}{r_k} \to y_\infty,
$$
and that $\lambda^{4th}_k\to \lambda^{4th}$, where
\[\lambda^{4th}_{k} := \phi\big(0^+,  u(x_k +\,\cdot\,, t_k  )-\anz_{x_k,t_k}-p_{4,x_k,t_k}\big)\quad \mbox{and}\quad  \lambda^{4th} := \phi(0^+,  u-\anz-p_4).\]
Then $y_\infty\in \{p_2=0\}$, and $q$ is translation invariant in the direction $y_\infty$.
\end{lemma}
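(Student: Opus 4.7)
The plan is to adapt the proof of Lemma~\ref{lem:E3B2d} to the fourth-order regime, replacing Lemma~\ref{lem:E3B1} by Lemma~\ref{lem:E4B1} and Lemma~\ref{lem:lipestF3} by Lemma~\ref{lem:lipestF4}. That $y_\infty \in \{p_2=0\}$ follows from Lemma~\ref{lem:EG2B1bisbis}. Setting $\mathfrak p := \anz + p_4$ and $\mathfrak p_k := \anz_{x_k,t_k} + p_{4,x_k,t_k}$ (both polynomials of degree $4$ with Laplacian $1$ by Lemma~\ref{lem:E3B4}), I decompose $w_k := u(x_k + r_k\,\cdot\,, t_k) - \mathfrak p_k(r_k\,\cdot\,) = w_k^{(1)} + w_k^{(2)} + w_k^{(3)}$ exactly as in \eqref{defwk00}, so that $w_k^{(3)}(x) = \mathfrak p(x_k + r_k x) - \mathfrak p_k(r_k x)$ is a harmonic polynomial of degree $\leq 4$. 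The hypothesis $(0,0) \notin \mathbf{\Sigma}^{\geq 5-\zeta}_{n-1}$ combined with Lemma~\ref{lem:E3B1c} yields $\lambda^{4th} \in (4,5-\zeta)$, and we fix $\gamma \in (\lambda^{4th}, 5-\zeta)$. The continuity assumption $\lambda^{4th}_k \to \lambda^{4th}$ and Lemma~\ref{lem:E4B1} then give $\phi^\gamma(r,\, u(x_k+\cdot,t_k)-\mathfrak p_k) \leq \gamma - 3\delta_\circ$ on $(0, r_\circ)$ for $k$ large, and Lemma~\ref{lem:lipestF4} supplies the uniform Lipschitz bound $\|\tilde w_k\|_{\mathrm{Lip}(B_R)} \leq C(R)$ together with the $r_k^{\eta_\circ}$-smallness of $\tilde w_k \Delta \tilde w_k$. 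Passing to the limit exactly as in Lemma~\ref{lem:EG2Bn-1} identifies the weak limit $Q$ of $\tilde w_k$ as a $\lambda^{4th}$-homogeneous solution of the Signorini problem \eqref{ETOP}, and the same argument applied to the solution at $t=0$ makes $q$ itself a $\lambda^{4th}$-homogeneous Signorini solution.

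For the analogue of Step 2, Proposition~\ref{prop:E3B5}(b2) gives $w_k^{(2)}/\|w_k^{(2)}\|_{L^2(\partial B_1)} \rightharpoonup q(y_\infty + \,\cdot\,)/\|q(y_\infty + \,\cdot\,)\|_{L^2(\partial B_1)}$. The key new ingredient is an upgraded sign-change estimate at order $r_k^5$ (the bound $O(r_k^4)$ of the third-order case would fail, since $\hat\ep_k \gtrsim r_k^{\lambda^{4th} + \delta}$ with $\lambda^{4th} > 4$). This is where the structure of the Ansatz enters crucially: since $p_4$ is divisible by $x_n$, a direct expansion shows
\[
\anz + p_4 = \tfrac{1}{2}(x_n + p_3/x_n + Q + p_4/x_n)^2 + O(|x|^5),
\]
so $\mathfrak p \geq -C|x|^5$ on $B_1$, and likewise $\mathfrak p_k \geq -C|x|^5$ uniformly in $k$ (by continuity of $p_{2,x_k,t_k}$, $p_{3,x_k,t_k}$, $p_{4,x_k,t_k}$). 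Hence $w_k^{(3)}(-y_k) = -\mathfrak p_k(-x_k) \leq Cr_k^5$ and $w_k^{(3)}(0) = \mathfrak p(x_k) \geq -Cr_k^5$, and the intermediate value theorem on the segment $\overline{-y_k, 0}$ produces $\bar y_k$ with $|w_k^{(3)}(\bar y_k)| \leq Cr_k^5$. Setting $\hat\ep_k := \sum_i \|w_k^{(i)}\|_{L^2(\partial B_1)}$, Lemma~\ref{lem:E3B1c} combined with the nontriviality of $q$ yields $\hat\ep_k \gtrsim r_k^{\lambda^{4th} + \delta'}$ with $\lambda^{4th} + \delta' < 5$, so $|w_k^{(3)}(\bar y_k)|/\hat\ep_k \to 0$. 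Thus the limit $\hat Q^{(3)} := \lim w_k^{(3)}/\hat\ep_k$ is a harmonic polynomial of degree $\leq 4$ vanishing at some interior point of $\overline{-y_\infty, 0}$, and the strong minimum principle forbids it from having constant sign unless identically zero.

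The final step replicates the conclusion of Lemma~\ref{lem:E3B2d}. The three pieces $\hat Q^{(1)}$ (which has constant sign), $\hat Q^{(2)} = b q(y_\infty + \,\cdot\,)$ with $b \geq 0$, and $\hat Q^{(3)}$ are linearly independent by their distinct structural features, so $\hat Q = aQ \not\equiv 0$; comparing $\hat Q$ and $bq$ as ordered $\lambda^{4th}$-homogeneous Signorini solutions via Lemma~\ref{lemAP:ordered} yields $\hat Q \equiv bq$ with $b > 0$. The resulting scaling identity
\[
b\,\frac{q(sy_\infty + x) - q(x)}{s} = -s^{\lambda^{4th}-1}\bigl[\hat Q^{(1)}(s^{-1}x) + \hat Q^{(3)}(s^{-1}x)\bigr] \qquad \forall \,s > 0
\]
has bounded LHS as $s \downarrow 0$ by Lipschitz continuity of $q$, and the term $s^{\lambda^{4th}-1}\hat Q^{(3)}(s^{-1}x)$ remains bounded only if the $4$-homogeneous part of $\hat Q^{(3)}$ vanishes (otherwise it blows up like $s^{\lambda^{4th}-5}$, since $\lambda^{4th} < 5$); the lower-degree parts of $\hat Q^{(3)}$ then vanish in the limit because $\lambda^{4th} > 4$. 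This produces $y_\infty \cdot \nabla q \leq 0$ (or $\geq 0$), and the first-eigenfunction argument from the end of Lemma~\ref{lem:EG2Bn-1} (using $\lambda^{4th} > 2$) forces $y_\infty \cdot \nabla q \equiv 0$, giving the desired translation invariance. The main obstacle is identifying and verifying the perfect-square representation of $\anz + p_4$ that yields $\mathfrak p \geq -C|x|^5$; this is the essential new structural input compared to the third-order case of Lemma~\ref{lem:E3B2d} and is precisely what unlocks the $O(r_k^5)$ sign-change estimate required to push the argument through.
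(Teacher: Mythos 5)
Your proof is correct and takes essentially the same approach as the paper: it mirrors the proof of Lemma~\ref{lem:E3B2d} with the replacements Lemma~\ref{lem:E3B1}$\to$Lemma~\ref{lem:E4B1}, Lemma~\ref{lem:lipestF3}$\to$Lemma~\ref{lem:lipestF4}, the sign-change bound $O(r_k^4)\to O(r_k^5)$, and the degree-$3$ degree-counting argument $\to$ degree-$4$. You have correctly identified the crucial structural input, namely the perfect-square representation $\anz + p_4 = \tfrac12(x_n + p_3/x_n + Q + p_4/x_n)^2 + O(|x|^5)$ (which indeed follows from the divisibility of $p_3,\,p_4$ by $x_n$ together with \eqref{eq:ansatzell}), and this yields the two-sided control $\mathfrak p,\,\mathfrak p_k \geq -C|x|^5$ needed for the intermediate-value bound $|w_k^{(3)}(\bar y_k)|\leq Cr_k^5$. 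Two small remarks, both in your favor: your segment $\overline{-y_k,0}$ and your citation of Proposition~\ref{prop:E3B5}(b2) (rather than (a)) are the accurate ones for the fourth-order setting, since $(0,0)\in\mathbf{\Sigma}_{n-1}^{>4}\subset\mathbf{\Sigma}_{n-1}^{4th}$ and $w:=u-\anz-p_4$. The only place to be slightly more careful is at the very end: the identity $b\,[q(sy_\infty+x)-q(x)]/s = -s^{\lambda^{4th}-1}[\hat Q^{(1)}(s^{-1}x)+\hat Q^{(3)}(s^{-1}x)]$ becomes a one-sided inequality only after discarding the $\hat Q^{(1)}$ term using its sign (since $\hat Q^{(1)}$ is not a polynomial, its rescaled growth is not controlled directly), but this is exactly what the paper does and your reasoning then goes through verbatim.
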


\begin{proof}
The proof is very similar to that of Lemma~\ref{lem:E3B2d}, with some appropriate modifications.
As before,  the fact that $y_\infty\in \{p_2=0\}$ follows from Lemma~\ref{lem:EG2B1bisbis}.

Also, since $(0,0)\in \mathbf{\Sigma}^{>4}_{n-1}\setminus \mathbf{\Sigma}^{\ge 5-\zeta}_{n-1}$, as in the proof of Lemma~\ref{lem:lipestF3} the limit $ \lim_{r\downarrow 0} \phi(r, u(\,\cdot \,, 0) -\anz -p_4)$ exists and belongs to $(4,5-\zeta)$, that is
\[
\lambda^{4th} : = \phi(0^+, u(\,\cdot \,, 0) -\anz -p_4)  \in (4, 5-\zeta).
\]
Similarly, the limits defining $\lambda^{4th}_{k}$ exist, and by assumption we have
\[
\lambda^{4th}_{k} := \phi\big(0^+,  u(x_k +\,\cdot\,, t_k  ) -\anz_{x_k,t_k} -p_{4,x_k,t_k}\big) \to \lambda^{4th}.
\]
We define
\[ \mathfrak p := \anz +p_4 \qquad \mbox{and}\qquad  \mathfrak p_k := \anz_{x_k,t_k} +p_{4,x_k,t_k},\]
and consider $w_k:= u(x_k + r_k\, \cdot\,, t_k ) - \mathfrak p_k(r_k\,\cdot\,) =  w_k^{(1)}+w_k^{(2)}+ w_k^{(3)} $  as in \eqref{defwk00}.
Recall that $y_k : = x_k /r_k$ and define $\tilde w_k$ as in \eqref{tildewk}.

\smallskip

\noindent $\bullet$ \emph{Step 1. } 
Here we argue as in Step 1 in the proof of Lemma~\ref{lem:E3B2d}.
More precisely, using Lemma~\ref{lem:E3B1} in place of Lemma~\ref{lem:lipestF3},
by the very same argument we deduce that $\tilde w_k$ converges locally uniformly to $Q$,
and that both $q$ and $Q$ are $\lambda^{4th}$-homogeneous solutions of \eqref{ETOP}.

\smallskip

\noindent $\bullet$ \textit{Step 2.}  By Proposition~\ref{prop:E3B5}(a), we have
\[
\frac{w_k^{(2)}}{ \| w_k^{(2)} \|_{L^2(\partial B_1)}} \rightharpoonup \frac{q (y_\infty +\,\cdot\,)}{ \|q (y_\infty +\,\cdot\,) \|_{L^2(\partial B_1)}} \quad \mbox{in } W^{1,2}_{\rm loc} (\R^n)
\]
and, by construction, $w_k^{(3)}$ is a quartic harmonic polynomial.
In addition, arguing as in Step 2 of the proof of Lemma~\ref{lem:E3B2d} we obtain that,  for each $k$, there exists a point $\bar y_k$ in the segment $\overline{0\,y_k}$ such that
\begin{equation}\label{ahsgoih5}
|w_k ^{(3)}  (\bar y_k)| \le Cr_k^5.
\end{equation}

\smallskip

\noindent $\bullet$ \textit{Step 3}.  Considering
\[
\hat \ep_k := \sum_{i=1,2,3}\| w_k^{(i)}\|_{L^2(\partial B_1)} \qquad \mbox{and } \qquad  \hat w_k : = \frac{w_k}{\hat \ep_k}\,,
\]
as in Step 3 of the proof of Lemma~\ref{lem:E3B2d} we have 
\[
\hat w_k \to  \hat Q = a Q,\quad \hat Q^{(2)} := \lim _k w_k^{(2)}/\hat \ep_k = b q(y_\infty+\,\cdot\,),\quad \hat Q^{(3)} := \lim _k w_k^{(3)}/\hat \ep_k  =  \big[\mbox{degree 4 harmonic pol.}\big] ,
\]
where $a \in [0,1]$, $b\ge0$, and all the convergences hold weakly in $W^{1,2}_{\rm loc}(\R^n)$.
Hence
\[
\hat Q = \hat Q^{(1)} + b q(y_\infty+\,\cdot\,) + \hat Q^{(3)},
\]
where $\hat Q^{(1)} := \lim_k w_k^{(1)}/\hat \ep_k $ has constant sign.
Since $q$ is a $\lambda^{4th}$-homogeneous solution of Signorini with $\lambda^{4th}>4$,  $\hat Q^{(1)}$ has constant sign, and  $\hat Q^{(3)}$ is a forth order harmonic polynomial that does not have constant sign (as a consequence of \eqref{ahsgoih5}), we deduce that the three functions  $\hat Q^{(i)}$ are  linearly independent and their sum $\hat Q$ cannot be zero.

Also, exactly as in Step 3 of the proof of Lemma~\ref{lem:E3B2d},
 $b>0$ and $\hat Q \equiv  bq$, therefore
 \[
b \big(q - q(y_\infty+\,\cdot\,)\big)  = \hat Q^{(1)} + \hat Q^{(3)}.
\]
Now,  using homogeneity, if  $Q^{(1)}\ge 0$ (resp. if $Q^{(1)}\le 0$) we obtain 
$$
\frac{q(sy_\infty+x)   - q(x, t)}{s}  \le s^{\lambda^{4th}-1}  \hat Q^{(3)}(s^{-1}x)   \quad (\mbox{resp. }\ge ),
$$
for all  $s>0$. As in Step 3 of the proof of Lemma~\ref{lem:E3B2d}, this is possible only if 
the quartic coefficients of $\hat Q^{(3)}$ vanishes, and letting $s\to0$ we get

 \[
y_\infty\cdot  \nabla q \le 0  \quad (\mbox{resp. }\ge 0).
\]
Reasoning now as in Step 3 of the proof of Lemma~\ref{lem:EG2Bn-1} (see also Step 3 of the proof of Lemma~\ref{lem:E3B2d}), we obtain that $\psi:= y_\infty\cdot  \nabla  q$ restricted to $\mathbb S^{n-1}$  must be a multiple of the first eigenfunction of a certain elliptic problem, and this easily leads to a contradiction.
\end{proof}

Before proving the last result of this section, we introduce a definition:
\begin{definition}\label{setofpolsP4}
We denote by $\mathcal P_{4,\ge}^{even}$ the set of $4$-homogeneous harmonic polynomials $p= p(x_1, \dots, x_n)$, such that,  for some $\boldsymbol e\in \mathbb S^{n-1},$ we have:
\begin{itemize}
\item $p$ is even with respect to $\{\boldsymbol e\cdot x =0\}$, that is, $p(x)= p(x -2(\boldsymbol e\cdot x )\boldsymbol e )$;
\item $p\ge 0$ on $\{\boldsymbol e\cdot x =0\}$;
\item $\|p\|_{L^2(\partial B_1)} =1$.
\end{itemize}
Given $p\in  \mathcal P_{4,\ge}^{even}$, we denote $\boldsymbol S(p,\ep )\subset \R^n$ the set
\[\boldsymbol S(p,\ep): = \{ |\boldsymbol e\cdot x| \le \ep\} \cap \{p \le \ep\} \cap \overline{B_2}.\]
\end{definition}
We now show the following result, which will be used later to bound the Hausdorff dimension of $\pi_1(\mathbf{\Sigma}_{n-1}^{\ge 4} \setminus  \mathbf{\Sigma}^{4th}_{n-1})$.

\begin{lemma}\label{lem:Sigma4aux}
Let  $u:B_1 \to [0,\infty)$ solve \eqref{eq:UELL}, and let  $0\in {\Sigma}_{n-1}^{\ge 4} \setminus  {\Sigma}^{4th}_{n-1}$.  
Let $\mathcal P_{4,\ge}^{even}$ and $\boldsymbol S (p,\ep)$ be as in Definition~\ref{setofpolsP4}.
Then,  given $\ep>0$,  there exists  $\varrho_\ep>0$ such that, for all $r\in (0,\varrho_\ep)$,
\begin{equation}\label{u=0closetozp}
\{u=0\}\cap \overline{B_r} \subset r \boldsymbol S (p_r,\ep)\qquad \text{for some $p_{r}\in \mathcal P_{4,\ge}^{even}$.}
\end{equation}
\end{lemma}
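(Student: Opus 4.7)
The plan is to extract, at scale $r\downarrow 0$, a blow-up limit of $(u-\anz)_r/H(r,u-\anz)^{1/2}$ whose even part (with respect to $\{p_2=0\}$) yields the approximating polynomial $p_r$, and to conclude by a compactness-contradiction argument. Set $w:=u-\anz$. The first task is to establish $c\,r^8\le H(r,w)\le C\,r^8$ for small $r$. The upper bound comes from $0\in \Sigma_{n-1}^{\ge 4}$: Lemma~\ref{lem:E4B1} gives the almost-monotonicity of $\phi^\gamma(\,\cdot\,,w)$ with $\phi^\gamma(0^+,w)=4$, and Lemma~\ref{lem:E3B1c}(b) with $\lambda_*=4$ and $\kappa=5-\gamma$ yields $H(r,w)+r^{2\gamma}\le C r^8$. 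The lower bound uses $0\notin \Sigma_{n-1}^{4th}$: were $r_k^{-8}H(r_k,w)\to 0$ along some sequence $r_k\downarrow 0$, the Lipschitz bound of Lemma~\ref{lem:E3B5} would give $r_k^{-4}w(r_k\,\cdot\,)\to 0$ in $C^0_{\rm loc}$, putting $0\in \Sigma_{n-1}^{4th}$ by Definition~\ref{def:Sigma4th}, a contradiction. In particular the normalized rescalings $\tilde w_r := w_r/H(r,w)^{1/2}$ are uniformly Lipschitz on compact sets.

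Arguing as in the blow-up analysis of Proposition~\ref{prop:E3B5}(b2) (which adapts since $H(r,w)\asymp r^8$ forces $\phi(0^+,w)=4$), every subsequential $C^0_{\rm loc}$-limit $Q$ of $\tilde w_{r_k}$ is a $4$-homogeneous solution of the Signorini problem \eqref{ETOP} with $\|Q\|_{L^2(\partial B_1)}=1$. Write $Q=Q^{even}+Q^{odd}$ with respect to $\{p_2=0\}$: the odd part is harmonic (extended by odd reflection, as it vanishes on $\{p_2=0\}$), and the even part, being a $4$-homogeneous even Signorini solution, is also a harmonic polynomial by \cite[Lemma 1.3.4]{GP09}. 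Up to a further extraction (since $H(r_k,w)^{1/2}/r_k^4\in [c,C]$), $r_k^{-4}w_{r_k}\to C_0 Q$ in $C^0_{\rm loc}$ for some $C_0\in (0,\infty)$. If $Q^{even}\equiv 0$, then $C_0 Q^{odd}$ would be a $4$-homogeneous harmonic polynomial vanishing on $\{p_2=0\}$, placing $0\in \Sigma_{n-1}^{4th}$ by Definition~\ref{def:Sigma4th}, a contradiction. Therefore $Q^{even}\not\equiv 0$ and $p^*:=Q^{even}/\|Q^{even}\|_{L^2(\partial B_1)}\in \mathcal P_{4,\ge}^{even}$.

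The geometric input comes from controlling $\tilde w_r$ on the contact set. On $\{u=0\}\cap B_r$ one has $w=-\anz$, and the identity \eqref{eq:ansatzell} together with the inclusion $\{u=0\}\cap B_r\subset \{|x_n|\le Cr^2\}$ from Proposition~\ref{prop:E2B3} (with $\lambda=3$) gives the universal lower bound $\anz\ge -C|x|^5$, whence $w\le Cr^5$ on the contact set. Dividing by $H(r,w)^{1/2}\asymp r^4$ yields $\tilde w_r\le Cr$ on the rescaled contact set $\{u_r=0\}\cap\overline{B_1}$. Combining this with the Signorini inequality $Q\ge 0$ on $\{p_2=0\}$, and with the observation (again from Proposition~\ref{prop:E2B3}) that every limit point $y_\infty$ of a sequence $y_k\in \{u_{r_k}=0\}\cap\overline{B_1}$ necessarily satisfies $y_\infty\in \{p_2=0\}$, passing to the limit along the subsequence $\tilde w_{r_k}\to Q$ gives $Q(y_\infty)=0$; hence $p^*(y_\infty)=0$, since $Q^{odd}$ vanishes on $\{p_2=0\}$.

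The proof closes by a compactness-contradiction argument: if the lemma failed, there would exist $\varepsilon>0$ and $r_k\downarrow 0$ such that no $p\in \mathcal P_{4,\ge}^{even}$ satisfies $\{u_{r_k}=0\}\cap \overline{B_1}\subset \boldsymbol S(p,\varepsilon)$. Extracting a subsequence with $\tilde w_{r_k}\to Q$ in $C^0_{\rm loc}$, one constructs $p^*$ as above and picks $y_k\in \{u_{r_k}=0\}\cap\overline{B_1}$ witnessing the failure for $p^*$; since Proposition~\ref{prop:E2B3} forces $|y_{k,n}|\le Cr_k<\varepsilon$ for $k$ large, the violation must be $p^*(y_k)>\varepsilon$. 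Extracting further $y_k\to y_\infty\in \{p_2=0\}\cap \overline{B_1}$, the previous paragraph yields the contradiction $\varepsilon\le p^*(y_\infty)=0$. The main obstacle lies in the very first step---establishing $H(r,w)\asymp r^8$---which uses the hypothesis $0\in \Sigma_{n-1}^{\ge 4}\setminus\Sigma_{n-1}^{4th}$ in an essential way and combines the truncated-frequency machinery of Section~\ref{sec:E3B} in a delicate manner; all subsequent steps are then cleanly driven by the presence of the (necessarily nontrivial) even part of the blow-up.
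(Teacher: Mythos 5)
Your proof is correct, but it takes a genuinely different route from the paper's. The paper works at each fixed scale $r$: it introduces the compact set $\overline{\mathcal Q}$ of accumulation points of $r^{-4}(u-\anz)(r\,\cdot\,)$, picks $q_r\in\overline{\mathcal Q}$ with $\|u-\anz-q_r\|_{L^\infty(B_r)}\le\eta r^4$, sets $p_r:=q_r^{even}/\|q_r^{even}\|_{L^2(\partial B_1)}$, and proves the inclusion \eqref{u=0closetozp} directly by a case analysis on $y\in B_2\setminus\boldsymbol S(p_r,\ep)$, showing $u(ry)>0$ via the lower bound $\anz\ge(x_n+p_3/x_n+Q)^2-C|x|^5$ together with $q_r(y)\geq\frac{1}{2}c_\circ\ep$ when $p_r(y)>\ep$ and $|y_n|\le\theta\ep$. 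The uniform lower bound $\|q^{even}\|\ge c_\circ>0$ over $\overline{\mathcal Q}$ plays the role of your nondegeneracy $H(r,w)\gtrsim r^8$ and is obtained by compactness of $\overline{\mathcal Q}$ combined with $0\notin\overline{\mathcal Q}$ (forced by $0\notin\Sigma_{n-1}^{4th}$). You instead make the two-sided bound $H(r,w)\asymp r^8$ explicit at the outset, then argue by contradiction via a subsequential blow-up $Q$ of $\tilde w_{r_k}$, using that $Q^{even}\not\equiv 0$ (else $r_k^{-4}w_{r_k}\to C_0Q^{odd}$ would place $0\in\Sigma_{n-1}^{4th}$) and that a contact point $y_k$ escaping $\boldsymbol S(p^*,\ep)$ must satisfy $p^*(y_k)>\ep$, yielding $\ep\le p^*(y_\infty)=Q(y_\infty)=0$. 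Both arguments rest on the same structural facts---blow-ups are $4$-homogeneous Signorini solutions decomposing into an odd harmonic polynomial vanishing on $\{p_2=0\}$ plus a nontrivial even harmonic polynomial nonnegative there, and the contact set lies inside $\{|x_n|\le Cr^2\}$---but the paper's version is direct and constructs $p_r$ explicitly at every scale (which is convenient when feeding the lemma into Lemma~\ref{prop:GMT2bis}), whereas yours is non-constructive. A minor citation slip: the blow-up step should invoke Proposition~\ref{prop:E3B5}(a) (which handles $w=u-\anz$ on $\Sigma_{n-1}^{3rd}\setminus\Sigma_{n-1}^{4th}$ and gives $\lambda^{3rd}\in[3,4]$, forced to equal $4$ here) rather than part (b2), which treats the further Ansatz $w=u-\anz-p_4$ on $\Sigma_{n-1}^{4th}$; the mechanics are the same, but (a) is the statement that actually applies to your choice of $w$.
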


\begin{proof} Consider the set of ``accumulation points'' $ \overline{\mathcal Q}$ defined as
\[
 \overline{\mathcal Q}  : = \big\{ q \ : \ \exists \,r_k \downarrow 0 \mbox{ s.t. } r_k^{-4} (u -\anz)(  r_k\, \cdot\, ) \rightarrow q \big\}.
\]
Note that, for all $\eta>0$, there exists $\varrho_\eta>0$ such that for any $r\in (0, \varrho_\eta)$ we have
\begin{equation}\label{12345}
\big\|u - \anz-q_{r}  \big\|_{L^\infty(B_r)}   \le \eta  r^4\qquad \text{for some $q_r \in \overline{\mathcal Q}$}.
\end{equation}
Thanks to Proposition~\ref{prop:E3B5}(a) and  \cite[Lemma 1.3.4]{GP09}, $\overline{\mathcal Q}$ is a closed set of $4$-homogeneous harmonic polynomials.
Also, using Lemma~\ref{lem:E4B2}  with $P\equiv 0$ and $\gamma\in (4,5)$ fixed, we see that $\|q\|_{L^2(\partial B_1)}\le C$ for all $q\in  \overline{\mathcal Q}$.
This implies that set  $\overline{\mathcal Q}$ is compact. 

Now, since by assumption $0\in{\Sigma}^{\ge 4}_{n-1} \setminus {\Sigma}^{4th}_{n-1}$, then $q^{even} \not\equiv 0$  for all $q\in  \overline{\mathcal Q}$
(recall Definition~\ref{def:Sigma4th}).
Thus, by compactness of $\overline{\mathcal Q}$, we deduce that
\[
0< c_\circ \| q^{even} \|_{L^2(\partial B_1)}   \le  \| q \|_{L^2(\partial B_1)}   \le C  \quad \forall \, q\in\overline{\mathcal Q}.
\]
Now, for $r>0$ and $q_r$ as in \eqref{12345}, we define
\[p_r : =  \frac{ q_r^{even}}{ \| q_r^{even} \|_{L^2(\partial B_1)}   }, \]
and note that $p_r\in \mathcal P_{4,\ge}^{even}$. We claim that \eqref{u=0closetozp} holds true provided that $r\in (0, \rho_\ep)$, with $\rho_\ep>0$ small.

Indeed, assume with no loss of generality that $\{p_2=0\}= \{x_n=0\}$. Then (since $q_r$ solves \eqref{ETOP}) every $p_r$ is a $4$-homogeneous harmonic polynomial, even in the variable $x_n$, nonnegative on $\{x_n=0\}$, and with unit $L^2(\partial B_1)$ norm.

We recall that
 \begin{equation}\label{eq:bound anz}\anz(x) \ge (x_n + p_3/x_n +Q)^2  -C|x|^5.\end{equation}
Now, by definition of $\boldsymbol S(p,\ep)$, it follows in particular that, fixed $\theta>0$,
\[
 y\in B_2\setminus \boldsymbol S(p_r,\ep) \qquad \Rightarrow \qquad \mbox{either}\quad (\,p_r(y)> \ep \mbox{ and } |y_n|\le \theta \ep\,) \quad \mbox{or} \quad ( \,|y_n|>\theta \ep\, ).
\]
We now observe that, if $p_r(y)> \ep \mbox{ and } |y_n|\le \theta \ep$, since
$q_r^{odd}$ vanishes on $\{x_n=0\}$ we get
\[
q_r(y) = p_r(y)  \| q^{even} \|_{L^2(\partial B_1)}  +  q_r^{odd}(y) \ge  c_\circ \ep - C |y_n| \ge  (c_\circ- C\theta) \ep \ge \frac12 c_\circ \ep >0
\]
provided we choose $\theta: = \frac{c_\circ}{2C}$ small enough. Thus, recalling \eqref{12345} and \eqref{eq:bound anz}, if $r>0$ is sufficiently small (so that we can take $\eta\ll \ep$) we get
\[  u(ry) \ge  \anz(ry) + q_r(ry)  - \eta r^4   \ge  -Cr^5 + \frac12 c_\circ \ep  r^4 -\eta r^4 >0.\]
On the other hand, if $|y_n|>\theta \ep$,
using again \eqref{eq:bound anz} we obtain, for $r>0$ sufficiently small,
\[  u(ry) \ge  \anz(ry) + q_r(ry)  - \eta r^4   \ge  (\theta \ep r -Cr^2)^2 - C r^5 - Cr^4 -\eta r^4 >0.\]
Therefore, we have proven that
\[
 y\in B_2\setminus \boldsymbol S(p_r,\ep) \quad \Rightarrow \quad  u(ry)>0,
\]
which gives \eqref{u=0closetozp}.
\end{proof}

\section{Hausdorff measures and covering arguments} \label{sec:GMT}

As already explained in the introduction, to prove our main results we will need some auxiliary results from geometric measure theory.
Before stating them, we recall some classical definitions.

Given $\beta >0$ and
$\delta \in (0,\infty]$,
the Hausdorff premeasures $\mathcal H^{\beta}_\delta(E)$ of a set $E$ are defined as follows:\footnote{In many textbooks, the definition of $\mathcal H^{\beta}_\delta$ includes a normalization constant\ chosen so that the Hausdorff measure of dimension $k$ coincides with the standard $k$-dimensional volume on smooth sets. However  such normalization constant is irrelevant for our purposes, so we neglect it.}
\begin{equation}
\label{eq:def Haus}
\mathcal H^{\beta}_\delta(E):=\inf\biggl\{\sum_i {\rm diam}(E_i)^\beta\,:\,E\subset \bigcup_i E_i,\,{\rm diam}(E_i)<\delta \biggr\},
\end{equation}
where the index $i$ goes through a finite or countable set.
Then, one defines the $\beta$-dimensional Hausdorff measure of $E$ as $\mathcal H^\beta(E):=\lim_{\delta\to 0^+}\mathcal H^{\beta}_\delta(E)$.

The Hausdorff dimension can be defined in terms of $\mathcal H^{\beta}_\infty$ as follows:
\begin{equation}
\label{eq:def dim}
{\rm dim}_{\mathcal H}(E):=\inf\{\beta >0\,:\,\mathcal H^{\beta}_\infty(E)=0\}
\end{equation}
(this follows from the fact that $\mathcal H^{\beta}_\infty(E)=0$ if and only if $\mathcal H^{\beta}(E)=0$, see for instance \cite[Section~1.2]{Sim83}).

%We want to prove that at most points of $\mathbf{\Sigma}$ we can find enough points converging to it along which the frequency is continuous.
%
%As a ``warm-up'' we start with the case $n=3$. For that case we want to show that, up to a set of dimension $0$, for any point there is a sequence converging to it along which the frequency is continuous. Note that we also need to make sure that the points along which we make the argument are coming from the past. This result is just a consequence of the following general result.

We now state (and prove, for completeness) a couple of standard results.

\begin{lemma}\label{lem:GMT1}
Let $E\subset \R^n$, and $f:E\to \R$.
Define
$$
F:=\{x\in E\,:\,\exists \,x_k\to x,\, x_k\in E,\, \text{ s.t. } f(x_k)\to f(x)\}.
$$
Then $E\setminus F$ is at most countable.
\end{lemma}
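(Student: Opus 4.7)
The plan is to reduce the statement to a covering/separation argument by quantifying the failure of $f$ to be sequentially approachable at points outside $F$. First, I would separate $E$ into isolated and non-isolated points. Isolated points of $E$ automatically lie in $F$ (the trivial constant sequence realizes the required convergence), and in any case they form a countable set because $\R^n$ is separable, so they may be ignored. Thus it suffices to bound the cardinality of
\[
G := (E \setminus F) \cap \{ x \in E : x \text{ is an accumulation point of } E\}.
\]

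For $x \in G$, the negation of the $F$-condition, combined with a standard diagonal extraction, yields positive constants $\varepsilon_x, \delta_x > 0$ such that
\[
|f(y) - f(x)| \geq \varepsilon_x \qquad \text{for every } y \in E \text{ with } 0 < |y - x| < \delta_x.
\]
Indeed, otherwise one could find, for each $k \in \N$, a point $y_k \in E$ with $0 < |y_k - x| < 1/k$ and $|f(y_k) - f(x)| < 1/k$, producing a sequence witnessing $x \in F$.

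Next I would quantize these constants by writing $G = \bigcup_{m,n \geq 1} G_{m,n}$, where
\[
G_{m,n} := \bigl\{ x \in G \,:\, \varepsilon_x \geq 1/m,\ \delta_x \geq 1/n \bigr\}.
\]
The key observation is that any two distinct points $x, x' \in G_{m,n}$ with $|x - x'| < 1/n$ satisfy $|f(x) - f(x')| \geq 1/m$, obtained by taking $y = x'$ in the defining property of $x$. Covering $\R^n$ by countably many balls $B_j$ of diameter less than $1/n$, the restriction $f|_{G_{m,n} \cap B_j}$ is therefore injective with $(1/m)$-separated image in $\R$, hence $G_{m,n} \cap B_j$ is countable. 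Summing over $j$ gives countability of $G_{m,n}$, and summing over $m, n$ yields countability of $G$, completing the proof.

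The argument is essentially bookkeeping and contains no serious obstacle; the only point needing mild care is the diagonal extraction producing the uniform $(\varepsilon_x, \delta_x)$, which is standard.
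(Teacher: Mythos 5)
Your argument is correct, and it takes a different, more hands-on route than the paper's. The paper's proof is a two-liner: it observes that $x \in E \setminus F$ exactly when the point $(x, f(x))$ is isolated in the graph $\{(y, f(y)) : y \in E\} \subset \R^{n+1}$, and that the isolated points of any subset of a second-countable space form a countable set; $E \setminus F$ is then the projection of a countable set and hence countable. Your proof reaches the same conclusion by introducing the quantified moduli $(\varepsilon_x, \delta_x)$, stratifying into the pieces $G_{m,n}$, covering each by countably many small balls, and exploiting that $f$ restricted to $G_{m,n}$ inside each small ball is injective with range $(1/m)$-separated in $\R$. Unpacked, the paper's ``isolated points are countable'' step relies on exactly the same second-countability mechanism you are using; your version makes it explicit in $\R^n\times\R$ coordinates, while the paper's is shorter and generalizes verbatim to $f:E\to Y$ with $E$, $Y$ in arbitrary second-countable spaces.

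One imprecision worth flagging: you assert that isolated points of $E$ lie in $F$ because the constant sequence $x_k\equiv x$ realizes the required convergence. Under that literal reading the constant sequence works at \emph{every} $x\in E$, so $F=E$ and the lemma is vacuous. The reading used implicitly both by the paper's proof and by the rest of your own argument (the negation producing $\varepsilon_x,\delta_x$ requires $x_k\neq x$) is that $(x,f(x))$ should be a limit of \emph{other} graph points; under that reading isolated points of $E$ actually lie in $E\setminus F$, not in $F$. This does not damage your proof, since your hedge ``in any case they form a countable set'' is what actually matters and is correct; but the isolated-point step is better presented as an unconditional absorption of a countable set rather than as a (here inconsistent) claim about membership in $F$.
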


\begin{proof}
Let $G := \big \{(x,f(x)): x\in E\big\}\subset \R^n \times \R$ be the graph of $f$. We note that $x\in E\setminus F$ if a only if $(x,f(x))$ is a isolated point of $G$. In particular $E\setminus F$  is the projection of a discrete (and hence countable) set.
\end{proof}

From now on, by convention, whenever we say that a set $E$ can be covered by a number $M>0$ of balls that it is not necessarily an integer, we mean that it can be covered by $\lfloor M\rfloor$ balls, where $\lfloor M\rfloor$ denotes the integer part of $M$.

\begin{lemma}
\label{LOC.lem:GMT2}
Let $B_r(x)\subset \R^n$ be an open ball, and $\Pi$ be a $m$-dimensional plane. %(not necessarily passing through $x$).
Let $\beta_1>m$. Then there exists $\hat\ep=\hat\ep(m,\beta_1)>0$ such that the following holds:
Let $E\subset \R^n$ satisfy
$$
E\subset B_r(x) \cap \{y\,:\,{\rm dist}(y,\Pi)\leq \ep r\},\qquad \text{for some  $0<\ep\leq \hat \ep$, $x \in \R^n$, $r>0$.}
$$
Then $E$ be covered with $\gamma^{-\beta_1}$ balls of radius $\gamma r$ centered at points of $E$, where $\gamma:=5\ep$.
\end{lemma}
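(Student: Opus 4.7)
The plan is to cover the slab $B_r(x) \cap \{y : {\rm dist}(y,\Pi)\leq \varepsilon r\}$ by short tubes over an efficient cover of $\Pi$, and then replace each tube by a single ball centered at a point of $E$. More precisely, denote by $x'$ the orthogonal projection of $x$ onto $\Pi$ and by $\pi_\Pi$ the orthogonal projection $\R^n\to \Pi$. Since $\pi_\Pi(B_r(x))\subset \Pi\cap B_r(x')$, which is an $m$-dimensional Euclidean ball of radius $r$, I can cover it with $N\leq C(m)\varepsilon^{-m}$ open balls $B_{\varepsilon r}(p_i)\cap \Pi$ of $\Pi$, for suitable centers $p_i\in\Pi$, by a standard volume-packing argument in dimension $m$.

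For each index $i$ I will consider the tube
\[
T_i := \bigl\{y\in \R^n : \pi_\Pi(y)\in B_{\varepsilon r}(p_i)\cap\Pi,\ {\rm dist}(y,\Pi)\leq \varepsilon r\bigr\}.
\]
By construction the tubes $T_i$ cover the slab, and in particular $E\subset \bigcup_i T_i$. Moreover, a direct Pythagorean estimate shows that the diameter of each $T_i$ is at most $2\varepsilon r \sqrt{2}< 3\varepsilon r$. Hence, whenever $T_i\cap E$ is nonempty I may pick any point $q_i\in T_i\cap E$, and every $y\in T_i$ satisfies $|y-q_i|\leq 3\varepsilon r < 5\varepsilon r=\gamma r$. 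Discarding those $i$ for which $T_i\cap E=\varnothing$, the balls $B_{\gamma r}(q_i)$ then cover $E$ and are centered in $E$, as required.

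It only remains to verify that the number of kept indices is at most $\gamma^{-\beta_1}=(5\varepsilon)^{-\beta_1}$. I have $N\leq C(m)\varepsilon^{-m}$, and the inequality $C(m)\varepsilon^{-m}\leq 5^{-\beta_1}\varepsilon^{-\beta_1}$ is equivalent to
\[
\varepsilon^{\,\beta_1-m}\leq \frac{1}{C(m)\,5^{\beta_1}},
\]
which holds as soon as $\varepsilon\leq \hat\varepsilon(m,\beta_1):=\bigl(C(m)\,5^{\beta_1}\bigr)^{-1/(\beta_1-m)}$. This is where the hypothesis $\beta_1>m$ enters in an essential way, since it guarantees a positive exponent on the left. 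There is no real obstacle in the argument; the only thing to be slightly careful about is the bookkeeping of the constants so that the final count genuinely matches the target $\gamma^{-\beta_1}$ and not just $C\gamma^{-\beta_1}$, which is why the smallness threshold $\hat\varepsilon$ must be chosen depending on both $m$ and $\beta_1$.
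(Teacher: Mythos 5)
Your argument is correct and reaches exactly the same count $\gamma^{-\beta_1}$ as the paper's, but by a genuinely different route. The paper first puts balls of radius $\varepsilon r$ around \emph{every} point of $E$, extracts a disjoint subfamily by the Vitali covering lemma (so that the dilated balls of radius $5\varepsilon r$ automatically cover $E$ and are automatically centered in $E$), and then bounds the size of the disjoint family by comparing the total volume of the disjoint balls with the $n$-dimensional volume of the $2\varepsilon$-neighborhood of $\Pi$ inside a ball, which is of order $\varepsilon^{n-m}$; this gives $\#\mathcal I \le C(n)\varepsilon^{-m}$. You instead construct an explicit cover of the $m$-dimensional disk $\pi_\Pi(B_r(x))$ by $N\le C(m)\varepsilon^{-m}$ small disks, lift each to a tube of diameter $<3\varepsilon r$, and replace every tube meeting $E$ by a ball of radius $5\varepsilon r$ centered at any of its points in $E$. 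Both arguments exploit the same $\varepsilon$-thinness of the slab, and both need $\beta_1>m$ only at the very end to absorb the constant into the smallness of $\varepsilon$; your constant depends on $m$ alone (which in fact matches the stated dependence $\hat\varepsilon(m,\beta_1)$ more literally than the paper's volume count, whose constant a priori involves $n$), while the paper's version is slightly shorter because Vitali hands you disjointness and the centering in $E$ for free. Either proof is perfectly adequate for the application in Proposition~\ref{prop:GMT2}, where only the final count $\gamma^{-\beta_1}$ with centers in $E$ is used.
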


\begin{proof}
Up to a scaling and a translation, it suffices to prove the result when $r=1$ and $B_r(x)$ is the unit ball $B_1$ centered at the origin.
Consider the $m$-dimensional set
$
B_{1}\cap \Pi,
$
and given $\ep>0$ small consider  the covering of $E\subset B_1\cap  \{y\,:\,{\rm dist}(y,\Pi)\leq \ep\}$ given by the closed balls
$
\{\overline{B_{\ep}(x)}\}_{x\in E}.
$
By Vitali Covering Lemma, there exists a disjoint family $\{\overline{B_{\ep}(x_i)}\}_{i\in \mathcal I}$ such that
$$
\bigcup_{i\in \mathcal I}\overline{B_{5\ep}(x_i)}\supset \bigcup_{x \in E}\overline{B_{\ep}(x)}\supset E.
$$
Note that
\[ \overline{B_{\ep}(x_i)} \subset \mathcal N_{2\ep}(\Pi) : =   \{x\in B_2  \ : \ {\rm dist}(x,\Pi) \le 2\ep\}.\]
Since
 $ \HH^n\big (\mathcal N_{2\ep}(\Pi)  \big)  \le C(n) \ep^{n-m},$
denoting by $\omega_n$ the volume of the $n$-dimensional unit ball we have
$$
\omega_n\,\ep^n \,\#\mathcal I \le\sum_{i\in \mathcal I}\HH^n(B_{\ep}(x_i))\leq \HH^n(\mathcal N_{2\ep}(\Pi) )=C(n) \ep^{n-m},
$$
which proves that $\#\mathcal I \leq C(n)  \ep^{-m}$.
Set $\gamma:=5\ep$. 
Then,
since $\beta_1>m$,
choosing $\ep$ sufficiently small we have
$C(n)  \ep^{-m}=C(n)5^m \gamma^{-m}\leq \gamma^{-\beta_1}$, proving that
 $ E$ can be covered by $\gamma^{-\beta_1}$ open balls of radius $\gamma$ centered at points of $E$, as desired.
\end{proof}

The following Reifenberg-type result will be used later choosing as function $f$ the frequency function, and it will allow us to perform our dimension reduction arguments only at continuity points of the frequency.

\begin{proposition}\label{prop:GMT2}
Let $E\subset \R^n$, and $f:E\to \R$.
Assume that, for any $\ep>0$ and $x\in E$ there exists $\varrho= \varrho(x,\ep)>0$ such that, for all $r\in (0, \varrho)$, we have
$$
E\cap \overline{B_r(x)}\cap f^{-1}([f(x)-\varrho,f(x)+\varrho])\subset \{y\,:\,{\rm dist}(y,\Pi_{x,r})\leq \ep r\},
$$
for some $m$-dimensional plane $\Pi_{x,r}$ passing through $x$ (possibly depending on $r$).
Then ${\rm dim}_\HH(E)\leq m$.
\end{proposition}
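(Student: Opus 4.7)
The plan is to establish that $\mathcal H^\beta(E)=0$ for every $\beta>m$ by a Reifenberg-type iteration of Lemma~\ref{LOC.lem:GMT2}, after first decomposing $E$ into countably many pieces on which the hypothesis can be applied uniformly. Fix $\beta>m$, choose an intermediate exponent $\beta_1\in (m,\beta)$, and let $\hat\ep=\hat\ep(m,\beta_1)$ be the constant provided by Lemma~\ref{LOC.lem:GMT2}. Write $\varrho(x):=\varrho(x,\hat\ep)$. The obstacle is that $\varrho(x)$ and $f(x)$ are merely pointwise quantities, with no uniform control; the hypothesis about $\Pi_{x,r}$ may only be used on points $y$ with $|f(y)-f(x)|\le\varrho(x)$, so we need to keep $f$ almost constant along the iteration.

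To this end, for each $k\in\N$ and $i\in\Z$ I would introduce
\[
E_{k,i}:=\bigl\{x\in E\,:\,\varrho(x)\ge 1/k\bigr\}\cap f^{-1}\bigl([i/k,(i+1)/k]\bigr),
\]
so that $E=\bigcup_{k,i}E_{k,i}$ is a countable partition. By construction, if $x,y\in E_{k,i}$ then $|f(y)-f(x)|\le 1/k\le\varrho(x)$; hence for every $x\in E_{k,i}$ and every $r\in(0,1/k)$ one has $r<\varrho(x)$ and
\[
E_{k,i}\cap\overline{B_r(x)}\subset E\cap\overline{B_r(x)}\cap f^{-1}\bigl([f(x)-\varrho(x),f(x)+\varrho(x)]\bigr)\subset \{y\,:\,{\rm dist}(y,\Pi_{x,r})\le\hat\ep r\}.
\]
This is exactly the hypothesis needed to invoke Lemma~\ref{LOC.lem:GMT2}, which covers $E_{k,i}\cap\overline{B_r(x)}$ by at most $\gamma^{-\beta_1}$ balls of radius $\gamma r$ (with $\gamma=5\hat\ep$) centered at points of $E_{k,i}$.

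The iteration is then straightforward: fix $x_0\in E_{k,i}$ and $r_0<1/k$, and apply the covering step repeatedly. After $N$ iterations, $E_{k,i}\cap\overline{B_{r_0}(x_0)}$ is covered by $\gamma^{-\beta_1 N}$ balls of radius $\gamma^N r_0$ centered in $E_{k,i}$, so
\[
\mathcal H^\beta_{2\gamma^N r_0}\!\bigl(E_{k,i}\cap\overline{B_{r_0}(x_0)}\bigr)\le \gamma^{-\beta_1 N}(2\gamma^N r_0)^\beta = 2^\beta r_0^\beta\,\gamma^{(\beta-\beta_1)N}\xrightarrow[N\to\infty]{}0,
\]
since $\beta>\beta_1$ and $\gamma<1$ (after shrinking $\hat\ep$ if needed). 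Hence $\mathcal H^\beta(E_{k,i}\cap\overline{B_{r_0}(x_0)})=0$.

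Finally, to pass from this local vanishing to $\mathcal H^\beta(E_{k,i})=0$, I would cover $\R^n$ by a countable family of balls of radius $1/(2k)$; for each such ball that meets $E_{k,i}$ pick a point of $E_{k,i}$ in it and apply the previous estimate with $r_0=1/(2k)$. Countable subadditivity of $\mathcal H^\beta$ then gives $\mathcal H^\beta(E_{k,i})=0$, and summing over the countable family $\{(k,i)\}$ yields $\mathcal H^\beta(E)=0$. Since $\beta>m$ is arbitrary, ${\rm dim}_\HH(E)\le m$ by \eqref{eq:def dim}. The only delicate point is the coordination of the three parameters ($k$ controlling the size of $\varrho$, $i$ controlling the oscillation of $f$, and $\hat\ep$ controlling the flatness), which is precisely what ensures that the Reifenberg iteration stays inside each piece $E_{k,i}$.
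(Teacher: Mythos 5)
Your proposal follows the same Reifenberg-type strategy as the paper and is essentially correct: decompose $E$ into countably many pieces $E_{k,i}$ on which $\varrho$ is bounded below and $f$ oscillates by at most that bound, so the flatness hypothesis applies to $E_{k,i}\cap\overline{B_r(x)}$ uniformly within each piece; then iterate Lemma~\ref{LOC.lem:GMT2} to force geometric decay of the Hausdorff premeasure, and conclude with a countable cover. The paper's version differs only cosmetically in the last step (it intersects with $B_R$ and extracts a finite Vitali subcover, while you use a fixed countable cover of $\R^n$ plus countable subadditivity); both are fine.

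One small slip worth fixing: in the local-to-global step you cover $\R^n$ by balls $B_j$ of radius $1/(2k)$, pick $x_j\in E_{k,i}\cap B_j$, and ``apply the estimate with $r_0=1/(2k)$''. But $E_{k,i}\cap B_j$ need not lie in $\overline{B_{1/(2k)}(x_j)}$, since recentering at $x_j$ while keeping the same radius may lose part of $B_j$. You can fix this either by starting from balls of radius $1/(4k)$ (so $E_{k,i}\cap B_j\subset\overline{B_{1/(2k)}(x_j)}$), or by defining $E_{k,i}$ with the strict inequality $\varrho(x)>1/k$ (as the paper does), which makes $r_0=1/k$ admissible and then $E_{k,i}\cap B_j\subset\overline{B_{1/k}(x_j)}$. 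Either fix is trivial and the rest of your argument goes through as written.
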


\begin{proof}
We need to prove that, given $\beta>m$, we have $\HH^\beta(E)=0$.

Let $\ep>0$ be a small constant to be fixed later, and for any $k>1$ and $j\in\mathbb Z$ define
$$
E_{k,j}:=\left\{x\in E\,:\, \varrho(x,\ep)>1/k, \,f(x)\in \bigl[{\textstyle \frac{j}{2k} ,\frac{j+1}{2k} }\bigr)\right\}.
$$
Since $E=\cup_{k,j} E_{k,j}$, it suffices to prove that $\HH^\beta(E_{k,j})=0$ for each $k,j$.
So, we fix $k>1$ and $j \in \mathbb Z$.
Similarly, it suffices to prove that for all $R\ge 1$ we have $\HH^\beta(E_{k,j}^R)=0$, where $E_{k,j}^R : = B_R\cap E_{k,j}$.

By assumption, for every $x \in E_{k,j}^R$ and $r\in(0, 1/k]$, there exists
a $m$-dimensional plane $\Pi_{x,r}$ such that
$$
E_{k,j}^R\cap \overline{B_r(x)} \subset \{y\,:\,{\rm dist}(y,\Pi_{x,r})\leq \ep r\}.
$$
So, we consider the covering $\{\overline{B_{1/k}(x)}\}_{x\in E_{k,j}^R}$, and since $E_{k,j}^R\subset B_R$ we extract a finite subcovering of closed balls $B_1^{(0)},\ldots,B_M^{(0)}$. (Indeed, by Vitali's lemma there is a covering $\{\overline{B_{1/k}(x_\ell)}\}$ such that the balls  $\{\overline{B_{1/(5k)}(x_\ell)}\}$ are disjoint, and hence there is a finite number of them.)
Inside each ball $B_i^{(0)}$ we have, by assumption,
$$
E_{k,j}^R \cap B_i^{(0)}\subset \{y\,:\,{\rm dist}(y,\Pi_{B_i^{(0)}})\leq \ep/k\}.
$$
Choose $\beta_1:=\frac{m+\beta}{2}\in (m,\beta)$.
Applying Lemma~\ref{LOC.lem:GMT2} with $\ep=\hat\ep(m,\beta_1)$ we deduce that, for each fixed $i,j,k, R$,  the set $E_{k,j}^R\cap B_i^{(0)}$ can be covered with $\gamma^{-\beta_1}$ closed balls $\hat B^{(1)}_{1},\ldots ,\hat B^{(1)}_{\gamma^{-\beta_1}}$ of radius $\gamma/k$ centered at points of $E^R_{k,j}\cap B_i^{(0)}$, where $\gamma=5\ep$. Using our assumption again, in each of these balls we have
$$
E_{k,j}^R \cap {\hat B^{(1)}_{\ell}}\subset \{y\,:\,{\rm dist}(y,\Pi_{x_{\ell}^{(1)}})\leq \ep\gamma/k\},
$$
where $x_{\ell}^{(1)}$ is the centre of $\hat B^{(1)}_{\ell}$.
We then apply again Lemma~\ref{LOC.lem:GMT2} so that, for each $\ell\in \{1,\ldots,\gamma^{-\beta_1}\}$, we can cover the set $E_{k,j}^R\cap \hat B^{(1)}_{\ell}$ with $\gamma^{-\beta_1}$ closed balls
of radius $\gamma^2/k$. This gives a new covering of $E_{k,j}^R\cap {B_i^{(0)}}$ with  $\gamma^{-2\beta_1}$ closed balls $\hat B^{(2)}_{1},\ldots ,\hat B^{(2)}_{\gamma^{-2\beta_1}}$ of radius $\gamma^2/k$ centered at points of $E^R_{k,j}$.
Iterating this construction, we conclude that
$
E_{k,j}^R\cap B_i^{(0)}
$
can be covered by $\gamma^{-N\beta_1}$ closed balls $\{\hat B^{(N)}_{\ell}\}$ of radius $\gamma^{N}/k$ for any $N\geq 1$,
which implies that
$$
\mathcal H_\infty^{\beta}(E_{k,j}^R\cap B_i)\leq C_{n,m}\sum_{\ell}{\rm diam}(\hat B^{(N)}_{\ell})^\beta
\leq C_{n,m}\gamma^{-N\beta_1}(\gamma^N/k)^{\beta} \leq C\gamma^{N(\beta-\beta_1)}.
$$
Since $\beta_1 \in (m,\beta)$, letting $N \to \infty$ we conclude that
$$
\mathcal H_\infty^{\beta}(E_{k,j}^R\cap B_i^{(0)})=0\qquad \mbox{for all } i,j,k,R,
$$
concluding the proof.
\end{proof}

In our study of $4$-homogeneous blow-ups, we will need a variant of the previous results involving zero sets of 4-homogeneous harmonic polynomials instead of hyperplanes (recall Definition~\ref{setofpolsP4}).

\begin{lemma}
\label{LOC.lem:GMT2bis}
Given $\beta_1>n-2$, there exists $\hat\ep=\hat\ep(n,\beta_1)>0$ 
 such that the following holds:
Let $E\subset \R^n$ satisfy
$$
E\subset B_1 \cap \boldsymbol S(p,\ep),\qquad \text{for some  $0<\ep\leq \hat \ep$, $p \in\mathcal P_{4,\ge}^{even}$.}
$$ 
Then $E$
can be covered with $\gamma^{-\beta_1}$ balls of radius $\gamma$ centered at points of $E$, for  some $\gamma=\gamma(n,\beta_1) \in (0,1)$.
\end{lemma}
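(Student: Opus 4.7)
The plan is to prove this in analogy with Lemma~\ref{LOC.lem:GMT2}, replacing the $m$-dimensional plane $\Pi$ by the real algebraic variety $Z_p := \{p=0\}\cap \{\boldsymbol e\cdot x=0\}$. Since $p$ is $4$-homogeneous and the hyperplane is $(n-1)$-dimensional, $Z_p$ has dimension at most $n-2$, matching the exponent $\beta_1 > n-2$ in the statement. The main work is to obtain estimates that are uniform as $p$ ranges over the compact family $\mathcal P_{4,\ge}^{even}$.

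\textbf{Step 1: Non-vanishing on the hyperplane.} First I would show that, for every $p\in \mathcal P_{4,\ge}^{even}$, the restriction $p|_{\{\boldsymbol e\cdot x=0\}}$ is not identically zero. Choosing coordinates with $\boldsymbol e = e_n$, if $p$ vanished on $\{x_n=0\}$ then evenness would give $p = x_n^2 r$ with $r$ even in $x_n$; harmonicity $\Delta p = 2r + 4x_n \partial_n r + x_n^2 \Delta r \equiv 0$ evaluated on $\{x_n=0\}$ forces $r|_{\{x_n=0\}}\equiv 0$, so $p = x_n^4 s$, and iterating (or using that $4$-homogeneity makes $s$ constant) together with $\Delta p\equiv 0$ yields $s\equiv 0$, contradicting $\|p\|_{L^2(\partial B_1)}=1$. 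Since $\mathcal P_{4,\ge}^{even}$, together with its associated unit vectors $\boldsymbol e$, is a compact subset of a finite-dimensional space, this gives a dimensional constant $c_0=c_0(n)>0$ with $\|p|_{\{\boldsymbol e\cdot x=0\}}\|_{L^2(\partial B_1 \cap \{\boldsymbol e\cdot x=0\})} \ge c_0$ for every such $p$.

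\textbf{Step 2: Uniform {\L}ojasiewicz bound and covering of $Z_p$.} I would then invoke the classical {\L}ojasiewicz inequality: for each fixed $p$, ${\rm dist}(x,Z_p) \le C_p\, p(x)^{1/\mu_p}$ on compact sets, and a standard compactness argument over $\mathcal P_{4,\ge}^{even}$ (combined with the uniform lower bound from Step~1) upgrades this to a uniform estimate. Concretely, there exist $C_1=C_1(n)$ and $\mu = \mu(n)\ge 1$ with
\[
\boldsymbol S(p,\ep)\cap \overline{B_1} \subset \mathcal N_{C_1\,\ep^{1/\mu}}(Z_p) \qquad \forall\,p\in \mathcal P_{4,\ge}^{even},\ \ep\in(0,1),
\]
where $\mathcal N_\delta(\cdot)$ denotes the $\delta$-neighborhood in $\R^n$. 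Moreover, $Z_p$ is a real algebraic subvariety of the hyperplane $\{\boldsymbol e\cdot x=0\}\simeq \R^{n-1}$ of dimension at most $n-2$, cut out by a polynomial of degree $4$ with a uniform $L^2$-lower bound; by classical bounds on zero sets of polynomials of bounded degree (Remez/Crofton-type), its $(n-2)$-dimensional Hausdorff measure is bounded by a dimensional constant $\hat C=\hat C(n)$. Hence, for every $\eta\in(0,1)$, the neighborhood $\mathcal N_\eta(Z_p)\cap \overline{B_1}$ can be covered by at most $\hat C\,\eta^{-(n-2)}$ closed balls of radius $\eta$ in $\R^n$ (the thickening in the normal $\boldsymbol e$-direction contributes only an $O(1)$ factor).

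\textbf{Step 3: Conclusion via Vitali.} Fix $\beta_1>n-2$. I would choose $\gamma=\gamma(n,\beta_1)\in (0,1)$ small enough so that $5^n\,\hat C\,\gamma^{-(n-2)} \le \gamma^{-\beta_1}$ (possible since $\beta_1>n-2$), and then $\hat\ep=\hat\ep(n,\beta_1)>0$ small enough so that $C_1\,\hat\ep^{1/\mu}\le \gamma$. Then for any $p\in \mathcal P_{4,\ge}^{even}$ and any $\ep\in(0,\hat\ep]$, Step~2 provides a cover of $\boldsymbol S(p,\ep)\cap \overline{B_1}$, and hence of any $E\subset B_1\cap \boldsymbol S(p,\ep)$, by $\hat C\,\gamma^{-(n-2)}$ balls of radius $\gamma$. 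As in the proof of Lemma~\ref{LOC.lem:GMT2}, a Vitali-type application to $\{\overline{B_\gamma(x)}\}_{x\in E}$ converts this into a cover by at most $5^n\,\hat C\,\gamma^{-(n-2)}\le \gamma^{-\beta_1}$ balls of radius $\gamma$ centered at points of $E$, as desired. The main difficulty is the \emph{uniformity} of the {\L}ojasiewicz exponent $\mu$ and of the Minkowski content bound on $Z_p$ as $p$ varies in $\mathcal P_{4,\ge}^{even}$; both follow from compactness of the polynomial family combined with classical estimates on real algebraic sets of bounded degree, but keeping every constant dimensional is essential for the dimension-reduction arguments used later in the paper.
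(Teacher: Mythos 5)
Your approach is a genuinely different route from the paper's, but as written it has two gaps that would need to be filled.

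The paper's proof (by contradiction, using compactness of $\mathcal P_{4,\ge}^{even}$) relies on the crucial observation that $\boldsymbol z(p):=\{p=0\}\cap\{\boldsymbol e\cdot x=0\}$ is contained in the \emph{critical} set $\{p=|\nabla p|=0\}$ of the harmonic polynomial $p$ (evenness kills the normal derivative on the hyperplane, nonnegativity kills the tangential gradient at a zero), so that the volume bound $\HH^n(\mathcal N_{2t}(\boldsymbol z(p)))\le C(n)t^2$ follows directly from the Naber--Valtorta estimate \cite[Theorem 1.1]{NV17}. The compactness argument only needs the qualitative convergence $\boldsymbol S(p,\ep)\downarrow \boldsymbol z(p)$ as $\ep\downarrow 0$, which is stable under limits in $p$, and this is all that is required to propagate the Naber--Valtorta bound to small $\ep$. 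Your route avoids Naber--Valtorta and instead goes through real algebraic geometry, but two steps do not work as stated. First, the ``{\L}ojasiewicz plus compactness'' argument: the {\L}ojasiewicz exponent is not a continuous (or even upper-semicontinuous in the useful direction) function of the coefficients, so one cannot simply pass to a limit to extract a uniform $\mu$; you would instead have to invoke explicit degree-dependent bounds on {\L}ojasiewicz exponents for polynomials of bounded degree (Koll\'ar, Kurdyka--Spodzieja, etc.), which is a nontrivial additional input. Second, ``$\HH^{n-2}(Z_p)\le\hat C$ \emph{hence} a cover by $\hat C\eta^{-(n-2)}$ balls of radius $\eta$'' is a non sequitur: finite Hausdorff measure does not control covering numbers or tube volumes. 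What you need is an upper \emph{Minkowski content} (equivalently, a tube volume estimate of the form $|\mathcal N_\eta(Z_p)\cap B_2|\le C\eta^2$), and this is exactly the kind of statement the Naber--Valtorta theorem supplies. For algebraic sets of bounded degree such Minkowski bounds are available (Wongkew, or Crofton/Vitushkin-type arguments), so the gap is fixable, but the fix essentially re-derives the tube volume estimate that the paper imports in one step. Your Step~1 (that $p|_{\{\boldsymbol e\cdot x=0\}}\not\equiv 0$ for $p\in\mathcal P_{4,\ge}^{even}$) is a correct and useful check, and the Vitali step at the end is the same as in the paper. In short: the strategy is sound, but replacing the two gaps with rigorous citations to real algebraic geometry would produce a longer, less self-contained argument than the one in the paper, which is why the paper routes through the critical-set estimate instead.
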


\begin{proof}
Given $t,\ep>0$ small, consider  the covering of $E\subset B_1\cap  \{y\,:\,{\rm dist}(y,\boldsymbol S(p,\ep))\leq t\}$ given by
$
\{\overline{B_{t}(x)}\}_{x\in E}.
$
By Vitali's lemma, there exists a disjoint family $\{\overline{B_{t}(x_i)}\}_{i\in \mathcal I}$ such that
$$
\bigcup_{i\in \mathcal I}\overline{B_{5t}(x_i)}\supset \bigcup_{x \in E}\overline{B_{t}(x)}\supset E.
$$
Note that, since $x_i \in E$,
\[ \overline{B_{t}(x_i)} \subset \mathcal N_{2t}(\boldsymbol S(p,\ep)) : =   \{x\in B_2  \ : \ {\rm dist}(x,\boldsymbol S(p,\ep)) \le 2t\}.\]
We claim that there exists a dimensional constant $C(n)$ such that, for any given $t\in(0,1)$, there is $\ep_t>0$ such that
\begin{equation}\label{ahfoih}
 \HH^n\big ( \mathcal N_{2t}(\boldsymbol S(p,\ep))    \big) \le C(n) t^2\qquad \forall\,\ep \in (0, \ep_t).
 \end{equation}
Indeed, if not, then for arbitrarily large $M$ there would exist some $t_M\in (0,1)$  and sequences $\ep_k\downarrow 0$ and $p_k\in \mathcal P_{4, \ge }^{even}$ such that
\begin{equation}\label{hsagb}
 \HH^n\big ( \mathcal N_{2t_M}(\boldsymbol S(p_k,\ep_k))    \big)  \ge M t_M^2\qquad \forall\,k \geq 1.
\end{equation}
Now, 
given $p\in P_{4, \ge }^{even}$ which is even with respect to the hyperplane $\{ \boldsymbol e \cdot x =0\} $ and nonnegative on it, let us define
 \[
 \boldsymbol z(p) : =  \{p=0\}\cap\{ \boldsymbol e \cdot x =0\}.
 \]
Notice that, by definition of  $\boldsymbol S(p,\ep)$, for all  $p\in P_{4, \ge }^{even}$  we have
\begin{equation}
\label{eq:Sp z}
 \boldsymbol S(p,\ep) \downarrow  \boldsymbol z(p)  \quad \mbox{as}\quad  \ep\downarrow 0.
\end{equation}
In addition, for all  $x\in \boldsymbol z(p)$, we have that  $\boldsymbol  e\cdot \nabla p(x)=0$ (since $p$ is even with respect $\{ \boldsymbol e \cdot x =0\}$). Furthermore, the tangential gradient vanishes at $x\in \boldsymbol z(p)$ (since $p\ge0$  on $\{ \boldsymbol e \cdot x =0\}$  and $p(x)=0$).
Hence, this proves that
\begin{equation}
\label{eq:z p Dp}
 \boldsymbol z(p) \subset \{p=|\nabla p| =0\}.
\end{equation}
Let $p_k$ be even with respect to $\boldsymbol e_k \in \mathbb S^{n-1}$, and assume without loss of generality that $p_k\to p_\infty \in \mathcal P_{4, \ge }^{even}$ and that $\boldsymbol e_k\to \boldsymbol e_\infty$.
Then it follows by \eqref{eq:Sp z} that, for all $\delta>0$, there exists $k_\delta$ such that
\[
\mathcal N_{2t_M}(\boldsymbol S(p_k,\ep_k)) \subset   \{x\in B_2  \ :  \ {\rm dist}(x, \boldsymbol z(p_\infty)) \le 2t_M+\delta\}, \quad \forall\, k \ge k_\delta.
\]
Recalling \eqref{hsagb}, this implies that 
 \[ \HH^n\big (  \big\{x\in B_2  \ :  \ {\rm dist}(x, \boldsymbol z(p_\infty)) \le 2t_M  \big\}     \big)  \ge M t_M^2.\]
On the other hand, \cite[Theorem 1.1]{NV17} implies the existence of a dimensional constant $C(n)$ such that
 \[ \HH^n\big (  \big\{x\in B_2  \ :  \ {\rm dist}(x, \{u=|\nabla u| =0\} ) \le 2t   \big\}   \big)  \le C(n) t^2 \quad \forall \,t \in (0,1)\]
 for every nonzero harmonic function $u$ in $B_4$. Recalling \eqref{eq:z p Dp}, we obtain a contradiction by choosing $M>C(n)$.

Now, denoting by $\omega_n$ the volume of the $n$-dimensional unit ball, given $t\in (0,1)$, thanks to \eqref{ahfoih} we have
$$
\omega_n\,t^n\,\#\mathcal I  \le\sum_{i\in \mathcal I}\HH^n(B_{t}(x_i))\leq \HH^n(N_{2t}(\boldsymbol S(p,\ep) )=C(n) t^{2} \qquad \forall\, \ep\in (0,\ep_t),
$$
which proves that $\#\mathcal I \leq C(n)  t^{2-n}$.
Set $\gamma := 5t$.
Since $\beta_1>n-2$,
choosing $t$ sufficiently small we have
$C(n)  t^{2-n}=C(n)5^{n-2} \gamma^{2-n}\leq \gamma^{-\beta_1}$, proving that
$E$ can be covered by $\gamma^{-\beta_1}$ open balls of radius $\gamma$ centered at points of $E$ whenever $\ep<\hat \ep:= \ep_t$.
\end{proof}

\begin{proposition}\label{prop:GMT2bis}
Let $E\subset \R^n$ be a measurable set, and $\tau: E\rightarrow \R $ a lower-semicontinuous function.
Assume that, for any $\ep>0$ and $x\in E$, there exists $\varrho= \varrho(x,\ep)>0$ such that, for all $r\in (0,\varrho)$, we have
$$
E\cap \overline{B_r(x)}\cap \tau^{-1}\big([\tau(x),+\infty) \big)\subset  \big\{x+ ry \, :\, y\in \boldsymbol S(p_{x,r},\ep) \big\}
$$
for some  $p_{x,r}\in \mathcal P_{4,\ge}^{even}$.
Then ${\rm dim}_\HH(E)\leq n-2$.
\end{proposition}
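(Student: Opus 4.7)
I would adapt the proof of Proposition~\ref{prop:GMT2} using Lemma~\ref{LOC.lem:GMT2bis} in place of Lemma~\ref{LOC.lem:GMT2}. Fix $\beta>n-2$, choose $\beta_1\in(n-2,\beta)$, and let $\hat\ep=\hat\ep(n,\beta_1)$ and $\gamma=\gamma(n,\beta_1)\in(0,1)$ be the constants from Lemma~\ref{LOC.lem:GMT2bis}. For $k,R\in\mathbb N$ set
\[
E_k^R:=\bigl\{x\in E\cap\overline{B_R(0)}:\varrho(x,\hat\ep)>1/k\bigr\};
\]
since $E=\bigcup_{k,R}E_k^R$, it suffices to prove $\HH^\beta(E_k^R)=0$ for each fixed $k,R$, and I put $X:=E_k^R$.

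The plan is a Reifenberg-style iteration: starting from a finite covering of $X$ by closed balls $B_1^{(0)},\dots,B_{M_0}^{(0)}$ of radius $1/(2k)$ centered at points of $X$, the goal is to show that at each step one can replace any ball $\overline{B_r(z)}$ (with $z\in X$ and $r\le 1/(2k)$) by at most $\gamma^{-\beta_1}$ closed balls of radius $\gamma r$ centered at points of $X$. Once this is achieved, after $N$ iterations one has at most $M_0\gamma^{-N\beta_1}$ balls of radius $\gamma^N/(2k)$, so $\HH^\beta_\infty(X)\le M_0(2k)^{-\beta}\gamma^{N(\beta-\beta_1)}\to 0$ as $N\to\infty$. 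The refinement step relies on choosing a center $x^*\in X\cap\overline{B_r(z)}$ at which $\tau$ essentially attains its infimum over $X\cap\overline{B_r(z)}$; the hypothesis at $x^*$ (applied at radius $2r<\varrho(x^*,\hat\ep)$, valid since $r\le 1/(2k)$ and $\varrho(x^*,\hat\ep)>1/k$) then gives
\[
X\cap\overline{B_r(z)}\subset E\cap\overline{B_{2r}(x^*)}\cap\tau^{-1}\bigl([\tau(x^*),+\infty)\bigr)\subset x^*+2r\,\boldsymbol S(p_{x^*,2r},\hat\ep),
\]
and Lemma~\ref{LOC.lem:GMT2bis} then provides the desired cover by $\gamma^{-\beta_1}$ smaller balls (after absorbing the factor $2$ by doubling $k$ in the stratification).

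The main obstacle is the selection of this center $x^*$. By lower-semicontinuity of $\tau$ and compactness of the closure $\overline{X\cap\overline{B_r(z)}}$ in $\R^n$, the lsc extension $\bar\tau(y):=\liminf_{E\ni w\to y}\tau(w)$ attains its infimum $c_*$ on this closure at some point $z^*$. In the \emph{good case} where the infimum is attained at some $x^*\in X$ itself, every $y\in X\cap\overline{B_r(z)}$ satisfies $\tau(y)\ge\tau(x^*)$ and the argument above goes through verbatim. In the \emph{bad case} where $c_*$ is only approximated by sequences in $X$ (so every $x^*\in X\cap\overline{B_r(z)}$ satisfies $\tau(x^*)>c_*$), one selects $x^*\in X\cap\overline{B_r(z)}$ with $\tau(x^*)<c_*+\eta$ for $\eta>0$ arbitrarily small; the hypothesis then still covers the ``upper'' portion $X\cap\overline{B_r(z)}\cap\tau^{-1}([\tau(x^*),+\infty))$, while the residual ``lower'' portion is confined to the $\tau$-window $[c_*,\tau(x^*))$ of length $<\eta$. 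I would handle the lower portion by iteratively repeating the infimum-selection within this ever-shrinking window; by a diagonal argument (choosing the successive $\eta$'s summable and exploiting lsc), after finitely many sub-iterations the entire $X\cap\overline{B_r(z)}$ is covered at total cost $C\gamma^{-\beta_1}$ for a dimensional constant $C$. Taking $\beta_1$ sufficiently close to $n-2$ so that $C\gamma^{\beta-\beta_1}<1$ then concludes the proof.
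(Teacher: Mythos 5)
Your overall plan is the right one: iterate Lemma~\ref{LOC.lem:GMT2bis} in a Reifenberg-style covering argument, stratify by the size of $\varrho(\cdot,\hat\ep)$, and at each scale pick a center at which $\tau$ is smallest so that the hypothesis covers the whole piece. You also correctly spot the central difficulty: $X=E_k^R$ need not be closed, hence $X\cap\overline{B_r(z)}$ need not be compact, and a lower-semicontinuous $\tau$ need not attain its infimum there. The gap is in your proposed fix for this ``bad case''. The iterative infimum-selection does not terminate in finitely many steps: after picking $x^*$ with $\tau(x^*)<c_*+\eta$, the residual set $X\cap\overline{B_r(z)}\cap\tau^{-1}([c_*,\tau(x^*)))$ still has $\inf\tau=c_*$ (still not attained), so the process continues indefinitely, and each sub-iteration a priori contributes another $\gamma^{-\beta_1}$ balls. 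There is no ``dimensional constant $C$'' bounding the number of sub-iterations, and the phrase ``diagonal argument'' does not close the gap. (The concluding sentence about taking $\beta_1$ close to $n-2$ so that $C\gamma^{\beta-\beta_1}<1$ is also circular, since $\gamma$ and $C$ depend on $\beta_1$.)

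The paper sidesteps this entirely with a single clean step that your argument is missing: after fixing $k$, invoke inner regularity of Hausdorff measure in the form of \cite[Corollary 2.10.23]{Fed69} to reduce to proving $\HH^\beta(K)=0$ for an arbitrary \emph{compact} $K\subset E_k$. Once $K$ is compact, $K\cap\overline{B_r(z)}$ is compact, the lower-semicontinuous $\tau$ genuinely attains its minimum at some $\bar x\in K\cap\overline{B_r(z)}$, and only your ``good case'' ever occurs. The rest of the iteration then proceeds exactly as you outline. So the covering mechanics are fine; what is missing is the compactness reduction, and without it the center-selection step is not justified.
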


\begin{proof}
Given $\beta>n-2,$ we need to prove that $\HH^\beta(E)=0$.
Let $\ep>0$ be a small constant to be fixed later, and for any $k>1$ define
$$
E_k:=\{x\in E\,:\,\rho(x) \geq 1/k\}.
$$
Since $E=\cup_k E_k$, it suffices to prove that $\HH^\beta(E_k)=0$ for each $k$.
So, we fix $k>1$.
Thanks to \cite[Corollary 2.10.23]{Fed69}, it suffices to prove that $\HH^\beta(K)=0$ for any compact set $K$ contained inside $E_k$.

We now claim the following: For each closed ball $\overline{B_r(x)}$ centered at a point $x \in E$ and of radius $r\leq 1/k$, there exist  $\bar x \in  K\cap \overline{B_r(x)}$
and $p_{\bar x,2r} \in \mathcal P_{4,\ge}^{even}$  such that
$$
K\cap \overline{B_r(x)}\subset  \bar x+  r \boldsymbol S (p_{\bar x,r},\ep).
$$
To prove this claim it suffices to observe that it is trivially true if $K\cap \overline{B_r(x)}$ is empty. Otherwise, the lower semicontinuous function $\tau$ attains its minimum at some point $\bar x \in K\cap \overline{B_r(x)}$. 
Then, by the assumption of the lemma,
$$
K\cap \overline{B_r(x)}=K \cap \overline{B_r(x)} \cap \tau^{-1}\big([ \tau(\bar x), \infty) \big) \subset  E\cap \overline{B_{2r}(\bar x)}  \cap \tau^{-1}\big([ \tau(\bar x), \infty) \big)  \subset
 \bar x+  r \boldsymbol S (p_{\bar x,r},\ep),
$$
which proves the claim.

Now,
consider the covering $\{\overline{B_{1/k}(x)}\}_{x\in K}$, and extract a finite subcovering of closed balls $B_1^{(0)},\ldots,B_M^{(0)}$.
Inside each ball $B_i^{(0)}$ we can apply the claim to deduce that
$$
K\cap {B_i^{(0)}}\subset   \bar x_i+  r \boldsymbol S (p_{\bar x_i,r},\ep).
$$
Applying now Lemma~\ref{LOC.lem:GMT2bis} we deduce that, for each fixed $i$, the set  $K\cap {B_i^{(0)}}$ can be covered with $\gamma^{-\beta_1}$ closed balls $\hat B^{(1)}_{1},\ldots,\hat B^{(1)}_{\gamma^{-\beta_1}}$ of radius $\gamma/k$ centered at points of $E$.
 In each of these balls we now reapply the claim to deduce that
$$
K\cap \hat B^{(1)}_{\ell} \subset  \bar x_{\ell}^{(1)} +  \frac \gamma k \boldsymbol S (p_{\bar  x_{\ell}^{(1)},\frac \gamma k},\ep).
$$
Thus we can apply again Lemma~\ref{LOC.lem:GMT2bis} (rescaled) to cover, for each $\ell$, the set $K\cap \hat B^{(1)}_{\ell}$ with $\gamma^{-\beta_1}$ closed balls. In this way we obtain a new covering of $K\cap {B_i^{(0)}}$ by $\gamma^{-2\beta_1}$ closed balls $\hat B^{(2)}_{1},\ldots,\hat B^{(2)}_{\gamma^{-2\beta_1}}$ of radius $\gamma^2/k$ centered at points of $E$.
Iterating this construction, we conclude that
$
K\cap {B_i^{(0)}}
$
can be covered by $\gamma^{-N\beta_1}$ closed balls $\{\hat B^{(N)}_{\ell}\}$ of radius $\gamma^N/k$ for any $N\geq 1$, which implies that
$$
\mathcal H_\infty^{\beta}(K\cap \overline{B_i})\leq C_{n,m}\sum_{\ell}{\rm diam}(\hat B^{(N)}_{\ell})^\beta
\leq C_{n,m}\gamma^{-N\beta_1}\left(\frac{\gamma^N}{k}\right)^{\beta} \leq C\gamma^{N(\beta-\beta_1)}.
$$
Since $\beta_1 \in (n-2,\beta)$, letting $N \to \infty$ we conclude that
$$
\mathcal H_\infty^{\beta}(K\cap B_i^{(0)} )=0\qquad \mbox{for all } i=1,2,\dots,M.
$$
This proves that
$\mathcal H_{\infty}^{\beta}(K)=0$ and therefore $\mathcal H^\beta(K)=0$ (see
\cite[Section 1]{Sim83}), concluding the proof.
\end{proof}

We will also use the following basic result about Hausdorff measures.
We refer to \cite[2.10.19(2)]{Fed69} and \cite[Lemma 3.5]{AlessioJoaquim} for a proof of such result; see also \cite[Lemma 2.4]{Whi97}.

\begin{lemma} \label{lem:GMT3}
Let  $E\subset \R^n$ be a set satisfying $\HH^\beta_\infty(E) >0$ for some $\beta\in (0,n]$.
Then:
\begin{enumerate}
\item[(a)] For $\HH^\beta$-almost every point $x_\circ \in E$, there is a sequence $r_k\downarrow 0$  such that
\begin{equation}\label{denpt}
\lim_{k\to \infty} \frac{\HH^\beta_\infty(E\cap B_{r_k}(x_\circ) )}{r_k^\beta} \ge c_{n,\beta}>0,
\end{equation}
where $c_{n,\beta}$ is a constant depending only on $n$ and $\beta$.  We call these points ``density points''.
\item[(b)] Assume that $0$ is a  ``density point'', let $r_k\downarrow 0$ be a sequence along which \eqref{denpt} holds, and define the ``accumulation set'' for $E$ at $0$ along $r_k$ as
\[
\mathcal A=\mathcal A_{E} := \big\{z\in \overline{B_{1}} \ : \, \exists\,(z_\ell)_{\ell\ge 1},(k_\ell)_{\ell \ge 1} \text{ s.t.  $z_{\ell}\in r_{k_\ell}^{-1}E \cap B_{1}$ and  $z_{\ell}\to z$ } \big\}.
\]
Then $\HH_\infty^\beta(\mathcal A) >0.$
\end{enumerate}
\end{lemma}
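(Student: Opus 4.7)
The plan is to establish parts (a) and (b) separately, using standard geometric-measure-theoretic tools. For part (a), I would proceed by contradiction: fix a small constant $c_{n,\beta}>0$ to be chosen and let $F\subset E$ denote the set of ``low-density'' points, i.e., those $x_\circ \in E$ for which $\HH^\beta_\infty(E\cap B_r(x_\circ)) < c_{n,\beta} r^\beta$ for every sufficiently small $r$. Since $\HH^\beta_\infty(G)=0$ implies $\HH^\beta(G)=0$, and by stratifying $F$ according to the threshold $\rho(x_\circ)$ below which the low-density inequality is in force, it suffices to show $\HH^\beta_\infty(F\cap B_R)=0$ for every $R>0$. Given any near-optimal cover $\{U_i\}$ of $F\cap B_R$ by small-diameter sets with $\sum_i \operatorname{diam}(U_i)^\beta<\HH^\beta_\infty(F\cap B_R)+\varepsilon$, I would enlarge each $U_i$ meeting $F$ to a ball $\tilde B_i$ centered at a point of $F$ with radius $\operatorname{diam}(U_i)$, and apply the low-density inequality inside each $\tilde B_i$ to produce a sub-cover of $E\cap \tilde B_i\supset F\cap U_i$ with total $\operatorname{diam}^\beta$ sum at most $2c_{n,\beta}(2\operatorname{diam}(U_i))^\beta$. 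Concatenating these sub-covers yields a new covering of $F\cap B_R$ whose total $\operatorname{diam}^\beta$ sum is bounded by $2^{\beta+1}c_{n,\beta}(\HH^\beta_\infty(F\cap B_R)+\varepsilon)$; choosing $c_{n,\beta}<2^{-\beta-2}$ and letting $\varepsilon\to 0$ forces $\HH^\beta_\infty(F\cap B_R)=0$.

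For part (b), I would set $E_k:=r_k^{-1}E\cap B_1$ and observe that, after passing to a subsequence, $\HH^\beta_\infty(E_k)\ge c_{n,\beta}/2$ uniformly in $k$, thanks to the scaling identity $\HH^\beta_\infty(rA)=r^\beta\HH^\beta_\infty(A)$ combined with the density hypothesis at $0$. Since $\overline{E_k}\subset \overline{B_1}$, Blaschke's selection theorem provides a further subsequence along which $\overline{E_{k_\ell}}$ converges in Hausdorff distance to some closed set $F\subset\overline{B_1}$, and by the definition of $\mathcal A$ one checks that $F\subset\mathcal A$. The conclusion then reduces to the upper-semicontinuity estimate
\[ \HH^\beta_\infty(\mathcal A)\ge \HH^\beta_\infty(F)\ge \limsup_{\ell\to\infty}\HH^\beta_\infty(\overline{E_{k_\ell}})\ge c_{n,\beta}/2,\]
which in turn follows by taking any near-optimal cover of $F$ by sets with small total $\operatorname{diam}^\beta$ sum, slightly thickening each element into an open set that still covers $F$, and noting that by Hausdorff convergence inside the compact ambient $\overline B_1$, the same open neighborhoods eventually cover $\overline{E_{k_\ell}}$ as well.

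The main technical point throughout is this last upper-semicontinuity property, which is one of the rare instances in which the Hausdorff pre-measure $\HH^\beta_\infty$ behaves better than the standard Hausdorff measure $\HH^\beta$; it crucially uses the compactness of the ambient to turn Hausdorff convergence into set-theoretic containment in open neighborhoods. In both parts, the use of $\HH^\beta_\infty$ (rather than $\HH^\beta$ or $\HH^\beta_\delta$) is essential: the absence of a diameter restriction makes the scaling identity $\HH^\beta_\infty(rA)=r^\beta\HH^\beta_\infty(A)$ an exact equality, and guarantees that the coverings produced by the iterative argument in (a) and the semicontinuity estimate in (b) remain legal irrespective of the scale of their elements.
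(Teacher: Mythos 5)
The paper does not actually prove this lemma; it defers to [Fed69, 2.10.19(2)], [AlessioJoaquim, Lemma 3.5], and [Whi97, Lemma 2.4]. Your proposal reconstructs the argument that underlies those references: an iterated covering estimate for (a) and upper semicontinuity of $\HH^\beta_\infty$ along Hausdorff limits for (b). Part (b) is correct as written: the Blaschke selection, the inclusion of the Hausdorff limit $F$ into $\mathcal A$, the exact scaling $\HH^\beta_\infty(rA)=r^\beta\HH^\beta_\infty(A)$, and the open-cover argument giving $\limsup_\ell\HH^\beta_\infty(\overline{E_{k_\ell}})\le\HH^\beta_\infty(F)$ are all sound, and the observation that $\HH^\beta_\infty$ (unlike $\HH^\beta$) is upper semicontinuous with respect to Hausdorff convergence of compacta is indeed the crux of that part.

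For part (a), however, there is a gap in the covering step. You invoke ``a near-optimal cover $\{U_i\}$ of $F\cap B_R$ by small-diameter sets with $\sum_i {\rm diam}(U_i)^\beta<\HH^\beta_\infty(F\cap B_R)+\varepsilon$,'' but these two requirements are in tension: a near-optimal cover for the unrestricted content $\HH^\beta_\infty$ may consist of sets of diameter comparable to $R$, and imposing a small-diameter restriction replaces $\HH^\beta_\infty$ on the right-hand side by the possibly much larger $\HH^\beta_\delta$, after which the self-improving inequality no longer closes. The fix is to couple the stratification with the localization. Writing $F=\cup_j F_j$ with $F_j:=\{x\in F: \HH^\beta_\infty(E\cap B_r(x))<c_{n,\beta}r^\beta\ \forall\,r<1/j\}$, one should show $\HH^\beta_\infty(F_j\cap B_\rho(y))=0$ for balls of radius $\rho<1/(4j)$. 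For such a ball, intersecting each element of a cover of $F_j\cap B_\rho(y)$ with $B_\rho(y)$ does not increase $\sum_i{\rm diam}(U_i)^\beta$, so a near-optimal cover may be taken with ${\rm diam}(U_i)\le 2\rho<1/(2j)$ for free; then every enlarged ball $\tilde B_i=B(x_i,{\rm diam}\,U_i)$ has radius below the stratum threshold $1/j$ and the low-density inequality applies to every $\tilde B_i$, yielding $\HH^\beta_\infty(F_j\cap B_\rho(y))\le c_{n,\beta}\bigl(\HH^\beta_\infty(F_j\cap B_\rho(y))+\varepsilon\bigr)$. Since $\HH^\beta_\infty(F_j\cap B_\rho(y))\le(2\rho)^\beta<\infty$, choosing $c_{n,\beta}<1$ and letting $\varepsilon\to0$ forces it to vanish; countable subadditivity of $\HH^\beta_\infty$ over a cover of $F_j$ by such balls, and over $j$, then gives $\HH^\beta(F)=0$. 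One minor correction to your closing commentary: the exact scaling $\HH^\beta(rA)=r^\beta\HH^\beta(A)$ holds for the Hausdorff measure $\HH^\beta$ as well, not only for the content; it is $\HH^\beta_\delta$ with finite $\delta$ that fails to scale exactly. What is genuinely special to $\HH^\beta_\infty$ in part (b) is the upper semicontinuity, not the scaling.
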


The last main result of this section is the following covering-type result that will play a crucial role in the understanding of the generic size of the singular set, and in particular in the proof of Theorem~\ref{thm-Schaeffer-intro}.
Notice that, when $k=1$, $\beta$ is an integer, and $\pi_1(E)$ is $\beta$-rectifiable, then the result follows from the coarea formula; see also Eilenberg's inequality \cite[13.3]{BZ}.

\begin{proposition}\label{prop:GMT4}
Let $E\subset \R^n\times [-1,1]$, let $(x,t)$ denote a point in  $\R^n\times [-1,1]$, and let $\pi_1:(x,t)\mapsto x$ and $\pi_2:(x,t)\mapsto t$ be the standard projections.
Assume that for some  $\beta\in (0,n]$ and $s> 0$ we have:
\begin{itemize}
\item  $\HH^\beta\big(\pi_1(E)\big) < +\infty$;

\item For any $(x_\circ,t_\circ) \in E$ there exists a modulus of continuity  $\omega_{x_\circ, t_\circ}:\R^+\to \R^+$   such that
\[
\big\{ (x,t)\in \R^n\times[-1,1] \ :\   t-t_\circ>  \omega_{x_\circ, t_\circ}(|x-x_\circ|) |x-x_\circ|^{s} \big\}\cap E = \varnothing.
\]
\end{itemize}
Then:
\begin{enumerate}
\item[(a)]  If $\beta\le s$, we have $\HH^{\beta/s} \big( \pi_2(E) \big) =0$.

\item[(b)]  If $\beta> s$, for $\HH^1$-a.e. $t\in \R$ we have $\HH^{\beta-s}\big(  E \cap \pi_2^{-1}(\{t\}) \big) =0$.
\end{enumerate}
\end{proposition}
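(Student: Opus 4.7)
The plan is to prove both parts through one covering construction, preceded by a quick layering step. Since the modulus $\omega_{x_\circ,t_\circ}$ depends on the point, I would first reduce to a uniform situation by defining, for each $\eta>0$ and $k\in\N$,
\[
E_{\eta,k}:=\{(x_\circ,t_\circ)\in E\,:\,\omega_{x_\circ,t_\circ}(r)\le\eta\text{ for all }r\le 1/k\},
\]
so that $E=\bigcup_k E_{\eta,k}$ (an increasing union) for every fixed $\eta$, and on each $E_{\eta,k}$ the hypothesis becomes the uniform quantitative statement
\[
(x_\circ,t_\circ),(x,t)\in E_{\eta,k},\ |x-x_\circ|\le 1/k\ \Longrightarrow\ |t-t_\circ|\le\eta|x-x_\circ|^s.
\]
For fixed $\eta$ and fixed $k_0\in\N$, and for any $k\ge k_0$ and $\ep>0$, the finiteness $\HH^\beta(\pi_1(E))<\infty$ lets me pick a covering of $\pi_1(E_{\eta,k_0})$ by balls $B_{r_i}(x_i)$ with radii $r_i\le 1/(2k)$ and $\sum_i r_i^\beta\le\HH^\beta(\pi_1(E))+\ep$; anchoring at a point of $E_{\eta,k_0}$ over each nonempty ball and using the above estimate yields
\[
E_{\eta,k_0}\cap\pi_1^{-1}(B_{r_i}(x_i))\subset B_{r_i}(x_i)\times I_i,\qquad |I_i|\le 2\eta(2r_i)^s\le 2\eta/k^s,
\]
that is, a covering of $E_{\eta,k_0}$ by rectangles whose time-length is $O(\eta r_i^s)$ and vanishes as $k\to\infty$.

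For part (a), projecting to time gives $\pi_2(E_{\eta,k_0})\subset\bigcup_i I_i$, hence
\[
\HH^{\beta/s}_{2\eta/k^s}(\pi_2(E_{\eta,k_0}))\le\sum_i|I_i|^{\beta/s}\le C(\beta,s)\,\eta^{\beta/s}\bigl(\HH^\beta(\pi_1(E))+\ep\bigr).
\]
Sending $\ep\downarrow 0$ and $k\uparrow\infty$ (so that the covering diameter vanishes and the premeasure $\HH^{\beta/s}_{2\eta/k^s}$ upgrades to $\HH^{\beta/s}$) gives $\HH^{\beta/s}(\pi_2(E_{\eta,k_0}))\le C\eta^{\beta/s}\HH^\beta(\pi_1(E))$. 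Taking $k_0\uparrow\infty$ (continuity of Hausdorff measure on the increasing sequence $\pi_2(E_{\eta,k_0})\uparrow\pi_2(E)$) and then $\eta\downarrow 0$ yields $\HH^{\beta/s}(\pi_2(E))=0$.

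For part (b), for each $t\in\R$ the slice $E_{\eta,k_0}\cap\pi_2^{-1}(\{t\})$ is covered by those $B_{r_i}(x_i)$ with $t\in I_i$, all of diameter $\le 1/k$; defining $g_k(t):=2^{\beta-s}\sum_i r_i^{\beta-s}\chi_{I_i}(t)$ I obtain the pointwise bound $\HH^{\beta-s}_{1/k}(E_{\eta,k_0}\cap\pi_2^{-1}(\{t\}))\le g_k(t)$ together with the integral estimate
\[
\int_\R g_k(t)\,dt=2^{\beta-s}\sum_i r_i^{\beta-s}|I_i|\le 2^{\beta+1}\eta\bigl(\HH^\beta(\pi_1(E))+\ep\bigr).
\]
Sending $\ep\downarrow 0$, then applying Fatou's lemma as $k\uparrow\infty$ (using that $\HH^{\beta-s}(E_{\eta,k_0}\cap\pi_2^{-1}(\{t\}))=\lim_k\HH^{\beta-s}_{1/k}(E_{\eta,k_0}\cap\pi_2^{-1}(\{t\}))$ since $E_{\eta,k_0}$ is held fixed while the diameter of the covering tends to $0$), then monotone convergence as $k_0\uparrow\infty$, and finally $\eta\downarrow 0$ yields $\int_\R^{\star}\HH^{\beta-s}(E\cap\pi_2^{-1}(\{t\}))\,dt=0$, so $\HH^{\beta-s}(E\cap\pi_2^{-1}(\{t\}))=0$ for a.e.\ $t$.

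The main technical obstacle --- which would occupy most of the write-up --- is arranging the four successive limits $\ep\downarrow 0$, $k\uparrow\infty$, $k_0\uparrow\infty$, $\eta\downarrow 0$ in the correct order so as to upgrade the directly-estimated premeasure $\HH^{\beta-s}_{1/k}$ on slices of $E_{\eta,k_0}$ to the genuine Hausdorff measure $\HH^{\beta-s}$ on slices of $E$. The key point is the separation of the roles of $k$ (which controls the fineness of the covering and is sent to infinity first, with $E_{\eta,k_0}$ held fixed) and $k_0$ (which indexes the exhaustion of $E$ and is sent to infinity only afterward), together with the fact that the bound on the right-hand side of the integral estimate depends only on $\eta$.
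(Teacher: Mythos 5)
Your proof is correct and follows the same basic strategy as the paper's (cover the spatial projection by balls at a small scale, lift to cylinders with time extent $\lesssim \eta\,r^s$, then project or slice), but it differs in two technical respects. First, the \emph{layering}: the paper fixes a threshold $\ep$ and partitions $E$ into \emph{disjoint} pieces $E_i$ indexed by the dyadic scale $2^{-i}$ at which $\omega_{x_\circ,t_\circ}$ first drops below $\ep$; disjointness of $\pi_1(E_i)$ (which follows from the injectivity of $\pi_1|_E$ implied by the one--sided cleaning condition) is what lets them sum $\HH^\beta(\pi_1(E_i))$. You instead use the \emph{increasing} exhaustion $E_{\eta,k}$ with a uniform modulus bound $\eta$ at scale $1/k$, and pass $k_0\to\infty$ at the end via continuity from below of Hausdorff measure for increasing arbitrary sets; this sidesteps the measurability concerns attached to summing over the disjoint images $\pi_1(E_i)$, at the modest cost of keeping track of the order of the limits $\ep\downarrow 0$, $k\uparrow\infty$, $k_0\uparrow\infty$, $\eta\downarrow 0$. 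Second, for part (b) the paper builds the counting functions $N_i(t,j)$ and $f_\ep(t)$, applies Chebyshev to get a bad set $X^\ep$ of small $\HH^1$ measure, and then runs a Borel--Cantelli-type argument over $\ep=2^{-2M}$ to extract a null set $X$ off which the sliced premeasure vanishes; you reach the same conclusion more directly via Fatou applied to the slice-covering functions $g_k$ (noting $g_k$ is measurable so Fatou is classical, and then using monotone convergence for the upper integral to handle $k_0\to\infty$ without worrying about measurability of $t\mapsto\HH^{\beta-s}(E\cap\pi_2^{-1}\{t\})$). Both are valid; yours is shorter and slightly more robust, the paper's is more elementary and explicit. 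Two small points worth keeping straight: the cover radii $r_i\le 1/(2k)$ must be taken at a scale $\le 1/(2k_0)$ so that the uniform bound $\omega\le\eta$ on $E_{\eta,k_0}$ genuinely applies to pairs over a common ball (you have this, since you require $k\ge k_0$); and the cover sum should be stated in terms of diameters $\sum(2r_i)^\beta\le 2^\beta\HH^\beta(\pi_1(E))+\ep$ to match the convention \eqref{eq:def Haus}, which only changes the harmless constants.
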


\begin{proof}
Fix $\ep>0$ be arbitrarily small.
We decompose $E = \bigcup_{i\ge 1} E_i$ as

\[
E_1 : = \big\{ (x_\circ,t_\circ) \in E \ :\  \omega_{x_\circ,t_\circ}(1) <\ep   \big \},
\]

\[
E_i  : = \big\{ (x_\circ,t_\circ) \in E \ :\  \omega_{x_\circ,t_\circ}(2^{-i+1} ) <\ep \le  \omega_{x_\circ,t_\circ}(2^{-i+2} )  \big \}, \quad \mbox{for } i\ge 2. 
\]
Fix $i \geq 1$ and note that, if $(x_1, t_1)$ and $(x_2, t_2)$ belong to $E_i$, then
\begin{equation}\label{XR}
\{ (x,t)\in B_{2^{-i+1}(x_j)} \times(-1,1) \ :\   t-t_j> \ep |x-x_j|^{s} \}\cap E = \varnothing, \quad j=1,2.
\end{equation}
Hence, by triangle inequality,
\begin{equation}\label{aiaiai}
x_1, x_2\in E_i, \ |x_1-x_2|\le 2^{-i} \quad \Rightarrow \quad  |t_1-t_2| \le \ep |x_1-x_2|^{s}.
\end{equation}
In particular, since the sets $\{E_i\}$ give a partition of $E$, it follows from \eqref{aiaiai} that the projection $\pi_1 :E \to \R^n$ is injective, and thus the sets $\pi_1(E_i)$ are disjoint.

Now, by definition of  $\HH^\beta \big(\pi_1(E_i)\big)$,  there is  countable collection of balls $\{B_\ell\}$  such that $\pi_1(E_i)\subset \cup_\ell B_\ell$, with
\begin{equation}
\label{eq:Haus Ei}
{\rm diam }(B_\ell) <  2^{-i}  \quad \mbox{and}\quad \sum_\ell {\rm diam }(B_\ell)^{\beta} < \HH^\beta \big(\pi_1(E_i)\big) +2^{-i}.
\end{equation}
Then, thanks to \eqref{aiaiai}, we see that  $E_i$ can be covered by the family of cylinders
\[
 \mathcal F_i:=\big\{ C_{\ell}: =  B_{\ell} \times ( t_\ell - \ep \,{\rm diam }(B_\ell)^{s}  , t_\ell + \ep\, {\rm diam }(B_\ell)^{s} ) \big\}
  \]
  for some suitable $t_\ell\in (-1,1)$.

\smallskip

Let us show $(a)$. Since $\{\pi_2(C_\ell)\}$ is a covering of $\pi_2(E_i)$ made of intervals of length  $2\ep {\rm diam }(B_\ell)^{s}$,  we have
\[
\HH^{\beta/s}_\infty(\pi_2(E_i)) \le  (2\ep)^{\beta/s}  \sum_\ell {\rm diam }(B_\ell)^{\beta} \leq (2\ep)^{\beta/s} \big(\HH^\beta \big(\pi_1(E_i)\big) +2^{-i}\big).
\]
Summing over $i\geq 1$ we obtain 
\[\HH^{\beta/s}_\infty(\pi_2(E)) \le  (2\ep)^{\beta/s}\big(\HH^\beta \big(\pi_1(E)\big) +1\big),\] 
and  (a) follows from the arbitrariness of $\ep$.

\smallskip

To show (b), following the same notation as above, we define the function
\[
N_i(t, j) = \#\big\{ C_{\ell} \in  \mathcal F_i\ : \   {\rm diam }(B_\ell) \in (2^{-j-1}, 2^{-j}), \ t \in  ( t_\ell - \ep {\rm diam }(B_\ell)^{s}  , t_\ell + \ep {\rm diam }(B_\ell)^{s} )   \big \}.
\]
Let $\mathcal I_{i,j}$ denote the set of indices $\ell$ such that $C_\ell \in \mathcal F_i$ and ${\rm diam }(B_\ell) \in (2^{-j-1}, 2^{-j})$.
Then we can rewrite $N_i(t,j)$ as
$$
N_i(t,j)=\sum_{\ell \in \mathcal I_{i,j}}\chi_{( t_\ell - \ep {\rm diam }(B_\ell)^{s}  , t_\ell + \ep {\rm diam }(B_\ell)^{s} )}(t).
$$
Hence, integrating over $[-1,1]$ we get
$$
\int_{-1}^1 N_i(t,j)\,dt \leq \sum_{\ell \in \mathcal I_{i,j}}2\ep {\rm diam }(B_\ell)^{s} \leq 2\ep (2^{-j})^s \,\# \mathcal I_{i,j},
$$
therefore, multiplying this estimate by $(2^{-j})^{\beta -s}$ and summing over $j$, we obtain (recall \eqref{eq:Haus Ei})
\begin{equation}\label{asiobdfsnion}
\begin{split}
\int_{-1}^1  \sum_j (2^{-j})^{\beta -s}  N_i(t, j)  \,dt   = 2 \ep \sum_j (2^{-j})^{\beta}  \,\# \mathcal I_{i,j}
  \le 2^{1+\beta} \ep\sum_{C_\ell \in \mathcal F_i} {\rm diam }(B_\ell)^{\beta} \le 2^{1+\beta}\ep \big(\HH^\beta \big(\pi_1(E_i)\big) +2^{-i}\big) .
\end{split}
\end{equation}
We now consider the functions $f_{i,\ep}(t):=\sum_j (2^{-j})^{\beta-s}  N_i(t, j)$ (note that the covering used to define $N_i(t,j)$ depends on $\epsilon$), and $f_\ep(t):=\sum_i f_{i,\ep}(t)$.
Then, summing \eqref{asiobdfsnion} over $i$, we have
$$
\int_{-1}^1f_\ep(t)\,dt \leq 2^{1+\beta}\ep \big(\HH^\beta \big(\pi_1(E)\big) +1\big),
$$
and it follows  by Chebyshev inequality
 $$
 \HH^1(X^\ep)\leq 2^{1+\beta}\ep^{1/2} \big(\HH^\beta \big(\pi_1(E)\big) +1\big),\qquad \text{where }X^\ep:= \{ t \in  (-1,1) \, :\    f_\ep(t)>\ep^{1/2}  \}.
 $$
Set $X:=  \cap_{M= 1}^\infty X_M$, where $X_M:=\cup_{k=M}^\infty X^{2^{-2k}}$.
Then
$$
\mathcal H^1(X_M)\leq \sum_{k=M}^\infty 2^{1+\beta}2^{-k}\big(\HH^\beta \big(\pi_1(E)\big) +1\big) \leq  2^{1+\beta}2^{1-M}\big(\HH^\beta \big(\pi_1(E)\big) +1\big),
$$
therefore $\mathcal H^1(X)=0$.

Also, for any  $t\in [-1,1]\setminus X$, there exists $M_t$ such that $t \in [-1,1]\setminus X_M \subset [-1,1]\setminus X^{2^{-2M}}$ for any $M \geq M_t$.
Therefore, considering the covering associated to $\ep=2^{-2M}$,  we get
\begin{equation*}
\begin{split}
\HH^{\beta-s}_\infty\big(\pi_1(E)\cap\pi_2^{-1}(\{t\})\big)  &\leq \sum_i \HH^{\beta-s}_\infty\big(\pi_1(E_i)\cap\pi_2^{-1}(\{t\})\big)\\
&  \leq
\sum_i \sum_j (2^{-j})^{\beta-s}  N_i(t, j) = f_{2^{-2M}}(t)\leq 2^{-M}\qquad \forall\, M \geq M_t.
\end{split}
\end{equation*}
This proves that $\HH^{\beta-s}_\infty\big(\pi_1(E)\cap\pi_2^{-1}(\{t\})\big) =  0$ for all $t\in  [-1,1]\setminus X$, as wanted.
\end{proof}

As an immediate consequence of Proposition~\ref{prop:GMT4}, we get:

\begin{corollary}\label{prop:GMT4b}
Let $E\subset \R^n\times [-1,1]$, let $(x,t)$ denote a point in  $\R^n\times [-1,1]$, and let $\pi_1:(x,t)\mapsto x$ and $\pi_2:(x,t)\mapsto t$ be the standard projections.
Assume that, for some  $\beta\in (0,n]$ and $s> 0,$ we have:
\begin{itemize}
\item  ${\rm dim}_\HH\big(\pi_1(E)\big) \le \beta$;

\item For all $(x_\circ,t_\circ) \in E$ and $\ep>0$, there exists  $\varrho= \varrho_{x_\circ, t_\circ, \ep}>0$   such that
\[
\big\{ (x,t)\in B_\varrho(x_\circ)\times[-1,1] \ :\   t-t_\circ>   |x-x_\circ|^{s-\ep} \big\}\cap E = \varnothing.
\]
\end{itemize}
Then:
\begin{enumerate}
\item[(a)]  If $\beta< s$,  we have ${\rm dim}_\HH \big( \pi_2(E) \big) \le \beta/s$.

\item[(b)]  If $\beta\ge s$, for $\HH^1$-a.e. $t\in \R$ we have ${\rm dim}_\HH\big(  E \cap \pi_2^{-1}(\{t\}) \big) \le \beta-s$.
\end{enumerate}
\end{corollary}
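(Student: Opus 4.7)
The strategy is to reduce to Proposition~\ref{prop:GMT4} via two standard moves: a countable decomposition of $E$ that makes the neighborhood radius $\varrho_{x_\circ,t_\circ,\ep}$ uniform, and a small loss in the exponent to pass from the ``$s-\ep$ for every $\ep$'' formulation to the fixed ``$s$'' formulation required by Proposition~\ref{prop:GMT4}.

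Fix $\ep_1>0$ (which will eventually be sent to $0$) and, for each $k\in\N$, set
\[
E^{(\ep_1)}_k:=\bigl\{(x_\circ,t_\circ)\in E\,:\, \varrho_{x_\circ,t_\circ,\ep_1}\geq 1/k\bigr\},
\]
so that $E=\bigcup_k E^{(\ep_1)}_k$. On each piece $E^{(\ep_1)}_k$ the assumption reads: for all $(x_\circ,t_\circ),(x,t)\in E^{(\ep_1)}_k$ with $|x-x_\circ|<1/k$ one has $t-t_\circ\leq |x-x_\circ|^{s-\ep_1}$. Setting $s':=s-\ep_1$ and
\[
\omega_{x_\circ,t_\circ}(r):=\begin{cases} 1 & \text{if } r<1/k,\\ 2\,k^{s'} & \text{if } r\geq 1/k,\end{cases}
\]
the bound $t-t_\circ\leq 2$ together with the above shows that $\{t-t_\circ>\omega_{x_\circ,t_\circ}(|x-x_\circ|)\,|x-x_\circ|^{s'}\}\cap E^{(\ep_1)}_k=\varnothing$, which is precisely the second hypothesis of Proposition~\ref{prop:GMT4} with exponent $s'$. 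Moreover, fixing any $\beta'>\beta$, the hypothesis ${\rm dim}_\HH(\pi_1(E))\leq \beta$ gives $\HH^{\beta'}(\pi_1(E^{(\ep_1)}_k))\leq \HH^{\beta'}(\pi_1(E))=0<\infty$, verifying the first hypothesis as well.

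For part~(a), assume $\beta<s$ and pick $\beta'\in(\beta,s)$ together with $\ep_1>0$ small enough that $\beta'\leq s'$. Proposition~\ref{prop:GMT4}(a) applied on each $E^{(\ep_1)}_k$ yields $\HH^{\beta'/s'}(\pi_2(E^{(\ep_1)}_k))=0$, and summing over $k$ gives $\HH^{\beta'/s'}(\pi_2(E))=0$; hence ${\rm dim}_\HH(\pi_2(E))\leq \beta'/s'$. Letting $\beta'\downarrow\beta$ and $\ep_1\downarrow 0$ (along countable sequences) produces the bound ${\rm dim}_\HH(\pi_2(E))\leq \beta/s$.

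For part~(b), assume $\beta\geq s$ and pick $\beta'>\beta$ and $\ep_1>0$ small with $\beta'>s'$. Proposition~\ref{prop:GMT4}(b) gives, for each $k$, a full-$\HH^1$-measure set $T_k\subset[-1,1]$ on which $\HH^{\beta'-s'}(E^{(\ep_1)}_k\cap\pi_2^{-1}(\{t\}))=0$. Intersecting over $k$ gives a full-measure set $T_{\beta',\ep_1}\subset[-1,1]$ on which $\HH^{\beta'-s'}(E\cap\pi_2^{-1}(\{t\}))=0$, so ${\rm dim}_\HH(E\cap\pi_2^{-1}(\{t\}))\leq \beta'-s'$ on $T_{\beta',\ep_1}$. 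Choose countable sequences $\beta'_n\downarrow\beta$ and $\ep_1^{(n)}\downarrow 0$, and intersect the corresponding full-measure sets: on this intersection (still of full $\HH^1$-measure) one has ${\rm dim}_\HH(E\cap\pi_2^{-1}(\{t\}))\leq \beta'_n-(s-\ep_1^{(n)})$ for every $n$, and sending $n\to\infty$ concludes ${\rm dim}_\HH(E\cap\pi_2^{-1}(\{t\}))\leq \beta-s$ for $\HH^1$-a.e.\ $t$. The only subtlety is the countable handling of the exceptional $t$-sets when sending the two parameters to their limits, which is harmless since countable unions of $\HH^1$-null sets remain $\HH^1$-null.
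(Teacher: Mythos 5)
Your strategy---partition $E$ into countably many pieces on which $\varrho_{x_\circ,t_\circ,\ep_1}$ has a uniform lower bound, apply Proposition~\ref{prop:GMT4} to each piece with a slightly smaller exponent, sum over pieces, and pass to the limit along countable parameter sequences (including the $\beta'>\beta$ trick to pass from finite Hausdorff measure to Hausdorff dimension, and the case split $\beta<s$ versus $\beta\geq s$)---is the intended reduction, and the countability bookkeeping is handled correctly. However, there is a genuine gap in the verification of the second hypothesis of Proposition~\ref{prop:GMT4}: the function $\omega_{x_\circ,t_\circ}$ you construct is \emph{not} a modulus of continuity, since $\omega_{x_\circ,t_\circ}(r)\equiv 1$ for $r<1/k$ and hence $\omega_{x_\circ,t_\circ}(0^+)=1\neq 0$. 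This is not cosmetic. The proof of Proposition~\ref{prop:GMT4} begins by decomposing $E=\bigcup_i E_i$ according to the dyadic scale at which $\omega_{x_\circ,t_\circ}$ first drops below a small fixed $\ep>0$, and then sends $\ep\to 0$ at the end to get the $\HH$-null conclusion. If $\omega_{x_\circ,t_\circ}\geq 1$ near $0$, no point of $E^{(\ep_1)}_k$ belongs to any $E_i$ once $\ep<1$, so the key estimate $\HH^{\beta/s'}_\infty\big(\pi_2(E^{(\ep_1)}_k)\big)\leq (2\ep)^{\beta/s'}\big(\HH^\beta(\pi_1(E^{(\ep_1)}_k))+1\big)$ is only available for $\ep\geq 1$, which does not yield measure zero.

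The fix is to spend the exponent loss twice. Set $s'':=s-2\ep_1$ and, for $r<1/k$, define $\omega_{x_\circ,t_\circ}(r):=r^{\ep_1}$ (extended by the constant $2k^{s''}$ for $r\geq 1/k$, which preserves monotonicity for $\ep_1$ small). This now genuinely vanishes at $0$, and for $(x_\circ,t_\circ),(x,t)\in E^{(\ep_1)}_k$ with $|x-x_\circ|<1/k$ one has
\[
t-t_\circ\leq |x-x_\circ|^{s-\ep_1}=|x-x_\circ|^{\ep_1}\cdot|x-x_\circ|^{s-2\ep_1}=\omega_{x_\circ,t_\circ}(|x-x_\circ|)\,|x-x_\circ|^{s''},
\]
which is precisely the hypothesis of Proposition~\ref{prop:GMT4} with a bona fide modulus of continuity. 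With this modification the rest of your argument goes through unchanged: for (a) choose $\beta'\in(\beta,s)$ and $\ep_1$ small enough that $\beta'\leq s''$; for (b) observe $\beta'>\beta\geq s>s''$. Since $\beta'/s''\downarrow\beta/s$ and $\beta'-s''\downarrow\beta-s$ as $\beta'\downarrow\beta$ and $\ep_1\downarrow 0$, you obtain the stated dimension bounds exactly as you described.
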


\section{Dimension reduction results} \label{sec:DRB}

This section is concerned with bounding the Hausdorff dimension of the differences of the subsets of $\mathbf{\Sigma}_{n-1}$ defined in \eqref{eq:Sigmaall}.
Note that we have the chain of inclusions
\begin{equation}\label{INCLUSI}
\mathbf{\Sigma}\supset \mathbf{\Sigma}_{n-1} \,\supset\, \mathbf{\Sigma}_{n-1}^{\ge 3} \,\supset\, \mathbf{\Sigma}_{n-1}^{3rd}\,=\, \mathbf{\Sigma}_{n-1}^{> 3}\,\supset\,\mathbf{\Sigma}_{n-1}^{\ge 4} \,\supset\,\mathbf{\Sigma}_{n-1}^{4th} \,=\, \mathbf{\Sigma}_{n-1}^{>4}  \,\supset\, \mathbf{\Sigma}_{n-1}^{\ge 5-\zeta},
\end{equation}
where the two equalities in such chain of inclusions follow from Propositions~\ref{lem-Sigma4th=Sigma>4} and~\ref{prop:nobadpoints}.

For $0\leq m\leq n-2$, we simply consider the sets
\[\qquad\qquad\qquad \mathbf{\Sigma}_m \,\supset\, \mathbf{\Sigma}_m^{\ge 3} \,=\, \mathbf{\Sigma}_m^{3rd}, \qquad 0\leq m\leq n-2,\]
as this suffices for our purposes.
Recall that, by Proposition~\ref{prop:E2B2}(a), we have $\mathbf{\Sigma}_m\setminus\mathbf{\Sigma}_m^a=\mathbf{\Sigma}_{m}^{\ge3}=\mathbf{\Sigma}_{m}^{3rd}$.

\smallskip

Our goal is to show that ${\rm dim}_{\HH}(\pi_1 (\mathbf{\Sigma}\setminus \mathbf{\Sigma}_{n-1}^{\ge 5-\zeta}) ) \le n-2$ for any $\zeta\in(0,1)$, where $\pi_1$ denotes the canonical projection $\pi_1:(x,t)\mapsto x$.
For this, using the tools developed in the previous sections, in the next lemmas we bound the size all the differences between consecutive sets of the previous chain of inclusions.

\begin{proposition}\label{prop:EG2B4}
Let $u\in C^0\big(\overline{B_1}\times [-1,1]  \big)$ solve \eqref{eq:UELL+t}. %and \eqref{notparabola}.  
Then:

{\rm (a)} $\dim_{\HH}\big(\pi_1(\mathbf{\Sigma}^a_m) \big)\le m-1$ for $1\le m\le n-2$   ($\pi_1(\mathbf{\Sigma}^a_m)$ is discrete if $m=1$).

{\rm (b)} $\dim_{\HH}\big(\pi_1(\mathbf{\Sigma}_{n-1}^{<3} )\big)\le n-3$  ($\pi_1(\mathbf{\Sigma}_{n-1}^{<3})$ is countable if $n=3$).

{\rm (c)} For any $\varrho\in(0,1)$:
\begin{itemize}
\item[-] if $m \leq n-2$ then $\pi_1\big(\mathbf{\Sigma}_{m}\setminus \mathbf{\Sigma}^a_m\big) \cap \overline{B_\rho}$ is covered by a $C^{1,1}$ $m$-dimensional manifold;
\item[-] $\pi_1\big(\mathbf{\Sigma}_{n-1}^{\ge3} \big) \cap \overline{B_\rho}$ is covered by a $C^{1,1}$ $(n-1)$-dimensional manifold.
\end{itemize}
\end{proposition}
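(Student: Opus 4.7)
The plan is to treat the three parts separately. Parts (a) and (b) are dimension reductions in the spirit of Federer, powered by the blow-up lemmas of Section~\ref{sec:EG2B} (specifically Lemmas~\ref{lem:EG2Bm} and~\ref{lem:EG2Bn-1}), combined with the measure-theoretic machinery of Section~\ref{sec:GMT} (Lemmas~\ref{lem:GMT1},~\ref{lem:GMT3} and Proposition~\ref{prop:GMT2}). Part (c) is a $C^{1,1}$ regularity statement for the union of singular sets over $t$, and will be obtained by a Whitney-type extension argument using the cubic $L^\infty$ expansion of \cite{AlessioJoaquim} together with the time-monotonicity.

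For (a), the plan is to argue by contradiction: if $\mathcal H^\beta_\infty\bigl(\pi_1(\mathbf{\Sigma}^a_m)\bigr)>0$ for some $\beta>m-1$, then Lemma~\ref{lem:GMT3}(a) selects a density point, which up to translation we take as $(0,0)\in \mathbf{\Sigma}^a_m$; along the densifying sequence $r_k\downarrow 0$ we extract (Proposition~\ref{prop:E2B2}(a)) a blow-up $q$ of the form \eqref{howisD2q}, and the balance $(n-m)\nu=\sum_j\nu_j$ with $\nu>0$ guarantees that $q|_{\{p_2=0\}}=-\sum_j\nu_j x_j^2$ is a nontrivial quadratic on the $m$-dimensional space $\{p_2=0\}$. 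Lemma~\ref{lem:EG2Bm} then forces every accumulation direction $y_\infty$ of points of $\mathbf{\Sigma}_m$ to lie in the algebraic variety $V:=\{p_2=0\}\cap\{q=0\}$, which has Hausdorff dimension $\le m-1$. Hence the accumulation set $\mathcal A$ produced by Lemma~\ref{lem:GMT3}(b) sits inside $V$, contradicting $\mathcal H^\beta_\infty(\mathcal A)>0$. For $m=1$, the set $V$ reduces to $\{0\}$, so no accumulation is possible and $\pi_1(\mathbf{\Sigma}^a_1)$ is discrete.

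For (b), I would proceed analogously using Lemma~\ref{lem:EG2Bn-1}, the only new ingredient being that the blow-up lemma requires frequency convergence $\lambda^{2nd}_k\to \lambda^{2nd}$. By the upper semicontinuity of the frequency (Lemma~\ref{lem:EG2B1bis}(b)) together with Lemma~\ref{lem:GMT1} applied to $f:=\phi(0^+,\cdot)$, outside a countable exceptional set we can arrange frequency continuity along some approximating sequence; restricting Proposition~\ref{prop:GMT2} to the sublevel set $f^{-1}\bigl([f(x)-\varrho,f(x)+\varrho]\bigr)$ absorbs the $\varrho$-slack. Lemma~\ref{lem:EG2Bn-1} then places every accumulation direction in the translation-invariance subspace $L\subset\{p_2=0\}$ of $q^{even}$, and the crucial point is that $\dim L\le n-3$: if $\dim L\ge n-2$ then $q^{even}$ would reduce to a two-dimensional Signorini solution of homogeneity in $(2+\alpha_\circ,3)$, which is forbidden by the classification recalled in Appendix~\ref{apb}, as the admissible $2$D frequencies avoid this interval. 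Applying Proposition~\ref{prop:GMT2} with $\Pi_{x,r}=x+L$ yields $\dim_\HH\bigl(\pi_1(\mathbf{\Sigma}_{n-1}^{<3})\bigr)\le n-3$; for $n=3$ one has $\dim L=0$, so $y_\infty=0$ and $\pi_1(\mathbf{\Sigma}_{n-1}^{<3})$ has no accumulation points, hence is countable.

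For (c), the plan is to build the covering $C^{1,1}$ manifold via Whitney's extension theorem at second order, taking as candidate tangent space at $x_\circ\in \pi_1(\mathbf{\Sigma})$ the hyperplane (or subspace) $T_{x_\circ}:=\{p_{2,x_\circ}=0\}$; this is well-defined thanks to Lemma~\ref{lem:EG2B1bis}(c), which shows that $p_{2,x_\circ,t_\circ}$ is independent of the choice of admissible time $t_\circ$. At non-anomalous singular points the $L^\infty$ cubic expansion $u(x_\circ+y,t_\circ)=p_{2,x_\circ}(y)+O(|y|^3)$ of \cite{AlessioJoaquim} holds with a uniform constant on $\overline{B_\varrho}$. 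Given two such points $x_0,x_1$, evaluating the expansion around $x_0$ at $x_1$ and using monotonicity of $u$ in $t$ in both directions (comparing $u(x_1,t_0)$ with $u(x_1,t_1)=0$ whichever time is larger) yields $|p_{2,x_0}(x_1-x_0)|\le C|x_1-x_0|^3$, while testing the same comparison on mesoscopic balls $|y|=r\gg |x_0-x_1|$ produces the coefficient Lipschitz bound $\|p_{2,x_0}-p_{2,x_1}\|_{L^\infty(B_1)}\le C|x_0-x_1|$; together these are exactly the second-order Whitney data and yield the $C^{1,1}$ covering manifold. The main obstacle is precisely this upgrade: the pointwise bound alone gives only $\mathrm{dist}(x_1-x_0,T_{x_0})\le C|x_1-x_0|^{3/2}$ from the quadratic growth of $p_{2,x_0}$ away from $T_{x_0}$, and to reach the sharp $|x_1-x_0|^2$ required for the $C^{1,1}$ graph property one has to combine it with the coefficient Lipschitz bound and a careful two-sided use of the time monotonicity, which is the delicate point of the argument.
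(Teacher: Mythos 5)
For parts (a) and (b) your outline matches the paper's proof: the same density-point argument via Lemma~\ref{lem:GMT3}, the same blow-up rigidity results (Lemmas~\ref{lem:EG2Bm} and~\ref{lem:EG2Bn-1}), and Proposition~\ref{prop:GMT2} for (b). In (a), your observation that $q|_{\{p_2=0\}}$ is a nontrivial quadratic because $\nu>0$ is the contrapositive of the paper's step (if it vanished identically, \eqref{howisD2q} would force $q\equiv0$); both work. One small imprecision: Proposition~\ref{prop:GMT2} does not discard a ``countable exceptional set'' and then apply to the rest --- it handles the whole set directly via the restriction to $f^{-1}([f(x)-\varrho,f(x)+\varrho])$; Lemma~\ref{lem:GMT1} only enters the paper's $n=3$ endpoint case. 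Your eventual conclusion is still correct.

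For part (c) you have correctly flagged where your own argument stalls, but the missing idea is exactly what the paper supplies, and you should make it explicit rather than leave it as an unresolved ``delicate point.'' Set $P_{x_\circ}:=p_{2,x_\circ,\tau(x_\circ)}(\cdot-x_\circ)$ and $r:=2|x-x_\circ|$. Assuming $\tau(x_\circ)\le\tau(x)$, the frequency-$\ge 3$ estimate together with monotonicity gives the \emph{one-sided} bound $P_{x_\circ}-P_x\le Cr^3$ on the whole overlap $B_r(x_\circ)\cap B_r(x)$, not merely the pointwise estimate $|p_{2,x_\circ}(x-x_\circ)|\le Cr^3$. Now observe: $P_{x_\circ}-P_x$ is a \emph{harmonic} quadratic polynomial (both Laplacians equal $1$), it is $\le 0$ at $x_\circ$ (since $P_{x_\circ}(x_\circ)=0$, $P_x\ge0$) and $\ge 0$ at $x$, so it vanishes at some $\hat x$ on the segment. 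Applying Harnack to the nonnegative harmonic function $Cr^3-(P_{x_\circ}-P_x)$ on the overlap, and using the zero at $\hat x$, upgrades the one-sided bound to a two-sided $L^\infty$ bound $\|P_{x_\circ}-P_x\|_{L^\infty(B_{cr}(\hat x))}\le Cr^3$. All three Whitney conditions $|D^k(P_{x_\circ}-P_x)|\le C|x-x_\circ|^{3-k}$, $k=0,1,2$, then follow \emph{simultaneously} from equivalence of norms on the finite-dimensional space of quadratic polynomials --- you never need a separate coefficient-Lipschitz lemma, a ``mesoscopic'' comparison, or a two-sided use of time monotonicity. From there, Whitney's theorem gives a $C^{2,1}$ function $F$ with $\pi_1(\mathbf{\Sigma}_{n-1}^{\ge3})\subset\{\nabla F=0\}$ and $D^2F(x_\circ)=D^2p_{2,x_\circ}$ of rank one (rank $n-m$ for $\mathbf{\Sigma}_m^{\ge3}$), and the implicit function theorem finishes. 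Without this harmonicity-plus-sign-change observation, your route --- pointwise bound, then try to separately prove coefficient Lipschitz, then worry about the $k=1$ condition and the $|x-x_\circ|^{3/2}$-vs-$|x-x_\circ|^2$ gap --- does not close.
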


\begin{proof}
(a) We need to prove that, for any $\beta>m-1$, the set $\pi_1(\mathbf{\Sigma}^a_m) $ has zero $\HH^\beta$ measure. Assume by contradiction that
\[ \HH^\beta \big( \pi_1(\mathbf{\Sigma}^a_m) \big) >0.
\]
Then,  by Lemma~\ref{lem:GMT3}, there exists a point $(x_\circ,t_\circ)\in \mathbf{\Sigma}^a_m$ ---which we assume for simplicity to be $(x_\circ,t_\circ)=(0,0)$---, a sequence $r_k\downarrow 0$, and a set $\mathcal A\subset \overline{B_{1}}$ with
$\HH^\beta \big( \mathcal A \big) >0$, such that for every point $y\in \mathcal A$ there is a sequence $(x_k, t_k)$ in  $\mathbf{\Sigma}^a_m$ such that $x_k/r_k \to y$.

Let $w=u(\,\cdot\,, 0)-p_2$, $w_r= w(r\,\cdot\,)$, $\tilde w_r= w_r/H(1,w_r)^{1/2}$. Then, thanks to Proposition~\ref{prop:E2B2}, up to extracting a subsequence we have
\begin{equation}\label{convergence11}
\tilde w_{r_k} \rightharpoonup q\quad \mbox{in }W^{1,2}_{\rm loc}(\R^n),
\end{equation}
where $q$ is $\lambda^{2nd}$-homogeneous harmonic function.  By definition of  $\mathbf{\Sigma}^a_{m}$ we have $\lambda^{2nd}=2$, and thus $q$ is a quadratic harmonic polynomial  satisfying \eqref{howisD2q}.

Thanks to Lemma~\ref{lem:EG2Bm} we have $\mathcal A \subset \{q=0\}\cap \{p_2=0\}$. Therefore, since $\HH^\beta \big( \mathcal A \big) >0 $, the polynomial $q$ vanishes in a subset of dimension $\beta>m-1$ of the $m$-dimensional linear space $\{p_2=0\}$. The only possibility is that $q\equiv 0$ on $\{p_2=0\}$,
and then \eqref{howisD2q} implies $q\equiv 0$; a contradiction since $H(1, q)=1$.

We note that in the case $m=1$ the same proof gives that $\mathbf{\Sigma}_1^a$ cannot have accumulation points,  i.e.,  it must be a discrete set.

\smallskip

(b) We apply  Proposition~\ref{prop:GMT2} to the set $\pi_1(\mathbf{\Sigma}_{n-1}^{<3})$ with the function $f:\pi_1(\mathbf{\Sigma}_{n-1}^{<3}) \rightarrow [0,\infty)$ defined by
$$
f(x_\circ)  := \phi\big(0^+, u(\,\cdot\, , \tau (x_\circ))  -p_{2,x_\circ, \tau (x_\circ)} \big) ,
\qquad \text{with}\quad
\tau (x_\circ)  := \min \big\{t\in [-1,1]  \,: \, ( x_\circ, t)\in \mathbf{\Sigma} \big\}.
$$
Note that, by Lemma~\ref{lem:EG2B1bis}(c), we have $\phi\big(0^+, u(\,\cdot\, , t)  -p_{2,x_\circ, t} \big) = f(x_\circ)$ for every $t$ such that $(x_\circ, t)\in \mathbf{\Sigma}$.
Also, by Proposition~\ref{prop:E2B2} (b) and the  definition of  $\mathbf{\Sigma}_{n-1}^{<3}$, we have $f(x_\circ)\in  [2+\alpha_\circ, 3).$

To obtain the result, thanks to Proposition~\ref{prop:GMT2}, it suffices to show the following property: for all  $x_\circ\in \pi_1(\mathbf{\Sigma}_{n-1}^{<3})$ and for all $\ep >0$ there exists $\varrho= \varrho(x_\circ,\ep) >0$ such that 
\[
B_r(x_\circ) \cap \pi_1(\mathbf{\Sigma}_{n-1}^{<3}) \cap f^{-1}([f(x_\circ)-\varrho,f(x_\circ)+\varrho]) \subset \{y \ :\  {\rm dist}(y, \Pi_{x_\circ,r}) \le \ep r\} \quad \forall \,r\in (0, \varrho),
\]
where $\Pi_{x_\circ,r}$ is a  $(n-3)$-dimensional plane passing through $x_\circ$.

With no loss of generality we can assume that  $(x_\circ, t_\circ) =(0,0)$, and we prove this statement by contradiction. If such $\varrho>0$ did not exist  for some $\ep >0$, then we would have sequences $r_k \downarrow 0$ and $x^{(j)}_k \in \pi_1(\mathbf{\Sigma}_{n-1}^{<3})\cap B_{r_k}$, $1\le j \le n-2$, such that 
\[
y^{(j)}_k := x^{(j)}_k/ r_k \to  y^{(j)}_\infty \in \overline{B_1},\qquad \dim\big({\rm span}( y^{(1)}_\infty, y^{(2)}_\infty, \dots, y^{(n-2)}_\infty)\big) =n-2,\qquad |f(x^{(j)}_k) - f(0)|\downarrow 0.
\]
Let $w=u(\,\cdot\,, 0)-p_2$, $w_r= w(r\,\cdot\,)$, $\tilde w_r= w_r/H(1,w_r)^{1/2}$. It follows by Proposition~\ref{prop:E2B2} that  \eqref{convergence11} holds,  where  $q$ is a $\lambda^{2nd}$-homogeneous solution of the Signorini problem \eqref{ETOP}. Also, since we are supposing that $(0,0)\in \mathbf{\Sigma}^{<3}_{n-1}$, we have $\lambda^{2nd}\in [2+\alpha_\circ, 3)$.

Applying then Lemma~\ref{lem:EG2Bn-1} to the sequences $\big(x^{(j)}_k,\tau(x^{(j)}_k)\big)$ we deduce that $q$ is translation invariant in the  $n-2$ independent directions
\[y^{(1)}_\infty, y^{(2)}_\infty, \dots, y^{(n-2)}_\infty\in \{p_2=0\}.\]
 As a consequence $q$ is a two dimensional $\lambda^{2nd}$-homogeneous solution of Signorini, with $\lambda^{2nd}\in [2+\alpha_\circ, 3)$. 
However, it follows from Lemma~\ref{lemAP:2D} that 2D homogeneous solutions of Signorini have homogeneities $\{1,2,3,4, \dots\}\cup \{1+\frac 1 2 , 3 + \frac 1 2 ,5 + \frac 1 2, 7, + \frac 1 2, \dots\}$, impossible.

 Note finally that, when $n=3$,  the same argument (but using  Lemma~\ref{lem:GMT1} in place of  Proposition~\ref{prop:GMT2}) implies that $\mathbf{\Sigma}_{n-1}^{<3}$ is at most countable.

(c) We prove the statement for the maximal stratum $\mathbf{\Sigma}_{n-1}^{\ge3}$; the proof for  $\mathbf{\Sigma}_{m}\setminus \mathbf{\Sigma}_{m}^a=\mathbf{\Sigma}_{m}^{\ge3}$ is analogous.

Given $x_\circ \in \pi_1(\mathbf{\Sigma}_{n-1}^{\ge3})$, set $P_{x_\circ} := p_{2,x_\circ, \tau(x_\circ)}(\,\cdot\,-x_\circ)$.
We claim that, for every pair $x_\circ, x \in \pi_1(\mathbf{\Sigma}_{n-1}^{\ge3})\cap \overline{B_\varrho}$, we have
\begin{equation}\label{ass.whitney}
|D^k P_{x_\circ}(x)  -  D^k P_{x}(x)|  \le C |x-x_\circ|^{3-k} \quad \mbox{for } k=0,1,2.
\end{equation}
Indeed, note that for all $\hat x\in \pi_1(\mathbf{\Sigma}_{n-1}^{\ge 3})\cap \overline{B_\varrho}$ we have
$\phi(0^+u(\hat x +\,\cdot\, ,  \tau(\hat x)) - p_{2, \hat x, \tau(\hat x)})\ge 3$.  Thus, by Lemma~\ref{lem:EMF3},
\[
\big\| u(\hat x +\,\cdot\, ,  \tau(\hat x)) - p_{2, \hat x, \tau(\hat x)}\big\|_{L^\infty(B_r)} \le C(n, \varrho) r^3 \qquad \forall \,r\in (0, {\textstyle \frac{1- \varrho}{2} }),
\]
therefore, applying this bound both to $\hat x=x_\circ$ and $\hat x=x$, we get
\[
|u(\, \cdot\,,  \tau(x_\circ)) -  P_{x_\circ} | \le C r^3 \quad \mbox{in } B_r(x_\circ)
\qquad 
\text{and}
\qquad
|u(\, \cdot\,, \tau(x) ) -  P_{x} | \le C r^3 \quad \mbox{in } B_r(x).
\]
Choosing $r = 2|x-x_\circ|$, and assuming without loss of generality that  $\tau(x_\circ)\le  \tau(x)$, since $u(\, \cdot\,, \tau(x_\circ)) \le u(\, \cdot\,, \tau(x))$   we obtain
\[
 P_{x_\circ} -   P_{x} \le Cr^3 + u(\, \cdot\,, \tau(x_\circ))- u(\, \cdot\,, \tau(x)) \le Cr^3 \quad \mbox {in }B_r(x_\circ) \cap B_r(x).
\]
Noticing that $P_{x_\circ} -   P_{x}$ is a harmonic quadratic polynomial that vanishes at some point $\hat x$  in the segment  joining $x_\circ$ to $x$, as a consequence of the above upper bound we easily deduce that 
\[
 \|P_{x_\circ} -   P_{x}\|_{ L^\infty(B_{4r}(\hat x))} \le Cr^3,
\]
and since the $L^\infty(B_1)$ and the  $C^3(B_1)$ norm are equivalent on space of quadratic polynomials, \eqref{ass.whitney} holds.

Then, applying Whitney's extension theorem (see \cite{Fef09} or \cite[Lemma 3.10]{AlessioJoaquim}) we obtain a $C^{2,1}$ function $F: B_1\to \R$ satisfying
\[
F(x) = P_{x_\circ}(x) + O(|x_\circ -x|^3)
\]
for all $x_\circ \in \pi_1(\mathbf{\Sigma}_{n-1}^{\ge 3})\cap \overline{B_\varrho}$.
In particular $\pi_1(\mathbf{\Sigma}_{n-1}^{\ge 3}) \subset \{\nabla F = 0\}$ and $D^2 F(x_\circ) = D^2 p_{2,x_\circ, \tau(x_\circ)}(0)$ has rank one
(recall that  $(x_\circ, \tau(x_\circ)) \in \mathbf{\Sigma}_{n-1}$). Hence, by the implicit function theorem, we find that  $\{\nabla F = 0\}$ is a $C^{1,1}$ $(n-1)$-dimensional manifold in a neighborhood of $x_\circ$.
\end{proof}

As a consequence of the previous result, we get the following:

\begin{corollary}\label{cor:EG2B4bis}
Let $n=3$, let $u\in C^0\big(\overline{B_1}\times [-1,1]  \big)$ solve \eqref{eq:UELL+t}, and assume that $u(x,t')>u(x,t)$ whenever $t'>t$ and $u(x,t)>0$. 
Then,  for all  but a countable set of  singular points $(x_\circ,t_\circ)$, we have
\[
\| u(x_\circ + \,\cdot\, ,t_\circ )-p_{2,x_\circ, t_\circ}\|_{L^\infty(B_r)}\le C r^3 \qquad \forall \,r\in  {\textstyle \big(0, \frac{1-|x_\circ|}{2}\big)},
\]
where $C$ depends only on $n$ and $1-|x_\circ|$.
\end{corollary}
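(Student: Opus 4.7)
The proof will split into a frequency-decay step and a countability argument, both resting directly on the dimension-reduction results in Proposition~\ref{prop:EG2B4}. The plan is the following.

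First, I will establish the pointwise bound at every singular point $(x_\circ,t_\circ)$ at which the Almgren frequency $\phi\bigl(0^+,\, u(x_\circ+\,\cdot\,,t_\circ)-p_{2,x_\circ,t_\circ}\bigr)\ge 3$. This is the exact argument already used inside the proof of Proposition~\ref{prop:EG2B4}(c): Lemma~\ref{lem:EMF3} applied to $w:=u(x_\circ+\,\cdot\,,t_\circ)-p_{2,x_\circ,t_\circ}$ with $\underline\lambda=3$ yields $H(r,w)\le C r^{6}$ on an interval of radii depending only on $n$ and $1-|x_\circ|$, and this $L^{2}$-type decay combined with the uniform $C^{1,1}$ regularity of $u$ (hence of $w$) gives the desired $\|w\|_{L^\infty(B_r)}\le C r^{3}$ for all $r\in\bigl(0,\tfrac{1-|x_\circ|}{2}\bigr)$.

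Second, I will enumerate the singular points where the frequency is $<3$. For $n=3$ the strata are $\mathbf{\Sigma}_0\cup\mathbf{\Sigma}_1\cup\mathbf{\Sigma}_2$. The stratum $\mathbf{\Sigma}_0$ carries no anomalous points: Proposition~\ref{prop:E2B2}(a) with $m=0$ forces any candidate 2-homogeneous second blow-up $q(x)=\nu|x|^{2}$ to satisfy $n\nu=0$, hence $q\equiv 0$, contradicting $q\not\equiv0$. Proposition~\ref{prop:EG2B4}(a) with $m=1=n-2$ then yields that $\pi_1(\mathbf{\Sigma}_1^{a})$ is discrete, and Proposition~\ref{prop:EG2B4}(b) in the case $n=3$ gives that $\pi_1(\mathbf{\Sigma}_2^{<3})$ is countable. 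Therefore $\pi_1(\mathbf{\Sigma}_1^{a}\cup\mathbf{\Sigma}_2^{<3})$ is countable, and by the first step the bound holds at every singular point lying outside $\pi_1^{-1}(\pi_1(\mathbf{\Sigma}_1^{a}\cup\mathbf{\Sigma}_2^{<3}))$.

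The last step—and the only genuinely new ingredient of the corollary—is to use the strict monotonicity assumption to lift countability from $\pi_1(\mathbf{\Sigma})$ to $\mathbf{\Sigma}$ itself. I will prove that under the hypothesis $u(x,t')>u(x,t)$ whenever $u(x,t)>0$ and $t'>t$, the projection $\pi_1$ is injective on $\mathbf{\Sigma}$. Indeed, if $(x_\circ,t_1),(x_\circ,t_2)\in\mathbf{\Sigma}$ with $t_1<t_2$, Lemma~\ref{lem:EG2B1bis}(c) would provide $r>0$ such that $u(\,\cdot\,,t)$ is constant in $t$ on $B_r(x_\circ)\times[t_1,t_2]$; but Caffarelli's expansion (Lemma~\ref{lem:EG2B1}) together with the nontriviality of the convex polynomial $p_{2,x_\circ,t_1}$ produces points arbitrarily close to $x_\circ$ on which $u(\,\cdot\,,t_1)>0$, and this contradicts the strict monotonicity hypothesis. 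Once $\pi_1$ is known to be injective on $\mathbf{\Sigma}$, the exceptional set of singular points is in bijection with the countable set $\pi_1(\mathbf{\Sigma}_1^{a}\cup\mathbf{\Sigma}_2^{<3})$, which concludes the proof. The main (and essentially only) obstacle is the verification of this injectivity: everything else is a direct specialization to $n=3$ of the dimension bounds already established in Proposition~\ref{prop:EG2B4}.
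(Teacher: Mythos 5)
Your decomposition matches the paper's: show the estimate at every singular point with $\phi\bigl(0^+, u(x_\circ + \cdot, t_\circ) - p_{2,x_\circ,t_\circ}\bigr) \geq 3$; observe via Proposition~\ref{prop:EG2B4}(a)--(b) (with $n=3$) that the remaining singular points project to a countable subset of $B_1$; and use the strict monotonicity hypothesis, together with Lemma~\ref{lem:EG2B1bis}(c), to conclude that $\pi_1$ is injective on $\mathbf{\Sigma}$, so that the exceptional set itself is countable. Your treatment of $\mathbf{\Sigma}_0$ --- using the constraint $(n-m)\nu=\sum_j\nu_j$ from Proposition~\ref{prop:E2B2}(a) to force $\mathbf{\Sigma}_0^a=\varnothing$ --- is a valid alternative to the paper's footnote citing the isolatedness of $\Sigma_0$ points. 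Your injectivity argument is exactly what the paper leaves implicit when it asserts that $\mathbf{\Sigma}_m\setminus\mathbf{\Sigma}_m^{\geq 3}$ (and not merely its spatial projection) is countable.

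There is, however, a genuine gap in your first step: the claim that $H(r,w)\le Cr^6$ combined with the uniform $C^{1,1}$ regularity of $w$ yields $\|w\|_{L^\infty(B_r)}\le Cr^3$. This implication is false. A uniform bound on $D^2 w$ together with $\nabla w(0)=0$ gives $|\nabla w|\le Cr$ in $B_r$, and combining this Lipschitz information with $\|w\|_{L^2(B_r)}\le Cr^{(n+6)/2}$ (which follows by integrating the $H$-bound in $r$) via a pigeonhole argument produces only $\|w\|_{L^\infty(B_r)}\le Cr^{(2n+6)/(n+2)}$, which for $n=3$ equals $r^{12/5}$ --- strictly worse than the claimed $r^3$. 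The paper's proof instead invokes Lemma~\ref{lem:u-v}: since both $u(x_\circ+\cdot,t_\circ)$ and $p_{2,x_\circ,t_\circ}$ solve the obstacle problem, $|w|$ is subharmonic, and the (rescaled) mean-value inequality gives the scale-invariant bound $\|w\|_{L^\infty(B_{r/2})}\le C(n)\,r^{-n/2}\|w\|_{L^2(B_r)}\le C\,r^3$. It is this structural feature of pairs of obstacle solutions --- not their $C^{1,1}$ regularity --- that produces the sharp exponent, and without it the chain of estimates in your first paragraph does not close.
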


\begin{proof}
On the one hand, since $n=3$, Proposition~\ref{prop:EG2B4} implies that  $\mathbf \Sigma_m\setminus \mathbf \Sigma^{\ge 3}_m$ is a countable set for $m=0,1,2$.\footnote{Note that, as a consequence of \cite{C-obst2}, points in $\Sigma_0$ are always
isolated and $u$ is strictly positive in a neighborhood of them.}
On the other hand,  for $(x_\circ,t_\circ)\in    \mathbf \Sigma^{\ge 3}_m$,  setting $\rho= \frac{1-|x_\circ|}{2}$ and applying Lemma~\ref{lem:EMF3} to the function $w= \rho^{-2}u(x_\circ + \rho\,\cdot\, ,t_\circ ) -p_{2,x_\circ, t_\circ}$ (note that then $\phi(0^+, w)\ge 3$) we obtain  
\[
\bigg(\frac{\rho}{r}  \bigg)^6 \le \frac{H(w,\rho)}{H(w,r)}. 
\]
Therefore, using Lemma~\ref{lem:u-v}, we obtain
\[
\| w\|_{L^\infty(B_r)}  \le C(n) H(w, 2r)^{1/2} \le C(n)   \frac{H(w,\rho)^{1/2}}{\rho^3}\,r^3,
\]
as desired.
\end{proof}

\begin{proposition}\label{prop:recti}
Let $u\in C^0\big(\overline{B_1}\times [-1,1]  \big)$ solve \eqref{eq:UELL+t}.  Then $\pi_1\big(\mathbf{\Sigma}_{n-1}^{\ge3}\setminus \mathbf{\Sigma}^{3rd}_{n-1}\big)$ is contained in a countable union of $(n-2)$-dimensional Lipschitz manifolds.
\end{proposition}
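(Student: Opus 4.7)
The plan is to build, at each point $x_\circ\in E:=\pi_1(\mathbf{\Sigma}_{n-1}^{\ge3}\setminus \mathbf{\Sigma}^{3rd}_{n-1})$, a canonical affine subspace tangent to $E$ of dimension at most $n-2$, and then to stratify $E$ into countably many pieces on which these subspaces form a continuous family with uniform flatness at every scale.

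First, I would set $t_\circ:=\tau(x_\circ)$ (the minimal $t$ with $(x_\circ,t)\in\mathbf{\Sigma}$, as in the proof of Proposition~\ref{prop:EG2B4}(b)), and use Proposition~\ref{prop-uniq-cubic-blowups} to guarantee that the cubic blow-up
\[
\tilde q_{x_\circ}:=\lim_{r\downarrow 0}r^{-3}\bigl(u(x_\circ+r\,\cdot\,,t_\circ)-p_{2,x_\circ,t_\circ}(r\,\cdot\,)\bigr)
\]
exists, is unique, and is a non-harmonic 3-homogeneous solution of the Signorini problem (non-harmonicity being precisely the obstruction $x_\circ\notin\mathbf{\Sigma}^{3rd}_{n-1}$). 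By Lemma~\ref{lem:signorinieven3}, in coordinates where $\{p_{2,x_\circ,t_\circ}=0\}=\{x_n=0\}$, one has $\tilde q^{even}_{x_\circ}(x)=b|x_n|^3-3|x_n|\sum_{\alpha=1}^{n-1}b_\alpha x_\alpha^2$ with $b=\sum_\alpha b_\alpha$, $b_\alpha\ge 0$, and $b>0$ (the latter equivalent to non-harmonicity). A direct computation of $y\cdot\nabla \tilde q^{even}_{x_\circ}$ then shows that the space $V_{x_\circ}\subset\R^n$ of translation-invariant directions of $\tilde q^{even}_{x_\circ}$ equals $\{x_n=0\}\cap\bigcap_{\{\alpha:\,b_\alpha>0\}}\{x_\alpha=0\}$, which is a linear subspace of dimension $m_{x_\circ}\le n-2$.

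Second, I would rerun the argument of Lemma~\ref{dimred} verbatim, using the uniqueness of $\tilde q_{x_\circ}$ and Lemma~\ref{lem:EG2Bn-1}, to conclude: for every $x_\circ\in E$ and every $\ep>0$ there exists $\varrho_\ep(x_\circ)>0$ such that
\[
E\cap B_r(x_\circ)\subset \bigl(x_\circ+V_{x_\circ}\bigr)+B_{\ep r}\qquad\forall\, r\in\bigl(0,\varrho_\ep(x_\circ)\bigr).
\]
This gives an intrinsic affine "tangent" $m_{x_\circ}$-plane at each point of $E$, with quantitative $\ep$-closeness at every sufficiently small scale. Stratifying $E=\bigcup_{m=0}^{n-2}\bigcup_{k\ge 1}E_m^k$ with $E_m^k:=\{x_\circ\in E:\, m_{x_\circ}=m,\ \varrho_{1/k}(x_\circ)\ge 1/k\}$ reduces the problem to covering each $E_m^k$ by countably many $m$-dimensional Lipschitz manifolds (note $m\le n-2$).

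The hard part is converting uniform $\ep$-planarity at every scale into an actual Lipschitz graph. My plan is first to establish continuity of the Grassmannian-valued map $x_\circ\mapsto V_{x_\circ}$ on $E_m^k$: if a sequence $x_\ell\to x_\circ$ in $E_m^k$ satisfied $V_{x_\ell}\to V_\infty\ne V_{x_\circ}$, I would pick a unit vector $y_\infty\in V_\infty\setminus V_{x_\circ}$ and, applying the $\ep$-closeness at $x_\ell$ together with the sequence $(x_\ell+r_\ell y_\infty,t_{x_\ell})\in\mathbf{\Sigma}$, invoke Lemma~\ref{lem:EG2Bn-1} at $x_\circ$ to deduce that $\tilde q^{even}_{x_\circ}$ must be translation invariant along $y_\infty$, contradicting $y_\infty\notin V_{x_\circ}$. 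Once this continuity is in hand, combining it with the $\ep$-closeness at scales $\le 1/k$ produces a Reifenberg-type approximation of $E_m^k$ by continuously varying $m$-planes, from which the covering by countably many $m$-dimensional Lipschitz manifolds follows by a Whitney-type extension argument analogous to the one used in Proposition~\ref{prop:EG2B4}(c) (there for $C^{1,1}$ $(n-1)$-manifolds; here for $C^{0,1}$ $m$-manifolds).
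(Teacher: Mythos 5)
Your proposal and the paper's proof agree on the essential inputs (uniqueness of the cubic blow-up via Proposition~\ref{prop-uniq-cubic-blowups}, its classification via Lemma~\ref{lem:signorinieven3}, the cone condition of Lemma~\ref{dimred}, and the countable stratification by a lower bound on $\varrho$), but your argument for passing from $\ep$-planarity to a Lipschitz covering is both more complicated than what is needed and contains a genuine gap.

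The gap is in your continuity argument for $x_\circ\mapsto V_{x_\circ}$. You claim that if $x_\ell\to x_\circ$ and $V_{x_\ell}\to V_\infty\neq V_{x_\circ}$, then for a unit vector $y_\infty\in V_\infty\setminus V_{x_\circ}$ the points $(x_\ell+r_\ell y_\infty,t_{x_\ell})$ lie in $\mathbf{\Sigma}$, which would let you invoke Lemma~\ref{lem:EG2Bn-1} along this sequence. But the cone condition of Lemma~\ref{dimred} is a one-sided containment: singular points near $x_\ell$ lie \emph{inside} a slab around $x_\ell+V_{x_\ell}$; it does not produce singular points in every direction of $V_{x_\ell}$. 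The point $x_\ell$ could perfectly well be an isolated element of $\pi_1(\mathbf{\Sigma}^{\ge3}_{n-1}\setminus\mathbf{\Sigma}^{3rd}_{n-1})$, in which case no such sequence exists. More fundamentally, uniqueness of the blow-up at each point (Proposition~\ref{prop-uniq-cubic-blowups}) does not by itself yield continuity of $x_\circ\mapsto\tilde q_{x_\circ}$, nor constancy of $m_{x_\circ}$ along a stratum, so the "Reifenberg plus Whitney" route is not set up correctly.

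Fortunately, you do not need continuity of $V_{x_\circ}$ at all. The paper's argument is shorter: having fixed a small $\ep<1$, the cone condition from Lemma~\ref{dimred} says that for every $x_\circ\in E_j$, all points of $\pi_1(\mathbf{\Sigma}^{\ge3}_{n-1})$ within distance $1/j$ of $x_\circ$ lie in a cone of half-aperture $\arcsin\ep$ about $x_\circ+L_{x_\circ,t_\circ}$. After a further finite partition of $E_j$ so that on each piece the subspaces $L_{x_\circ,t_\circ}$ lie in a small ball of the Grassmannian (and the piece has diameter $<1/j$), every two points $x,y$ in a single piece satisfy a two-sided cone condition with respect to a single fixed $(n-2)$-plane, and the classical criterion then shows that each piece is the graph of a Lipschitz function over that plane. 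That finishes the proof with no tangent-plane continuity, no Reifenberg-type statement, and no Whitney extension. Your approach would be salvageable if you replaced step 6 with this finite Grassmannian partition, but as written the continuity claim is unsupported.
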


\begin{proof}
For any $(x_\circ, t_\circ)\in \mathbf{\Sigma}_{n-1}^{\ge3}\setminus \mathbf{\Sigma}^{3rd}_{n-1}$ we apply~Lemma~\ref{dimred} to $u(x_\circ+ \,\cdot\,, t_\circ+ \,\cdot\,)$ to find a $(n-2)$-dimensional linear subspace  $L_{x_\circ, t_\circ}$ and
$\varrho_{x_\circ, t_\circ}>0$ such that 
\[
\pi_1\big(\mathbf{\Sigma}_{n-1}^{\ge 3}\big)\cap B_r(x_\circ) \subset x_\circ +  L_{x_\circ, t_\circ} +   B_{r} \qquad \mbox{for all }r \in (0,\varrho_{x_\circ, t_\circ}).
\]
Write $\mathbf{\Sigma}_{n-1}^{\ge3}\setminus \mathbf{\Sigma}^{3rd}_{n-1} = \bigcup_j E_j$, where 
\[
E_j := \big\{ (x_\circ, t_\circ)\in\mathbf{\Sigma}_{n-1}^{\ge3}\setminus \mathbf{\Sigma}^{3rd}_{n-1} :\   \varrho_{x_\circ, t_\circ}>1/j \big\} .
\]
Note that, for any $(x_\circ, t_\circ)\in E_j$, the set $\pi_1\big(\mathbf{\Sigma}_{n-1}^{\ge 3}\big)\cap B_{1/j}(x_\circ)$ is contained inside the cone 
\[
\biggl\{x \in  B_{1/j}(x_\circ)\,:\,{\rm dist}\bigg( \frac{x-x_\circ}{|x-x_\circ|} , x_\circ+L_{x_\circ, t_\circ}  \bigg)  \le 1\biggr\},
\]
which implies (by a classical geometric argument) that the set $\pi_1\big(E_j\big)\cap B_{1/2}$ can be covered by a 1-Lipschitz $(n-2)$-dimensional manifold.
The result follows by taking the union of these manifolds over all $j\in \mathbb N$.
\end{proof}

\begin{lemma}\label{lem:Sigmainterm}
Let  $u\in C^0\big(\overline{B_1}\times [-1,1]  \big)$ solve \eqref{eq:UELL+t}. Then:
\begin{enumerate}
\item[(a)]   ${\rm dim}_{\HH} \big(\pi _1(\mathbf{\Sigma}_{n-1}^{>3} \setminus \mathbf{\Sigma}^{\ge 4}_{n-1}) \big)\le n-2$ (countable if $n=2$).
\item[(b)]    ${\rm dim}_{\HH} \big(\pi _1(\mathbf{\Sigma}_{n-1}^{>4} \setminus \mathbf{\Sigma}^{\ge 5-\zeta}_{n-1}) \big)\le n-3$ (countable if $n=3$).
\end{enumerate}
\end{lemma}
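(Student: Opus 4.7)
Proof plan for Lemma~\ref{lem:Sigmainterm}. The strategy in both parts is a dimension reduction argument based on Proposition~\ref{prop:GMT2}, with Lemma~\ref{lem:GMT1} handling the countable edge cases. The engines are the symmetry lemmas~\ref{lem:E3B2d} (for part (a)) and~\ref{lem:E3B2b} (for part (b)), which force blow-ups to be translation invariant along directions pointing to nearby singular points with nearly the same frequency, combined with the Liouville-type classification (Lemma~\ref{lemAP:2D}) of 2D homogeneous Signorini solutions.

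To make the frequency a function only of $x$, for $(a)$ let me define on $E:=\pi_1(\mathbf{\Sigma}_{n-1}^{>3}\setminus\mathbf{\Sigma}_{n-1}^{\ge 4})$ the map $\tau(x):=\min\{t:(x,t)\in\mathbf{\Sigma}\}$ (which exists since $\mathbf{\Sigma}$ is closed by Lemma~\ref{lem:EG2B1bis}(a)) and set
\[ f(x):=\phi\bigl(0^+,u(\,\cdot\,,\tau(x))-p_{2,x,\tau(x)}-p_{3,x,\tau(x)}\bigr),\]
and analogously for $(b)$ using $u-\anz_{x,\tau(x)}-p_{4,x,\tau(x)}$. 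By Lemma~\ref{lem:EG2B1bis}(c), replacing $\tau(x)$ by any other $t$ with $(x,t)\in\mathbf{\Sigma}$ does not change $f(x)$. For $(0,0)\in\mathbf{\Sigma}_{n-1}^{>3}\setminus\mathbf{\Sigma}_{n-1}^{\ge 4}$ the discussion in Proposition~\ref{prop:E3B5}(a) gives $f(0)=\lambda^{3rd}\in(3,4)$; likewise for $(b)$, $f(0)=\lambda^{4th}\in(4,5-\zeta)$.

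To apply Proposition~\ref{prop:GMT2} with $m=n-2$ (for (a)) or $m=n-3$ (for (b)), I argue by contradiction. If the required containment in an $m$-plane failed at $(0,0)$, then for some $\epsilon_0>0$ and sequences $\varrho_k\downarrow 0$, $r_k\in(0,\varrho_k)$ there would exist points $x^{(1)}_k,\dots,x^{(n-1-m+1)}_k\in E\cap \overline{B_{r_k}}\cap f^{-1}([f(0)-\varrho_k,f(0)+\varrho_k])$ whose rescalings $y^{(j)}_k:=x^{(j)}_k/r_k$ (up to subsequence) converge to vectors $y^{(j)}_\infty$ spanning an $(n-1-m+1)$-dimensional subspace ---$(n-1)$ vectors in (a), $(n-2)$ vectors in (b). A standard Gram-Schmidt argument on the failure of the covering in arbitrarily small scales produces such a family of quantitatively independent directions. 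Applying Lemma~\ref{lem:E3B2d} (resp.\ Lemma~\ref{lem:E3B2b}) separately to each sequence $(x^{(j)}_k,\tau(x^{(j)}_k))$ ---the hypothesis $\lambda^{3rd}_k\to\lambda^{3rd}$ (resp.\ $\lambda^{4th}_k\to\lambda^{4th}$) being exactly the frequency-interval constraint, combined with the almost-monotonicity of $\phi^\gamma$ from Lemmas~\ref{lem:E3B1},~\ref{lem:E4B1}--- I conclude that every $y^{(j)}_\infty$ lies in $\{p_2=0\}$ and that the single blow-up $q$ of $u-p_2-p_3$ (resp.\ $u-\anz-p_4$) along $r_k$ is translation invariant in each $y^{(j)}_\infty$.

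Since in $(a)$ the span of the $y^{(j)}_\infty$'s is all of $\{p_2=0\}$, $q$ depends only on $x_n$; combined with the evenness given by Proposition~\ref{prop:E3B5}(a) (valid because $\lambda^{3rd}<4$), $q(x)=c|x_n|^{\lambda^{3rd}}$, which violates the Signorini condition $\Delta q=0$ on $\{x_n\ne 0\}$ unless $c=0$, contradicting $\|q\|_{L^2(\partial B_1)}=1$. In $(b)$, the $(n-2)$ translation invariances leave $q$ as a function of two variables only, so $q$ is an even 2D $\lambda^{4th}$-homogeneous solution of the Signorini problem. By Lemma~\ref{lemAP:2D} the admissible homogeneities lie in $\{1,2,3,4,\dots\}\cup\{3/2,7/2,11/2,15/2,\dots\}$, which has empty intersection with $(4,5-\zeta)\subset(4,5)$; again a contradiction. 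Finally, for the endpoint claims (countable when $n=2$ in (a) and when $n=3$ in (b)), I use Lemma~\ref{lem:GMT1}: a single accumulating sequence with $f(x_k)\to f(x_0)$ suffices, because a single direction $y_\infty\in\{p_2=0\}$ already spans $\{p_2=0\}$ in the first case and already reduces $q$ to a 2D solution in the second case, so $F=\varnothing$ and $E$ is countable.

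The main obstacle is the careful bookkeeping in the frequency-convergence step: one must ensure that along the chosen sequences the hypotheses of Lemmas~\ref{lem:E3B2d},~\ref{lem:E3B2b} are met, in particular that the ``third/fourth-order frequency'' is well-defined and converges. This is where the $f^{-1}([f(0)-\varrho_k,f(0)+\varrho_k])$ restriction is used crucially, together with the fact that $\lambda^{3rd}$ (resp.\ $\lambda^{4th}$) exists as an honest limit away from $\mathbf{\Sigma}_{n-1}^{\ge 4}$ (resp.\ $\mathbf{\Sigma}_{n-1}^{\ge 5-\zeta}$) by Proposition~\ref{prop:E3B5}; everything else is a fairly standard application of the geometric measure theory machinery built in Section~\ref{sec:GMT}.
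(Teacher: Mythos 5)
Your proposal is correct and follows essentially the same route as the paper: Proposition~\ref{prop:GMT2} (resp.\ Lemma~\ref{lem:GMT1} in the low-dimensional cases) applied to a third/fourth-order frequency function, combined with the symmetry Lemmas~\ref{lem:E3B2d} and~\ref{lem:E3B2b} to force translation invariances on the blow-up, and then the 1D/2D classification (Lemma~\ref{lemAP:1D} and Lemma~\ref{lemAP:2D}) to reach a contradiction. One small slip: the formula $(n-1-m+1)$ you wrote for the number of independent directions does not evaluate to the $n-1$ (for (a)) or $n-2$ (for (b)) that you correctly use immediately afterward --- the count should simply be $m+1$, i.e.\ one more than the target plane dimension.
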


\begin{proof}
(a) The proof  is similar to the one of Proposition~\ref{prop:EG2B4}(b).
Indeed, we apply  Proposition~\ref{prop:GMT2} to the set $\pi _1(\mathbf{\Sigma}_{n-1}^{>3} \setminus \mathbf{\Sigma}^{\ge 4}_{n-1})$ with the function $f:\pi _1(\mathbf{\Sigma}_{n-1}^{>3} \setminus \mathbf{\Sigma}^{\ge 4}_{n-1}) \rightarrow [0,\infty)$ defined as 
\begin{equation}\label{deftau}
f(x_\circ)  := \phi\big(0^+, u(\,\cdot\, , \tau (x_\circ))  -\anz_{x_\circ, \tau (x_\circ)} \big) ,
\qquad \text{where} \quad\tau (x_\circ)  := \min \big\{t\in [-1,1]  \,: \, ( x_\circ, t)\in \mathbf{\Sigma} \big\}.
\end{equation}
By Lemma~\ref{lem:EG2B1bis} (c) we have $\phi\big(0^+, u(\,\cdot\, , t)  -\anz_{x_\circ, t} \big) = f(x_\circ)$ for every $t$ such that $(x_\circ, t)\in \mathbf{\Sigma}$.
Moreover, by definition of  $\mathbf{\Sigma}_{n-1}^{>3} \setminus \mathbf{\Sigma}^{\ge 4}_{n-1}$, we have $f(x_\circ)\in  (3,4).$
Then, thanks to Proposition~\ref{prop:GMT2}, it is enough to show that for all  $x_\circ\in \pi_1(\mathbf{\Sigma}_{n-1}^{>3} \setminus \mathbf{\Sigma}^{\ge 4}_{n-1})$ and for all $\ep >0$ there exist $\varrho= \varrho(x_\circ,\ep) >0$, and a $(n-2)$-dimensional plane $\Pi_{x_\circ}$ passing through $x_\circ$,  such that
\[
B_r(x_\circ) \cap \pi_1(\mathbf{\Sigma}_{n-1}^{>3} \setminus \mathbf{\Sigma}^{\ge 4}_{n-1}) \cap f^{-1}([f(x_\circ)-\varrho,f(x_\circ)+\varrho]) \subset \{y \ :\  {\rm dist}(y, \Pi_{x_\circ}) \le \ep r\} \quad \forall \,r\in (0, \varrho).
\]
Assuming $(x_\circ, t_\circ) =(0,0)$ and arguing by contradiction, we find sequences $r_k \downarrow 0$ and $x^{(j)}_k \in \pi_1(\mathbf{\Sigma}_{n-1}^{>3} \setminus \mathbf{\Sigma}^{\ge 4}_{n-1})\cap B_{r_k}$, $1\le j \le n-1$, such that 
\[
y^{(j)}_k := x^{(j)}_k/ r_k \to  y^{(j)}_\infty \in \overline{B_1},\qquad \dim\big({\rm span}( y^{(1)}_\infty, y^{(2)}_\infty, \dots, y^{(n-1)}_\infty)\big) =n-1,\qquad |f(x^{(j)}_k) - f(0)|\downarrow 0.
\]
Setting $w=u(\,\cdot\,, 0)-\anz$, $w_r= w(r\,\cdot\,)$, $\tilde w_r= w_r/H(1,w_r)^{1/2}$, it follows by
 Proposition~\ref{prop:E3B5}(a)  that \eqref{convergence11} holds,    where  $q$ is a $\lambda^{3rd}$-homogeneous solution of the Signorini problem \eqref{ETOP} with $\lambda^{3rd}\in (3,4)$ (recall that $(0,0)\in \mathbf{\Sigma}_{n-1}^{>3} \setminus \mathbf{\Sigma}^{\ge 4}_{n-1}$).
Also, applying Lemma~\ref{lem:E3B2d} to the sequences $\big(x^{(j)}_k,\tau(x^{(j)}_k)\big)$, we deduce that $q$ is translation invariant in the  $n-1$ independent directions
\[y^{(1)}_\infty, y^{(2)}_\infty, \dots, y^{(n-1)}_\infty\in \{p_2=0\}.\]
Thus $q$ is a 1D $\lambda^{3rd}$-homogeneous solution of Signorini, with $\lambda^{3rd}\in (3,4)$,
and this is impossible by Lemma~\ref{lemAP:1D}.

Finally, when $n=2$, the same argument (using  Lemma~\ref{lem:GMT1} instead of  Proposition~\ref{prop:GMT2})  implies that $\mathbf{\Sigma}_{n-1}^{>3} \setminus \mathbf{\Sigma}^{\ge 4}_{n-1}$ is at most countable.

\smallskip

(b) The proof is completely analogous to  the one of part (a), using Lemmas~\ref{lem:E3B2b} and~\ref{lemAP:2D} instead of Lemmas~\ref{lem:E3B2d} and~\ref{lemAP:1D}.
\end{proof}

\begin{remark}\label{rem-7/2}
Notice that the difference between parts (a) and (b) in the previous Lemma comes from the fact that there exist 2D solutions to the Signorini problem with homogeneity $3+\frac12 \in (3,4)$, while there is no such solution with homogeneity in the interval $(4,5)$.
Hence, using the exact same proof as above, one can show that ${\rm dim}_{\HH} \big(\pi _1(\mathbf{\Sigma}_{n-1}^{>3} \setminus \mathbf{\Sigma}^{\ge 7/2}_{n-1}) \big)\le n-3$, where we define  $\mathbf{\Sigma}^{\ge 7/2}_{n-1}$ as the set at which $\phi(0^+,u-\anz)\ge 7/2$.
\end{remark}

With the aid of Lemmas~\ref{LOC.lem:GMT2bis} and~\ref{prop:GMT2bis}, we can next prove the following:

\begin{lemma}\label{lem:Sigma4}
Let $u\in C^0\big(\overline{B_1}\times [-1,1]  \big)$ solve  \eqref{eq:UELL+t}.
Then
\[ {\rm dim}_{\HH} \big(\pi _1(\mathbf{\Sigma}_{n-1}^{\ge 4} \setminus \mathbf{\Sigma}^{4th}_{n-1}) \big)\le n-2.\]
\end{lemma}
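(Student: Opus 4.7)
The plan is to apply Proposition~\ref{prop:GMT2bis} to the set $E := \pi_1(\mathbf{\Sigma}_{n-1}^{\ge 4}\setminus \mathbf{\Sigma}_{n-1}^{4th})$, using as auxiliary function
\[
\tau(x_\circ):=\min\bigl\{t\in[-1,1]\,:\,(x_\circ,t)\in\mathbf{\Sigma}\bigr\},\qquad x_\circ\in E,
\]
exactly as in the proof of Proposition~\ref{prop:EG2B4}(b) and Lemma~\ref{lem:Sigmainterm}. The first step is to verify that $\tau$ is lower-semicontinuous: given $x_k\to x_\circ$ in $E$ with $t_k:=\tau(x_k)$, by extracting a subsequence we may assume $t_k\to t_\infty$, and the closedness property in Lemma~\ref{lem:EG2B1bis}(a) gives $(x_\circ,t_\infty)\in\mathbf{\Sigma}$, hence $\tau(x_\circ)\le t_\infty=\liminf_k\tau(x_k)$. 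Moreover, by Lemma~\ref{lem:EG2B1bis}(c), for each $x_\circ\in E$ the solution $u(\,\cdot\,,t)$ is independent of $t$ in a neighborhood of $x_\circ$ for $t$ ranging in $[\tau(x_\circ),t^*]$, where $t^*$ is any time such that $(x_\circ,t^*)\in\mathbf{\Sigma}_{n-1}^{\ge 4}\setminus\mathbf{\Sigma}_{n-1}^{4th}$; this ensures that $(x_\circ,\tau(x_\circ))$ itself belongs to $\mathbf{\Sigma}_{n-1}^{\ge 4}\setminus\mathbf{\Sigma}_{n-1}^{4th}$.

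The core step is then to verify the geometric containment assumption of Proposition~\ref{prop:GMT2bis}. Fix $x_\circ\in E$ and $\ep>0$. Applying Lemma~\ref{lem:Sigma4aux} to the (single) solution $u(\,\cdot\,,\tau(x_\circ))$ at its singular point $x_\circ$, we find $\varrho_\ep=\varrho_\ep(x_\circ)>0$ such that, for every $r\in(0,\varrho_\ep)$,
\[
\{u(\,\cdot\,,\tau(x_\circ))=0\}\cap\overline{B_r(x_\circ)}\subset x_\circ+r\,\boldsymbol{S}(p_{x_\circ,r},\ep)
\]
for some $p_{x_\circ,r}\in \mathcal P_{4,\ge}^{even}$. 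Now, if $x\in E\cap\overline{B_r(x_\circ)}$ satisfies $\tau(x)\ge \tau(x_\circ)$, then $(x,\tau(x))\in\mathbf{\Sigma}$ gives $u(x,\tau(x))=0$, and by monotonicity
\[
0\le u(x,\tau(x_\circ))\le u(x,\tau(x))=0,
\]
so $x$ lies in the contact set of $u(\,\cdot\,,\tau(x_\circ))$. Consequently
\[
E\cap \overline{B_r(x_\circ)}\cap \tau^{-1}\bigl([\tau(x_\circ),+\infty)\bigr)\subset x_\circ+r\,\boldsymbol S(p_{x_\circ,r},\ep),
\]
which is exactly the hypothesis of Proposition~\ref{prop:GMT2bis}. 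The conclusion $\dim_{\mathcal H}(E)\le n-2$ then follows immediately.

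The substantive ingredient in this argument is Lemma~\ref{lem:Sigma4aux}, which already encapsulates the PDE input (the ansatz $\anz$, the monotonicity of $\phi^\gamma$, and the compactness/non-triviality of the set of fourth-order accumulation polynomials $q^{even}\not\equiv 0$ for $0\in\Sigma_{n-1}^{\ge 4}\setminus\Sigma_{n-1}^{4th}$). Once that is available, the main conceptual step---and the one that distinguishes this proof from its analogues for lower strata---is the reduction from the two-variable set $\mathbf{\Sigma}$ to the single-solution geometric statement via the auxiliary function $\tau$, which is possible only thanks to the monotonicity in $t$ and to Lemma~\ref{lem:EG2B1bis}(c). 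The only mildly delicate point is checking that this $\tau$-trick is compatible with Proposition~\ref{prop:GMT2bis}; the rest is then purely a covering argument based on the fine structure of zero sets of $4$-homogeneous harmonic polynomials (already handled in Lemma~\ref{LOC.lem:GMT2bis} via \cite{NV17}).
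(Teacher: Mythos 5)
Your proposal is correct and follows the same route as the paper: define $\tau$ as the minimal singular time, verify lower semicontinuity via Lemma~\ref{lem:EG2B1bis}, reduce the two-variable covering to a single-solution containment via monotonicity, invoke Lemma~\ref{lem:Sigma4aux} for the geometric inclusion, and apply Proposition~\ref{prop:GMT2bis}. The paper's proof is terser; your version spells out two details it leaves implicit, namely that $\tau$ is lower semicontinuous and that Lemma~\ref{lem:EG2B1bis}(c) guarantees $(x_\circ,\tau(x_\circ))\in \mathbf{\Sigma}_{n-1}^{\ge 4}\setminus\mathbf{\Sigma}_{n-1}^{4th}$ so that Lemma~\ref{lem:Sigma4aux} is applicable.
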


\begin{proof}
Define $\tau: \pi_1(\mathbf{\Sigma})\rightarrow [-1,1]$  as in \eqref{deftau} and note that, by Lemma~\ref{lem:EG2B1bis},  it is lower semicontinuous.

Hence, thanks to Lemma~\ref{prop:GMT2bis}, it suffices to prove that, for any given $\ep>0$ and $(x_\circ, \tau(x_\circ))\in\mathbf{\Sigma}_{n-1}^{\ge 4} \setminus \mathbf{\Sigma}^{4th}_{n-1}$, there exists $\varrho= \varrho(x_\circ,\ep)>0$ such that
\begin{equation}\label{goal123}
\mathbf{\Sigma} \cap \overline{B_r(x_\circ)} \times  [\tau(x_\circ),1)  \subset \big\{ x_\circ +r y\,:\,   y\in \boldsymbol S( p_{x_\circ,r} , \ep)  \big\}  \qquad \forall\,r \in (0 , \varrho),
\end{equation}
for some $p_{x_\circ,r}\in \mathcal P_{4,\ge}^{even}$. 
This follows from Lemma~\ref{lem:Sigma4aux} applied to  $u(x_\circ+\cdot\,,\,\tau(x_\circ) \,)$, since by monotonicity
\[
\mathbf{\Sigma} \cap \pi_2^{-1}\big( [\tau(x_\circ),1] \big)   \subset   \{ u(x_\circ+\cdot\,, \tau(x_\circ) ) =0\}.
\]
\end{proof}

We can finally prove the following:

\begin{theorem}\label{thm:E3B3}
Let $u\in C^0\big(\overline{B_1}\times [-1,1]  \big)$ solve \eqref{eq:UELL+t}.  There exists a set $\mathbf{\Sigma}^* \subset \mathbf{\Sigma}_{n-1} \subset \mathbf{\Sigma}$,  with ${\rm dim}_{\HH}\big(\mathcal \pi_1(\mathbf{\Sigma}\setminus \mathbf{\Sigma}^*)\big) \le n-2$, such that for any given $\ep>0$ the following holds:
\[
\big\|u(x_\circ+\,\cdot\,, t_\circ)-\anz_{x_\circ,t_\circ} -p_{4,x_\circ,t_\circ}  \big\|_{L^\infty(B_r)} \le C r^{5-\ep} \qquad  \forall \,r\in \big(0, {\textstyle \frac{1}{2}}\big),\, \forall\,(x_\circ,t_\circ)\in (\mathbf{\Sigma}^* \cap B_{1/2} ) \times(-1,1),
\]
where $C$ depends only on $n$ and $\ep$.
\end{theorem}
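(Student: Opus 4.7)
The natural candidate is
\[
\mathbf{\Sigma}^* := \bigcap_{k\ge 1}\mathbf{\Sigma}_{n-1}^{\ge 5-1/k}\subset \mathbf{\Sigma}_{n-1},
\]
so that every point $(x_\circ,t_\circ)\in\mathbf{\Sigma}^*$ satisfies $\phi^\gamma(0^+,\,u(x_\circ+\,\cdot\,,t_\circ)-\anz_{x_\circ,t_\circ}-p_{4,x_\circ,t_\circ})\ge 5-\zeta$ for every $\zeta\in (0,1)$ and every $\gamma\in(5-\zeta,5)$. The proof then splits naturally into two tasks: (i) derive the pointwise expansion of order $5-\ep$ at each point of $\mathbf{\Sigma}^*$; (ii) show $\dim_{\HH}\pi_1(\mathbf{\Sigma}\setminus\mathbf{\Sigma}^*)\le n-2$.

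For (i), fix $(x_\circ,t_\circ)\in \mathbf{\Sigma}^*$ and let $w(x):= u(x_\circ+x,t_\circ)-\anz_{x_\circ,t_\circ}(x)-p_{4,x_\circ,t_\circ}(x)$. Given $\ep\in (0,1)$, I would pick $\zeta :=\ep/4$ and any $\gamma\in (5-\zeta,5)$. Lemma~\ref{lem:E4B1} (applied with $P=p_{4,x_\circ,t_\circ}$) yields
\[
\frac{d}{dr}\phi^\gamma(r,w)\ge \frac{2}{r}\frac{\bigl(r^{2-n}\int_{B_r}w\Delta w\bigr)^2}{\bigl(H(r,w)+r^{2\gamma}\bigr)^2}-Cr^{4-\gamma},
\]
whose negative correction is integrable near $0$ (because $4-\gamma>-1$). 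Consequently $\phi^\gamma(r,w)$ remains trapped in a bounded window close to $\phi^\gamma(0^+,w)$ for $r$ small, and Lemma~\ref{lem:E3B1c}(a) applied with $\underline\lambda$ arbitrarily close to $5-\zeta$ and $\overline\lambda$ finite yields
\[
H(r,w)\le C_\delta\, r^{2(5-\zeta)-\delta}\qquad \text{for every }\delta>0 \text{ and every small } r.
\]
Converting this to an $L^2$ bound on annular shells, $\|w(r\,\cdot\,)\|_{L^2(B_5\setminus B_1)}\le C r^{5-\zeta-\delta/2}$, and feeding the result into the Lipschitz estimate of Lemma~\ref{lem:E3B5} (applied with $P=p_{4,x_\circ,t_\circ}$) gives $|\nabla w|\le Cr^{4-\zeta-\delta/2}$ in $B_{r/2}$. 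Since $w(0)=0$ (because $u(x_\circ,t_\circ)=0$ and $\anz_{x_\circ,t_\circ}$, $p_{4,x_\circ,t_\circ}$ vanish at the origin), integration along radial lines yields $\|w\|_{L^\infty(B_{r/2})}\le Cr^{5-\zeta-\delta/2}$, and choosing $\zeta,\delta$ small enough that $\zeta+\delta/2\le \ep$ delivers $\|w\|_{L^\infty(B_r)}\le Cr^{5-\ep}$.

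For (ii), write $\mathbf{\Sigma}\setminus\mathbf{\Sigma}^* = \bigcup_{k\ge 1}(\mathbf{\Sigma}\setminus\mathbf{\Sigma}_{n-1}^{\ge 5-1/k})$ and, using the chain of inclusions \eqref{INCLUSI} (recall $\mathbf{\Sigma}_{n-1}^{3rd}=\mathbf{\Sigma}_{n-1}^{>3}$ and $\mathbf{\Sigma}_{n-1}^{4th}=\mathbf{\Sigma}_{n-1}^{>4}$), decompose each term as
\begin{align*}
\mathbf{\Sigma}\setminus\mathbf{\Sigma}_{n-1}^{\ge 5-1/k}\subset \Bigl(\bigcup_{m=0}^{n-2}\mathbf{\Sigma}_m\Bigr) &\cup \bigl(\mathbf{\Sigma}_{n-1}\setminus\mathbf{\Sigma}_{n-1}^{\ge 3}\bigr)\cup\bigl(\mathbf{\Sigma}_{n-1}^{\ge 3}\setminus\mathbf{\Sigma}_{n-1}^{>3}\bigr)\\
&\cup\bigl(\mathbf{\Sigma}_{n-1}^{>3}\setminus\mathbf{\Sigma}_{n-1}^{\ge 4}\bigr)\cup\bigl(\mathbf{\Sigma}_{n-1}^{\ge 4}\setminus\mathbf{\Sigma}_{n-1}^{4th}\bigr)\cup\bigl(\mathbf{\Sigma}_{n-1}^{>4}\setminus\mathbf{\Sigma}_{n-1}^{\ge 5-1/k}\bigr).
\end{align*}
Proposition~\ref{prop:EG2B4}(a),(c) controls each lower stratum $\mathbf{\Sigma}_m$ with $m\le n-2$ by a $C^{1,1}$ $m$-manifold together with an anomalous set of dimension $\le m-1$, hence has projection of dimension $\le n-2$; Proposition~\ref{prop:EG2B4}(b) bounds $\pi_1(\mathbf{\Sigma}_{n-1}^{<3})$ by $n-3$; Proposition~\ref{prop:recti} covers $\pi_1(\mathbf{\Sigma}_{n-1}^{\ge 3}\setminus\mathbf{\Sigma}_{n-1}^{>3})$ by countably many $(n-2)$-dimensional Lipschitz manifolds; Lemma~\ref{lem:Sigmainterm}(a) bounds $\pi_1(\mathbf{\Sigma}_{n-1}^{>3}\setminus\mathbf{\Sigma}_{n-1}^{\ge 4})$ by $n-2$; Lemma~\ref{lem:Sigma4} bounds $\pi_1(\mathbf{\Sigma}_{n-1}^{\ge 4}\setminus\mathbf{\Sigma}_{n-1}^{4th})$ by $n-2$; and Lemma~\ref{lem:Sigmainterm}(b) even bounds $\pi_1(\mathbf{\Sigma}_{n-1}^{>4}\setminus\mathbf{\Sigma}_{n-1}^{\ge 5-1/k})$ by $n-3$. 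A countable union over $k$ preserves vanishing of $\mathcal H^\beta$ for any $\beta>n-2$, and the conclusion follows.

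The main obstacle is conceptual rather than technical: one must orchestrate the many ingredients developed in Sections~\ref{sec:E3B}--\ref{sec:DRB} (the almost-monotonicity of the truncated frequencies $\phi^\gamma$, the rotational semiconvexity and Lipschitz bounds behind Lemma~\ref{lem:E3B5}, the Reifenberg-type covering lemmas from Section~\ref{sec:GMT}, and the dimension reduction arguments for both single solutions and 1-parameter families) into a single coherent scheme that simultaneously yields the sharp polynomial expansion on $\mathbf{\Sigma}^*$ and the sharp $(n-2)$-dimensional bound on the spatial projection of its complement. The analytic step relies crucially on combining the decay of $H(r,w)$ from Lemma~\ref{lem:E3B1c}(a) with the Lipschitz bound of Lemma~\ref{lem:E3B5}, while the dimensional step reduces to careful bookkeeping.
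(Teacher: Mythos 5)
Your proof is correct and takes essentially the same route as the paper: you use the same set $\mathbf{\Sigma}^*=\bigcap_\zeta\mathbf{\Sigma}_{n-1}^{\ge 5-\zeta}$, the same chain of dimensional bounds (Propositions~\ref{prop:EG2B4} and~\ref{prop:recti}, Lemmas~\ref{lem:Sigmainterm} and~\ref{lem:Sigma4}, together with $\mathbf{\Sigma}_{n-1}^{3rd}=\mathbf{\Sigma}_{n-1}^{>3}$ and $\mathbf{\Sigma}_{n-1}^{4th}=\mathbf{\Sigma}_{n-1}^{>4}$), and the same analytic chain $\text{Lemma}~\ref{lem:E4B1}\Rightarrow\text{Lemma}~\ref{lem:E3B1c}(a)\Rightarrow\text{Lemma}~\ref{lem:E3B5}$ to upgrade the $L^2$ decay of $H(r,w)$ to the pointwise $O(r^{5-\ep})$ bound. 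Your slight unpacking of the final step (shell $L^2$ control, gradient bound, radial integration from $w(0)=0$) is the same mechanism the paper compresses into ``combining with the Lipschitz estimate.''
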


\begin{proof}
Recall the chain of inclusions \eqref{INCLUSI}. 
We have:
\vspace{1mm}

\begin{tabular}{llcc@{}c@{}l}
$\bullet$ &  Proposition~\ref{prop:EG2B4} (a) and (c)   \qquad 	&$\Rightarrow$& ${\rm dim}_{\HH} ($ &  $\pi_1 (\mathbf{\Sigma}\setminus \mathbf{\Sigma}_{n-1})$                      &  $)\le n-2,$ \vspace{1mm}
\\
$\bullet$ &  Proposition~\ref{prop:EG2B4} (b)         	&$\Rightarrow$&  ${\rm dim}_{\HH}($   &  $\pi_1(\mathbf{\Sigma}_{n-1} \setminus \mathbf{\Sigma}_{n-1}^{\ge3})$   &  $)\le n-3,$\vspace{1mm}
\\
$\bullet$ &  Proposition~\ref{prop:recti}     		&$\Rightarrow$&  ${\rm dim}_{\HH}($   &  $\pi_1(\mathbf{\Sigma}_{n-1}^{\ge3} \setminus \mathbf{\Sigma}_{n-1}^{3rd})$    &  $)\le n-2$,\vspace{1mm}
\\
$\bullet$ &  Remark~\ref{rmk:Sigma 3rd}  	&$\Rightarrow$&     & $ \pi_1(\mathbf{\Sigma}_{n-1}^{3rd} \setminus \mathbf{\Sigma}_{n-1}^{>3})$  &    $=\varnothing,$\vspace{1mm}
\\
$\bullet$ &  Lemma~\ref{lem:Sigmainterm}(a)   	&$\Rightarrow$&  ${\rm dim}_{\HH}($   &  $\pi_1(\mathbf{\Sigma}_{n-1}^{>3} \setminus \mathbf{\Sigma}_{n-1}^{\ge4})$   &    $)\le n-2,$\vspace{1mm}
\\
$\bullet$ &  Lemma~\ref{lem:Sigma4}       	&$\Rightarrow$&  ${\rm dim}_{\HH}($   &  $\pi_1(\mathbf{\Sigma}_{n-1}^{\ge 4} \setminus \mathbf{\Sigma}_{n-1}^{>4})$  &   $)\le n-2,$\vspace{1mm}
\\
$\bullet$ &  Lemma~\ref{lem:Sigmainterm}(b)     &$\Rightarrow$&  ${\rm dim}_{\HH}($ &  $\pi_1(\mathbf{\Sigma}_{n-1}^{>4} \setminus \mathbf{\Sigma}_{n-1}^{\ge 5-\zeta})$     &  $)\le n-3$.
\end{tabular}
\vspace{2mm}

\noindent Thus, if we define
\[\mathbf{\Sigma}^* := \bigcap_{\ep>0} \mathbf{\Sigma}_{n-1}^{\ge 5-\ep},\]
then  ${\rm dim}_{\HH}(  \pi_1(\mathbf{\Sigma} \setminus \mathbf{\Sigma}^*))\le n-2$.
Fix $\ep>0$, and
let $(x_\circ,t_\circ)\in (\mathbf{\Sigma}^* \cap B_{1/2} ) \times(-1,1)$.
By Lemmas~\ref{lem:E4B1} and~\ref{lem:E3B1c} applied to  $w: = u(x_\circ+\,\cdot\,, t_\circ)-\anz_{x_\circ,t_\circ} -p_{4,x_\circ,t_\circ}$ we obtain
\[
c \left(\frac{1}{r}\right)^{2 (5-\ep)} \le \frac{H\left(1/2, w\right) +  (1/2)^{2(5-\ep)}}{H(r,w) +r^{2(5-\ep)}},
\]
therefore
\[
H(r,w)^{1/2} \le  C \bigg( \int_{B_{1/2}}   \big( u(x_\circ+\,\cdot\,, t_\circ)-\anz_{x_\circ,t_\circ} -p_{4,x_\circ,t_\circ}\big)^2  +(1/2)^{2(5-\ep)}\bigg)^{ 1 /2 }  r^{5-\ep}
\le C(n, \ep) \,r^{5-\ep}.
\]
Combining this bound with the Lipschitz estimate in Lemma~\ref{lem:E3B5}, we easily conclude that
\[
\big\|u(x_\circ+\,\cdot\,, t_\circ)-\anz_{x_\circ,t_\circ} -p_{4,x_\circ,t_\circ}  \big\|_{L^\infty(B_r)}  = \big\|w \big\|_{L^\infty(B_r)} \le C r^{5-\ep} \qquad \forall\, 0<r<1/2.
\]
where $C$ depends only on $n$ and $\ep$.
\end{proof}

\section{Cleaning lemmas and proof of the main results} \label{sec:ESC}
Recall that, in all the previous sections, we only assumed that $u(\cdot,t)$ was nondecreasing in $t$.
Now, in order to conclude the proof of Theorem~\ref{thm-Schaeffer-intro},
we will assume the ``uniform monotonicity'' condition~\eqref{assumption}.
Note that condition \eqref{assumption} rules out the existence of connected components of the complement of the contact set that remain unchanged for some interval of times.

The first bound involves a barrier argument that will play an important role.

\begin{lemma}\label{lem:ESCbar}
Let  $u:B_1 \times (-1,1) \to [0,\infty)$ solve \eqref{eq:UELL+t}, with $(0,0)\in\mathbf{\Sigma}$ and $\{p_2=0\} \subset \{x_n=0\}$. 
Let $\mathfrak p$ be a polynomial satisfying  $\Delta \mathfrak p=1$. %and $\mathfrak p=0$ on $\{x_n=0\}$.
Assume that, for some $\beta\ge0$, we have
\[
| u(\,\cdot\,, 0) -\mathfrak p | \le Cr^\beta \quad \mbox{in } B_r \qquad  \forall  \,r\in (0,r_\circ),
\]
and define
\[
\psi(x) :=  -\sum_{i=1}^{n-1}x_i^2 +(n-1)x_n^2 + \frac 1 2,\qquad \psi^r(x) := \psi(x/r),  \qquad D_r := \partial B_r\cap \{\psi^r >0\} = \partial B_r\cap \big\{ |x_n| >{\textstyle \frac{r}{\sqrt{2n}} }\}.\]
Then, for all $t\ge 0$ we have
\[
u(\,\cdot\,,t) \ge  \mathfrak p+ \frac{\min_{D_r} [u(\,\cdot\,, t)-u(\,\cdot\,, 0)]}{\max_{\partial B_1} \psi} \psi^r -   Cr^{\beta}  \quad\mbox{in } B_r \qquad  \forall  \,r\in (0,r_\circ).
\]
\end{lemma}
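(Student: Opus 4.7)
The plan is a direct barrier-and-maximum-principle argument. I would set
\[
\phi(x) := \mathfrak p(x) + \frac{m_r}{M}\,\psi^r(x) - Cr^\beta,\qquad m_r := \min_{D_r}\bigl[u(\,\cdot\,,t)-u(\,\cdot\,,0)\bigr],\qquad M := \max_{\partial B_1}\psi,
\]
with the constant $C$ the one from the hypothesis on $\mathfrak p$, and then prove $\phi\le u(\,\cdot\,,t)$ in $B_r$, which is exactly the desired inequality. The good feature of this ansatz is that $\psi$ is harmonic (a direct computation gives $\Delta\psi=-2(n-1)+2(n-1)=0$), hence so is $\psi^r$, so that $\Delta\phi\equiv \Delta\mathfrak p\equiv 1$ in $B_r$. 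Since $u(\,\cdot\,,t)$ solves the obstacle problem, $\Delta u(\,\cdot\,,t)=\chi_{\{u(\,\cdot\,,t)>0\}}\le 1$ in the sense of distributions, and therefore $\phi-u(\,\cdot\,,t)$ will be subharmonic on $B_r$. Thus the full statement reduces to checking the boundary inequality $\phi\le u(\,\cdot\,,t)$ on $\partial B_r$ and invoking the maximum principle.

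To verify the boundary inequality, I would first note that the monotonicity hypothesis and $t\ge 0$ give $u(\,\cdot\,,t)\ge u(\,\cdot\,,0)$, and therefore $m_r\ge 0$. Then I would split $\partial B_r$ according to the sign of $\psi^r$. On $D_r$ the rescaling identity $\psi^r(x)=\psi(x/r)$ together with the definition of $M$ yields $\psi^r\le M$, so $\frac{m_r}{M}\psi^r\le m_r$; combining this with the definition of $m_r$ and the hypothesis $|u(\,\cdot\,,0)-\mathfrak p|\le Cr^\beta$ gives
\[
u(\,\cdot\,,t) \;\ge\; u(\,\cdot\,,0)+m_r \;\ge\; \mathfrak p - Cr^\beta + \tfrac{m_r}{M}\psi^r \;=\; \phi.
\]
On the complement $\partial B_r\setminus D_r$, one has $\psi^r\le 0$; since $m_r\ge 0$, the correction term $\frac{m_r}{M}\psi^r$ is nonpositive, and the hypothesis on $\mathfrak p$ yields directly
\[
\phi \;\le\; \mathfrak p - Cr^\beta \;\le\; u(\,\cdot\,,0) \;\le\; u(\,\cdot\,,t).
\]

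Finally, having established the comparison on $\partial B_r$, the maximum principle applied to the subharmonic function $\phi - u(\,\cdot\,,t)$ on $B_r$ gives $\phi\le u(\,\cdot\,,t)$ throughout $B_r$, which is the statement of the lemma. There is no significant obstacle: the only design point is the choice of the harmonic polynomial $\psi$, whose characteristic sign pattern (positive near the ``poles'' $|x_n|\approx 1$ and negative on the ``equatorial belt'' $|x_n|\lesssim r/\sqrt{2n}$) is what allows a single globally harmonic correction to deliver a gain of order $m_r$ on $D_r$ without producing any loss on its complement, while remaining compatible with the obstacle equation $\Delta\mathfrak p\equiv 1$.
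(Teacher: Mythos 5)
Your proof is correct and follows essentially the same route as the paper: you construct the identical barrier $\mathfrak p + \frac{m_r}{\max_{\partial B_1}\psi}\psi^r - Cr^\beta$, verify $\psi^r$ is harmonic so the barrier has Laplacian $1\ge \Delta u(\,\cdot\,,t)$, check the boundary comparison separately on $D_r$ and on $\partial B_r\cap\{\psi^r\le 0\}$, and conclude by the maximum principle. (The paper denotes by $M$ the full coefficient $\frac{m_r}{\max_{\partial B_1}\psi}$ rather than just the denominator, but this is only a notational difference.)
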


\begin{proof}
It follows by our assumption on $u$ that
\begin{equation}\label{ahgon}
u(\,\cdot\,, 0)-\mathfrak p  \ge -C r^{\beta}  \qquad  \forall \,r\in \big(0, {\textstyle \frac{1}{2}}\big).
\end{equation}
Set
\[
v : = \mathfrak p  +   M \psi^r   - Cr^\beta, \qquad \text{with }M :=  \frac{ \min_{D_r}  [u( \, \cdot\, ,t) - u( \, \cdot\, ,0)] }{\max_{\partial B_1}  \psi}.
\]
We claim that
 $v\le u( \, \cdot\, ,t)$ on $\partial B_r$.
Indeed, since $t\ge 0$, it follows by \eqref{ahgon} that
\[
v\le  \mathfrak p   - Cr^{\beta}  \le u( \, \cdot\, ,0) \le u( \, \cdot\, ,t) \qquad \mbox{on }\partial B_r\cap \{\psi^r\leq 0\}.
\]
On the other hand, since $\max_{\partial B_1}  \psi = \max_{D_r} \psi^r$, we see that
$M\psi^r \le  \min_{D_r}  [u( \, \cdot\, ,r) - u( \, \cdot\, ,0)]$ on $\partial B_r$.
Hence
\[
v=   \mathfrak p +   M \psi^r  - Cr^{\beta}  \le u( \, \cdot\, ,0) + M \psi^r \le u( \, \cdot\, ,t) \qquad \mbox{on } D_r=\partial B_r\cap \{\psi^r>0\},
\]
and the  claim follows.

To conclude the proof it suffices to observe that, since $\psi^r$ is harmonic, we have $\Delta v=1\geq \chi_{ \{u( \, \cdot\, ,t)  >0\}} =
\Delta u( \, \cdot\, ,t)$.
Thus, combining the claim with the maximum principle, we conclude that
\[
\mbox{$v\le u( \, \cdot\, ,t)$ in $B_r$}.
 \]
\end{proof}

The second result gives us a bound on the speed at which $u$ increases in $t$ at singular points.
Note that this speed is much better in the lower strata $ \mathbf{\Sigma}_m$ with $m \leq n-2$ with respect to $\mathbf{\Sigma}_{n-1}$. This is one of the reasons why, in the previous sections, we needed to perform a very refined analysis at points in $\mathbf{\Sigma}_{n-1}$.

\begin{lemma}\label{lem:ESC1}
Let $u:B_1 \times (-1,1) \to [0,\infty)$ satisfy \eqref{eq:UELL+t} and  \eqref{assumption},
 with $(0,0)\in\mathbf{\Sigma}$ and $\{p_2=0\} \subset \{x_n=0\}$. Let $D_r$ be defined as in Lemma \ref{lem:ESCbar}.

{\rm (a)} If $(0,0)\in \mathbf{\Sigma}_m$ with $m\le n-2$,  then for all $\ep>0$ there exist $c_\ep,\rho_\ep>0$ such that
 \[\min_{D_r} [u(\,\cdot\,, t)-u(\,\cdot\,, 0)]
 \ge c_\ep r^{\ep} t,\qquad \forall\,r \in (0,\rho_\ep).\]

{\rm (b)} If $(0,0)\in \mathbf{\Sigma}_{n-1}$,  there exists $c,\rho>0$ such that 
\[\min_{D_r} [u(\,\cdot\,, t)-u(\,\cdot\,, 0)] \ge cr t,\qquad \forall\,r \in (0,\rho).\]
\end{lemma}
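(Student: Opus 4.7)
The plan is to compare the nonnegative function $v := u(\,\cdot\,,t) - u(\,\cdot\,,0)$ with a harmonic auxiliary inside $\Omega := \{u(\,\cdot\,,0)>0\}$: since $\Delta u(\,\cdot\,,t)\ge \Delta u(\,\cdot\,,0)$, $v$ is subharmonic in $B_1$ and harmonic in $\Omega$. First, I would produce a compact subset of $\partial B_1$ on which $v$ has a positive lower bound. Because $p_2$ is nondegenerate in the directions transverse to $\{p_2=0\}\subset\{x_n=0\}$ and $(0,0)\in\mathbf{\Sigma}$, the points $(0,\dots,0,\pm h)$ lie in $\Omega$ for all small $h>0$. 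A bounded component $U\Subset B_1$ of $\Omega$ would satisfy $\Delta u(\,\cdot\,,0)=1$ and $u(\,\cdot\,,0)=0$ on $\partial U$, forcing $u(\,\cdot\,,0)\le 0$ in $U$ by the maximum principle, a contradiction; hence every connected component of $\Omega$ touches $\partial B_1$. By \eqref{assumption} I select a compact $K\subset\partial B_1\cap\Omega$ (for case (b), with pieces in both of the components of $\Omega$ containing $(0,\dots,0,\pm h)$) on which $v\ge c_K t$. Letting $H$ be the bounded harmonic function in $\Omega$ with $H=c_K t$ on $K$ and $H=0$ elsewhere on $\partial\Omega$, the comparison principle gives $v\ge H$ in $\Omega$ (both are harmonic in $\Omega$, and $v-H\ge 0$ on $\partial\Omega$: on $\partial\Omega\cap B_1$, $v=u(\,\cdot\,,t)\ge 0=H$; on $\partial\Omega\cap\partial B_1$, $v\ge c_K t\,\chi_K=H$). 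It thus suffices to estimate $H$ from below on $D_r$.

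For part (b) ($m=n-1$), Proposition~\ref{prop:E2B3} combined with $\lambda^{2nd}\ge 2+\alpha_\circ$ (from Proposition~\ref{prop:E2B2}(b)) gives $\{u(\,\cdot\,,0)=0\}\cap B_\rho\subset\{|x_n|\le C\rho^{1+\alpha_\circ}\}$, so the free-boundary portion of $\partial\Omega^\pm$ near $0$ is a sub-linear perturbation of the hyperplane $\{x_n=0\}$. A boundary-Harnack / Hopf argument at this (approximately) flat boundary --- obtained by comparing $H|_{\Omega^+}$ with a suitable harmonic function proportional to $x_n$ in an appropriate subdomain of the half-ball --- yields $H(x)\ge c\,t\,x_n$ in $\Omega^+\cap B_{1/2}$. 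Since $|x_n|\ge r/\sqrt{2n}$ on $D_r$ and the analogous estimate holds in $\Omega^-$, this gives $H\ge c\,r\,t$ on $D_r$, proving (b).

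For part (a) ($m\le n-2$), set $L:=\{p_2=0\}$, which has codimension $n-m\ge 2$. The inequality $p_2(x)\ge c\,\mathrm{dist}(x,L)^2$ combined with Lemma~\ref{lem:EG2B1} gives the refined inclusion $\{u(\,\cdot\,,0)=0\}\cap B_\rho\subset\{\mathrm{dist}(\,\cdot\,,L)\le C\rho\sqrt{\omega(\rho)}\,\}$, so for small $r$ the contact set near $0$ lies in a tube around $L$ of transversal width $o(\rho)$, while $\mathrm{dist}(D_r,L)\ge r/\sqrt{2n}$. Since $L$ has codimension at least two, splitting coordinates into those tangential and transversal to $L$ reduces the harmonic measure estimate to a two-dimensional annular problem with inner radius $\delta\sim r\sqrt{\omega(r)}$ and outer radius $\sim 1$; the explicit 2D Green function yields
\[
\omega_\Omega(x_\circ,K)\ \ge\ c\,\frac{\log(1/\omega(r))}{\log(1/r)}\qquad\text{for } x_\circ\in D_r,
\]
and since $\omega(r)\to 0$ while $r^\ep\log(1/r)\to 0$ for every $\ep>0$, this bound dominates $c_\ep r^\ep$ for $r$ small, proving (a). (In the sub-case $m\le n-3$, $L$ has zero $2$-capacity and one in fact obtains the stronger $\omega_\Omega\ge c>0$ uniformly in $r$.)

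The main technical difficulty is the boundary-Harnack estimate in (b) and the reduction to the 2D annular model in (a) for the genuinely irregular domain $\Omega$; both require constructing explicit sub-/super-solutions adapted to the free-boundary geometry, for which the barrier construction of Lemma~\ref{lem:ESCbar} and the thin-tube containment of the contact set above provide the key inputs.
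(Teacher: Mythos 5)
For part (b) you take essentially the same route as the paper: extract a $C^{1,\alpha_\circ}$ domain inside $\{u(\,\cdot\,,0)>0\}$ touching the origin (using Propositions~\ref{prop:E2B2}(b) and~\ref{prop:E2B3}), and apply a boundary Hopf / boundary-Harnack argument for the nonnegative harmonic function $v=u(\,\cdot\,,t)-u(\,\cdot\,,0)$. No issues there, modulo a clean justification that the Hopf estimate gives the linear lower bound uniformly on $D_r$ (a standard $C^{1,\alpha}$ boundary regularity statement that the paper also uses implicitly).

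For part (a), however, the barrier step has a genuine gap. Your plan is to compare $v$ with the harmonic-measure function $H$ in $\Omega=\{u(\,\cdot\,,0)>0\}$, and then estimate $H(x_\circ)$ for $x_\circ\in D_r$ by reducing to a 2D annular problem with inner radius $\delta\sim r\sqrt{\omega(r)}$, using that $\{u(\,\cdot\,,0)=0\}\cap B_\varrho\subset\{\operatorname{dist}(\,\cdot\,,L)\le C\varrho\sqrt{\omega(\varrho)}\}$. The problem is that the candidate annular barrier, say $\Psi(x)= c_{\varrho}t\,\frac{\log(\operatorname{dist}(x,L)/\delta)}{\log(\varrho/\delta)}$ (the codimension-$2$ model), does not close up: on the inner boundary (the contact set inside the tube) and where $\operatorname{dist}(\,\cdot\,,L)\ge\varrho/2$ on $\partial B_\varrho$ you do get $v\ge\Psi$, but on the remaining part of $\partial B_\varrho$ --- the sphere near $L$ --- you only know $v\ge 0$ while $\Psi$ is strictly positive there. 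So the maximum principle cannot be applied in $B_\varrho\cap\Omega$, and the single-scale formula $\omega_\Omega(x_\circ,K)\gtrsim \log(1/\omega(r))/\log(1/r)$ is not justified. (One could try to patch this by a dyadic iteration over scales $2^{-k}$, but that is a genuinely different argument and the resulting product of per-scale losses has to be estimated carefully; as written the claim is not airtight. The same issue affects the $m\le n-3$ case.)

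The paper avoids this entirely with a cone barrier: take the cone $\mathcal C_\delta=\{\operatorname{dist}(x/|x|,\{p_2=0\})\le\delta\}$, which contains $\{u(\,\cdot\,,0)=0\}\cap B_{r_\delta}$ for small $r_\delta$, and in the complementary cone $\tilde{\mathcal C}_{2\delta}$ use the first eigenfunction $\Psi_{2\delta}$ of the spherical Laplacian with zero Dirichlet data on $\tilde{\mathcal C}_{2\delta}\cap\mathbb S^{n-1}$, extended $\mu_{2\delta}$-homogeneously. Because $\{p_2=0\}$ has codimension $\ge 2$ and hence zero harmonic capacity, $\mu_{2\delta}\downarrow 0$ as $\delta\downarrow 0$; choosing $\delta$ with $\mu_{2\delta}<\ep$ gives a barrier $\psi_{2\delta}$ whose zero set is precisely $\partial\tilde{\mathcal C}_{2\delta}$, so the comparison with $v$ closes on all of $\partial(\tilde{\mathcal C}_{2\delta}\cap B_{r_\delta})$: either $\psi_{2\delta}$ vanishes (on the lateral cone boundary) or $v\ge c_\delta t$ (on $\tilde{\mathcal C}_{2\delta}\cap\partial B_{r_\delta}$, by \eqref{assumption} and a Harnack chain). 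This yields $v\ge c\,r^{\mu_{2\delta}}t\ge c_\ep r^\ep t$ on $D_r$ cleanly, with no dependence on the quantitative rate of the modulus $\omega$. You should replace your 2D annular reduction with this cone-eigenfunction barrier.
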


\begin{proof} (a)
Note that, by the uniform convergence of $r^{-2}u(r\,\cdot\,,0)$ to $p_2$,
given $\delta>0$ there exists $r_\delta>0$ such that
\[
\{u(\,\cdot\, ,0)=0\}\cap B_{r_\delta} \subset \mathcal C_\delta :=  \left\{ x \in \R^n \  :\  {\rm dist} \big( {\textstyle \frac{x}{|x|} } , \{p_2=0\} \big) \le \delta \right\}.
\]
Denote by $\tilde {\mathcal C}_\delta : = \R^n \setminus \mathcal C_\delta$ the complementary cone, and let
$\psi_{\delta}(x) := |x|^{\mu_\delta} \Psi_\delta(x/|x|)$, where  $\Psi_\delta \ge 0$ is the first eigenfunction of the spherical Laplacian
in $\tilde {\mathcal C}_\delta \cap \mathbb S^{n-1}$ with zero boundary conditions and $\mu_\delta$ is chosen so that the first eigenvalue is $\mu_\delta (n-2+\mu_\delta)$). 
In this way  $\psi_\delta$ is a $\mu_\delta$-homogeneous harmonic function vanishing on the boundary of $\tilde {\mathcal C}_\delta$.

Since $\dim(\{p_2=0\})=m\le n-2$, the set $\{p_2=0\}$ has zero capacity and so $\psi_{\delta}$ converges to a positive constant as $\delta\downarrow 0$. Thus  ${\mu_\delta}\downarrow 0$, and we can choose $\delta = \delta(\ep)>0$ such that $\mu_{2\delta}<\ep$.

We now observe that, for $t\ge 0$, we have
\[
\{u(\,\cdot\,, t)>0\} \supset  \{u(\,\cdot\,, 0)>0\}\supset  \tilde {\mathcal C}_\delta \cap B_{r_\delta} \supset   \tilde {\mathcal C}_{2\delta} \cap B_{r_\delta},
\]
and  $v:= u(\,\cdot\,, t)-u(\,\cdot\,, 0)$ is nonnegative and harmonic in $\{u(\,\cdot\,, t)>0\}$.
Note also that, by the maximum principle, every connected component of $\{u(\,\cdot\,, t)>0\}$ must have a part of its boundary on $\partial B_1$, and thus \eqref{assumption} and the Harnack inequality  (applied to a chain of balls) imply that
\[
v \ge c_\delta t \quad \mbox{in}\quad  \tilde{\mathcal C}_{2\delta} \cap \partial B_{r_\delta},\qquad c_\delta>0.
\]
Hence we can use the function
\[v' := \frac{c_\delta \psi_{2\delta}}{ \| \psi_{2\delta} \|_{L^\infty(\partial B_{r_{\delta}})}  }\,t\]
as lower barrier, and applying the maximum principle we obtain $v-v'\ge 0$  inside the domain  $\tilde{\mathcal C}_{2\delta} \cap B_{r_\delta}$.
Since $D_{r_\delta}\subset \tilde{\mathcal C}_{2\delta} \cap B_{r_\delta}$, this proves that 
\[
\min_{D_r} [u(\,\cdot\,, t)-u(\,\cdot\,, 0)]=\min_{D_r} v \geq \min_{D_r} v'=cr^{\mu_{2\delta}}t \geq  c r^\ep t\qquad \forall\,r\in (0, r_\delta).
\]
\smallskip

(b) After a rotation, we may assume that $\{p_2=0\}=\{x_n=0\}$.
By Propositions~\ref{prop:E2B2}  and~\ref{prop:E2B3}, we have that  $\{ u(\cdot ,0 )>0\} \supset \{|x_n|\leq C|x'|^{1+\alpha_\circ}\}$ in a neighborhood of the origin, where $x=(x',x_n)$ and $\alpha_\circ>0$.
In particular, there exists a $C^{1,\alpha_\circ}$ domain $\Omega$ contained inside  $\{ u(\cdot ,0 )>0\}$ and satisfying $0\in \partial\Omega$.
By monotonicity of $u$ in $t$, the same domain $\Omega$ is contained in $\{ u(\cdot ,t )>0\}$ for $t>0$.

Hence, the function $v:=u(\cdot ,t )-u(\cdot ,0 )$ is positive and harmonic in $\Omega$, and by assumption  \eqref{assumption} we have ---as in the proof of (a)--- that
$v \ge c_1 t>0$ in a small ball $B\subset\subset \Omega$.
Using Hopf's lemma in $C^{1,\alpha}$ domains, we deduce that $\partial_{x_n}v(0)\geq c_2t>0$, and the result follows.
\end{proof}

We can now prove the following key result:

\begin{lemma}\label{lem:ESC2}
Let $u:B_1 \times (-1,1) \to [0,\infty)$ satisfy \eqref{eq:UELL+t} and \eqref{assumption}, let $\alpha_\circ>0$ be given by Proposition~\ref{prop:E2B2}, and let $\mathbf{\Sigma}^*\subset \mathbf{\Sigma}_{n-1}$ be given by Theorem~\ref{thm:E3B3}. 

\vspace{1mm}

{\rm (a)} If $(0,0) \in \mathbf{\Sigma}_m^a$ and $m\le n-2$, then for all $\ep>0$ there exists   $\varrho>0$ such that
\[
\big\{ (x,t)\in B_\varrho \times (0,1) \ :\   t> |x|^{2-\ep} \big\} \cap \{u=0\} = \varnothing.
\]

{\rm (b)} If $(0,0) \in \mathbf{\Sigma}_m\setminus \mathbf{\Sigma}_m^a$, $m\le n-2$, then for all $\ep>0$ there exists   $\varrho>0$ such that
\[
\big\{ (x,t)\in B_\varrho \times (0,1) \ :\   t> |x|^{3-\ep} \big\} \cap \{u=0\} = \varnothing.
\]

{\rm (c)} If $(0,0)\in\mathbf{\Sigma}^{<3}_{n-1}$, then there exist $C,\varrho>0$ such that
\[
\big\{ (x,t)\in B_\varrho\times (0,1) \ :\   t>C |x|^{1+\alpha_\circ} \big\} \cap \{u=0\}  = \varnothing.
\]

{\rm (d)} If $(0,0)\in\mathbf{\Sigma}^{> 3}_{n-1}$,  then  there exist $\delta,\varrho>0$ such that
\[
\big\{ (x,t)\in B_\varrho\times (0,1) \ :\   t> |x|^{2+\delta} \big\} \cap \{u=0\}  = \varnothing.
\]

{\rm (e)} If $(0,0)\in\mathbf{\Sigma}^*$,  then  for all $\ep>0$  there exists $\varrho>0$ such that
\[
\big\{ (x,t)\in B_\varrho\times (0,1) \ :\   t> |x|^{4-\ep} \big\} \cap \{u=0\}  = \varnothing.
\]
\end{lemma}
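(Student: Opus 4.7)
My plan is to treat all five parts via a unified scheme: for each type of singular point I will apply the barrier of Lemma~\ref{lem:ESCbar} with a polynomial Ansatz $\mathfrak p$ adapted to the stratum, then combine the resulting upper bound on the barrier parameter $M$ with the lower bound from Lemma~\ref{lem:ESC1}. After rotating so that $\{p_2=0\}\subset\{x_n=0\}$ and restricting to $x\in B_{r/2}\cap\{u(\cdot,t)=0\}$, one computes $\psi^r(x)=-|x'|^2/r^2+(n-1)x_n^2/r^2+\tfrac12\geq \tfrac14$; plugging $u(x,t)=0$ into the barrier inequality $u(\cdot,t)\geq \mathfrak p+M\psi^r-Cr^\beta$ will give $M\leq 4[Cr^\beta-\mathfrak p(x)]$. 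The central step in each case will be to verify $\mathfrak p(x)\geq -Cr^\beta$ at such contact points; once this is done, combining $M\leq Cr^\beta$ with $M\geq c_{\ep'}r^{\ep'}t$ (from Lemma~\ref{lem:ESC1}(a), when $m\leq n-2$) or $M\geq crt$ (from Lemma~\ref{lem:ESC1}(b), when $m=n-1$) will yield $t\leq Cr^{\beta-1-\ep'}$. Setting $r=2|x|$ and choosing $\ep'$ strictly smaller than the $\ep$ appearing in the statement, the factor $|x|^{\ep'-\ep}\to+\infty$ as $|x|\to 0$ will absorb the universal constant and produce the asserted clearing region.

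The Ansatz $\mathfrak p$ and rate $\beta$ will be chosen as follows. For (a), $\mathfrak p=p_2$ with the modulus estimate $\|u-p_2\|_{L^\infty(B_r)}\leq r^2\omega(r)$ from Lemma~\ref{lem:EG2B1}; since $p_2\geq 0$ and $\omega(r)\to 0$, the conclusion $t\leq |x|^{2-\ep}$ will follow for any fixed $\ep>0$ and $|x|$ small enough. For (b), $\mathfrak p=p_2$ with $\beta=3$, coming from $\phi(0^+,u-p_2)\geq 3$ via Lemma~\ref{lem:EMF3} together with an $L^2$-to-$L^\infty$ upgrade via Lemma~\ref{lem:u-v}; nonnegativity is automatic from $p_2\geq 0$. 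For (c), $\mathfrak p=p_2$ with $\beta=2+\alpha_\circ$ from the lower bound on $\phi(0^+,u-p_2)$ given by Proposition~\ref{prop:E2B2}(b). For (d), $\mathfrak p=p_2+p_3$ with $\beta=3+\delta$ for some $\delta>0$, obtained from the strict inequality $\phi^\gamma(0^+,u-p_2-p_3)>3$ on $\mathbf\Sigma^{>3}_{n-1}$ via Lemmas~\ref{lem:E3B1c} and~\ref{lem:E3B1} (upgraded to $L^\infty$ using the Lipschitz bound of Lemma~\ref{lem:lipestF3} for the sub-stratum $\mathbf\Sigma^{>3}_{n-1}\setminus\mathbf\Sigma^{\geq 4}_{n-1}$ and its analogue via Lemma~\ref{lem:E3B5} for $\mathbf\Sigma^{\geq 4}_{n-1}$); the crucial improvement here uses Proposition~\ref{prop:E2B3}, which gives $|x_n|\leq Cr^2$ at contact points, so since $p_2=\tfrac12 x_n^2$ and $p_3=x_n\cdot(\mbox{quadratic})$, one finds $(p_2+p_3)(x)\geq -Cr^4\geq -Cr^{3+\delta}$ for $\delta<1$. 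Finally, for (e), $\mathfrak p=\anz+p_4$ with $\beta=5-\zeta$ from Theorem~\ref{thm:E3B3}; the square-completion $\anz=\tfrac12(x_n+p_3/x_n+Q)^2+O(|x|^5)$ of~\eqref{eq:ansatzell} gives $\anz\geq -Cr^5$ globally, while $p_4=x_n\cdot(\mbox{cubic})$ combined with $|x_n|\leq Cr^2$ yields $|p_4(x)|\leq Cr^5$ at contact points, so $\mathfrak p(x)\geq -Cr^5\geq -Cr^{5-\zeta}$.

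The main technical obstacle will be case (e): the lower bound $\mathfrak p\geq -Cr^5$ only marginally beats the error $Cr^{5-\zeta}$, so the resulting $M\leq Cr^{5-\zeta}$ is essentially sharp, and translating $t\leq Cr^{4-\zeta}$ into the desired $t\leq|x|^{4-\ep}$ for arbitrary $\ep>0$ will crucially require taking $\zeta<\ep$ and invoking the constant-absorption mechanism of the first paragraph. A secondary subtlety will be in case (d), where converting the $L^2$ rate coming from $\phi^\gamma>3$ into a polynomial $L^\infty$ bound requires handling the two sub-cases $\mathbf\Sigma^{>3}_{n-1}\setminus\mathbf\Sigma^{\geq 4}_{n-1}$ and $\mathbf\Sigma^{\geq 4}_{n-1}$ separately via the appropriate Lipschitz-type estimate from Section~\ref{sec:E3B}; in the latter, the triangle inequality $\|u-p_2-p_3\|_\infty\leq\|u-\anz\|_\infty+\|\anz-p_2-p_3\|_\infty$ will bring one back to the setting of Lemma~\ref{lem:E3B5}. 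Aside from these points, the arguments reduce to careful bookkeeping of the contact-set confinement provided by Proposition~\ref{prop:E2B3}.
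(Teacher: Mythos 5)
Your proposal follows the paper's proof essentially exactly: combine the barrier bound of Lemma~\ref{lem:ESCbar} (giving an upper bound on the increment $\min_{D_r}[u(\cdot,t)-u(\cdot,0)]$ at contact points via a suitable Ansatz $\mathfrak p$ and rate $\beta$) with the lower bound from Lemma~\ref{lem:ESC1}, and choose $\mathfrak p$ and $\beta$ stratum by stratum exactly as the paper does for (b), (c), (e). The only deviations are cosmetic choices of Ansatz---in (a) the paper simply takes $\mathfrak p\equiv 0$ with $\beta=2$ rather than $\mathfrak p=p_2$ with the Caffarelli modulus, and in (d) the paper works directly with $\mathfrak p=\anz$ and the global lower bound $\anz\geq -\bar C|x|^5$ (avoiding your two sub-case split and triangle inequality through $\anz$)---but these yield identical conclusions and the core argument is the same.
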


\begin{proof}
After a rotation, we may assume $\{p_2=0\}\subset \{x_n=0\}$.
In all the following cases we will apply Lemma~\ref{lem:ESCbar} and use that  $\psi^r\ge \frac 1 4$ in $B_{r/2}$.

\smallskip

(a) By Lemma~\ref{lem:ESC1}(a) we have, for any $\ep>0$,
 \begin{equation} \label{agoisnbe}
\min_{D_r} [u(\,\cdot\,, t)-u(\,\cdot\,, 0)] \ge c_\ep r^{\ep/2} t.
 \end{equation}
Also, since $u$ is $C^{1,1}$, $|u(\,\cdot\,,  0)|\le C_0r^2$ in~$B_r$  for all $r\in (0,1/2)$.
Thus, by Lemma~\ref{lem:ESCbar} applied with $\mathfrak p\equiv 0$ and $\beta=2$,
 \begin{equation}\label{anboiafsnboasn}
u(\,\cdot\,,t) \ge  c_1  \min_{D_r}[u(\,\cdot\,, t)-u(\,\cdot\,, 0)] \psi^r -   C_0r^{2}  \qquad \textrm{in}\, B_r, \qquad    \forall  \,r\in (0,1/2).
\end{equation}
Since $\psi^r\ge \frac 1 4$ in $B_{r/2}$,
thanks to \eqref{agoisnbe} we deduce that
\[
u(\,\cdot\,,  t)>0 \quad \mbox{in }B_{r/2} \qquad  \mbox{for }  t \ge (r/2)^{2-\ep},
\]
therefore
\[
\{u=0\}\cap\ \{t> |x|^{2-\ep}\} = \varnothing.
\]
\smallskip

(b) Using again Lemma~\ref{lem:ESC1}(a), it follows that \eqref{agoisnbe} holds. 
Also, since $(0,0)\in\mathbf{\Sigma}_m\setminus \mathbf{\Sigma}_m^a$, it follows from Proposition~\ref{prop:E2B2}(a) that $\lambda^{2nd}\geq 3$. Hence Lemmas~\ref{lem:EMF3} and~\ref{lem:u-v} imply that $| u(\,\cdot\,  ,0) - p_2| \le C_0r^{3}$ in $B_r$, and therefore Lemma~\ref{lem:ESCbar} applied with $\mathfrak p\equiv p_2$ and $\beta=3$ yields
 \[
u(\,\cdot\,,t) \ge p_2+ c_1 \min_{D_r} [u(\,\cdot\,, t)-u(\,\cdot\,, 0)]  \psi^r -   C_0r^{3}     \qquad \textrm{in}\, B_r, \qquad \forall  \,r\in (0,1/2).
\]
Since $p_2\geq0$, one concludes as in the proof of (a).

\smallskip

(c) By Lemma~\ref{lem:ESC1}(b) we have
 \begin{equation} \label{asdhgoh}
\min_{D_r} [u(\,\cdot\,, t)-u(\,\cdot\,, 0)] \ge c r t.
 \end{equation}
Since at the maximal stratum the frequency is at least $2+\alpha_\circ$ (see Proposition~\ref{prop:E2B2}(b)),
using Lemmas~\ref{lem:EMF3} and~\ref{lem:u-v} we have $| u(\,\cdot\,  ,0) - p_2| \le C_0r^{2+\alpha_\circ}$ in $B_r$.
Therefore, it follows from Lemma~\ref{lem:ESCbar} applied with $\mathfrak p\equiv p_2$ and $\beta=2+\alpha_\circ$, that
 \[
u(\,\cdot\,,t) \ge  p_2+ c_1   \min_{D_r} [u(\,\cdot\,, t)-u(\,\cdot\,, 0)]  \psi^r -   C_0r^{2+\alpha_\circ}   \qquad \textrm{in}\, B_r, \qquad  \forall  \,r\in (0,1/2).
\]
Thus, since $\psi^r\ge \frac 1 4$ in $B_{r/2}$,
thanks to \eqref{asdhgoh} 
we obtain
\[
u(\,\cdot\,,  t)>0 \quad  \mbox{in } B_{r/2} \qquad \mbox {for } t \ge C_3 r^{1+\alpha_\circ}.
\]

 \smallskip

(d) Again,  \eqref{asdhgoh} holds as a consequence of Lemma~\ref{lem:ESC1}(b). 
Moreover, since $(0,0)\in\mathbf{\Sigma}^{>3}_{n-1}$, thanks to Lemma~\ref{lem:E3B5} we deduce that  $| u(\,\cdot\,  ,0) - \anz| \le C_0r^{3+2\delta}$ in $B_r$ for some $\delta>0$ (note that $\delta$ may depend on the point $(0,0)$). 
Therefore, Lemma~\ref{lem:ESCbar} applied with $\mathfrak p\equiv \anz$ and $\beta=3+2\delta$ yields
 \[
u(\,\cdot\,,t) \ge \anz+ c_1 \min_{D_r} [u(\,\cdot\,, t)-u(\,\cdot\,, 0)]  \psi^r -   C_0r^{3+2\delta}      \qquad \textrm{in}\, B_r, \qquad  \forall  \,r\in (0,1/2).
\]
Recalling that $\anz  \ge - \bar C|x|^5$, it follows from \eqref{asdhgoh} that,
for  $t> (r/2)^{2+\delta}$ and $r$ sufficiently small, 
\[
u(\,\cdot\,,t) \ge   \anz  + c_3 r t -   C_0r^{3+\delta}  \ge -\bar Cr^5 + c_3 r t  -C_0 r^{3+2\delta} >0\quad \mbox{in }B_{r/2}\cap \{u(\,\cdot\,,0) =0 \}.
\]
Since $u(\,\cdot\,,t)\geq u(\,\cdot\,,0)$, this proves the result.

\smallskip

(e) Again,  \eqref{asdhgoh} holds as a consequence of Lemma~\ref{lem:ESC1}(b). 
Moreover,  by Theorem~\ref{thm:E3B3}, for every $\ep>0$ we have   $| u(\,\cdot\,  ,0) - \anz -p_4| \le C_0r^{5-\ep/2}$ in $B_r$. 
Then, applying Lemma~\ref{lem:ESCbar}  with $\mathfrak p\equiv  \anz +p_4$ and $\beta=5-\ep/2$,
 \[
u(\,\cdot\,,t) \ge   \anz +p_4 + c_1\min_{D_r}  [u(\,\cdot\,, t)-u(\,\cdot\,, 0)] \psi^r -   C_0r^{5-\ep/2}   \qquad \textrm{in}\, B_r \qquad     \forall  \,r\in (0,1/2).
\]
Also,
\[
 \anz + p_4 \ge - \bar C|x|^5 \qquad \mbox{in }  \{u(\,\cdot\,,0) =0 \}\subset \{x_n \le C|x'|^2\}.
\]
Thus \eqref{asdhgoh} yields, for $t> r^{4-\ep}$ and $r$ small,
\[
u(\,\cdot\,,t) \ge   \anz +p_4 + c_3 r t -   C_0r^{5-\ep/2}  \ge -\bar Cr^5 + c_3 r t  -C_0 r^{5-\ep/2} >0\qquad \mbox{in }B_{r/2}
\cap \{u(\,\cdot\,,0) =0 \}.
\]
\end{proof}

The set $\mathbf{\Sigma}^{\ge 3}_{n-1}\setminus \mathbf{\Sigma}^{3rd}_{n-1}$ is treated separately in the following lemma.
Since in this case the 3rd order blow-up is not harmonic, the proof is more involved. In particular, instead of proving that there are no singular points in the ``future'' $t>0$, we show that they do not exists in the past.

\begin{lemma}\label{lem:cleaningpast3}
Let $u:B_1 \times (-1,1) \to [0,\infty)$ satisfy \eqref{eq:UELL+t} and \eqref{assumption}, with $(0,0) \in \mathbf{\Sigma}^{\ge 3}_{n-1}\setminus \mathbf{\Sigma}^{3rd}_{n-1}$. 
Then
\[
\big\{ (x,t)\in B_1\times (-1,0) \ :\   t<-\omega(|x|)|x|^{2} \big\} \cap \mathbf{\Sigma}^{\ge 3}_{n-1} = \varnothing,
\]
for some modulus a continuity $\omega:[0,\infty)\to [0,\infty)$.
\end{lemma}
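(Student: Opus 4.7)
The proof is by contradiction, combining the uniqueness of the non-harmonic cubic blow-up at $(0,0)$ from Proposition~\ref{prop-uniq-cubic-blowups} with a barrier argument applied in the forward-in-time direction at a hypothetical past singular point. Suppose no such modulus exists; then there are $\delta_0>0$ and a sequence $(x_k,t_k)\in\mathbf{\Sigma}^{\ge 3}_{n-1}$ with $r_k:=|x_k|\to 0$, $t_k<0$, and $|t_k|/r_k^2\ge\delta_0$. Normalize coordinates so $\{p_2=0\}=\{x_n=0\}$. By Lemma~\ref{lem:EG2B1bis}(a), after passing to a subsequence $(x_k,t_k)\to(0,t_\infty)$. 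The case $t_\infty<0$ is ruled out: both $(0,0)$ and $(0,t_\infty)$ would lie in $\mathbf{\Sigma}$, so Lemma~\ref{lem:EG2B1bis}(c) would force $u$ to be $t$-independent on some cylinder $B_\rho(0)\times[t_\infty,0]$; combined with \eqref{assumption} and the strong maximum principle applied to the nonnegative harmonic function $u(\cdot,0)-u(\cdot,t_\infty)$ on the connected component of $\{u(\cdot,t_\infty)>0\}$ touching $\partial B_1$, this gives a contradiction. Hence $t_k\to 0^-$.

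Next, I rule out the subcase $u(x_k,0)=0$ via a blow-up comparison. In that situation, $(x_k,0)$ is a free-boundary point of $u(\cdot,0)$; by the same Lemma~\ref{lem:EG2B1bis}(c)+maximum-principle argument applied to $(x_k,t_k)$ and $(x_k,0)$, the point $(x_k,0)$ cannot be singular, so it is regular, with blow-up $\tfrac12(\max(0,\mathbf e_k\cdot z))^2$ for some unit vector $\mathbf e_k$. The singular-of-type-$\ge 3$ structure at $(x_k,t_k)$ provides the blow-up $\tfrac12(e_k\cdot z)^2$, and passing the monotonicity $u(\cdot,0)\ge u(\cdot,t_k)$ to the blow-up limit yields
\[
\tfrac12\bigl(\max(0,\mathbf e_k\cdot z)\bigr)^2\ge \tfrac12(e_k\cdot z)^2\qquad\forall z\in\R^n.
\]
Evaluating at $z=e_k$ forces $\mathbf e_k\cdot e_k\ge 1$ (hence $\mathbf e_k=e_k$), while evaluating at $z=-e_k$ forces $\mathbf e_k\cdot e_k\le -1$ (hence $\mathbf e_k=-e_k$); these are incompatible, so $u(x_k,0)>0$ for all large $k$.

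To close the contradiction in this remaining case, I apply Lemma~\ref{lem:ESCbar} at $(x_k,t_k)$ with $\mathfrak p=p_{2,x_k,t_k}(\cdot-x_k)$ and $\beta=3$, combined with Lemma~\ref{lem:ESC1}(b) at $(x_k,t_k)$ (giving $\min_{\tilde D_r(x_k)}[u(\cdot,0)-u(\cdot,t_k)]\gtrsim r|t_k|$), to get
\[
u(x,0)\ge \mathfrak p(x-x_k)+M\psi^r(x-x_k)-Cr^3\quad\text{in } B_r(x_k),\qquad M\gtrsim r|t_k|.
\]
Evaluating at $x=0\in B_r(x_k)$ with $r\in[2r_k,4r_k]$, using that $\psi^r(-x_k)\ge c>0$ for $r\ge 2r_k$, $\mathfrak p(-x_k)\ge 0$, and $|t_k|\ge\delta_0 r_k^2$, one obtains $u(0,0)\ge(c_1\delta_0-c_2)r_k^3$, which is strictly positive for $\delta_0$ exceeding a fixed dimensional constant, contradicting $u(0,0)=0$. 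The refinement to arbitrary $\delta_0>0$ ---which is what the statement of the lemma requires--- comes from upgrading the $Cr^3$ error in the barrier to $o(r^3)$: this uses the uniqueness of the cubic expansion $u(x,0)=p_2(x)+\tilde q(x)+o(|x|^3)$ at $(0,0)$ (Proposition~\ref{prop-uniq-cubic-blowups}), together with $\tilde q|_{\{x_n=0\}}\equiv 0$ (Lemma~\ref{lem:signoriniodd}) and $|x_{k,n}|/r_k\to 0$ (Lemma~\ref{lem:EG2B1bisbis}), and, when $(x_k,t_k)$ itself lies in $\mathbf{\Sigma}^{\ge 3}_{n-1}\setminus\mathbf{\Sigma}^{3rd}_{n-1}$, the convergence of cubic blow-ups $\tilde q_{(x_k,t_k)}\to \tilde q_{(0,0)}$ which allows one to use the refined polynomial $\mathfrak p=p_{2,x_k,t_k}(\cdot-x_k)+\tilde q^{\mathrm{odd}}_{(x_k,t_k)}(\cdot-x_k)$ in Lemma~\ref{lem:ESCbar}.

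The main obstacle is making the last enhancement quantitative and uniform in $k$. The difficulty is that the even part $\tilde q^{\mathrm{even}}$ is of indefinite sign (Lemma~\ref{lem:signorinieven3}), and its one-sided effect on the caps $\tilde D_r(x_k)$ used by the barrier must be carefully estimated via the explicit form of $\tilde q^{\mathrm{even}}$ combined with an almost-uniform convergence of cubic blow-ups along the sequence. The subcase $(x_k,t_k)\in\mathbf{\Sigma}^{>3}_{n-1}=\mathbf{\Sigma}^{3rd}_{n-1}$ (cf.~Proposition~\ref{prop:nobadpoints}) is in fact easier, since there Lemma~\ref{lem:E3B5}-type estimates immediately give a $o(r^3)$ expansion at $(x_k,t_k)$; the genuinely hard subcase is $(x_k,t_k)\in\mathbf{\Sigma}^{\ge 3}_{n-1}\setminus\mathbf{\Sigma}^{3rd}_{n-1}$, where one must transfer the refined expansion from $(0,0)$ to the moving base point $(x_k,t_k)$ using Proposition~\ref{prop-uniq-cubic-blowups}.
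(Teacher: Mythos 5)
Your approach is genuinely different from the paper's, and unfortunately it has a gap that you yourself acknowledge. The core difficulty is that your barrier argument, anchored at the moving base point $(x_k,t_k)$ and evaluated at $x=0$, needs the error in the cubic expansion of $u(\,\cdot\,,t_k)$ at $x_k$ to be $o(r^3)$ with a rate that is uniform in $k$. Proposition~\ref{prop-uniq-cubic-blowups} gives only the existence of each limit $\lim_{r\downarrow 0}r^{-3}(u(x_k+r\,\cdot\,,t_k)-p_{2,x_k,t_k})$; it provides no quantitative rate, let alone one uniform along a sequence where $t_k\to 0^-$. Moreover, because the cubic blow-up contains a non-harmonic even part $\tilde q^{\mathrm{even}}=|x_n|(\ldots)$ which is not a polynomial, Lemma~\ref{lem:ESCbar} (which requires $\mathfrak p$ polynomial with $\Delta\mathfrak p=1$) cannot absorb it: with $\mathfrak p = p_{2,x_k,t_k}+\tilde q^{\mathrm{odd}}_{(x_k,t_k)}$ the residual $|u(x_k+\cdot,t_k)-\mathfrak p|$ is still $O(r^3)$, not $o(r^3)$, because of $\tilde q^{\mathrm{even}}$. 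You note both issues at the end; they are exactly the missing content of the proof, not details that can be ``made quantitative'' by a routine bookkeeping pass.

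The paper's argument avoids these obstacles entirely by a different strategy. Rather than trying to push the barrier so hard that $u(0,0)>0$ (a stronger conclusion than the lemma needs), the paper shows that if $u(\,\cdot\,,t)$ had a singular point $x_r\in B_r$ with $t\leq -\eta r^2$, then the contact set of $u(\,\cdot\,,t/2)$ must contain the whole cubic graph $\{x_n+\frac b3 x_n^2 = x'\cdot Bx'\}\cap B_{2r}$. The key tools are (i) the one-sided expansion $u(rx,0)\leq \frac12 r^2 x_n^2 + r^3\tilde q(x)+o(r^3)$, which needs only the uniqueness of the blow-up \emph{at the fixed point} $(0,0)$ --- so no uniformity issue; (ii) a lower bound on $u(\,\cdot\,,0)-u(\,\cdot\,,t)$ near $x_r$ obtained from Hopf's lemma via the trapped estimate of Proposition~\ref{prop:E2B3}, which only requires the $C^{1,1}$ thinness of the contact set and so is robust; and (iii) a touching argument with a paraboloid-type barrier $\phi_{z',c}$ showing that points $z$ on the cubic graph satisfy $u(rz,t)=0$. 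Once the contact set is known to contain a smooth $(n-1)$-hypersurface splitting $B_{2r}$, Caffarelli's thick-contact-set regularity from \cite{C-obst} upgrades the whole free boundary to smooth near $x_r$, giving the contradiction. This is weaker than $u(0,0)>0$ (the contact set may well persist), which is precisely why it is provable without uniform cubic expansions at the moving points. You should also be aware that your preliminary reductions (ruling out $t_\infty<0$ and $u(x_k,0)=0$) are not needed in the paper's argument, since the contradiction is reached directly at the moving singular point $x_r$.
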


\begin{proof}
Let $w = u(\,\cdot\,0)-p_2$, $w_r= w(r\,\cdot\,)$. Also, with no loss of generality we assume that $p_2= \frac 12 x_n^2$. 
By Proposition~\ref{prop-uniq-cubic-blowups} we have that
\begin{equation}\label{haiohio2h}
\|r^{-3}w_r -\tilde q\|_{L^\infty(B_4)} \le \delta(r)\downarrow 0 \quad \mbox{for }\quad  \tilde q(x) = |x_n| \bigg(\frac{a}{3} x_n^2 - x' \cdot A x'  \bigg) + x_n\bigg(\frac{b}{3} x_n^2 - x' \cdot B x'  \bigg),
\end{equation}
where $x=(x',x_n)$, and $A\in \R^{n-1}\times \R^{n-1}$ is symmetric, nonnegative definite, and has at least one positive eigenvalue.

Fix $\eta>0$, and assume by contradiction that there exists $r>0$ small and $t \leq -\eta r^2$ such that
$u(\,\cdot\,, t)$ has a singular point in $\mathbf\Sigma^{\ge 3}_{n-1}\cap B_{r}$.
Under this assumption, we claim that 
\begin{equation}\label{iugfdtdxjkhnj}
\bigg\{x_n+ \frac{b}3 x_n^2=x'\cdot Bx'\bigg\} \cap B_{2r} \subset \{u(\,\cdot\,, t/2)=0\},
\end{equation}
where $b$ and $B$ are given by \eqref{haiohio2h}.

Before proving the claim, we show that it leads to a contradiction.
Indeed, thanks to Proposition~\ref{prop:E2B3}, since we are assuming that $u(\,\cdot\,, t)$ has a singular point $x_r\in\Sigma ^{\ge 3}_{n-1}$ with $|x_r|\le r$, then for some $e_r\in \mathbb S^{n-1}$ we have
\begin{equation}\label{eq:trapped}
\{u(\,\cdot\,, t) =0\}\cap B_{\rho} \subset \bigg\{x\in B_\rho\,:\, |e_r\cdot (x -x_r)| \le C\rho^2 \bigg\}, \quad\mbox{for all  }\rho\in [r, 1].
\end{equation}
Note that the hypersurface $\big\{x_n+ b x_n^2=x'\cdot Bx'\big\}  \cap B_{2r}$ separates the ball $B_{2r}$ in two connected components $B_{2r}^+$ and $B_{2r}^-$.
Also, by monotonicity, \eqref{iugfdtdxjkhnj} and \eqref{eq:trapped} hold for $u(\,\cdot\,,t')$ for all $t' \in[t,t/2]$.
Hence, if we define
$$
u^+(x,t'):=\left\{
\begin{array}{ll}
u(x,t')&\text{in }B_{2r}^+,\\
0&\text{in }B_{2r}^-,
\end{array}
\right.
\qquad
u^-(x,t'):=\left\{
\begin{array}{ll}
0&\text{in }B_{2r}^+,\\
u(x,t')&\text{in }B_{2r}^-,
\end{array}
\right.
$$
then both $u^+(\,\cdot\,,t')$ and $u^-(\,\cdot\,,t')$ are solutions to the obstacle problem with a thick contact set at $0$.
Combining this information with \eqref{eq:trapped}, it follows by \cite{C-obst} that 
the free boundaries of $u^+(\,\cdot\,,t')$ and $u^-(\,\cdot\,,t')$ are uniformly smooth hypersurface inside $B_{3r/2}$,
for all $t' \in[t,t/2]$. In addition, by strict monotonicity, these hypersurfaces are disjoint for any $t'<t/2$.
Since the free boundary of $u(\,\cdot\,,t')$ is the union of these hypersurfaces, this proves 
 in particular that the free boundary of $u(\,\cdot\,,t)$ has no singular points, a contradiction.

Thus, we are left with proving~\eqref{iugfdtdxjkhnj}.
First of all we note that, by 
Lemma~\ref{lem:EG2B1bisbis}, we have 
\begin{equation}\label{anklabjklba}
e_r \to \boldsymbol e_n \quad \mbox{and} \quad 
 (r^{-1} x_r)\cdot e_r \to 0\qquad \mbox{as } r \downarrow 0,
\end{equation}
where $e_r$ is the unit vector appearing in \eqref{eq:trapped}.
Furthermore, by the classical barrier argument used in proof of Hopf's Lemma (see for instance \cite[Chapter 6.4.2]{Eva10}), it follows from \eqref{eq:trapped} that 
\begin{equation}
\label{eq:Hopf t}
u(\,\cdot\,, 0)- u(\,\cdot\,, t)  \ge c_1  |t|  \big( |e_r\cdot (x- x_r)|- C|x-x_r|^2\big)_+.
\end{equation}
Now, given $z' \in B_2'\subset \R^{n-1}$ and $c\ge0$, we define the function
\[
\phi_{z',c}(x) :=  
\bigg( \frac 1 {2r}  -n\bigg) \bigg(x_n + \frac{br} 3 x_n^2 - rx'\cdot Bx'\bigg)^2 + (x'-z')^2 +c. 
\]
Note that $\phi_{z',c}\ge c\ge 0$ and
\[
\Delta \phi_{z',c} = \frac 1 r  - 2n + O(r) + 2(n-1) \le \frac 1 r,\qquad \mbox{provided } 0<r\ll 1 .
\]
Also, since $A\ge 0 $ we have $\tilde q(x)\le -x_n\big( x' \cdot B x'  \big) +  C|x_n|^3$, therefore (recall \eqref{haiohio2h})
\[
 r^{-3}u(r x,0) - \frac{1}{2 r} x_n^2= r^{-3}w_r(x)  \le  \tilde q(x)+ \delta(r) \le -x_n\big( x' \cdot B x'  \big)  +  C|x_n|^3 + \delta(r).
\]
Thus, combining the bound above with \eqref{eq:Hopf t}, 
we get
\[
\begin{split}
r^{-3} u(r x,t) - \frac{1}{2 r} x_n^2 &\le    r^{-3} \big(u(r x,t)- u(rx,0) \big)  -x_n\big( x' \cdot B x'  \big)  +  C|x_n|^3 + \delta(r)\\
& \le  -  r^{-3} c_1  |t|  \big(|e_r\cdot (rx- x_r)|- C|rx- x_r|^2\big)_+ -x_n\big( x' \cdot B x'  \big)  +  C|x_n|^3 + \delta(r)
\\
& \leq  -  c_1  \eta  \big(|e_r\cdot (x- \hat x_r)|- Cr|x- \hat x_r|^2\big)_+ -x_n\big( x' \cdot B x'  \big)  +  C|x_n|^3 + \delta(r),
\end{split}
\]
where we used that $|t|\geq \eta r^2$ and we denoted $\hat x_r =: r^{-1}x_r \in B_1$.

Recalling \eqref{anklabjklba}, this implies that
\[
v(x) : = r^{-3}u(r x,t) \le  \frac{1}{2 r} x_n^2 - x_n\big( x' \cdot B x'  \big)   -c_1 \eta |x_n|    +  C|x_n|^3 + \theta(r),
\]
for some modulus of continuity $\theta(r)$.
On the other hand, for any $c\geq0$ we have
\[
\phi_{z',c}(x) \geq \frac{1}{2 r} x_n^2 -nx_n^2 - C|x_n|^3 - x_n\big( x' \cdot B x'  \big) +(x'-z')^2 +O(r).
\]
Let now $z: = (z',z_n)$ satisfy $z_n+ r\frac b  3z_n^2=rz'\cdot Bz'$, and consider a point
$x\in \partial B_s(z)$, where $0<r\ll s\ll 1$.
Then, since $|z_n|=O(r)$, we have $(x'-z')^2 = s^2 - x_n^2 + O(r)$, and therefore
\[
\phi_{z',c}(x) \geq \frac{1}{2 r} x_n^2 -(n+1)x_n^2 - C|x_n|^3 - x_n\big( x' \cdot B x'  \big) +s^2 +O(r) \qquad \textrm{on } \partial B_s(z).
\]
Since, for $r\ll s\ll1$,
\[c_1 \eta |x_n| +s^2 \ge Cx_n^2 + C|x_n|^3 +\theta(r) \qquad \textrm{for}\quad |x_n|\leq s, \]
we deduce that
\[
v(x) = r^{-3}u(r x,t) < \phi_{z',c}(x)  \qquad \textrm{on } \partial B_s(z)
\]
for all $c>0$.

Now, assume there exists $c_*>0$ be such that $\phi_{z',c_*}$ touches $v$ from above at some point $x_\circ\in \overline{B_s(z)}$. Since $v<\phi_{z',c_*}$ on $\partial B_s(z)$, the contact point is inside $B_s(z).$
Also, since $\Delta v= r^{-1}$ in $\{ u(r \, \cdot\, ,t)>0\}$  while $\Delta \phi_{z',c_*}\le r^{-1}$, 
it follows by the maximum principle that $x_\circ \not\in \{ u(r \, \cdot\, ,t)>0\}$.
Thus, 
\[
 0 = r^{-3}u(r x_\circ,t)   =  v(x_\circ) =  \phi_{z',c_*}(x_\circ)\geq c_*>0, 
\]
a contradiction. This proves that $v \leq \phi_{z',c}$ for all $c>0$, and letting $c\to 0$ we obtain
\[
 0 \le  r^{-3}u(r z,t) = v(z)  \le  \phi_{z',0}(z) = 0 .
\]
Since $z'\in  B_2'$ is arbitrary, this proves \eqref{iugfdtdxjkhnj}, and the lemma follows.
\end{proof}

We finally prove: 

\begin{theorem}\label{thm:mainell}
Let $u:B_1 \times (-1,1) \to [0,\infty)$ satisfy \eqref{eq:UELL+t} and \eqref{assumption}. 
Then:

\begin{enumerate}
\item[(a)] In dimension $n=2$ we have $\dim_\HH \big(  \pi_2(\mathbf{\Sigma})\big) \le 1/4$.

\item[(b)] In dimension $n=3$ we have $\dim_\HH \big(  \pi_2(\mathbf{\Sigma})\big) \le 1/2$.

\item[(c)] In dimensions $n\ge 4$,  for $\HH^1$-a.e. $t\in (-1,1)$ we have \[\HH^{n-4} \big(\mathbf{\Sigma}\cap  \pi_2^{-1}(\{t\}) \big) = 0.\]
\end{enumerate}
In particular, for $n\leq 4$, the singular set is empty for a.e. $t$.
\end{theorem}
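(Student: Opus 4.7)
The plan is to decompose $\mathbf{\Sigma}$ along the chain of inclusions \eqref{INCLUSI} (together with the splitting $\mathbf{\Sigma}_m = \mathbf{\Sigma}_m^a\cup(\mathbf{\Sigma}_m\setminus \mathbf{\Sigma}_m^a)$ for $0\le m\le n-2$), and on each of the resulting strata $E$ to combine the spatial Hausdorff bound $\dim_\HH \pi_1(E)\le \beta$ coming from Section~\ref{sec:DRB} with a cleaning exponent $s$ provided by Lemma~\ref{lem:ESC2} or Lemma~\ref{lem:cleaningpast3}: for every $(x_\circ,t_\circ)\in E$ and every $\ep>0$, there should be no other singular point with $|t-t_\circ|>|x-x_\circ|^{s-\ep}$ in a small neighborhood of $(x_\circ,t_\circ)$ (in the future, except for $\mathbf{\Sigma}_{n-1}^{\ge 3}\setminus \mathbf{\Sigma}_{n-1}^{3rd}$, where Lemma~\ref{lem:cleaningpast3} provides it in the past). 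Reading off the bounds from the previous sections, the pairs $(\beta,s)$ that emerge are $(m-1,2)$ on $\mathbf{\Sigma}_m^a$, $(m,3)$ on $\mathbf{\Sigma}_m\setminus \mathbf{\Sigma}_m^a$ for $1\le m\le n-2$, $(n-3,1+\alpha_\circ)$ on $\mathbf{\Sigma}_{n-1}^{<3}$, $(n-2,2)$ in the past on $\mathbf{\Sigma}_{n-1}^{\ge 3}\setminus \mathbf{\Sigma}_{n-1}^{3rd}$, $(n-2,2+\delta_{x_\circ,t_\circ})$ on $\mathbf{\Sigma}_{n-1}^{>3}\setminus \mathbf{\Sigma}_{n-1}^{\ge 4}$ (with $\delta>0$ pointwise, so I would subdivide into countable pieces $\{\delta\ge 1/k\}$), $(n-2,3)$ on $\mathbf{\Sigma}_{n-1}^{\ge 4}\setminus \mathbf{\Sigma}_{n-1}^{4th}$ (by rerunning the argument of Lemma~\ref{lem:ESC2}(e) with only the bound $|u-\anz|\le Cr^{4-\ep}$), $(n-3,4-\zeta)$ on $\mathbf{\Sigma}_{n-1}^{4th}\setminus \mathbf{\Sigma}_{n-1}^{\ge 5-\zeta}$, and $(n-1,4-\ep)$ for every $\ep>0$ on $\mathbf{\Sigma}^*$ using Theorem~\ref{thm:E3B3} and Lemma~\ref{lem:ESC2}(e).

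I would then feed each pair $(\beta,s)$ into Corollary~\ref{prop:GMT4b}: case (a) ($\beta<s$) yields $\dim_\HH \pi_2(E)\le \beta/s$, and case (b) ($\beta\ge s$) yields $\dim_\HH(E\cap \pi_2^{-1}\{t\})\le \beta-s$ for $\HH^1$-a.e.\ $t$. A direct comparison shows that, for $n=2$, every stratum except $\mathbf{\Sigma}^*$ contributes a set of zero Hausdorff dimension in time, while $\mathbf{\Sigma}^*$ gives exactly $\beta/s\le 1/4$; for $n=3$ the analogous inspection gives $\dim_\HH \pi_2 \le 1/2$, saturated again at $\mathbf{\Sigma}^*$ where $\beta/s= 2/4$. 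For $n\ge 4$, every stratum except $\mathbf{\Sigma}_{n-1}^{\ge 3}\setminus \mathbf{\Sigma}_{n-1}^{3rd}$ satisfies either $\beta/s<1$ (so the slice is empty for a.e.\ $t$, which in particular gives $\HH^{n-4}=0$) or $\beta-s<n-4$ (so $\HH^{n-4}$ of the slice vanishes because its Hausdorff dimension is strictly less than $n-4$), and in either case the desired conclusion follows immediately.

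The main obstacle is the borderline stratum $\mathbf{\Sigma}_{n-1}^{\ge 3}\setminus \mathbf{\Sigma}_{n-1}^{3rd}$, where $\beta=s=2$ and the application of Corollary~\ref{prop:GMT4b}(b) would only give $\dim_\HH(\text{slice})\le n-4$, which is \emph{not} enough to ensure $\HH^{n-4}=0$ (in particular, for $n=4$ it would not force emptiness). Here I would instead invoke the sharper Proposition~\ref{prop:GMT4}(b), used in its obvious time-reversed form (the proof being symmetric under $t\mapsto -t$). The extra finiteness hypothesis $\HH^\beta(\pi_1(E))<\infty$ on compact sets is available precisely because Proposition~\ref{prop:recti} shows that $\pi_1(\mathbf{\Sigma}_{n-1}^{\ge 3}\setminus \mathbf{\Sigma}_{n-1}^{3rd})$ is contained in a countable union of $(n-2)$-dimensional Lipschitz manifolds, so $\HH^{n-2}$ is $\sigma$-finite on it. Proposition~\ref{prop:GMT4}(b) with $\beta=n-2$ and $s=2$ then yields $\HH^{n-4}(E\cap \pi_2^{-1}\{t\})=0$ for $\HH^1$-a.e.\ $t$, closing the case $n\ge 4$ and, in particular, emptiness of the singular set for a.e.\ $t$ whenever $n\le 4$. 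The crux of the plan is thus to isolate this single critical stratum and to recognize that rectifiability of the spatial projection, rather than a mere dimensional bound, is the decisive extra ingredient.
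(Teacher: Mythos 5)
Your proposal follows the paper's own strategy: the same stratification of $\mathbf{\Sigma}$ along \eqref{INCLUSI}, the same pairing on each stratum of a spatial dimension bound $\beta$ from Section~\ref{sec:DRB} with a cleaning exponent $s$ from Lemmas~\ref{lem:ESC2}--\ref{lem:cleaningpast3}, the same two covering results (Proposition~\ref{prop:GMT4}/Corollary~\ref{prop:GMT4b}), and the same identification of $\mathbf{\Sigma}_{n-1}^{\ge 3}\setminus\mathbf{\Sigma}_{n-1}^{3rd}$ as the critical borderline case where $\beta=s=2$ forces one to use the stronger Proposition~\ref{prop:GMT4}(b), with the $\sigma$-finiteness of $\HH^{n-2}$ on the spatial projection supplied by the rectifiability statement of Proposition~\ref{prop:recti} rather than by the raw dimension bound. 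This is exactly the heart of the paper's argument, and you have it right.

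One small but genuine improvement over what is printed in the paper: you refine $\mathbf{\Sigma}_{n-1}^{>3}\setminus\mathbf{\Sigma}^*$ into $\mathbf{\Sigma}_{n-1}^{>3}\setminus\mathbf{\Sigma}_{n-1}^{\ge 4}$ (further split into $\{\delta_{x_\circ,t_\circ}\ge 1/k\}$), $\mathbf{\Sigma}_{n-1}^{\ge 4}\setminus\mathbf{\Sigma}_{n-1}^{4th}$, and $\mathbf{\Sigma}_{n-1}^{4th}\setminus\mathbf{\Sigma}_{n-1}^{\ge 5-\zeta}$, so that on each piece the cleaning exponent is \emph{strictly} larger than $2$ and Corollary~\ref{prop:GMT4b}(b), which needs only the dimension bound, yields $\dim_\HH(\text{slice})<n-4$. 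The paper instead invokes Proposition~\ref{prop:GMT4}(b) on the whole block $\mathbf{\Sigma}_{n-1}^{>3}\setminus\mathbf{\Sigma}^*$ with $\beta=n-2$, $s=2$, which strictly speaking requires $\HH^{n-2}$-finiteness that is not established there; your subdivision circumvents this. Two minor imprecisions in your list of $(\beta,s)$ pairs that do not affect any conclusion: on $\mathbf{\Sigma}_{n-1}^{4th}\setminus\mathbf{\Sigma}_{n-1}^{\ge 5-\zeta}$, the cleaning exponent furnished by Lemma~\ref{lem:ESCbar} is $\lambda^{4th}_{x_\circ,t_\circ}-1\in(3,4-\zeta)$ and in particular is not uniformly $4-\zeta$ (only uniformly $\ge 3$, which is still more than enough given that $\beta\le n-3$); and on $\mathbf{\Sigma}^*$ the natural value to feed into Corollary~\ref{prop:GMT4b} is simply $s=4$, since the hypothesis there already allows a loss of $\ep$ at each point.
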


\begin{proof}
First of all, as in the proof of Theorem~\ref{thm:E3B3}, we have the following:
\begin{itemize}
\item $\dim_\HH \big(  \pi_1(\mathbf{\Sigma}_m^a)\big) \le n-3$ for $0\leq m\leq n-2$; \vspace{1mm}

\item $\dim_\HH \big(  \pi_1(\mathbf{\Sigma}_m\setminus \mathbf{\Sigma}_m^a)\big) \le n-2$ for $0\leq m\leq n-2$;  \vspace{1mm}

\item $\dim_\HH \big(  \pi_1(\mathbf{\Sigma}_{n-1} \setminus \mathbf{\Sigma}_{n-1}^{\ge3})\big) \le n-3$; \vspace{1mm}

\item $\pi_1(\mathbf{\Sigma}_{n-1}^{\geq 3}\setminus \mathbf{\Sigma}_{n-1}^{3rd})$ is contained in a countable union of $(n-2)$-dimensional Lipschitz graphs;

\vspace{1mm}

\item $\pi_1(\mathbf{\Sigma}_{n-1}^{3rd}\setminus \mathbf{\Sigma}_{n-1}^{> 3})=\varnothing$; \vspace{1mm}

\item $\dim_\HH \big(  \pi_1(\mathbf{\Sigma}_{n-1}^{>3}\setminus \mathbf{\Sigma}^*)\big) \le n-2$;

\item $\dim_\HH \big(  \pi_1(\mathbf{\Sigma}^*)\big) \le n-1$. \vspace{1mm}

\end{itemize}
Furthermore, thanks to Lemmas~\ref{lem:ESC2} and~\ref{lem:cleaningpast3}, we have:
\begin{itemize}
\item In $\mathbf{\Sigma}_m^a$ for $0\leq m\leq n-2$, we can use Corollary~\ref{prop:GMT4b} with $\beta=n-3$ and $k=2$; \vspace{1mm}

\item  In $\mathbf{\Sigma}_m\setminus \mathbf{\Sigma}_m^a$ we can use Corollary~\ref{prop:GMT4b} with $\beta=n-2$ and $k=3$; \vspace{1mm}

\item In $\mathbf{\Sigma}_{n-1} \setminus \mathbf{\Sigma}_{n-1}^{\ge3}$ we can use Corollary~\ref{prop:GMT4b} with $\beta=n-3$ and $k=1+\alpha_\circ$;\vspace{1mm}

\item In $\mathbf{\Sigma}_{n-1}^{\geq 3}\setminus \mathbf{\Sigma}_{n-1}^{3rd}$ (up to taking a countable union, and up to reversing time) we can use Proposition~\ref{prop:GMT4} with $\beta=n-2$ and $k=2$;

\vspace{1mm}

\item In $\mathbf{\Sigma}_{n-1}^{>3}\setminus \mathbf{\Sigma}^*$ we can use Proposition~\ref{prop:GMT4} with $\beta=n-2$ and $k=2$; \vspace{1mm}

\item  In $\mathbf{\Sigma}^*$ we can use Corollary~\ref{prop:GMT4b} with $\beta=n-1$ and $k=4$. \vspace{1mm}

\end{itemize}
Hence, combining these information, we deduce that:
\begin{itemize}

\item  $\dim_\HH\big(\mathbf{\Sigma}_m^a\cap  \pi_2^{-1}(\{t\}) \big)\leq n-5$  for $\HH^1$-a.e. $t\in \R$; \vspace{1mm}

\item  $\dim_\HH\big((\mathbf{\Sigma}_m\setminus \mathbf{\Sigma}_m^a) \cap  \pi_2^{-1}(\{t\}) \big)\leq n-5$  for $\HH^1$-a.e. $t\in \R$; \vspace{1mm}

\item  $\dim_\HH\big(\mathbf{\Sigma}_{n-1}^{<3}\cap  \pi_2^{-1}(\{t\}) \big)\leq n-4-\alpha_\circ$  for $\HH^1$-a.e. $t\in \R$; \vspace{1mm}

\item $\HH^{n-4} \big((\mathbf{\Sigma}_{n-1}^{\geq 3}\setminus \mathbf{\Sigma}_{n-1}^{3rd})\cap  \pi_2^{-1}(\{t\}) \big) = 0$  for $\HH^1$-a.e. $t\in \R$;   \vspace{1mm}

\item $\HH^{n-4} \big((\mathbf{\Sigma}_{n-1}^{>3}\setminus \mathbf{\Sigma}^*)\cap  \pi_2^{-1}(\{t\}) \big) = 0$  for $\HH^1$-a.e. $t\in \R$; \vspace{1mm}

\item  $\dim_\HH\big(\mathbf{\Sigma}^* \cap  \pi_2^{-1}(\{t\}) \big)\leq n-5$  for $\HH^1$-a.e. $t\in \R$. \vspace{1mm}

\end{itemize}
Thus, part (c) is proved.
Parts (a) and (b) follow exactly in the same way, but using instead Proposition~\ref{prop:GMT4}(a) and Corollary~\ref{prop:GMT4b}(a).
\end{proof}

\begin{remark}
\label{remark:sharp}
Thanks to Remark~\ref{rem-7/2}, one could actually slightly improve the estimate for the set $\mathbf{\Sigma}_{n-1}^{>3}\setminus \mathbf{\Sigma}^*$ and show that $\dim_\HH\big((\mathbf{\Sigma}_{n-1}^{>3}\setminus \mathbf{\Sigma}^*)\cap  \pi_2^{-1}(\{t\}) \big)\leq n-4-\frac12$.
However, all the other estimates are sharp (at least with respect to the techniques introduced in this paper), and in particular we believe that it is very unlikely that one could prove
a stronger result with these techniques. 
\end{remark}

As a consequence of the previous estimates, we finally obtain our main results:

\begin{proof}[Proof of Theorems~\ref{thm-Schaeffer-intro} and~\ref{cor-Hele-Shaw-intro}]
The results follow immedialy from Theorem~\ref{thm:mainell}.
\end{proof}

\appendix
\section{Some results on the Signorini problem}
For the convenience of the reader,
in this appendix we gather some classical results on the Signorini problem \eqref{signorininormal} that we use several times throughout the paper

\begin{lemma}\label{lemAP:1D}
The only 1D solutions to \eqref{signorininormal} that vanish at the origin are given by $q(x_n)= -c|x_n| +bx_n$, for some $c\ge 0$ and $b\in \R$.
\end{lemma}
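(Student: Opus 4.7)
The plan is to reduce \eqref{signorininormal} in one dimension to an elementary ODE-plus-jump analysis. First I would note that in one variable the Laplacian is just the second derivative, so the condition $\Delta q = 0$ in $\R \setminus \{0\}$ forces $q$ to be affine on each of the two half-lines: there exist constants $a_\pm, b_\pm \in \R$ with
\[
q(x_n) = \begin{cases} a_+ x_n + b_+ & \text{if } x_n > 0, \\ a_- x_n + b_- & \text{if } x_n < 0. \end{cases}
\]
Since solutions of the Signorini problem are continuous (indeed, Lipschitz), continuity at $0$ gives $b_+ = b_-$, and the hypothesis $q(0) = 0$ then forces $b_+ = b_- = 0$.

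Next I would compute $\Delta q$ in the sense of distributions. With the expression above one gets $q' = a_+ \chi_{\{x_n > 0\}} + a_- \chi_{\{x_n < 0\}}$, and so
\[
\Delta q = (a_+ - a_-)\, \delta_0
\]
as a distribution on $\R$. The condition $\Delta q \leq 0$ from \eqref{signorininormal} therefore amounts to $a_+ - a_- \leq 0$, i.e.\ $a_+ \leq a_-$. The remaining requirements in \eqref{signorininormal} are automatic here: $q \Delta q = 0$ because $q(0)=0$, and $q \geq 0$ on $\{x_n = 0\}$ trivially reduces to $0 \geq 0$.

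Finally, I would rewrite $(a_+, a_-)$ with $a_+ \leq a_-$ in the claimed form by setting $b := \tfrac{a_+ + a_-}{2} \in \R$ and $c := \tfrac{a_- - a_+}{2} \geq 0$, so that $a_+ = b - c$ and $a_- = b + c$. Then for either sign of $x_n$,
\[
q(x_n) = -c\,|x_n| + b\, x_n,
\]
which is the desired form. There is no real obstacle in this proof; the only subtlety is to make sure the jump in $q'$ across $0$ is interpreted distributionally, so that the Signorini inequality $\Delta q \leq 0$ translates correctly into $a_+ \leq a_-$ (equivalently, $c \geq 0$).
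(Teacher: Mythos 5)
Your proof is correct and follows essentially the same route as the paper's: reduce to piecewise-affine on each half-line, use $q(0)=0$ to kill the constant term, and read off $c\ge 0$ from the sign of the distributional jump in $q'$ across the origin (i.e.\ from $\Delta q\le 0$). The paper simply states this more tersely, writing $q$ directly in the form $a - c|x_n| + bx_n$; your explicit computation of $\Delta q = (a_+-a_-)\delta_0$ is the same content spelled out.
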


\begin{proof}
Since  $q=q(x_n)$, it follows from \eqref{signorininormal} that $q$ must be affine in $\R^n\setminus \{0\}$,
hence $q(x_n)=a -c|x_n| +bx_n$ for some $a,b,c\in \R$.
The condition $\Delta q \leq 0$ implies that $c\geq 0$. Also, since $q(0)=0$ we deduce that $a=0$, as desired.
\end{proof}

\begin{lemma}\label{lemAP:2D}
Let $\lambda>0$, and let $i$ denote the imaginary unit.
The only 2D $\lambda$-homogeneous solutions  of \eqref{signorininormal} (i.e.,  $q=q(x_n,x_{n-1})$ and $q(rx)= r^\lambda q(x)$ for every $r>0$) are given by
$$
\left\{
\begin{array}{ll}
q(x_n, x_{n-1}) = c i^{1-\lambda} {\rm Re}(|x_n| + i x_{n-1})^\lambda   + b  {\rm Re}(x_n + i x_{n-1})^\lambda ,& \text{if }\lambda\in \{1,3,5,...\}\\
q(x_n, x_{n-1}) = c i^{\lambda}  {\rm Re}(x_n + i x_{n-1})^\lambda   + b  {\rm Im}(x_n + i x_{n-1})^\lambda ,& \text{if }\lambda\in \{2,4,6,...\},
\\
q(x_n, x_{n-1}) = c {\rm Re}(x_n + i x_{n-1})^\lambda   ,& \text{if }  {\textstyle \lambda\in \left\{\frac{3}{2}, \frac{7}{2}, \frac{11}{2}, ...\right\}},
\end{array}
\right.
$$
where $c\ge 0$ and $b\in \R$.
In particular the set of possible homogeneities is $ \{1,2,3,4,5,...\}\cup
\left\{\frac{3}{2}, \frac{7}{2}, \frac{11}{2}, ...\right\}$.
\end{lemma}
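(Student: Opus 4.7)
I would prove the lemma by writing $q$ in polar coordinates, reducing to a linear ODE on each open half-plane, and performing a case analysis driven by the Signorini boundary conditions at the two points $\theta=\pm\pi/2$ where the constraint axis meets the unit circle.

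\smallskip

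The setup: let $(x_n,x_{n-1})=(r\cos\theta,r\sin\theta)$, so that the constraint set $\{x_n=0\}$ corresponds to $\theta=\pm\pi/2$ and the two open half-planes $\{x_n>0\}$, $\{x_n<0\}$ correspond respectively to $\theta\in(-\pi/2,\pi/2)$ and $\theta\in(\pi/2,3\pi/2)$. A $\lambda$-homogeneous function $q=r^\lambda f(\theta)$ is harmonic in each open half-plane iff $f''+\lambda^2 f=0$, so $f$ is a linear combination of $\cos(\lambda\theta),\sin(\lambda\theta)$ on each arc; denote by $f_\pm$ these two restrictions. The global problem then becomes: (i) continuity of $f$ at $\theta=\pm\pi/2$; (ii) $f(\pm\pi/2)\ge 0$, coming from $q\ge 0$ on $\{x_n=0\}$; and (iii) the jump/complementarity condition which, using $\partial_{x_n}=\mp r^{-1}\partial_\theta$ near $\theta=\pm\pi/2$, reads $f_+'(\pi/2)\ge f_-'(\pi/2)$ and $f_+'(-\pi/2)\le f_-'(3\pi/2)$, with equality whenever the corresponding boundary value is strictly positive.

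\smallskip

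The even--odd decomposition $q=q^{\rm even}+q^{\rm odd}$ with respect to $\{x_n=0\}$ would then decouple the analysis. The odd part vanishes on $\{x_n=0\}$ and is harmonic in both open half-planes, so its odd reflection is harmonic on all of $\R^2$; being $\lambda$-homogeneous, it must be either zero or a harmonic polynomial, forcing $\lambda\in\{1,2,3,\ldots\}$ and producing the $b$-term (equal to ${\rm Re}(x_n+ix_{n-1})^\lambda$ for odd $\lambda$ and ${\rm Im}(x_n+ix_{n-1})^\lambda$ for even $\lambda$, since these are precisely the $\lambda$-homogeneous harmonic polynomials which are odd in $x_n$). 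For the even part, writing $q^{\rm even}=r^\lambda h(\theta)$ with $h=\alpha\cos(\lambda\theta)+\beta\sin(\lambda\theta)$ on the upper half-plane, the problem reduces to $h(\pm\pi/2)\ge 0$, $h'(\pi/2)\ge 0$, $h'(-\pi/2)\le 0$, and $h(\pm\pi/2)h'(\pm\pi/2)=0$. When $\lambda$ is a positive integer, direct evaluation of $\cos(\lambda\pi/2)$ and $\sin(\lambda\pi/2)$ forces $\beta=0$ and pins down the sign of $\alpha$, recovering the stated formulas. When $\lambda$ is a half-integer, continuity forces $A_+=A_-$ and $B_+=-B_-$, any non-trivial solution must vanish at (at least) one of $\theta=\pm\pi/2$, and one obtains the one-parameter family $c\,{\rm Re}(x_n+ix_{n-1})^\lambda$ with a suitable branch. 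For all other $\lambda>0$, the combined constraints force $q\equiv 0$.

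\smallskip

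The main technical obstacle will lie in the half-integer case: to distinguish the admissible $\lambda\in\{3/2,7/2,11/2,\ldots\}$ from the excluded half-integers $\{1/2,5/2,9/2,\ldots\}$, one must track the signs of $f_\pm'$ at both $\theta=\pi/2$ and $\theta=-\pi/2$ simultaneously and verify that the two Signorini inequalities are compatible for the former values and incompatible for the latter. The computation is elementary but requires careful bookkeeping of trigonometric values at $\lambda\pi/2$; it ultimately reduces to the alternation of the sign of $\cos(\lambda\pi/2)\sin(\lambda\pi/2)$ within the half-integers (negative for $\lambda=(4k+3)/2$ and positive for $\lambda=(4k+1)/2$), which controls whether the derivative jumps at the two endpoints conspire to produce a nonpositive concentrated Laplacian.
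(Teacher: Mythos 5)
The paper does not actually prove this lemma: the displayed proof is a two-line citation to \cite[Proposition~A.1]{FS17} and \cite[Remark~1.2.7]{GP09}. What you supply is a self-contained argument, and it is the standard and correct one, very close in spirit to the one in those references: passing to polar coordinates so that $\lambda$-homogeneity reduces harmonicity to the pendulum ODE $f''+\lambda^2 f=0$ on each of the two arcs $\theta\in(-\pi/2,\pi/2)$ and $\theta\in(\pi/2,3\pi/2)$, splitting $q=q^{\rm even}+q^{\rm odd}$ with respect to $\{x_n=0\}$, disposing of the odd part by Liouville (it must be a $\lambda$-homogeneous harmonic polynomial, forcing $\lambda\in\mathbb{N}$ and giving the $b$-terms with the correct parity of $\mathrm{Re}$ versus $\mathrm{Im}$), and then confronting the even part with the four Signorini constraints at $\theta=\pm\pi/2$ (two sign constraints on $h(\pm\pi/2)$, two sign constraints on $h'(\pm\pi/2)$, and complementarity). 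Your identification of $\cos(\lambda\pi/2)\sin(\lambda\pi/2)$ as the gatekeeping quantity is exactly right: once $h(\pi/2)>0$ forces $h'(\pi/2)=0$, one computes $h(-\pi/2)=\alpha(\cos^2-\sin^2)/\cos$, which together with complementarity at $-\pi/2$ forces $\cos^2=\sin^2$ (automatic precisely for half-integers), and then $h'(-\pi/2)\le 0$ reads $\alpha\sin\le 0$ while $h(\pi/2)=\alpha/\cos>0$, so $\cos\cdot\sin<0$, selecting $\lambda\in\{3/2,7/2,11/2,\dots\}$; for all other non-integer $\lambda$ the system forces $\alpha=\beta=0$.

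One small slip of exposition, not of substance: in the half-integer step, the claim that continuity of $q$ across $\theta=\pm\pi/2$ gives $A_+=A_-$ and $B_+=-B_-$ is not what the relations say. If one uses the two-arc parametrization, evenness in $x_n$ gives $A_-=\varepsilon B_+$ and $B_-=\varepsilon A_+$ with $\varepsilon=\sin(\lambda\pi)=\pm1$. In any case this is unnecessary: since for half-integer $\lambda$ there is no $\lambda$-homogeneous harmonic polynomial at all, $q=q^{\rm even}$ automatically, and the single-arc analysis in the two unknowns $(\alpha,\beta)$ already yields the conclusion. Fixing that sentence would make the argument airtight.
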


\begin{proof}
A proof of this result can be found, for instance, in \cite[Proposition A.1]{FS17}; see also \cite[Remark 1.2.7]{GP09}.
\end{proof}

The following result is proved in \cite[Lemma 1]{ACS08}.
\begin{lemma}\label{lemAP:Alm}
Let $q$ be a solution of \eqref{signorininormal} and assume that $q(0)=0$. 
Let $\phi(\cdot,q)$ be as in \eqref{eq:Almgren}. Then $r\mapsto \phi(r, q)$ is monotone nondecreasing. Moreover, if $I\ni r \mapsto \phi(r, q)\equiv \lambda>0$ for some open interval $I\subset \R^+$, then $q$ is $\lambda$-homogeneous.
\end{lemma}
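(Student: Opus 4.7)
The proof I would write follows the now-classical Almgren frequency argument for the Signorini problem, but there are two subtle structural facts that must be exploited at the outset. Let $q$ solve \eqref{signorininormal} with $q(0)=0$. The key observation is that $\Delta q$ is a nonpositive measure supported in $\{x_n=0\}\cap\{q=0\}$, on which both $q$ and $x\cdot \nabla q$ vanish (the latter because $q=0$ on that portion of $\{x_n=0\}$ forces the tangential gradient to vanish there, while $x_n=0$ kills the only possibly nonzero component). Therefore, as Radon measures,
\[
q\,\Delta q\equiv 0\qquad \text{and}\qquad (x\cdot\nabla q)\,\Delta q\equiv 0.
\]
These two identities are the only way the Signorini structure enters the computation.

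Using Lemma~\ref{lem:EFF1}, the first identity gives $\frac{d}{dr}H(r,q)=\frac{2}{r}D(r,q)$ (after a standard scaling computation), so that
\[
\frac{d}{dr}\log H(r,q)=\frac{2\phi(r,q)}{r}.
\]
Using Lemma~\ref{lem:EFF2} together with the second identity, we obtain
\[
\frac{d}{dr}D(r,q)=2 r^{2-n}\int_{\partial B_r} q_\nu^2.
\]
Combining these two formulas yields
\[
\frac{d}{dr}\log \phi(r,q)=\frac{2}{r}\cdot \frac{r\int_{\partial B_r} q_\nu^2}{\int_{\partial B_r} q q_\nu}-\frac{2}{r}\cdot\frac{\int_{\partial B_r} q q_\nu}{\int_{\partial B_r} q^2},
\]
where we used $\int_{\partial B_r} q q_\nu=\int_{B_r}|\nabla q|^2$ (consequence of $q\Delta q=0$) to rewrite $D$. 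An application of Cauchy--Schwarz
\[
\Bigl(\int_{\partial B_r} q q_\nu\Bigr)^{2}\le \Bigl(\int_{\partial B_r} q^2\Bigr)\Bigl(\int_{\partial B_r} q_\nu^2\Bigr)
\]
immediately gives $\frac{d}{dr}\log \phi(r,q)\ge 0$, proving the monotonicity statement.

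For the rigidity part, suppose $\phi(r,q)\equiv\lambda$ on some open interval $I$. Then equality must hold in the Cauchy--Schwarz inequality above for every $r\in I$, which forces $q_\nu$ and $q$ to be proportional on $\partial B_r$; a direct check of the proportionality constant shows $r q_\nu=\lambda q$ on $\partial B_r$. Since this Euler relation holds for every $r$ in the annulus $\{rx:r\in I,\,x\in\partial B_1\}$, the function $r\mapsto r^{-\lambda}q(rx)$ is constant in $r\in I$ for every fixed $x\in\partial B_1$, so $q$ is $\lambda$-homogeneous on that annulus. Finally, one extends the homogeneity to all of $\R^n$: the $\lambda$-homogeneous extension $\bar q$ of $q|_{\partial B_1}$ is itself a global Signorini solution (since the scaling preserves \eqref{signorininormal}), and it agrees with $q$ on an open set, so by the unique continuation property for Signorini solutions (or simply by observing that both $q$ and $\bar q$ solve the same elliptic system in $\R^n\setminus\{x_n=0\}$ and have the same trace on the annulus) they coincide. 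The expected main obstacle is purely notational, namely giving a clean distributional justification of the two vanishing identities $q\Delta q=(x\cdot\nabla q)\Delta q=0$ when $q$ is merely $C^{1,1/2}$ and $\Delta q$ is a singular measure; this is handled by approximating $q$ by $q_\varepsilon:=(q-\varepsilon)_+-(q+\varepsilon)_-$ and passing to the limit.
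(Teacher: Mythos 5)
Your argument is the classical Almgren-frequency monotonicity proof of Athanasopoulos--Caffarelli--Salsa, which is exactly what the paper cites for this lemma, and your identification of the two structural facts that make the Signorini problem enter — $q\,\Delta q\equiv 0$ and $(x\cdot\nabla q)\,\Delta q\equiv 0$ as measures, the second because on the support of $\Delta q\subset\{x_n=0\}\cap\{q=0\}$ one has $x\cdot\nabla q=x'\cdot\nabla' q$ and $\nabla' q$ vanishes at interior minima of $q|_{\{x_n=0\}}$ — is correct and is the right point of entry. However, there is an algebraic slip in your displayed formula for $\frac{d}{dr}\log\phi(r,q)$. Since $D'(r,q)=2r^{2-n}\int_{\partial B_r}q_\nu^2$, $D(r,q)=r^{2-n}\int_{\partial B_r}q\,q_\nu$, $H'(r,q)=2r^{1-n}\int_{\partial B_r}q\,q_\nu$, and $H(r,q)=r^{1-n}\int_{\partial B_r}q^2$, the correct expression is
\[
\frac{d}{dr}\log\phi(r,q)=\frac{2\int_{\partial B_r}q_\nu^2}{\int_{\partial B_r}q\,q_\nu}-\frac{2\int_{\partial B_r}q\,q_\nu}{\int_{\partial B_r}q^2},
\]
with no residual $1/r$. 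As you wrote it, the second term carries an extra $1/r$, and then Cauchy--Schwarz only gives nonnegativity for $r\ge 1$; indeed, plugging a $\lambda$-homogeneous harmonic $q$ into your formula yields $\frac{2\lambda}{r}\bigl(1-\frac{1}{r}\bigr)<0$ for $r<1$, which is the wrong sign. With the corrected formula, the Cauchy--Schwarz inequality $\bigl(\int_{\partial B_r}q\,q_\nu\bigr)^2\le\bigl(\int_{\partial B_r}q^2\bigr)\bigl(\int_{\partial B_r}q_\nu^2\bigr)$ gives $\frac{d}{dr}\log\phi\ge 0$ as intended (note one also needs $\int_{\partial B_r}q\,q_\nu=r^{n-2}D(r,q)>0$, which holds unless $q\equiv 0$ on $B_r$). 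Your rigidity argument — equality in Cauchy--Schwarz on each sphere forces $q_\nu=c(r)\,q$, a direct computation gives $c(r)=\lambda/r$, integrating the resulting ODE gives homogeneity on the annulus, and then unique continuation of harmonic functions in each half-space $\{\pm x_n>0\}$ globalizes it — is correct.
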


We conclude this section with a uniqueness result.

\begin{lemma}\label{lemAP:ordered}
Let $q_i$, $i=1,2$  be two solutions of \eqref{signorininormal}  satisfying $q_1\ge q_2$ in $B_1$ and $q_i(0)=0$.  Assume that $\phi(0^+, q_2)>1$, or that $q_2\equiv 0$. Then $q_1\equiv q_2$.
\end{lemma}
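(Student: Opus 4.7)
If $q_2 \equiv 0$, then $q_1 \geq 0$ in $B_1$ is superharmonic (by $\Delta q_1 \leq 0$) and attains its minimum at the interior point $0$, so the mean value inequality for superharmonic functions forces $q_1 \equiv 0$ in $B_1$; unique continuation of harmonic functions on each connected component of $\R^n \setminus \{x_n=0\}$ then propagates the identity $q_1 \equiv q_2$ globally. Assume from now on that $\lambda_2 := \phi(0^+, q_2) > 1$, and set $w := q_1 - q_2 \geq 0$, which is harmonic in $B_1 \setminus \{x_n=0\}$ with $w(0)=0$. By the strong maximum principle applied to $w$ on each of the open connected half-balls $B_1 \cap \{\pm x_n>0\}$, either $w$ vanishes identically there or is strictly positive on the open set; my aim is to rule out the strictly positive alternative.

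Suppose, aiming at a contradiction, that $w>0$ on $B_1 \cap \{x_n<0\}$ (the upper-half case is symmetric). Hopf's boundary point lemma at the smooth boundary point $0$ of this half-ball yields $c>0$ with $w(-te_n) \geq ct$ for all small $t>0$. Since $\lambda_2>1$, the monotonicity of the Almgren frequency for Signorini (Lemma~\ref{lemAP:Alm}) yields $H(r,q_2)^{1/2} \leq C r^{\lambda_2 - \varepsilon}$ and thus $q_2(x) = o(|x|)$ near $0$, so
\[
q_1(-te_n) \geq \tfrac{c}{2}\,t \quad \text{for small } t>0.
\]
This pointwise lower bound prevents $q_1$ from decaying faster than linearly at $0$, so $\lambda_1 := \phi(0^+, q_1) \leq 1$. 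Conversely, Signorini solutions are Lipschitz, which together with the existence of a nonzero homogeneous blow-up forces $\lambda_1 \geq 1$. Hence $\lambda_1 = 1$.

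Extracting a subsequential limit $q_1^0$ of the rescaled functions $q_1(r\,\cdot\,)/r$ (using uniform Lipschitz bounds for Signorini solutions together with a Monneau-type nondegeneracy $H(r,q_1)^{1/2}\asymp r$), I obtain a nontrivial $1$-homogeneous global Signorini solution. Being harmonic off $\{x_n=0\}$ and $1$-homogeneous, $q_1^0$ is linear on each open half-space; continuity across $\{x_n=0\}$ plus the sign condition $q_1^0 \geq 0$ on that hyperplane force the tangential coefficient to vanish, so
\[
q_1^0(x) = b^+\, x_n \ \text{on} \ \{x_n\ge 0\}, \qquad q_1^0(x)= b^-\, x_n \ \text{on} \ \{x_n\le 0\},
\]
with the only remaining Signorini requirement being the jump inequality $b^+ \leq b^-$. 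The Hopf lower bound along $-e_n$ gives $q_1^0(-e_n) = -b^- > 0$, i.e.\ $b^- < 0$. Along $+e_n$ there are two subcases: if $w\equiv 0$ on $B_1 \cap \{x_n\ge 0\}$, then $q_1=q_2$ there and $q_1(te_n)/t = q_2(te_n)/t \to 0$, giving $b^+ = 0$; if instead $w>0$ on the upper half-ball, the symmetric Hopf estimate gives $b^+ > 0$. In either case $b^+ > b^-$, contradicting $b^+ \leq b^-$. Hence $w\equiv 0$ in $B_1$, and unique continuation of harmonic functions in each half-space propagates $q_1 \equiv q_2$ globally.

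The main technical obstacle is the blow-up step: I need to verify the nondegeneracy $H(r,q_1)^{1/2}\asymp r$ under $\lambda_1=1$ so that the rescaled limit is genuinely nonzero, and to check that the full Signorini structure (distributional sign of $\Delta q_1^0$ and nonnegativity on $\{x_n=0\}$) is preserved in the limit. Both are standard in Signorini theory but require careful invocation. Everything else reduces to the strong maximum principle, Hopf's lemma, and the elementary classification of $1$-homogeneous Signorini profiles sketched above.
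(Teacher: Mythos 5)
Your proof is correct, but it takes a considerably longer route than the paper's. The paper's argument is a two-line direct one: apply Hopf's lemma to $w:=q_1-q_2$ at $0$ in each half-ball $B_1\cap\{\pm x_n>0\}$ to get strict sign on $\partial_{x_n}w(0,0^\pm)$; since $\phi(0^+,q_2)>1$ forces $\nabla q_2(0)=0$, this translates into a strictly positive jump $\partial_{x_n}q_1(0,0^+)-\partial_{x_n}q_1(0,0^-)$, which is exactly the density of $\Delta q_1$ on $\{x_n=0\}$ at $0$ and must be $\le 0$ by the Signorini sign condition — contradiction. You reach the same contradiction but only after passing to a blow-up of $q_1$: you pin the frequency to $1$, construct a nontrivial $1$-homogeneous blow-up $q_1^0=b^\pm x_n$, and read off $b^+>b^-$ from the Hopf bound. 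The key ingredients (Hopf at $0$ plus the sign of the jump in the normal derivative across $\{x_n=0\}$) are identical in both proofs; the difference is that you execute them at the blow-up scale rather than at finite scale. This forces you to set up blow-up existence, nondegeneracy, and the classification of $1$-homogeneous Signorini profiles, all of which the direct argument avoids — and indeed the "technical obstacle" you flag (nondegeneracy $H(r,q_1)^{1/2}\asymp r$) is not actually needed, since the Hopf lower bound along $\{-te_n\}$ immediately prevents the normalized rescalings from collapsing. One thing you do more carefully than the paper: you explicitly split into the case where $w$ vanishes on one half-ball and the case where it is positive on both, whereas the paper's line "the distributional Laplacian of $q_1$ on $\{x_n=0\}$ is given by $2\partial_{x_n}q_1(x',0^+)$" implicitly assumes the even (symmetric) formula and should more precisely be stated as the one-sided jump, exactly as you treat it.
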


\begin{proof}
We use coordinates $(x',x_n) \in \R^{n-1}\times \R.$
Assume by contradiction that $q_1\not\equiv q_2.$
Then, applying  Hopf's Lemma at the origin, we deduce that $\partial_{x_n}(q_1-q_2)(0,0^+)>0$. Also, our assumption on $q_2$ implies that $\nabla q_2(0,0)=0$, thus $\partial_{x_n}q_1(0,0^+)>0$.
On the other hand, the distributional Laplacian of $q_1$ on $\{x_n=0\}$ is given by $2\partial_{x_n}q_1(x',0^+)$.
Since $\Delta q_1\leq 0$, this gives the desired contradiction.
\end{proof}

\section{Odd frequency points in the Signorini problem}
\label{apb}

The aim of this section is to show how the arguments developed in this paper (see in particular Section~\ref{cubic-sec})
can be applied in the context of the Signorini problem to prove both uniqueness and nondegeneracy of blow-ups at all points of odd frequency for solutions of the Signorini problem
\begin{equation}\label{signoriniB1}
\begin{cases}
\Delta u\le 0\quad \text{and} \quad u\Delta u=0 \quad & \mbox{in }B_1
\\
\Delta u=0  &\mbox{in }B_1\setminus  \{x_n=0\}
\\
\,u\ge 0 &\mbox{on } B_1\cap\{x_n=0\},
\end{cases}
\end{equation}
see Theorem \ref{unique:blow up} below.
Since this was an open problem in this topic which we expect to be of interest to a wide audience, we prefer to give a complete and self-contained proof (rather than referring to parts of this paper) so that this appendix can become of reference for future results on the Signorini problem.

Note that this appendix extends the results of \cite{GP09} (which were dealing only with even frequencies) to the sets $\Gamma_{2m+1}(u)$, $m\in \mathbb N$ (see \cite{GP09} for an explanation of this notation).\smallskip

Since the odd part of a solution of \eqref{signoriniB1} is harmonic and vanishes on $\{x_n=0\}$, to understand the structure of the solution and the free boundary it suffices to study even solutions, that is,
$u(x',x_n)=u(x',-x_n).$
For this reason, also when studying global homogeneous solutions, we can restrict ourselves to even functions.

\smallskip 

We begin by recalling Lemma~\ref{lem:signoriniodd}: If $q$ is a $\lambda$-homogeneous even solution of  \eqref{signoriniB1} and $\lambda=2m+1$ is an odd integer, then $q\equiv 0$ on $\{x_n=0\}$ (this result was not known before).
As a consequence of this fact and the Liouville Theorem for harmonic functions vanishing on a hyperplane\footnote{More precisely, if we consider the odd reflection of $u|_{\{x_n>0\}}$, then we obtain a global $\lambda$-homogeneous harmonic functions in the whole space. By Liouville Theorem, this functions mush be a $\lambda$-homogeneous harmonic polynomial.}, $q$ must be a harmonic polynomial on each side sides of $\{x_n =0\}$.

Then, since $q|_{\{x_n=0\}}=0$, $q$ is even, and $q$ is harmonic in $\R^n\setminus\{x_n=0\}$, we deduce that
\[
q(x',x_n) = -|x_n| \big(q_0(x') + x_n^2 q_1(x', x_n)  \big),\qquad (x',x_n) \in \R^{n-1}\times \R,
\]
where $q_0$ and $q_1$ are polynomials.  Furthermore, since $\Delta q\le 0$, the polynomial $q_0(x')$ is nonnegative.

In the sequel it will be useful to define the ``trace operator'' $T$ as
\begin{equation}\label{defq0}
  q\mapsto  T[q] : = q_0.
\end{equation}
Since $q_0\equiv 0$ implies that $q\equiv 0$ (as a consequence the harmonicity of $q$ outside of $\{x_n=0\}$), one easily deduces that $T$ is injective.

We will need a monotonicity formula that is the analogue of Lemma~\ref{lem:derprod}. 

\begin{lemma}\label{lem:derprodsig}
Let $u:B_1\to \R$ be an even solution of  \eqref{signoriniB1} with $\phi(0^+,u)= \lambda$, where $\lambda$ is an odd integer, and define $u_r(x):= u(rx)$. Also, let $q$ be any $\lambda$-homogeneous even solution of  \eqref{signoriniB1}.
Then, for any $\varrho\in (0,1)$,
\[
\frac{d}{dr} \int_{\partial B_\varrho}  u_r q = \frac{\lambda}{r} \int_{\partial B_\varrho} u_r q - \frac{\varrho}{r} \int_{B_\varrho} u_r \Delta q.
\]
In particular
\[\frac{d}{dr}\left(\frac{1}{r^\lambda}\int_{\partial B_1} u_r q\right) \geq 0.\]
\end{lemma}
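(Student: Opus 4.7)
The plan is to follow the same computational scheme used in Lemma~\ref{lem:derprod}, differentiating under the integral, applying the divergence theorem, and integrating by parts twice. The new input (which replaces the obstacle-problem computations of Lemma~\ref{lem:derprod}) will be the combination of Lemma~\ref{lem:signoriniodd} and the Signorini constraint $u\geq 0$ on $\{x_n=0\}$.

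\medskip

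\textbf{Step 1 (the identity).} Writing $\frac{d}{dr}\int_{\partial B_\varrho} u_r q = \int_{\partial B_\varrho} \frac{x}{r}\cdot\nabla u_r\, q = \frac{\varrho}{r}\int_{\partial B_\varrho} \partial_\nu u_r\, q$, applying the divergence theorem, and then integrating by parts on the gradient term, I would obtain
\[
\frac{d}{dr}\int_{\partial B_\varrho} u_r q = \frac{\varrho}{r}\left(\int_{\partial B_\varrho} u_r \partial_\nu q -\int_{B_\varrho} u_r \Delta q + \int_{B_\varrho} q\,\Delta u_r\right).
\]
Since $q$ is $\lambda$-homogeneous, $\partial_\nu q=\frac{\lambda}{\varrho} q$ on $\partial B_\varrho$, which gives the boundary term $\frac{\lambda}{r}\int_{\partial B_\varrho} u_r q$.

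\medskip

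\textbf{Step 2 (killing the $\int q\,\Delta u_r$ term).} This is the place where the argument departs from the obstacle setting. The measure $\Delta u_r$ is supported on the contact set $\{u_r=0\}\cap\{x_n=0\}\subset \{x_n=0\}$. Since $q$ is a $\lambda$-homogeneous even solution of the Signorini problem with $\lambda$ odd, Lemma~\ref{lem:signoriniodd} gives $q\equiv 0$ on $\{x_n=0\}$. Hence $\int_{B_\varrho} q\,\Delta u_r=0$, and we obtain the claimed formula.

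\medskip

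\textbf{Step 3 (monotonicity).} For the ``In particular'' statement, I would take $\varrho=1$ in the formula just proved to get
\[
\frac{d}{dr}\!\left(\frac{1}{r^\lambda}\int_{\partial B_1} u_r q\right) = -\frac{\lambda}{r^{\lambda+1}}\int_{\partial B_1} u_r q + \frac{1}{r^{\lambda+1}}\!\left(\lambda\int_{\partial B_1} u_r q - \int_{B_1} u_r \Delta q\right) = -\frac{1}{r^{\lambda+1}}\int_{B_1} u_r \Delta q.
\]
To conclude it is enough to observe that $-u_r\Delta q\geq 0$ as measures: $\Delta q$ is a nonpositive measure concentrated on $\{x_n=0\}$, while the Signorini constraint $u\geq 0$ on $\{x_n=0\}$ gives $u_r\geq 0$ there.

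\medskip

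There is no serious obstacle; the whole point is identifying the right reason to discard $\int q\,\Delta u_r$. In the obstacle setting (Lemma~\ref{lem:derprod}) this term was of size $O(r^3)$ because the free boundary had a specific geometry; here, instead, the exact odd-frequency vanishing property of $q$ on $\{x_n=0\}$ (Lemma~\ref{lem:signoriniodd}) makes the term identically zero, which is precisely what yields a clean monotonicity formula.
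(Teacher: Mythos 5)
Your proposal is correct and matches the paper's own proof essentially step for step: the same differentiation and integration by parts, the same invocation of Lemma~\ref{lem:signoriniodd} to kill $\int_{B_\varrho} q\,\Delta u_r$ (since $q\equiv 0$ on $\{x_n=0\}$ where $\Delta u_r$ is supported), and the same sign observation $-u_r\Delta q\geq 0$ for the monotonicity. Your closing remark on why the term vanishes exactly here, versus only being $O(r^3)$ in Lemma~\ref{lem:derprod}, accurately pinpoints the role of the odd-frequency vanishing.
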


\begin{proof}
We have 
\[
\begin{split}
\frac{d}{dr} \int_{\partial B_\varrho}  u_r q  &=  \int_{\partial B_\varrho}  \frac x r \cdot \nabla u_r q   =  \frac{\varrho}{r} \int_{\partial B_\varrho}  \partial_\nu u_r q   =  \frac{\varrho}{r} \int_{B_\varrho}  {\rm div} ( \nabla u_r q)= \frac{\varrho}{r} \bigg( \int_{B_\varrho}  \nabla w_r \nabla q   + \int_{B_\varrho}  \Delta w_r q\bigg)
\\
&= \frac{\varrho}{r} \bigg( \int_{\partial B_\varrho}   u_r \partial_\nu q   -\int_{\partial B_\varrho} u_r \Delta q  + \int_{B_\varrho}  \Delta u_r q\bigg).
\end{split}
\]
Since  $q$ is $\lambda$-homogeneous, we find that $\varrho \int_{\partial B_\varrho}  u_r \partial_\nu q = \lambda\int_{\partial B_\varrho}  u_r q$.
Also, since $q$ vanishes on $\{x_n=0\}$ (by Lemma~\ref{lem:signoriniodd}) and $\Delta u$ is a measure supported on $\{x_n=0\}$, we have  $\int_{B_\varrho}  \Delta u_r q=0$.
This proves the first statement.

Finally, taking $\varrho=1$ and using that $-u_r\Delta q\geq0$ in $\R^n$ (since $\Delta q\le 0$ is supported on $\{x_n=0\}$, and $u_r\ge0$ there) we obtain
\[
\frac{d}{dr} \left(\frac{1}{r^\lambda}\int_{\partial B_1} u_r q\right)  =  -\frac{1}{r^{\lambda+1}} \int_{\partial B_1} u_r \Delta q  \geq 0.
\]
\end{proof}

As a consequence of the previous result, we deduce the following:

\begin{proposition}\label{uniquelambda}
Let $u:B_1\to \R$ be an even solution of  \eqref{signoriniB1} with $\phi(0^+,u)= \lambda$, where $\lambda=2m+1$ is an odd integer.
Then the limit
\[
\tilde q := \lim_{r\downarrow 0} \frac{u(r\,\cdot\,)}{r^\lambda}
\]
exists and it is a $\lambda$-homogeneous even solution of \eqref{signoriniB1}.
\end{proposition}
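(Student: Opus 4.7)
The plan is to parallel the uniqueness argument of Proposition~\ref{prop-uniq-cubic-blowups}, replacing Lemma~\ref{lem:derprod} by its Signorini analogue Lemma~\ref{lem:derprodsig}. As a first step, I would fix the natural normalization and set up compactness for the rescalings $u_r/r^\lambda$. Since $\phi(r,u)\geq \lambda$ for all $r\in(0,1)$ by Lemma~\ref{lemAP:Alm}, the identity $\frac{d}{dr}\log(H(r,u)/r^{2\lambda}) = 2(\phi(r,u)-\lambda)/r \geq 0$ shows that $H(r,u)/r^{2\lambda}$ is monotone nondecreasing, so $\alpha^2 := \lim_{r\downarrow 0} H(r,u)/r^{2\lambda} \in [0,+\infty)$ exists. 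The standard growth bound $\|u\|_{L^\infty(B_\rho)}\leq C\rho^\lambda$, combined with the $C^{1,1/2}$ regularity for the Signorini problem, would make the rescalings $u_r/r^\lambda$ precompact in $C^1_{\rm loc}(\R^n)$. Any subsequential limit $Q$ then solves \eqref{signoriniB1} in $\R^n$, is even in $x_n$, and, via the scaling identity $H(\rho,u_r/r^\lambda) = r^{-2\lambda}H(\rho r,u)$, satisfies $H(\rho,Q)=\alpha^2\rho^{2\lambda}$; Lemma~\ref{lemAP:Alm} applied to $Q$ would then force $\lambda$-homogeneity. If $\alpha=0$, every such $Q$ vanishes identically and the limit $\tilde q\equiv 0$ is established; from here on I may therefore assume $\alpha>0$, so that every subsequential limit $Q$ is a nonzero $\lambda$-homogeneous even Signorini solution.

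For any $\lambda$-homogeneous even Signorini solution $q$, Lemma~\ref{lem:derprodsig} gives that $r\mapsto r^{-\lambda}\int_{\partial B_1} u_r\,q$ is monotone nondecreasing, and Cauchy--Schwarz bounds it in absolute value by $\|q\|_{L^2(\partial B_1)}H(r,u)^{1/2}/r^\lambda\leq C\|q\|_{L^2(\partial B_1)}$; hence the limit
\[
L(q):=\lim_{r\downarrow 0} r^{-\lambda}\int_{\partial B_1} u_r\,q
\]
exists. Passing to the limit along any subsequence $u_{r_k}/r_k^\lambda\to Q$ (uniformly on $\overline{B_1}$) would yield $L(q)=\int_{\partial B_1}Q\,q$. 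Consequently, any two subsequential limits $Q^{(1)},Q^{(2)}$ of $u_r/r^\lambda$ satisfy $\int_{\partial B_1}Q^{(1)}q=\int_{\partial B_1}Q^{(2)}q$ for every $\lambda$-homogeneous even Signorini solution $q$.

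The main challenge will be to deduce $Q^{(1)}=Q^{(2)}$ from this: the obstacle is that the set of $\lambda$-homogeneous even Signorini solutions forms a convex cone but \emph{not} a vector space, so one cannot test directly with $q=Q^{(1)}-Q^{(2)}$. The key observation is that the cone is closed under \emph{addition}: if $q^{(1)}$ and $q^{(2)}$ both satisfy the conditions in \eqref{signoriniB1} and are $\lambda$-homogeneous and even in $x_n$, then so does $q^{(1)}+q^{(2)}$, since harmonicity off $\{x_n=0\}$, nonnegativity on $\{x_n=0\}$, and $\Delta q\leq 0$ are all preserved under sums (note that nonnegativity is in fact equality, by Lemma~\ref{lem:signoriniodd}). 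I would then test the identity $\int_{\partial B_1}Q^{(1)}q=\int_{\partial B_1}Q^{(2)}q$ with the admissible choices $q=Q^{(1)}$ and $q=Q^{(1)}+Q^{(2)}$, obtaining $\int_{\partial B_1}(Q^{(1)})^2=\int_{\partial B_1}Q^{(1)}Q^{(2)}$ and $\int_{\partial B_1}(Q^{(1)})^2=\int_{\partial B_1}(Q^{(2)})^2$ respectively, which combine to yield $\int_{\partial B_1}(Q^{(1)}-Q^{(2)})^2=0$. By $\lambda$-homogeneity this forces $Q^{(1)}\equiv Q^{(2)}$ on $\R^n$, completing the uniqueness of the blow-up and hence the proof.
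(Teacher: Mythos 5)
Your proof is correct and follows the paper's strategy almost exactly: compactness of the rescalings, Lemma~\ref{lem:derprodsig} to show $\int_{\partial B_1}Q^{(1)}q=\int_{\partial B_1}Q^{(2)}q$ for every admissible $q$, and then a positivity argument to conclude $Q^{(1)}=Q^{(2)}$. The only place you diverge is the final step: you flag that $Q^{(1)}-Q^{(2)}$ may not lie in the cone $\mathcal{Q}_\lambda$ of admissible test functions and circumvent this by testing with $Q^{(1)}$ and $Q^{(1)}+Q^{(2)}$ (both admissible, since the cone is closed under addition). The paper simply tests with $q^{(1)}-q^{(2)}$, which is also legitimate but for a slightly different reason: the identity $\int_{\partial B_1}Q^{(1)}q=\int_{\partial B_1}Q^{(2)}q$ is linear in $q$, so once established on the cone it extends automatically to the linear span of the cone, and $Q^{(1)}-Q^{(2)}$ lies in that span. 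Your variant is a clean, self-contained way of making this extension explicit, but the paper's terse step is not a gap.
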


\begin{proof}
Let
\[
q^{(i)} = \lim_{r^{(i)}_k\downarrow 0} \frac{1}{(r_k^{(i)})^\lambda}  u_{r_k^{(i)}} ,\qquad i=1,2,
\]
be two accumulation points along different sequences $r_k^{(i)}$. Then, given a $\lambda$-homogeneous solution of Signorini $q$, we can apply Lemma~\ref{lem:derprodsig} to deduce that  $r\mapsto \frac{1}{r^\lambda} \int_{\partial B_1}  u_r q $ has a unique limit as $r\to 0.$
In particular this implies that 
$$
\int_{\partial B_1} q^{(1)} q = \int_{\partial B_1} q^{(2)} q,
$$
therefore, choosing $q=q^{(1)}-q^{(2)}$, we deduce that $q^{(1)}\equiv q^{(2)}$.
\end{proof}

The next step consists in showing the following nondegeneracy property:
if $\phi(0^+,u)= \lambda$, then the limit $\tilde q$  obtained in Proposition~\ref{uniquelambda} cannot be identically zero.
This is the most delicate part of this appendix, and the 
the proof of this fact requires a new compactness lemma and an interesting ODE type formula obtained below.

\begin{lemma}\label{lem:compsig}
Let $u:B_1\to \R$ be an even solution of  \eqref{signoriniB1} satisfying $\phi(0^+,u)= \lambda$ with $\lambda$ odd, 
set $u_r(x):= u(rx)$, and let $\tilde u_r := u_r/\|u_r\|_{L^2(\partial B_1)}$.
Given $\eta>0$ there exists  $\delta = \delta(n,\lambda, \eta)$  such that, if  for some $r\in (0,1/2)$ and for some $\lambda$-homogeneous even solution $q$ of  \eqref{signoriniB1} we have
\[
\| \tilde u_r -q\|_{L^\infty(B_2)} \le \delta,
\]
then
\[
\tilde u_r= 0 \qquad \mbox{on } \{x_n=0\} \cap  (B_1\setminus B_{1/2} )\cap \big\{ T[q]  \ge \eta\big\},
\]
where $T[q]$ is defined as in \eqref{defq0}. 
\end{lemma}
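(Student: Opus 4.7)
My plan is to argue by a compactness-and-contradiction scheme, using the one-sided complementarity condition of the Signorini problem as the source of rigidity. Suppose the lemma fails; then for some $\eta>0$ one can produce sequences $\delta_k\downarrow 0$, even Signorini solutions $u_k$ with $\phi(0^+,u_k)=\lambda$, radii $r_k\in(0,1/2)$, $\lambda$-homogeneous even Signorini solutions $q_k$, and points $z_k\in\{x_n=0\}\cap(B_1\setminus B_{1/2})$ such that, writing $\tilde u_k$ for the normalized rescaling associated to $u_k$ at scale $r_k$, one has $\|\tilde u_k-q_k\|_{L^\infty(B_2)}\le\delta_k$, $T[q_k](z_k')\ge\eta$, and yet $\tilde u_k(z_k)>0$ for every $k$.

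First I would extract compactness on the $q_k$ side. From $\|\tilde u_k\|_{L^2(\partial B_1)}=1$ and the closeness assumption, $\|q_k\|_{L^2(\partial B_1)}\le 2$ for $k$ large. By the structural description following Lemma~\ref{lem:signoriniodd}, every $\lambda$-homogeneous even Signorini solution is of the form $-|x_n|\bigl(q_0(x')+x_n^2 q_1(x',x_n)\bigr)$ with $q_0,q_1$ polynomials of bounded degree; hence the $q_k$ lie in a fixed finite-dimensional vector space, so up to a subsequence $q_k\to q_\infty$ in every norm and $z_k\to z_\infty\in\overline{B_1}\setminus B_{1/2}$ with $z_{\infty,n}=0$, where $q_\infty$ is again a $\lambda$-homogeneous even Signorini solution and $T[q_\infty](z_\infty')\ge\eta$ by continuity of the trace operator $T$.

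Next I would promote the $L^\infty$-closeness $\tilde u_k\to q_\infty$ in $B_2$ to $C^{1,\alpha}$-convergence up to $\{x_n=0\}$ on each side. The $\tilde u_k$ are Signorini solutions in $B_2$ with uniformly bounded $L^\infty$ norm (inherited from closeness to the $q_k$'s), so the classical $C^{1,1/2}$ regularity for the Signorini problem (Athanasopoulos--Caffarelli, see \cite{ACS08}) provides uniform bounds
\[
\|\tilde u_k\|_{C^{1,1/2}(\overline{B_{3/2}}\cap\{\pm x_n\ge 0\})}\le C.
\]
Arzel\`a--Ascoli then upgrades the uniform convergence to $C^{1,\alpha}_{\rm loc}$ on each closed half-ball, for every $\alpha<1/2$; in particular, the one-sided normal derivatives converge uniformly on compact subsets of $\{x_n=0\}\cap B_{3/2}$, and therefore
\[
\partial_{x_n}^+\tilde u_k(z_k)\longrightarrow\partial_{x_n}^+ q_\infty(z_\infty)=-T[q_\infty](z_\infty')\le -\eta.
\]

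Finally I would close the argument via Signorini complementarity. Since $\tilde u_k(z_k)>0$ and $\tilde u_k$ is continuous, a relatively open neighborhood of $z_k$ inside $\{x_n=0\}$ lies in the non-coincidence set, and on it the complementarity condition $u\cdot \partial_{x_n}^+u=0$ (equivalently, the fact that the nonpositive measure $\Delta u$ is concentrated on $\{u=0\}\cap\{x_n=0\}$) forces $\partial_{x_n}^+\tilde u_k(z_k)=0$ for every $k$. Passing to the limit in the display above yields $0\le -\eta$, the required contradiction. The main obstacle is precisely the second step: upgrading the $L^\infty$-closeness in $B_2$ to $C^{1,\alpha}$-closeness up to $\{x_n=0\}$ on each side, so that $\partial_{x_n}^+\tilde u_k$ converges at the moving points $z_k\to z_\infty$ on the hyperplane; once this one-sided compactness up to the flat boundary is secured, the rigidity that concludes the proof is immediate.
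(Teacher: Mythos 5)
Your proof is correct, but it takes a genuinely different route from the paper's. The paper proves the lemma directly by a sliding-barrier argument: it compares $\tilde u_r(z+\cdot)$ on the small ball $B_\varrho$ with the explicit harmonic function $\phi_c(x)= -(n-1)x_n^2+|x'|^2+c$, shows $\tilde u_r(z+\cdot)\le\phi_c$ on $\partial B_\varrho$ for all $c\ge 0$ using the closeness to $q$ and the lower bound $T[q](z')\ge\eta$, and then lowers $c$ until first contact; the maximum principle forces the contact point into $\{x_n=0\}\cap\{\tilde u_r(z+\cdot)=0\}$, which is only compatible with $c_*=0$ and hence $\tilde u_r(z)=0$. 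This is elementary, fully local, and gives an explicit $\delta\sim\varrho^3$. Your argument instead is a compactness--contradiction scheme: you exploit that $\lambda$-homogeneous even Signorini solutions form a finite-dimensional family to pass to a limit $q_\infty$, you invoke the Athanasopoulos--Caffarelli $C^{1,1/2}$ estimate to upgrade the $L^\infty$-closeness to one-sided $C^{1,\alpha}$ convergence up to $\{x_n=0\}$ (so that $\partial_{x_n}^+\tilde u_k(z_k)\to\partial_{x_n}^+ q_\infty(z_\infty)=-T[q_\infty](z_\infty')\le-\eta$), and you close via the complementarity $\tilde u_k>0\Rightarrow\partial_{x_n}^+\tilde u_k=0$. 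Your approach is conceptually cleaner and avoids the explicit barrier construction, but it leans on the optimal regularity theory as a black box, is nonconstructive in $\delta$, and is specific to the Signorini setting where the contact set is a fixed hyperplane; the paper's barrier technique is intentionally parallel to Lemma~\ref{lem:compcubic} for the obstacle problem, where the contact set is thick and no such clean compactness argument is available. (A minor note: the $C^{1,1/2}$ optimal regularity is due to the 2004 Athanasopoulos--Caffarelli paper rather than the 2008 Athanasopoulos--Caffarelli--Salsa reference \cite{ACS08}; for your argument any uniform $C^{1,\alpha}$ estimate up to the thin set would suffice.)
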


\begin{proof}
Fix $z= (z',0)\in (B_1\setminus B_{1/2} )$ such that $T[q](z') \ge \eta$, and given $c>0$ we define
\[
\phi_{c}(x) :=  -(n-1)|x_n|^2 + |x'|^2 + c.
\]
Let  $\varrho>0$ be sufficiently small (depending only on $n$ and  $\eta$) and take $\delta= \varrho^3$. Then,  
for $|x|= \varrho$ we have 
\begin{equation}\label{ahiohaoihaioh}
\begin{split}
u_r(z+x)  \le q(z+x) +\delta &= -|x_n| q_0(z')  + O(\varrho^3) +\delta  \le -\eta |x_n| +O(\varrho^3) +\delta
\\ & \le  -n| x_n|^2 + |x|^2 \le \phi_{c}(x)    \qquad \forall\,c \geq 0.
\end{split}
\end{equation}
Since $\phi_{c} > q(z+\,\cdot\,)$ inside $B_\varrho$  for $c$ large,
we can decrease $c$ until a contact point occur inside $\overline{B_\varrho}$.
Since $\phi_{0}(0) = 0\leq q(z)$ (since $z\in \{x_n=0\}$), we see that such a contact point must occur for some value $c_*\geq 0$.

If $c_*=0$ then we have  $u_r(z) \le \phi_{0}(0) = 0$, as wanted.
Hence, it suffices to show that $c_*>0$ is impossible.

Assume by contradiction that there exists $c_*>0$
such that
$\phi_{c_*} \ge  q(z+\,\cdot\,)$ in $\overline{B_\varrho}$, and 
 $\phi_{c_*}(x_\circ)=  q(z+x_\circ)$ for some  $x_\circ \in \overline{B_\varrho}$.
By  \eqref{ahiohaoihaioh}  we see that $\phi_{c_*}$  and $u_r(z+\,\cdot\,)$ must touch at an interior point, that is $x_\circ\not\in \partial B_\varrho$. 
Also, since $\phi_{c_*}$ is harmonic, it cannot touch $u_r(z+\,\cdot\,)$ at some point where it is harmonic too. Thus, $x_\circ$ must belong to $\{x_n=0\}\cap \{u_r(z+\cdot) =0\}$. 
This gives $0=u_r(z+x_\circ)=\phi_{c_*} (x_\circ) =  |x_\circ|^2 + c_*>0$, a contradiction.
\end{proof}

Another fundamental tool is the following ODE-type formula.

\begin{lemma}\label{lem:ODEalessio11}
Let $u:B_1\to \R$ be an even solution of  \eqref{signoriniB1} satisfying $\phi(0^+,u)= \lambda$, with $\lambda$ odd. 
Set $u_r(x):= u(rx)$, $h(r): = \|u_r\|_{L^2(\partial B_1)}$, and $\tilde u_r := u_r/h(r)$. 
Let  $q$ be an even $\lambda$-homogeneous solution of  \eqref{signoriniB1}, and define
\begin{equation}\label{psi-alessio2}
\psi(r;q): =  \int_{\partial B_1}  \tilde u_r q  - 2\int_{\partial B_{1/2}}  \tilde u_r q .
\end{equation}
Then
\[
\frac{d}{dr} \psi(r;q)  = -\theta(r)  \psi(r;q)  - \frac{1}{r} \int_{B_1\setminus B_{1/2}} \tilde u_r \Delta q,
\qquad
\text{where}\quad
\theta(r) : =  \bigg(\frac{h'(r)}{h(r)} +\frac \lambda r\bigg) = \big(  \log(h(r)/r^\lambda)\big)'.
\]
\end{lemma}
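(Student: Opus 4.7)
The proof is a direct computation that mirrors the obstacle-problem version in Lemma~\ref{lem:ODEalessio}, but is actually cleaner here because no remainder term of the type $O(r^3/h(r))$ arises (since $u$ itself, not a polynomial-corrected perturbation, solves \eqref{signoriniB1}). The plan is to start from the identity already established in Lemma~\ref{lem:derprodsig} and then track how the normalization by $h(r)$ affects the $r$-derivative.

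First I would apply Lemma~\ref{lem:derprodsig} with the two choices $\varrho = 1$ and $\varrho = 1/2$, obtaining
\[
\frac{d}{dr}\int_{\partial B_\varrho} u_r\,q \;=\; \frac{\lambda}{r}\int_{\partial B_\varrho} u_r\,q \;-\; \frac{\varrho}{r}\int_{B_\varrho} u_r\,\Delta q,
\]
which relies on $q$ being $\lambda$-homogeneous (so that $\varrho\partial_\nu q = \lambda q$ on $\partial B_\varrho$). Note that we do \emph{not} pick up an $O(r^3)$ correction here, because $\Delta u_r$ is a (negative) measure supported on $\{x_n=0\}$ where $q\equiv 0$ by Lemma~\ref{lem:signoriniodd}, so the term $\int_{B_\varrho}\Delta u_r\,q$ that appeared in the intermediate computation is identically zero.

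Next I would divide through by $h(r)$. Writing $\tilde u_r = u_r/h(r)$ and using the product rule,
\[
\frac{d}{dr}\int_{\partial B_\varrho}\tilde u_r\,q \;=\; -\frac{h'(r)}{h(r)}\int_{\partial B_\varrho}\tilde u_r\,q \;+\; \frac{1}{h(r)}\frac{d}{dr}\int_{\partial B_\varrho} u_r\,q,
\]
and substituting the previous identity gives
\[
\frac{d}{dr}\int_{\partial B_\varrho}\tilde u_r\,q \;=\; \left(\frac{\lambda}{r}-\frac{h'(r)}{h(r)}\right)\int_{\partial B_\varrho}\tilde u_r\,q \;-\; \frac{\varrho}{r}\int_{B_\varrho}\tilde u_r\,\Delta q.
\]
Finally, taking the combination $\varrho = 1$ minus twice $\varrho = 1/2$ produces exactly
\[
\frac{d}{dr}\psi(r;q) \;=\; -\theta(r)\,\psi(r;q) \;-\; \frac{1}{r}\int_{B_1\setminus B_{1/2}}\tilde u_r\,\Delta q,
\]
since the two boundary contributions at $\partial B_1$ and $\partial B_{1/2}$ combine exactly into $\psi(r;q)$, and the volume integrals combine into $\int_{B_1\setminus B_{1/2}}$.

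There is no real obstacle in this lemma; the only thing to verify carefully is the identification $\theta(r) = \bigl(\log(h(r)/r^\lambda)\bigr)'$, which follows directly from $\log(h(r)/r^\lambda) = \log h(r) - \lambda\log r$. All the hard work has been done upstream in Lemma~\ref{lem:derprodsig}, whose monotonicity content relied on Lemma~\ref{lem:signoriniodd} (vanishing of $q$ on $\{x_n=0\}$). The present lemma simply repackages that identity into an ODE that will be iterated in subsequent nondegeneracy arguments, exactly as in the analysis of Proposition~\ref{prop:nobadpoints} for the obstacle problem.
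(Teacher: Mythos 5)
Your proof is correct and follows essentially the same route as the paper: apply Lemma~\ref{lem:derprodsig} at $\varrho=1$ and $\varrho=1/2$, divide by $h(r)$ via the product rule, and subtract twice the second identity from the first. One small remark: the coefficient you obtain, $\lambda/r - h'(r)/h(r)$, equals $-\big(\log(h(r)/r^\lambda)\big)'$, which matches the second expression for $-\theta$ in the statement; the first expression $\theta = h'/h + \lambda/r$ as printed in the paper is not equal to $\big(\log(h/r^\lambda)\big)' = h'/h - \lambda/r$ and appears to carry a sign typo, so your computation is the one to trust.
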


\begin{proof}
As in the proof of Lemma~\ref{lem:derprodsig}, we obtain
\[
\frac{d}{dr} \int_{\partial B_\varrho}  u_r q = \frac{\lambda}{r} \int_{\partial B_\varrho} u_r q - \frac{\varrho}{r} \int_{B_\varrho} u_r \Delta q.
\]
Thus, since $\tilde u_r = u_r/h(r)$, we deduce that
\[
\frac{d}{dr} \int_{\partial B_\varrho}  \tilde u_r q  = \bigg( - \frac {h'(r)}{h(r)} + \frac{\lambda}{r}\bigg) \int_{\partial B_\varrho} \tilde u_r q - \frac{\varrho}{r} \int_{B_\varrho} \tilde u_r \Delta q ,
\]
and the lemma follows combining the identities for $\varrho =1$ and  $\varrho =1/2$.
\end{proof}

In the sequel, for $\lambda=2m+1$, $m\in \mathbb N$, we denote 
\[
\mathcal Q_\lambda := \{\mbox{ even $\lambda$-homogeneous solutions of \eqref{signoriniB1} }\}.
\]
Also, given
$f\in L^1_{\rm loc}(\R^n) $,  we define the radial symmetrization in the first $(n-1)$ variables as 
\begin{equation}
\label{eq:symm}
\widehat f(x', x_n) : =   \ave_{{\rm SO}(n-1) } f(Mx', x_n) \,dM,\qquad x=(x',x_n) \in \R^{n-1}\times \R,
\end{equation}
where the previous average is with respect to the Haar meaure of ${\rm SO}(n-1)$.

\begin{lemma}\label{lem:QQQ}
Given $\lambda\ge 3$ odd,  there exists a unique $Q\in \mathcal Q_\lambda$ 
satisfying
%of the form
\begin{equation}\label{hbaiohaiohaio}
%Q(x) = \sum_{k=0}^{\frac{\lambda-1}{2}} %a_k |x'|^{\lambda-1-2k} |x_n|^{1+2k} 
Q=\widehat Q
\qquad \mbox{ and  }\quad  \|Q\|_{L^2(\partial B_1)}=1.
\end{equation} 
Moreover, for any other $q\in \mathcal Q_\lambda$  we have 
\[
\int_{ \partial B_1} q Q \ge c_{n,\lambda}  \|q\|_{L^2(\partial B_1)} >0
\]
where $c_{n,\lambda}$ is some positive constant depending only on $n$ and $\lambda$.
\end{lemma}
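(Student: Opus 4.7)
The plan is to build a bijection between $\mathcal Q_\lambda$ and the cone of nonnegative $(\lambda-1)$-homogeneous polynomials on $\R^{n-1}$ via the trace map $T$ in \eqref{defq0}, and then to exploit rotational symmetry and averaging together with a finite-dimensional compactness argument. First I would reduce the structure of $\mathcal Q_\lambda$ as follows. Since by Lemma~\ref{lem:signoriniodd} any $q\in\mathcal Q_\lambda$ vanishes on $\{x_n=0\}$, and since $q$ is a harmonic polynomial on each side of this hyperplane, the evenness in $x_n$ forces $q(x',x_n)=-|x_n|\,g(x',x_n^2)$ with $g$ polynomial. The identity $\Delta q=0$ for $x_n>0$ together with the expansion $g(x',x_n^2)=\sum_{k\ge 0}x_n^{2k}q_k(x')$ yields the recursion $q_{k+1}=-\Delta_{x'}q_k\bigl/[2(k+1)(2k+3)]$, so $q$ is uniquely and explicitly determined by $q_0=T[q]$. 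Hence $T$ is a linear isomorphism from the finite-dimensional space $V_\lambda$ (obtained by dropping the sign condition $\Delta q\le 0$) onto the space of $(\lambda-1)$-homogeneous polynomials on $\R^{n-1}$, and the Signorini sign condition is exactly $q_0\ge 0$.

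Second, if $Q\in\mathcal Q_\lambda$ satisfies $Q=\widehat Q$, then $T[Q]$ is $SO(n-1)$-invariant and $(\lambda-1)$-homogeneous; because $\lambda-1$ is even, this forces $T[Q](x')=c|x'|^{\lambda-1}$ for some $c\ge 0$. The converse direction (that $T^{-1}(c|x'|^{\lambda-1})$ is rotationally invariant) follows from uniqueness of the extension in the previous step combined with the $SO(n-1)$-invariance of the recursion. The $L^2(\partial B_1)$-normalization then selects the unique $c>0$, giving existence and uniqueness of $Q$. For the scalar product inequality, I would use that the $SO(n-1)$-action preserves both $\partial B_1$ and $\Delta$, so averaging commutes with $\Delta$ and preserves the Signorini sign condition; in particular $\widehat q\in\mathcal Q_\lambda$ and $T[\widehat q]=\widehat{q_0}$ is $SO(n-1)$-invariant. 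Self-adjointness of the averaging in $L^2(\partial B_1)$ (which holds because the Haar measure on $SO(n-1)$ is inversion-invariant) then gives
\[
\int_{\partial B_1}q\,Q=\int_{\partial B_1}q\,\widehat Q=\int_{\partial B_1}\widehat q\,Q,
\]
and by the previous step $\widehat q=\alpha_q Q$ for some $\alpha_q\ge 0$, whence $\int_{\partial B_1}qQ=\alpha_q=\|\widehat q\|_{L^2(\partial B_1)}$.

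Finally, the main technical point---obtaining a \emph{uniform} lower bound $\|\widehat q\|_{L^2(\partial B_1)}\ge c_{n,\lambda}\|q\|_{L^2(\partial B_1)}$---I would handle by compactness. Writing $q_0(x')=|x'|^{\lambda-1}p(x'/|x'|)$ with $p\ge 0$ on $\mathbb S^{n-2}$, transitivity of the $SO(n-1)$-action on $\mathbb S^{n-2}$ gives $\widehat{q_0}(x')=|x'|^{\lambda-1}\Xint-_{\mathbb S^{n-2}}p\,d\sigma$, and this average is strictly positive unless $p\equiv 0$, i.e., unless $q\equiv 0$. Therefore the seminorm $q\mapsto\|\widehat q\|_{L^2(\partial B_1)}$ is a positively $1$-homogeneous continuous functional on the finite-dimensional cone $\mathcal Q_\lambda$ that is strictly positive on $\mathcal Q_\lambda\setminus\{0\}$; a compactness argument on the closed and bounded slice $\{q\in\mathcal Q_\lambda:\|q\|_{L^2(\partial B_1)}=1\}$ produces the constant $c_{n,\lambda}>0$. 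I expect the main subtleties to be cosmetic: verifying that the recursive formula terminates (which follows from $\lambda$-homogeneity of $q_0$) and that averaging preserves the distributional inequality $\Delta q\le 0$ on $\R^n$ (which follows since a spherical-type average of nonpositive measures is nonpositive).
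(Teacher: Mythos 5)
Your proof is correct and follows essentially the same route as the paper's: you characterize the unique rotationally invariant $Q$ via the explicit recursion for the coefficients of $T^{-1}(c|x'|^{\lambda-1})$, use $SO(n-1)$-averaging together with the inversion-invariance of the Haar measure to reduce $\int_{\partial B_1} qQ$ to $\|\widehat q\|_{L^2(\partial B_1)}$, and then invoke finite-dimensional compactness on the unit slice of $\mathcal Q_\lambda$. The only difference is cosmetic: you isolate the strictly positive continuous functional $q\mapsto\|\widehat q\|_{L^2(\partial B_1)}$ directly, whereas the paper states the compactness step as a lower bound on $\|T[q]\|_{L^2}$; both rest on the same observation that a nonzero nonnegative polynomial on $\mathbb S^{n-2}$ has strictly positive average.
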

\begin{proof}
We begin by noticing that $Q=\widehat Q$ belongs to  $\mathcal Q_\lambda$ if and only if
\begin{equation}\label{ahoihaoihia}
Q(x)=\sum_{k=0}^{\frac{\lambda-1}{2}} a_k |x'|^{\lambda-1-2k} |x_n|^{1+2k},\qquad a_0 \leq 0,\qquad
\Delta \bigg( \sum_{k=0}^{\frac{\lambda-1}{2}} a_k |x'|^{\lambda-1-2k} x_n^{1+2k} \bigg) =0.
\end{equation}
Setting $r':=|x'|$ and noticing that
$\Delta Q= \partial_{r'r'}Q + \frac{n-2}{r'}\partial_{r'}Q + \partial_{x_nx_n}Q$, we can rewrite \eqref{ahoihaoihia} as
\[
\sum_{j=0}^{\frac{\lambda-3}{2}}  \Big(a_j (\lambda-1-2j) (\lambda -2-2j+n-2) + a_{j+1}  (3+2j)(2+2j) \Big) (r')^{\lambda-3-2j} x_n^{1+2j}=0.
\] 
Therefore, \eqref{ahoihaoihia} is satisfied if and only if 
\[
a_j (\lambda-1-2j) (\lambda -2-2j+n-2) + a_{j+1}  (3+2j)(2+2j)   =0 \qquad \mbox{for all } j=0,1, \dots {\textstyle \frac{\lambda-3}{2}}.
\]
This means that all the coefficients are uniquely determined (by induction over $j$) once $a_0\leq 0$ is fixed,
and $a_{0} < 0$ is uniquely determined imposing that  $\|Q\|_{L^2(\partial B_1)}=1$.  
This concludes the first part of the proof.

To prove the second part, note that if $q\in  \mathcal Q_\lambda$ then $ \widehat q\in \mathcal Q_\lambda$ (see \eqref{eq:symm}).
We now recall that, to define the trace operator $T$, we used the expansion
\[
q(x',x_n) = -|x_n| \big(q_0(x') + x_n^2 q_1(x', x_n)  \big),\qquad \text{so that }T[q]=q_0.
\]
Since $T[q]\equiv 0$ implies $q\equiv 0$, it follows by compactness that 
$$
\big\|T[q]\big\|_{L^2(\partial B_1)}\ge \tilde c_{n,\lambda}\|q\|_{L^2(\partial B_1)}\qquad \forall\,q\in\mathcal Q_\lambda,\,\text{ for some $c_{n,\lambda}>0$}.
$$
Also
\[
\widehat   q(x',x_n) = -|x_n| \big(\widehat{q_0}(x') + x_n^2 \widehat{q_1}(x', x_n)  \big),\qquad \text{that is}\quad T[\widehat q] = \widehat{T[q]}.
\]
Thus, given $q\in \mathcal Q_\lambda$, since $\widehat  q\in\mathcal Q_\lambda$ depends only on the variables $r'=|x'|$ and $x_n$,
it follows from the first part of the proposition that $\widehat q$ must by a positive multiple of $Q$, that is, $\widehat q= t Q$, where $t\ge c_{n, \lambda}\|q\|_{L^2(\partial B_1)}>0$. Hence, since $\widehat  Q = Q$ and using the invariance of the Haar measure $dM$ on ${\rm SO}(n-1)$ under the transformation $M\mapsto M^{-1}$, we get
\[
\begin{split}
\int_{ \partial B_1} q Q   &= \int_{ \partial B_1} q \widehat Q  =\int_{ \partial B_1}dx \ave_{{\rm SO}(n-1) } q(x',x_n)\,Q(Mx', x_n) \,dM\\
&=\int_{ \partial B_1}dx \ave_{{\rm SO}(n-1) } q(M^{-1}x', x_n)\,Q(x',x_n) \,dM\\
&=\int_{ \partial B_1}dx \ave_{{\rm SO}(n-1) } q(Mx', x_n)\,Q(x',x_n) \,dM
= \int_{ \partial B_1} \widehat q Q = t  \int_{ \partial B_1} Q^2   \ge c_{n,\lambda}\|q\|_{L^2(\partial B_1)}.
\end{split}
\]
\end{proof}

In the following proposition we will use the notation $X\asymp Y$ for $X\le C(n,\lambda) Y$ and $Y \le C(n,\lambda) X$.

\begin{proposition}\label{prop:nondeglambda}
Let $u:B_1\to \R$ be an even solution of  \eqref{signoriniB1} with $\phi(0^+,u)= \lambda$,  where $\lambda$ is an odd integer. Suppose that $\|u\|_{L^2(\partial B_1)} =1$, and set $u_r(x):=u(rx).$ 
Then
\[0< c r^{\lambda} \leq \|u_r\|_{L^2(\partial B_1)} \leq r^{\lambda}\qquad \forall\, r\in (0,1],\]
where $c$ depends only on $n$ and $\lambda$.
\end{proposition}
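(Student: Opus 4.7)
The upper bound is immediate: for Signorini solutions the product $u\Delta u$ vanishes as a distribution (since $\Delta u$ is supported on $\{x_n=0\}$ where $u\ge 0$ and $u\Delta u\equiv 0$), so Lemma~\ref{lem:EFF1} yields $\frac{d}{dr}\log H(r,u)=\frac{2}{r}\phi(r,u)$. Combined with Lemma~\ref{lemAP:Alm} and the hypothesis $\phi(0^+,u)=\lambda$, this gives $\phi(r,u)\ge\lambda$ on $(0,1]$, hence $h(r)^2\le r^{2\lambda}H(1,u)=r^{2\lambda}$. The same identity shows that $r\mapsto h(r)/r^\lambda$ is monotone non-increasing as $r\downarrow 0$, so the lower bound amounts to ruling out $h(r)/r^\lambda\downarrow 0$. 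I will do so by contradiction, following the strategy of Proposition~\ref{prop:nobadpoints}. Standard $C^{1,\alpha}$ estimates and frequency bounds show that $\tilde u_r$ is pre-compact in $C^0_{\rm loc}(\R^n)$ and every accumulation point is an even, $\lambda$-homogeneous global solution of \eqref{signoriniB1} with unit $L^2(\partial B_1)$-norm; in particular, for each $r>0$ there is some $q_r\in\mathcal Q_\lambda$ with $\|q_r\|_{L^2(\partial B_1)}=1$ such that $\|\tilde u_r-q_r\|_{L^\infty(B_2)}=\delta(r)\downarrow 0$.

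Mirroring the cubic case, define
\[
\Psi(r):=\max\{\psi(r;q):q\in\mathcal Q_\lambda,\ \|q\|_{L^2(\partial B_1)}=1\},\qquad \Phi(r):=\psi(r;Q),
\]
with $\psi$ as in \eqref{psi-alessio2} and $Q$ given by Lemma~\ref{lem:QQQ}. A direct computation using $\lambda$-homogeneity gives $\int_{\partial B_{1/2}}q^2=2^{1-n-2\lambda}\int_{\partial B_1}q^2$, so $\psi(r;q_r)=1-2^{2-n-2\lambda}+o(1)>0$, whence $\Psi(r)\gtrsim 1$. The analogous computation together with the bound $\int_{\partial B_1}q_rQ\ge c_{n,\lambda}$ from Lemma~\ref{lem:QQQ} yields $\Phi(r)\gtrsim 1$, and the upper bounds $\Psi,\Phi\le C$ follow by Cauchy--Schwarz. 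Denoting by $q_r^*$ a maximiser in the definition of $\Psi$, the same computation forces $q_r^*-q_r\to 0$ in $L^\infty(B_2)$. The key point is then an almost-orthogonality estimate obtained via Lemma~\ref{lem:compsig}: since $\tilde u_r$ is $\delta(r)$-close to $q_r^*$, for any fixed $\eta>0$ and $r$ small enough we have $\tilde u_r\equiv 0$ on $(B_1\setminus B_{1/2})\cap\{x_n=0\}\cap\{T[q_r^*]\ge\eta\}$; together with the fact that $\Delta q$ equals $-2T[q]$ times the surface measure of $\{x_n=0\}$, and that $T[Q](x')\gtrsim 1$ on $B_1'\setminus B_{1/2}'$ (by the explicit formula in Lemma~\ref{lem:QQQ} with $a_0<0$), this yields
\[
\Bigl|\int_{B_1\setminus B_{1/2}}\tilde u_r\,\Delta q_r^*\Bigr|\le C\eta\,\Bigl|\int_{B_1\setminus B_{1/2}}\tilde u_r\,\Delta Q\Bigr|.
\]

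Fixing $\eta$ small, Lemma~\ref{lem:ODEalessio11} applied both to $Q$ and to $q_r^*$ (together with the envelope identity $\Psi'=\partial_r\psi(\,\cdot\,;q_r^*)$ a.e.) gives
\[
\frac{d}{dr}\Bigl(\frac{\Psi}{\Phi}\Bigr)=\frac{1}{r\Phi^2}\Bigl(\Psi\int_{B_1\setminus B_{1/2}}\tilde u_r\Delta Q-\Phi\int_{B_1\setminus B_{1/2}}\tilde u_r\Delta q_r^*\Bigr)\le-\frac{c}{r}\Bigl|\int_{B_1\setminus B_{1/2}}\tilde u_r\Delta Q\Bigr|.
\]
Integrating and using $\Psi/\Phi\asymp 1$ shows that $\rho\mapsto\rho^{-1}|\int\tilde u_\rho\Delta Q|$ belongs to $L^1((0,r_0))$. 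Lemma~\ref{lem:ODEalessio11} applied to $\Phi$ alone then reads $(\log\Phi)'(r)=-\theta(r)+g(r)$ with $g\in L^1((0,r_0))$ and $\theta(r)=(\log(h(r)/r^\lambda))'$; since $\log\Phi$ is bounded, integrating from $\hat r$ to $r_0$ shows that $\log(h(\hat r)/\hat r^\lambda)$ stays bounded as $\hat r\downarrow 0$, contradicting $h(r)/r^\lambda\downarrow 0$ and completing the proof. The main obstacle is the almost-orthogonality estimate: it is precisely there that the barrier argument in Lemma~\ref{lem:compsig} and the structural non-degeneracy $T[Q]\gtrsim 1$ from Lemma~\ref{lem:QQQ} combine to close the ODE argument.
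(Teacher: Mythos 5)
Your proof is correct and follows essentially the same approach as the paper's, built on the same three ingredients: the compactness lemma (Lemma~\ref{lem:compsig}) used to derive the almost-orthogonality of $\Delta q_r^*$ against $\tilde u_r$, the ODE formula of Lemma~\ref{lem:ODEalessio11} applied to $\Psi$ and $\Phi$, and the lower bound $\int_{\partial B_1}q\,Q\geq c_{n,\lambda}$ from Lemma~\ref{lem:QQQ}. The minor stylistic differences --- deriving the upper bound $r^{-2\lambda}H(r,u)\nearrow$ directly from Lemma~\ref{lem:EFF1} and $u\Delta u\equiv 0$ rather than citing \cite{ACS08}, and phrasing the conclusion as a contradiction rather than deducing the bound on $\log(h(r)/r^\lambda)$ directly --- do not change the substance. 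Your computation of the constant, $\psi(r;q_r)=1-2^{2-n-2\lambda}+o(1)$, is in fact the correct one (the paper's exponent is a harmless typo), and your sign analysis in $\frac{d}{dr}(\Psi/\Phi)$ is sound.
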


\begin{proof}
The inequality $\|u_r\|_{L^2(\partial B_1)} \le r^\lambda$ follows from the fact that $r^{-2\lambda} H(r,u)$ is monotone nondecreasing  since $\phi(r,u)\geq  \lambda$ (see \cite[Lemma 2]{ACS08}). We need to show the bound from below (the nondegeneracy). 

Define 
\begin{equation}\label{themax11}
\Psi(r): = \max\big\{   \psi(r;q) \ : \  q\in \mathcal Q_\lambda \mbox{ and } \|q \|_{L^2(\partial B_1)}=1\big\},
\end{equation}
where $\psi$ is given by \eqref{psi-alessio2},
and let $q_r^*$ be the function at which the above maximum is attained (note $ \mathcal Q_\lambda$ is a closed convex subset of a finite dimensional vector space). 
Also, let $Q$ be as in Lemma~\ref{lem:QQQ}, and define $\Phi(r) : = \psi(r, Q)$.
Then, as a consequence of Lemma~\ref{lem:ODEalessio11}, we have 
\[
\frac{d}{dr} \Psi(r)  = \theta(r)  \Psi(r)   - \frac{1}{r} \int_{B_1\setminus B_{1/2}} \tilde u_r \Delta q^*_r  \qquad \mbox{ for a.e. } r>0,
\]
and
\begin{equation}\label{reugheirhn2}
\frac{d}{dr} \Phi(r)  = \theta(r)  \Phi(r)   - \frac{1}{r} \int_{B_1\setminus B_{1/2}} \tilde u_r \Delta Q\qquad \forall\,r>0.
\end{equation}
We now claim that 
\[
\Psi(r)  \asymp  \Phi(r)  \asymp \frac{\Psi(r) }{\Phi(r)}\asymp  1 \qquad \mbox{as} \quad r\downarrow 0,
\]
Indeed, the accumulation points of $\tilde u_r$ (as $r\downarrow 0$ and in the $C^0_{\rm loc}(\R^n)$ topology) belong to the unit ball of  $\mathcal Q_\lambda$ (see \cite{ACS08}) and therefore $\tilde u_r -q_r =o(1)$ for some $q_r\in \mathcal Q$.
Hence, by definition of $\Psi$,
\[
\begin{split}
\Psi(r) \ge  \psi(r;q_r) &=  \int_{\partial B_1}\tilde u_r q_{r}  - 2\int_{\partial B_{1/2}}  \tilde u_r q_{r}
=   \int_{\partial B_{1}}  q_{r}^2  - 2\int_{\partial B_{1/2}}  q_{r}^2 +o(1) 
\\
& =   (1-2^{-n-1-2\lambda})\int_{\partial B_{1}}  q_{r}^2 + o(1)  \ge \frac 1 2 >0.
\end{split}
\]
Note that the above computation shows also that
$\psi(r,q) = (1-2^{-n-1-2\lambda})\int_{\partial B_{1}} q_r q+o(1)$,
from which it follows that
 $q_r^* = q_r+o(1)$ as $r\downarrow 0$
(recall that  $q_r^*$ is a maximizer in \eqref{themax11}).

Similarly,  using   Lemma~\ref{lem:QQQ},
\[
\begin{split}
\Phi(r) &= \int_{\partial B_{1}}  \tilde u_r Q  - 2\int_{\partial B_{1/2}}  \tilde u_r Q
=   \int_{\partial B_{1}}  q_{r}Q  - 2\int_{\partial B_{1/2}}  q_{r}Q+o(1)
\\
& \ge c_{n,\lambda}    (1-2^{-n-1-2\lambda})  +o(1) \geq \frac{c_{n,\lambda}}2>0,
\end{split}
\]
where $c_{n,\lambda}$ is the constant from Lemma~\ref{lem:QQQ}. 
Finally, it is clear that $\Psi(r)$ and $\Phi(r)$ are bounded by above, so the claim is proved.

Using the expressions for $\frac{d}{dr}\Psi$ and $\frac{d}{dr}\Phi$, we find
\[
\frac{d}{dr}  \bigg( \frac{\Psi(r)}{\Phi(r)}\bigg)  =   -\frac{1}{r}  \frac{ \Psi(r) \int_{B_1\setminus B_{1/2}} \tilde u_r \Delta q_r^*  - \Phi(r) \int_{B_1\setminus B_{1/2}} \tilde w_r \Delta Q}{\Phi(r)^2}
\]
We claim that, given $\varepsilon>0$, for $r$ sufficiently small,
\begin{equation}\label{ggooaall11}
\left|\int_{B_1\setminus B_{1/2}} \tilde u_r \Delta q_r^*\right| \leq \varepsilon \left|\int_{B_1\setminus B_{1/2}} \tilde u_r \Delta Q\right|.
\end{equation}
Indeed,  
introducing the notation $B_r':=B_r\cap\{x_n=0\}$ and
using  Lemma~\ref{lem:compsig}, given $\eta>0$ and choosing $r>0$ is sufficiently small so that $\|\tilde u_r - q_r^*\|_{L^\infty(B_2)} \le \delta (n,\eta)$ (recall that  $q_r^* = q_r+o(1)$ as $r\downarrow 0$), we have
\[\begin{split}
0\leq -\int_{B_1\setminus B_{1/2}}  u_r \Delta q_{r}^* & =  \int_{B'_1\setminus B'_{1/2}}  u(rx',0) T[q_r^*] \,dx'
=  \eta \int_{(B'_1\setminus B'_{1/2})\cap \{T[q_r^*] \leq \eta\}} u_r \,dx'\le  \eta \int_{B'_1\setminus B'_{1/2}}  u_r\,dx'
\end{split}\]
(recall that $u_r\geq 0$ on $\{x_n=0\}$),
while 
\[
-\int_{B_1\setminus B_{1/2}}  u_r \Delta Q = 2|a_0| \int_{B'_1\setminus B'_{1/2}}  u(r x',0)  |x'|^{\lambda-1} \,dx'  \ge c_{n,\lambda}' \int_{B'_1\setminus B'_{1/2}}  u_r\,dx',
\]
for some constant $c_{n,\lambda}'>0$.
Hence, dividing by $h(r)$, we obtain 
\[
0\leq -\int_{B_1\setminus B_{1/2}}  \tilde u_r \Delta q_{r} \le C_{n,\lambda}\eta \int_{B_1\setminus B_{1/2}}  \tilde u_r \Delta Q,
\]
and  \eqref{ggooaall11} follows.

Then, thanks to \eqref{ggooaall11}, we deduce that
\[
\frac{d}{dr} \bigg( \frac{\Psi(r)}{\Phi(r)}\bigg)   =   -\frac{1}{r}  \frac{ \Psi(r) \int_{B_1\setminus B_{1/2}} \tilde u_r \Delta q_r^*  - \Phi(r) \int_{B_1\setminus B_{1/2}} \tilde u_r \Delta Q}{\Phi(r)^2}  \asymp \frac{1}{r} \int_{B_1\setminus B_{1/2}} \tilde u_r \Delta Q
\]
for $r\leq r_0$ small enough.

Integrating the above ODE over $[\hat r,r_0]$, since the integral of $\frac{d}{dr} \big(\frac{\Psi(r)}{\Phi(r)}\big)$ over $[\hat r, r_0]$ is uniformly bounded as $\hat r \to 0$,
we deduce that the negative term $\frac1r \int_{B_1\setminus B_{1/2}} \tilde u_r \Delta Q$ is integrable over $[0,r_0].$
Hence, since $\Phi(r)\asymp 1$ and $\theta(r)= \frac{d}{dr} \log (h(r)/r^\lambda)$, it follows from \eqref{reugheirhn2} that
$$
\frac{d}{dr}\log \Phi(r)= \frac{d}{dr} \log (h(r)/r^\lambda)+g(r),\qquad \text{with $g \in L^1([0,r_0])$}.
$$
Integrating over $[\hat r,r_0]$ and using again that $\Phi(r)\asymp 1$, we deduce that $\log (h(\hat r)/\hat r^\lambda)$ is uniformly bounded as $\hat r \to 0$,
therefore $h(r)\asymp r^{\lambda}$, as desired.
\end{proof}

As a consequence of  Propositions~\ref{uniquelambda} and~\ref{prop:nondeglambda}, 
we get the the uniqueness and nondegeneracy of blow-ups:

\begin{theorem}\label{unique:blow up}
Let $u:B_1\to \R$ be an even solution of  \eqref{signoriniB1} with $\phi(0^+,u)= \lambda$, where $\lambda=2m+1$ is an odd integer.
Then the limit
\[
\tilde q := \lim_{r\downarrow 0} \frac{u(r\,\cdot\,)}{r^\lambda}
\]
exists, is non-zero, and it is a $\lambda$-homogeneous even solution of \eqref{signoriniB1}.
\end{theorem}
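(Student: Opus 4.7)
My plan is to simply combine the two main results already established in this appendix, Propositions~\ref{uniquelambda} and~\ref{prop:nondeglambda}, since together they yield all three assertions (existence, $\lambda$-homogeneity, and non-triviality).

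First, I will invoke Proposition~\ref{uniquelambda} directly: given the hypothesis $\phi(0^+,u)=\lambda$ with $\lambda=2m+1$ odd, it ensures that the pointwise (in fact $C^0_{\rm loc}$) limit
\[
\tilde q := \lim_{r\downarrow 0} \frac{u(r\,\cdot\,)}{r^\lambda}
\]
exists and is a $\lambda$-homogeneous even solution of \eqref{signoriniB1}. This takes care of two out of the three claims, and no further argument is needed for them.

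Next, to show $\tilde q \not\equiv 0$, I would reduce to the normalized situation. If $u\equiv 0$ there is nothing to prove, so assume $u\not\equiv 0$. By the unique-continuation type property for solutions of \eqref{signoriniB1} (or simply by $\phi(0^+,u)=\lambda<\infty$), we have $\|u\|_{L^2(\partial B_1)}>0$, so replacing $u$ by $u/\|u\|_{L^2(\partial B_1)}$ we may assume $\|u\|_{L^2(\partial B_1)}=1$. Then Proposition~\ref{prop:nondeglambda} applies and yields
\[
\|u_r\|_{L^2(\partial B_1)} \ge c\, r^\lambda \qquad \forall\, r\in (0,1],
\]
for some $c=c(n,\lambda)>0$. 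Equivalently,
\[
\Bigl\| \frac{u(r\,\cdot\,)}{r^\lambda}\Bigr\|_{L^2(\partial B_1)}\ge c \qquad \forall\, r\in (0,1].
\]

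Finally, I will pass to the limit $r\downarrow 0$ in this inequality. Since $u(r\,\cdot\,)/r^\lambda \to \tilde q$ locally uniformly (and in particular in $L^2(\partial B_1)$) by Proposition~\ref{uniquelambda}, the $L^2(\partial B_1)$ norm is preserved in the limit, and therefore
\[
\|\tilde q\|_{L^2(\partial B_1)} \ge c>0,
\]
which gives $\tilde q \not\equiv 0$. I do not expect any serious obstacle here: all the genuine difficulty has already been absorbed into Proposition~\ref{prop:nondeglambda} (the ODE-type argument based on Lemmas~\ref{lem:compsig},~\ref{lem:ODEalessio11} and~\ref{lem:QQQ}), and the present deduction is essentially a one-line application.
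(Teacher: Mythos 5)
Your proof is correct and follows exactly the same route as the paper, which presents Theorem~\ref{unique:blow up} as a direct consequence of Propositions~\ref{uniquelambda} and~\ref{prop:nondeglambda} without spelling out the (one-line) deduction. The only superfluous remark is the case $u\equiv 0$, which cannot occur under the hypothesis $\phi(0^+,u)=\lambda$ (the frequency is not defined for the zero function).
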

Thanks to this result,
by classical arguments (see Proposition \ref{prop:recti} and Lemma \ref{dimred})
one easily obtains the following rectifiability result, that was already proved with completely different methods in \cite{FS17}:

\begin{corollary}
Let $u:B_1\to \R$ be an even solution of  \eqref{signoriniB1}. 
Then, for any odd integer $\lambda \ge3$, the set of free boundary points of frequency $\lambda$ is $(n-2)$-rectifiable.
\end{corollary}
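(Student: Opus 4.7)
The strategy is to combine the uniqueness and nondegeneracy of blow-ups (Theorem~\ref{unique:blow up}) with a Federer-type dimension reduction, followed by a Whitney-type covering argument analogous to the ones used in Proposition~\ref{prop:recti} and Lemma~\ref{dimred}. Fix an odd integer $\lambda\ge 3$ and let $\Gamma_\lambda(u)$ denote the set of free boundary points where $u$ has Almgren frequency $\lambda$. At any $x_\circ\in \Gamma_\lambda(u)$, Theorem~\ref{unique:blow up} produces a nonzero blow-up $\tilde q_{x_\circ}\in \mathcal Q_\lambda$; by Lemma~\ref{lem:signoriniodd} and the discussion preceding Lemma~\ref{lem:derprodsig}, after reflecting evenly we may write
\[
\tilde q_{x_\circ}(x',x_n)=-|x_n|\bigl(q_0^{x_\circ}(x')+x_n^2 q_1^{x_\circ}(x',x_n)\bigr),
\]
with $q_0^{x_\circ}\ge0$ a nonzero polynomial on $\R^{n-1}$ of degree $\lambda-1\ge 2$ (nonzero because the trace operator $T$ is injective and $\tilde q_{x_\circ}\not\equiv 0$).

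The key dimension-reduction step is the following claim: if $x_k\in \Gamma_\lambda(u)$ satisfies $x_k\to x_\circ$ and $(x_k-x_\circ)/|x_k-x_\circ|\to y_\infty\in\mathbb S^{n-1}$, then $\tilde q_{x_\circ}$ is invariant under translation by $y_\infty$. To prove this, I would rescale by $r_k:=|x_k-x_\circ|$ and set $\tilde u_{r_k}(y):=u(x_\circ+r_k y)/\|u(x_\circ+r_k\,\cdot\,)\|_{L^2(\partial B_1)}$. By the nondegeneracy (Proposition~\ref{prop:nondeglambda}) and the uniqueness of blow-ups, $\tilde u_{r_k}\to \tilde q_{x_\circ}/\|\tilde q_{x_\circ}\|_{L^2(\partial B_1)}$ locally uniformly. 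The points $y_k:=(x_k-x_\circ)/r_k\in \partial B_1$ are free boundary points of $\tilde u_{r_k}$ at which $\tilde u_{r_k}$ has frequency $\lambda$ after rescaling; since Almgren's frequency is upper-semicontinuous under locally uniform convergence of solutions to \eqref{signoriniB1} (Lemma~\ref{lemAP:Alm}), the limit $y_\infty$ is a free boundary point of $\tilde q_{x_\circ}$ with frequency $\ge \lambda$. Since $\tilde q_{x_\circ}$ is $\lambda$-homogeneous, the frequency at any nonzero point is at most $\lambda$ (seen for instance by letting $r\to\infty$ in Almgren's formula centered at $y_\infty$), hence equal to $\lambda$. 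Lemma~\ref{lemAP:Alm} then forces $\tilde q_{x_\circ}(y_\infty+\,\cdot\,)$ to be $\lambda$-homogeneous too, and the classical Federer argument (using $\lambda$-homogeneity at both $0$ and $y_\infty$, then letting the dilation parameter tend to $0$) yields $\tilde q_{x_\circ}(y_\infty+y)=\tilde q_{x_\circ}(y)$ for all $y$.

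Once the translation invariance is established, the structural expression for $\tilde q_{x_\circ}$ forces $y_\infty\in\{x_n=0\}$ (because $|x_n|$ is not translation-invariant in any transverse direction) and $q_0^{x_\circ}$ translation-invariant in direction $y_\infty$. Since $q_0^{x_\circ}$ is a nonzero polynomial on $\R^{n-1}$ of degree $\ge 2$, its set of translation-invariance directions is a proper linear subspace of $\R^{n-1}$, hence of dimension at most $n-2$. Embedding back into $\{x_n=0\}\subset\R^n$, this exhibits an $(n-2)$-dimensional subspace $L_{x_\circ}\subset\R^n$ containing all possible limiting directions $y_\infty$ arising from sequences in $\Gamma_\lambda(u)$ converging to $x_\circ$.

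To conclude $(n-2)$-rectifiability, I would then apply a standard covering argument as in Proposition~\ref{prop:recti} and Lemma~\ref{dimred}: stratifying $\Gamma_\lambda(u)$ according to a quantitative version of the above ``cone'' property (i.e.\ the scale at which $\Gamma_\lambda(u)\cap B_r(x_\circ)$ lies in an $\varepsilon r$-neighborhood of $x_\circ+L_{x_\circ}$) and partitioning into countably many pieces by the size of the quantitative radius, each piece lies in a $1$-Lipschitz $(n-2)$-graph; taking the countable union gives the desired rectifiability. I expect the main technical obstacle to be the first step, namely transferring the semicontinuity of the frequency across the limit $r_k\to 0$ with the additional translation by $y_k\in\partial B_1$; this uses crucially the nondegeneracy of Proposition~\ref{prop:nondeglambda}, which ensures that the rescalings $\tilde u_{r_k}$ and their translates remain compact in $C^0_{\rm loc}$, together with the uniqueness of the blow-up at $x_\circ$ provided by Theorem~\ref{unique:blow up}.
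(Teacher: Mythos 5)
Your proposal is correct and follows essentially the same approach the paper has in mind; the paper gives only a one-sentence citation to ``classical arguments (see Proposition~\ref{prop:recti} and Lemma~\ref{dimred}),'' and your argument supplies the details of exactly that route. One small difference worth noting: where Lemma~\ref{dimred} relies on the machinery of Lemma~\ref{lem:EG2Bn-1} (built to handle a $1$-parameter family of solutions and not actually assuming uniqueness of blow-ups), you instead run the cleaner classical dimension-reduction for a single solution — upper semicontinuity of the Almgren frequency along the rescalings, boundedness $\phi(0^+,q(y_\infty+\cdot))\le\lambda$ from homogeneity at the origin, and then the Federer two-centers-of-homogeneity trick to get translation invariance. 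That is the right level of generality here, since Theorem~\ref{unique:blow up} already hands you uniqueness and nondegeneracy of the blow-up $\tilde q_{x_\circ}$, which is precisely what makes the classical argument go through (compactness of $\tilde u_{r_k}$ and convergence of the whole family, not just a subsequence). Your observations that the translation-invariance direction must lie in $\{x_n=0\}$ (else $\tilde q_{x_\circ}\equiv 0$) and that $T[\tilde q_{x_\circ}]=q_0^{x_\circ}$ is a nonzero $(\lambda-1)$-homogeneous polynomial of degree $\ge 2$, whose invariance directions span at most an $(n-2)$-dimensional subspace of $\{x_n=0\}$, then feed directly into the Whitney/Reifenberg covering step from Proposition~\ref{prop:recti}.
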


\end{document}